 \tikzset{hfit/.style={rounded rectangle, inner xsep=0pt},
           vfit/.style={rounded corners}}
\setlist[enumerate]{leftmargin=.5in}
\setlist[itemize]{leftmargin=.5in}
\DeclareMathOperator{\Int}{\mathrm{int}}
\DeclareMathOperator{\cInt}{\mathbf{int}}
\newcommand{\rmGamma}{\mathrm{cl}}
\newcommand{\Gammatau}{\mathrm{\Gamma}_\tau^+}
\newcommand{\rmPhi}{\mathrm{\Phi}}
\newcommand{\bmPhi}{{\bm{\Phi}}}
\newcommand{\bmphi}{{\bm{\phi}}}
\newcommand{\bmTheta}{{\bm{\Theta}}}
\newcommand{\bflt}{\textrm{block-flow topology~}}
\newcommand{\bflta}{\textrm{block-flow topology}}
\newcommand{\discresol}{\textrm{condensed Morse pre-order~}}
\newcommand{\discresols}{\textrm{condensed Morse pre-orders~}}
\newcommand{\Discresol}{\textrm{Condensed Morse pre-order~}}
\newcommand{\dff}{{\mathrm d}}
\newcommand{\dffbf}{{\mathrm d}}
\newcommand{\Conf}{\mathscr{D}}
\newcommand{\Sing}{\mathrm{\Sigma}}
\newcommand{\sing}{{\mathrm{sing}}}
\newcommand{\rel}{~{\rm rel}~}
\newcommand{\scrC}{{\mathscr{C}}}
\newcommand{\scrO}{{\mathscr{O}}}
\newcommand{\scrE}{{\mathscr{E}}}
\newcommand{\aclop}{{\mathscr{CO}}}
\newcommand{\Tor}{{\mathrm{Tor}}}
\newcommand{\cross}{{\bm{\lambda}}}
\newcommand{\Lap}{{\bm{\mathrm{\Lambda}}}}
\newcommand{\sSC}{\mathsf{SC}}
\newcommand{\sFwdset}{\mathsf{Invset}^+}
\newcommand{\Invsetpl}{\mathsf{Invset^+}}
\newcommand{\Invsetneg}{\mathsf{Invset^-}}
\newcommand{\cl}{\mathrm{cl}}
\newcommand{\ccl}{\mathbf{cl}}
\newcommand{\bccl}{\mathbf{\bar cl}}
\newcommand{\bcl}{\mathrm{\bar cl}}
\newcommand{\Gr}{{\mathrm{Gr\,}}}
\newcommand{\uclbf}{\cl^-_\sqbullet}
\newcommand{\uclbff}{\ccl^-_\sqbullet}
\newcommand{\der}{\mathrm{\Gamma}}
\newcommand{\derr}{\bm{\Gamma}}
\newcommand{\uder}{\Gamma^+_\sqbullet}
\newcommand{\uderm}{\Gamma^-_\sqbullet}
\newcommand{\udermm}{\bm\Gamma^-_\sqbullet}
\newcommand{\uderup}{{\bmPhi}}
\newcommand{\bvtheta}{{\mathbf{c}}}
\newcommand{\deromega}{{\mathrm{\Gamma}}_\omega}
\newcommand{\lebf}{\le^-_\sqbullet}
\newcommand{\gebf}{\ge^-_\sqbullet}
\newcommand{\scrTbf}{\scrT^-_\sqbullet}
\newcommand{\scrTbfop}{\scrT_-^\sqbullet}
\newcommand{\lbr}{\Vert}
\newcommand{\rbr}{\Vert}
\newcommand{\sAtt}{{\mathsf{ Att}}}
\newcommand{\sABlock}{{\mathsf{ABlock}}}
\newcommand{\sABlockR}{{\mathsf{ABlock}}_{\mathscr{R}}}
\newcommand{\sABlockC}{{\mathsf{ABlock}}_{\mathscr{C}}}
\newcommand{\sRBlockO}{{\mathsf{RBlock}}_{\mathscr{O}}}
\newcommand{\sCo}{{\mathsf{Co}}}
\newcommand{\BB}{\mathscr{B}}
\newcommand{\scrR}{\mathscr{R}}
\newcommand{\scrA}{\mathscr{A}}
\newcommand{\scrT}{\mathscr{T}}
\newcommand{\IIi}{{{\sO(\sP)}^{\bm{2}}}}
\newcommand{\spi}{\mathsf{p}}
\newcommand{\smin}{\smallsetminus}
\newcommand{\rmin}{\!-\!}
\newcommand{\pred}{\blacktriangleleft}
\newcommand{\setof}[1]{\left\{ {#1}\right\}}
\newcommand{\dyn}{\mathrm{dyn}}
\newcommand{\grd}{\mathrm{disc}}
\newcommand{\disc}{\mathrm{disc}}
\newcommand{\cm}{{\dff}}
\newcommand{\possim}{\stackrel{\mbox{\textnormal{\raisebox{0ex}[0ex][0ex]{\scaleto{+}{2.5pt}}}}}{\sim}}
\newcommand{\sdoteq}{\stackrel{\mbox{\textnormal{\raisebox{0ex}[0ex][0ex]{\scaleto{+}{2.5pt}}}}}{=}}
\newcommand{\wbinom}[2]{\genfrac{}{}{0pt}{}{#1}{#2}}
\newcommand{\grade}{\mathrm{grd}}
\newcommand{\ppart}{\mathrm{part}}
\newcommand{\para}{\mathrm{para}}
\newcommand{\flt}{\mathrm{flt}}
\newcommand{\tess}{\mathrm{tess}}
\newcommand{\tile}{\mathrm{tile}}
\newcommand{\cell}{\mathrm{cell}}
\newcommand{\skel}{\mathrm{skel}}
\newcommand{\lap}{\mathrm{lap}}
\newcommand{\pb}{\mathrm{pb}}
\newcommand{\ind}{\mathrm{ind}}
\newcommand{\tessph}{\amalg}
\newcommand{\otessph}{\overline{\amalg}}
\newcommand{\rk}{\mathrm{rank~}}
\newcommand{\topc}{{\top}\!}
\newcommand{\BM}{\mathrm {BM}}
\definecolor{Blue}{RGB}{0,162,255}
\definecolor{Orange}{RGB}{243,144,25}
\definecolor{Red}{RGB}{236,93,87}
\newcommand{\scrS}{\mathscr{S}}
\newcommand{\bh}{{\bf h}}
\newcommand{\bv}{{\bf v}}
\newcommand{\K}{{\mathbb{K}}}
\newcommand{\N}{{\mathbb{N}}}
\newcommand{\R}{{\mathbb{R}}}
\newcommand{\T}{{\mathbb{T}}}
\newcommand{\Z}{{\mathbb{Z}}}
\newcommand{\E}{{\mathbb{E}}}
\newcommand{\rA}{{{C}}}
\newcommand{\RR}{\mathbf{R}}
\newcommand{\ccC}{\text{\small\Fontauri{C}}}
\newcommand{\cccC}{\text{\tiny\Fontauri{C}}}
\newcommand{\cE}{\text{\huge\Fontauri{e}}}
\newcommand{\cF}{{{\bm{\psi}}^\top}}
\newcommand{\cR}{{\bm{\psi}}}
\newcommand{\cS}{{\mathcal S}}
\newcommand{\ccT}{\text{\small\Fontauri{T}}}
\newcommand{\cccT}{\text{\tiny\Fontauri{T}}}
\newcommand{\ccfT}{\text{\footnotesize\Fontauri{T}}}
\newcommand{\cU}{\text{\Fontauri\slshape U}}
\newcommand{\cX}{\text{\small\Fontauri{X}}}
\newcommand{\ccX}{\text{\footnotesize\Fontauri{X}}}
\newcommand{\cccX}{\text{\tiny\Fontauri{X}}}
\newcommand{\cXr}{{\mathcal X}_{\rm reg}}
\newcommand{\cXs}{{\mathcal X}_{\rm sing}}
\newcommand{\sA}{{\mathsf A}}
\newcommand{\sB}{{\mathsf B}}
\newcommand{\sE}{{\mathsf E}}
\newcommand{\sJ}{{\mathsf J}}
\newcommand{\sO}{{\mathsf O}}
\newcommand{\sK}{{\mathsf K}}
\newcommand{\sL}{{\mathsf L}}
\newcommand{\sM}{{\mathsf M}}
\newcommand{\sN}{{\mathsf N}}
\newcommand{\sP}{{\mathsf P}}
\newcommand{\sQ}{{\mathsf Q}}
\newcommand{\sT}{{\mathsf T}}
\newcommand{\sU}{{\mathsf U}}
\newcommand{\sAnt}{{\sO(\sP)}}
\newcommand{\tH}{{\vec{H}}}
\newcommand{\sSet}{{\mathsf{Set}}}
\def\setof#1{\left\{{#1}\right\}}
\def\w#1{\mbox{#1}}
\newcommand{\id}{\mathrm{id}}
\DeclareMathOperator{\rank}{rank}
\DeclareMathOperator{\st}{\mathbf{star}}
\DeclareMathOperator{\nst}{\mathrm{star}}
\DeclareMathOperator{\bd}{bd}
\DeclareMathOperator{\Inv}{Inv}
\newcommand{\image}{\mathrm{im~}}
\newcommand{\Sub}{\mathsf{Sub}}
\newcommand{\bFPoset}{\text{{\bf FPoset}}}
\newcommand{\bFDLat}{\text{{\bf FDLat}}}
\newcommand{\bFPreOrd}{\text{{\bf FPreOrd}}}
\newcommand{\sRmod}{R\text{-{\bf Mod}}}
\newcommand{\bfE}{\text{\bf E}}
\hfill\end{quote}\end{snugshade}}
\definecolor{shadecolor}{rgb}{0.8,0.8,0.8}
\newtheorem{theorem}{Theorem}[chapter]
\newtheorem{lemma}[theorem]{Lemma}
\newtheorem{proposition}[theorem]{Proposition}
\theoremstyle{definition}
\newtheorem{definition}[theorem]{Definition}
\theoremstyle{remark}
\newtheorem{remark}[theorem]{Remark}
\numberwithin{section}{chapter}
\numberwithin{equation}{chapter}
\numberwithin{figure}{chapter}
\begin{document}

\frontmatter

\title{The Algebra of Semi-flows: A Tale of Two Topologies}

%    Remove any unused author tags.

%    author one information
\author{Kelly Spendlove}
\address{Mathematical Institute }
\curraddr{University of Oxford, Oxford UK}
\email{spendlove@maths.ox.ac.uk}
\thanks{This work was supported in part by NWO GROW grant 040.15.044/3192 and by EPSRC grant EP/R018472/1.}

%    author two information
\author{Robert Vandervorst}
\address{ Department of Mathematics}
\curraddr{VU University Amsterdam\\
               The Netherlands}
\email{r.c.a.m.vander.vorst@vu.nl}
\thanks{}

%    \date is required; it is the date received by the editor.
%\date{\today}
\date{August 14, 2023}

\subjclass[2020]{06-04, 06-08, 06D05, 06E05, 37B35, 37M22, 37C70}
%    Recognition of the 2010 edition of the Mathematics Subject
%    Classification requires a version of amsbook.cls from July 2009
%    or later.  If "2010" is not recognized, please upgrade.

\keywords{Closure algebra discretization, closure/derivative operator,    bi-topological space, flow topology \and Morse pre-order, Conley index, connection matrix,   parabolic recurrence relation, parabolic homology.}

%\dedicatory{Dedication text (use \\[2pt] for line break if necessary)}

\begin{abstract}
To capture the global structure of a dynamical system  we reformulate dynamics in terms of appropriately constructed
%an appropriately constructed class of 
topologies, which we call \emph{flow topologies}; we call this process \emph{topologization}.
This yields a description of a semi-flow in terms of a \emph{bi-topological space}, with the first topology corresponding to the (phase) space and the second to the flow topology.
%The latter captures directionality.
A study of topology is facilitated  through \emph{discretization}, i.e. defining and examining appropriate finite sub-structures. Topologizing the dynamics provides an elegant solution to their discretization by discretizing the associated flow topologies.
We introduce \emph{Morse pre-orders}, an instance of a more general bi-topological discretization, which synthesize the space and flow topologies, and encode the directionality of  dynamics.
We describe how 
Morse pre-orders can be augmented with appropriate (co)homological information in order to describe invariance of the dynamics; this ensemble provides an \emph{algebraization} of the semi-flow.
An illustration of the main ingredients is provided 
by an application to 
the theory of discrete parabolic flows.  %Finally, 
Algebraization  yields a new invariant for positive braids in terms of a bi-graded differential module which  contains Morse theoretic information of  parabolic flows. 
\end{abstract}

\maketitle

\tableofcontents

%    Include unnumbered chapters (preface, acknowledgments, etc.) here.
%\include{}

\mainmatter
%    Include main chapters here.
%chap 1
%\include{prelude-old.tex}
\chapter{Prelude}
\label{prelude}

We introduce the point of view that dynamics can be studied as a topology.  Thus an analysis of a dynamical system results in an analysis of two topologies (i.e. a bi-toplogical space): the first topology corresponds to the (phase) space, and the second to the dynamics. %, cf.\ \cite{lsa,lsa2,lsa3}.
% As both the topology and dynamics can be now be approached from the perspective of finite topological spaces, this viewpoint provides the foundation for a computational theory of semi-flows on compact spaces, which we also set forth in this paper.
% Ultimately, the thesis of this paper is that a synthesis of both topology and dynamics through bi-topological spaces, in combinastion with both discretization and algebraization, can provide considerable information about the global dynamics of differential equations and, more generally, semi-flows.
Topology and associated algebraic invariants have long played a prolific role in the theory of dynamical systems \cite{conley:cbms,morse,poincare,smale}.  Loosely stated, a dynamical system engenders topological data, both local (e.g. fixed points) and global (e.g. attractors) and the directionality of the dynamics organizes the data. %the organization of these data account for the directionality of the dynamics. 
The topological data have associated algebraic invariants which may further codify the relationship between local and global and often recover the invariance of the dynamics, i.e. provide information about the existence and structure of the invariant sets.
%and the relationship between local and global can be codified in the algebra.  
%Morse theory is an influential instantiation of this idea \cite{milnor,PS} wherein the local data (non-degenerate fixed points) in the gradient flow $\dot x = -\nabla f(x)$ of a generic map $f$ on a manifold are graded by their Morse index and assemble into a chain complex where the boundary operator is determined by the structure of the connecting orbits; the homology of this chain complex is isomorphic to the singular homology of the manifold. 
%Conley theory is a purely topological generalization of Morse theory and has been used in the theory of dynamical systems beyond the applicability of Morse theory, e.g.\ \cite{cmdb,im,terman}.
%
%The central question is to unravel the global dynamics of a semi-flow $\phi$ defined on a topological space $(X,\scrT)$.\footnote{A topology on $X$ is denoted by $\scrT$.} 
%To be more precise
%

% \subsection{Flow topologies, discretization and homological algebra}
\section{Topologization and discretization}

The novelty of our approach -- and the first theme of this text -- is to formalize the dynamical system itself as a topology, and capture both topology and dynamics in the formalism of bi-topological spaces, i.e. a \emph{topologization}\index{Topologization} of the dynamical system.
Recall that a
 \emph{semi-flow}\index{Semi-flow} on a topological space $(X,\scrT)$\footnote{A topology on $X$ is denoted by $\scrT$.} 
is a continuous map $\varphi\colon \R^+\times X \to X$ such that 
\begin{enumerate}
    \item[(i)] $\varphi(0,x)=x$ for all $x\in X$, and
    \item[(ii)] $\varphi(t,\varphi(s,x)) =\varphi(t+s,x)$ for all $s,t\in \R^+$ and $x\in X$.
\end{enumerate}
For a semi-flow $\varphi$  the
backward image  is denoted by $\varphi(-t,x)$, $t>0$ and is defined as $\varphi(-t,x) := \{x'\in X\mid \varphi(t,x) = x'\}$.
One way to regard a topological space is via a closure operator on the algebra of subsets of $X$, cf.\ Sect.\ \ref{CA-disc}. Using this point of view, a map $\varphi$ as defined above yields a natural closure operator on $X$ through backward or forward images. If we disregard the continuity of $\varphi$ in $(X,\scrT)$  this defines  Alexandrov topologies\index{Alexandrov topology} on $X$ which are denoted by $\scrT^-$ and $\scrT^+$ respectively, cf.\ Sect.\ \ref{twotopos}.
The topologies $\scrT^-$ and $\scrT^+$ record directionality of the flow, but discard the sense of time and invariance. They are also independent of the continuity properties of $\varphi$.
We therefore define a topology which allows one to incorporate the continuity of $\varphi$ in $(X,\scrT)$ and  which is more suited for capturing the important characteristics of dynamics such as invariance. We will refer to this topology as the (derived) \emph{\bflta}\index{Block-flow topology} denoted by $\scrTbf$, cf.\ Sect.\  \ref{derivesflowtop}.\footnote{As we have cast dynamics as topology, it is worthwhile to ask the question: what can dynamical systems theory say about topology? Section~\ref{tesshom} may be regarded as steps in this direction.}
The topologies $\scrT$ and $\scrTbf$ comprise the bi-topological space $(X,\scrT,\scrTbf)$ which becomes our model for a semi-flow $\varphi$.\footnote{The topologies $\scrT$ and $\scrTbf$ are in general related while the topologies $\scrT$ and $\scrT^-,\scrT^+$ are independently defined. The \bflt is not Alexandrov in general. An explanation of the notation of the various flow topologies we use is given  in Sect.'s \ref{derivesflowtop} and \ref{exttopology}.}
Sets that are closed in $(X,\scrT)$ and open  $(X,\scrTbf)$, so called \emph{pairwise clopen sets},\index{Pairwise clopen set}    are closed attracting blocks for $\varphi$, cf.\ Thm.\ \ref{charclattbl}. %Fig.\ \ref{morsetes2}.
Of course the \bflt $\scrTbf$ discards some  information about $\varphi$.
However, we can define suitable (co)homology theories on $(X,\scrT,\scrTbf)$ which allow one to describe fundamental invariant structures of the dynamics, cf.\ Sect.\ \ref{sec:cm}. %relevant structure of the dynamics.
%This marks the first narrative of regarding dynamics as topology and vice versa.

%\subsection{Discrete topology}
The second theme of this paper concerns discretization of both topology and dynamics. 
The last few decades have seen ever more sophisticated uses of discrete approximation %and computational algebraic topology 
in order to explore global dynamical features \cite{cmdb,cmdbchaos,MCG,GKV,kkv,kmv,mischaikow:mrozek:95}; these techniques are largely based on Conley theory, a topological generalization of Morse theory \cite{conley:cbms}.  
As before, we describe topology in terms of a closure algebra, which 
%we show 
provides a powerful formalism for discretization and extends these techniques.
 Moreover, as we encode a semi-flow $\varphi$ as a topology, topological discretization also provides a means of discretizing $\varphi$.
% The last few decades have seen ever more sophisticated %concrete instantiations of using
% uses of both discrete approximation %combinatorial data
% and computational algebraic topology for exploring global dynamical features \cite{cmdb,cmdbchaos,MCG,GKV,kkv,mischaikow:mrozek:95}; these techniques are based on Conley theory, a topological generalization of Morse theory \cite{conley:cbms}.
%The second theme of this paper concerns discretization of both topology and dynamics.  As we encode a semi-flow $\phi$ as a topology, topological discretizations also provide a means of discretizing $\phi$.
%Our second narrative concerns discretization of topology and dynamics. Since we regard dynamics as topology discretization of topology also provides a way of discretizing $\phi$.
%As before, describing topology in terms of closure algebras provides a convenient formalism, which we utilize for discretization.
If we describe a topological space via a closure algebra $\bigl(\sSet(X),\cl\bigr)$, then
discretization may be regarded as determining a finite sub-algebra in the category of closure algebras, cf.\ Sect.\ \ref{CA-disc}. 
A finite closure algebra may be represented by $\bigl(\sSet(\cX),\cl\bigr)$, where $\cX$ is a finite set, and is
 equivalently described by a finite pre-order $(\cX,\le)$, called the \emph{specialization pre-order}\index{Specialization pre-order}
 of the associated Alexandrov topology on $\cX$.
Duality of the latter pre-order defines
a continuous map
\[
\disc\colon X \twoheadrightarrow \cX,
\]
which is called the \emph{discretization map}\index{Discretization map} and provides a discretization of $(X,\scrT)$ by a finite topological space $(\cX,\le)$, cf.\ Eqn.\ \eqref{induceddisc}. 
%\vskip-.4cm
%\input{figures/exampleFull-b}
\begin{figure}[hbt]%\label{discpo1}
\begin{minipage}{.24\textwidth}
\centering
\begin{tikzpicture}[scale=0.8]
\def\h{.1}
%two cells
\foreach \x in {0,1,2}
    \foreach \y in {0,1,2}
        %\draw[pattern = north east lines](\x+\h,\y+\h) rectangle (\x+1-\h,\y+1-\h);
        \filldraw[black,fill opacity=.15](\x+\h,\y+\h) rectangle (\x+1-\h,\y+1-\h);
%Edges
\foreach \x in {0,1,2,3}
    \foreach \y in {0,1,2}
        \filldraw[black, fill opacity=1.0, thick] (\x,\y+\h) -- (\x,\y+1-\h);
\foreach \x in {0,1,2}
    \foreach \y in {0,1,2,3}
        \filldraw[black, fill opacity=1.0 ,thick] (\x+\h,\y) -- (\x+1-\h,\y);
%vertices
\foreach \x in {0,1,2,3}
    \foreach \y in {0,1,2,3}
        \fill[fill = black, fill opacity=1.0] (\x,\y) circle (1.5pt);
\fill[white]  (2+.5*\h, -\h) rectangle ( 3+\h, 1-.5*\h);
\end{tikzpicture}
\end{minipage}
\begin{minipage}{.24\textwidth}
\centering
\begin{tikzpicture}[2cell/.style={rectangle,fill,inner sep=1.7pt},
edge/.style = {circle, draw,inner sep=1pt},
vert/.style = {circle, fill, inner sep=1pt},
line width=.1pt,scale=.4]
\def\h{.15}
%%VERTICES
\foreach \x in {0,2,4,6}
    \foreach \y in {0,2,4,6}
        \node(\x\y) at (\x,\y) [vert] {};
        %\draw[fill=black] (\x,\y) circle (3pt);
%%EDGES
\foreach \x in {0,2,4,6}
    \foreach \y in {1,3,5}
        \node(\x\y) at (\x,\y) [edge] {};
        %\draw[fill=black] (\x,\y) circle (3pt);
\foreach \y in {0,2,4,6}
    \foreach \x in {1,3,5}
        \node(\x\y) at (\x,\y) [edge] {};
        %\draw[fill=black] (\x,\y) circle (3pt);
%%TWO-CELLS
\foreach \x in {1,3,5}
    \foreach \y in {1,3,5}
        \node(\x\y) at (\x,\y) [2cell] {};
        %\draw[fill=black] (\x,\y) circle (3pt);
\foreach \x[count=\xi]  in {1,3,5}
    \foreach \y[count=\yi] in {1,3,5} {
        \draw[-latex] (\x,\y+\h) to (\x,\y+1-\h);
        \draw[-latex] (\x+\h,\y) to (\x+1-\h,\y);
        \draw[-latex] (\x-\h,\y) to (\x-1+\h,\y);
        \draw[-latex] (\x,\y-\h) to (\x,\y-1+\h);
    }
\foreach \x[count=\xi]  in {1,3,5}
    \foreach \y[count=\yi] in {0,2,4,6} {
        %\draw[-latex,thick] (\x,\y+\h) to (\x,\y+1-\h);
        \draw[-latex] (\x+\h,\y) to (\x+1-\h,\y);
        \draw[-latex] (\x-\h,\y) to (\x-1+\h,\y);
        %\draw[-latex,thick] (\x,\y-\h) to (\x,\y-1+\h);
    }
\foreach \x[count=\xi]  in {0,2,4,6}
    \foreach \y[count=\yi] in {1,3,5} {
        \draw[-latex] (\x,\y+\h) to (\x,\y+1-\h);
        \draw[-latex] (\x,\y-\h) to (\x,\y-1+\h);
    }
\fill[white]  (4+\h, -\h) rectangle ( 6+\h, 2-\h);
\end{tikzpicture}
\end{minipage}
\begin{minipage}{.24\textwidth}
\centering
 \begin{tikzpicture}[dot/.style={draw,circle,fill,inner sep=.75pt},line width=.3pt,scale=0.82, decoration={markings, 
    mark= at position 0.55 with {\arrow{latex}}}]
    \draw (0,0) -- (0,3) -- (3,3) -- (3,1) -- (2,1); 
    \draw (1,0)--(1,3);
    \draw (2,2) -- (3,2);
    \draw (2,0)--(2,3);
    \draw (0,1) -- (2,1);
    \draw (0,2)--(2,2);
    \draw (0,0)--(2,0);
    \node[Orange] (00) at (.5,.45)[dot] {};
    \node[Orange] (01) at (.75,1.35)[dot] {};
    \node[Orange] (02) at (0.4,2.5)[dot] {};
    \node[Orange] (11) at (1.25,1.65)[dot] {};
    \node[Orange] (23) at (2,2.5)[dot] {};
    \node[Orange] (12) at (1.025,1.5)[dot] {};
    
    %\node (A) at (.875,1.425) {};
    %\node (B) at (1.16,1.575) {};

    % \draw[Orange,postaction={decorate}] (3.1,3.1) .. controls (2.5,2.5) .. (23);
    % \draw[Orange,postaction={decorate}] (23) .. controls (1.5,2.75) .. (02);
    
    %%FLOWLINES BETWEEN CRIT POINTS
    \draw[Orange, postaction={decorate}] (11) to[out=80,in=-10] (02);
    \draw[Orange, postaction={decorate}] (23) to[out=180,in=50] (02);
    
    \draw[Orange, postaction={decorate}] (01) to[out=80,in=160] (11);
    \draw[Orange, postaction={decorate}] (11) to[out=270,in=340] (01);
    \draw[Orange]  (.85,1.425) to[out=90,in=120] (1.16,1.575);
    \draw[Orange] (1.16,1.575) to[out=290,in=280] (.85,1.425);
    
    \draw[Orange, postaction={decorate}] (01) to[out=280,in=80] (00);
    
    %%EXTRA FLOW LINES INTO (23)
    \draw[Orange, postaction={decorate}] (2.5,3.1) to[out=290,in=0] (23);
    
    %%FLOWLINES INTO (02)
    \draw[Orange, postaction={decorate}] (-.1,2.35) to[out=0,in=250] (02);
    \draw[Orange, postaction={decorate}] (-.1,3.1) to[out=330,in=150] (02);

    \draw[Orange, postaction={decorate}] (3.1,2.5) to[out=240,in=25] (02);
    \draw[Orange, postaction={decorate}] (2.1,3.1) to[out=240,in=60] (02);
    \draw[Orange, postaction={decorate}] (-0.1,1.65) to[out=0,in=350] (02);
    \draw[Orange, postaction={decorate}] (-0.1,1.95) to[out=20,in=300] (02);
    
    \draw[Orange, postaction={decorate}] (1.125,3.1) to[out=230,in=100] (02);
    
    \draw[Orange, postaction={decorate}] (2.1,1.65) to[out=170,in=10] (02);
    
    % upper flow line T'''
    %\draw[Orange, postaction={decorate}] (3.1,1.85) .. controls (1.9,2.65) and (1.8,1.5)  .. (1.295, 2.18);
    \draw[Orange, postaction={decorate}]
    (3.1, 1.45) .. controls (2.75, 1.85)  and (2.55, 2.1) .. (2.2, 2.25);
    %extra flow line with tangency
    \draw[Orange, postaction={decorate}] (2.4,1.75) .. controls (1.9,2.4) and (1.75,1.5)  .. (1.295, 2.18);
    
    % \draw[Orange, postaction={decorate}] (3.1,1.85) .. controls (1.9,2.45) and (1.8,1.5)  .. (1.295, 2.18);
    
    \draw[Orange, postaction={decorate}] (3.075,3.1) .. controls (2.5,2.25) and (2,2.25)  .. (1.75, 2.41);
    
    \draw[Orange, postaction={decorate}] (2.025,-.1) .. controls (1.95,0.25) and (1.5,0.3)  .. (.95, .39);
%\draw (0,0) .. controls (0,4) and (4,0) .. (4,4);

    %\draw[Orange, postaction={decorate}] (2.1,1.9) to[out=160,in=-10] (1.9,1.9);

    %\draw[Orange, postaction={decorate}] (2.1,3.1) to[out=260,in=60] (02);

    %%FLOWLINES INTO (01)
     \draw[Orange, postaction={decorate}] (-.1,1.5) to[out=0,in=150] (01);
	  %%FLOWLINES INTO (00)
     \draw[Orange, postaction={decorate}] (-.1,0.5) to[out=0,in=170] (00);
     \draw[Orange, postaction={decorate}] (-.15,-.05) to[out=0,in=190] (00);
    \draw[Orange, postaction={decorate}] (.5,-0.1) to[out=80,in=290] (00);
    \draw[Orange, postaction={decorate}] (2.1,0.4) to[out=180,in=-10] (00);
    
    \draw[Orange, postaction={decorate}] (2.4,.75) .. controls (2,1.4) and (1.5, .6) .. (00);
    %\draw[Orange, postaction={decorate}] (2.1,1.003) to[out=178,in=10] (00);
    
    \draw[Orange, postaction={decorate}] (2.1,1.35) to[out=170,in=51] (00);
    \draw[Orange, postaction={decorate}] (-.1,1.35) to[out=10,in=90] (00);
    \draw[Orange, postaction={decorate}] (-.1,1.03) to[out=-10,in=120] (00);
    \draw[Orange, postaction={decorate}] (1.55,-.1) to[out=100,in=-30] (00);
    \draw[Orange, postaction={decorate}] (1.05,-.1) to[out=110,in=-40] (00);
    %%%Extra arrow for tangency
    %\draw[Orange, postaction={decorate}] (2.0,1.005) to[out=180,in=10] (1.7,1.005);
    
    %%FLOWLINES INTO (11)
    \draw[Orange, postaction={decorate}] (3.1,1.1) to[out=140,in=-10] (11);
    
    %%%MISC FLOWLINES
    \draw[Orange, postaction={decorate}] (1.795,3.1) .. controls (1.65,2.95) and (1.55,2.85)  .. (1.3, 2.825);
    
    \draw[Orange, postaction={decorate}] (3.1,1.95) .. controls (2.65,2.15) and (2.4,2.175)  .. (2.25, 2.235);
    
    \draw[Orange, postaction={decorate}] (2.8,.8) .. controls (2,1.2)   .. (1.35, .74);

    % %%COLORS
    % \fill[Blue, opacity=0.2] (1,0) rectangle (2,3);
    % \fill[Blue, opacity=0.2] (0,1) rectangle (1,2);
    % \fill[Blue, opacity=0.2] (2,2) rectangle (3,3);
    % \fill[Orange, opacity=0.2] (0,2) rectangle (1,3);
    % \fill[Red,opacity=.2] (0,0) rectangle (1,1);
    % %%COLORS
    % \fill[Blue, opacity=0.2] (1,0) rectangle (2,3);
    % \fill[Blue, opacity=0.2] (0,1) rectangle (1,2);
    % \fill[Blue, opacity=0.2] (2,2) rectangle (3,3);
    % \fill[Orange, opacity=0.2] (0,2) rectangle (1,3);
    % \fill[Red,opacity=.2] (0,0) rectangle (1,1);

\end{tikzpicture}
\end{minipage}
\begin{minipage}{.24\textwidth}
\centering
\begin{tikzpicture}[2cell/.style={rectangle,fill,inner sep=1.7pt},
edge/.style = {circle, draw,inner sep=1pt},
vert/.style = {circle, fill, inner sep=1pt},
line width=.1pt, scale=.4]
\def\h{.15}
%%VERTICES
\foreach \x in {0,2,4,6}
    \foreach \y in {0,2,4,6}
        \node(\x\y) at (\x,\y) [vert] {};
        %\draw[fill=black] (\x,\y) circle (3pt);
%%EDGES
\foreach \x in {0,2,4,6}
    \foreach \y in {1,3,5}
        \node(\x\y) at (\x,\y) [edge] {};
        %\draw[fill=black] (\x,\y) circle (3pt);
\foreach \y in {0,2,4,6}
    \foreach \x in {1,3,5}
        \node(\x\y) at (\x,\y) [edge] {};
        %\draw[fill=black] (\x,\y) circle (3pt);
%%TWO-CELLS
\foreach \x in {1,3,5}
    \foreach \y in {1,3,5}
        \node(\x\y) at (\x,\y) [2cell] {};
        
\foreach \x[count=\xi]  in {1}
    \foreach \y[count=\yi] in {1,5} {
        \draw[-latex] (\x,\y+\h) to (\x,\y+1-\h);
        \draw[-latex] (\x+\h,\y) to (\x+1-\h,\y);
        \draw[-latex] (\x-\h,\y) to (\x-1+\h,\y);
        \draw[-latex] (\x,\y-\h) to (\x,\y-1+\h);
        %%diagonals
        \draw[-latex] (\x+\h,\y+\h) to (\x+1-\h,\y+1-\h);
        \draw[-latex] (\x-\h,\y+\h) to (\x-1+\h,\y+1-\h);
        \draw[-latex] (\x-\h,\y-\h) to (\x-1+\h,\y-1+\h);
        \draw[-latex] (\x+\h,\y-\h) to (\x+1-\h,\y-1+\h);

    }
\foreach \x in {2,3,4,6}
    \draw[latex-] (\x-\h,5) to (\x-1+\h,5);
%%%FLOW TOPOLOGY
\draw[latex-latex] (5-\h,5) to (5-1+\h,5);
%%%%
\foreach \x[evaluate = \x as \xi using int(\x+1)] in {3,5} {
    \draw[latex-] (\xi-\h,6-\h) to (\xi-1+\h,6-1+\h);
    \draw[latex-] (\x,6-\h) to (\x,6-1+\h);
}
%%% NEW ADDITIONS FOR FLOW TOPOLOGY
%\draw[latex-] (3-\h,6-\h) to (3-1+\h,6-1+\h);
%\draw[latex-] (4-\h,6-\h) to (4-1+\h,6-1+\h);
%%%%
\foreach \x[evaluate = \x as \xi using int(\x+1)] in {5} {
    \draw[latex-] (\xi-\h,4+\h) to (\xi-1+\h,4+1-\h);
    \draw[latex-] (\x,4+\h) to (\x,4+1-\h);
}
\foreach \x[evaluate = \x as \xi using int(\x+1)] in {3} {
    \draw[latex-] (\xi-\h,0+\h) to (\xi-1+\h,0+1-\h);
    \draw[latex-] (\x,0+\h) to (\x,0+1-\h);
}
\foreach \y in {2,3,4} {
    \draw[latex-latex] (3,\y+\h) to (3,\y+1-\h);
}
%%FLow topology
\draw[-latex] (3,1+\h) to (3,1+1-\h);
%%%
\foreach \x in {3,4}
    \draw[latex-] (\x-\h, 1) to (\x-1+\h, 1);
\foreach \x in {1,2}
    \draw[latex-latex] (\x+\h, 3) to (\x+1-\h, 3);
\foreach \y in {2,3}
    \draw[latex-] (4-\h,\y) to (3+\h,\y);
\draw[latex-] (4-\h,4-\h) to (3+\h,3+\h);

\draw[latex-] (3-\h,3+\h) to (2+\h,4-\h);
\draw[latex-] (3-\h,3-\h) to (2+\h,2+\h);

\draw[-latex] (1,3+\h) to (1,4-\h);

\draw[latex-] (1,3-\h) to (1,2+\h);
\draw[latex-] (0+\h,3) to (1-\h,3);

%New additions
\draw[-latex] (4+\h, 2) to (5-\h, 2);
\draw[-latex] (4+\h, 3) to (5-\h, 3);
\draw[-latex] (5+\h, 3) to (6-\h, 3);
\draw[-latex] (5, 4-\h) to (5, 3+\h);

\draw[-latex] (4+\h, 4-\h) to (5-\h, 3+\h);
\draw[-latex] (5+\h, 3-\h) to (6-\h, 2+\h);

\draw[-latex] (5,3-\h) to (5,2+\h);

%%%WHITE SPACE
\fill[white]  (4+\h, -\h) rectangle ( 6+\h, 2-\h);
\end{tikzpicture}
\end{minipage}
\vspace{2ex}
\caption{
A discretization of $(X,\scrT)$ with the associated face partial order $\le$ [left 1 and 2]. Example of a semi-flow $\varphi$ on $X$ and pre-order $\lebf$ which is a discretization of the \bflt $\scrTbf$ [right 3 and 4].
% TODO: The antagonistic coarsening of $\le$ and $\lebf$ in Fig.\ \ref{morsetes23} makes a Morse pre-order $\le^\dagger$ [left]. The realization yields a Morse tessellation [right].
%The pre-orders $(\cX,\le)$ and $(\cX,\le^+)$ are discretizations of the topologies $\scrT$ and $\scrTbf$ respectively.
}\label{morsetes23}
%\label{antacoarsening}
\end{figure}
%\vskip-.4cm
%
The elements of $\cX$ are denoted by $\xi$ and are called \emph{cells} in $\cX$.~The closure algebra for a bi-topological space such as $(X,\scrT,\scrTbf)$ is given by $\bigl(\sSet(X),\cl,\uclbf\bigr)$ and by considering finite sub-structures we obtain discretization maps $\disc\colon X\to \cX$ that are continuous with respect to two finite topologies $(\cX,\le)$ and $(\cX,\lebf)$. 
That is, for the bi-topological space  $(X,\scrT,\scrTbf)$ a discretization is a finite bi-topological space $(\cX,\le,\lebf)$ with continuous discretization map $\disc\colon (X,\scrT,\scrTbf)
\twoheadrightarrow (\cX,\le,\lebf)$; this procedure is described for general bi-topological spaces in Sect.\ \ref{bi-top}. 
Fig. \ref{morsetes23}  illustrates discretization of $(X,\scrT,\scrTbf)$ via two compatible pre-orders on a finite topological  space $\cX$.  
Attracting blocks, cf.\ Eqn.\ \eqref{clattbl12}, play a central role in the study of the gradient-like and recurrent dynamics of a semi-flow $\varphi$.
%In order to study the gradient-like and recurrent dynamics of a semi-flow $\phi$ attracting 
%and repelling 
%blocks play a crucial role, cf.\ Eqn.\ \eqref{clattbl12}.
As earlier noted, %pointed out 
the pairwise clopen sets in $(X,\scrT,\scrTbf)$ comprise
the closed attracting blocks for $\varphi$.
% In order to carry out our next step the pairwise clopen  sets in the bi-topological space are of particular interest. 
% the {pairwise clopen sets} $U\subset X$ that are closed in $\scrT$ and open in $\scrTbf$. 
%Closed repelling blocks are subsets that are closed in $\scrT$ and open in $\scrTbf$.
%They correspond open attracting blocks for $\phi$.
We can describe such sets in terms of a discretization which synthesizes both topologies. 
A \emph{Morse pre-order}, cf.\ Defn.\ \ref{morsepreorder1aa}, is a pre-order $(\cX,\le^\dagger)$ such that both discretization maps
$\disc\colon (X,\scrT)\twoheadrightarrow (\cX,\le^\dagger)$ and $\disc\colon (X,\scrTbf)\twoheadrightarrow (\cX,\ge^\dagger)$
% \[
% \disc\colon (X,\scrT)\twoheadrightarrow (\cX,\le^\dagger)\quad\text{ and }\quad
% \disc\colon (X,\scrTbf)\twoheadrightarrow (\cX,\ge^\dagger)
% \]
are continuous; Morse pre-orders  are particular instances of \emph{antagonistic pre-orders},\index{Antagonistic pre-order} which are defined purely in terms of bi-topological spaces, cf.\ Defn.\ \ref{antacoarse}.\index{Bi-topological space}
%In the discretized setting we can describe such sets via a special coarsening in the finite topologies. An \emph{antagonistic pre-order} is a pre-order $(\cX,\le^\dagger)$ such that both $\disc\colon (X,\scrT)\twoheadrightarrow (\cX,\le^\dagger)$ and $\disc\colon (X,\scrTbf)\twoheadrightarrow (\cX,\ge^\dagger)$ are continuous and will be referred to as a \emph{Morse pre-order}  for $(X,\scrT,\scrTbf)$, cf.\ Defn.\ \ref{morsepreorder1aa}.
Closed sets in $(\cX,\le^\dagger)$ correspond to closed attracting blocks for $\varphi$, cf.\ Fig.\ \ref{MPOandTile}[right]. 
%The closure of an open attracting block is a regular closed attracting block and by Theorem \ref{clattbl} the latter form a sublattice of the closed attracting blocks. This allows the treatment of antagonism in terms of common coarsenings (protagonistic pre-orders). A \emph{Morse pre-order} $(\cX,\le^\ddagger)$ is a common coarsening of $(\cX,\le)$ and $(\cX,\lebf)$ such that $\sO(\cX,\le^\ddagger)$ is contained in the range of the homomorphism $\cl\colon \sO(\cX,\le^\dagger) \to \sO(\cX,\le)\cap \sO(\cX,\lebf)$ for some antagonistic pre-order $\le^\dagger$. 

The lattice $\sO(\cX,\le^\dagger)$ of closed sets in $(\cX,\le^\dagger)$  can be represented by the down-sets of a finite poset $(\sSC,\le)$, cf.\ Eqn.\ \eqref{defnofSC}, Fig.\ \ref{MPOandTile}[left]. The map~$\dyn\colon {(\cX,\le^\dagger)} \twoheadrightarrow\! (\sSC,\le\!)$ is defined as the dual of $\sO(\cX,\le^\dagger) \cong \sO(\sSC,\le) \rightarrowtail \sSet(\cX)$, cf.\ Eqn.'s \eqref{dubbleconst2}-\eqref{idred}.\footnote{This map is christened $\dyn$ as it may be regarded as a grading by the dynamics.  Combining \eqref{defndyn} and \eqref{defnofSC} 
%and \eqref{dyndefn}
provides a formula for $\dyn$, q.v. Thm.\ \ref{thethmdyn} and Eqn.\ \eqref{thethmdynform}.} 
 Depending on the topology on $\cX$ the map $\dyn$ is order-preserving, or order-reversing\footnote{The dashed arrows in Figure \ref{dia:intro} indicated order-reversing maps. We use this notation throughout the paper.} as is displayed in the following diagram:
% The lattice $\sO(\cX,\le^\dagger)$ of closed sets in $(\cX,\le^\dagger)$  can be represented by the down-sets of a finite poset $(\sSC,\le)$, cf.\ Eqn.\ \eqref{defnofSC}, and yields the discretization  $\tile\colon X\to \sSC$. %, cf.\ App.\ \ref{gradfilt}.
% As explained in Sect.\ \ref{bi-top}
%  the discretization factors through $\cX$, i.e. $X\xrightarrow{\disc} \cX\xrightarrow{\dyn} \sSC$.
% The map $\dyn\colon \cX \twoheadrightarrow (\sSC,\le)$ is defined as the dual of $\sO(\cX,\le^\dagger) \cong \sO(\sSC) \rightarrowtail \sSet(\cX)$, cf.\ Eqn.\ \eqref{dyndefn}.\footnote{Combining \eqref{defndyn}, \eqref{defnofSC} and \eqref{dyndefn} provides a formula for $\dyn$, see Theorem \ref{thethmdyn} and Eqn. \eqref{thethmdynform}.} 
 %Depending on the topology on $\cX$ the map $\dyn$ is order-preserving, or order-reversing\footnote{The dashed arrows in Fig.\ \eqref{dia:intro} indicated order-reversing maps.} as is displayed in the following diagram:
%\footnote{In particular the gradings $\tile\colon (X,\scrT) \to \sSC$ and
%$\tile^*\colon (X,\scrTbf)\to \sSC^*$ are continuous. }
%
%the discretization map $\disc$:
% \footnote{The composition $(\cX,\le)\twoheadrightarrow (\cX,\le^\dagger)\twoheadrightarrow\sSC$ consists of two order-reversing maps and is thus order-preserving.}

\begin{equation}
   \label{dia:intro}
    \begin{tikzcd}[column sep=huge, row sep=large]
& (\cX,\le) \arrow[d, "\id"'] \arrow[rd, "\dyn"]                 &   \\
(X,\scrT,\scrTbf) \arrow[r, "\disc"', bend left=29, shift right] \arrow[r, "\disc", dashed, bend right=29, shift left] \arrow[ru, "\disc", shift left=2] \arrow[rd, "\disc"',  shift right=2] & (\cX,\le^\dagger) \arrow[r, "\dyn"]                                  & (\sSC,\le) \\
& (\cX,\lebf) \arrow[u, "\id", dashed] \arrow[ru, "\dyn"', dashed] &  
\end{tikzcd}
\end{equation}

%
% \begin{equation}
%     %\label{contgr2}
%   \dgARROWLENGTH=2em
%     \begin{diagram}
%         \node{}\node{(\cX,\le)}\arrow{se,l}{}\arrow{seee,l}{\dyn}\node{}\node{}\\
%         \node{X}\arrow{ne,l}{\grd}\arrow{se,r}{\grd}\node{}\node{(\cX,\le^\dagger)}\arrow[2]{e,l}{\dyn}\node{}\node{\sSC~~}\\
%         \node{}\node{(\cX,\lebf)}\arrow{ne,l,--}{}\arrow{neee,r,--}{\dyn}\node{}\node{}
%     \end{diagram}\label{dia:intro}
% \end{equation}
%\begin{equation}
%    %\label{contgr2}
%    \dgARROWLENGTH=1.6em
%    \begin{diagram}
%        \node{}\node{(\cX,\le)}\arrow{se,l,A}{}\arrow{ese,l,A}{\dyn}\node{}\node{}\\
%        \node{X}\arrow{ne,l,A}{\grd}\arrow{se,r,A}{\grd}\node{}\node{(\cX,\le^\dagger)}\arrow{e,l,A}{}\node{~~\sSC~~}\\
%        \node{}\node{(\cX,\lebf)}\arrow{ne,l,A}{}\arrow{ene,r,A}{\dyn}\node{}\node{}
%    \end{diagram}\label{dia:intro}
%\end{equation}
%where $\dyn\colon \cX \to (\sSC,\le)$ is defined as the dual of $\sO(\cX,\le^\dagger) \cong \sO(\sSC) \rightarrowtail \sSet(\cX)$, cf.\ Eqn.\ \eqref{dyndefn}.
The composition $X\xrightarrow{\disc} \cX\xrightarrow{\dyn} \sSC$ yields the $T_0$-discretization
$\tile\colon X\to \sSC$, cf.\ App.\ \ref{gradfilt};
%$\tile\colon (X,\scrT,\scrTbf)\to (\sSC,\leq,\leq)$;
in particular, % the discretizations 
$\tile\colon (X,\scrT) \to (\sSC,\le)$ and
$\tile\colon (X,\scrTbf)\to (\sSC,\ge)$ are continuous. 
%
% \vskip.2cm
% \noindent {\bf{Theorem A}}\label{thmA}
% {\em
% % Let $\phi$ be a semi-flow on a topological space $(X,\scrT)$ and let $(X,\scrT,\scrTbf)$ be the associated bi-topological space  given by the \bflt 
% % $\scrTbf$  for $\phi$.
% Let $\disc\colon X \to \cX$ be a  discretization map.
% Suppose that $(\cX,\le^\dagger)$ is a Morse pre-order, i.e. the maps
% \[
% \disc\colon(X,\scrT) \to (\cX,\le^\dagger),\qquad \disc\colon(X,\scrTbf) \to (\cX,\ge^\dagger),
% \]
% are continuous.
% Then, the poset $(\sT,\le)$ consisting of element $T=\tile^{-1}(\cS)$, $\cS\in \sSC$, with $T\le T'$ if and only if $\cS\le \cS'$, is a Morse tessellation,\footnote{A Morse tessellation is a poset $(\sT,\le)$ for which the sets $T\in \sT$ form a tessellation of $X$ such that $\big\downarrow T$ is closed and $\varphi(t,x)\in \Int \big\downarrow T$ for every $x\in T$ and for all tiles $T$} cf.\ Defn.\ \ref{morsetess45}.
% Conversely, if $(\sT,\le)$ is a Morse tessellation we obtain a Morse pre-order by defining $(\sT,\le)$ to be a Morse pre-order, and thus a discretization of $(X,\scrT,\scrTbf)$.
% % the elements of $\sT$ to be the cells and the partial order to be the Morse pre-order
% }
% \vskip.2cm
%
%
The discretization $\tile$ %and
defines a \emph{Morse tessellation}\index{Morse tessellation} with locally closed tiles $T=\tile^{-1}\cS$, $\cS\in \sSC$,
i.e. the sets $T$ form a tessellation of $X$ such that $\big\downarrow T$ is closed and $\varphi(t,x)\in \Int \big\downarrow T$ for every $x\in T$ and for all tiles $T$, 
 cf.\ Defn.\ \ref{morsetess45} and \cite[Defn.\ 8]{lsa3}.
Fig.\ \ref{MPOandTile}[right] illustrates how a Morse tessellation is obtained from a Morse pre-order. 
Conversely, if $(\sT,\le)$ is a Morse tessellation we obtain a Morse pre-order by defining $(\sT,\le)$ to be a Morse pre-order, and thus a discretization of $(X,\scrT,\scrTbf)$, cf.\ Sect. \ref{Morsetessll}.
% Conversely, Morse tessellations can be used to construct Morse pre-orders and discretizations, cf.\ Sect. \ref{Morsetessll}.
%
\begin{figure}[hbt]
%%%%%%%%%%%%%%%%%%%%%%%%%%%
\begin{minipage}{.5\textwidth}
\centering
\begin{tikzpicture}[2cell/.style={rectangle,fill,inner sep=1.7pt}, edge/.style = {circle, draw,inner sep=1pt},vert/.style = {circle, fill, inner sep=1pt},line width=.35pt,scale=.45]
\def\h{.15}
%%VERTICES
\foreach \x in {0,2,4,6}
    \foreach \y in {0,2,4,6}
        \node(\x\y) at (\x,\y) [vert] {};
        %\draw[fill=black] (\x,\y) circle (3pt);
%%EDGES
\foreach \x in {0,2,4,6}
    \foreach \y in {1,3,5}
        \node(\x\y) at (\x,\y) [edge] {};
        %\draw[fill=black] (\x,\y) circle (3pt);
\foreach \y in {0,2,4,6}
    \foreach \x in {1,3,5}
        \node(\x\y) at (\x,\y) [edge] {};
        %\draw[fill=black] (\x,\y) circle (3pt);
%%TWO-CELLS
\foreach \x in {1,3,5}
    \foreach \y in {1,3,5}
        \node(\x\y) at (\x,\y) [2cell] {};
\foreach \x[count=\xi]  in {1}
    \foreach \y[count=\yi] in {1,5} {
        \draw[latex-latex] (\x,\y+\h) to (\x,\y+1-\h);
        \draw[latex-latex] (\x+\h,\y) to (\x+1-\h,\y);
        \draw[latex-latex] (\x-\h,\y) to (\x-1+\h,\y);
        \draw[latex-latex] (\x,\y-\h) to (\x,\y-1+\h);
        %%diagonals
        \draw[latex-latex] (\x+\h,\y+\h) to (\x+1-\h,\y+1-\h);
        \draw[latex-latex] (\x-\h,\y+\h) to (\x-1+\h,\y+1-\h);
        \draw[latex-latex] (\x-\h,\y-\h) to (\x-1+\h,\y-1+\h);
        \draw[latex-latex] (\x+\h,\y-\h) to (\x+1-\h,\y-1+\h);

    }
    
%%EDIT
\foreach \x in {3}
    \draw[-latex] (\x-\h,5) to (\x-1+\h,5);
    
\foreach \x in {4,5,6}
    \draw[latex-latex] (\x-\h,5) to (\x-1+\h,5);
    
\foreach \x[evaluate = \x as \xi using int(\x+1)] in {3,5} {
    \draw[latex-latex] (\xi-\h,6-\h) to (\xi-1+\h,6-1+\h);
    \draw[latex-latex] (\x,6-\h) to (\x,6-1+\h);
}
\foreach \x[evaluate = \x as \xi using int(\x+1)] in {5} {
    \draw[latex-latex] (\xi-\h,4+\h) to (\xi-1+\h,4+1-\h);
    \draw[latex-latex] (\x,4+\h) to (\x,4+1-\h);
}
\foreach \x[evaluate = \x as \xi using int(\x+1)] in {3} {
    \draw[latex-latex] (\xi-\h,0+\h) to (\xi-1+\h,0+1-\h);
    \draw[latex-latex] (\x,0+\h) to (\x,0+1-\h);
}
\foreach \y in {1,2,3,4} {
    \draw[latex-latex] (3,\y+\h) to (3,\y+1-\h);
}
\foreach \x in {3}
    \draw[-latex] (\x-\h, 1) to (\x-1+\h, 1);
\foreach \x in {4}
    \draw[latex-latex] (\x-\h, 1) to (\x-1+\h, 1);
\foreach \x in {1,2}
    \draw[latex-latex] (\x+\h, 3) to (\x+1-\h, 3);
\foreach \y in {2,3}
    \draw[latex-latex] (4-\h,\y) to (3+\h,\y);
\draw[latex-latex] (4-\h,4-\h) to (3+\h,3+\h);

\draw[-latex] (3-\h,3+\h) to (2+\h,4-\h);
\draw[-latex] (3-\h,3-\h) to (2+\h,2+\h);

\draw[-latex] (1,3+\h) to (1,4-\h);
\draw[-latex] (1,3-\h) to (1,2+\h);

\draw[latex-latex] (0+\h,3) to (1-\h,3);
\draw[latex-latex] (5-\h,4) to (4+\h,4);

%% New additions for coarsening 
%First row
\draw[-latex] (3-\h,5+\h) to (2+\h,6-\h);
\draw[-latex] (3-\h,5-\h) to (2+\h,4+\h);
\draw[-latex] (3+\h,5-\h) to (4-\h,4+\h);
\draw[-latex] (5-\h,5+\h) to (4+\h,6-\h);
\draw[-latex] (5-\h,5-\h) to (4+\h,4+\h);

%Second row
\draw[-latex] (1-\h,3+\h) to (0+\h,4-\h);
\draw[-latex] (1-\h,3-\h) to (0+\h,2+\h);

\draw[-latex] (1+\h,3+\h) to (2-\h,4-\h);
\draw[-latex] (1+\h,3-\h) to (2-\h,2+\h);

\draw[-latex] (3+\h,3-\h) to (4-\h,2+\h);

\draw[latex-latex] (5+\h,3-\h) to (6-\h,2+\h);
\draw[latex-latex] (5+\h,3) to (6-\h,3);
\draw[latex-latex] (5,3-\h) to (5,2+\h);

\draw[-latex] (5+\h,3+\h) to (6-\h,4-\h);
\draw[-latex] (5,3+\h) to (5,4-\h);
\draw[-latex] (5+\h,3-\h) to (4+\h,4-\h);
\draw[-latex] (5-\h,3) to (4+\h,3);
\draw[-latex] (5-\h,3-\h) to (4+\h,2+\h);
\draw[-latex] (5-\h,2) to (4+\h,2);

%%%Third row
\draw[-latex] (3-\h,1+\h) to (2+\h,2-\h);
\draw[-latex] (3-\h,1-\h) to (2+\h,\h);
\draw[-latex] (3+\h,1+\h) to (4-\h,2-\h);

%%%WHITE SPACE
\fill[white]  (4+\h, -\h) rectangle ( 6+\h, 2-\h);
%%%%
\def\d{.275}
\draw (-\d,-\d) -- (2+\d,-\d) -- (2+\d,2+\d) -- (-\d,2+\d) -- cycle ; 
\draw (-\d,4-\d) -- (2+\d,4-\d) -- (2+\d,6+\d) -- (-\d,6+\d) -- cycle ;

\draw (6+\d,6+\d) -- (3-\d,6+\d) -- (3-\d,3+\d) -- (0-\d,3+\d) -- (0-\d,3-\d) -- (3-\d,3-\d) -- (3-\d,-\d) -- (3-\d,-\d) -- (4+\d,-\d) -- (4+\d, 4-\d) -- (6+\d,4-\d) -- cycle;

%New addition
\draw (5-\d,2-\d) -- (5-\d,3+\d) -- (6+\d,3+\d) -- (6+\d,2-\d) -- cycle ; 

\end{tikzpicture}
\end{minipage}
\begin{minipage}{.40\textwidth}
\centering
 \begin{tikzpicture}[dot/.style={draw,circle,fill,inner sep=.75pt},line width=.3pt,scale=0.9, decoration={markings, 
    mark= at position 0.55 with {\arrow{latex}}}]
    \draw (0,0) -- (0,3) -- (3,3) -- (3,1) -- (2,1); 
    \draw (1,0)--(1,1);
    \draw (1,2)--(1,3);
    \draw (2,2) -- (3,2);
    \draw (2,0)--(2,2);
    \draw (0,1) -- (1,1);
    \draw (0,2)--(1,2);
    \draw (0,0)--(2,0);
    \node[Orange] (00) at (.5,.45)[dot] {};
    \node[Orange] (01) at (.75,1.35)[dot] {};
    \node[Orange] (02) at (0.4,2.5)[dot] {};
    \node[Orange] (11) at (1.25,1.65)[dot] {};
    \node[Orange] (23) at (2,2.5)[dot] {};
    \node[Orange] (12) at (1.025,1.5)[dot] {};
    
    \node at (-.25,.5) {$T_0$};
    \node at (-.25,1.5) {$T_2$};
    \node at (-.25,2.5) {$T_1$};
    \node at (3.35,1.525) {$T_3$};

    %\node (A) at (.875,1.425) {};
    %\node (B) at (1.16,1.575) {};

    % \draw[Orange,postaction={decorate}] (3.1,3.1) .. controls (2.5,2.5) .. (23);
    % \draw[Orange,postaction={decorate}] (23) .. controls (1.5,2.75) .. (02);
    
    %%FLOWLINES BETWEEN CRIT POINTS
    \draw[Orange, postaction={decorate}] (11) to[out=80,in=-10] (02);
    \draw[Orange, postaction={decorate}] (23) to[out=180,in=50] (02);
    
    \draw[Orange, postaction={decorate}] (01) to[out=80,in=160] (11);
    \draw[Orange, postaction={decorate}] (11) to[out=270,in=340] (01);
    \draw[Orange]  (.85,1.425) to[out=90,in=120] (1.16,1.575);
    \draw[Orange] (1.16,1.575) to[out=290,in=280] (.85,1.425);
    
    \draw[Orange, postaction={decorate}] (01) to[out=280,in=80] (00);
    
    %%EXTRA FLOW LINES INTO (23)
    \draw[Orange, postaction={decorate}] (2.5,3.1) to[out=290,in=0] (23);
    
    %%FLOWLINES INTO (02)
    \draw[Orange, postaction={decorate}] (-.1,2.35) to[out=0,in=250] (02);
    \draw[Orange, postaction={decorate}] (-.1,3.1) to[out=330,in=150] (02);

    \draw[Orange, postaction={decorate}] (3.1,2.5) to[out=240,in=25] (02);
    \draw[Orange, postaction={decorate}] (2.1,3.1) to[out=240,in=60] (02);
    \draw[Orange, postaction={decorate}] (-0.1,1.65) to[out=0,in=350] (02);
    \draw[Orange, postaction={decorate}] (-0.1,1.95) to[out=20,in=300] (02);
    
    \draw[Orange, postaction={decorate}] (1.125,3.1) to[out=230,in=100] (02);
    
    \draw[Orange, postaction={decorate}] (2.1,1.65) to[out=170,in=10] (02);
    
    % upper flow line T'''
    %\draw[Orange, postaction={decorate}] (3.1,1.85) .. controls (1.9,2.65) and (1.8,1.5)  .. (1.295, 2.18);
    \draw[Orange, postaction={decorate}]
    (3.1, 1.45) .. controls (2.75, 1.85)  and (2.55, 2.1) .. (2.2, 2.25);
    %extra flow line with tangency
    \draw[Orange, postaction={decorate}] (2.4,1.75) .. controls (1.9,2.4) and (1.75,1.5)  .. (1.295, 2.18);
    
    \draw[Orange, postaction={decorate}] (3.075,3.1) .. controls (2.5,2.25) and (2,2.25)  .. (1.75, 2.41);
    
    \draw[Orange, postaction={decorate}] (2.025,-.1) .. controls (1.95,0.25) and (1.5,0.3)  .. (.95, .39);
%\draw (0,0) .. controls (0,4) and (4,0) .. (4,4);

    %\draw[Orange, postaction={decorate}] (2.1,1.9) to[out=160,in=-10] (1.9,1.9);

    %\draw[Orange, postaction={decorate}] (2.1,3.1) to[out=260,in=60] (02);

    %%FLOWLINES INTO (01)
     \draw[Orange, postaction={decorate}] (-.1,1.5) to[out=0,in=150] (01);
	  %%FLOWLINES INTO (00)
     \draw[Orange, postaction={decorate}] (-.1,0.5) to[out=0,in=170] (00);
     \draw[Orange, postaction={decorate}] (-.15,-.05) to[out=0,in=190] (00);
    \draw[Orange, postaction={decorate}] (.5,-0.1) to[out=80,in=290] (00);
    \draw[Orange, postaction={decorate}] (2.1,0.4) to[out=180,in=-10] (00);
    
    \draw[Orange, postaction={decorate}] (2.4,.75) .. controls (2,1.4) and (1.5, .6) .. (00);
    %\draw[Orange, postaction={decorate}] (2.1,1.003) to[out=178,in=10] (00);
    
    \draw[Orange, postaction={decorate}] (2.1,1.35) to[out=170,in=51] (00);
    \draw[Orange, postaction={decorate}] (-.1,1.35) to[out=10,in=90] (00);
    \draw[Orange, postaction={decorate}] (-.1,1.03) to[out=-10,in=120] (00);
    \draw[Orange, postaction={decorate}] (1.55,-.1) to[out=100,in=-30] (00);
    \draw[Orange, postaction={decorate}] (1.05,-.1) to[out=110,in=-40] (00);
    %%%Extra arrow for tangency
    %\draw[Orange, postaction={decorate}] (2.0,1.005) to[out=180,in=10] (1.7,1.005);
    
    %%FLOWLINES INTO (11)
    \draw[Orange, postaction={decorate}] (3.1,1.1) to[out=140,in=-10] (11);
    
    %%%MISC FLOWLINES
    \draw[Orange, postaction={decorate}] (1.795,3.1) .. controls (1.65,2.95) and (1.55,2.85)  .. (1.3, 2.825);
    
    \draw[Orange, postaction={decorate}] (3.1,1.95) .. controls (2.65,2.15) and (2.4,2.175)  .. (2.25, 2.235);
    
    \draw[Orange, postaction={decorate}] (2.8,.8) .. controls (2,1.2)   .. (1.35, .74);

\end{tikzpicture}
\vspace{2ex}
\end{minipage}
    \caption{The antagonistic coarsening of $\le$ and $\lebf$ in Fig.\ \ref{morsetes23} makes a Morse pre-order $\le^\dagger$ [left], with $\sSC$ depicted as the outlined sets. The realization yields a Morse tessellation $(\sT,\le)$ [right].}
    \label{MPOandTile}
\end{figure}
%\vskip-.4cm
%\input{figures/exampleFull}
%\input{figures/exampleFull-b}
%\vskip-.4cm
%
For Boolean discretizations, i.e. discretizations for which the closure operators for $(X,\scrT)$ and $(\cX,\le)$ commute,\footnote{For example, CW-decompositions.} cf.\ Defn.\ \ref{regularCA}, we can exploit the fact that sets in $\sO(\cX,\le^\dagger)$ correspond to regular closed sets, cf.\ Thm.\ \ref{clattbl}. In this case we can define a pre-order $\le^\topc$ on the top cells\footnote{Top cells are elements in $(\cX,\le)$ that are maximal with respect to $\le$.} $\xi^\topc\in \cX^\topc$ such that $\sO(\cX^\topc,\le^\topc) \cong \sO(\cX,\le^\dagger)$.
Such a pre-order $(\cX^\topc,\le^\topc)$ is called a \emph{\discresol}  and drastically reduces the amount of data to analyze, cf.\ Sect.'s \ref{dyngrad12} and \ref{algimpl}.\footnote{In the application to parabolic systems in Section \ref{gldecomppf} one may  achieve a data reduction of orders of magnitude using discrete resolutions, e.g. for a cubical CW-decomposition of a $d$-cube we have $4^{-d} < |\cX^\topc|/|\cX| < 2^{-d}$.
% See \cite{hms2} for an example of computing a discrete resolution for parabolic flows where $|\cX^\topc|/|\cX|\approx 10^{-4}$, i.e. one  achieves a data reduction of four orders of magnitude using discrete resolutions.
}
\Discresol\index{\Discresol} do not lose information about the initial Morse pre-order and associated Morse tessellation. The map $\cU^\topc \mapsto \cl~\cU^\topc$, with $\cU^\topc\in \sO(\cX^\topc,\le^\topc)$, defines an injective lattice homomorphism $\cl\colon\sO(\cX^\topc,\le^\topc) \rightarrowtail \sO(\cX,\le)$, whose image
%whose image is the Morse pre-order
$\sO(\cX,\le^\dagger)$ is the lattice of down-sets of the Morse pre-order, and yields the factorization:
\[
\begin{tikzcd}[column sep=large, row sep=huge]
\sO(\cX^\topc,\le^\topc) \arrow[r] \arrow[rd] & \sO(\cX,\le^\dagger) \arrow[l, "\cong"'] \arrow[r, tail] \arrow[d]                                & \sO(\cX,\le) \arrow[r, tail] & \sSet(\cX) \\
                       & \sO(\sSC,\le) \arrow[u, "\cong"'] \arrow[lu, "\cong"] \arrow[ru, tail] \arrow[rru, "\dyn^{-1}"', tail] &                   &  
\end{tikzcd}
\]
% \[
% \begin{diagram}
% \node{\sO(\cX^\topc,\le^\topc)}\arrow{e,l,<>}{\cong}\arrow{se,r,<>}{\cong}\node{\sO(\cX,\le^\dagger)}\arrow{s,r,<>}{\cong}\arrow{e,l,V}{}\node{\sO(\cX,\le)}\arrow{e,l,V}{}\node{\sSet(\cX)}\\
% \node{}\node{\sO(\sSC,\le)}\arrow{ne,l,V}{}\arrow{nee,r,V}{\dyn^{-1}}
% \end{diagram}
% \]
cf.\ Thm.\ \ref{dyngdres}.
The discretization $\dyn\colon (\cX,\le) \to(\sSC,\le)$ is the map that allows us to alternate between Morse pre-orders and \discresols, cf.\ Sect.\ \ref{dyngrad12}, and is dual to the injection $\cl\colon\sO(\cX^\topc,\le^\topc) \rightarrowtail \sO(\cX,\le)$.  A formula for $\dyn$ is given in Theorem \ref{thethmdyn}.
In summary, $(\cX,\le^\dagger)$ is the relevant topology that contains the information about closed attracting blocks. A discrete resolution $(\cX^\topc,\le^\topc)$ is a coarser data structure  than  $(\cX,\le^\dagger)$ and which yields the same down-sets, cf.\ Fig.\ \ref{morsetess1ex}.
The Morse pre-order can be retrieved from the \discresol, cf.\ Thm.\ \ref{thethmdynform}. In Sections \ref{subsec:discbr} and \ref{sec:parabolic:model} we exploit this principle in the application to parabolic systems.

\begin{figure}[h!]
\begin{minipage}{.4\textwidth}
\centering
\begin{tikzpicture}[scale=1]
\def\a{4}
\def\b{6}
\def\c{5}
% \foreach \x in {1}
%     \foreach \y in {0,1,2}
%         \node[circle, draw=black,fill=Blue, fill opacity=.2] (\x\y) at (\x, \y) { };
\def\h{.1}

\node[rectangle, draw=black,fill opacity=.2, text opacity=1] (00) at (0, 0){\scriptsize $\xi_0 $};
\node[rectangle, draw=black,fill opacity=.2, text opacity=1] (01) at (0, 1){\scriptsize $ \xi_2 $};
\node[rectangle, draw=black,fill opacity=.2, text opacity=1] (02) at (0, 2){ \scriptsize $\xi_1 $};

\node[rectangle, draw=black,fill opacity=.2, text opacity=1] (10) at (1, 0){\scriptsize $\xi_3 $};
\node[rectangle, draw=black,fill opacity=.2, text opacity=1] (11) at (1, 1){\scriptsize $\xi_4 $};
\node[rectangle, draw=black,fill opacity=.2, text opacity=1] (12) at (1, 2){\scriptsize $\xi_5 $};

\node[rectangle, draw=black,fill opacity=.2, text opacity=1] (22) at (2, 2){ \scriptsize $\xi_6 $};
\node[rectangle, draw=black,fill opacity=.2, text opacity=1] (21) at (2, 1){ \scriptsize $\xi_7 $};

\draw[-latex,thick] (12) to (02);
\draw[-latex ,thick] (10) to (00);
\draw[latex-latex ,thick] (01) to (11);
\draw[latex-latex ,thick] (10) to (11);
\draw[latex-latex ,thick] (11) to (12);
\draw[latex-latex ,thick] (12) to (22);
\draw[-latex,thick] (01) to (02);
\draw[-latex,thick] (01) to (00);

\draw[-latex,thick] (21) to (11);
\draw[-latex,thick] (21) to (22);

\end{tikzpicture}
\end{minipage}
\begin{minipage}{.5\textwidth}
\centering
\begin{tikzpicture}[node distance=.25cm and .25cm]
\def\a{-.25}
\def\b{2.25}
\def\c{1}

\node[draw, ellipse] (0) at (\a, 0) {\scriptsize $\cS_1: \{\xi_1\}$};
\node[draw, ellipse] (1) at (\b, 0) {\scriptsize $\cS_0: \{\xi_0\}$};
\node[draw, ellipse] (2) at (\c,1) {\scriptsize $\cS_2: \{\xi_i\}_{2\leq i \leq 6}$};
\node[draw, ellipse] (3) at (\c,2) {\scriptsize $\cS_3: \{\xi_7\}$};
 
\draw[->,>=stealth,thick] (2) to (0);
\draw[->,>=stealth,thick] (2) to (1);
\draw[->,>=stealth,thick] (3) to (2);

\end{tikzpicture}
\end{minipage}
\vspace{2ex}
\caption{An associated discrete resolution $\le^\topc$ on the top-cells $\cX^\topc$ of CW-decomposition in Fig.\ \ref{morsetes23} [left] and the poset $\sSC$ of its partial equivalence classes [right], which is order-isomorphic to the poset  $(\sT,\le)$.}\label{morsetess1ex}
\end{figure}

% \begin{remark}
% \label{viarepbl12}
% A different way to define a Morse pre-order is to consider pairwise clopen set that are regular closed in $(\cX,\le)$ and open in 
% \end{remark}

% Thesis of paper: synthesis of topology and dynamics through bi-topological spaces; dynamics as topology and topology as dynamics through appropriate discretizations.
%\vspace{-3ex}
\section{Algebraization}\label{algebraization}\index{Algebraization}

The above constructions have reduced dynamics to a (continuous) discretization $\tile\colon X\to \sSC$ of the space $X$. A discretization as such captures robust directionality properties of a flow. However, information about invariant sets is lost. 
The third theme in this paper is the algebraization of semi-flows; that is, the order-theoretic structures that encode the directionality are to be augmented with (co)homological data which carry information about (robust) invariant dynamics via Wazewski's principle.\index{Wazewski's principle}
%The third theme in this paper is to augment the order structures with  (co)homological data that carries information about (robust) invariant dynamics.
A discretization $\tile\colon X\to \sSC$, or equivalently a Morse tessellation, gives rise to a filtering of $X$ consisting of regular closed %open 
attracting blocks, i.e. a lattice homomorphism $\alpha \mapsto F_\alpha X$, where $\alpha\in \sO(\sSC)$ is a down-set in $\sSC$ and $F_\alpha X = \tile^{-1} \alpha \in \sABlockR(\varphi)$ is an attracting block.  In the case of homology with field coefficients the representation theory of Cartan-Eilenberg systems,\index{Cartan-Eilenberg system} cf.\ Sect.\ \ref{CEsystems}, in particular Franzosa's connection matrix theory\index{Connection matrix}\index{Franzosa's connection matrix} \cite{fran2,fran3,fran,hms,robbin:salamon2}, describes a 
strict
%\footnote{cf.\ App.\ \ref{gradedcellchain}.} 
$\sSC$-graded chain complex $\bigl(C^\tile(X),\cm^\tile \bigr)$ whose grading is given by
%$\alpha \mapsto F_\alpha C^\tile(X) := 
$C^\tile(X)=\bigoplus_{\cS\in \sSC} H(F_{\downarrow\cS} X,F_{\downarrow\cS^\pred} X)$,  where $H$ is the singular homology functor, cf.\ Fig.\ \ref{morsetess2ex}[left], cf.\ App.\ \ref{gradedcellchain}.
From the
% The differential $\cm^\tile$ % =\bigoplus_q \cm_q$ 
% is called a \emph{connection matrix} for  $\tile$ and from the 
graded chain complex 
$\bigl(C^\tile(X),\cm^\tile \bigr)$ all homologies $H(F_\beta X,F_\alpha X)$, with $\alpha,\beta\in\sO(\sSC)$, can be computed as homology of the sub-quotient chain complex $G_{\beta\smin\alpha} C^\tile(X)$ and which is denoted by $H^\tile(G_{\beta\smin\alpha}X)$.  This data can be visualized as a poset isomorphic to $\sSC$, 
whose elements are pairs $\bigl(\cS,P^\tile_\mu(G_\cS X)\bigr) \in \sSC\times \Z_+[\mu]$, where $P^\tile_\mu(G_\cS X)$
is the Poincar\'e polynomial of $H^\tile(G_\cS X)$ which uses the natural dimension grading of singular homology,
%augmented with homological information at the vertices, 
cf.\ Fig.\ \ref{morsetess2ex}[right].
%a poset structure given by $(\sSC,\le)$ augmented with homological information at the vertices $\cS\in \sSC$. 
Such a poset  will be referred to as the\index{Tessellar phase diagram} \emph{tessellar phase diagram}\footnote{The tessellar phase diagram is expressed with respect to Borel-Moore homology.}
$(\tessph,\le^\dagger)$ for  $(\cX,\le^\dagger)$, cf.\ Sect.\ \ref{doublegr}.

\begin{figure}[h!]

%\centering
\begin{minipage}{.46\textwidth}
\centering
% \[
% \cdots \to 0\to  \K\langle \cS_2 \rangle
% \xrightarrow{
% \Delta^\tile_1 = \begin{pmatrix}
% 1\\
% 1
% \end{pmatrix}
% }
% \K\langle \cS_1\rangle ~\oplus~ \K\langle \cS_0\rangle
% \to 0
% \]\\
\begin{tikzcd}[column sep = 1.5em]
0 \arrow{r} &
\Z_2\langle \cS_2\rangle 
\arrow{r}{\tiny \dff_1^\tile=\begin{pmatrix}
1\\
1
\end{pmatrix}}&[5em] 
\bigoplus_{i=\{0,1\}}\Z_2\langle \cS_i\rangle \arrow{r} &
% \K\langle \cS_1\rangle ~\oplus~ \K\langle \cS_0\rangle \arrow{r} &
0
\end{tikzcd}
\end{minipage}
\begin{minipage}{.52\textwidth}
\centering
\begin{tikzpicture}[node distance=.2cm and .25cm]
\def\a{0}
\def\b{2}
\def\c{1}

\node[draw, ellipse] (0) at (\a, 0) {\scriptsize $\cS_1 : \mu^0$};
\node[draw, ellipse] (1) at (\b, 0) {\scriptsize $\cS_0 : \mu^0$};
\node[draw, ellipse] (2) at (\c,1) {\scriptsize $\cS_2 : \mu^1$};
\node[draw, ellipse] (3) at (\c,2) {\scriptsize $\cS_3 : 0$};
% \node[draw, ellipse] (0) at (\a, 0) {\scriptsize $\bigl(\cS_1, \mu^0\bigr)$};
% \node[draw, ellipse] (1) at (\b, 0) {\scriptsize $\bigl(\cS_0, \mu^0\bigr)$};
% \node[draw, ellipse] (2) at (\c,1) {\scriptsize $\bigl(\cS_2, \mu^1\bigr)$};
% \node[draw, ellipse] (3) at (\c,2) {\scriptsize $\bigl(\cS_3, 0\bigr)$};
 
\draw[->,>=stealth,thick] (2) to (0);
\draw[->,>=stealth,thick] (2) to (1);
\draw[->,>=stealth,thick] (3) to (2);

\end{tikzpicture}
\end{minipage}
\vspace{2ex}
\caption{The $\sSC$-graded  chain complex  $(C^\tile(X),\cm^\tile)$  [left] of Fig.\ \ref{morsetes23} (computed using $\Z_2$ coefficients) and associated the tessellar phase diagram $(\tessph,\le^\dagger)$ [right]. We display the pair $\bigl(\cS,P^\tile_\mu(G_\cS X)\bigr)$ as $\cS$ and $P^\tile_\mu(G_\cS X)$ as a matter of style.}\label{morsetess2ex}
\end{figure}

% % \vskip.2cm
% \noindent {\bf{Theorem A}}\label{thmA}
% {\em
% Let $\phi$ be a semi-flow on a topological space $(X,\scrT)$ and let $(X,\scrT,\scrTbf)$ be the associated bi-topological space  given by the \bflt 
% $\scrTbf$  for $\phi$.
% Moreover, let $\disc\colon X \to \cX$ be a  discretization map.
% Suppose that  the maps
% \[
% \disc\colon(X,\scrT) \to (\cX,\le^\dagger),\qquad \disc\colon(X,\scrTbf) \to (\cX,\ge^\dagger),
% \]
% are continuous for some pre-order $(\cX,\le^\dagger)$, which is a Morse pre-order for $\phi$. Then,
% \begin{enumerate}
%     \item [{\rm(i)}] $\tessph\bigl(C^\tile,\cm^\tile \bigr)$ only depends on $\le^\dagger$ and is denoted by $\tessph(\le^\dagger)$;
%     \item [{\rm(ii)}] for all $\bigl(\cS,P_s(\cS)\bigr)\in \otessph(\le^\dagger)$ it holds that $S:=\Inv(|\cS|,\phi)\neq\varnothing$;
%     \item [{\rm(iii)}] for $\bigl(\cS,P_s(\cS)\bigr),\bigl(\cS',P_s(\cS')\bigr)\in \otessph(\le^\dagger)$ which are adjacent in $\tessph(\le^\dagger)$ and for which $\cm^\tess(\cS,\cS')\neq 0$, there exists a connecting orbit from $S$ to $S'$.
% \end{enumerate}
% }
% \vskip.2cm

% The main result in the above theorem is the fact that the tessellar phase diagram $\tessph(\le^\dagger)$ is independent of $\phi$ and $\disc$ as long a continuity is preserved. This is a summary of the results in Sect.\ \ref{disc-top} -- \ref{sec:disc-dyn}. The statements in (ii) and (iii) follow from the connection matrix theory in Sect.\ \ref{sec:cm}.

%[Stub paragraph describing 7.1, 7.2]
A discretization $\tile\colon X\to \sSC$ may be considered purely from a topological perspective, independent of dynamics.  Given a sufficiently `nice' discretization, the connection matrix theory can be applied to $\tile$.   We regard this as part of the synthesis of dynamics and topology, and using dynamical tools to analyze topology.  In Section \ref{tesshom}, we show how this leads to a homology theory which we call \emph{tessellar homology} and which, in contradistinction to cellular homology, uses general tiles instead of CW-cells. In Section \ref{cellhom12} we show how cellular homology specifically is recovered.

%we invoke the theory of the connection matrix outside the realm of dynamics and show how this generalizes cellular homology.  We regard this as part of the synthesis of dynamics and topology, and using dynamical tools to analyze topology.

One of the key advantages of using Morse pre-orders is that $\tile$ factors through the discretization of $(X,\scrT)$, which enables a computational approach to connection matrix theory using the algorithms of \cite{hms,hms2} and associated software \cite{cmcode}.
For the sake of the simplicity, we explain this in the case that $(X,\scrT)$ admits a finite CW-decomposition.
%For example if $(X,\scrT)$ is a `nice' topological space allowing a finite CW-decomposition. 
%This is best explained via the diagrams:
The idea is encapsulated in the following diagram, cf.\ Sect.\ \ref{cellhom12}:

\begin{equation}\label{factor3ind}
\begin{tikzcd}[column sep=large, row sep=huge]
                                 & (\cX,\leq) \arrow[d, "\dyn"] &   \\
X \arrow[ru, "\cell"] \arrow[r, "\tile"] & \sSC \arrow[r, "\ind"] & \Z
\end{tikzcd}
%
%
    % \dgARROWLENGTH=1.8em
    % \begin{diagram}
    %     \node{} \node{(\cX,\leq)}\arrow{s,r}{\dyn}\arrow{e,l}{\pi}\node{\cX/_\sim} \arrow{sw,r}{\varpi}\\
    %     \node{X}\arrow{e,l}{\tile}\arrow{ne,l}{\cell} \node{\sSC}
    % \end{diagram}
    % \quad\quad
    % \dgARROWLENGTH=3.5em
    % \begin{diagram}
    %     \node{} \node{(\cX,\leq)}\arrow{s,r}{\dyn}\\%\arrow{se,l}{}\\
    %     %\arrow{e,l}{q}
    %     %\node{\cX/_\sim} \arrow{sw,r}{}\\
    %     \node{X}\arrow{e,l}{\tile}\arrow{ne,l}{\cell} \node{\sSC}\arrow{e,l}{\ind} \node{\Z}
    % \end{diagram}
\end{equation}
%
% spectral systems in \cite{Matschke} allows one to define an algebraic structure $\bfE = \{E^\beta_\alpha\}$ of $R-$modules over the 
% nested pair $\alpha\subset \beta$, with $\alpha,\beta\in \sO(\sSC)$, called an \emph{exact couple system}, cf.\ \cite{SpV2}.
%
In the case of a CW-decomposition map $\cell$, the associated pre-order is a partial order (face partial order) and the above diagram
%, cf.\ \eqref{dia:cmt:factor3}, 
provides the factorization via\! $(\cX,\le)$. 
% By choosing $E^\beta_\alpha := H\bigl(F_\beta X,F_\alpha X\bigr)$ we obtain a exact couple system with the property that 
% \[
% E^{\alpha\cup\beta}_\alpha \cong E^\beta_{\alpha\cap \beta},\quad \forall \alpha,\beta\in \sO(\sP).
% \]
% In \cite{SpV2} it is shown that an exact couple system with the above property is equivalent to Franzosa's homology module braid for the Conley index cf.\ \cite{fran,fran2}.
The fact that we factor $\tile$ through $(\cX,\le)$ allows us to compute $(C^\tile(X),\cm^\tile)$ by instead computing connection matrices in two  simpler settings. First for the discretization $\cell\colon X\to (\cX,\le)$ to obtain the cellular chain complex $C^\cell(X)$ graded over the poset $\cX$ of cells which represents the singular homology as cellular homology; then a second time by re-grading $C^\cell(X)$ by $\sSC$ via $\dyn\colon \cX\to \sSC$ and using the algorithm of \cite{hms} to produce $(C^\tile(X), \cm^\tile)$.
%and  and twice to consider the homology modules $H(F_\beta X,F_\alpha X)$, cf.\ \cite{hms,hms2}. 
% We think of exact couple systems as treated above as discretization of algebraic topology.
%
% \begin{remark}
% In view of Remark \ref{antatop} we assign a (co)homology theory to the finite topological space $(\cX,\le^\dagger)$ via the exact couple system defined above. This homology materializes as Conley indices of Morse tiles.
% \end{remark}
As indicated above the discretization $\tile\colon X\to\sSC$ is  equivalent to a Morse tessellation $(\sT,\le)\xrightarrow[]{\cong} (\sSC,\le)$, where the tiles $T\in \sT$ are given by $T=\tile^{-1}\cS$.
Since $\{\cS\} = \beta\smin\alpha$, the homology of $T$ is given by
\[
H^\dyn(\beta\smin\alpha)\cong H^\tile(G_{\beta\smin\alpha}X) \cong H^\cell(G_{\beta\smin\alpha} X) \cong H^\BM(T),
\]
cf.\ Thm.\ \ref{tilehomtoBM}, where the latter is the \emph{Borel-Moore homology} of $T$, cf.\ \cite{BM,Bredon,Geog,goresky,ginzburg,iversen}. Since $T$ is the set-difference of two attracting blocks for $\varphi$ it is an isolating block (neighborhood) 
for $\varphi$, cf.\ \cite{lsa3}, and $H^{\tile}(\beta\smin\alpha)$ represents  the Conley index of $T$.
%The Borel-Moore homology can also be interpreted as the Conley index of $T$.
%

An additional scalar discretization $\ind\colon\sSC\to \Z$, cf.\ \eqref{factor3ind}, allows a second grading of $H^\tile$ which make $H^\tile$ a bi-graded homology theory (in the case of field coefficients) denoted by $H^\tile_{p,q}(X)$, cf. Sect.\ \ref{dimensiongrading}. 
The discretization $\ind$ induces a spectral sequence which allows an additional $\Z$-grading of the tessellar homology in accordance with the $\sSC$-grading of $C^\tile$. As a matter of fact $H^\tile_{p,q}(G_{\cU\smin\cU'}X)$ is well-defined for any convex set $\cU\smin\cU'$ in $\sSC$.
In this context $(C^\tile,\cm^\tile)$ may be regarded as chain complex as well as $\Z$-graded differential module/vector space. Define the Poincar\'e polynomials
$
    P_{\lambda,\mu}(C^\tile) = \sum_{\cS\in \sSC}P^\tile_{\lambda,\mu}(G_\cS X)
$ with
% $
%     P_{\lambda,\mu}(C^\tile) = \sum_{\cS\in \sSC}\sum_{p,q\in \Z} \bigl(\rk H^\tile_{p,q}(\cS)\bigr) \lambda^p \mu^q
% $,
$P^\tile_{\lambda,\mu}(G_\cS X) = \sum_{p,q\in \Z} \bigl(\rk H^\tile_{p,q}(G_\cS X)\bigr) \lambda^p \mu^q$.
% and $P_{\lambda,\mu}^\tile(X) = \sum_{p,q\in \Z} \bigl(\rk H^\tile_{p,q}(X)\bigr) \lambda^p \mu^q$. 
Then, the following
variation on the standard Morse relations are satisfied, cf.\ Thm.\ \ref{morserel1},
\begin{equation*}
    %\label{morsegenrel}
    %P_{\lambda,\mu}(C^\tile) 
    \sum_{\cS\in \sSC}P^\tile_{\lambda,\mu}(G_\cS X)
    = P_{\lambda,\mu}^\tile(X) + \sum_{r=1}^\infty (1+\lambda^r \mu) Q^r_{\lambda,\mu},
\end{equation*}
where $Q^r_{\lambda,\mu} \ge 0$ and the sum over $r$ is finite.
The $p$-index is a manifestation of the \bflt  and the $q$-index of the phase space topology making it a true tale of two topologies. The impact is most apparent in the application to parabolic flows where we use a canonical discretization $\lap \colon \sSC \to \Z$.

%

% \vskip.2cm
% \noindent {\textbf{Theorem B}}\label{thmB}
% {\em
% This theorem should be about the data structure via top-cells and $\dyn$.
% }

\section{Parabolic flows and braid invariants}

% In Sect. \ref{parabolic}--\ref{gldecomppf} we demonstrate the ideas and methods in this paper for a large class of flows,
% % The second part of the paper
% % %Finally, we 
% called \emph{discrete parabolic flows}. %cf.\ Sect.'s \ref{parabolic},
% % \ref{sec:parabolic:model} and \ref{algimpl}. 
% % Sect. \ref{parabolic}--\ref{gldecomppf}. %\ref{tesspara}
% Discrete parabolic flows occur in many different settings. For example studying the infinite dimensional dynamics of scalar parabolic equations can be realized via discrete parabolic equations, cf.\ \cite{gv}, \cite{day}.
% Discrete parabolic equations and parabolic recurrence relations also play a prominent role in the theory of monotone twist maps whose dynamics can be 
% studied using parabolic flows, cf.\ \cite{angenent1}, \cite{CzechV}. 
% The introduction of Morse theory on braids in \cite{im} was sparked by questions in fourth order Lagrangian dynamics which uses parabolic recurrence relations to describe periodic solutions. Finally,  parabolic flows play a pivotal role
% in computing braid Floer homology, cf.\ \cite{bgvw}.

Chapter \ref{parabolic} encompasses  the final theme  wherein we demonstrate the ideas and methods in this paper for a large class of flows,
called \emph{discrete parabolic flows}.
% In Sect. \ref{parabolic}--\ref{gldecomppf} we demonstrate the ideas and methods in this paper for a large class of flows,
% called \emph{discrete parabolic flows}. 
Such flows occur in a wide variety of settings, e.g. studying the infinite dimensional dynamics of scalar parabolic equations which can be realized via discrete parabolic equations, cf.\ \cite{gv}, \cite{day}.
Discrete parabolic equations and parabolic flows
%recurrence relations 
also play a prominent role in the theory of monotone twist maps whose dynamics can be 
studied using parabolic flows, cf.\ \cite{angenent1}, \cite{CzechV}. 
The introduction of Morse theory on braids in \cite{im} was sparked by questions for fourth order Lagrangian dynamics which use parabolic flows
%recurrence relations 
to describe periodic solutions. Finally,  parabolic flows play a pivotal role
in computing braid Floer homology, cf.\ \cite{bgvw}.

A parabolic flow $\varphi$ is defined via  differential equations
%a vector field of the form
%defined via next neighbor coupling, i.e.
$
\dot x_i = R_i(x_{i-1},x_i,x_{i+1}),
$
with $R_{i+d}=R_i$, 
where the (smooth) functions $R_i(x_{i-1},x_i,x_{i+1})$ are monotone with respect to their first and third argument and their stationary equations, given by $R_i(x_{i-1},x_i,x_{i+1})=0$, are referred to as \emph{parabolic recurrence relations}.
Parabolic recurrence relations form a perfect symbiosis
with discretized braids.
A \emph{discretized braid (diagram)} $x$ on $n$ strands and $d$ discretization points is a unordered collection of sequences $\cdots, x_0^\alpha,x_1^\alpha,\cdots$ such that
$x_{i+d}^{\theta(\alpha)} = x_i^\alpha$, for a permutation $\theta\in S_n$, and $\alpha=1,\cdots,n$. 

\begin{figure}[h!]
\centering
\begin{minipage}{.45\textwidth}
\begin{tikzpicture}[dot/.style={draw,circle,fill,inner sep=.75pt},line width=.7pt,scale=.65]
% \foreach \x in {0,1,2}
%      \draw[thin,gray,->] (\x, -.25) -- (\x, 4);
\def\a{0}
\def\b{1}
\def\c{2}
\def\d{3}
\def\e{4}
\foreach \x in {0,1,2,3,4,5,6}
    \foreach \y in {\a,\b,\c,\d,\e}
        \node (\x\y) at (\x, \y)[dot] {};
%%constrant strands
\foreach \y in {\a,\e}
    \foreach \x  in {0,1,2,3,4,5}
        \draw (\x,\y) to (\x+1,\y);
%other strands
\foreach \x in {0,2,3,5} {
     \draw (\x, \b) to (\x+1, \c);
     \draw (\x,\c) to (\x+1,\d);
     \draw (\x,\d) to (\x+1,\b);
     }
\foreach \x in {1, 4} {
    \draw (\x,\d) to (\x+1,\c);
    \draw (\x,\c) to (\x+1,\b);
    \draw (\x,\b) to (\x+1,\d);
}
\end{tikzpicture}
\end{minipage}
\begin{minipage}{.4\textwidth}
\centering
\begin{tikzpicture}[scale=.65,node distance=.3cm and .25cm]
\def\a{0}
\def\b{2}
\def\c{4}
\def\h{1.25}
%Two node layers
\node[draw, ellipse] (0) at (\a, 0) {\scriptsize $\cS_0 : \lambda^0 \mu^0$};
\node[draw, ellipse] (1) at (\c, 0) {\scriptsize $\cS_1 : \lambda^0 \mu^0$};

\node[draw, ellipse] (4) at (\a, 3.25*\h) {\scriptsize $\cS_4 : \lambda^5 \mu^3$};
\node[draw, ellipse] (5) at (\c, 3.25*\h) {\scriptsize $\cS_5 : \lambda^5 \mu^3$};

\node[draw, ellipse] (7) at (\a, 5.2*\h) {\scriptsize $\cS_7 : \lambda^7 \mu^5$};
\node[draw, ellipse] (8) at (\c, 5.2*\h) {\scriptsize $\cS_8 : \lambda^7 \mu^5$};
%Single node layers
\node[draw, ellipse] (2) at (\b, \h) {\scriptsize $\cS_2 : \lambda^3 \mu^1+ \lambda^3 \mu^2$};
\node[draw, ellipse] (3) at (\b, 2.3*\h) {\scriptsize $\cS_3 : \lambda^4 \mu^2$};
\node[draw, ellipse] (6) at (\b, 4.25*\h) {\scriptsize $\cS_6b : \lambda^6 \mu^4$};
\node[draw, ellipse] (9) at (\b, 6.2*\h) {\scriptsize $\cS_9 : \lambda^8 \mu^6$};
% \node[draw, ellipse] (0) at (\a, 0) {\scriptsize $\bigl(\cS_0, \lambda^0 \mu^0\bigr)$};
% \node[draw, ellipse] (1) at (\c, 0) {\scriptsize $\bigl(\cS_1, \lambda^0 \mu^0\bigr)$};

% \node[draw, ellipse] (4) at (\a, 3.25*\h) {\scriptsize $\bigl(\cS_4, \lambda^5 \mu^3\bigr)$};
% \node[draw, ellipse] (5) at (\c, 3.25*\h) {\scriptsize $\bigl(\cS_5, \lambda^5 \mu^3\bigr)$};

% \node[draw, ellipse] (7) at (\a, 5.2*\h) {\scriptsize $\bigl(\cS_7, \lambda^7 \mu^5\bigr)$};
% \node[draw, ellipse] (8) at (\c, 5.2*\h) {\scriptsize $\bigl(\cS_8, \lambda^7 \mu^5\bigr)$};
% %Single node layers
% \node[draw, ellipse] (2) at (\b, \h) {\scriptsize $\bigl(\cS_2, \lambda^3 \mu^1+ \lambda^3 \mu^2\bigr)$};
% \node[draw, ellipse] (3) at (\b, 2.3*\h) {\scriptsize $\bigl(\cS_3, \lambda^4 \mu^2\bigr)$};
% \node[draw, ellipse] (6) at (\b, 4.25*\h) {\scriptsize $\bigl(\cS_6, \lambda^6 \mu^4\bigr)$};
% \node[draw, ellipse] (9) at (\b, 6.2*\h) {\scriptsize $\bigl(\cS_9, \lambda^8 \mu^6\bigr)$};

\draw[->,>=stealth,thick] (2) to (0);
\draw[->,>=stealth,thick] (2) to (1);
\draw[->,>=stealth,thick] (3) to (2);
\draw[->,>=stealth,thick] (4) to (3);
\draw[->,>=stealth,thick] (5) to (3);
\draw[->,>=stealth,thick] (6) to (4);
\draw[->,>=stealth,thick] (6) to (5);
\draw[->,>=stealth,thick] (7) to (6);
\draw[->,>=stealth,thick] (8) to (6);
\draw[->,>=stealth,thick] (9) to (7);
\draw[->,>=stealth,thick] (9) to (8);

\end{tikzpicture}
\end{minipage}
\vspace{2ex}
\caption{A discretized (pseudo-anosov) braid $y$ [left], and associated reduced tessellar phase diagram $\otessph(y)$ [right]. The vertices in   $\otessph(y)$ contain the Poincar\'e polynomials of the 
bi-graded 
parabolic homology, cf.\ Sect.\ \ref{lapgrading}.}\label{fig:braid:cmg12}
\end{figure}

By viewing such sequences as piecewise linear interpolations between the anchor points the various strands `intersect', cf.\ Fig.\ \ref{fig:braid:cmg12}[left]. There is a non-degeneracy condition on the intersections (no tangencies). 
A collection of  stationary solutions of $\varphi$ of integer period  forms a discretized braid diagrams denoted by $y$ and referred to as a \emph{skeleton}, cf.\ Fig.\ \ref{fig:braid:cmg12}[left].\index{Skeleton} 
% Suppose $y\in \Conf_m^d$ such that $y$ is a `solution' of the parabolic recurrence relation.
If we add an additional
periodic sequence $x$  the ensemble of $x$ and $y$  generically forms a braid $x\rel y$, called a \emph{relative} discretized braid.\index{Discretized braid}\index{Discretized braid!relative} Since $y$ is stationary for $\varphi$  we may consider $\varphi(t,x)\rel y$. Whenever $\varphi(t,x)\rel y$ becomes singular, i.e. strands develop tangencies,  then the total number of intersections of $x$ with the strands in $y$ decreases strictly.
This principle is crucial and emphasizes the intimate relation between parabolic flows and discretized braids.
As such relative braids can be partially ordered using parabolic flows which form the backbone of the canonical discretization of the \bflt for parabolic flows.
We  construct special CW-decompositions, cf.\ Sect.\ \ref{parabolic}, and discretizations of the \bflt via
appropriately defined discrete Lyapunov functions,
which allows us to establish Morse tessellations and Morse representations, cf.\ Sect.\ \ref{sec:parabolic:model}.
% This allows us to
% compute various Morse tessellations and Morse representations, cf.\ Sect.\ \ref{algimpl}.
In the language of \discresols the partial order on the relative braid classes defines the poset $(\sSC,\le)$ which comes from the canonical CW-decomposition of $X$ given by the skeleton $y$ and the \bflt given by $\varphi$.
% In Sect.\ \ref{tesspara} we show that the Borel-Moore homologies of the realizations of $\dyn(\cS)$, $\cS\in \sSC$, are exactly the Conley index invariants for relative braid classes as defined in \cite{im}.

%\input{longchdiagprelude2}
\begin{figure}[h!]
%\vskip-.4cm
\centering
\tikzcdset{every label/.append style = {font = \tiny}}
\begin{tikzcd}[column sep=1.6em, row sep =3.5em]   
0 \arrow{r} & 
\Z_2\langle \cS_9\rangle 
\arrow{r}{\cm^\tile_{6}=\begin{pmatrix}
1\\
1
\end{pmatrix}} & [1.5em]
\bigoplus_{i= 7,8}\Z_2\langle \cS_i\rangle  
\arrow{r}{\cm^\tile_{5} = \begin{pmatrix}
1 ~~ 1
\end{pmatrix}} 
%arrow sets spot for connecting homom.
\arrow[phantom, near start,""{coordinate, name=Z}]{d} & [1.5em]
\Z_2\langle \cS_6\rangle
  \arrow[
    rounded corners,
    to path={
      -- ([xshift=2ex]\tikztostart.east)
      |- (Z) [near start]\tikztonodes
      -| ([xshift=-2ex]\tikztotarget.west)
      -- (\tikztotarget)
    }
  ]{dlll}{\cm^\tile_{4}=\begin{pmatrix}
0\\
0
\end{pmatrix}} \\
\bigoplus_{i=4,5}\Z_2\langle \cS_i\rangle \arrow{r}{\cm^\tile_{3} = \begin{pmatrix}
0~~~1\\
1~~~1
\end{pmatrix}} & 
\bigoplus_{i=2,3} \Z_2\langle \cS_i\rangle \arrow{r}{\cm^\tile_{2} = \begin{pmatrix}
0 ~~ 0
\end{pmatrix}} & 
\Z_2\langle \cS_2\rangle  
\arrow{r}{\cm^\tile_{1}=\begin{pmatrix}
1\\
1
\end{pmatrix}} & 
\bigoplus_{i=0,1} \Z_2\langle \cS_i\rangle \arrow{r}{\cm^\tile_{0}=0} & 0
\end{tikzcd}
 \caption{Associated  complex $\scrA(\beta)$, computed over $\K=\Z_2$ coefficients, viewed as chain complex using dimension grading. The index $q$
 for $\cm^\tile_q$ reflects the natural grading induced by singular homology.
 %The other bi-graded entries of $\cm$ are trivial. 
 }\label{fig:my_label2}
% \vskip-.4cm    
\end{figure}

In   Section \ref{algimpl} we give an overview of the algorithmic steps in computing the order structures and connection matrices and in Section \ref{lapgrading} we discuss the bi-graded parabolic homology.
In Section \ref{tesspara} 
%--\ref{stabofclasses} 
we show that the parabolic differential module (and the associated  tessellar phase diagrams) we obtain for parabolic flows extend the results in \cite{im} and provide an  invariant for positive braids which also defines a new invariant for scalar parabolic equations, cf.\ Thm.\ \ref{stableh2}. %Thus, the invariant is formulated in terms of a chain complex graded by a finite poset, which carries information on the Morse tessellation as well as additional (co)homological data recovering the information on the invariance and connecting orbits.
%invariant is formulated in terms of finite poset graded chain complexes which carry the Morse tessellation information as well as the homological data and boundary operators.
%\input{longchdiagprelude}
%
The importance of Theorem \ref{stableh2} is that we obtain complete insight in the homology of loop spaces for scalar parabolic equations as well as the boundary homomorphisms which contain information about connecting orbits for parabolic equations. 
To best explain the discretization of topology and dynamics and the main statement of Theorem \ref{stableh2} we consider an example. Let $y$ be a discretized braid $y$ given in Figure \ref{fig:braid:cmg12}[left]
and $\varphi$ is any parabolic flow for which $y$ is stationary. This can be rephrased in terms of a bi-topological space for which $y$ defines a natural discretization. The  sub-poset of  homologically non-trivial braid classes in $(\sSC,\le)$ is given by the reduced tessellar phase diagram $(\otessph,\le^\dagger)$ 
displayed in Fig.\ \ref{fig:braid:cmg12}[right].
The more detailed information is given by the \emph{parabolic differential module} $\scrA(\beta)$ generated by $\tile\colon X\to \sSC$ and the scalar discretization $\lap\colon \sSC\to \Z$, which counts the intersections of $x$ with $y$ divided by two, cf.\ Sect.\ \ref{tesspara} for a detailed definition.
As described in Section \ref{lapgrading} %\ref{fig:braid:cmg12} 
$\lap$ provides a scalar grading on the tessellar homology $H^\tile(G_{\cU\smin\cU'}X)$, with $\cU\smin\cU'$ convex in $\sSC$, which yields the \emph{parabolic homology} $\tH_{p,q}(G_{\cU\smin\cU'}X)$ where
\[
H^\tile(G_{\cU\smin\cU'}X) = \bigoplus_{p,q\in\Z} \tH_{p,q}(G_{\cU\smin\cU'}X).
\]
Figure \ref{fig:braid:cmg12}[right] lists the Poincar\'e polynomials of the non-trivial tiles with respect to parabolic homology.
The differential module $\scrA(\beta)$
 may be  regarded as a chain complex in Figure \ref{fig:my_label2} with homology
 $H^\BM_q(G_\cS X) =\oplus_{p\in \Z}\tH_{p,q}(G_\cS X)$,
or as $\Z$-graded differential module in Figure \ref{fig:my_label2b} with homology
 $\tH_{p,*}(G_\cS X) =\oplus_{q\in \Z}\tH_{p,q}(G_\cS X)$. 
By making further specification of the entries in $\cm^\tile$ the differential module can also be represented as to show the $\otessph$-order, cf.\ Sect.\ \ref{tesspara}.
%in given in Fig.\ \ref{fig:braidclass123}.
Theorem \ref{stableh2} shows that the parabolic differential module, and thus the reduced tessellar phase diagram is a topological invariant for the topological braid $\beta(y)$, i.e. all braid diagrams isotopic to $y$, cf.\ Sect.\ \ref{tesspara}. Due to Theorem \ref{stableh2} the parabolic differential module and reduced tessellar phase diagram can be denoted by  $\scrA(\beta)$ and $\otessph(\beta)$  respectively.
In summary, the application of the methods put forth in this paper to parabolic flows gives a novel approach towards computing algebraic topological information about infinite dimensional problems.
\begin{figure}%[htbp]
%\vskip-.2cm
\centering
\tikzcdset{every label/.append style = {font = \tiny}}
\begin{tikzcd}[column sep=2.2em, row sep =3.5em]   
0 \arrow{r} & 
\Z_2\langle \cS_9\rangle 
\arrow{r}{\cm^\tile_{8}=\begin{pmatrix}
1\\
1
\end{pmatrix}} & [1.5em]
\bigoplus_{i= 7,8}\Z_2\langle \cS_i\rangle  
\arrow{r}{\cm^\tile_{7} = \begin{pmatrix}
1 ~~ 1
\end{pmatrix}} 
%arrow sets spot for connecting homom.
\arrow[phantom, near start,""{coordinate, name=Z}]{d} & [1.5em]
\Z_2\langle \cS_6\rangle
  \arrow[
    rounded corners,
    to path={
      -- ([xshift=2ex]\tikztostart.east)
      |- (Z) [near start]\tikztonodes
      -| ([xshift=-2ex]\tikztotarget.west)
      -- (\tikztotarget)
    }
  ]{dlll}{\cm^\tile_{6}=\begin{pmatrix}
0\\
0
\end{pmatrix}} 
\\
\bigoplus_{i=4,5}\Z_2\langle \cS_i\rangle \arrow{r}{\cm^\tile_{5} = \begin{pmatrix}
1~~~1
\end{pmatrix}} 
%
%\arrow[to path={ -- ([yshift=-3.5ex]\tikztostart.south) -| (\tikztotarget)},
%        rounded corners]{rr}{f}
%
%\arrow[drru, controls={+(0,-1) and +(1,-1)}, "h"]
%
% \arrow[bend right]{rr}{~~^2\cm^\tile_{5} = \begin{pmatrix}
% 0~~~0\\
% 0~~~1
% \end{pmatrix}} 
\arrow[rr, "\tiny{ \dff^\tile_5 =\begin{pmatrix} 0~~~0\\ 0~~~1\end{pmatrix} }", bend right=49]
& 
 \Z_2\langle \cS_3\rangle \arrow{r}{\cm^\tile_{4} = \begin{pmatrix}
0\\
0
\end{pmatrix}} 
%\arrow[phantom]{d}[pos=0.35]{\tiny{ \dff_5 =\begin{pmatrix} 0~~~0\\ 0~~~1\end{pmatrix} }}
& 
\Z_2\oplus\Z_2\langle \cS_2\rangle  
\arrow{r}{\cm^\tile_{3}=\begin{pmatrix}
1~~~0\\
1~~~1
\end{pmatrix}} & 
\bigoplus_{i=0,1} \Z_2\langle \cS_i\rangle \arrow{r}{} & 0\\
 & \phantom{.}  
\end{tikzcd}
%\vspace{-2ex}
 \caption{The differential module $\scrA(\beta)$, computed over $\K=\Z_2$ coefficients,
 with lap number grading. 
 %The other bi-graded entries of $\cm$ are trivial. 
For the entry $\cm_p^\tile$ the index $p$ reflects the lap number.
%and $r$ indicates the distance between lap numbers. 
}\label{fig:my_label2b}
% \vskip-.4cm    
\end{figure}
%\vspace{-1ex}
%\vspace{-2ex}

% The idea of treating dynamics as bi-topological spaces is also useful outside the realm of discretization, cf.\ \cite{GKV} in the setting of chain recurrence.
%Another possible direction is to consider dynamics as parameter as opposed to a fixed topology, cf.\ \cite{DKV}.
The themes throughout this paper touch upon many topics. Section \ref{postlude} concludes with a discussion of related remarks and open problems.

%chap 2
\chapter{Topology, discretization and bi-topological spaces}
\label{dynastop}

% Our viewpoint of dynamical systems is a computational theory which captures notions of topology and dynamics in terms of algebraic structures, cf.\ \cite{lsa,lsa2,lsa3}. 
% % A similar approach can be used for the topological spaces in question, e.g., the phase space of a dynamical system. 
% To do so 
% we approach both topology and dynamics from the point of view finite topological spaces. 
%
Our philosophy is that discretization is the study of appropriate \emph{finite} substructures.
In this chapter we start with an exposition of topological spaces, closure algebras and modal algebras and discuss discretization in terms of Boolean algebras with operators. %cf.\ App.\ \ref{sec:birkhoff}.  
Closure algebras provide an equivalent way to describe a topological space. In general modal algebras are also related to topological spaces which provides the essential link to dynamical systems. The latter can be used to regard various aspects of dynamics in terms of topology.
% To explain the mechanism we explain the procedure for arbitrary topological space.
% one can also approach topology from a more algebraic point of view.

% \subsection{Closure algebras and CA-discretizations}
\section{Closure algebras}\label{CA-disc}\index{Closure algebra}
Let $(X,\scrT)$ be a topological space and let $\sSet(X)$ denote the (complete and atomic) Boolean algebra of subsets of $X$.   For $(X,\scrT)$ we define an associated  \emph{closure algebra}\index{Closure algebra}  as the pair $\bigl(\sSet(X),\cl \bigr)$, where $\cl\colon \sSet(X)\to \sSet(X)$ is the operator defined as the closure of a subset, 
$\cl\, U = \bigcap\bigl\{U'\supset U\mid U'\text{~closed} \bigr\}$, cf.\ \cite{mcktar}, which is our first source of closure algebras.   
In general, an operator $\cl\colon\sSet(X) \to \sSet(X)$ is a \emph{closure operator}\index{Closure operator} if all four Kuratowski axioms{\index{Kuratowski axioms} for a closure operator are satisfied: for all $U,U'\subset X$,
\begin{enumerate}
    \item [(K1)] (normal) $\cl~\varnothing = \varnothing$;
    \item [(K2)] (additive) $\cl(U\cup U') = \cl~U \cup \cl~U'$;
    \item [(K3)] (sub-idempotent) $\cl\bigl(\cl~ U\bigr) \subset \cl~ U$;\footnote{Axiom (K3) in combination with Axiom (K4) this implies that
    $\cl$ is idempotent, i.e. $\cl\bigl(\cl~ U\bigr) = \cl~U$.}
    \item [(K4)] (expansive) $U \subset \cl~ U$.\footnote{The single condition (K): $U\cup \cl~U\cup \cl\bigl(\cl~ U'\bigr) = \cl(U\cup U')\smin \cl~\varnothing$ is equivalent to (K1)-(K4).} 
\end{enumerate}
Continuous maps between topological spaces can also be described in terms of closure algebras.
Let $g\colon X\to Y$ be a continuous map\index{Continuous map} between topological spaces.
% $(X,\scrT)$ and $(Y,\scrT')$.
Then, $g^{-1}$ defines a map from $\sSet(Y)$ to $\sSet(X)$.
As a matter of fact $g^{-1}\colon \sSet(Y) \to \sSet(X)$ is a completely additive\footnote{Closed with respect to arbitrary intersections and unions.} Boolean homomorphism\index{Boolean homomorphism}\index{Boolean homomorphism!completely additive}  of complete and atomic Boolean algebras\index{Boolean algebra}\index{Boolean algebra!complete and atomic}, cf.\ App. \ref{complBoolAlg}. Consider the (not necessarily commutative) diagram: 
\begin{equation}
\label{closalgsemidiag12}
\begin{tikzcd}[column sep=large, row sep=large]
\sSet(X) \arrow[r, "\cl_X"]                & \sSet(X)                 \\
\sSet(Y)  \arrow[r, "\cl_Y"] \arrow[u, "g^{-1}"] & \sSet(Y)  \arrow[u, "g^{-1}"']
\end{tikzcd}
\end{equation}
The continuity of $g$ is equivalent to the condition
$\cl_X g^{-1}(V) \subset g^{-1}(\cl_YV)$ for all $V\subset Y$
%, where the closure operator in $Y$ is also denoted by $\cl$,
which makes $g^{-1}$ a \emph{semi-homomorphism}\index{Semi-homomorphism of closure algebras} of closure algebras.
In particular, if $V\subset Y$ is closed in $Y$, then $g^{-1}(V)$ is closed in $X$.
In case $\cl_X g^{-1}(V) = g^{-1}(\cl_YV)$ for all $V\subset Y$ the operator $g^{-1}$ is called a homomorphism of closure algebras,\footnote{ For a \emph{homomorphism} of closure algebras the diagram in \eqref{closalgsemidiag12} commutes.}\index{Homomorphism of closure algebras} in which case $g$ is an open continuous map.\footnote{If there is no ambiguity about the topological space the sub-index of $\cl$ will be omitted.}\index{Open continuous map}\index{Continuous map!open}
A second source for closure algebras are via pre-ordered sets.\index{Pre-order} Let $(X,\le)$ be a pre-order. Define
\[
\cl_\le U := \bigl\downarrow U =\bigl\{x\in X\mid x\le y \text{~~for some ~}y\in U \bigr\}.
\]
Then, $(\sSet(X),\cl_\le)$ is a closure algebra and the closure operator $\cl_\le$ is completely additive,\index{Closure operator!completely additive} i.e. Kuratowski axiom (K2) is satisfied with respect to arbitrary unions. The associated topological space is denoted by $(X,\scrT_\le)$ which is an Alexandrov topological space\index{Alexandrov topological space} and the topology $\scrT_\le$ is called an \emph{Alexandrov topology}, i.e.\index{Alexandrov topology} $\scrT_\le$ is closed under arbitrary intersections and unions.

On the other hand every topological space induces a natural pre-order as follows:
\[
x\le_\scrT x'\quad\text{if and only if}\quad x\in \cl\{x'\},
\]
which is called the \emph{specialization pre-order}\index{Specialization pre-order} associated to $(X,\scrT)$. In general, the topology $\scrT_{\le_\scrT}$ induced by $\le_\scrT$ is finer than $\scrT$.
In particular $\bigl\downarrow U$ is not equal to $\cl~U$ in that case.
If we start from an Alexandrov topology $\scrT$, then $\scrT_{\le_\scrT} = \scrT$. The above described duality between topological spaces and pre-orders will be used to treat discretization of topological spaces, cf.\ \cite{BMM} for further details on closure algebras.\index{Closure algebra}
A topological space yields a closure algebra where the Boolean algebra is complete and atomic. This concept can be defined for any Boolean algebra, cf.\ Sect.\ \ref{clandbiclos} and App.\ \ref{binrel}.

\section{Modal operators and modal algebras}
\label{modalstuff}\index{Modal operator}

A third source for closure algebras is given by modal operators and  binary relations  on a set $X$, cf.\ App.\ \ref{binrel}.
% Binary relations are dual to operators on $\sSet(X)$:
% \begin{equation}
%     \label{dualmodbinrel}
%     (x,x')\in \phi  \quad\text{if and only if}\quad x\in \rmPhi\{x'\},
% \end{equation}
% where $\Phi=\phi^{-1}$ is regarded as operator on $\sSet(X)$ and is an example of a completely additive modal operator.
In general, an 
%(not necessarily completely additive) 
operator $\rmPhi\colon \sSet(X) \to \sSet(X)$ is called a \emph{modal operator}\index{Modal operator} if following axioms are satisfied: for all $U,U' \subset X$, 
\begin{enumerate}
    \item [(M1)] (normal) $\rmPhi \varnothing= \varnothing$;
    \item [(M2)] (additive) $\rmPhi (U\cup U') = \rmPhi U \cup \rmPhi U'$;
\end{enumerate} 
 The pair $(\sSet(X),\rmPhi)$ is called\index{Modal algebra} a \emph{modal algebra} and is an example of a Boolean algebra with operators, cf.\ \cite{JonssonTarski}, %\cite{Esakia}, 
 \cite{mcktar}.\index{Boolean algebra with operators}
 A modal algebra  defines a topology  on $X$. Consider the set $\sFwdset(\rmPhi)$ consisting  of subsets $U\subset X$ such that $\rmPhi U\subset U$.\index{$\sFwdset(\Phi)$} 
% and thus
% %
%  a modal algebra  defines a canonical topology $\scrT_\rmPhi$ on $X$. 
\begin{proposition}
\label{modaltop}
    The set $\sFwdset(\rmPhi)$ defines a bounded, distributive lattice with binary operations $\cap$ and $\cup$. Moreover, $\sFwdset(\Phi)$ is closed under arbitrary intersections, i.e.  arbitrary intersections of sets in $\sFwdset(\rmPhi)$ are again in $\sFwdset(\rmPhi)$. 
    %As a matter of fact $\sFwdset(\Phi) = \sFwdset(\cl_\Phi)$.
\end{proposition}
\begin{proof}
  % An alternative way of proving Proposition \ref{modaltop} directly goes as follows.
The subsets $\varnothing$ and $X$ are in $\sFwdset(\rmPhi)$ since $\rmPhi$ is a normal operator.
Finite unions of sets in $\sFwdset(\rmPhi)$ are obviously  again in $\sFwdset(\rmPhi)$ since $\rmPhi$ is additive.
Let $\{U_i\}$ be an arbitrary collection is subsets in $\sFwdset(\rmPhi)$. Then, $\rmPhi\bigl( \bigcap_i U_i\bigr) \subset \rmPhi U_i \subset U_i$ and therefore $\rmPhi\bigl( \bigcap_i U_i\bigr) \subset \bigcap_i U_i$.
\end{proof}
The lattice $\sFwdset(\rmPhi)$ defines a topology $\scrT_\rmPhi$ on $X$ by declaring the subsets in $\sFwdset(\rmPhi)$ to be the closed sets.
%for the topology $\scrT_\rmPhi$.
%
The topology $\scrT_\rmPhi$ given by the lattice $\sFwdset(\rmPhi)$ can also be characterized by an associated closure operator.
\begin{proposition}
    \label{chartopviaclop}
Consider the operator $\cl_\Phi\colon\sSet(X)\to \sSet(X)$ defined by
\begin{equation}
\label{Phiclasop}
\cl_\rmPhi U = \bigcap\bigl\{ U'\supset U\mid U'\in \sFwdset(\rmPhi)\bigr\} \in \sFwdset(\Phi).
\end{equation}
Then, $\cl_\Phi$ is a closure operator and  $\sFwdset(\Phi) = \sFwdset(\cl_\Phi)$, i.e.
the sets in $\sFwdset(\Phi)$ are exactly the closed sets  defined by $\cl_\Phi$.
% see notes for proof
\end{proposition}
\begin{proof}
    The definition of $\cl_\Phi$ is a standard construction of a closure operator satisfying the Kuratowski axioms (K1)-(K4). 
    A set $U\in \sFwdset(\cl_\Phi)$ satisfies $\cl_\Phi U\subset U$ and thus $\cl_\Phi U = U$, i.e. $U\in \sFwdset(\Phi)$ and thus $\sFwdset(\cl_\Phi)\subset \sFwdset(\Phi)$.
    If $U\in \sFwdset(\Phi)$, then $\cl_\Phi U = U$ which yields 
    $\sFwdset(\Phi)\subset \sFwdset(\cl_\Phi)$. Combining both inclusions gives the desired statement.
\end{proof}

The lattice $\sFwdset(\Phi)$ is a complete \emph{co-Heyting algebra}\index{Co-Heyting algebra}
with binary subtraction\index{Binary subtraction} $U - U' := \cl_\Phi (U\smin U')$. 
%
% \begin{remark}
%    % \begin{proof}
%    An alternative way of proving Proposition \ref{modaltop} directly goes as follows.
% The subsets $\varnothing$ and $X$ are in $\sFwdset(\rmPhi)$ since $\rmPhi$ is a normal operator.
% Finite unions of sets in $\sFwdset(\rmPhi)$ are obviously  again in $\sFwdset(\rmPhi)$ since $\rmPhi$ is additive.
% Let $\{U_i\}$ be an arbitrary collection is subsets in $\sFwdset(\rmPhi)$. Then, $\rmPhi\bigl( \bigcap_i U_i\bigr) \subset \rmPhi U_i \subset U_i$ and therefore $\rmPhi\bigl( \bigcap_i U_i\bigr) \subset \bigcap_i U_i$.
% %\end{proof}
% \end{remark}
%
 The specialization pre-order $\le_\rmPhi$ defined by the topology $\scrT_\rmPhi$ (or equivalently the closure operator $\cl_\rmPhi$)
is the transitive reflexive closure\index{Transitive reflexive closure} of the binary relation $\phi\subset X \times X$, referred to as the \emph{specialization relation}\index{Specialization relation}, which is defined by
%as in \eqref{dualmodbinrel}.
%$\sqsubset_\Phi$  defined by:
% \[
% x\sqsubset_\Phi x' \quad\text{if and only if}\quad x\in \rmPhi\{x'\},
% %\footnote{For points $x\in X$ we often write $\rmPhi x$ for $\rmPhi \{x\} $.}
% \]
%Binary relations are dual to operators on $\sSet(X)$:
\begin{equation}
    \label{dualmodbinrel}
    (x,x')\in \phi  \quad\text{if and only if}\quad x\in \rmPhi\{x'\},
\end{equation}
where $\Phi=\phi^{-1}$ is regarded as an operator on $\sSet(X)$ and is an example of a completely additive modal operator, cf.\ \ref{complBoolAlg}.
 % which will be referred to as the \emph{specialization relation}.\index{Specialization relation}
As before $\bigl\downarrow U$ does not  coincide with $\cl_\rmPhi U$ in general. This is due to the fact that a modal operator is not completely additive\footnote{An operator is said to be completely additive if it is closed under arbitrary unions.} in general. For the time being the above described  duality between $(X,\scrT_\rmPhi)$ and  the associated closure algebra $(\sSet(X),\cl_\rmPhi)$ suffices.

In the case $\Phi$ is a \emph{completely additive} modal operator on $\sSet(X)$ the specialization relation recovers 
$\Phi$ and vice versa.
To be more precise,\index{Modal operator!completely additive}
%If we denote the specialization relation  by $\phi^{-1}$ then
% \[
% (x,x')\in \sqsubset_\Phi\quad\text{~~if and only if~~}\quad(x',x)\in\sqsubset_\Phi^{-1}\quad \text{~~if and only if~~}\quad x\in \sqsubset_\Phi^{-1}\{x'\},
% \]
% where for the latter we regard $\sqsubset_\Phi^{-1}$ as modal operator $\Phi =\sqsubset_\Phi^{-1}$, i.e.
% \[
% \Phi U:= \bigcup_{x\in U} \Phi\{x\},\quad \text{where}\quad \Phi\{x\}=\bigl\{ y\in X\mid (x,y) \in \sqsubset_\Phi^{-1}\text{~~for some~~} x\in U\bigr\}.
% \] 
\[
(x,x')\in \phi\quad\text{~~if and only if~~}\quad(x',x)\in\phi^{-1}\quad \text{~~if and only if~~}\quad x\in \phi^{-1}\{x'\},
\]
where for the latter we regard the opposite relation $\phi^{-1}$ as modal operator: %$\Phi =\phi^{-1}$, i.e.
\begin{equation}
    \label{modalcorres12}
\Phi U:= \bigcup_{x\in U} \Phi\{x\},\quad \text{where}\quad \Phi\{x\}=\bigl\{ y\in X\mid (x,y) \in \phi^{-1}\text{~~for some~~} x\in U\bigr\}.
\end{equation}
The transitive reflexive closure $\phi^{\bm{+=}} = \bigcup_{k\ge 0}\phi^k$\index{Transitive reflexive closure}\index{Transitive reflexive closure!of a binary relation|see{Binary relation}}
%\index{Transitive reflexive closure!~~|see{Binary relation}} 
defines a pre-order $(X,\phi^{\bm{+=}})$ and therefore $\Phi^{\bm{+=}} = \bigcup_{k\ge 0}\Phi^k$\index{Transitive reflexive closure! of a modal operator}
is a completely additive closure operator. In particular:
\begin{lemma}
\label{equivsofclos}
    $\Phi^{\bm{+=}} U = \bigl\downarrow U = \cl_\Phi U$.
\end{lemma}
\begin{proof}
    The first equility is a direct consequence of the duality between $\phi$ and $\Phi$.
    As for the second equality we argue as follows.
    If $\cl_\Phi U =U$, then $U\in \sFwdset(\Phi)$, i.e. $\Phi U \subset U$ and thus $\Phi^{\bm{+=}} U=U$. Conversely,
    if $\Phi^{\bm{+=}} U=U$, then $\Phi U\subset U$ and thus $\cl_\Phi U =U$. Therefore, $\Phi^{\bm{+=}} U=U$ if and only if $\cl_\Phi U=U$ and $\Phi^{\bm{+=}} U=U= \cl_\Phi U$.
\end{proof}
In this case a binary relation $\phi\subset X\times X$ is the source of a closure algebra: $\bigl(\sSet(X),\Phi^{\bm{+=}}\bigr)$. The associated topology is 
an Alexandrov topology  and is equivalent to the specialization pre-order, cf.\ \ref{binrel122}.

Let  $(\sSet(X),\Phi)$ and $(\sSet(Y),\Psi)$ be   modal algebras.
As for closure algebras a map $g\colon X\to Y$ yields a Boolean homomorphism
$g^{-1}\colon \sSet(Y) \to \sSet(X)$. The latter is a \emph{semi-homomorphism of modal algebras}\index{Semi-homomorphism!of a modal algebra} if 
$\Phi g^{-1}(V) \subset g^{-1}(\Psi V)$
for all $V\subset Y$ and is expressed in the (not necessarily commutative) diagram:
\begin{equation}\label{semiforMA}
\begin{tikzcd}[column sep=large, row sep=large]
\sSet(X) \arrow[r, "\rmPhi"]                & \sSet(X)                 \\
\sSet(Y)  \arrow[r, "\Psi"] \arrow[u, tail, "g^{-1}"] & \sSet(Y)  \arrow[u, tail, "g^{-1}"']
\end{tikzcd}
\end{equation}
The semi-homomorphism property for Boolean homomorphisms $g^{-1}\colon \sSet(Y) \to \sSet(X)$ is related to continuity of $g$:
\begin{proposition}
    \label{semiMAcont}
    Let  $g^{-1}\colon (\sSet(Y),\Phi) \to (\sSet(X),\Psi)$ be a semi-homomorphism of modal algebras for a map $g\colon X\to Y$, i.e. $\Phi g^{-1}(V) \subset g^{-1}(\Psi V)$
for all $V\subset Y$. Then, $g\colon (X,\scrT_\Phi) \to (Y,\scrT_\Psi)$ is a continuous map. 
\end{proposition}
\begin{proof}
The closure operators $\cl_\Psi$ and $\cl_\Psi$ are given by
\eqref{Phiclasop}. 
%$\cl_\Phi U = \bigcap \bigl\{ U'\supset U \mid \Phi U'\subset U'\bigr\}$ and 
%$\cl_\Psi V =  \bigcap \bigl\{ V'\supset V \mid \Psi V'\subset V'\bigr\}$.
Since $g^{-1}$ is a completely additive Boolean homomorphism we have
\[
g^{-1}\bigl(\cl_\Psi V\bigr) =  \bigcap \bigl\{ g^{-1}(V') \mid V'\supset V,~~ \Psi V'\subset V'\bigr\}.
\]
The fact that $g^{-1}$ is a semi-homomorphism of modal algebras implies,
for $V'$ closed in $Y$, that $\Phi g^{-1}(V') \subset g^{-1}(\Psi V')\subset g^{-1}(V')$
and thus $g^{-1}(V')$ is closed in $X$.
This implies that $\cl_\Phi g^{-1}(V) \subset g^{-1}(\cl_\Psi V)$, which proves that 
$g\colon X\to Y$ is a continuous map.
\end{proof}
\begin{remark}
    \label{charsemihomMA}
    If $\Xi\colon \sSet(Y) \to \sSet(X)$ is completely additive Boolean homomorphism\index{Boolean homomorphism!completely additive} then $\Xi=g^{-1}$ for some map $g\colon X\to Y$, cf.\ Prop.\ \ref{charcompladd} and \cite{JonssonTarski}. The latter is given by $g(x) =y$ for the unique $y\in Y$ such that $x\in \Xi\{y\}$.
    % , which follows from the fact that $\Xi$ is a completely additive Boolean homomorphism.
\end{remark}

\begin{remark}
If $\Phi$ and $\Psi$ are completely additive operators then
    Proposition \ref{semiMAcont} can be proved  using  transitive reflexive closure. Observe that $\Phi^k g^{-1}(V) \subset g^{-1}(\Psi^k V)$ for all $k\ge 0$. 
    Then,
    \[
    \cl_\Phi g^{-1}(V) = \bigcup_{k\ge 0} \Phi^k g^{-1}(V) \subset 
    \bigcup_{k\ge 0}  g^{-1}(\Psi^k V)  = g^{-1}\bigl(\bigcup_{k\ge 0}\Psi^k V\bigr)
    = g^{-1}(\cl_\Psi V),
    \]
    which establishes continuity.
\end{remark}

\begin{remark}
    \label{dualrelclosalg}
    For a pre-order $(X,\le)$ the associated dual given by $\cl_\le U=\bigl\downarrow U$ defines a \emph{complete and atomic and completely additive} closure algebra $(\sSet(X),\cl_\le)$, i.e.\index{Closure algebra!complete and atomic and completely additive} $\sSet(X)$ is a complete and atomic Boolean algebra
    %\footnote{As a matter of fact $\sSet(X)$ is a complete and atomic Boolean algebra.}
    and the closure operator $\cl_\le$ is completely additive.
    The completely additive closure operator $\cl_\le$ retrieves the pre-order. Similarly, a binary relation $\phi\subset X\times X$ yields
    a {complete and atomic, and completely additive} closure algebra $(\sSet(X),\cl_\Phi)$, where $\cl_\Phi = \Phi^{\bm{+=}}$ and
    $\Phi = \phi^{-1}$. The closure operator retrieves the transitive reflexive relation $\phi^{\bm{+=}}$, but not $\phi$ in general. For a complete and atomic, and completely additive modal
    algebra $(\sSet(X),\Phi)$ the operator $\Phi$ retrieves $\phi$.
    These principles play a role in the duality theory of closure algebras and modal algebras, cf.\ App.\ \ref{complBoolAlg}, Sect.\ \ref{clandbiclos} and \cite{mcktar}, \cite{JonssonTarski}. In this text we are mainly interested in closure algebras and their duality.
\end{remark}
 
 \begin{remark}
 \label{derivativermk}
 Closely related to a closure operator is the notion of a derivative operator. A modal operator $\der\colon \sSet(X) \to \sSet(X)$ is called a \emph{derivative operator}\index{Derivative operator} if following axioms are satisfied: for all $U,U' \subset X$, 
\begin{enumerate}
    \item [(D1)] (normal) $\der \varnothing= \varnothing$;
    \item [(D2)] (additive) $\der (U\cup U') = \der U \cup \der U'$;
    \item [(D3)] (quasi-idempotent) $\der(\der U)\subset U\cup \der U$.\footnote{Axiom (D3) is equivalent to from Axiom (K4) via $\cl=\id\cup\der$.}
\end{enumerate} 
 The pair $(\sSet(X),\der)$ is called a \emph{derivative algebra},\index{Derivative algebra} cf.\ \cite{Esakia}, \cite{mcktar}.
A derivative operator defines a closure operator via 
\begin{equation}
    \label{clfromder}
    \cl_\der U:= U\cup \der U.
\end{equation}
For every closure operator there exists a derivative operator such that \eqref{clfromder} holds, e.g.\ take $\der = \cl$.\footnote{The choice of a derivative operator is clearly not unique. An important non-trivial choice is given by the \emph{derived set}, the set of limit points of a set $U$: 
\[
\der U := \bigl\{ x\in X\mid N\cap U\smin\{x\}\neq \varnothing\text{~for all neighborhoods~} N\ni x\bigr\},
\]
which is \emph{not} equal to $\cl~U$ in general.
}
\end{remark}

\section{Discretization of topology}\index{Discretization!of topology}
\label{disc-top}

We start with a general description of discretization of topology in terms of closure algebras. This procedure can then be used for the same purpose in the setting of modal algebras. These techniques play a role in the discussion of treating dynamics in terms of topology.

\subsection{Closure algebra discretization}\index{Closure algebra discretization}
\label{closalgdisc12}
Discretization of a topological space $(X,\scrT)$ in the spirit of closure algebras is
an \emph{injective}  Boolean homomorphism\footnote{\label{footinj} In the context of discretizaztion we consider injective homomorphisms with finite range, i.e. finite subalgebras. The theory can be phrased in more general terms via homomorphisms.}   $|\cdot|\colon \sSet(\cX)\rightarrowtail \sSet(X)$, where $\sSet(\cX)$ is the
powerset of a finite set $\cX$, in combination with an appropriately chosen \emph{discrete} closure operator $\ccl\colon \sSet(\cX)\to \sSet(\cX)$ such that $\cl|\cU|\subset |\ccl \cU|$ for all $\cU\in \sSet(\cX)$, in which case $|\cdot|\colon \sSet(\cX)\to \sSet(X)$ is a {semi-homomorphism} of closure algebras. 
We refer to the elements $\xi\in \cX$ as \emph{cells}.\index{Cell}
%
% {\color{blue} a choice of a finite subalgebra $\sSet(\cX)$, represented} as the powerset of a finite set $\cX$, 
% and an appropriate closure operator $\cl\colon \sSet(\cX)\to \sSet(\cX)$, and an injective semi-homomorphism $|\cdot|\colon \sSet(\cX)\to \sSet(X)$, i.e., $\cl|\cU|\subset |\cl~ \cU|$ for all $\cU\in \sSet(\cX)$, cf.\ \cite{BMM}. 
%
%\marginpar{\footnotesize {\color{red} Is this necessarily injective?} Yes, since we consider a sub-algebra.} 
%
This discretization is captured by the following  diagram in the category of closure algebras and semi-homomorphisms:\footnote{In the above mentioned  category of closure algebras we employ the morphisms are semi-homomorphisms of closure algebras.}
\begin{equation}\label{dia:disctop}
\begin{tikzcd}[column sep=large, row sep=large]
\sSet(X) \arrow[r, "\cl"]                & \sSet(X)                 \\
\sSet(\cX)  \arrow[r, "\ccl"] \arrow[u, tail, "|\cdot|"] & \sSet(\cX)  \arrow[u, tail, "|\cdot|"']
\end{tikzcd}
\end{equation}
\noindent
The closure algebra $(\sSet(\cX),\ccl)$ defines a finite topology on $\cX$ by declaring $\cU$ closed if and only if $\ccl\cU =\cU$.  As $\cX$ is a finite set any topology on $\cX$ is necessarily an {Alexandrov topology}\index{Alexandrov topology},
 and is equivalent to the specialization pre-order $(\cX,\le)$\footnote{The Alexandrov topology is $T_0$ if and only if the specialization pre-order is
a partial order.}  defined by
\begin{equation}
    \label{spec1}
\xi\le \xi' \quad\text{if and only if}\quad  \xi\in \ccl\{\xi'\},
\end{equation}
cf.\ App.\ \ref{sec:birkhoff}.
The discretization described above allows us to regard $\cX$ as an algebra $\bigl(\sSet(\cX),\ccl\bigr)$, as a pre-order $(\cX,\le)$, and as topological space $\cX$. 
%$(\cX,\scrT_\le)$. 
In general we do not differentiate between the specialization pre-order and the Alexandrov topology, and we refer to the triple $(\cX,\ccl,|\cdot|)$ as a closure algebra discretization, or \emph{CA-discretization} of $(X,\scrT)$.\index{CA-discretization}
\begin{definition}\index{Boolean CA-discretization}\index{CA-discretization!Boolean}
\label{regularCA}
A CA-discretization is called \emph{Boolean} if $\cl|\cU| = |\ccl\cU|$ for all $U\in \sSet(\cX)$.
\end{definition}
This definition in particular implies that \eqref{dia:disctop} is commutative in which case the  map $|\cdot|\colon \sSet(\cX)\to \sSet(X)$ is  a {homomorphism} of closure algebras, cf.\ \cite{BMM}.
For a pre-order
$(\cX,\le)$ we  define\index{Down-set} 
a \emph{down-set} $\cU\subset \cX$ by the property: $\xi'\in \cU$, $\xi\le \xi'$, then $\xi\in \cU$. The set of down-sets is denoted by
 $\sO(\cX,\le)$\footnote{If there is no ambiguity about the pre-order we write $\sO(\cccX)$ for short. Another common notation is $\Invsetpl(\le)$.}
which by construction  is  a finite distributive lattice with binary operations $\cap $ and $\cup$, cf.\ App.\ \ref{posets}, \ref{sec:birkhoff}} and \cite{davey:priestley}. %.q.v.\ Appendix \ref{posets} and 
Note that $\sO(\cX,\le) = \sFwdset(\ccl)$, where $\ccl$ is the associated closure operator on $\sSet(\cX)$.
 In a similar fashion we can define the lattice of \emph{up-sets} $\sU(\cX,\le)=\sFwdset(\bccl)$,\index{Up-set}
where $\bccl\,\cU = \st \cU$
 is the\index{Conjugate closure operator}\index{Star}\index{Closure operator!conjugate} \emph{conjugate closure operator}, cf.\ App.\ \ref{operators} and \cite{JonssonTarski}.
% A subset $\cV\subset \cX$ is an {up-set} if  $\eta\ge \xi$, $\xi \in \cV$  implies that $\eta\in \cV$. %
% Equivalently, $\cV$ is an up-set if and only if $\cV = \cU^c$ for some down-set $\cU$.\footnote{Up sets can be defined by an interior operator: if $\cU$ is closed then $\cU^c = (\cl~\cU)^c = \Int \cU^c$.}  
The join-irreducible elements of both lattices are characterized as
\[
\ccl~\xi = \big\downarrow\xi := \setof{\xi'\mid \xi' \leq \xi} \quad\text{and}\quad \st \xi = \big\uparrow\xi := \setof{\xi'\mid \xi \leq \xi'},\quad \xi\in \cX,
\]
which are called the \emph{principal} down-sets and up-sets respectively.\index{Convex set}
Intersections of up-sets and down-sets are the \emph{convex sets} in $(\cX,\le)$\footnote{The convex sets in the pre-order $(\ccX,\le)$  are  the \emph{locally closed} subsets in $\ccX$ as topological space.}
%q.v.\ Appendix \ref{sec:prelims:lat}.
and are denoted by $\sCo(\cX,\le)$ which is a meet-semilattice with respect to $\cap$.

\begin{remark}
\label{partialequiv}
For a pre-order $(\cX,\le)$ we can define the \emph{partial equivalence classes}\index{Partial equivalence class} by $\xi\sim \xi'$ if and only if
$\xi\le \xi'$ and $\xi'\le \xi$. The set of partial equivalence classes is denoted by $\cX/_\sim$. The latter is a poset via $[\xi]\le [\eta]$ if only only if $\xi\le \eta$. This yields the natural order-preserving projection
$\cX \xrightarrow{\pi} \cX/_\sim$ defined by $\xi \mapsto [\xi]$.
By construction $\sO(\cX,\le) \cong \sO(\cX/_\sim)$.
The map $X\twoheadrightarrow\cX\twoheadrightarrow\cX/_\sim$ is also a discretization.
%as well as a grading by the poset $\cX/_\sim$. 
The associated CA-discretization $\bigl(\cX/_\sim, \ccl,|\cdot| \bigr)$ is defined by
$\cl [\xi] = \big\downarrow [\xi]$ and $\bigl|[\xi]\bigr| = \bigcup_{\xi'\in [\xi]}|\xi'|$ and yields a $T_0$ Alexandrov topology.\index{Alexandrov topology}
\end{remark}

%%%%%%%%%%%%
%%%%%%%%%%%%%%%%

\subsection{Modal algebra discretization}\index{Modal algebra discretization}
Let $(X,\scrT_\rmPhi)$ be a topological space defined by a modal operator 
$\rmPhi\colon \sSet(X)\to \sSet(X)$. The associated closure algebra is $(\sSet(X),\cl_\rmPhi)$ with closure operator defined in \eqref{Phiclasop}.
Discretization in terms of closure algebras  can always be formulated in terms of modal algebras.
Consider a discrete modal operator\index{Modal operator!discrete} $\bmPhi\colon\sSet(\cX) \to \sSet(\cX)$.
As explained in Section \ref{modalstuff} we obtain a topological space $(\cX,\scrT_\bmPhi)$, whose discrete topology can be described by either the  specialization pre-order $\le_\bmPhi$ or the associated closure operator $\ccl_\bmPhi$.
 A triple $(\cX,\bmPhi,|\cdot|)$
 %, with a (discrete) modal operator $\bmPhi\colon\sSet(\cX) \to \sSet(\cX)$,  
 is a modal algebra discretization, or \emph{MA-discretization}\index{MA-discretization}\index{Modal algebra discretization} of $(X,\scrT_\rmPhi)$ if 
\begin{enumerate}
\item [(MA)] $\rmPhi |\cU| \subset |\bmPhi \cU|$ for all $\cU\in \sSet(\cX)$,
\end{enumerate}
 which is equivalent to the condition that $|\cdot|$ is a semi-homomorphism of modal algebras 
 and is expressed in the diagram:
\begin{equation}\label{dia:disctop2}
\begin{tikzcd}[column sep=large, row sep=large]
\sSet(X) \arrow[r, "\rmPhi"]                & \sSet(X)                 \\
\sSet(\cX)  \arrow[r, "\bmPhi"] \arrow[u, tail, "|\cdot|"] & \sSet(\cX)  \arrow[u, tail, "|\cdot|"']
\end{tikzcd}
\end{equation}
\begin{proposition}
\label{MAtoCA}
Let  $(\cX,\bmPhi,|\cdot|)$ be a MA-discretization of $(X,\scrT_\rmPhi)$. Then, the induced closure operator $\ccl_\bmPhi\colon \sSet(\cX) \to \sSet(\cX)$ given in \eqref{Phiclasop} defines a CA-discretization $(\cX,\ccl_\bmPhi,|\cdot|)$ of $X$.
In particular, the closure operator $\ccl_\bmPhi$ is given by $\ccl_\bmPhi =\bigcup_{k\ge 0} \bmPhi^k$.\footnote{The expression $\bmPhi^{\bm{+=}}:=\bigcup_{k\ge 0} \bmPhi^k$ is the transitive reflexive closure of $\bmPhi$, cf.\ App.\ \ref{binrel122}.\index{Transitive reflexive closure}}
\end{proposition}
\begin{proof}
Since $|\cdot|$ is a completely additive Boolean homomorphism it is the inverse image of a map $X\to \cX$, cf.\ Prop.\ \ref{charcompladd} and Rem.\ \ref{charsemihomMA}. By Axiom (MA) and Proposition \ref{semiMAcont}
the latter is continuous and thus
% By \eqref{Phiclasop} $\cl_\Phi |\cU| = \bigcap \bigl\{ U'\supset |\cU| \mid \Phi U'\subset U'\bigr\}$. Similarly,
% $\ccl_\bmPhi \cU =  \bigcap \bigl\{ \cU'\supset \cU \mid \bmPhi \cU'\subset \cU'\bigr\}$ and since $|\cdot|$ is a completely additive Boolean homomorphism we have
% \[
% |\ccl_\bmPhi\cU| =  \bigcap \bigl\{ |\cU'| \mid \cU'\supset \cU,~~ \bmPhi \cU'\subset \cU'\bigr\},
% \]
% By assumption (MA) we have, for $\cU'$ closed in $\cX$, that $\Phi|\cU'| \subset |\bmPhi \cU'|\subset |\cU'|$
% and thus $|\cU'|$ is closed in $X$.
% This implies that 
$\cl_\Phi |\cU| \subset |\ccl_\bmPhi \cU|$ for all $\cU\subset \cX$,
which proves that $(\cX,\ccl_\bmPhi,|\cdot|)$ is a CA-discretization of $X$.
Since a finite modal operator is completely additive the formula for $\bmTheta$ follows from Lemma \ref{equivsofclos}.
%
% Denote the infinite union by $\bmTheta =  \bigcup_{k\ge 0} \bmPhi^k$.
% Since $\cX$ is a finite set there only exist finitely many $\Phi^k\cU$ for the finitely many subsets $\cU\subset \cX$.
% Therefore, $\bmTheta = \bigcup_{k=0}^{k^*}\bmPhi^k$ for some $k^*$.
% By additivity we obtain $\bmPhi^\ell\bigl( \bigcup_{k=0}^{k^*}\bmPhi^k \bigr) = \bigcup_{k=\ell}^{k^*+\ell}\bmPhi^k$
% and thus $\bmTheta^2 = \bmTheta$. By definition $\id \subset \bmTheta$ and therefore $\bmTheta$ is a closure operator on $\cX$. It remains to show that $\bmTheta = \ccl_\bmPhi$.
% If $\bmPhi \cU \subset \cU$, i.e. $\cU$ is closed. Then, $\bmTheta \cU =  \bigcup_{k\ge 0} \bmPhi^k\cU\subset \cU
% \subset \bmTheta\cU$ and thus $\bmTheta\cU=\cU$.
% Conversely, if $\bmTheta\cU=\cU$, then $\bmPhi^k\cU\subset \cU$ for all $k$. In particular, $\bmPhi\cU\subset \cU$ and thus $\ccl_\bmPhi\cU=\cU$. We conclude that $\cU$ is $\bmPhi$-closed if and only if $\cU$ is $\bmTheta$-closed and therefore $\ccl_\bmPhi \cU = \cU =\bmTheta \cU$, which concludes the proof.
%
%This induces an
%associated closure
%operator $\cl\colon \sSet(\cX) \to \sSet(\cX)$ 
%defined by $\cl = \id \cup \der$ and which satsifies 
%$\cl |\cU| \subset |\cl~ \cU|$, for all $\cU\in \sSet(\cX)$. This shows that
%$(\cX,\cl,|\cdot|)$ is a CA-discretization. 
%% This in turn can be used to define a continuous map
%% $\grd\colon X \to (\cX,\le)$. 
\end{proof}

The advantage of using a discrete modal operator on $\sSet(\cX)$ is a refinement of the specialization pre-order on $\cX$ in terms of specialization relation given by:\index{Specialization relation}
% \begin{equation}
%     \label{spec111}
% \xi \sqsubset_\bmPhi\xi' \quad\text{if and only if}\quad  \xi\in \bmPhi\{\xi'\}.
% \end{equation}
\begin{equation}
    \label{spec111}
(\xi,\xi')\in \bmphi \quad\text{if and only if}\quad  \xi\in \bmPhi\{\xi'\}.
\end{equation}
Observe that $\bmphi$ need not be transitive and is not reflexive in general. Moreover, the  transitive reflexive closure $\bmphi^{\bm{+=}}$\index{Transitive reflexive closure}  is the specialization pre-order of $\ccl_\bmPhi = \bmPhi^{\bm{+=}}$ in \eqref{spec1}, cf.\ Sect.\ \ref{modalstuff}. 
\begin{remark}
As for pre-orders the notion of down-set for a specialization relation is formulated as: $\cU\subset \cX$ is a down-set\index{Down-set} for $\bmPhi$ if  $\xi'\in \cU$ and $(\xi,\xi')\in \bmphi$, then $\xi'\in\cU$.
Note that $\sO(\cX,\le_\bmPhi) = \sO(\cX,\bmphi) = \sFwdset(\bmPhi)$.
\end{remark}
\begin{remark}
Modal algebra discretization using a derivative operator will be referred to as \emph{DA-discretizations}.\index{DA-discretization}
Binary relations coming from derivative operators will be called \emph{weakly transitive},\index{Weakly transitive relation}\index{wK4-relation} or {wK4}, cf.\ \cite{Esakia}. If $\der$ satisfies the stronger sub-idempotency axiom in  (K3), i.e.  $\der(\der U)\subset \der U$, 
we say that $\der$ is a \emph{strong
derivative operator}.\index{Derivative operator!strong} This is often the case in dynamics in which instance the associated  specialization relation\index{Specialization relation} is transitive (a K4-order).
\end{remark}

\section{Discretization maps}\label{sec:gradings}\index{Discretization map}
% Let $(X,\scrT)$ be a topological space.

Let $(X,\scrT)$ be a topological space. 
A \emph{discretization map}\index{Discretization map} on $X$ is a \emph{surjective} map\footnote{\label{footinj2} As pointed in Footnote \ref{footinj}, in the context of discretization the maps are chosen to be surjective and are dual to injective closure algebra homomorphisms.}
\begin{equation}
    \label{spec0}
\grd\colon X \xtwoheadrightarrow{~} \cX,
\end{equation}
where $\cX$ is a finite set. Since unions and intersections are preserved under preimage, $\grd^{-1}\colon \sSet(\cX)\to \sSet(X)$, is an injective Boolean homomorphism, cf.\ Footn.'s \ref{footinj} and \ref{footinj2}.  In this context, we say that $\grd^{-1}$ is an \emph{evaluation map}\index{Evaluation map} and we use the notation: $|\cU|:=\grd^{-1} \cU$ for $\cU\in \sSet(\cX)$.
\subsection{Topology consistent pre-orders}\index{Topology consistent pre-order}
\label{topconspre}
As pointed out above any topology on $\cX$ is equivalent to its specialization pre-order $(\cX,\leq)$.  We say that $\leq$ is a \emph{$\scrT$-consistent pre-order}\index{$\scrT$-consistent pre-order} on $X$ with respect to $\grd$ if $\grd\colon (X,\scrT) \twoheadrightarrow (\cX,\leq)$ is continuous, which is equivalent to the condition that
$\cl~ \grd^{-1}\cU \subset \grd^{-1}\bigl\downarrow \cU$, for all $\cU\subset \cX$, i.e. $\cl|\cU|\subset |\ccl~\cU|$, where
$\ccl~\cU = \bigl\downarrow \cU$.  Consequently, when $\leq$ is $\scrT$-consistent, the triple $(\cX,\ccl,|\cdot|)$ is a CA-discretization, and $\disc\colon (X,\scrT) \twoheadrightarrow (\cX,\le)$ is a \emph{continuous} discretization map.   
If $\grd$ is a continuous \emph{open} map then  $|\cdot|=\grd^{-1}$ is a homomorphism of closure algebras.\footnote{By \eqref{spec1} $\xi\le \xi'$ if and only if $\xi\in \ccl~\xi'$ which is equivalent to $\{\xi\}\subset \ccl~ \xi'$. In the case that $|\cdot|$ is an injective homomorphism of closure algebras we obtain the equivalent statement $|\xi|\subset |\ccl~\xi'| = \cl|\xi'|$. Here we use the convention $\ccl\{\xi\} = \ccl~\xi$.}
A  specialization relation\index{Specialization relation} $\bmphi\subset \cX\times\cX$ is $\scrT$-consistent if %and only if
the reflexive transitive closure is $\scrT$-consistent. 
%

% is necessarily an \emph{Alexandrov topology}, and is equivalent to a pre-order $(\cX,\le)$, called the specializatiion pre-order.
% In particular,
% \begin{equation}
%     \label{spec1}
% \xi\le \xi' \quad\text{if and only if}\quad  \xi\in \cl\{\xi'\},
% \end{equation}
% where
%  $\cl\colon \sSet(\cX)\to \sSet(\cX)$ is the \emph{closure operator} defined as the closure of a subset and where
%  $\sSet(\cX)$ is the Boolean algebra of subsets of $\cX$,
% cf.\ Appendix \ref{sec:birkhoff} and \ref{binrel}.  
%  We say that $\leq$ is a \emph{topology consistent preorder} on $X$ if $\grd$ is continuous. The continuity of $\grd$ is equivalent to the condition that 
% $\cl~ \grd^{-1}(\cU) \subset \grd^{-1}(\cl~ \cU)$, for $\cU\subset \cX$.  In general, we do not differentiate between the specialization pre-order and the Alexandrov topology, and we refer to the triple $(\cX,\cl,|\cdot|)$ as an \emph{CA-discretization} of $X$.

Conversely, given a CA-discretization $(\cX,\ccl,|\cdot|)$, we define $\grd\colon X\twoheadrightarrow \cX$ via
\begin{equation}
\label{induceddisc}
\grd(x) = \xi \text{ where } x\in |\xi|,
\end{equation}
which is a well-defined map since $\bigcup_\xi |\xi|=X$ and $|\xi|$ and $|\xi'|$ are mutually disjoint for all $\xi\neq\xi'$, cf.\ Rem.\ \ref{charsemihomMA}. By Birkhoff duality the injectivity of $|\cdot|$ implies the surjectivity of $\disc$, cf.\ Thm.\ \ref{thm:birkhoff} and \cite[Thm.\ 5.19]{davey:priestley}.  Moreover, since $\cl|\cU| \subset |\ccl~ \cU|$, $|\cdot|=\grd^{-1}$, the map
%$$\cl~ \grd^{-1}(\cU) = \cl|\cU| \subset |\cl~ \cU| = \grd^{-1}(\cl~ \cU)$, 
$\grd$ is a continuous map, thus $\leq$ is $\scrT$-consistent.
 For a discretization of a subset $X$ of the plane with the associated face partial order $\le$ in  Fig.\ \ref{morsetes23}[left 1 and 2] the pre-order $(\cX,\le)$ is a discretization of the topology $\scrT$ of $X$. The map $\grd\colon X\twoheadrightarrow \cX$ assigns a vertex, edge or square to any point in $X$.  For closure algebras, $\disc$ is induced by sending a point $x$ to the cell in which it is contained, as in Equation \eqref{induceddisc}.
We can summarize these considerations as follows:
\begin{proposition}
 A surjective, continuous discretization map $\disc\colon (X,\scrT)\xtwoheadrightarrow{} (\cX,\le)$ is equivalent to a CA-discretization $(\cX,\ccl,|\cdot|)$ with $(\sSet(\cX),\ccl)$ dual\footnote{In terms of Boolean algebras with operators, cf.\ App.\ \ref{sec:birkhoff} the pre-order is the dual to the closure algebra and vice versa. This duality can also be understood in terms of (co)-Heyting algebras.} to $(\cX,\le)$ and $|\cdot|=\disc^{-1}$.
\end{proposition}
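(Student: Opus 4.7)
The plan is to verify the two directions of the equivalence, noting that almost all of the ingredients have been assembled in the preceding discussion; the proof is essentially a matter of organizing them.

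For the forward direction, I start with a surjective continuous map $\disc\colon (X,\scrT) \twoheadrightarrow (\cX,\le)$ and set $|\cdot|:=\disc^{-1}$ on $\sSet(\cX)$. Since preimage always preserves unions, intersections, and complements, $|\cdot|$ is a Boolean homomorphism $\sSet(\cX)\to \sSet(X)$; surjectivity of $\disc$ dualizes to injectivity of $|\cdot|$ via Birkhoff duality (Theorem~\ref{thm:birkhoff}). The finite pre-order $(\cX,\le)$ is, as recalled in Section~\ref{CA-disc}, equivalent to the Alexandrov topology on $\cX$ whose closure operator I denote by $\cl$, so that $(\sSet(\cX),\cl)$ is the closure algebra dual to $(\cX,\le)$ in the sense of \eqref{spec1}. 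Continuity of $\disc$ translates under $|\cdot|=\disc^{-1}$ into the semi-homomorphism condition $\cl|\cU|\subset |\cl~\cU|$ for every $\cU\in\sSet(\cX)$, which is exactly the defining inequality of a CA-discretization in diagram \eqref{dia:disctop}. Hence $(\cX,\cl,|\cdot|)$ is a CA-discretization.

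For the reverse direction, I begin with a CA-discretization $(\cX,\cl,|\cdot|)$ where $(\sSet(\cX),\cl)$ is dual to $(\cX,\le)$, and define $\disc\colon X\twoheadrightarrow \cX$ by \eqref{induceddisc}. As noted in the paragraph following \eqref{induceddisc}, this is well defined because the sets $\{|\xi|\}_{\xi\in\cX}$ are mutually disjoint and cover $X$, both consequences of $|\cdot|$ being a Boolean homomorphism applied to the atoms of $\sSet(\cX)$. Surjectivity of $\disc$ is exactly the Birkhoff dual statement of injectivity of $|\cdot|$. By construction $\disc^{-1}(\cU)=|\cU|$, so the semi-homomorphism inequality $\cl|\cU|\subset |\cl~\cU|$ translates back into $\cl~\disc^{-1}(\cU)\subset \disc^{-1}(\cl~\cU)$, which is continuity of $\disc\colon (X,\scrT)\to (\cX,\le)$.

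Finally, to conclude the equivalence rather than a mere pair of implications, I would verify that the two constructions are mutually inverse: starting from $\disc$, building the CA-discretization, and then reading off the discretization map via \eqref{induceddisc} returns the original $\disc$ (since $\disc(x)=\xi \iff x\in\disc^{-1}(\xi)=|\xi|$); and starting from $(\cX,\cl,|\cdot|)$, building $\disc$ via \eqref{induceddisc} recovers $|\cdot|=\disc^{-1}$ tautologically, while the closure operator $\cl$ is fixed by the dual pre-order $(\cX,\le)$. The only delicate step is the correct use of Birkhoff duality to pass between surjectivity of $\disc$ and injectivity of $|\cdot|=\disc^{-1}$, which relies on the fact that the atoms of $\sSet(\cX)$ correspond to the points of $\cX$; this is the one place where the finiteness of $\cX$ is essential and is precisely the content of Theorem~\ref{thm:birkhoff} in Appendix~\ref{sec:birkhoff}.
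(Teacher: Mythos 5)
Your proof is correct and follows essentially the same route as the paper: the paper states the Proposition as a summary of the discussion immediately preceding it, which already establishes both directions (continuity translating to the semi-homomorphism inequality, the definition of $\disc$ via \eqref{induceddisc} being well defined and surjective, and Birkhoff duality relating surjectivity of $\disc$ to injectivity of $|\cdot|$). The only difference is that you explicitly check that the two constructions are mutually inverse, which the paper leaves implicit, but this is an organizational rather than a substantive departure.
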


A pre-order $\le$ is a \emph{$\scrT$-co-consistent}\index{$\scrT$-co-consistent pre-order} with respect to $\grd$ if $\grd\colon X \to (\cX,\ge)$ is continuous, where $\ge $ is the opposite pre-order. This is equivalent to the condition 
$\cl~ \grd^{-1} \cU\subset \grd^{-1}\bccl\, \cU$, for all $\cU\subset \cX$,
where $\bccl\,\cU:=\st \cU$ is the \emph{conjugate closure operator}, cf.\ \cite{mcktar}. In terms of realization this reads $\cl|\cU|\subset |\st\cU|$. 
% $\Int \grd^{-1}(\cU) \supset \grd^{-1}(\cl~ \cU)$, for $\cU\subset \cX$, i.e. $\Int|\cU|\supset |\cl~\cU|$.  
If $\cU$ is $\bccl$-closed, i.e. $\bccl\,\cU = \st \cU = \cU$ which implies that $\cU$ is open in $(\cX,\le)$, then the $\scrT$-co-consistency implies that $|\cU|$ is a closed set. 
Indeed, $\cl |\cU| \subset |\st\cU|=|\cU| \subset \cl |\cU|$ and thus $\cl|\cU|=|\cU|$. Moreover, the closed sets $\cU\in \sO(\cX,\le)$ for a $\scrT$-co-consistent pre-order are open under realization. If $\cU\in\sO(\cX,\le)$ %(closed in $(\cX,\le)$), 
then $\ccl~\cU=\cU$ and therefore $\cU^c$ is open which implies that $\st\cU^c=\cU^c$. By the previous $|\cU^c| = |\cU|^c$ is closed and thus
$|\cU|$ is open.

\begin{remark}
\label{arbdiscmap}
If we allow the evaluation map $|\cdot|\colon \sSet(\cX) \to \sSet(X)$ to be an arbitrary homomorphism then the map $\disc$  defined 
in \eqref{induceddisc} is still valid
% as follows:
% \begin{equation}
%     \label{gendiscmap}
% \disc(x) = \min \bigl\{\xi\in \cX\mid x\in |\xi| \bigr\},
% \end{equation}
% which is derived from the Birkhoff duality in Thm.\ \ref{thm:birkhoff}, cf.\ \cite[Thm.\ 5.19]{davey:priestley}.
% Eqn.\ \eqref{gendiscmap} 
which allows us to treat the theory of discretization  with arbitrary, not necessarily surjective, continuous maps $\disc\colon X\to \cX$.
In this paper we restrict to  surjective discretization maps unless stated otherwise.
\end{remark}

\begin{remark}\label{indiscrete}
For a discretization map $\grd\colon X\twoheadrightarrow \cX$, there is always a $\scrT$-consistent pre-order.  Namely, the trivial, or indiscrete topology on $\cX$: $\cl\,\varnothing=\varnothing$ and $\cl\,\cU = \cX$ for all $\cU\neq\varnothing$, i.e. $\le$ is an equivalence: $\xi\le \xi'$ and $\xi'\le \xi$ for all $\xi,\xi'\in \cX$.
% given by $\cl_0$ with $\cl_0\varnothing=\varnothing$ and $\cl_0\cU=\cX$ for all $\cU\neq \varnothing$,
% The finest topology for which $\grd$ is continuous is given by the final topology on $\cX$ with respect to $\grd$.
\end{remark}

\subsection{Filtering and grading}
\label{filtandgrad}

If $\le$ is a $\scrT$-consistent pre-order for a discretization map $\grd$, then by \eqref{spec1} the down-sets $\cU$ for $\le$ correspond to the closed sets in $\cX$ and therefore by the continuity of $\grd$ we have that $\grd^{-1}\cU \in \scrC(X,\scrT)$,\index{$\scrC(X)$}
where $\scrC(X)=\scrC(X,\scrT)$\footnote{If there is no ambiguity about the topology we write $\scrC(X)$ for short. The same applies to the open sets $\scrO(X)$.} denotes the closed sets in $X$.\index{$\scrO(X)$} 
This yields the lattice homomorphism
\begin{equation}
    \label{spec2}
    \grd^{-1}\colon \sO(\cX,\le) \longrightarrow \scrC(X) 
    \xrightarrow[]{\subset} \sSet(X).
\end{equation}
In this setting we refer to $\grd^{-1}$ as an $\sO(\cX/_\sim)$-\emph{filtering} on $X$ and  use the filtering notation:\index{Filtering!of a space}
$
F_\cU X := \grd^{-1}\cU = |\cU|.
$
% The latter may also be regarded as Boolean homomorphism
% $\grd^{-1}\colon \sSet(\cX) \to \sSet(X)$. In this setting we think of $\grd^{-1}$ as \emph{evaluation map} and we use the notation:
% $
% \grd^{-1}(\cU) =: |\cU|.
% $
%As pointed out before a subset $\cU\subset \cX$ is \emph{closed} if $\cU$ is a down-set of $(\cX,\le)$. 
Similarly, a subset $\cU\subset \cX$ is {\em open} if it is an up-set of $\cX$ and the image under $\grd^{-1}$ are open sets in $X$.
Dual to the $\sO(\cX/_\sim)$-filtering is the \emph{grading} $X = \bigcup_{\xi\in [\xi]} G_{[\xi]} X$ with the property that $G_{[\xi]}X\neq \varnothing$ for all $\xi\in \cX$, cf.\ App.\ \ref{gradfilt}. The latter is an $\cX/_\sim$-grading given by $G_{[\xi]}X\xmapsto{\grade} [\xi]$, where $\bigl\{G_{[\xi]}X \mid[\xi]\in (\cX/_\sim,\le) \bigr\}$ is an ordered tessellation.\index{Grading!of a space} 
The following scheme shows the duality between CA-discretizations and continuous discretization maps, and between filterings and gradings, cf.\ App.\ \ref{gradfilt}:
% The associated graded tessellation is 

% % \grade and ftl still

% and $\disc\colon X\to (\cX,\le)$ may be regarded as grading with notation $x\mapsto G_\xi X := \disc(x)$. Most of the gradings in this paper are in the setting of finite posets.
% \[
% \left.\begin{array}{ccc}\displaystyle{\text{CA-discretization} \atop (\cX,\|\cdot|,\cl)} &\quad \Longrightarrow &\quad |\cdot|\colon \sO(\cX,\le) \to \sSet(X) \\
% \downarrow &  & 0 \\
% 0 & 0 & 0\end{array}\right.
% \]
%\[
%\begin{diagram}
%\node{\displaystyle{\wbinom{(\cX,|\cdot|,\cl)}{ \text{CA-discretization}}}}\arrow{s,<>}\arrow[2]{e}\node{~}\node{\displaystyle{\wbinom{|\cdot|\colon \sO(\cX,\le) \to \sSet(X)}{ \text{filtering} }}}\arrow{s,<>}\\
%\node{\displaystyle{\wbinom{\disc\colon (X,\scrT)\twoheadrightarrow (\cX,\le)}{\text{ continuous discretization}}}}\arrow[2]{e}\node{~}\node{\displaystyle{\wbinom{X = \bigcup_{[\xi]}G_{[\xi]} X}{\text{grading}}}}
%\end{diagram}
%% \begin{diagram}
%% \node{\displaystyle{(\cX,|\cdot|,\cl) \atop \text{CA-discretization}}}\arrow{s,<>}\arrow[2]{e}\node{~}\node{\displaystyle{|\cdot|\colon \sO(\cX,\le) \to \sSet(X) \atop \text{filtering} }}\arrow{s,<>}\\
%% \node{\displaystyle{\disc\colon (X,\scrT)\twoheadrightarrow (\cX,\le) \atop \text{ continuous discretization}}}\arrow[2]{e}\node{~}\node{\displaystyle{X = \bigcup_{[\xi]}G_{[\xi]} X \atop \text{grading}}}
%% \end{diagram}
%\]

\[
\begin{tikzcd}[column sep=large, row sep=large]
{\displaystyle{\wbinom{(\cX,|\cdot|,\cl)}{ \text{CA-discretization}}}} \arrow[r] \arrow[d] & {\displaystyle{\wbinom{|\cdot|\colon \sO(\cX,\le) \to \sSet(X)}{ \text{filtering} }}} \arrow[d] \\
{\displaystyle{\wbinom{\disc\colon (X,\scrT)\twoheadrightarrow (\cX,\le)}{\text{ continuous discretization}}}} \arrow[r] \arrow[u] & {\displaystyle{\wbinom{X = \bigcup_{[\xi]}G_{[\xi]} X}{\text{grading}}}} \arrow[u]
\end{tikzcd}
\]
where $G_{[\xi]}X=\disc^{-1}[\xi]$.
In general a grading on a topological space yields a discretization which is not necessarily continuous.\footnote{Let $X=\bigcup_{p\in \sP} G_pX$ be a $\sP$-graded decomposition of $X$.
%$\grade\colon \{G_pX\neq\varnothing\} \hookrightarrow \sP$. 
Then, the map $\grade\colon X\to \sP$, defined by $\grade(x) =
\grade(G_pX)=p$ for all $x\in G_pX$, is a discretization map in the sense of Rmk.\ \ref{arbdiscmap}. By restricting to the range one obtains a surjective discretization map, cf.\ App.\ \ref{gradfilt}.} 
%From a filtering one can reconstruct a CA-discretization since all closed sets are given. 
%For the filtering
This implies that the above diagram do not necessarily point in the opposite direction.
Finally we define a class of discretization maps which are favorable for using homology theories.

\begin{definition}
\label{regudisc}
A discretization map $\disc\colon X\twoheadrightarrow \cX$ is \emph{natural}\index{Discretization map!natural} %\emph{regular} 
if it is continuous and  the associated filtering $\disc^{-1}\colon \sO(\cX,\le)\to  \sSet(X)$ consists of mutually\index{Good pair} good pairs.\footnote{Recall that a pair $(X,A)$, $A\subset X$ closed, is a \emph{good pair}  if 
$A$ is a deformation retract of a neighborhood in $X$, q.v.\ \cite[Thm.\ 2.13]{Hatcher}.}
\end{definition}

%%%%%%%%%%%%%
%%%%%%%%%%%%%%%%

\section{Bi-topological spaces and discretization}\index{Bi-topological space}\index{Bi-topological space!discretization of}
\label{bi-top}
A triple $(X,\scrT,\scrT')$ is called a \emph{bi-topological space}\index{Bi-topological space} if the factors $(X,\scrT)$ and $(X,\scrT')$ are well-defined topological spaces. 
The associated closure algebra for $(X,\scrT,\scrT')$ is the Boolean algebra with operators\index{Boolean algebra with operators} $(\sSet(X),\cl,\cl')$, where $\cl$ and $\cl'$ are the closure operators for $\scrT$ and $\scrT'$ respectively and is referred to as \emph{bi-closure algebra}\index{Bi-closure algebra} for  $(X,\scrT,\scrT')$.
%
%\subsection{Pairwise clopen sets}
%Of importance in our analysis are pairwise clopen sets. 
A subset $U\subset X$ is a \emph{$(\scrT,\scrT')$-pairwise clopen set}\index{$(\scrT,\scrT')$-pairwise clopen set}\index{Pairwise clopen set} for $X$ if
$U$ is closed in $\scrT$ and open in $\scrT'$.  We denote the set of $(\scrT,\scrT')$-pairwise clopen sets in a bi-topological space $(X,\scrT,\scrT')$ by $\aclop(X)$.\index{$\aclop(X)$}\index{$\aclop^*(X)$}
Similarly, we can define $(\scrT',\scrT)$-pairwise clopen sets which are
open in $\scrT$ and closed in $\scrT'$ and are denoted by
%and the $(\scrT',\scrT)$-pairwise clopen sets by 
$\aclop^*(X)$.

The next step is to consider discretization for bi-topological spaces.
A discretization map $\disc\colon X\twoheadrightarrow \cX$ is a p-continuous map\footnote{J.C. Kelly refers to such map that are continuous with respect to both topologies as \emph{p-continuous}, or \emph{pairwise continuous} maps, cf.\ \cite{Kelly}.\index{$p$-continuous map}\index{Pairwise continuous map}}  between bi-topological spaces if there exists pre-orders $(\cX,\le)$ and $(\cX,\le')$ that are $\scrT$-consistent and $\scrT'$-consistent respectively. We write
$\disc\colon (X,\scrT,\scrT') \twoheadrightarrow (\cX,\le,\le')$. The associated \emph{bi-topological CA-discretization}\index{Bi-topological CA-discretization}\index{CA-discretization!bi-topological} is denoted by $(\cX,\ccl,\ccl',|\cdot|)$ where $\ccl$ and $\ccl'$ are the associated closure operators.
Let $(\cX,\ccl,\ccl',|\cdot|)$ be a bi-topological CA-discretization for $(X,\scrT,\scrT')$. This is equivalent
to choosing a discretization map $\disc\colon X\twoheadrightarrow \cX$ and pre-orders $(\cX,\le)$ and $(\cX,\le')$ such that $\disc$ is continuous with respect to both $\scrT$ and $\scrT'$.
%In this sectting $\disc\colon (X,\scrT,\scrT') \to (\cX,\le,\le')$ is a continuous map in both factors.
Since the pre-orders $\le$ and $\le'$ represent Alexandrov topologies we can coarsen the finite topologies using both up-sets and down-sets. 
\begin{definition}
\label{antacoarse}
Let $\disc\colon X\twoheadrightarrow \cX$ be a discretization map.
An \emph{antagonistic pre-order}\index{Antagonistic pre-order} for $(X,\scrT,\scrT')$ is a pre-order $(\cX,\le^\dagger)$ such that
\begin{enumerate}
    \item [(i)] $\le^\dagger$ is $\scrT$-consistent with respect to $\grd$; %both $\scrT$-consistent and $\scrT^+$-consistent with respect to $\grd$;
    \item [(ii)] $\le^\dagger$  is $\scrT'$-co-consistent with respect to $\grd$.
\end{enumerate}
These conditions   translate as 
\begin{equation}
    \label{dubbleconst}
    \cl|\cU|\subset | \ccl^\dagger\cU|,\quad \cl'|\cU| \subset |\st^\dagger\cU|,\quad \forall \cU\subset \cX,
\end{equation}
where $\st^\dagger = \bccl^\dagger$, the conjugate closure operator,\index{Conjugate closure operator} cf.\ Sect.\ \ref{topconspre} and \cite{JonssonTarski}.
The triple $(\cX,\ccl^\dagger,|\cdot|)$ is called an \emph{antagonistic CA-discretization}\index{Antagonistic CA-discretization}\index{CA-discretization!antagonistic} for $(X,\scrT,\scrT')$.

% A \emph{protagonistic pre-order} for 
% $(X,\scrT,\scrT')$ is a pre-order $(\cX,\le^\dagger)$ such that
% \begin{enumerate}
%     \item [(iii)] $\le^\dagger$ is 
%     %$\scrT$-co-consistent with respect to $\grd$; 
%     both $\scrT$-consistent and $\scrT^+$-consistent with respect to $\grd$.
%     \end{enumerate}
%     These conditions   translate as 
% \begin{equation}
%     \label{dubbleconst}
%     \cl|\cU|\subset |\cl^\dagger \cU|,\quad \cl'|\cU| \subset |\cl^\dagger\cU|,\quad \forall \cU\subset \cX.
% \end{equation}
% The triple $(\cX,\cl^\dagger,|\cdot|)$ is called an \emph{protagonistic CA-discretization} for $(X,\scrT,\scrT')$.
\end{definition}
%\begin{definition}
%\label{antacoarse}
%An \emph{antagonistic coarsening} of $\le$ and $\le'$ is a pre-order $(\cX,\le^\dagger)$ such that
%\begin{enumerate}
%    \item [(i)] $\le^\dagger$ is $\scrT$-co-consistent with respect to $\grd$; %both $\scrT$-consistent and $\scrT^+$-consistent with respect to $\grd$;
%    \item [(ii)] $\le^\dagger$  is $\scrT'$-consistent with respect to $\grd$.
%\end{enumerate}
%These conditions are translated as
%\begin{equation}
%    \label{dubbleconst}
%    \cl|\cU|\subset |\st^\dagger \cU|,\quad \cl'|\cU| \subset |\cl^\dagger\cU|,\quad \forall \cU\subset \cX,
%\end{equation}
%where $\st^\dagger = (\cl^\dagger)^*$, the conjugate closure operator.
%\end{definition}

% \begin{remark}
% Both antagonistic and protagonistic pre-orders can also be defined by reversing the role of $\scrT$ and $\scrT'$.
% Protagonistic pre-orders are just discretizations of common coarsenings of the topologies.
% \end{remark}

\begin{remark}
\label{doubleconti}
An equivalent way to say that $(\cX,\le^\dagger)$ is an antagonistic pre-order is that both
\[
\disc\colon (X,\scrT) \xtwoheadrightarrow{} (\cX,\le^\dagger),\quad\text{and}\quad \disc\colon (X,\scrT') \xtwoheadrightarrow{} (\cX,\ge^\dagger),
\]
are continuous.
\end{remark}

\begin{remark}
\label{antatop}
We can use the pairwise clopen sets in $(X,\scrT,\scrT')$ as a base (of closed sets) for the topology $\scrT^\dagger$.
Closed sets in $(\cX,\le^\dagger)$ are pairwise clopen sets in $(X,\scrT,\scrT')$,
 which yields the continuous discretization map $\disc\colon (X,\scrT^\dagger)\twoheadrightarrow (\cX,\le^\dagger)$. We say that $\scrT^\dagger$ is the \emph{antagonistic topology}\index{Antagonistic topology} with respect to the pair $(\scrT,\scrT')$. Since closed sets in $(X,\scrT^\dagger)$ are not necessarily pairwise clopen it is preferable to use the concept of pairwise clopen sets in the bi-topological space $(X,\scrT,\scrT')$.
\end{remark}

\begin{remark}
Antagonistic pre-orders can also be defined by reversing the role of $\scrT$ and $\scrT'$.
%Pre-orders that are $(\scrT,\scrT')$-consistent, or $(\scrT,\scrT')$-co-consistent for $\disc$ are called \emph{protagonistic}. The latter plays a role for defining Morse pre-orders, cf.\ Sect.\ \ref{secondgrad}. 
\end{remark}

Antagonistic pre-orders yield discrete topologies on $\cX$ since 
%both $\le$ and $\le'$ 
such  topologies  are necessarily Alexandrov. One cannot play the same game with $\scrT$ and $\scrT'$ on $X$ since topologies are not Alexandrov in general. For this reason one chooses the formalism of bi-topological spaces and anatagonistic pre-orders.

Let $(\cX,\le^\dagger)$ be an antagonistic pre-order as in Definition
\ref{antacoarse}.
Closed sets $\cU$ in $(\cX,\le^\dagger)$ are pairwise clopen sets $|\cU|\in \aclop(X)$, cf.\ Sect.\ \ref{topconspre}.
% In the applications to dynamics we will also  consider antagonistic pre-orders in relation to regular closed and regular open pairwise clopen sets.
% Need more stuff here. Relations to various lattices
%
Conversely, if $(\cX,\ccl,\ccl',|\cdot|)$ is a bi-topological CA discretization for $(X,\scrT,\scrT')$ then the lattice embedding:
%An equivalent way to define $\le^\dagger$ is as a pre-order which yields the following embedding: 
\begin{equation}
      \label{dubbleconst2}
\iota\colon\sAnt \rightarrowtail \sO(\cX,\le) \cap \sU(\cX,\le') =: \aclop(\cX),
\end{equation}
%\begin{equation}
%      \label{dubbleconst2}
%\iota\colon\sO(\cX,\le^\dagger) \rightarrowtail \sU(\cX,\le) \cap \sO(\cX,\le'),
%\end{equation}
yields an antagonistic pre-order for $(X,\scrT,\scrT')$. 
Indeed, 
if we use Birkhoff duality\index{Birkhoff duality} to dualize the homomorphism
%\marginpar{\footnotesize Write formula for dyn.} 
$\iota\colon\sAnt \rightarrowtail\sSet(\cX)$, q.v.\ Thm.\ \ref{thm:birkhoff} and Rem.\ \ref{extraformdual}, we obtain the order-preserving surjection
\begin{equation}
    \label{defndyn}
    \begin{aligned}
    \pi\colon \cX \twoheadrightarrow \sP,\quad \xi \mapsto \pi(\xi) &:= \min\bigl\{ p\in \sP\mid \xi\in \iota\bigl(\big\downarrow p\bigr)\bigr\}\\
    &~=\max\Bigl\{\min\bigl\{ \cU\in \sJ(\sAnt)\mid \xi\in \cU\bigr\}\Bigr\},
     \end{aligned}
% \sJ(\iota)\colon \cX \twoheadrightarrow \sJ\bigl( \sAnt\bigr),\quad \xi \mapsto \sJ(\iota)(\xi) := \min\bigl\{ \cU\in \sAnt\mid \xi\in \cU\bigr\}
\end{equation}
% \begin{equation}
%     \label{defndyn}
%     \sJ(\iota)\colon \cX \twoheadrightarrow \sJ\bigl( \sAnt\bigr),\quad \xi \mapsto \sJ(\iota)(\xi) := \min\bigl\{ \cU\in \sJ(\sAnt)\mid \xi\in \cU\bigr\}
% % \sJ(\iota)\colon \cX \twoheadrightarrow \sJ\bigl( \sAnt\bigr),\quad \xi \mapsto \sJ(\iota)(\xi) := \min\bigl\{ \cU\in \sAnt\mid \xi\in \cU\bigr\}
% \end{equation}
where $\sJ\bigl( \sAnt\bigr)$ is the   poset of join-irreducible elements in $\sAnt$.
%
%from the embedding $\clop(\cX) \rightarrowtail \sSet(\cX)$ we obtain the dualized order-surjection $\grade\colon \cX \twoheadrightarrow (\sSC,\le)$, where $\sSC\cong \sJ(\clop(\cX))$.
By construction  a pre-order $(\cX,\le^\dagger)$ is defined by
\[
\xi\le^\dagger \xi' \quad\hbox{if and only if}\quad \pi(\xi) \le \pi(\xi'),
% \xi\le^\dagger \xi' \quad\hbox{if and only if}\quad \sJ(\iota)(\xi) \subseteq \sJ(\iota)(\xi'),
\]
is an antagonistic pre-order with $\sO(\cX,\le^\dagger)\cong \sAnt$. %, cf.\ App.\ \ref{JTduality}.
In this case we say that $\le^\dagger$ is
an \emph{antagonistic coarsening} of $(\cX,\ccl,\ccl',|\cdot|)$.
The lattice embedding $\sO(\cX,\le^\dagger) \rightarrowtail\sSet(\cX)$ is dual to the identity map 
\begin{equation}
\label{idred}
    \id\colon \cX \to (\cX,\le^\dagger).
\end{equation}
%A pre-order $(\cX,\le^\dagger)$ satisfying the above conditions is called an \emph{antagonistic pre-order} for $\disc\colon (X,\scrT,\scrT') \to (\cX,\le^\dagger)$ and $(\cX,\cl^\dagger,|\cdot|)$ is called an \emph{antagonistic CA-discretization} for $(X,\scrT,\scrT')$.
%
% The discretization maps
% $\disc\colon (X,\scrT) \to (\cX,\le^\dagger)$ and %,\quad\hbox{and}\quad 
% $\disc\colon (X,\scrT') \to (\cX,\ge^\dagger)$ 
% are continuous.

\begin{theorem}
    \label{dubbleconst3}
    A pre-order $\le^\dagger$ is an
     antagonistic pre-order for $\disc\colon (X,\scrT,\scrT') \to (\cX,\le^\dagger)$ 
     if and only if there exists a bi-topological CA-discretization $(\cX,\ccl,\ccl',|\cdot|)$ such
     that $\sO(\cX,\le^\dagger)$ is given by
     \eqref{dubbleconst2}, i.e. an antagonistic pre-order is equivalent to an antagonistic coarsening.
      %  for some for p-continuous discretization $\disc\colon (X,\scrT,\scrT') \to (\cX,\le,\le')$.
\end{theorem}
%\begin{lemma}
%    \label{dubbleconst3}
%    Every antagonistic pre-order for $\disc\colon (X,\scrT,\scrT') \to (\cX,\le^\dagger)$ is given by \eqref{dubbleconst2}
%    for some for p-continuous discretization $\disc\colon (X,\scrT,\scrT') \to (\cX,\le,\le')$.
%\end{lemma}

\begin{proof}
One direction is given be the construction in \eqref{dubbleconst2}.
It remains to show that an antagonistic pre-order satisfying Definition \ref{antacoarse}(i)-(ii) is an antagonistic coarsening. If we define the discrete closure operators  $\ccl = \ccl^\dagger$ and $\ccl'=\st^\dagger$, then
$\sO(\cX,\le) = \sO(\cX,\le^\dagger)$ and $\sU(\cX,\le') = \sO(\cX,\le^\dagger)$, which proves the theorem.\end{proof}

In  practical situations, given a bi-topological CA-discretization $(\cX,\le,\le')$, we can choose $\sO(\cX,\le^\dagger) = \aclop(\cX)$. 
Let $\sJ(\sO(\cX,\le^\dagger))$ be the poset of join-irreducible elements in $\aclop(\cX)$. From the results in \cite{lsa3,kkv}  consider a representation $(\sSC,\le)$ of
$\sJ(\sO(\cX,\le^\dagger))$ defined by
\begin{equation}
\label{defnofSC}
\sSC := \bigl\{ \cS = \cU\smin\cU^\pred\mid \cU\in \sJ(\sO(\cX,\le^\dagger))\bigr\},
\end{equation}
 with
$\cS\le \cS'$ if and only if $\cU\subset \cU'$, with $\cU,\cU'\in \sJ(\sO(\cX,\le^\dagger))$ uniquely
determined by $\cS= \cU\smin\cU^\pred$ and $\cS' = \cU'\smin\cU'^\pred$.\footnote{For every $\cU\in \sJ(\sO(\cccX,\le^\dagger))$ there exists a unique immediate predecessor $\cU^\pred\in  \sO(\cccX,\le^\dagger)$.} 
If we regard the pre-order $(\cX,\le^\dagger)$ as a directed graph then
the sets $\cS\in \sSC$ correspond to the \emph{strongly connected components}\index{Strongly connected components!of a directed graph}\index{$\sSC$} of the directed graph which is the motivation for the
abbreviation $\sSC$, cf.\ Rem.\ \ref{perspectives}.
The posets
$(\sSC,\le)$ and $\bigl(\sJ(\sO(\cX,\le^\dagger)),\subset \bigr)$ are isomorphic by construction and
the elements in $\sSC$ correspond to the partial equivalence classes in $(\cX,\le^\dagger)$, cf.\ Rem.\ \ref{partialequiv}, which yields the (order-preserving) projection $\dyn\colon(\cX,\le^\dagger) \twoheadrightarrow (\sSC,\le) = \cX/_\sim$.\footnote{We use the terminology $\dyn$ as it is used later on in the setting of the \bflt.} The latter may be regarded as  a \emph{finite} discretization of $\cX$.
%
% From the embedding $\iota\colon \sO(\cX,\le^\dagger) \rightarrowtail\sSet(\cX)$ we obtain the composition 
% \begin{equation}
% \label{dyndefn}
% \cX \xtwoheadrightarrow{\sJ(\iota)} \sJ(\sO(\cX,\le^\dagger)) \xrightarrow{\cong} \sSC,
% \end{equation}
% given by $\xi \mapsto \sJ(\iota)(\xi) \mapsto  \sJ(\iota)(\xi)\smin  \sJ(\iota)(\xi)^\pred$
% is denoted by $\part\colon\cX \twoheadrightarrow \sSC$.
% In the construction of $\part$ the set $\cX$ is an anti-chain. 
% By App.\ \ref{JTduality} this yields the order-preserving map
% $\part\colon (\cX,\le^\dagger) \twoheadrightarrow (\sSC,\le)$ via $\xi\le^\dagger \xi'$ if and only if $\part(\xi)\le\part(\xi')$ which
% is called a \emph{finite} discretization of $\cX$, cf.\ Rmk.\ \ref{partialequiv}.
%
% By construction the map $\part\colon (\cX,\le^\dagger) \twoheadrightarrow \sSC$ is order-preserving and is called a \emph{finite} discretization of $\cX$. 
The embedding
$\sO(\cX,\le^\dagger) \xrightarrow{\subset}\sO(\cX,\le)$ implies that 
 $\dyn$ is also an order-preserving map $\dyn\colon (\cX,\le)\twoheadrightarrow (\sSC,\le)$ which factors as
$(\cX,\le) \xtwoheadrightarrow{\id} (\cX,\le^\dagger) 
 \xtwoheadrightarrow{\dyn} (\sSC,\le)$. Indeed, a downset in $(\cX,\le^\dagger)$ is a downset in $(\cX,\le)$ and therefore $\le\, \subset \,\le^\dagger$ as pre-orders.
% 
%  $\cX \twoheadrightarrow \sJ(\sO(\cX,\le)) \twoheadrightarrow \sSC$ via
% $\xi \mapsto \sJ(\jmath)(\xi) \mapsto \cS$. 
% As before the pre-order $(\cX,\le)$ is characterized by $\xi\le \xi'$ if and only $\sJ(\jmath)(\xi) \subseteq \sJ(\jmath)(\xi')$ which implies that $\part$ is order-preserving with respect to $\le$.
By the same token we have that $\dyn$ is order-reversing with respect to $\le'$ which follows from the embedding $\sO(\cX,\le^\dagger) \xrightarrow{\subset}\sU(\cX,\le')$ and therefore $\ge'\, \subset\, \le^\dagger$ as pre-orders.
Summarizing, the maps $\dyn$ are order-preserving and order-reversing respectively:
%\begin{equation}
%\label{ordpreforgrd}
%\grade\colon (\cX,\le) \twoheadrightarrow (\sSC,\le),\quad \grade\colon (\cX,\le') \twoheadrightarrow (\sSC,\le)
%\end{equation} 
%\vskip-.2cm
\begin{equation}
\label{ordpreforgrd}
\begin{tikzcd}[column sep=huge, row sep=large]
& (\cX,\le) \arrow[d, "\id"'] \arrow[rd, "\dyn"]    \arrow[rrd, "\ppart", bend left]              &  & \\
(X,\scrT,\scrT') \arrow[r, "\disc"', bend left=29, shift right] \arrow[r, "\disc", dashed, bend right=29, shift left] \arrow[ru, "\disc", shift left=2] \arrow[rd, "\disc"',  shift right=2] & (\cX,\le^\dagger) \arrow[r, "\dyn"]                                  &  \sSC \arrow[r,"\pi", two heads] & \sP\\
& (\cX,\le') \arrow[u, "\id", dashed] \arrow[ru, "\dyn"', dashed] \arrow[rru, "\ppart"', dashed, bend right]  &  &
\end{tikzcd}
\end{equation} 
%
%, $\sU(\cX,\le^\dagger) \rightarrowtail \sSet(\cX)$
%and $$\sO(\cX,\le^\dagger) \rightarrowtail \sSet(\cX)$
%
The map $\dyn$ is again a continuous discretization and the composition 
\begin{equation}
\label{spacegrading}
\begin{tikzcd}[column sep=huge]
X \arrow[r] \arrow[r, "\disc", two heads] \arrow[rr, "\tile", two heads, bend right, shift right] & \cX \arrow[r, "\dyn", two heads] & \sSC
\end{tikzcd}
% X\xtwoheadrightarrow{\disc} \cX \xtwoheadrightarrow{\dyn} \sSC
%X\xrightarrow{ \disc} \cX \xrightarrow{\part} \sSC,
\end{equation}
denoted by $\tile$,
may be regarded as a continuous map $\tile\colon (X,\scrT) \to (\sSC,\le)$
and as a continuous map  $(X,\scrT') \to (\sSC,\ge)$ 
by factoring through $(\cX,\le^\dagger)$ and $(\cX,\ge^\dagger)$ respectively, and which is equivalently obtained
 by factoring through $(\cX,\le)$
by factoring through $(\cX,\le')$ respectively.
%Such a grading $X\twoheadrightarrow \sSC$ is again a discretization map.
% Call it a Morse pre-order we antagonize using a derived flow topology
Every antagonistic pre-order for $(X,\scrT,\scrT')$ defines a grading $G_\cS X\xmapsto{\tile}\cS$ of $X$ given by %\eqref{spacegrading}.
\[
X = \bigcup_{\cS\in \sSC} G_\cS X, \quad G_\cS X =
\tile^{-1} \cS,
% \bigl(\disc^{-1}\circ\skel^{-1}\bigr)(\cS),
\]
which is called an \emph{antagonistic tessellation}\index{Tessellation!antagonistic}\index{Antagonistic tessellation} of $X$. We apply these bi-topological discretization techniques  in the next two chapters in the context of discretizing semi-flows.

\begin{remark}
An antagonistic pre-order $(\cX,\le^\dagger)$ satisfies $\le\, \subset \,\le^\dagger$ and $\ge'\, \subset \,\le^\dagger$ as pre-orders 
and thus $\le\vee\ge'\,\subset \,\le^\dagger$ as pre-orders.
The `vee' on the pre-order is the transitive closure of the union.
The case $\sO(\cX,\le^\dagger) = \aclop(\cX)$ corresponds to $\, \le\vee\ge'  \,= \,\le^\dagger$.
\end{remark}

\begin{remark}
\label{perspectives}
There are various (equivalent) perspectives for presenting binary relations on $\cX$:
% by  regarding relations from different points of view:
\begin{enumerate}
    \item[(a)] as a binary relation, i.e.\  $\bmphi \subset \cX\times \cX$;
    \item[(b)] as a directed graph (digraph) $\bmphi$ with vertices $\cX$ and edge set 
    \[
    \bigl\{\xi \to \xi'\mid (\xi,\xi')\in \bmphi^{-1}\bigr\};
    \]
    \item[(c)] as modal operator  $\bmPhi=\bmphi^{-1}\colon\sSet(\cX) \to \sSet(\cX)$. 
    % where 
    % \begin{equation*}
    %     %\label{addop}
    % \bvtheta_\cR[\cU]  = \Bigl\{\xi'~\Big|~ \exists \xi\in \cU~\text{~such that~}~(\xi,\xi')\in \cR^{-1}\Bigr\}.\footnote{cf.\ Appendix \ref{sec:birkhoff}.}
    %  \end{equation*}
%    In fact $\bvtheta_\cF$ may be regarded as \emph{multi-valued map}  $\bvtheta_\cF[\cdot] = \cF^{-1}[\cdot]\colon \cX^+\rightrightarrows \cX^+$.
\end{enumerate}
We alternate between these perspectives, using whichever is  conceptually most convenient. As digraph the strongly connected components are found via \eqref{SCbin}.
\end{remark}

%\begin{remark}
%An antagonistic pre-order $(\cX,\le^\dagger)$ satisfies $\le\, \subset \,\le^\dagger$ and $\le'\, \subset \,\le^\dagger$ as pre-orders 
%and thus $\le\vee\le'\,\subset \,\le^\dagger$ as pre-orders.
%The `vee' on the pre-order is the transitive closure of the union.
%The case $\sO(\cX,\le^\dagger) = \aclop(\cX)$ corresponds to $\le^\dagger \,=\, \le\vee\le'$.
%\end{remark}

%%%%%%%%%%%%
%%%%%%%%%%%%%%%

\section{Examples and further extensions}
\label{exdisc}
In this section we discuss examples of discretization of topology starting with regular closed sets. The latter will be applied in the setting of CW-decompositions.\index{CW-decomposition} We start with outlining regular closed sets.
%in the setting of closure algebras.

\subsection{Regular closed sets}
\label{regclsets}\index{Regular closed set}
Of particular importance in this text are the  regular closed sets which play a central role in the construction of Morse tessellations. A subset $U\subset X$ is \emph{regular closed}\index{Regular closed set} if $\cl\Int U = U$.
The set of regular closed sets in any topological space $X$ is denoted by $\scrR(X,\scrT)$\index{$\scrR(X)$} and the latter forms a complete Boolean algebra with unary operation $U^\#=\cl\,U^c$    and binary operations $U\vee U' = U\cup U'$ and $U\wedge U' = \cl\Int(U\cap U')$, cf.\ \cite[Sect.\ 2.3]{walker}.
By the same token we can define the regular closed sets in a CA-discretization $(\cX,\cl,|\cdot|)$\index{$\scrR(\cX)$}
and we denote regular closed sets in $\cX$ by $\scrR(\cX)$.\footnote{In this text we mainly consider regular closed sets with repect to one topology $\scrT$. Therefore the notation $\scrR(X)$ and $\scrR(\cccX)$ does not cause any ambiguities.}
%
% A subset $\cU\subset \cX$ is \emph{regular closed} if $\cl \Int \cU = \cU$. A similar definition can be given regular open subsets, i.e. $\Int\cl\, \cU = \cU$.
% % In the Alexandrov topology associated with $(\cX,\le)$ implies that $\cl \xi := \downarrow \xi$ is the closure of $\xi$. In a broader context the face partial order is dual to the closure operator $\cl\colon \sSet(\cX) \to \sSet(\cX)$ on the Boolean algebra $\sSet(\cX)$ of all subsets of $\cX$, cf. Example \ref{alexandrov1}.
% % A subcomplex $\cU\subset \cX$ is \emph{regular closed} if $\cl \Int \cU = \cU$. A similar definition can be given regualr open complexes.
% The regular closed subsets in $\cX$ are denoted by $\scrR(\cX)$ which form a Boolean algebra with operations defined as follows:
% \[
% \cU\vee\cU' := \cU\cup \cU',\quad \cU \wedge \cU' := \cl\Int (\cU\cap \cU'),\quad \cU^\# := \cl \cU^c.
% \]
% %Observe that $\cU\wedge\cU' = (\cU\cap \cU')^{\#\#}$.
Regular closed subsets can be obtained from closed subsets $\sO(\cX,\le)$.
\begin{proposition}[cf.\ \cite{lsa3}, Lem.\ 22]
\label{rgcomplex}
The map $\ccl\cInt \colon \sO(\cX,\le) \to \scrR(\cX)$, defined by $\cU \mapsto \ccl\cInt \cU$, is a surjective lattice homomorphism.\footnote{This result holds for any topological space $X$.}
\end{proposition}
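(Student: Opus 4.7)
My plan is to prove the proposition in three steps: well-definedness of the map, surjectivity, and preservation of the two lattice operations $\vee,\wedge$ on $\scrR(\cX)$.

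First I would verify that $\cl\Int\cU$ actually lands in $\scrR(\cX)$ for every $\cU\in\sO(\cX,\le)$. This rests on the standard identity $\cl\Int(\cl\Int V)=\cl\Int V$, valid for any subset $V$ of any topological space: the inclusion $\Int V\subset \Int\cl\Int V$ gives $\cl\Int V\subset \cl\Int\cl\Int V$, while $\Int\cl\Int V\subset \cl\Int V$ followed by closure gives the reverse. Surjectivity is then immediate: a regular closed $V=\cl\Int V$ is in particular closed, i.e.\ a down-set in the specialization pre-order, so $V\in\sO(\cX,\le)$ and $\cl\Int V=V$ puts $V$ in the image.

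Next I would check the homomorphism property. For the join, one inclusion $\cl\Int\cU\cup\cl\Int\cU'\subset \cl\Int(\cU\cup\cU')$ is immediate from monotonicity and $\Int\cU\cup\Int\cU'\subset \Int(\cU\cup\cU')$. For the reverse inclusion I exploit that $\cX$ is finite and $\scrT$ is Alexandrov, so $\Int\cU=\{\xi:\big\uparrow\xi\subset\cU\}$ and $\cl\Int\cU=\big\downarrow\{\xi:\big\uparrow\xi\subset\cU\}$. Given $\xi$ with $\big\uparrow\xi\subset\cU\cup\cU'$, pick a maximal $\eta\ge\xi$ in the finite poset $\cX$; then $\big\uparrow\eta=\{\eta\}$, so $\eta\in\cU$ or $\eta\in\cU'$, placing $\xi$ in $\cl\Int\cU$ or $\cl\Int\cU'$ respectively. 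For the meet, I would directly verify
\[
\cl\Int(\cU\cap\cU')=\cl\Int\bigl(\cl\Int\cU\cap\cl\Int\cU'\bigr)=\cl\Int\cU\wedge\cl\Int\cU'.
\]
One direction uses $\cl\Int\cU\subset\cU$ (since $\cU$ is closed) and the analogous inclusion for $\cU'$, giving $\cl\Int\cU\cap\cl\Int\cU'\subset\cU\cap\cU'$ and then monotonicity of $\cl\Int$. The other direction uses the identity $\Int(\cU\cap\cU')=\Int\cU\cap\Int\cU'\subset \cl\Int\cU\cap\cl\Int\cU'$, and I apply $\cl\Int$ to both sides, noting that $\cl\Int\circ\Int=\cl\Int$.

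The main obstacle I anticipate is the reverse inclusion for the join: in a non-Alexandrov setting $\Int(\cU\cup\cU')$ can strictly contain $\Int\cU\cup\Int\cU'$ (points on the common boundary may become interior in the union), so one cannot simply commute $\Int$ with $\cup$. The Alexandrov/finite poset structure of $\cX$ resolves this cleanly via the maximal element argument above, and this is where the hypothesis that the CA-discretization is finite plays its essential role.
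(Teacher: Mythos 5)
Your proof is correct in its three parts, and each step is verified carefully. One refinement: the closing remark overstates the role of finiteness. The paper's footnote asserts that the proposition holds in \emph{any} topological space, and indeed the reverse inclusion $\cl\Int(\cU\cup\cU')\subset\cl\Int\cU\cup\cl\Int\cU'$ can be proved without Alexandrov structure. Suppose $x\in\Int(\cU\cup\cU')$ but $x\notin\cl\Int\cU$ and $x\notin\cl\Int\cU'$; shrinking, choose an open neighborhood $W$ of $x$ with $W\subset\cU\cup\cU'$, $W\cap\Int\cU=\varnothing$, $W\cap\Int\cU'=\varnothing$. Then $W\cap\cU'^c$ is open and contained in $\cU$, hence in $\Int\cU$, forcing $W\cap\cU'^c=\varnothing$, i.e.\ $W\subset\cU'$; but then the nonempty open $W\subset\Int\cU'$, a contradiction. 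So your maximal-element argument is a clean special case of a general phenomenon, not a patch that the finite setting makes necessary; this is worth knowing since the proposition is invoked in the paper both at the discrete level $\cX$ and at the continuous level $X$.

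A second, smaller point: you write ``pick a maximal $\eta\ge\xi$ in the finite poset $\cX$,'' but $(\cX,\le)$ in a CA-discretization is a priori only a pre-order. Your argument survives: choose $\eta\in\big\uparrow\xi$ with no strictly larger element, so $\big\uparrow\eta$ equals the equivalence class $[\eta]$, and since $\cU,\cU'$ are down-sets, $\eta\in\cU$ already forces $[\eta]\subset\cU$ (and likewise for $\cU'$). It is worth phrasing the step this way to cover the pre-order case the paper actually works with.
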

% This proposition can be used to construct a Boolean homomorphism between $\sSet(\cX)$ and $\Sub_\rcl(\cX)$. Consider the map $\cU\mapsto \cl\,\cU$. By definition $\cup$ is preserved by $\cl$ and $\cl(\cU\cap \cU') \subset\cl\cU\cap \cl\cU' = \cU\cap \cU' \subset \cl(\cU\cap\cU')$,
% which proves that $\cl(\cU\cap \cU') = \cl \cU\cap \cU'$ for closed subcomplexes $\cU,\cU'\in \Sub_\cl(\cX)$. Summarizing the map
% $\sSet(\cX) \xrightarrow{\cl} \Sub_\cl(\cX)$ is a lattice homomorphism.
% As a matter of fact $\cl$ is a retraction.
% %
% \begin{prop}
% \label{rgcomplex2}
% The map $\cl\Int \cl \colon \sSet(\cX) \to \Sub_\rcl(\cX)$, defined by $\cU \mapsto \cl\Int\cl\,\cU$, is a surjective Boolean homomorphism.
% \end{prop}
%
% Since $|\cl\Int \cU| = \cl|\Int\cU| = \cl\Int|\cU|$ a subset $\cU\subset\cX$ is regular closed in $\cX$ if and only if $|\cU|\subset X$ is regular closed in $X$. 
The atoms in $\sSet(\cX)$ are given by the set $\cX$. Since  $\scrR(\cX)$ is a finite Boolean algebra it is a power set on a set of atoms. Define the maximal elements in $\cX$ with respect to $\le$ by $\cX^{\topc}$, i.e. $\xi\in \cX^{\topc}$ if and only if $\st\xi = \xi$. Such elements are called \emph{top cells} and form an anti-chain in $(\cX,\le)$.\index{Top cell}\index{Cell!top}
\begin{proposition}
  \label{atomreg}
  The atoms of $\scrR(\cX)$ are given by the set $\bigl\{\ccl\, \xi\mid \xi\in \cX^{\topc} \bigr\}$.
Moreover, $\ccl\colon \sSet(\cX^{\topc}) \to \scrR(\cX)$ is an isomorphism with inverse $\cU\mapsto \cU\cap \cX^{\topc}$.
  %are the subcomplexes of the form $\cl \xi$, for
\end{proposition}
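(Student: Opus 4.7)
The plan is to work entirely inside the finite closure algebra $(\sSet(\cX),\cl)$ with its Alexandrov topology, using the standard Alexandrov formulas $\cl~\cU=\bigcup_{\eta\in\cU}\big\downarrow\eta$ and $\Int\cU=\{\eta\mid \st\eta\subset\cU\}$; in particular every open set is an up-set and every closed set is a down-set. Note also that $\scrR(\cX)$ is a finite Boolean algebra whose order is simply inclusion (since $U\vee U'=U\cup U'$), so its atoms are the minimal non-empty regular closed sets.

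First I would show that for each top cell $\xi\in\cX^{\topc}$ the principal down-set $\cl~\xi=\big\downarrow\xi$ is regular closed. Since $\xi$ is maximal, $\st\xi=\{\xi\}\subset\cl~\xi$, so $\xi\in\Int(\cl~\xi)$, giving $\cl~\xi=\cl\{\xi\}\subset\cl\Int(\cl~\xi)\subset\cl~\xi$. Next I would verify that these are precisely the atoms. Given any $\varnothing\ne V\in\scrR(\cX)$ with $V\subset\cl~\xi$, pick $\eta\in V$; since $V=\cl\Int V$, there exists $\eta'\in\Int V$ with $\eta\le\eta'$. But $\eta'\in V\subset\big\downarrow\xi$ forces $\eta'\le\xi$, and $\Int V$ is an up-set, so $\xi\in\st\eta'\subset\Int V\subset V$, whence $V\supset\cl~\xi$ and equality holds; thus $\cl~\xi$ is an atom. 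Conversely, any non-empty $U\in\scrR(\cX)$ has (by finiteness) a maximal element $\eta\in U$, and the same up-set argument applied to $U$ shows that this $\eta$ must lie in $\cX^{\topc}$, so $\cl~\eta\subset U$ and every atom has this form.

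Having identified the atoms, the isomorphism is immediate from the representation theorem for finite Boolean algebras: every $U\in\scrR(\cX)$ is the join (= union) of the atoms it contains, so the map $\cU\mapsto\bigcup_{\xi\in\cU}\cl~\xi=\cl~\cU$ is a Boolean isomorphism $\sSet(\cX^{\topc})\xrightarrow{\cong}\scrR(\cX)$. To check the inverse formula, one direction is $\cU\subset\cl~\cU\cap\cX^{\topc}$; for the other, if $\eta\in\cl~\cU\cap\cX^{\topc}$ then $\eta\le\xi$ for some $\xi\in\cU\subset\cX^{\topc}$, and maximality of $\eta$ forces $\eta=\xi\in\cU$. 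Hence $(\cl~\cU)\cap\cX^{\topc}=\cU$, and the map $U\mapsto U\cap\cX^{\topc}$ is the required inverse.

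The only subtle point is the atom characterization, where one must resist the temptation to argue by intersecting with smaller regular closed sets directly (since $\wedge$ in $\scrR(\cX)$ is $\cl\Int(\cdot\cap\cdot)$, not $\cap$); using minimality with respect to inclusion and the up-set structure of interiors is the cleanest route, and this is the only step where the regular closed hypothesis is essentially used.
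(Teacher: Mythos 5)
Your proof is correct and takes a genuinely different route from the paper's. The paper's argument leans on Proposition \ref{rgcomplex} (that $\cl\Int\colon\sO(\cX,\le)\to\scrR(\cX)$ is a surjective lattice homomorphism) at two points: to write an arbitrary regular closed set as a union of $\cl\,\xi$'s by distributing $\cl\Int$ over singletons (discarding the non-top cells because $\Int\,\xi=\varnothing$ there), and to compute $\cl\,\xi\wedge\cl\,\xi'=\cl\Int(\xi\cap\xi')=\varnothing$ directly, then checks the homomorphism property of $\cl$ by an explicit calculation with $\wedge$. You avoid $\cl\Int$ entirely and work purely order-theoretically from the Alexandrov dictionary (closed = down-set, open = up-set, $\Int V$ an up-set): regularity of $\cl\,\xi$ drops out of $\st\,\xi=\{\xi\}$; atomicity comes from pushing any $\eta\in V\subset\cl\,\xi$ up into $\Int V$ and then up again to $\xi$; and that every atom has this form comes from the observation that a maximal element of a regular closed set is necessarily interior and hence top. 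You then invoke the finite Boolean representation theorem rather than re-verifying that $\cl$ preserves meets. Your route is more self-contained (no dependence on Proposition \ref{rgcomplex}) and makes the up-set structure of $\Int V$ do all the work, while the paper's route buys uniformity with the surrounding development of regular closed sets via the $\cl\Int$ homomorphism. One small remark: like the paper's proof, your maximal-element argument quietly uses that $(\cX,\le)$ is a genuine partial order (so maximal in $U$ really means no strict successors in $U$); this is harmless given the contexts in which the proposition is applied (CW face posets, Boolean CA-discretizations), but it is worth being aware that the statement as literally phrased for arbitrary pre-orders would need the $T_0$ hypothesis.
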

\begin{proof}
We start with the observation that $\xi\in \cX^{\topc}$ represents an open subset.
Indeed, $\st\xi=\xi$ and thus $\xi=\cInt\xi$. Then, $\ccl\,\xi = \ccl\cInt\xi$ is a regular closed set since $\ccl\cInt\ccl\cInt \xi = \ccl\cInt \xi$.
Suppose $\xi\not\in \cX^{\topc}$, i.e. $\xi$ is not maximal in $(\cX,\le)$. 
Since $\xi=\{\xi\}$ is a singleton set and since $\cInt\{\xi\}\subset\{\xi\}$ we conclude that $\cInt\xi=\xi$, or $\cInt\xi=\varnothing$. Suppose the former holds. Then, since $\xi$ is not maximal, $\{\xi\}\subsetneq \st \xi = \{\xi\}$, a contradiction.
%
% If $\xi$ is interior to $\xi$, then $\st\xi\subset \xi$. On the hand, since $\xi$ is not maximal we have that  $\st\xi=\big\uparrow \xi \not\subset \xi$, which is a contradiction.
%
Therefore, $\cInt\xi=\varnothing$. 
Let $\cU = \bigcup \{\xi\}$ be a regular closed set. Then,  by invoking Proposition \ref{rgcomplex} we have
\[
\cU = \ccl\cInt\cU = \ccl\cInt\Bigl(\bigcup\{\xi\}\Bigr)
= \bigcup \ccl\cInt \xi =\bigcup\Bigl\{\ccl\,\xi\mid \xi\in \cX^{\topc}\Bigr\},
\]
which proves that every regular closed set is union of elements $\ccl\,\xi$, for $\xi\in \cX^{\topc}$. The latter form an anti-chain in $\scrR(\cX)$, i.e.,
again invoking Proposition \ref{rgcomplex},
\begin{equation}
    \label{emptyforwed}
\ccl\,\xi \wedge \ccl\,\xi' = \ccl\cInt\xi \wedge \ccl\cInt\xi'  =
\ccl\cInt \bigl(  \xi \cap  \xi'\bigr) =\varnothing,
\end{equation}
% \[
% \begin{aligned}
% \ccl\,\xi \wedge \ccl\,\xi' &= \ccl \cInt \bigl( \ccl\,\xi \cap \ccl\,\xi'\bigr) =
% \ccl  \cInt \bigl(\ccl\cInt\xi \cap \cInt\ccl\cInt\xi'\bigr)\\
% &= \ccl\cInt \bigl(  \xi \cap  \xi'\bigr) =\varnothing,
% \end{aligned}
% \]
which proves that $\ccl\,\xi$, for $\xi\in \cX^{\topc}$, are atoms.
The map $\ccl\colon \sSet(\cX^{\topc}) \to \scrR(\cX)$ preserves union. Consider $\ccl~ \cU\wedge \ccl~\cU'$. By \eqref{emptyforwed} we have
\[
\ccl~ \cU\wedge \ccl~\cU' = \bigcup\ccl~\xi \wedge \bigcup\ccl~\xi'
= \bigcup \ccl~\xi'' = \ccl \Bigl(\bigcup\{\xi''\} \Bigr),
\]
where $\bigcup\{\xi''\} = \cU\cap \cU'$, which proves that 
$\ccl\colon \sSet(\cX^{\topc}) \to \scrR(\cX)$ is a homomorphism.
Any $\cU\in \scrR(\cX)$ is uniquely represented as $\cU=\bigcup \ccl~\xi=
\ccl\bigl(\bigcup\{\xi\} \bigr)
$, $\xi\in \cX^\top$ which shows that $\cU\cap \cX^\top$ yields the unique set of generating cells $\xi\in \cX^\top$. 
\end{proof}

% Since the atoms in $\Sub_\rcl(\cX)$ are of the form $\ccl\xi = \downarrow\!\xi$ we have the following
% isomorphism.
% \begin{proposition}
% \label{ggcomplex2}
% The map $\ccl \colon \sSet(\cX^{\topc}) \to \Sub_\rcl(\cX)$, defined by $\cU \mapsto \ccl \cU$, is a Boolean isomorphism.
% \end{proposition}
% {\color{red} I think another characterization of $\Sub_\rcl(\cX)$ is that it is generated by $\{\downarrow \xi\}_{\xi\in \cX^{\topc}}$}.

% Recall that a closed set $U\subset X$ is  \emph{regular} if $\ccl \cInt U=U$.
% For a given topological space $X$ the set of regular closed sets in $X$ is denoted by $\scrR(X)$, which is a complete Boolean algebra 
% with binary operations $U\vee U'= U\cup U'$ and $U\wedge U= \ccl\interior(U\cap U')$ and unary operation $U^{\#} = \ccl U^c$ for all $U,U'\in \scrR(X)$, cf.\ 
% \cite[Sect.\ 2.3]{walker}.
% For a given  CW-decomposition $(\cX,\leq,|\cdot|)$ of $X$ the top cells $\xi\in \cX^{\topc}$ represent regular open sets {\color{red} regular open sets are not defined?} $|\xi|$ since $|\xi|\cong B^m$. Therefore the closed realizations $\lbr\xi\rbr=\ccl|\xi|$ are 
% regular closed sets. Indeed, since $\cInt\ccl |\xi|= |\xi|$ it follows that $\ccl \cInt \ccl|\xi|=\ccl|\xi|$.
% Moreover, %using the commutativity $|\ccl\xi| = \ccl|\xi|$, we have
% \[
% \lbr\xi\rbr \wedge \lbr\xi'\rbr = \ccl \cInt \bigl( \ccl|\xi| \cap \ccl|\xi'|\bigr) =
% \ccl \bigl( \cInt\ccl|\xi| \cap \cInt\ccl|\xi'|\bigr) = \ccl \bigl(  |\xi| \cap  |\xi'|\bigr) =\varnothing.
% \]

Regular closed sets in $\cX$ do not necessarily yield regular closed sets in $X$ under evaluation.
However, if $(\cX,\ccl,|\cdot|)$ is \emph{Boolean}, cf.\ Defn.\ \ref{regularCA},\index{Boolean CA-discretization}\index{CA-discretaization!Boolean} then we have the following correspondence:
% Closed/open subsets in $\cX$ yield  closed/open subsets in $X$ under evaluation.
% The same is not true for regular closed subsets in general. Suppose that evaluation $|\cdot|$ is an injective
% homomorphism of closure algebras, i.e. $\ccl|\cU| = |\ccl~\cU|$ for all $\cU\in \sSet(\cX)$. Such a discretization is called a \emph{regular CA-discretization}.
%\marginpar{\footnotesize \color{blue} Move to 2.1}
%Then,  a subset $\cU\subset \cX$ is closed/open if and only if $|\cU|\subset X$ is closed/open.
$|\cU| = |\ccl\cInt \cU| = \cl|\cInt\cU| = \cl\Int|\cU|$\footnote{If $(\cccX,\ccl,|\cdot|)$ is Boolean then $\cInt|\cU| = |\cInt\cU|$ follows from the relation for closure.} and thus a subset $\cU\subset\cX$ is regular closed in $\cX$ if and only if $|\cU|\subset X$ is regular closed in $X$. 
This way the image of $|\cdot|\colon \scrR(\cX) \to \scrR(X)$ 
% \marginpar{\footnotesize {\color{red} this doesn't seem right, the domain should be $\sO(\cX)$? And image instead of range?}I don't see what's wrong since $|\cdot|$ is defined on $\sSet(X)$} 
yields
the finite  Boolean subalgebra $\scrR_0(X)$  contained in $\scrR(X)$. The subalgebra $\scrR_0(X)$ is  generated by the 
%\emph{unordered} 
set of atoms $\sJ\bigl( \scrR_0(X)\bigr) := \bigl\{\cl|\xi|\mid \xi\in \cX^{\topc}\bigr\}$.\footnote{The join-irreducible element in a finite Boolean algebra and the atoms that generate the Boolean algebra.}

\begin{proposition}
\label{realization}
Let $(\cX,\ccl,|\cdot|)$ be a Boolean CA-discretization for $X$.\index{Boolean CA-discretization}
% and assume that $|\cdot|\colon\sSet(\cX) \to \sSet(X)$ is a closure algebra homomorphism.
Then, the map 
\[
\lbr\cdot\rbr\colon \sSet(\cX^{\topc}) \to \scrR_0(X),\quad \xi \mapsto \lbr\xi\rbr := \ccl|\xi|,\footnote{The notation $\lbr\cdot\rbr$ is called \emph{closed realization}.}
\]
is a lattice isomorphism and thus a Boolean isomorphism.
\end{proposition}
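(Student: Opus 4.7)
The plan is to exhibit the map $\lbr\cdot\rbr$ as the composition of two lattice isomorphisms:
\[
\sSet(\cX^{\topc}) \xrightarrow{~\cl~} \scrR(\cX) \xrightarrow{~|\cdot|~} \scrR_0(X),
\]
so that the statement follows from two separate (and conceptually clean) facts.

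First I would invoke Proposition \ref{atomreg}, which already provides that $\cl\colon \sSet(\cX^{\topc})\to\scrR(\cX)$ is a Boolean isomorphism sending $\xi\mapsto \cl\,\xi$. Then the substance reduces to showing $|\cdot|\colon \scrR(\cX)\to\scrR_0(X)$ is a Boolean isomorphism. Injectivity is inherited from the injectivity of the evaluation map $|\cdot|\colon\sSet(\cX)\to \sSet(X)$, while surjectivity onto $\scrR_0(X)$ is by the very definition of the subalgebra $\scrR_0(X)$ as the image of $\scrR(\cX)$ under realization.

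The content is checking that $|\cdot|$ intertwines the lattice operations on $\scrR$. Here I would exploit the Boolean hypothesis, cf.\ Definition \ref{regularCA}, namely $\cl|\cU|=|\cl\,\cU|$ for all $\cU\in\sSet(\cX)$, from which the dual relation $\Int|\cU|=|\Int\cU|$ also follows. With these two identities in hand, for any $\cU,\cU'\in\scrR(\cX)$:
\[
|\cU\vee\cU'| = |\cU\cup\cU'| = |\cU|\cup|\cU'| = |\cU|\vee|\cU'|,
\]
\[
|\cU\wedge\cU'| = |\cl\Int(\cU\cap\cU')| = \cl\Int(|\cU|\cap|\cU'|) = |\cU|\wedge|\cU'|,
\]
using that $|\cdot|$ is already a Boolean homomorphism on the ambient power sets. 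A similar verification handles the unary $(\cdot)^\#$. This establishes that $|\cdot|\colon\scrR(\cX)\to\scrR_0(X)$ is a Boolean isomorphism, and composing with Proposition \ref{atomreg} yields that $\lbr\xi\rbr = \cl|\xi|$ defines a Boolean (hence lattice) isomorphism $\sSet(\cX^{\topc})\to \scrR_0(X)$.

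The main subtlety — rather than a real obstacle — lies in being careful that the meet $\wedge$ in $\scrR$ is not simply set-theoretic intersection but $\cl\Int(\cdot\cap\cdot)$, so that the Boolean hypothesis is genuinely required in the $\wedge$-calculation; without $\cl|\cU|=|\cl\,\cU|$ one would only have the inclusion $\cl|\cU|\subset|\cl\,\cU|$ coming from continuity, which is not enough to match $\wedge$ on both sides. Everything else is essentially bookkeeping once Proposition \ref{atomreg} is in hand.
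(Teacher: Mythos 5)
Your proof is correct and follows exactly the route the paper sets up: the paper leaves Proposition \ref{realization} without an explicit proof, but the discussion immediately preceding it — establishing that the Boolean condition $\cl|\cU|=|\cl\,\cU|$ (and its dual $\Int|\cU|=|\Int\cU|$) forces $\cU\in\scrR(\cX)$ iff $|\cU|\in\scrR(X)$, then defining $\scrR_0(X)$ as the image of $\scrR(\cX)$ — together with Proposition \ref{atomreg} is precisely the two-step factorization you carry out. Your explicit verification of the $\wedge$-compatibility, and your remark that without the full Boolean hypothesis one would only have the one-sided inclusion $\cl|\cU|\subset|\cl\,\cU|$ from continuity, correctly identifies the point where the hypothesis is essential.
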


Conversely, a finite sub-algebra $\scrR_0(X)\subset \scrR(X)$ yields a finite
closure algebra $\bigl(\sSet(\cX),\ccl\bigr)$ as follows. 
 Define $[\scrR_0(X)]$ as the smallest sub-algebra  in $\sSet(X)$ containing $\scrR_0(X)$. Denote the set of atoms
by $\sJ\bigl( [\scrR_0(X)]\bigr) := \bigl\{|\xi| \mid \xi\in \cX\bigr\}$ for some finite set $\cX$.
This defines the pre-order $(\cX,\le)$ via the relation: $\xi\le \xi'$ if and only if $|\xi|\subset \ccl|\xi'|$. Via the pre-order we obtain a closure operator on $\sSet(\cX)$ via \eqref{spec1}.

% \marginpar{\footnotesize {\color{red} what is the context of this result?} This yields regular discretizations via algebras of regular closed sets}
\begin{proposition}
\label{realization22}
Let $\scrR_0(X)\subset \scrR(X)$ be a finite sub-algebra.
Then, $[\scrR_0(X)]\subset \sSet(X)$ defines a unique finite closure algebra
 $\bigl(\sSet(\cX),\ccl\bigr)$, where the $\ccl$ is defined by \eqref{spec1}.
\end{proposition}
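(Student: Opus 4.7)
The goal is to extract a finite closure algebra $(\sSet(\cX),\cl)$ from the smallest sub-Boolean-algebra $[\scrR_0(X)]\subset \sSet(X)$ generated by the finitely many regular closed sets in $\scrR_0(X)$. My plan is to proceed in four steps: identify the underlying set $\cX$ via atoms, show the induced relation is a pre-order, verify the Kuratowski axioms, and check uniqueness.

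First, I would observe that, since $\scrR_0(X)$ is finite and closed under $\vee,\wedge,{}^\#$, the generated sub-algebra $[\scrR_0(X)]\subset \sSet(X)$ is itself finite (being obtained by closing a finite set under finitely many Boolean operations). As a finite Boolean sub-algebra of $\sSet(X)$ it is atomic, and the indicated labeling $\sJ\bigl([\scrR_0(X)]\bigr)=\bigl\{|\xi|\mid \xi\in\cX\bigr\}$ provides a bijection between atoms and a finite set $\cX$. By the structure theorem for finite Boolean algebras, the map $|\cdot|\colon\sSet(\cX)\to [\scrR_0(X)]$ defined on atoms and extended by unions is a Boolean isomorphism onto $[\scrR_0(X)]$, i.e.\ $|\cU|=\bigcup_{\xi\in\cU}|\xi|$.

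Next, I would verify that the relation $\xi\le\xi'\iff |\xi|\subset \cl|\xi'|$ on $\cX$ is a pre-order. Reflexivity is immediate from axiom (K4) for the ambient $\cl$ on $X$, since $|\xi|\subset \cl|\xi|$. For transitivity, suppose $|\xi|\subset \cl|\xi'|$ and $|\xi'|\subset \cl|\xi''|$. Monotonicity of $\cl$ (a consequence of (K2)) and idempotency (which follows from (K3) and (K4)) give $\cl|\xi'|\subset \cl\cl|\xi''|=\cl|\xi''|$, hence $|\xi|\subset \cl|\xi''|$, i.e.\ $\xi\le \xi''$. This step is the only place where the ambient Kuratowski structure is genuinely used, and is what I would expect to be the main (though not deep) technical point.

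Having a pre-order $(\cX,\le)$ on hand, the operator defined by \eqref{spec1}, namely $\cl\,\cU:=\bigcup_{\xi'\in\cU}\big\downarrow\xi'$, is precisely the closure of the associated Alexandrov topology. Axioms (K1)–(K4) are then standard: normality is $\big\downarrow\varnothing=\varnothing$; additivity and expansivity follow from the union definition together with reflexivity of $\le$; idempotency follows from transitivity of $\le$, since $\big\downarrow\big\downarrow\xi'=\big\downarrow\xi'$. Thus $(\sSet(\cX),\cl)$ is a finite closure algebra.

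Finally, for uniqueness, I would note that at every stage no auxiliary choice intervenes: the atoms of $[\scrR_0(X)]$ are canonically determined (hence so is $\cX$ up to relabeling and the evaluation map $|\cdot|$); the pre-order is defined by an unambiguous condition on the atoms; and the Alexandrov closure operator determined by a pre-order is unique. Consequently $[\scrR_0(X)]$ determines $(\sSet(\cX),\cl)$ uniquely, completing the proof.
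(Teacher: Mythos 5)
Your proof is correct and follows exactly the construction the paper sketches in the paragraph preceding Proposition \ref{realization22}; the paper itself gives no separate proof, so your write-up simply makes explicit (atoms of $[\scrR_0(X)]$ give $\cX$, the ambient $\cl$ induces a pre-order on $\cX$, and the Alexandrov closure of that pre-order is a Kuratowski closure) what the text takes as evident. No gaps.
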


The above statement can be rephrased as:
%In other words
a finite sub-algebra $\scrR_0(X)\subset \scrR(X)$  induces a unique Boolean CA-discretization $(\cX,\ccl,|\cdot|)$  for $X$, where $|\cdot|$ an injective homomorphism of closure algebras.

\subsection{CW-decomposition maps}\label{CW}\index{CW-decomposition map}\index{CW decomposition}

% Imposing conditions on $\disc$ allow the relative homology groups of~\eqref{eqn:conleycomplex}, as well as the connecting homomorphisms  in \eqref{CMEntry},  to be computed explicitly, i.e.,  the Conley complex for \eqref{eq:discgrading} can be constructed. 
Consider a  discretization
$\disc\colon X \twoheadrightarrow \cX$ without indicating a specific pre-order on $\cX$ for now. 
% If we regard $\cX$  as anti-chain the filtering $|\cdot| = \grade^{-1}\colon \sSet(\cX) \to \sSet(X)$ is the injective evaluation map.
% and a (first) pre-order $\le$ on $\cX$ such that
% \begin{enumerate}
%     \item [(i)] the map $\cell$ is surjective;
%     \item [(ii)] with respect to the first pre-order $\le$ on $\cX$ and first topology $\scrT$ on $X$
%     the map $\cell$ is continuous open.
% \end{enumerate}
%
Let  $B^q$ and $\bar B^q$
% $B^n := \bigl\{x\in \R^n \mid \| x\| <1\bigr\}$ and let  $\bar{B}^n := \bigl\{x\in \R^n \mid \| x\| \leq 1\bigr\}$ 
denote the open and closed unit balls in $\R^q$ respectively (where $B^0$ and $\bar{B}^0$ denote the one point space).  We say that $\xi$ is an \emph{$q$-cell} if $|\xi|$ is homeomorphic to an open ball $B^q$.  The integer $q$ is called the \emph{dimension}\index{Dimension} of $\xi$ and is denoted $\dim \xi$.  
Suppose $\disc$ has the property that
%
% Suppose $(\cX,\cl,|\cdot|)$ is an Alexandrov discretization for $X$ for which 
%
every $\xi$ is  an $q$-cell for some $q$.
% If each $\xi\in \cX$ is an $n$-cell for some $n$, then an Alexandrov discretization $(\cX,\cl,|\cdot|)$ is called a \emph{cell decomposition} of $X$. 
Given such a discretization map assigning dimension to a cell is an order-preserving map 
\[
\dim\colon \cX \to (\N,\le),
\]
when $\cX$ is regarded as anti-chain.
%\footnote{If we regard $\cX$ as anti-chain then $\disc$ is a grading in $X$.}
% Given such an Alexandrov decomposition, assigning dimension to a cell is an order-preserving map 
% \begin{equation}
%     \label{dim}
%     \dim \colon \bigl(\cX,\leq\bigr)\longrightarrow \bigl(\N,\le\bigr).
% \end{equation}
% \begin{equation}
%     \label{dim}
%     \dim \colon \bigl(\cX,\leq\bigr)\twoheadrightarrow \bigl(\sDC(\cX),\le\bigr) \rightarrowtail \bigl(\N,\le\bigr),
% \end{equation}
% where $\sDC(\cX)\subset \N$ is the finite set of possible dimensions of cells with the usual ordering of the integers. 
%and where $\N[d]:=\{0,1,\cdots,d\}$ with $d$ is the maximal dimension in $\cX$.
Note therefore that the anti-chain $\cX$ is a naturally graded set with respect to $\dim$, i.e., $\cX=\bigcup_{q\in \N} G_q\cX$, where $G_q\cX=\dim^{-1}\!q$ is the set of $q$-cells. The latter also
%The grading $\dim\colon \cX \to \N$ 
yields the filtering %of closed subcomplexes in $\Sub_\cl(\cX)$:
\[
\big\downarrow q \mapsto F_{\downarrow q}\cX,\quad \big\downarrow q \in \sO(\N),
%\varnothing \subset F_0 \cX \subset F_1\cX \subset \cdots \subset F_d\cX,
\]
where, by Birkhoff duality, $F_{\downarrow q}\cX = \dim^{-1}\!\big\downarrow q$, $\big\downarrow q =\{0,1,\cdots,q\}$ and $q\in \N$. %This yields the decomposition $\cX =\bigsqcup_{n\in \N} G_n\cX$, with $G_n\cX = F_{\downarrow n}\cX\smin F_{\downarrow(n-1)}\cX = \dim^{-1}(n)$. The realizations in $X$ is denoted by $F_{\downarrow n} X$ and $G_nX$ respectively.
The composition $X\xrightarrow{\disc}\cX\xrightarrow{\dim}\N$, denoted by $\skel$,  also defines a discretization on $X$ and yields the filtering
$\big\downarrow q \mapsto F_{\downarrow q} X = \skel^{-1} \big\downarrow q$, $\big\downarrow q \in \sO(\N)$.

\begin{definition}
\label{CWdefn}
A \emph{CW-decomposition map}\index{CW-decomposition map}\index{CW decomposition} on $X$, denoted by $\cell\colon X\twoheadrightarrow\cX$, is a discretization map where each $\xi\in \cX$ is an $q$-cell for some $q$.  Moreover, for every $\xi\in \cX$ there is a continuous map $f_\xi\colon \bar{B}^q\to X$, { where $q=\dim \xi$}, such that 
\begin{enumerate}
    \item [(i)] $f_\xi$ restricts to a homeomorphism $f_\xi|_{B^q}\colon B^q\to |\xi|$;
    \item [(ii)] $f_\xi ( \bar{B}^q\smin B^q)\subset F_{\downarrow (q-1)}X$. %|\dim^{-1}(n-1)|$. %X^{n-1}$.
\end{enumerate}
% (i) $f_\xi$ restricts to a homeomorphism $f_\xi|_{B^n}\colon B^n\to |\xi|$ and (ii) $f_\xi ( \bar{B}^n\smin B^n)\subset X^{n-1}$.  
% If for each $\xi\in \cX$ there is some $n\geq 0$ such that the pair $|\xi|\subset \lbr \xi\rbr$ is homeomorphic to the pair $B^n \subset \bar{B}^n$, then $(\cX,\cl,|\cdot|)$ is called a \emph{regular} CW-decomposition. 
A subset $U\subset X$ is open (closed) if and only if    $f_\xi^{-1}(U)$ is open (closed) in $\bar B^q$
for all $\xi\in \cX$.  A CW-decomposition map is \emph{regular}\index{CW-decomposition map!regular} if the maps $f_\xi$ are embeddings.
%The associated discretization $(\cX,\cl,|\cdot|)$ is called a \emph{CW-decomposition} for $X$.
\end{definition}

Note that if $X$ admits a (finite) CW-decomposition map then $X$ is a compact Hausdorff space, cf.\ \cite{Hatcher}. 
%We refer to $X$ as a finite \emph{CW-complex} in this case.
Since $X$ is Hausdorff, it follows that $\lbr \xi\rbr = f_\xi(\bar{B}^q)$.\footnote{By continuity $f_\xi(\bar{B}^q) \subset \cl f_\xi({B}^q)= \lbr\xi\rbr$. On the other hand $\lbr \xi\rbr = \cl f_\xi({B}^q) \subset \cl f_\xi(\bar{B}^q) = f_\xi(\bar{B}^q)$ since the continuous image of a compact set is compact and in a Hausdorff space compact sets are closed.} CW-decompositions are general enough to include simplicial and cubical complexes.  
%If for each $\xi\in \cX$ there is some $n\geq 0$ such that the pair $|\xi|\subset \lbr \xi\rbr$ is homeomorphic to the pair $B^n \subset \bar{B}^n$, then $(\cX,\cl,|\cdot|)$ is called a \emph{regular} CW-decomposition.  {\color{red} do we use regular in the paper anywhere?  If not, we should get rid of it as it overloads (?) the term regular}
From a CW-decomposition map we can define the following finite topology in terms of a pre-order on $\cX$:
\[
\xi\le \xi' \quad\text{if and only if}\quad |\xi|\subset \cl|\xi'|,
\]
which is called the \emph{face partial order}\index{Face partial order} on $\cX$. 
%The name is justified by the following result. 
% is motivated by:
%The CW-properties of $\cell$ yield the following consequence:
%A first consequence of this definition with respect to a CW-decomposition is a partial order.
%
\begin{lemma}\label{lem:faceposet}
The pre-order $(\cX,\le)$ is a partial order and the associated Alexandrov topology is
a $T_0$ topology. % which is also referred to as the {face partial order}.
% If $(\cX,\cl,|\cdot|)$ is a finite CW-decomposition for $X$, then $(\cX,\scrT_\le)$ is a $T_0$ topological space, and thus $\le$ is a partial order. % which is also referred to as the {face partial order}.
\end{lemma}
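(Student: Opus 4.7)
The plan is to verify the three pre-order axioms (reflexivity, transitivity, antisymmetry) and then deduce the $T_0$ property. Reflexivity $\xi\le\xi$ is immediate from (K4): $|\xi|\subset\cl|\xi|$. Transitivity follows from monotonicity and idempotency of $\cl$: if $|\xi|\subset\cl|\xi'|$ and $|\xi'|\subset\cl|\xi''|$, then $|\xi|\subset\cl|\xi'|\subset\cl(\cl|\xi''|)=\cl|\xi''|$.

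The only real content is antisymmetry, and this is where I would use the two defining properties of a CW-decomposition map. The essential input is the identity
\[
\cl|\xi'| \;=\; f_{\xi'}(\bar B^{q'}) \;=\; |\xi'| \,\cup\, f_{\xi'}(\bar B^{q'}\smin B^{q'}) \;\subset\; |\xi'|\,\cup\, F_{\downarrow(q'-1)}X,
\]
where $q'=\dim(\xi')$; the first equality uses that $X$ is Hausdorff (so $f_{\xi'}(\bar B^{q'})$ is compact hence closed and contains $|\xi'|$), and the inclusion uses Defn.\ \ref{CWdefn}(ii). Now suppose $\xi\le \xi'$ with $\xi\ne \xi'$. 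Since $\cell$ is a map, the cells $\{|\xi|\}_{\xi\in\cX}$ are pairwise disjoint, so $|\xi|\cap |\xi'|=\varnothing$. Combining this with $|\xi|\subset\cl|\xi'|$ gives $|\xi|\subset F_{\downarrow(q'-1)}X$, forcing $\dim(\xi)\le q'-1<\dim(\xi')$. Hence $\xi\le\xi'$ with $\xi\ne\xi'$ implies the strict dimension inequality $\dim(\xi)<\dim(\xi')$. If both $\xi\le\xi'$ and $\xi'\le\xi$ held with $\xi\ne\xi'$ we would obtain $\dim(\xi)<\dim(\xi')<\dim(\xi)$, a contradiction. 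Therefore $\xi=\xi'$, establishing antisymmetry.

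Finally, having shown that $(\cX,\le)$ is a genuine partial order, the associated Alexandrov topology is $T_0$ by the standard correspondence recorded in Sect.\ \ref{CA-disc} (see the footnote to Eqn.\ \eqref{spec1}): two cells $\xi,\xi'$ are topologically indistinguishable precisely when $\xi\le\xi'$ and $\xi'\le\xi$, which by antisymmetry forces $\xi=\xi'$. The main (indeed only) obstacle is the antisymmetry step; once one recognizes that property (ii) of a CW-decomposition map forces a strict drop of dimension along non-trivial face relations, the rest is formal.
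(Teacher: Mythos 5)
Your proof is correct and follows essentially the same route as the paper's: both derive antisymmetry from Defn.\ \ref{CWdefn}(ii) together with pairwise disjointness of cells, via a forced strict drop of dimension. A small stylistic difference is that you isolate the statement that $\xi\le\xi'$ with $\xi\neq\xi'$ forces $\dim(\xi)<\dim(\xi')$ as an intermediate step (effectively proving the order-preservation of $\dim$ from Lemma \ref{facepart1}(ii) en route), whereas the paper assumes both inequalities $\xi\le\xi'$ and $\xi'\le\xi$ at once and derives a symmetric contradiction; the underlying argument is the same.
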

\begin{proof}
Suppose $\xi\neq\xi'$ and $\xi\sim\xi'$, i.e. $\xi\le \xi'$ and $\xi'\le \xi$, which implies that $\cl|\xi|=\cl|\xi'|$ and thus $f_\xi(\bar B^q) = f_{\xi'}(\bar B^q)$.
%, i.e. $f_\xi(B^n) \cap f_{\xi'}( B^n) =\varnothing$.
Furthermore,
$f_\xi(\bar B^q)  = f_\xi( \bar{B}^q\smin B^q \cup B^q) = f_\xi( \bar{B}^q\smin B^q) \cup |\xi|$,
and similarly $f_{\xi'}(\bar B^q) = f_{\xi'}( \bar{B}^q\smin B^q) \cup |\xi'|$.
By assumption $|\xi|\cap |\xi'|=\varnothing$, which yields that 
\[
|\xi|\subset f_{\xi'}( \bar{B}^q\smin B^q) \subset F_{\downarrow (q-1)}X,\quad
|\xi'|\subset f_{\xi}( \bar{B}^q\smin B^q) \subset F_{\downarrow (q-1)}X.
\]
This contradicts the definition of $F_{\downarrow (q-1)}X$ and the fact that all cells are realized as disjoint sets in $X$. Therefore $\xi\sim \xi'$ if and only if $\xi=\xi'$ and $\le$ is a partial order and the Alexandrov topology is $T_0$.
\end{proof}

The next step is to show that $\cell\colon X\to (\cX,\leq)$ is a continuous discretization map, i.e., that $\le$ is $\scrT$-consistent.  Moreover, we also show that $\dim$ is order-preserving.

\begin{lemma}
\label{facepart1}
Let $\cl\colon \sSet(\cX) \to \sSet(\cX)$ be the closure operator defined by face partial order $\le$. Then,
\begin{enumerate}
    \item [{\rm (i)}] %the face partial order 
    $\leq$ is a $\scrT$-consistent partial order and $\cl|\xi| = |\ccl~\xi|$ for all $\xi\in \cX$;
    \item [{\rm (ii)}] $\dim\colon (\cX,\le) \to (\N,\le)$ is order-preserving.
\end{enumerate}
\end{lemma}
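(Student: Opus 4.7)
The plan is to prove (ii) first, since it does not use (i), and then to leverage it as the engine of a dimension induction for (i). For (ii), take $\xi\le\xi'$; if $\xi=\xi'$ the dimensions agree trivially. Otherwise, cells partition $X$ so $|\xi|\cap|\xi'|=\varnothing$, while $|\xi|\subset\cl|\xi'|$ by the definition of the face partial order. Compactness of $\bar B^{q'}$ together with Hausdorffness of $X$ (noted in the paper just before the lemma) yields $\cl|\xi'|=f_{\xi'}(\bar B^{q'})$, hence
\[
|\xi|\subset\cl|\xi'|\smin|\xi'|\subset f_{\xi'}(\bar B^{q'}\smin B^{q'})\subset F_{\downarrow(q'-1)}X.
\]
Because $|\xi|$ is a single block of the partition sitting inside a disjoint union of cells of strictly smaller dimension, it must coincide with one of them, forcing $\dim\xi\le q'-1<\dim\xi'$.

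For (i), note that $\scrT$-consistency is equivalent to the singleton inclusion $\cl|\xi|\subset|\cl\xi|$ (additivity of $\cl$ extends it to arbitrary $\cU$). The opposite inclusion $|\cl\xi|\subset\cl|\xi|$ is immediate: for every $\xi'\le\xi$ one has $|\xi'|\subset\cl|\xi|$ by the definition of $\le$, hence $|\cl\xi|=\bigcup_{\xi'\le\xi}|\xi'|\subset\cl|\xi|$. I would prove $\cl|\xi|\subset|\cl\xi|$ by induction on $q=\dim\xi$. The base $q=0$ is trivial: $\partial\bar B^0=\varnothing$ and, by (ii), no $\xi'<\xi$ exists, so $\cl|\xi|=f_\xi(\bar B^0)=|\xi|=|\cl\xi|$. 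For the step, (ii) guarantees that $\xi'<\xi$ forces $\dim\xi'<q$, so the inductive hypothesis gives $\cl|\xi'|=|\cl\xi'|$, a closed subset of $X$. Writing
\[
|\cl\xi|=|\xi|\cup\bigcup_{\xi'<\xi}|\cl\xi'|=|\xi|\cup\bigcup_{\xi'<\xi}\cl|\xi'|,
\]
the second summand is a finite union of closed sets. Comparing with $\cl|\xi|=f_\xi(\bar B^q)=|\xi|\cup f_\xi(\partial\bar B^q)$, the task reduces to showing that every cell $|\eta|$ met by $f_\xi(\partial\bar B^q)$ lies entirely in $\cl|\xi|$, so that $\eta\le\xi$ and every $y\in|\eta|\cap f_\xi(\partial\bar B^q)$ belongs to $|\cl\xi|$.

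This last reduction is the main obstacle, since a priori $|\eta|\cap\cl|\xi|\neq\varnothing$ does not force $|\eta|\subset\cl|\xi|$. The cleanest route is to prove instead that $|\cl\xi|$ itself is closed in $X$; then $|\cl\xi|\supset|\xi|$ forces $\cl|\xi|\subset|\cl\xi|$. Closedness would be verified through the weak topology axiom of Def. 3.1: for each $\eta\in\cX$, check that $f_\eta^{-1}(|\cl\xi|)$ is closed in $\bar B^{q_\eta}$. When $\eta\le\xi$, the inductive hypothesis makes $\cl|\eta|\subset|\cl\xi|$ and thus $f_\eta^{-1}(|\cl\xi|)=\bar B^{q_\eta}$; when $\eta\not\le\xi$, the preimage lies in $\partial\bar B^{q_\eta}$, where its closedness is to be extracted from the inductive description of $F_{\downarrow(q_\eta-1)}X$ as a union of closed realizations $\cl|\xi'|=|\cl\xi'|$, combined with continuity of $f_\eta$ restricted to the boundary. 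This closure-finite bookkeeping, which packages the classical CW-complex fact that closures of cells are unions of cells, is the technical heart of the inductive step.
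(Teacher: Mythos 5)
Your proof of (ii) is correct and self-contained, and cleaner than the paper's: you obtain $\dim\xi<\dim\xi'$ for $\xi<\xi'$ directly from
$|\xi|\subset\cl|\xi'|\smin|\xi'|\subset f_{\xi'}(\bar B^{q'}\smin B^{q'})\subset F_{\downarrow(q'-1)}X$
together with the fact that the cells partition $X$ (so a whole cell lying inside $F_{\downarrow(q'-1)}X$ must itself have dimension $\le q'-1$), whereas the paper first proves (i) and then derives (ii) from the resulting decomposition of $\cl|\xi|\smin|\xi|$. Your reordering avoids leaning on (i), which turns out to be wise.

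In (i), however, there is a genuine gap, and it sits in the case $\eta=\xi$, which your dichotomy ``$\eta\le\xi$ versus $\eta\not\le\xi$'' silently absorbs. For $\eta<\xi$, part (ii) forces $\dim\eta<q$, your inductive hypothesis applies, and indeed $f_\eta^{-1}(|\cl~\xi|)=\bar B^{q_\eta}$. But for $\eta=\xi$ there is no lower-dimensional inductive hypothesis to invoke, and
\[
f_\xi^{-1}\bigl(|\cl~\xi|\bigr)=B^q\cup f_\xi^{-1}\Bigl(\bigcup_{\xi'<\xi}|\xi'|\Bigr),
\]
which (since $\cl\,B^q=\bar B^q$ in $\bar B^q$) is closed in $\bar B^q$ precisely when $f_\xi(\bar B^q\smin B^q)\subset\bigcup_{\xi'<\xi}|\xi'|$ --- and that inclusion is exactly $\cl|\xi|\subset|\cl~\xi|$, the thing you set out to prove. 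So the induction is circular, not a bookkeeping task. Moreover the assertion that $\cl|\xi|$ is a union of whole cells is not a formal consequence of Definition~\ref{CWdefn}: attach a $2$-cell to a $1$-complex along a map whose image is a proper closed arc inside a single open $1$-cell and the closure of the $2$-cell meets, but does not contain, that $1$-cell, so both $\scrT$-consistency and the identity $\cl|\xi|=|\cl~\xi|$ fail for it. The paper's own proof elides the same point (``This implies that $\cl|\xi|$ is a union of sets $|\xi'|$'' is asserted, not argued), so the difficulty is shared; the property does hold for the regular and cubical decompositions the paper actually uses, but it needs to be hypothesized or proved from additional structure, not derived from Definition~\ref{CWdefn}(i)--(ii) alone.
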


\begin{proof}
We have that $\cl|\xi| = \lbr\xi\rbr = f_\xi(\bar B^q)$ and thus, as before using Definition \ref{CWdefn}(i)-(ii),
$f_\xi(\bar B^q)  = f_\xi( \bar{B}^q\smin B^q \cup B^q) = f_\xi( \bar{B}^q\smin B^q) \cup |\xi|$.
This implies that $\cl|\xi|$ is a union of sets $|\xi'|$ and more precisely
\[
%\begin{aligned}
\cl|\xi| = \bigcup \bigl\{ |\xi'|\mid |\xi'|\subset \cl|\xi|\bigr\}
=\bigcup \bigl\{ |\xi'|\mid \xi'\le \xi\bigr\} = \bigl|\cl~\xi\bigr|,
%\end{aligned}
\]
which proves (i). 
As for (ii) we have that $\cl|\xi| \smin |\xi|=f_\xi( \bar{B}^q\smin B^q) = \bigcup \bigl\{ |\xi'|\mid \xi'\le \xi\bigr\}\subset F_{\downarrow (q-1)}X$
and thus $\dim \xi'\le q-1$. Consequently, $\xi'\le \xi$ implies $\dim\xi'\le \dim\xi$.
\end{proof}

Lemma \ref{facepart1} shows in particular that $\cell$ is continuous open map and thus a discretization with respect to the face partial order. Moreover, $|\cdot|$ is an injective homomorphism of closure algebras. Indeed by additivity of $|\cdot|$ and $\cl$ we have that 
$\cl|\cU| = |\ccl~\cU|$ for all $\cU\in \sSet(\cX)$.
The associated CA-discretization $(\cX,\ccl,|\cdot|)$ is Boolean and is called a \emph{CW-decomposition}\index{CW decomposition} for $X$.

\begin{lemma}
\label{regCW}
A CW-decomposition map\index{CW-decomposition map}\index{CW decomposition}\index{Discretization map!natural} $\cell\colon X\to \cX$ is a natural discretization map.\footnote{cf.\ Defn.\ \ref{regudisc}.}
\end{lemma}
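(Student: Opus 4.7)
The plan is to verify the two defining properties of Definition \ref{regudisc}: continuity of $\cell$, and the assertion that the filtering $\cell^{-1}\colon \sO(\cX,\le)\to\sSet(X)$ consists of mutually good pairs. Continuity is not an issue since Lemma \ref{facepart1}(i) already establishes that the face partial order is $\scrT$-consistent. The remaining work, therefore, is to show that for every $\cU\subset \cU'$ in $\sO(\cX,\le)$ the pair $\bigl(|\cU'|,|\cU|\bigr)$ is good.

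First I would check that each $|\cU|$ with $\cU\in\sO(\cX,\le)$ is a closed subcomplex of $X$ in the classical sense. Indeed, $\cU$ being a down-set means that $\xi\in\cU$ implies $\xi'\in\cU$ for every $\xi'\le\xi$, i.e. every cell in $\cl\,\xi$ lies in $\cU$; combined with Lemma \ref{facepart1}(i), which gives $\cl|\xi|=|\cl\,\xi|$, this yields that $|\cU|=\bigcup_{\xi\in\cU}|\xi|$ is closed in $X$ and equals a union of characteristic images $f_\xi(\bar B^{\dim\xi})$, so the characteristic maps of cells in $\cU$ make $|\cU|$ into a CW-complex in its own right, and $|\cU|$ is a closed subcomplex of $X$. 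In particular $|\cU|$ is a closed subcomplex of $|\cU'|$ whenever $\cU\subset\cU'$ in $\sO(\cX,\le)$.

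Second, I would invoke the standard fact (e.g.\ \cite[Prop.\ A.5]{Hatcher}) that if $Y$ is a CW-complex and $A\subset Y$ is a closed subcomplex, then $A$ is a deformation retract of some open neighborhood in $Y$, so $(Y,A)$ is a good pair. Applying this to $Y=|\cU'|$ and $A=|\cU|$ gives precisely that $\bigl(|\cU'|,|\cU|\bigr)$ is a good pair. Since $\cU\subset\cU'$ was arbitrary in $\sO(\cX,\le)$, the filtering $\cell^{-1}$ consists of mutually good pairs, completing the proof.

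There is no serious obstacle here; the only verifications of substance are (a) that down-sets in the face order correspond exactly to closed subcomplexes (which is the content of Lemma \ref{facepart1}(i) and the CW closure-finiteness condition in Definition \ref{CWdefn}(ii)), and (b) that the subcomplex-goodness theorem for CW-pairs applies verbatim to the filtering. Both steps are formal once Lemma \ref{facepart1} is in hand.
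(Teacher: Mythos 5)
Your proof is correct and follows essentially the same route as the paper's own one-line argument (down-sets realize as closed subcomplexes, then invoke Hatcher Prop.~A.5 for CW pairs). You are somewhat more careful than the paper in applying the result to the pair $\bigl(|\cU'|,|\cU|\bigr)$ for each inclusion $\cU\subset\cU'$, which is what ``mutually good pairs'' literally demands, whereas the paper's proof states it only for the ambient $X$; this is a welcome precision but not a different method.
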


\begin{proof}
For every  closed subset $\cU\subset \cX$ the realization $|\cU| \subset X$ is a sub CW-decomposition and therefore a deformation retract of a neighborhood in $X$, cf.\ \cite[Prop.\ A.5.]{Hatcher}.
\end{proof}

\subsection{General closure and bi-closure algebras}
\label{clandbiclos}

A more general notion of closure algebra is given by a Boolean algebra $\sB=(\sB,\vee,\wedge,^\neg)$ and an operator $\cl\colon \sB \to \sB$
(an abstract closure operator)\index{Closure operator!abstract} satisfying Axiom (K1)-(K4).\footnote{Replace $\varnothing$ and $X$ by the neutral elements $0$ and $1$ respectively, as well as the binary operations and complement.} The Boolean algebra $\sB$ is not necessarily complete, nor atomic. Such algebras are referred to as \emph{closure algebras} and are denoted as $(\sB,\cl)$.\index{Closure algebra} In \cite{mcktar} various representation results for general closure algebras are given. 
For a closure algebra  the lattice of closed elements is given by $\sFwdset(\cl)=\{b\in \sB\mid\cl~b\le b\}$.
The latter can be considered as \emph{generalized topological space},\index{Generalized topological space} q.v.\ \cite{naturman}.
In terms of closed sets this entails that $\sFwdset(\cl)$ is a bounded lattice, cf.\ Prop.\ \ref{modaltop} and if $\bigwedge b$, $b\in \sFwdset(\cl)$, exists in $\sB$, then $\bigwedge b\in \sFwdset(\cl)$.
Moreover, the expression $\max\bigl\{a\mid b\le a,~\cl~ a=a\bigr\}$ exists in $\sB$. By definition the latter satisfies $b\le \max\bigl\{a\mid b\le a,~\cl~ a=a\bigr\} \le \cl~b$ and thus it exists and $\cl~b = \bigwedge\bigl\{a\mid b\le a,~\cl~ a=a\bigr\}$. The generalized topological space is denoted $\bigl(\sB,\sFwdset(\cl)\bigr)$. Closure algebras are equivalent to generalized topological spaces.
In a similar fashion a \emph{bi-closure algebra}\index{Bi-closure algebra} is given by a Boolean algebra and two abstract closure operators $\cl,\cl'\colon \sB\to \sB$ and is denoted by $(\sB,\cl,\cl')$.  For  bi-closure algebras we have an associated generalized bi-topological space.
The same consideration hold if we used derivative operators.\index{Derivative operator}

For bi-closure algebras discretization can be formulated as bofore. An embedding
\[
|\cdot|\colon\bigl( \sSet(\cX),\ccl,\ccl'\bigr) \rightarrowtail \bigl(\sB,\cl,\cl'\bigr),
\]
is a bi-topological CA-discretization if $\cl|\cU| \subset |\ccl\cU|$
and $\cl'|\cU| \subset |\ccl'\cU|$ for all $\cU\subset \cX$.
In order to describe discretization in terms of continuous discretization maps we need to use a representation of the bi-closure algebra such as the approach by Mckinsey-Tarski, cf.\ \cite{mcktar}, or
Jonsson-Tarski, cf.\ \cite{JonssonTarski}.

In a slightly more general setting one may define a \emph{modal algebra} by specifying a Boolean algebra $\sB$ and an  operator $\Phi\colon \sB\to \sB$ satisfying (M1)-(M2).\footnote{As for closure algebras use the neutral elements $0$ and $1$.} The latter is   called a(n) (abstract) modal operator\index{Modal operator!abstract} and the associated modal algebra is denoted by $(\sB,\Phi)$.
For a modal algebra the lattice of closed elements is given by  $\sFwdset(\Phi) = \{b\in \sB\mid \Phi b\le b\}$.
Similarly a \emph{bi-modal algebra}\index{Bi-modal algebra} is given by two modal operators $\Phi,\Phi'\colon\sB \to \sB$ and is denoted by $(\sB,\Phi,\Phi')$. 
Even though the closed element  almost yield  generalized topological spaces the correspondence is more involved in this case.
% work out details

%chap 3
\chapter{Flow topologies and discretization of dynamics}
\label{sec:disc-dyn}
In the previous sections we discussed 
topological and bi-topological spaces in terms of closure algebras 
which is the appropriate language for formalizing discretization of topology.
%and we discussed discretizations via closure algebras. 
% The latter was used to formulate methods of discretization when we regard dynamical systems as bi-topological spaces.
% The latter is subject of the next sections. 
The next step 
is to model the dynamics of
 semi-flows on topological spaces 
% as well as iterations of maps 
 via  appropriately constructed topologies on $X$.
 %interacting topologies.
% If we discard the notion of time in a dynamical system an order structure remains. If we combine dynamics induced order with continuity properties of the dynamical systems at hand we obtain a second topology.
Such topologies may be realized in many different ways and we refer to the dynamics induced topologies as \emph{flow topologies}.
The objective  is not to develop explicit methods for discretizing dynamics but to describe the contours of a theory that discretizes dynamics in terms of discretizing two topologies.
% %In this paper we focus on dynamical systems given by semi-flows on compact topological spaces $X$. 
% % Let $X$ be a topological space.\footnote{At a later stage we restrict to compact Hausdorff spaces $X$ which admit a finite CW-decomposition $(\cX,\cl,|\cdot|)$.} 
% In this paper a dynamical system is a semi-flow on a topological space $X$. To be more precise
% a \emph{semi-flow} on a topological space $X$
% is a continuous map $\phi\colon \R^+\times X \to X$ such that 
% \begin{enumerate}
%     \item[(i)] $\varphi(0,x)=x$ for all $x\in X$, and
%     \item[(ii)] $\varphi(t,\varphi(s,x)) =\varphi(t+s,x)$ for all $s,t\in \R^+$ and $x\in X$.
% \end{enumerate}
%In the previous sections we discussed the discretization of both topology and algebraic topology.
% The next step 
% is to model the dynamics of
%  semi-flows on topological spaces 
% % as well as iterations of maps 
%  via  appropriately constructed topologies on $X$.
%  %interacting topologies.
% The basic topology is defined to explore the order structure of a dynamical system. 
%We start with describing the latter.

\section{Dynamics as topology}
\label{twotopos}
As discussed in the previous sections a topological space $(X,\scrT)$  can be equivalently described via the closure algebra $\cl\colon\sSet(X) \to \sSet(X)$. 
This description is convenient for introducing new topologies in relation to dynamical systems.

% We denote this first topology by $\scrT$. 
%Another closure algebra can be defined as follows.
%\marginpar{\footnotesize\color{blue} Clean up.}
\subsection{Basic flow topologies}
\label{basicflowtop}\index{Flow topology}\index{Relational flow topology}
 
For a  semi-flow $\varphi$ define the (completely additive) modal operators\index{Modal operator!completely additive}
%\footnote{cf.\ Sect.\ \ref{JTduality}, Axioms (M1)-(M2).} 
$\der^-, \der^+\colon \sSet(X)\to \sSet(X)$  given by
\begin{equation}
    \label{secondcl3}
   U \mapsto \der^- U := \bigcup_{t> 0} \varphi({-t},U),\quad U \mapsto \der^+ U := \bigcup_{t> 0} \varphi(t,U),
\end{equation}
%where 
%\[
%\phi_t(U):=\bigl\{ y\in X\mid (x,y) \in \phi_t\text{~~for some~~} x\in U\bigr\}.
%\]
The operators $\der^-$ and $\der^+$ which are called the \emph{strict backward image}\index{Strict backward image} and \emph{strict forward image}\index{Strict forward image} operators respectively.
The operators $\cl^-=\id\cup \der^-$ and $\cl^+ = \id \cup \der^+$ are obtained by taking $t\ge 0$ and  satisfy
%
% \emph{forward-image} operator
%\begin{equation}
%    \label{secondcl1}
%    \rmGamma^+\colon \sSet(X) \to \sSet(X),\quad U \mapsto \rmGamma^+ U := \bigcup_{t\ge 0} \varphi(t,U).
%\end{equation}
%\vskip-.2cm
%\noindent By the properties of union with  respect to image we observe that 
all four Kuratowski axioms (K1)-(K4) for closure operators.
% : for all $U,U'\subset X$,
% \begin{enumerate}
%     \item [(i)] (normalized) $\rmGamma^+\varnothing = \varnothing$;
%     \item [(ii)] (additive) $\rmGamma^+(U\cup U') = \rmGamma^+U \cup \rmGamma^+U'$;
%     \item [(iii)] (idempotent) $\rmGamma^+\bigl(\rmGamma^+ U\bigr) = \rmGamma^+ U$;
%     \item [(iv)] (expansive) $U \subset \rmGamma^+ U$. 
% \end{enumerate}
% $\rmGamma^+\varnothing = \varnothing$, $\rmGamma^+(U\cup U') = \rmGamma^+U \cup \rmGamma^+U'$, $U \subset \rmGamma^+ U$ and $\rmGamma^+\bigl(\rmGamma^+ U\bigr) = \rmGamma^+ U$.
% \begin{lemma}
% \label{secondcl2}
% The operator $\rmGamma^+$ is a closure operator on $\sSet(X).$
% \end{lemma}
% \begin{proof}
% Obviously, $\rmGamma^+\varnothing = \varnothing$, $\rmGamma^+(U\cup U') = \rmGamma^+U \cup \rmGamma^+U'$ and $U \subset \rmGamma^+ U$.
% Next consider
% \[
% \begin{aligned}
% \rmGamma^+\bigl(\rmGamma^+ U\bigr) &= \bigcup_{t\ge 0}\varphi\bigl( t,\rmGamma^+ U\bigr)
% = \bigcup_{t\ge 0}\varphi\bigl( t,\bigcup_{s\ge 0} \varphi(s,U)\Bigr)\\
% &= \bigcup_{t\ge 0\atop s\ge 0}\varphi\bigl( t,\varphi(s,U)\bigr) = 
% \bigcup_{t\ge 0\atop s\ge 0}\varphi\bigl( t+s,U)\bigr) = \rmGamma^+ U,
% \end{aligned}
% \]
% which show that all four Kuratowski axioms are satisfied and $\rmGamma^+$ is a closure operator on $X$.
% \end{proof}
%
The derivative operators\index{Derivative operator} $\der^-$ and $\der^+$ define the topologies $\scrT^-$ and $\scrT^+$ on $X$ respectively, which are Alexandrov topologies on $X$. 
The associated specialization pre-order on $X$ defined by $\scrT^+$ will   be denoted by $\le^+$ and is
defined by $y\le^+ x$ if and only if $y\in \rmGamma^+\{x\}$. The latter is  
 characterized by
\[
y\le^+ x\quad\text{if and only if}\quad y = \varphi(t,x)\quad\text{for some}\quad t\ge 0.
\]
The pre-order $\le^+$ does record the directionality of the flow $\phi$ but discards the sense of time and invariance.
The closed sets in $\scrT^+$ are the forward invariant sets for $\phi$ and are denoted by $\Invsetpl(\varphi)$.
As a matter of fact, using the notation in Sect.\ \ref{modalstuff}, we have that $\Invsetpl(\varphi) = \sFwdset(\der^+) = \sFwdset(\cl^+)$.
The two topologies $\scrT$ and $\scrT^+$ combined comprise the bi-topological space $(X,\scrT,\scrT^+)$.
The associated specialization pre-order on $X$ defined by $\scrT^-$ will   be denoted by $\le^-$ and is
defined by $y\le^- x$ if and only if $y\in \rmGamma^-\{x\}$. This pre-order is characterized by
$y\le^- x$ if and only if $y \in\varphi({-t},x)$ for some $t\ge 0$, i.e. $x=\varphi(t,y)$ for some $t\ge 0$. This shows that $\le^-$ is the opposite pre-order to $\le^+$
and closed sets in $\scrT^+$ are open sets in $\scrT^-$ and vice versa.
The Alexandrov topologies $\scrT^-$ and $\scrT^+$ are each other's opposites.
The closed sets in $\scrT^-$ are the backward invariant sets for $\varphi$ and are denoted by $\Invsetneg(\varphi)$.
The two topologies $\scrT$ and $\scrT^-$ combined comprise the bi-topological space $(X,\scrT,\scrT^-)$.

\begin{remark}
\label{aboutstar}
For the topologies $\scrT^-$ and $\scrT^+$  the closure  and conjugate closure operators are related: $\bcl^+ = \nst^+= \cl^-$ and $\bcl^- = \nst^- = \cl^+$.
\end{remark}

%\begin{remark}
%\label{pospace}
%Since the topologies $\scrT^-$ and  $\scrT^+$ are Alexandrov the associated specialization pre-orders $\le^-$ and  $\le^+$ equivalently describe the topologies.
%Therefore one can also think of $(X,\scrT,\le^-)$ and $(X,\scrT,\le^+)$ as a \emph{pospaces}, i.e  $(X,\scrT)$ is a topological space  while $(X,\le^-)$ and $(X,\le^+)$ are pre-orders, cf.\ \cite{Woolf}. %This structure plays a role in the treatment of Morse pre-orders.
%\end{remark}

Since $\cl^-$ and $\cl^+$ are closure operators $\der^-$ and $\der^+$ are
(canonical) \emph{derivative operators} satisfying  the Axioms (D1)-(D3).
%\begin{equation}
%    \label{secondcl3}
%    \der^+\colon \sSet(X) \to \sSet(X),\quad U \mapsto \der^+ U := \bigcup_{t> 0} \varphi(t,U),
%\end{equation}
%\vskip-.2cm
% and which are called the {strict backward}  and strict forward image operators respectively.
%By definition $\rmGamma^+ = \id \cup \der^+$ and the operator $\der^+$ satisfies 
%the Axioms (D1)-(D3). 
Observe that 
\begin{equation}
\begin{aligned}
\label{strongdop144}
\der^+\bigl(\der^+ U\bigr)  &= \bigcup_{t>0} \varphi\bigl(t,\bigcup_{s>0} \varphi(s,U)\Bigr) = \bigcup_{t>0} \bigcup_{s>0}\varphi(t,\varphi(s,U))\\
&= \bigcup_{s+t >0} \varphi(s+t,U) = \der^+ U,
\end{aligned} 
\end{equation}
and therefore
 $\der^+$ satisfies the stronger idempotency axiom  (K3), i.e. $\der^+(\der^+ U) = \der^+ U$ and $\der^+$ is an idempotent derivative operator.
 In the same way one proves that $\der^-$ satisfies the idempotency axiom in (K3).
% the following axioms: for all $U,U' \subset X$, 
% \begin{enumerate}
%     \item [(i)] (normalized) $\der^+ \varnothing= \varnothing$;
%     \item [(ii)] (additive) $\der^+ (U\cup U') = \der^+ U \cup \der^+ U'$;
%     \item [(iii)] (idempotent) $\der^+(\der^+ U)= \der^+ U$.
% \end{enumerate} 
% Derivative operators associated with a closure operator are not unique and satisfy a weaker variation of Axiom (iii),
% % (i) $\der^+ \varnothing= \varnothing$, (ii) $\der^+ (U\cup U') = \der^+ U \cup \der^+ U'$, and (iii) $\der^+(\der^+ U)= \der^+ U$, 
% cf.\ \cite{Esakia,mcktar}.

The  derivative operator $\der^+$ is associated with a  binary relation on $X$:
$y<^+ x$ if and only if $y = \varphi(t,x)$ for some $t> 0$.
%\[
%y<^+ x\quad\text{if and only if}\quad y = \varphi(t,x)\quad\text{for some}\quad t> 0.
%\]
Observe that the $<^+$ is a transitive relation and the reflexive closure yields the specialization pre-order $\le^+$. 
Points $x\in X$ for which  $\der^+ \{x\} =\{x\}$ correspond to fixed points of $\phi$ and are examples of reflexive points for $<^+$, i.e. $x<^+ x$.
Other reflexive points are given by periodic orbits for $\phi$, cf.\ \cite{Akin}.
The derivative operator $\der^+$ does not  detect invariant sets in general. Indeed, for 
%$\varphi(t,x) = xe^t$ 
$\varphi(t,x) = x+t$ 
the set $U=(0,\infty)$ satisfies $\der^+ U=U$ but is not invariant. To capture invariance one can use $\tau$-forward image operator $\Gammatau$ and topology $\scrT^+_\tau$ by considering forward images from $t\ge\tau$ and similarly for $\tau$ negative.
cf.\ Sect.\ \ref{topologization}.
% Note to self, see `Computational braids 7g, pp 27-28
%
% The points $x\in U$ for which satisfy $\der^+ \{x\} =\{x\}$ are called fixed points of $\der^+$ in $U$ and are denoted by $\fix U$. A fixed point $x$ is a reflexive point for $<^+$, i.e. $x<^+ x$.
% Other reflexive points are given by periodic orbits for $\phi$, cf.\ \cite{Akin}.
% For any closure operator the derived set\footnote{The derived set is defined as
% $\der_{\scrT^+} U := \bigl\{ x\in X\mid N\cap U\smin\{x\}\text{~for all neighborhoods~} N\ni x\bigr\}$,  the set of limit points of $U$, cf.\ \cite{Engelking}.} yields a derivative operator.
In view of the considerations in Sect.\ \ref{modalstuff} one can also consider the topology $\scrT_\tau$ defined via the modal operator $\Phi$ defined by $\Phi U = \varphi(\tau,U)$ for some $\tau\neq 0$.
In particular we have that 
\[
\scrT^+ \subset \scrT_\tau^+ \subset \scrT_\tau,\quad \tau>0,
\]
and the same for $\tau<0$ and all topologies are Alexandrov.
The flow topologies capture directionality but do not require any continuity properties on $\phi$. An interesting feature of topological dynamics is to study convergence and decompositions. To do so we will now explore an alternative way to recast dynamics in terms of topology.

\begin{remark}
\label{contflow12}
If $\varphi$ is a continuous semi-flow then the continuity of $\varphi(t,\cdot)$
with respect to the topology $\scrT^+$ is immediate.
Continuity of $\varphi$ with respect to $\scrT$ yields interaction of the two topologies as a manifestation of the continuous semi-flow $\varphi$ on $X$. 
This implies the \emph{subcommutativity}\index{Subcommutativity of topologies} for $\scrT$ and $\scrT^+$, i.e.
$U\subset X$ $\scrT^+$-closed implies that $\cl~U$ is $\scrT^+$-closed. This 
 makes the space $(X,\scrT,\scrT^+)$ a $(\scrT,\scrT^+)$-subcommutative bi-topological space.\index{$(\scrT,\scrT^+)$-subcommutative bi-topological space}\index{Subcommutative bi-topological space}\index{Bi-topological space!subcommutative}
\end{remark}

\subsection{The \bflt $\scrTbf$}
\label{derivesflowtop}

%Since we are interested in an algebraic topological description of dynamics such sets do not suffice to consider forward and backward invariant sets in general.
As our aim is an algebraization of dynamics that recovers invariance
based on Wazewski's principle, cf.\  \cite{conley:cbms}, we study the 
%
% , in general it does not suffice to consider forward and backward invariant sets.
% Indeed, if $U,U'\in \Invsetpl(\varphi)\cap \scrC(X)$ with $\omega(U)=\omega(U')$. Then, $\Inv(U\smin U')=\varnothing$ even though the homology $H(U,U')$ may be non-trivial.\footnote{Consider $\phi$ generated by $\dot x=x(4-x^2)$, $\dot y=-y$ and with 
% $U=[-2,2]\times [0,2]$ and $U' = \{(0,0)\} \cup [-2,-1]\times [0,1] \cup [1,2]\times [0,1]$.
% % $U=\bigl\{(x,y)\mid 0\le x\le 2,~0\le y\le 3\bigr\}$ and $U' = \bigl\{(x,y)\mid 0\le y\le 1 \text{~for~} 0\le x<1,0\le y\le 2 \text{~for~} 1\le x\le 2 \bigr\}$.
% }
%In Conley theory one considers 
attracting and repelling blocks of $\varphi$.
Recall that closed \emph{attracting blocks}\index{Attracting block}\index{Attracting block!closed} are defined by
\begin{equation}
\label{clattbl12}
\sABlockC(\varphi) := \bigl\{ U\subset X\mid \cl~U=U,~~\varphi(t,U) \subset \Int U,~~\forall t>0\bigr\}.
\end{equation}
 Let $U,U'\in \sABlockC(\varphi)$.\index{$\sABlockC(\varphi)$} 
 If the singular homology satisfies
%  the \emph{Conley index} for a tile $U\smin U'$
% is defined as the singular homology $H(U,U')$. If 
$H(U,U')\neq 0$, then $\Inv(U\smin U') \neq \varnothing$, cf.\ Sect.\ \ref{MDs}.
In order to incorporate attracting blocks into the theory of (bi)-closure algebras we construct topologies derived 
from the basic flow topologies. In general, there are a number of options to define such  topologies. We highlight one particular choice that serves the purpose of constructing closed attracting blocks, cf.\ Sect.\ \ref{topologization}. 
%In Sect.\ \ref{topologization} for more details.
% which define coarsened bi-topological spaces.

\begin{lemma}
\label{super}
The operator $\uderm:=\der^-\cl$ is a modal operator.
%\footnote{A modal operator on $\sSet(X)$ is operator which is additive and preserves the empty set, cf.\ App.\ \ref{JTduality}.}
\end{lemma}

\begin{proof}
Axioms (M1)-(M2) in Section \ref{modalstuff} are satisfied since $\cl$  is a closure operator and $\der^-$ is a derivative operator,
%As for the super-idempotency we argue as follows. By \eqref{subcomm12}
%\[
%\uder\bigl(\uder U\bigr) = \der^+\cl~ \der^+\cl~ U \supset \der^+\der^+\cl~\cl~U
%=\der^+\cl~ U = \uder U,
%\]
which establishes $\uderm$ as a normal, additive operator on $\sSet(X)$.
\end{proof}

%\begin{proof}
%The properties (K1), (K2) and (K4) are satisfied since $\cl$ and $\cl^+$ are closure operators. As for the super-idempotency we argue as follows. By \eqref{subcomm12}
%\[
%\cl^+\cl U = \cl^+\cl~ \cl^+\cl~ U \supset \cl^+\cl^+\cl~\cl~U
%=\cl^+\cl~ U,
%\]
%which establishes $\cl^+\cl$ as super-idempotent operator on $\sSet(X)$.
%\qed
%\end{proof}

The operator $\uderm$ does not define a derivative operator since (D3) is not satisfied in general. 
Using the theory in
 Section \ref{modalstuff}
 $\uderm$  can be used to  define a topology on $X$.
\begin{definition}
\label{udermdefn}
A subset $U\subset X$ is closed with respect to $\uderm$ if and only if $\uderm U \subset U$. Such set are denoted by $\sFwdset(\uderm)$.
%The $\scrTbf$-closed sets comprise the \emph{\bflt} $\scrTbf$.
\end{definition}
By Proposition \ref{modaltop} $\sFwdset(\uderm)$ defines a topology on $X$ which is denoted by $\scrTbf$\index{$\scrTbf$} and is called the \emph{\bflt}\index{Block-flow topology} on $X$.
Observe that the condition that $U\subset X$ is $\scrTbf$-closed is equivalent to the condition $\varphi(-t,\cl~U)\subset U$ for all $t>0$.
%\begin{lemma}
%\label{udermtop1}
%Let $U,U'\subset X$ be $\scrTbf$-closed. Then, 
%$U\cup U'$ is $\scrTbf$-closed.
%% such that $\uderm U \subset U$ and $\uderm U' \subset U'$. Then, $\uderm (U\cup U') \subset U\cup U'$.
%\end{lemma}
%\begin{proof}
%By assumption
%\[
%\begin{aligned}
%\varphi\bigl(-t,\cl(U\cup U')\bigr) &=\varphi\bigl(-t,\cl~U\cup \cl~U')\bigr)
%= \varphi(-t,\cl~U)\cup \varphi(-t,\cl~U')\\
%&\subset U\cup U',~~\forall t>0.
%\end{aligned}
%\]
%Therefore, $\uderm(U\cup U') \subset U\cup U'$, which proves that $U\cup U'$ is $\scrTbf$-closed.
%\qed
%\end{proof}
%
%\begin{lemma}
%\label{udermtop2}
%Let $\{U_i\}_{i\in I}$ an arbitrary collection of $\scrTbf$-closed sets in $X$. Then, the intersection $\bigcap_{i\in I} U_i$
%is $\scrTbf$-closed.
%\end{lemma}
%\begin{proof}
%For the closure of $\bigcap_{i\in I} U_i$ it holds that $\cl\bigl( \bigcap_{i\in I} U_i\bigr) \subset \bigcap_{i\in I} \cl~U_i$.
%By assumption 
%\[
%\begin{aligned}
%\varphi\bigl(-t,\cl\Bigl( \bigcap_{i\in I} U_i\Bigr)\Bigr) &\subset \varphi\bigl(-t,\bigcap_{i\in I} \cl~U_i\Bigr)
%= \bigcap_{i\in I} \varphi(-t,\cl~U_i)\\
%&\subset \bigcap_{i\in I} U_i,~~\forall t>0.
%\end{aligned}
%\]
%Therefore, $\uderm\Bigl(\bigcap_{i\in I} U_i\Bigr) \subset \bigcap_{i\in I} U_i$,
%which proves that $\bigcap_{i\in I} U_i$ is $\scrTbf$-closed.
%\qed
%\end{proof}
%
%By Lemma \ref{udermtop1} and \ref{udermtop2} the $\scrTbf$-closed sets  comprise the \emph{\bflt} $\scrTbf$. 
The associated closure operator is given by
\begin{equation}
    \label{udermtop3}
    \uclbf   U := \bigcap\bigl\{U'\supset U\mid U' \in \sFwdset(\uderm) \bigr\}.
\end{equation}
By definition   the \bflt $\scrTbf$ is not necessarily an Alexandrov topology.
A subset $U\subset X$ is \emph{$\scrTbf$-closed} if $\uclbf   U = U$.

\begin{proposition}
\label{altder222}
The \bflt $\scrTbf$ is a coarsening of the Alexandrov topology $\scrT^-$ which is generated by $\cl^-$, i.e. $\cl^- \subset \uclbf  $.
\end{proposition}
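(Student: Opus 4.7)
My plan is to show the containment at the level of closed sets — every $\underline\scrT^0$-closed set is $\scrT^-$-closed — and then transfer this to the closure operators. Since a coarser topology has fewer closed sets than the finer one, this inclusion of closed families gives $\underline\scrT^0\subset\scrT^-$, and hence $\cl^- U\subset \ucl^0 U$ for every $U\subset X$ because the $\underline\scrT^0$-closure always produces a $\scrT^-$-closed set containing $U$, and $\cl^- U$ is the least such set.

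The first step is to observe that $\der^-$ is monotone on $\sSet(X)$. This is immediate from the additivity axiom (D2): if $V\subset W$, then $\der^- W=\der^-(V\cup W)=\der^- V\cup \der^- W$, so $\der^- V\subset \der^- W$. In particular, since $U\subset \cl~U$, we have
\begin{equation*}
\der^- U\subset \der^-\cl~U=\uderm U.
\end{equation*}

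Now suppose $U$ is $\underline\scrT^0$-closed, i.e.\ $\uderm U\subset U$. Combining with the previous inclusion gives $\der^- U\subset U$, so
\begin{equation*}
\cl^- U=U\cup \der^- U=U,
\end{equation*}
which means $U$ is $\scrT^-$-closed. Hence every $\underline\scrT^0$-closed set is $\scrT^-$-closed, so $\underline\scrT^0\subset\scrT^-$ as topologies.

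Finally, to obtain the operator inequality $\cl^-\subset \ucl^0$, take any $U\subset X$. The set $\ucl^0 U$ is $\underline\scrT^0$-closed by construction (Eqn.\ \eqref{udermtop3}) and contains $U$; by the previous paragraph it is therefore $\scrT^-$-closed and contains $U$. Since $\cl^- U$ is the smallest $\scrT^-$-closed set containing $U$, we conclude $\cl^- U\subset \ucl^0 U$, as desired. I don't anticipate any real obstacle: the proof is essentially just the observation that monotonicity of $\der^-$ together with the expansivity $U\subset \cl~U$ allows one to drop the extra $\cl$ inside $\uderm=\der^-\cl$ when checking the closure condition.
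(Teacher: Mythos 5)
Your proof is correct and follows essentially the same route as the paper's: the key inclusion $\der^- U\subset\uderm U$ (from monotonicity of $\der^-$ and expansivity $U\subset\cl~U$), combined with the definition of $\underline\scrT^0$-closedness, shows every $\underline\scrT^0$-closed set is $\scrT^-$-closed. You simply spell out the monotonicity step and the final passage to the operator inequality $\cl^-\subset\ucl^0$ in more detail than the paper's one-line argument, but the substance is identical.
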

\begin{proof}
By definition $\der^- U \subset \uderm U\subset U$ which implies $\der^- U\subset U$. Therefore, 
$\cl^- \subset \uclbf  $.
% By construction $(\uderm)^k U = (\der^+\cl)^k U\supset (\der^+)^k U= \der^+U$ and therefore $\uclbf   U \supset \cl^- U$,
% which shows that $\scrTbf$ is a coarsening of $\scrT^-$.
\end{proof}

\begin{proposition}
\label{continderived}
The  maps $\varphi(t,\cdot)$ are continuous in the \bflt $\scrTbf$ for all $t\ge 0$.
\end{proposition}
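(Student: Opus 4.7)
The plan is to show directly that for any $\underline\scrT^0$-closed set $V\subset X$ and any $t\ge 0$, the preimage $U:=\phi(t,\cdot)^{-1}(V)$ is $\underline\scrT^0$-closed. By Definition \ref{udermdefn}, this amounts to verifying that $\phi(-s,\cl~U)\subset U$ for every $s>0$. The argument rests on two ingredients already available to us: the $\scrT$-continuity of $\phi(t,\cdot)$ built into the semi-flow, and the semi-group identity $\phi(t,\phi(s,\cdot))=\phi(s,\phi(t,\cdot))$.

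Fix $s>0$ and pick $y\in\phi(-s,\cl~U)$. By the definition of the backward image recalled in the Prelude, this means $\phi(s,y)\in\cl~U$. I would then compute
\[
\phi\bigl(s,\phi(t,y)\bigr)=\phi(t+s,y)=\phi\bigl(t,\phi(s,y)\bigr)\in\phi(t,\cl~U)\subset \cl~\phi(t,U)\subset\cl~V,
\]
where the second inclusion is the $\scrT$-continuity of $\phi(t,\cdot)$ and the third uses $\phi(t,U)\subset V$ by construction of $U$. Rewriting, $\phi(t,y)\in\phi(-s,\cl~V)$. Because $V$ is $\underline\scrT^0$-closed, $\phi(-s,\cl~V)\subset V$, and consequently $\phi(t,y)\in V$, i.e., $y\in U$. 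This is exactly $\phi(-s,\cl~U)\subset U$, so $\uderm U\subset U$ and $U$ is $\underline\scrT^0$-closed.

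Since preimages of $\underline\scrT^0$-closed sets under $\phi(t,\cdot)$ are $\underline\scrT^0$-closed, $\phi(t,\cdot)$ is $\underline\scrT^0$-continuous for every $t\ge 0$. The only real subtlety is the mild ``swap'' in the middle: $\phi(s,\cdot)$ and $\phi(t,\cdot)$ commute via the semi-group law, which lets one invoke the $\underline\scrT^0$-closedness of $V$ (a statement about backward images of $\cl~V$) after having pushed the $\cl$ through $\phi(t,\cdot)$ using ordinary $\scrT$-continuity. I expect no further obstacle; the case $t=0$ is trivial since $\phi(0,\cdot)=\id$.
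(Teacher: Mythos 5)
Your proof is correct, and it is essentially the same argument the paper gives, just unfolded pointwise rather than written at the level of the operators $\der^-$ and $\uderm$. The paper shows $\uderm\phi(-t,U)\subset\phi(-t,\uderm U)\subset\phi(-t,U)$ by first observing $\phi(-t,\der^- U)=\der^-\phi(-t,U)$ (the semi-group law in operator form) and then pushing the closure through $\phi(-t,\cdot)$ via $\scrT$-continuity; your chain $\phi(t,\phi(s,y))=\phi(s,\phi(t,y))$ followed by $\phi(t,\cl~U)\subset\cl~\phi(t,U)$ is exactly the same two ingredients applied to the preimage set $U=\phi(t,\cdot)^{-1}(V)=\phi(-t,V)$.
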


\begin{proof}
%Suppose $\scrTbf$ is defined by a dynamical system $\phi$. 
For the backward image it holds that $\varphi(-t,\der^- U) = \der^-\varphi(-t,U)$, $t\ge 0$, and thus $\varphi(-t,\der^-\cl~ U) = \der^-\varphi(-t,\cl~ U) \supset \der^-\cl~\varphi(-t, U)$. 
Suppose $U$ is $\scrTbf$-closed, i.e. $\uderm U\subset U$.
Then, 
\[
\uderm \varphi(-t,U)\subset \varphi(-t,\uderm U) \subset \varphi(-t,U),
\]
which proves that $\varphi(-t,U)$ is $\scrTbf$-closed. 
The latter proves that for every $t\ge 0$ the map $\varphi(t,\cdot)$ is continuous in the $\scrTbf$-flow topology.
%
% Moreover,
% \[
% \begin{aligned}
% \varphi\bigl(-t,(\uderm)^k U\bigr) &= \varphi\bigl(-t,(\der^-\cl)^k U\bigr) = \der^-\varphi\bigl(-t,\cl(\der^-\cl)^{k-1} U\bigr)\\
% &\supset \der^-\cl~\varphi\bigl(-t,(\der^-\cl)^{k-1} U\bigr) \supset \cdots\supset
% (\der^-\cl)^k \varphi\bigl(-t, U\bigr)\\
% &= (\uderm)^k \varphi\bigl(-t, U\bigr),
% \end{aligned}
% \]
% and under union we obtain $\varphi\bigl(-t,\udero U\bigr)\supset  \udero \varphi\bigl(-t, U\bigr)$, which yields
% \[
% \begin{aligned}
% \varphi(t,\uclbf   U) &= \varphi(t,U\cup\udero U)  = \varphi(t, U) \cup \varphi(t,\udero U)\\
% &\supset   \varphi(t, U) \cup \udero \varphi(t, U) = \uclbf   \varphi(t,U),
% \end{aligned}
% \]
% which proves that for every $t\ge 0$ the map $\varphi(t,\cdot)$ is continuous in the $\scrTbf$-flow topology.
\end{proof}

Observe, by Proposition \ref{continderived}, that $\varphi(t,\cdot)$ is continuous in both topologies $\scrT$ and $\scrTbf$.
The \bflt $\scrTbf$ 
% is derived from the pospace $(X,\scrT,\le^+)$ and 
defines a new, derived subcommutative bi-topological space\index{Subcommutative bi-topological space}\index{Bi-topological space!subcommutative} $(X,\scrT,\scrTbf)$. 
%The latter is not a pospace in general.
Open and closed sets in the \bflt $\scrTbf$  can be characterized via the semi-flow $\varphi$.
\begin{lemma}
\label{charattbl12345}
A subset $U\subset X$ is closed in the \bflt $\scrTbf$ if and only if
\begin{equation}
    \label{attblock14bc}
  \varphi(-t,\cl~U)\subset U,\quad \forall t>0.
\end{equation}
Similarly, a subset $U\subset X$ is open in the \bflt $\scrTbf$ if and only if
\begin{equation}
    \label{attblock15bc}
  \varphi(t,U)\subset \Int U,\quad \forall t>0.
\end{equation}
\end{lemma}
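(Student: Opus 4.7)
The plan is to unpack the two equivalences directly from the definitions. Both characterizations follow from Definition \ref{udermdefn} together with the formula $\uderm = \der^-\cl$, once one keeps careful track of backward images versus preimages.

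For the first equivalence, I would simply compute
\[
\uderm U \;=\; \der^-(\cl\, U) \;=\; \bigcup_{t>0}\phi\bigl(-t,\cl\,U\bigr),
\]
using \eqref{secondcl3}. By Definition \ref{udermdefn}, $U$ is $\underline\scrT^0$-closed if and only if $\uderm U\subset U$, which by the displayed formula is equivalent to $\phi(-t,\cl\,U)\subset U$ for every $t>0$. This establishes \eqref{attblock14bc}.

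For the second equivalence, I would pass to complements. By definition $U$ is $\underline\scrT^0$-open exactly when $U^c$ is $\underline\scrT^0$-closed, and by the first part the latter is equivalent to $\phi(-t,\cl\, U^c)\subset U^c$ for every $t>0$. The key set-theoretic observation is the standard adjunction between forward image and preimage: since $\phi(-t,A)=\{x\in X\mid \phi(t,x)\in A\}$, the inclusion $\phi(-t,\cl\,U^c)\subset U^c$ says that $x\in U$ implies $\phi(t,x)\notin \cl\,U^c$, i.e., $\phi(t,x)\in (\cl\,U^c)^c = \Int U$. Hence $\phi(-t,\cl\,U^c)\subset U^c$ is equivalent to $\phi(t,U)\subset \Int U$, which gives \eqref{attblock15bc}.

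There is no real obstacle in the argument; the proof is a bookkeeping exercise in the definitions of $\der^-$, $\cl$, and the backward image. The only point that warrants explicit attention is the passage between $\phi(-t,\cl\,U^c)\subset U^c$ and $\phi(t,U)\subset \Int U$, where one uses both the de Morgan identity $\Int U=(\cl\,U^c)^c$ and the contrapositive form of the preimage inclusion. Since this holds pointwise for each $t>0$, taking the union over $t>0$ is immediate and the quantifier $\forall t>0$ is preserved throughout.
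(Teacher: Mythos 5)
Your proof is correct, and the first half coincides with the paper's: both simply unwind $\uderm = \der^-\cl$, use $\der^-(\cl\,U)=\bigcup_{t>0}\phi(-t,\cl\,U)$ from \eqref{secondcl3}, and read off the definition of $\underline\scrT^0$-closed from Definition \ref{udermdefn}.

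For the second half you take a slightly different (and cleaner) route than the paper. The paper first rewrites $\phi(-t,\cl\,U^c)\subset U^c$ as $U\subset\phi(-t,\Int U)$ and then argues both directions by composing with the semi-flow: applying $\phi(t,\cdot)$ and using $\phi(t,\phi(-t,A))\subset A$ to get $\phi(t,U)\subset\Int U$, and conversely applying $\phi(-t,\cdot)$ and using $A\subset\phi(-t,\phi(t,A))$ to recover $U\subset\phi(-t,\Int U)$. You instead observe directly that $\phi(-t,\cdot)$ is a preimage, so $\phi(-t,\cl\,U^c)\subset U^c$ is, by contrapositive, exactly the statement that $x\in U$ forces $\phi(t,x)\in(\cl\,U^c)^c=\Int U$; this gives $\phi(t,U)\subset\Int U$ as an immediate set-theoretic equivalence, with no appeal to semi-flow composition identities. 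Both arguments are sound and elementary; yours isolates the single fact that preimages commute with complementation, which is what the paper's composition steps implicitly rely on.
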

\begin{proof}
By definition a subset $U\subset X$ is $\scrTbf$-closed if and only if $\uclbf   U=U$
%$\udero U\subset U$ 
which implies $\uderm U \subset U$ and thus $\der^- \cl~U\subset U$. The latter implies that $\varphi(-t,\cl~U)\subset U$ for all $t>0$.
Conversely, if \eqref{attblock14bc} holds then $\der^- \cl~U\subset U$ and thus
$\uclbf   U=U$.
%$\udero U \subset U$.

% $\bigcup_{t>0} \varphi(-t,\cl~U) =\uderm U \subset U$, which implies that 

% By assumption $\udero U\subset U$ 
% which implies $\uderm U \subset U$ and thus $\der^- \cl~U\subset U$. The latter implies that $\varphi(-t,\cl~U)\subset U$ for all $t>0$.
% Conversely, if \eqref{attblock14bc} holds then $\der^- \cl~U\subset U$ and thus
% $\udero U \subset U$.

By definition a subset $U\subset X$ is $\scrTbf$-open if and only if $U^c$ is $\scrTbf$-closed, i.e.
$\varphi(-t,\cl~U^c) \subset U^c$ for all $t>0$. The latter is equivalent to $\varphi(-t,\cl~U^c)^c \supset U$ for all $t>0$, which results in the equivalent statement that $\varphi(-t,\Int U) \supset U$ for all $t>0$.
If we compose the latter with $\varphi(t,\cdot)$ we obtain
\[
\varphi(t,U) \subset \varphi\bigr(t,\varphi(-t,\Int U) \bigr)\subset \Int U,\quad\forall t>0.
\]
On the hand,  composition of  \eqref{attblock15bc} with the inverse image $\varphi(-t,\cdot)$ gives
\[
U \subset \varphi\bigl(-t,\varphi(t,U) \bigr) \subset \varphi(-t,\Int U),\quad\forall t>0,
\]
 which prove that $U$ is $\scrTbf$-open.
%Therefore, $U\subset \varphi(t,\cl~U^c)^c$ for all $t>0$ and 
%\[
%\varphi(-t,U) \subset \varphi\bigl(-t,\varphi(t,\cl~U^c)^c \bigr) =
%  \varphi\bigl(-t,\varphi(t,U^c) \bigr)^c \subset (\cl~U^c)^c =\Int U
%\]
%which proves \eqref{attblock15bc}.
\end{proof}

%\begin{remark}
%It is clear from the definition of the flow topology and the above lemma  that if a set is $\scrTbf$-closed, then it is also $\scrT^+$-closed, or forward invariant. Similarly, if a set is $\scrTbf$-open, then it is $\scrT^+$-open, or backward invariant. Therefore, $\Int^+\supset \underline{\Int}^0$. 
%\end{remark}

%We study $X$ as bi-topological space $(X,\scrT,\scrT^\infty)$.
% Sets that are closed for both $\scrT$ and $\scrT^\tau$ are also closed for both
% $\scrT$ and $\scrT^+$. 
The $(\scrT,\scrTbf)$-pairwise  clopen sets in $(X,\scrT,\scrTbf)$ are
defined as sets $U\subset X$ that are closed in $\scrT$ and open in $\scrTbf$.
\begin{theorem}
\label{charclattbl}
A subset $U\subset X$ is a
 {closed attracting block},\index{Attracting block} cf.\ \eqref{clattbl12},
 if and only if $U$ is a $(\scrT,\scrTbf)$-pairwise  clopen set in $(X,\scrT,\scrTbf)$.
\end{theorem}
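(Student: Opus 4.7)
The plan is to observe that this theorem is essentially a direct unpacking of the definitions combined with the characterization already established in Lemma \ref{charattbl12345}. First, I would expand both sides of the biconditional into their constituent conditions. On the left, by \eqref{clattbl12}, $U$ being a closed attracting block means exactly that (a) $\cl~U = U$ and (b) $\phi(t,U) \subset \Int U$ for all $t > 0$. On the right, $U$ being a $(\scrT,\underline\scrT^0)$-pairwise clopen set means exactly that (a$'$) $U$ is closed in $\scrT$ and (b$'$) $U$ is open in $\underline\scrT^0$.

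Conditions (a) and (a$'$) coincide tautologically, since closedness in $\scrT$ is by definition the condition $\cl~U = U$. For the remaining equivalence between (b) and (b$'$), I would cite the second half of Lemma \ref{charattbl12345}, which gives precisely the characterization that $U$ is $\underline\scrT^0$-open if and only if $\phi(t,U) \subset \Int U$ for all $t > 0$. Combining both equivalences completes the biconditional.

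Since essentially no new work is required beyond invoking Lemma \ref{charattbl12345}, the only potential pitfall is making sure the quantifier \emph{for all $t > 0$} matches in both directions (the characterization in Lemma \ref{charattbl12345} uses strict inequality $t > 0$, which is exactly what \eqref{clattbl12} stipulates). Given that this alignment is built into the block-flow topology via the strict forward/backward image operators in \eqref{secondcl3}, there is no real obstacle; the statement should be presented as a short corollary of the open-set characterization in Lemma \ref{charattbl12345}.
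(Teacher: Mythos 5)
Your proposal is correct and follows the paper's own proof exactly: the paper likewise disposes of Theorem \ref{charclattbl} by citing Lemma \ref{charattbl12345} together with the definition of closed attracting blocks in \eqref{clattbl12}. Your version simply spells out the definitional unpacking that the paper leaves implicit.
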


\begin{proof}
This follows from Lemma \ref{charattbl12345} and the definition of closed attracting blocks, cf.\ Eqn.\ \eqref{clattbl12}.
\end{proof}
% \footnote{Since we are interested in an algebraic topological description of dynamics such sets do not suffice in general.
% Indeed, if $U,U'\in \Invsetpl(\varphi)\cap \scrC(X)$ with $\omega(U)=\omega(U')$. Then, $\Inv(U\smin U')=\varnothing$ even though the homology $H(U,U')$ may be non-trivial. 
% In Conley theory one considers attracting blocks instead.
%  Let $U,U'\in \sABlockC(\varphi)$, then the \emph{Conley index} for a tile $U\smin U'$
% is defined as the singular homology $H(U,U')$. If $H(U,U')\neq 0$, then $\Inv(U\smin U') \neq \varnothing$.} 
%and are defined as
%\begin{equation}
%    \label{tauattbl}
%    \sABlockO(\varphi):= \bigl\{ U\subset X~|~\Int U=U,~~\uclbf   U =U\bigr\},
%\end{equation}
%% \begin{equation}
%%     \label{tauattbl}
%%     \sABlockO(\varphi):= \bigl\{ U\subset X~|~\cl~U^c=U^c,~~\uclbf   U =U\bigr\},
%% \end{equation}
%i.e. $U\subset X$ is open and $\varphi(t,\cl~U) \subset U$ for all $t>0$.
%% \begin{lemma}
%% \label{condattbl1}
%% If $U\in \sABlockO(\varphi)$, then 
%% \begin{equation}
%%     \label{attblock14a}
%%   \varphi(t,\cl~U) \subset \cl~ \varphi(t,U)\subset U,\quad \forall t>0.
%% \end{equation}
%% \end{lemma}
%% \begin{proof}
%% By continuity $\varphi(t,\cl~U) \subset \cl~ \varphi(t,U)$ and by the condition that $U$ is an open attracting block we have 
%% \[
%% \cl~ \varphi(t,U) \subset \cl \bigcup_{t>0} \varphi(t,U) \subset U,
%% \]
%% which proves the lemma.
%% \end{proof}
%%
The
set of   closed attracting blocks $\sABlockC(\varphi)$ is
a sublattice of $\sSet(X)$, cf.\ \cite{lsa,lsa2}, \cite{kkv}.
This lattice is not complete in general.
%\begin{remark}
The $(\scrTbf,\scrT)$-pairwise clopen sets correspond to open repelling blocks. Indeed, $U$ is open and \eqref{attblock15bc} we have that $\varphi(-t,\cl~U) \subset U$ for all $t>0$. We denote the open \emph{repelling blocks}\index{Repelling block}\index{Repelling block!open} by $\sRBlockO(\varphi)$.\index{$\sRBlockO(\varphi)$}

\begin{remark}\label{rem:phi_continuity}
\label{usecont}
The continuity of $\varphi$ in the $\scrT$-topology 
can be relaxed to an $\R^+$-parameter family $\varphi(t,\cdot)$ of  continuous maps in the $\scrT$-topology for all $t\ge 0$. 
 This implies in particular that we can apply discretization and topologization to other families of maps such as $t\in \Z^+$ which is equivalent to iterating a map, i.e. discrete time dynamics. Continuity of $\varphi(t,\cdot)$
 implies a more equal role for both topologies.
 The continuity of $\varphi$ on $\R^+\times X$ comes in in two instances, (i) equivalent discretization via \discresols, cf.\ Sect.\ \ref{secondgrad}, and (ii) algebraization in order to invoke Wazewski's principle for finding invariant sets, cf.\ Sect.\ \ref{topologization} and \cite{conley:cbms}.
\end{remark}

% \begin{remark}
% \label{antatop}
% We can use the pairwise clopen sets in $(X,\scrT,\scrTbf)$ as a base (of closed sets) for the topology $\scrT^\dagger$.
%  This yields the continuous discretization map $\disc\colon (X,\scrT^\dagger)\twoheadrightarrow (\cX,\le^\dagger)$. We say that $\scrT^\dagger$ is an \emph{antagonistic topology}\index{Antagonistic topology} with respect to the pair $(\scrT,\scrTbf)$.
% \end{remark}

\begin{remark}
\label{otherdertop}
If we consider variations of the operator $\uderm$ such as $\uder = \der^+\cl$  then 
the $(\scrTbf,\scrT)$-clopen sets
%sets that are open in $(X,\scrT)$ and closed in $(X,\scrTbf)$ 
are open attracting blocks\index{Attracting block!open}
and the $(\scrT,\scrTbf)$-clopen sets are the closed repelling blocks.\index{Repelling block!closed}
Other variations entail $\cl~\der^+$ and $\cl~\der^-$, cf.\ Rem.\ \ref{contflow12} and Sect.\ \ref{topologization}. Our definition of \bflt is suitable for the theory in this text.
\end{remark}

\section{Discretization of the \bflt}
\label{discdyn123}
In Section \ref{disc-top} we discussed discretization of topology in terms of closure algebras and derivative algebras. The standard CW-decompositions of  spaces are examples of such 
discretizations. In this section we apply the closure algebra discretization to the \bflt which provides the appropriate discretization of dynamics.
%
% \subsection{CA-discretizations and DA-discretizations for $\scrT^+$}
% \label{CA2}
As we have encoded $\varphi$ as a topological space $(X,\scrT,\scrTbf)$, we can apply the tools from Section~\ref{disc-top}, e.g., CA-discretizations, MA-discretizations, discretization maps and topology consistent pre-orders.  

Let  $\cX$ be a finite sets and let $\uclbff  \colon \sSet(\cX) \to \sSet(\cX)$ be a closure
operator
such that 
$\uclbf   |\cU| \subset |\uclbff   \cU|$ for all $\cU\in \sSet(\cX)$.
This induces a pre-order $\lebf$ by \eqref{spec1} and yields the continuity of $\grd\colon (X,\scrTbf) \to (\cX,\lebf  )$
defined in Eqn.\ \eqref{induceddisc}.
%, cf.\ Remark \ref{indiscrete}. 
Conversely, if $\lebf  $ is any $\scrTbf$-consistent pre-order with  respect to $\grd$ then the associated
closure operator $\uclbff$ defines a CA-discretization for $(X,\scrTbf)$.
%
% A pre-order $\le^+$ satisfying the above condition is called a \emph{dynamics consistent pre-order} on $\cX$. 
% \marginpar{\footnotesize {\color{red} which condition?} The inclusion condition on $\rmGamma^+$}
\begin{lemma}
\label{admis1a1}
A pre-order $\lebf  $ on $\cX$ with associated closure operator $\uclbff  \colon \sSet(\cX) \to \sSet(\cX)$ is $\scrTbf$-consistent with respect to $\disc\colon X \twoheadrightarrow\cX$ if and only if
\begin{equation}
    \label{secondtop6}
\varphi\bigl(-t,\cl |\xi|\bigr) \subset \bigl| \uclbff  \xi\bigr|,\quad 
 \forall t > 0,
% \forall t \ge 0,
\quad\text{and}\quad \forall \xi\in \cX.
\end{equation}
\end{lemma}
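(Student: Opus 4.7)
The plan is to unwind the definitions on both sides of the equivalence and match them via the operator $\uderm = \der^-\cl$. First I rewrite condition \eqref{secondtop6}: since $\bigcup_{t>0}\phi(-t,\cl|\xi|) = \der^-\cl|\xi| = \uderm|\xi|$, the condition is equivalent to $\uderm|\xi| \subset |\ucl^0\xi|$ for every $\xi\in\cX$. On the other side, $\underline\scrT^0$-consistency of $\le^0$ with respect to $\disc$ means exactly $\ucl^0|\cU|\subset|\ucl^0\cU|$ for all $\cU\subset\cX$, which, since both $\ucl^0$ on $\sSet(X)$ (ordinary topological closure, cf.\ \eqref{udermtop3}) and $\ucl^0$ on $\sSet(\cX)$ (Alexandrov closure) are additive and $|\cdot|$ is a Boolean homomorphism, reduces to $\ucl^0|\xi|\subset|\ucl^0\xi|$ for every $\xi\in\cX$.

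For the forward implication, assume $\le^0$ is $\underline\scrT^0$-consistent, so $\ucl^0|\xi|\subset|\ucl^0\xi|$. By Definition \ref{udermdefn} any $\underline\scrT^0$-closed set $U$ satisfies $\uderm U\subset U$; applying this to $U=\ucl^0|\xi|$ and using that $\uderm$ is additive (hence monotone) with $|\xi|\subset\ucl^0|\xi|$, I obtain
\[
\uderm|\xi| \subset \uderm\bigl(\ucl^0|\xi|\bigr) \subset \ucl^0|\xi| \subset \bigl|\ucl^0\xi\bigr|,
\]
which is \eqref{secondtop6}.

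For the converse, assume $\uderm|\xi|\subset|\ucl^0\xi|$ for all $\xi$. The key step is to show that $|\ucl^0\xi|$ is itself $\underline\scrT^0$-closed, for then $|\xi|\subset|\ucl^0\xi|$ would force $\ucl^0|\xi|\subset|\ucl^0\xi|$ by definition of $\ucl^0$ as intersection of $\underline\scrT^0$-closed supersets. Writing $|\ucl^0\xi|=\bigcup_{\eta\in\ucl^0\xi}|\eta|$ and using additivity of $\uderm=\der^-\cl$ (Lemma \ref{super}), together with the hypothesis,
\[
\uderm\bigl|\ucl^0\xi\bigr| \;=\; \bigcup_{\eta\in\ucl^0\xi}\uderm|\eta| \;\subset\; \bigcup_{\eta\in\ucl^0\xi}\bigl|\ucl^0\eta\bigr| \;=\; \Bigl|\bigcup_{\eta\in\ucl^0\xi}\ucl^0\eta\Bigr|.
\]
Since $\ucl^0$ on $\cX$ is idempotent (K3), for each $\eta\in\ucl^0\xi$ we have $\ucl^0\eta\subset\ucl^0(\ucl^0\xi)=\ucl^0\xi$, so the right-hand side is contained in $|\ucl^0\xi|$. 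Thus $|\ucl^0\xi|$ is $\underline\scrT^0$-closed, completing the proof.

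The main (mild) obstacle is the converse direction: it requires packaging the pointwise-on-cells hypothesis into a statement about topological closure in $\underline\scrT^0$, and the crucial ingredients are the additivity of $\uderm$ (giving distribution over the union $|\ucl^0\xi|=\bigcup_\eta|\eta|$) and the idempotency of the discrete closure $\ucl^0$ on $\cX$. Everything else is bookkeeping with the definitions of closure, expansiveness, and Boolean homomorphism.
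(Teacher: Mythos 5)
Your proof is correct and follows essentially the same approach as the paper: unwind both sides via $\uderm = \der^-\cl$, exploit finite additivity of $\uderm$ and the Boolean homomorphism $|\cdot|$ to distribute over cells, use idempotency of the discrete $\ucl^0$ on the converse, and appeal to Definition \ref{udermdefn}'s characterization of $\underline\scrT^0$-closed sets. The only cosmetic differences are that you reduce consistency to the pointwise-on-cells statement $\ucl^0|\xi|\subset|\ucl^0\xi|$ first (the paper works directly with arbitrary closed $\cU$) and that in the forward direction you inflate $|\xi|$ to $\ucl^0|\xi|$ in $X$ rather than inflating $\xi$ to $\ucl^0\xi$ in $\cX$ as the paper does; both routes land in the same place.
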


\begin{proof}
The discretization $\disc\colon X\twoheadrightarrow \cX$ is continuous if and only if $\cU$ closed in $(\cX,\lebf  )$ implies that $|\cU|$ is $\scrTbf$-closed.
Assume \eqref{secondtop6} is satisfied. Let $\uclbff   \cU = \cU$, then 
\[
\begin{aligned}
\varphi(-t,\cl|\cU|) &= \bigcup_{\xi\in \cU} \varphi(-t,\cl|\xi|)\subset \bigcup_{\xi\in \cU}|\uclbff  \xi|\\
&= \Bigl|\uclbff  \Bigl(\bigcup_{\xi\in\cU}\{\xi\}\Bigl)\Bigr| = |\uclbff  \cU| =|\cU|,\quad\forall t>0,
\end{aligned}
\]
which proves that $\uderm|\cU| \subset |\cU|$ and thus $|\cU|$ is $\scrTbf$-closed.
%
% By definition 
% $\uclbf   |\xi| = |\xi|\cup \udero|\xi| = |\xi| \cup \bigcup_{k=1}^\infty\bigl(\uderm \bigr)^k |\xi|$.
% Suppose \eqref{secondtop6} is satisfied. Then, $\uderm |\xi| \subset |\uclbf  \xi|$ and thus
% $\uderm\uderm |\xi| \subset \uderm |\uclbf  \xi|\subset |\uclbf  \uclbf  \xi|=|\uclbf   \xi|$.
% This implies $\bigl(\uderm \bigr)^k |\xi| \subset |\uclbf   \xi|$ and therefore
% $\uclbf  |\xi| \subset |\uclbf  \xi|$.
% By the additivity of $\uclbf  $ we have
% \begin{equation*}
% \begin{aligned}
% \uclbf  \bigl|\cU\bigr| &= \uclbf  \Bigl|\bigcup_{\xi\in \cU}\xi\Bigr|  = \bigcup_{\xi\in \cU}
% \uclbf  \bigl|\xi\bigr|\\
% &\subset \bigcup_{\xi\in \cU} \bigl| \uclbf  \xi\bigr|
% = \Bigl| \bigcup_{\xi\in \cU}\uclbf  \xi\Bigr| = \Bigl|\uclbf   \Bigl(\bigcup_{\xi\in \cU}\xi\Bigr)\Bigr|
% = \bigl| \uclbf  \cU \bigr|,
% \end{aligned}
% \end{equation*}
% which establishes the continuity of $\grd\colon X\to \cX$ with respect to the Alexandrov topology of $\lebf  $.
%
Conversely, if $\grd$ is continuous, 
then $\cU$ closed in $(\cX,\lebf  )$ implies $|\cU|$ is $\scrTbf$-closed and
therefore $\varphi(-t,\cl|\cU|)\subset |\cU|$. Choose $\cU = \uclbff  \xi$. This implies that
\[
\varphi(-t,\cl|\xi|) \subset \varphi(-t,\cl|\uclbff  \xi|)\subset |\uclbff  \xi|,\quad\forall t>0,\quad\text{and}\quad \forall \xi\in \cX.
\]
%
% then $\varphi\bigl(-t,\cl |\xi|\bigr) \subset \der^-\cl|\xi| = \uderm |\xi| \subset \uclbf  |\xi|\subset \bigl| \uclbf  \xi\bigr|$ for all $t> 0$ and for all $\xi\in \cX$,
which establishes \eqref{secondtop6}.
% then $\varphi\bigl(t,|\xi|\bigr) \subset \uclbf  |\xi|\subset \bigl| \uclbf  \xi\bigr|$ for all $t\ge 0$ and for all $\xi\in \cX$,
% which establishes \eqref{secondtop6}.
\end{proof}

In the discrete setting we can define the discrete analogues of the \bflt $(X,\scrTbf)$ via discretizations of $\scrT$ and $\scrT^-$.
Let $\derr^-\colon \sSet(\cX) \to \sSet(\cX)$ be a discrete  derivative for $\scrT^-$ and define the additive operator $\udermm:= \derr^-\ccl\colon \sSet(\cX) \to \sSet(\cX)$. 
\begin{lemma}
\label{choiceforgam1}
The triple $(\sSet(\cX,\udermm,|\cdot)$ defines a MA-discretization of $(X,\scrTbf)$. The associated discrete closure operator is given by $\uclbff = \bigcup_{k\ge 0} \bigl( \udermm \bigr)^k$ and 
the triple $(\cX,\uclbff  ,|\cdot|)$
is a CA-discretization for $(X,\scrTbf)$.
\end{lemma}
\begin{proof}
In order to establish $(\sSet(\cX,\udermm,|\cdot)$ as a MA-discretization we use the fact that $\ccl$ defines a CA-discretization  $(\cX,\ccl,|\cdot|)$ for $(X,\scrT)$ and $\derr^-$ defines a MA-discretization $(\cX,\derr^-,|\cdot|)$ for $(X,\scrT^-)$. This implies, for $\cU\subset \cX$, that
\begin{equation}
\label{MAcheck}
\uderm |\cU| = \der^-\cl|\cU| \subset \der^-|\ccl\cU|\subset |\derr^-\ccl~\cU|
= |\udermm\cU|,
\end{equation}
which by Proposition \ref{MAtoCA} shows that $(\sSet(\cX,\udermm,|\cdot)$ is a MA-discretization of $(X,\scrTbf)$
and provides the expression for $\uclbff$.
It remains to show that $\disc\colon X\twoheadrightarrow\cX$ is continuous.
Let $\cU=\{\xi\}$, then \eqref{MAcheck} yields
$\varphi(-t,\cl|\xi|) \subset \uderm |\xi| \subset |\udermm \xi| \subset  |\uclbff  \xi|$ for all $t>0$ which, by Lemma \ref{admis1a1}, proves that $\disc$ continuous.
\end{proof}

The following lemma  formulates a criterion
 for discretizing the \bflt $\scrTbf$ without using a discretization for $\scrT^-$.
\begin{lemma}
\label{udermlem}
Let $\uderup\colon \sSet(\cX) \to \sSet(\cX)$ be a modal %additive
operator such that
\begin{equation}
    \label{secondtop5}
\varphi\bigl(-t,\cl|\xi|\bigr) \subset \bigl| \uderup\xi\bigr|,\quad \forall t> 0,\quad\text{and}\quad \forall \xi\in \cX.
\end{equation}
Then, the  operator $\uclbff = \bmPhi^{\bm{+=}}=\bigcup_{k\ge 0}  \bmPhi^k \colon \sSet(\cX)\to \sSet(\cX)$ is a  closure operator and yields a  CA-discretization $(\cX,\uclbff,|\cdot|)$ of the \bflt  $(X,\scrTbf)$. %, cf.\ Eqn.\   \eqref{discderfs1}.
%Then, the transitive relation $<^+$ is $\scrT^+$-consistent, i.e.
%the pre-order $\le^+$, defined via $\rmGamma^+=\id \cup \der^+$, is $\scrT^+$-consistent.
\end{lemma}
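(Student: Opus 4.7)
The plan is to mirror the structure of the proof of Lemma \ref{choiceforgam1}, treating $\uderup$ as an abstract additive operator satisfying the covering condition \eqref{secondtop5} in place of the concrete $\uderm = \der^-\cl$. There are two separate things to check: (i) that $\uderup^{\bm +}$ satisfies the derivative axioms (D1)--(D3), and (ii) that the induced closure operator $\ucl^0 = \id\cup\uderup^{\bm +}$ makes $\disc\colon X\twoheadrightarrow\cX$ continuous into the resulting Alexandrov topology.

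For (i), the normality (D1) is immediate: by (M1) for the modal operator $\uderup$ we have $\uderup\,\varnothing = \varnothing$, hence by induction $\uderup^k\,\varnothing = \varnothing$ for every $k\ge 1$, and the union defining $\uderup^{\bm +}$ is empty. For additivity (D2) I would use that composition and arbitrary union preserve additivity: each $\uderup^k$ is additive because $\uderup$ is, and a union of additive operators is additive. The key point is (D3). As in the proof of Lemma \ref{choiceforgam1}, since $\sSet(\cX)$ is finite, the increasing sequence $\bigcup_{k=1}^{N} \uderup^k\cU$ stabilizes at some $N=k_*$ (possibly depending on $\cU$), so $\uderup^{\bm +}\cU = \bigcup_{k=1}^{k_*}\uderup^k\cU$. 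By the additivity established above,
\[
\uderup^\ell\bigl(\uderup^{\bm +}\cU\bigr) \;=\; \bigcup_{k=1}^{k_*}\uderup^{k+\ell}\cU \;\subset\; \uderup^{\bm +}\cU,
\]
and taking the union over $\ell\ge 1$ yields $\uderup^{\bm +}\bigl(\uderup^{\bm +}\cU\bigr)\subset \uderup^{\bm +}\cU$, which is even the strong form of (K3), in particular (D3).

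For (ii), I would invoke Lemma \ref{admis1a1}: it suffices to verify
$\phi(-t,\cl|\xi|)\subset |\ucl^0 \xi|$ for all $t>0$ and all $\xi\in\cX$. This is immediate from the hypothesis \eqref{secondtop5}, since $\uderup\xi \subset \uderup^{\bm +}\xi \subset \ucl^0\xi$ and $|\cdot|$ is monotone, giving
\[
\phi(-t,\cl|\xi|) \;\subset\; |\uderup\xi| \;\subset\; |\uderup^{\bm +}\xi| \;\subset\; |\ucl^0\xi|.
\]
Hence the pre-order $\le^0$ associated with $\ucl^0$ is $\underline\scrT^0$-consistent, so $(\cX,\ucl^0,|\cdot|)$ is the claimed CA-discretization.

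No step is genuinely hard; the only subtlety is the finite stabilization argument used for (D3), which relies essentially on the finiteness of $\cX$ (and hence of $\sSet(\cX)$) together with the additivity of $\uderup$. The rest is a direct transcription of the analogous steps in Lemma \ref{choiceforgam1}, with the covering hypothesis \eqref{secondtop5} playing the role that the composed inclusion $\uderm|\xi|\subset |\uderm\xi|$ played there.
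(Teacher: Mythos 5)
Your proposal is correct and follows essentially the same approach as the paper's proof: the sub-idempotency of $\uderup^{\bm +}$ is established by the same finite-stabilization argument that the paper cites from Lemma \ref{choiceforgam1}, and continuity of $\disc$ is verified via the criterion of Lemma \ref{admis1a1} through the same chain of inclusions $\phi(-t,\cl|\xi|)\subset|\uderup\xi|\subset|\uderup^{\bm +}\xi|\subset|\ucl^0\xi|$. The only difference is that you spell out the (D1)--(D3) verifications that the paper leaves implicit by reference to Lemma \ref{choiceforgam1}.
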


\begin{proof}
The fact that $\uderup^{\bm+=}$ is closure operator follows from  Lemma \ref{choiceforgam1}.
By assumption $\bigcup_{t>0} \varphi(-t,\cl |\xi|) = \uderm |\xi| \subset |\uderup \xi|$.
As in the proof of Lemma \ref{choiceforgam1} this implies that 
$\varphi(-t,\cl|\xi|) \subset \uderm |\xi| \subset |\uderup \xi| \subset |\uderup^{\bm{+=}} \xi| \subset |\uclbff  \xi|$
for all $t>0$ which proves that $\disc$ is continuous, completing the proof.
%
% , which by additivity gives $\uderm|\cU| \subset |\uderup\cU|$. 
% Then, $\uderup^2|\cU| \subset \uderup|\uderup\cU|\subset |\uderup^2\cU|$ and thus $\uderup^k|\cU| \subset |\uderup^k\cU|$ for $\cU\subset \cX$ and for all $k\ge 1$.
% This yields
% \[
% \udero|\cU| = \bigcup_{k=1}^\infty  \uderup^k|\cU| \subset \bigcup_{k=1}^\infty
%  |\uderup^k\cU| = \Bigl| \bigcup_{k=1}^\infty
%  \uderup^k\cU\Bigr| = |\uderup^{\bm +}\cU|,
%  \]
% which completes the proof. 
% %Now proceed as in the proof of Lemma \ref{T0disc1}.
\end{proof}

The following result gives a local version of the above criterion and provides a practical method for constructing CA-discretizations for the \bflta. Fig.\ \ref{morsetes2} displays an example of a discretization of both $(X,\scrT)$ and $(X,\scrTbf)$.
%\vskip-.5cm

%\input{figures/ThreeSquareFlow}
%\input{figures/ThreeSquareFlowOld}
%\input{figures/ThreeSquareFlow-3}

\begin{figure}
\centering
\begin{minipage}{.175\textwidth}
\centering
\begin{tikzpicture}[dot/.style={draw,circle,fill,inner sep=.75pt},line width=.3pt,scale=1.2, decoration={markings, 
    mark= at position 0.55 with {\arrow{latex}}}]
%%Start tikz
%%box
\draw (0,0) -- (0,3) -- (1,3) -- (1,0) -- cycle; 
\draw (0,1) -- (1,1);
\draw (0,2) -- (1,2);

%%flow
%crit point
\node[Orange] (1) at (.5,1)[dot] {};
\node[Orange] (2) at (.5,1.6)[dot] {};

%flowlines b/t crit points
\draw[Orange, postaction={decorate}] (2) to[out=270,in=100] (1);
%flowlines into (1)
    %flowlines from left
\draw[Orange, postaction={decorate}] (-.1,1.35) to[out=0,in=160] (1);
\draw[Orange, postaction={decorate}] (-.1,1.15) to[out=-10,in=210] (1);
    %from lines from right
\draw[Orange, postaction={decorate}] (1.1,1.45) to[out=180,in=20] (1);
\draw[Orange, postaction={decorate}] (1.1,1.25) to[out=190,in=-30] (1);
    %flowlines from below
\draw[Orange, postaction={decorate}] (-.1,.5) to[out=0,in=250] (1);
\draw[Orange, postaction={decorate}] (.5,-.1) to[out=90,in=280] (1);
\draw[Orange, postaction={decorate}] (1.1,.5) to[out=180,in=310] (1);

%flowlines into (2)
    %flowlines from left
\draw[Orange, postaction={decorate}] (-.1,1.5) to[out=0,in=180] (2);
    %flowlines from right
\draw[Orange, postaction={decorate}] (1.1,1.65) to[out=180,in=10] (2);
    %flowlines from top
\draw[Orange, postaction={decorate}] (-.1,2.85) to[out=-30,in=120] (2);
\draw[Orange, postaction={decorate}] (.5,3.1) to[out=280,in=100] (2);
\draw[Orange, postaction={decorate}] (1.1,2.65) to[out=210,in=60] (2);
%%End tikz
\end{tikzpicture}

$\vphantom{X}$
\end{minipage}
\begin{minipage}{.225\textwidth}
\centering
\begin{tikzpicture}[dot/.style={circle,fill, inner sep = 0.75, outer sep=1.0}, scale=1.25]
\def\h{.1}
%2-cell
\foreach \x in {0}
    \foreach \y in {0,1,2}
        \fill[black,fill opacity=.15](\x+\h,\y+\h) rectangle (\x+1-\h,\y+1-\h);
%one cell
\foreach \x in {0}
    \foreach \y in {1,2}
        \filldraw[thick] (\x+\h,\y) -- (\x+1-\h,\y);
%vertex
\foreach \x in {0}
    \foreach \y in {1, 2}
        \node at (\x+\h,\y) [dot] {};
\foreach \x in {1}
    \foreach \y in {1, 2}
        \node at (\x-\h,\y) [dot] {};

\draw[black,thick] (\h,\h) -- (1-\h,\h);
\draw[black,thick] (\h,3-\h) -- (1-\h,3-\h);
%one cell
\foreach \x in {0}
    \foreach \y in {0,1,2} {
        \draw[black,thick] (\x+\h,\y+\h) -- (\x+\h,\y+1-\h);
        \draw[black,thick] (\x+1-\h,\y+\h) -- (\x+1-\h,\y+1-\h);
    }
%Names
\def\a{0}
\node[anchor=west] at (1+\a,.5)     {\scriptsize $\xi_1$};
\node[anchor=west] at (1+\a,1)      {\scriptsize $\xi_2$};
\node[anchor=west] at (1+\a,1.5)    {\scriptsize $\xi_3$};
\node[anchor=west] at (1+\a,2)      {\scriptsize $\xi_4$};
\node[anchor=west] at (1+\a,2.5)    {\scriptsize $\xi_5$};
\end{tikzpicture}

\hspace{-5mm} {\scriptsize $\cX$}
\end{minipage}
\begin{minipage}{.08\textwidth}
\begin{tikzpicture}[2cell/.style={diamond,fill,inner sep=1.5pt},
2cella/.style={regular polygon,regular polygon sides=6,fill,inner sep=1.5pt},
2cellb/.style={regular polygon,regular polygon sides=5,draw, inner sep=1.2pt},
edge/.style={circle,draw,inner sep = 1.3pt}, line width=.45pt,scale=.75]
\def\h{.15}
%%% FLOW TOPOLOGY \leq^0
%% Vertices
%% 'Two-cells'
\foreach \y in {0,4}
    \node(\y) at (0,\y) [2cell] {};
\foreach \y in {2}
    \node(\y) at (0,\y) [2cella] {};
%% 'One-cells'
\foreach \y in {1,3}
    \node(\y) at (0,\y) [2cellb] {};
%% Face Partial order 
\draw[-latex] (0) to (1);
\draw[-latex] (2) to (1);
\draw[-latex] (2) to (3);
\draw[-latex] (4) to (3);
\end{tikzpicture}

{\scriptsize $\leq$}
\end{minipage}
\begin{minipage}{.125\textwidth}
\begin{tikzpicture}[2cell/.style={diamond,fill,inner sep=1.5pt},
2cella/.style={regular polygon,regular polygon sides=6,fill,inner sep=1.5pt},
2cellb/.style={regular polygon,regular polygon sides=5,draw, inner sep=1.2pt},
edge/.style={circle,draw,inner sep = 1.3pt}, line width=.45pt,scale=.75]
\def\h{.15}
%%% FLOW TOPOLOGY \leq^0
%% Vertices
%% 'Two-cells'
\foreach \y in {0,4}
    \node(\y) at (0,\y) [2cell] {};
\foreach \y in {2}
    \node(\y) at (0,\y) [2cella] {};
%% 'One-cells'
\foreach \y in {1,3}
    \node(\y) at (0,\y) [2cellb] {};
%% Face Partial order 
\draw[-latex] (0) to (1);
\draw[latex-latex] (1) to (2);
\draw[-latex] (2) to (3);
\draw[-latex] (3) to (4);
\draw[-latex] (0) to[out=25,in=-25] (2);
\draw[-latex] (2) to[out=25,in=-25] (4);
%%Self edges
\draw[-latex] (0) to[loop left] (0);
\draw[-latex] (1) to[loop left] (1);
\draw[-latex] (2) to[loop left] (2);
\draw[-latex] (4) to[loop left] (4);

\end{tikzpicture}

{\scriptsize $\uderup^{-1}$}
\end{minipage}
\begin{minipage}{.1\textwidth}
\begin{tikzpicture}[2cell/.style={diamond,fill,inner sep=1.5pt},
2cella/.style={regular polygon,regular polygon sides=6,fill,inner sep=1.5pt},
2cellb/.style={regular polygon,regular polygon sides=5,draw, inner sep=1.2pt},
edge/.style={circle,draw,inner sep = 1.3pt}, line width=.45pt,scale=.75]
\def\h{.15}
%
% [2cell/.style={circle, inner sep = 1.5, outer sep=2.0,draw, fill=black, fill opacity=.15},edge/.style={circle, inner sep = 1.5, outer sep=2.0, draw=black},line width=.5pt,scale=.75]
% \def\h{.15}
%%% FLOW TOPOLOGY \leq^0
%% Vertices
%% 'Two-cells'
\foreach \y in {0,4}
    \node(\y) at (0,\y) [2cell] {};
\foreach \y in {2}
    \node(\y) at (0,\y) [2cella] {};
%% 'One-cells'
\foreach \y in {1,3}
    \node(\y) at (0,\y) [2cellb] {};
%% Face Partial order 
\draw[-latex] (0) to (1);
\draw[latex-latex] (2) to (1);
\draw[-latex] (2) to (3);
\draw[latex-] (4) to (3);
\end{tikzpicture}

{\scriptsize $\lebf$}
\end{minipage}
\begin{minipage}{.1\textwidth}
\begin{tikzpicture}[2cell/.style={diamond,fill,inner sep=1.5pt},
2cella/.style={regular polygon,regular polygon sides=6,fill,inner sep=1.5pt},
2cellb/.style={regular polygon,regular polygon sides=5,draw, inner sep=1.2pt},
edge/.style={circle,draw,inner sep = 1.3pt}, line width=.45pt,scale=.75]
\def\h{.15}
%%% FLOW TOPOLOGY \leq^0
%% Vertices
%% 'Two-cells'
\foreach \y in {0,4}
    \node(\y) at (0,\y) [2cell] {};
\foreach \y in {2}
    \node(\y) at (0,\y) [2cella] {};
%% 'One-cells'
\foreach \y in {1,3}
    \node(\y) at (0,\y) [2cellb] {};
%% Face Partial order 
\draw[latex-] (0) to (1);
\draw[latex-latex] (2) to (1);
\draw[latex-] (2) to (3);
\draw[-latex] (4) to (3);
\end{tikzpicture}

{\scriptsize $\gebf$}
\end{minipage}
\begin{minipage}{.1\textwidth}
\begin{tikzpicture}[2cell/.style={diamond,fill,inner sep=1.5pt},
2cella/.style={regular polygon,regular polygon sides=6,fill,inner sep=1.5pt},
2cellb/.style={regular polygon,regular polygon sides=5,draw, inner sep=1.2pt},
edge/.style={circle,draw,inner sep = 1.3pt}, line width=.45pt,scale=.75]
\def\h{.15}
%%% FLOW TOPOLOGY \leq^0
%% Vertices
%% 'Two-cells'
\foreach \y in {0,4}
    \node(\y) at (0,\y) [2cell] {};
\foreach \y in {2}
    \node(\y) at (0,\y) [2cella] {};
%% 'One-cells'
\foreach \y in {1,3}
    \node(\y) at (0,\y) [2cellb] {};
%% Face Partial order 
\draw[latex-latex] (0) to (1);
\draw[latex-latex] (2) to (1);
\draw[latex-latex] (2) to (3);
\draw[-latex] (4) to (3);
\end{tikzpicture}

{\scriptsize $\le^\dagger$}
\end{minipage}
%\vskip-1cm
\caption{
The topological space $(X,\scrT)$ and semi-flow $\phi$ [left 1].  A discretization of $X$ with $\scrT$-consistent pre-order $(\ccX,\le)$  [middle 2, 3]. 
The relation $\uderup^{-1}$ [middle 4] which generates the 
 $\scrTbf$-consistent pre-order $(\ccX,\lebf  )$ [right 5].
The discretization map $\disc\colon X\to\ccX$ is continuous with respect to both topologies. Common coarsening of $(\ccX,\le)$ and $(\ccX,\gebf  )$ [right 6] resulting in a Morse pre-order $(\ccX,\le^\dagger)$
[right 7].   All pre-orders are represented by their Hasse diagrams.}
% \caption{A discretization of $(X,\scrT)$ with the associated pre-order $(\cX,\le)$ [left 1 and 2]. Notice that $(\cX,\le)$ is not a CW decomposition.  An example of a flow $\phi$ and  pre-order $(\cX,\lebf  )$ consistent with the flow topology  $(X,\scrTbf)$ [middle 3, 4 and 5]. 
% The discretization map $\disc\colon X\to\cX$ is continuous with respect to both topologies.
% %The pre-orders $(\cX,\le)$ and $(\cX,\lebf  )$ are discretizations of the topologies $\scrT$ and $\scrTbf$ respectively.
% Common coarsening of $(\cX,\le)$ and $(\cX,\gebf  )$ resulting in a Morse pre-order $(\cX,\le^\dagger)$
% [right 6 and 7]. All pre-orders are represented by their Hasse diagrams.}
\label{morsetes2}
\end{figure}

\begin{theorem}
\label{localconstrGam}
Let $(X,\scrT)$ be compact and let $\uderup\colon \sSet(\cX) \to \sSet(\cX)$ be a modal operator.
Assume that for every $\xi\in \cX$ there exists $t_\xi>0$ such that 
\begin{equation}
    \label{secondtop555}
\varphi\bigl(-t,\cl|\xi|\bigr) \subset \bigl| \uderup\xi\bigr|,\quad \forall 0<t\le t_\xi,\quad\text{and}\quad \forall \xi\in \cX.
\end{equation}
%Eqn.\ \eqref{secondtop5} is satisfied for  $0<t\le t_\xi$. 
Then, $(\cX,\uclbff  ,|\cdot|)$, with $\uclbff  = \uderup^{\bm{+=}}$, is a CA-discretization for  $(X,\scrTbf)$.
%the conclusion in Lemma \ref{udermlem} remains the same.
\end{theorem}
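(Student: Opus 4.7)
The plan is to reduce the theorem directly to Lemma \ref{udermlem}. That lemma requires the \emph{global} condition $\phi(-t,\cl|\xi|) \subset |\uderup\xi|$ for all $t>0$, whereas the present hypothesis only furnishes such an inclusion for $0<t\le t_\xi$. Accordingly, the work will be to upgrade the local-in-time bound into a global bound, at the cost of replacing $\uderup$ by its transitive closure $\uderup^{\bm +}$. Since $(\uderup^{\bm +})^{\bm +}=\uderup^{\bm +}$ and $\uderup^{\bm +}$ is itself additive and normal (hence a modal operator), invoking Lemma \ref{udermlem} with the modal operator $\widetilde{\uderup}:=\uderup^{\bm +}$ will then produce $\ucl^0=\id\cup\uderup^{\bm +}$ as the closure operator of a CA-discretization of $(X,\underline\scrT^0)$.

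The central step is to show that $\phi(-t,\cl|\xi|)\subset |\uderup^{\bm +}\xi|$ for every $\xi\in\cX$ and every $t>0$. Because $\cX$ is \emph{finite}, the quantity $t_*:=\min_{\xi\in\cX}t_\xi$ is strictly positive. Given $t>0$, I choose $N\in\N$ with $\Delta:=t/N\le t_*$, and for $x\in\phi(-t,\cl|\xi|)$ consider the sampled orbit $y_j:=\phi(j\Delta,x)$, $j=0,\dots,N$, so that $y_0=x$ and $y_N=\phi(t,x)\in\cl|\xi|$. Setting $\eta_N:=\xi$, I descend inductively in $j$: whenever $y_j\in\cl|\eta_j|$, the relation $\phi(\Delta,y_{j-1})=y_j\in\cl|\eta_j|$ together with $0<\Delta\le t_*\le t_{\eta_j}$ places $y_{j-1}\in|\uderup\eta_j|$ by \eqref{secondtop555}. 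Since the realization $|\cdot|$ partitions $X$ into the disjoint cells $\{|\eta|\}_{\eta\in\cX}$, there is a unique $\eta_{j-1}\in\uderup\eta_j$ with $y_{j-1}\in|\eta_{j-1}|\subset\cl|\eta_{j-1}|$, which closes the induction. After $N$ iterations $\eta_0\in\uderup^N\xi\subset\uderup^{\bm +}\xi$, hence $x=y_0\in|\eta_0|\subset|\uderup^{\bm +}\xi|$, as desired.

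The main obstacle lies in the bookkeeping of this descent: one must track the orbit through a \emph{uniquely determined} cell at each intermediate time, which is exactly what lets the local hypothesis compose $N$ times and land in $\uderup^N\xi$ rather than in some larger set. This depends crucially on the atomic structure of the Boolean realization — that $X$ decomposes as a disjoint union of cells $|\eta|$ — and on the semigroup property of $\phi$ used to chain the time-steps. Once the chain is constructed, the identity $(\uderup^{\bm +})^{\bm +}=\uderup^{\bm +}$, which follows immediately from expanding $\uderup^{\bm +}=\bigcup_{k\ge 1}\uderup^k$, ensures that the output of Lemma \ref{udermlem} matches the theorem's statement verbatim. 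I note that compactness of $(X,\scrT)$ does not appear to be used in this argument — the uniform time-step $t_*>0$ arises purely from the finiteness of $\cX$ — and is presumably retained for compatibility with the CW-framework of Section \ref{CW}.
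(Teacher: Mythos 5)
Your argument is correct and follows essentially the same route as the paper's: fix a uniform $t_*>0$, iterate the local hypothesis to obtain $\phi(-t,\cl|\xi|)\subset|\uderup^{\bm +}\xi|$ for all $t>0$, and conclude via Lemma \ref{udermlem}; the paper carries out the iteration at the level of set preimages (showing $\phi(-2t,\cl|\xi|)\subset\phi(-t,\cl|\uderup\xi|)\subset|\uderup^2\xi|$ and repeating), whereas you track a sampled orbit through the cell partition, but the underlying composition is the same. Your side remark that compactness of $X$ plays no actual role here — finiteness of $\cX$ alone gives $t_*=\min_\xi t_\xi>0$ — is accurate, and the paper's appeal to compactness at that step is indeed superfluous.
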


\begin{proof}
The proof of based on the following observation. By the compactness of $X$ we may assume, without loss of generality, that $t_\xi\ge t_*>0$ for all $\xi\in \cX$. 
Then, $\varphi(-t,\cl|\xi|) \subset |\uderup\xi|$ for all $0<t\le t_*$ and for all $\xi\in \cX$.
Observe that 
\[
\begin{aligned}
\varphi(-2t,\cl|\xi|) &= \varphi\bigl(-t, \varphi(-t,\cl|\xi|)\bigr)\subset \varphi\bigl(-t,|\uderup\xi|\bigr)\\
&\subset  \varphi\bigl(-t,\cl|\uderup \xi|\bigr)\subset | \uderup^2\xi|,
\end{aligned}
\]
which yields $\varphi(-kt,\cl|\xi|)\subset  | \uderup^k\xi|
\subset |\uderup^{\bm{+=}}\cU|$,  for all $k\ge 0$ and for all $0<t\le t_*$. As  in the proof of Lemma \ref{udermlem},
$\varphi(-t,\cl|\xi|)  \subset |\uderup^{\bm{+=}} \xi| \subset |\uclbff  \xi|$
for all $t>0$ which proves by Lemma \ref{admis1a1} that $\disc$ is continuous,
and thus $(\cX,\uderup^{\bm{+=}},|\cdot|)$ is a CA-discretization for  $(X,\scrTbf)$.
%
% By additivity we have that $\varphi(-t,\cl|\cU|) \subset |\uderup\cU|$ for all $0<t\le t_*$ and for all $\cU\subset \cX$. Observe that 
% \[
% \begin{aligned}
% \varphi(-2t,\cl|\cU|) &= \varphi\bigl(-t, \varphi(-t,\cl|\cU|)\bigr)\subset \varphi\bigl(-t,|\uderup\cU|\bigr)\\
% &\subset  \varphi\bigl(-t,\cl|\uderup \cU|\bigr)\subset | \uderup^2\cU|,
% \end{aligned}
% \]
% which yields $\varphi(-kt,\cl|\cU|)\subset  | \uderup^k\cU|$,  for all $k\ge 1$ and for all $0<t\le t_*$.
% From Eqn.\ \eqref{discderfs1} we obtain $\varphi(-kt,\cl|\cU|)\subset |\uderup^{\bm +}\cU|$
% and thus $\uderup |\cU| \subset |\uderup^{\bm +}\cU|$. Iteration of the latter, as in the proof of Lemma \ref{udermlem}, and using the fact that $\uderup^+$ is sub-idempotent gives the inclusion
% $\udero|\cU| \subset |\uderup^+\cU|$.
\end{proof}

\begin{remark}
The operator $\uderup$ defines a relation on $\cX$: $(\eta,\xi) \in \bmphi$ if and only if $\eta\in \uderup \xi$, cf.\ App.\ \ref{complBoolAlg}. The transitive, reflexive closure of $\bmphi$ is the pre-order $\lebf  $ associated to
$\uclbff   := \uderup^{\bm{+=}}$. 
%which defines a CA-discretization of $(X,\scrTbf)$.
\end{remark}

\begin{remark}
\label{discotherfltop}
For discretizing the flow topologies $\scrT^-$ and $\scrT^+$ we can use the criteria in Lemmas \ref{admis1a1} and \ref{secondtop5} by discarding the topology $\scrT$, i.e. take $\cl$ to be the identity map. For example a  discrete closure operator $\ccl^+\colon\sSet(\cX) \to \sSet(\cX)$ yields a CA-discretization for $\scrT^+$ if and only if $\varphi(t,|\xi|)\subset |\ccl^+\xi|$ for all $t\ge 0$ and for all $\xi\in \cX$, i.e. $\scrT^+$-consistency for the associated pre-order $\le^+$. 
\end{remark}

\begin{remark}
\label{modalextra}
The definition of the \bflt in Section \ref{derivesflowtop} uses the modal operator $\uderm$. This construction works for any modal operator $\Phi$ on $\sSet(X)$ as is explained in Section \ref{modalstuff}.
\end{remark}

%%%%%%%%%
%%%%%%%%%%%%%

\section{Morse pre-orders}
\label{secondgrad}\index{Morse pre-order}
In this section we explain the implications of discretization with respect to two topologies in the sense of $(\scrT,\scrTbf)$- pairwise clopen sets. 
%A Morse pre-order as defined in Definition \ref{morsepreorder1aa} will be called a $(\scrT^*,\scrT^\infty)$-consistent discretization.
%Some aspects are not related to the dynamical interpretation of $\scrT^\infty$.
We consider the  bi-topological space $(X,\scrT,\scrTbf)$ and we use the theory in Section \ref{bi-top} to discuss discretization in this setting.
Let $\disc\colon X\twoheadrightarrow \cX$ be a discretization map and let
$(\cX,\le^\dagger)$ be an anatagonistic pre-order\index{Antagonistic pre-order} 
for $(X,\scrT,\scrTbf)$, which motivates the following definition:
%$\le^\dagger$ satisfies
%Definition \ref{antacoarse} with respect to $(X,\scrT,\scrTbf)$.
% \begin{enumerate}
%     \item [(i)] $\le^\dagger$ is $\scrT$-co-consistent with respect to $\grd$; %both $\scrT$-consistent and $\scrT^+$-consistent with respect to $\grd$;
%     \item [(ii)] $\le^\dagger$  is $\scrTbf$-consistent with respect to $\grd$,
% \end{enumerate}
% cf.\ Definition \ref{antacoarse}.
%The associated closure operator %for the Morse pre-order $(\cX,\le^\mu)$ 
%is denoted by $\cl^\dagger\colon \sSet(\cX)\to \sSet(\cX)$.
%
%
 \begin{definition}
 \label{morsepreorder1aa}
 Let $\disc\colon X\twoheadrightarrow \cX$ be a discretization map.
 A \emph{Morse pre-order}\index{Morse pre-order} on $\cX$ is an anatagonistic pre-order $\le^\dagger$ 
 for $(X,\scrT,\scrTbf)$, i.e. 
 %$\le^\dagger$ satisfies
 \begin{enumerate}
     \item [(i)] $\le^\dagger$ is $\scrT$-consistent with respect to $\grd$; %both $\scrT$-consistent and $\scrT^+$-consistent with respect to $\grd$;
     \item [(ii)] $\le^\dagger$  is $\scrTbf$-co-consistent with respect to $\grd$.
 \end{enumerate}
 The associated closure operator %for the Morse pre-order $(\cX,\le^\mu)$ 
 is denoted by $\cl^\dagger\colon \sSet(\cX)\to \sSet(\cX)$.
 \end{definition}
%
%%%
%figure
%\input{figures/ThreeSquareExample-2}
\begin{figure}
\begin{minipage}{0.5\textwidth}
\centering
\begin{tikzpicture}[scale=.75, dot/.style={circle,fill, inner sep = 1.0, outer sep=2.0}]
%Base layer
\def\h{1.1}
\def\v{.9}
\def\l{.1}

%first row
\node (e) at (0,0) [dot] {};
\node[anchor=west] (le) at (\l,0) {\scriptsize $\varnothing$};
%second row
\node (2) at (-\h,\v) [dot] {};
\node[anchor=east] (l2) at (-\h-\l,\v) {\scriptsize $\{\xi_2\}$};
\node (4) at (\h,\v) [dot] {};
\node[anchor=west] (l4) at (\h+\l,\v) {\scriptsize $\{\xi_4\}$};
%third row
\node(12) at (-2*\h,2*\v) [dot] {};
\node[anchor=east](l12) at (-2*\h-\l,2*\v) {\scriptsize $\{\xi_1,\xi_2\}$};
\node(24) at (0, 2*\v) [dot] {};
\node[anchor=west](l24) at (0+\l, 2*\v) {\scriptsize $\{\xi_2,\xi_4\}$};
\node(45) at (2*\h, 2*\v) [dot] {};
\node[anchor=west](l45) at (2*\h+\l, 2*\v) {\scriptsize $\{\xi_4,\xi_5\}$};
%fourth row
\node(124) at (-\h, 3*\v) [dot] {};
\node[anchor=east](l124) at (-\h-\l, 3*\v) {\scriptsize $\{\xi_1,\xi_2,\xi_4\}$};

\node(245) at (\h, 3*\v) [dot] {};
\node[anchor=west](l245) at (\h+\l, 3*\v) {\scriptsize $\{\xi_2,\xi_4,\xi_5\}$};
%fifth row
\node(234) at (0, 3.5*\v) [dot] {};
\node[anchor=west] (l234) at (0+\l, 3.5*\v) {\scriptsize $~~\,\{\xi_2,\xi_3,\xi_4\}$};
%sixth row
\node(1234) at (-\h, 4.5*\v) [dot] {};
\node[anchor=east](l1234) at (-\h-\l, 4.5*\v) {\scriptsize $\{\xi_1,\xi_2,\xi_3,\xi_4\}$};

\node(2345) at (\h, 4.5*\v) [dot] {};
\node[anchor=west](l2345) at (\h+\l, 4.5*\v) {\scriptsize $\{\xi_2,\xi_3,\xi_4,\xi_5\}$};
%final row
\node(12345) at (0, 5.5*\v) [dot] {};
\node[anchor=west](l12345) at (0+\l, 5.5*\v) {\scriptsize $\{\xi_1,\xi_2,\xi_3,\xi_4,\xi_5\}$};

%%Old example
%\node (e) at (0,0) {$\varnothing$};
% \node (2) at (-\h,\v) {$\{2\}$};
% \node (4) at (\h,\v) {$\{4\}$};

% \node(12) at (-2*\h,2*\v) {$\{1,2\}$};
% \node(24) at (0, 2*\v) {$\{2,4\}$};
% \node(45) at (2*\h, 2*\v) {$\{4,5\}$};

% \node(124) at (-\h, 3*\v) {$\{1,2,4\}$};
% \node(245) at (\h, 3*\v) {$\{2,4,5\}$};

% \node(234) at (0, 4*\v) {$\{2,3,4\}$};

% \node(1234) at (-\h, 5*\v) {$\{1,2,3,4\}$};
% \node(2345) at (\h, 5*\v) {$\{2,3,4,5\}$};

% \node(12345) at (0, 6*\v) {$\{1,2,3,4,5\}$};

\draw (2) -- (e);
\draw (4) -- (e);

\draw (12) -- (2);
\draw (24) -- (2);
\draw (24) -- (4);
\draw (45) -- (4);

\draw (124) -- (12);
\draw (124) -- (24);
\draw (245) -- (24);
\draw (245) -- (45);
\draw (234) -- (24);

%\draw (234) -- (124);
%\draw (234) -- (245);

\draw (1234) -- (234);
\draw (1234) -- (124);
\draw (2345) -- (234);
\draw (2345) -- (245);

\draw (12345) -- (1234);
\draw (12345) -- (2345);

\end{tikzpicture}
\end{minipage}
 $\bigcap\quad$ 
\begin{minipage}{0.15\textwidth}
\centering
\begin{tikzpicture}[node distance=.1cm, dot/.style={circle,fill, inner sep = 1.0, outer sep=2.0}]
%Base layer
\def\h{1.1}
\def\v{.9}
\def\l{.1}

\node (e) at (0,0) [dot] {};
\node[anchor=west] (le) at (\l,0) {\scriptsize $\varnothing$};

\node (1) at (0,\v) [dot] {};
\node[anchor=west] (l1) at (\l,\v) {\scriptsize $\{\xi_1\}$};

\node (2) at (0,\v*2) [dot] {};
\node[anchor=west] (l2) at (\l,\v*2) {\scriptsize $\{\xi_1,\xi_2,\xi_3\}$};

\node (3) at (0,\v*3) [dot] {};
\node[anchor=west] (l3) at (\l,\v*3) {\scriptsize $\{\xi_1,\xi_2,\xi_3,\xi_4\}$};

\node (4) at (0,\v*4) [dot] {};
\node[anchor=west] (l4) at (\l,\v*4) {\scriptsize $\{\xi_1,\xi_2,\xi_3,\xi_4,\xi_5\}$};

\draw (1) -- (e);
\draw (2) -- (1);
\draw (3) -- (2);
\draw (4) -- (3);
\end{tikzpicture}
\end{minipage}
$=\quad$
\begin{minipage}{0.15\textwidth}
\centering
\begin{tikzpicture}[node distance=.1cm, dot/.style={circle,fill, inner sep = 1.0, outer sep=2.0}]
%Base layer
\def\h{1.1}
\def\v{.9}
\def\l{.1}

\node (e) at (0,0) [dot] {};
\node[anchor=west] (le) at (\l,0) {\scriptsize $\varnothing$};

\node (1) at (0,\v) [dot] {};
\node[anchor=west] (l1) at (\l,\v) {\scriptsize $\{\xi_1,\xi_2,\xi_3,\xi_4\}$};

\node (2) at (0,\v*2) [dot] {};
\node[anchor=west] (l4) at (\l,\v*2) {\scriptsize $\{\xi_1,\xi_2,\xi_3,\xi_4,\xi_5\}$};

\draw (1) -- (e);
\draw (2) -- (1);
\end{tikzpicture}
\end{minipage}
% \begin{minipage}{0.15\textwidth}
% \centering
% \begin{tikzpicture}[node distance=.1cm, dot/.style={circle,fill, inner sep = 1.0, outer sep=2.0}]
% %Base layer
% \def\h{1.1}
% \def\v{.9}
% \def\l{.1}

% \node (e) at (0,0) [dot] {};
% \node[anchor=west] (le) at (\l,0) {\scriptsize $\{\xi_1,\xi_2,\xi_3,\xi_4\}$};

% \node (1) at (0,\v) [dot] {};
% \node[anchor=west] (l1) at (\l,\v) {\scriptsize $\{\xi_5\}$};

% \draw[-latex] (1) -- (e);
% \end{tikzpicture}
% \end{minipage}
%\vskip-1cm
\caption{Intersection of both the lattice of down-sets $\sO(\ccX,\leq)$ and up-sets $\sU(\ccX,\lebf  )$ for Fig. \ref{morsetes2} yields the coarsening $(\sSC,\leq)$ [right] of the Morse pre-order $(\ccX,\le^\dagger)$.}
%\caption{A discretizations of $(X,\scrT)$ and $(X,\scrTbf)$ in Fig.\ \ref{morsetes2} together with their Hasse diagrams and associated lattices of down-sets [left] and up-sets [middle]. Intersection of both lattices $\sO(\cX,\le)$ and $\sU(\cX,\lebf  )$ yields the coarsening $(\sSC,\le)$[right] of the Morse pre-order  $(\cX,\le^\dagger)$.}
\label{morsetes222}
\end{figure}
By Theorem \ref{dubbleconst3} $(\cX,\le^\dagger)$ is an antagonistic coarsening of discretizations for both 
$\scrT$ and $\scrTbf$, cf.\ Sect.\ \ref{bi-top}. %\ref{antadisc}.
%\begin{definition}
%\label{morsepreorder1aa}
%Let $\disc\colon X\twoheadrightarrow \cX$ be a discretization map.
%A \emph{Morse pre-order} on $\cX$ is a pre-order $\le^\dagger$ such that
%\begin{enumerate}
%    \item [(i)] $\le^\dagger$ is $\scrT$-co-consistent with respect to $\grd$; %both $\scrT$-consistent and $\scrT^+$-consistent with respect to $\grd$;
%    \item [(ii)] $\le^\dagger$  is $\scrTbf$-consistent with respect to $\grd$.
%\end{enumerate}
%A Morse pre-order is an antagonistic coarsening of discretizations for both 
%$\scrT$ and $\scrTbf$, cf.\ Sect.\ \ref{antadisc}.
%The associated closure operator %for the Morse pre-order $(\cX,\le^\mu)$ 
%is denoted by $\cl^\dagger\colon \sSet(\cX)\to \sSet(\cX)$.
%\end{definition}
%
The conditions for an antagonistic pre-order imply that for $\cU\subset \cX$ we have that (i) $\cl|\cU| \subset |\ccl^\dagger\cU|$ and (ii) $\uclbf  |\cU| \subset |\st^\dagger\cU|$. In particular, if $\cU\in \sO(\cX,\le^\dagger)$, then $\ccl^\dagger\cU=\cU$ and thus  $|\cU|$ is $\scrT$-closed. Moreover, 
if $\cU\in \sO(\cX,\le^\dagger)$, then %$\st^\dagger \cU=\cU$ and thus 
$|\cU|$ is $\scrTbf$-open, cf.\ Sect.\ \ref{topconspre}, which implies that $|\cU|$ satisfies
%$|\cU|\subset \uclbf  |\cU| \subset |\cU|$ which implies that $\uclbf  |\cU| = |\cU|$ and therefore $|\cU|$ 
 $\varphi(t,|\cU|) \subset \Int |\cU|$ for $t>0$, cf.\ Lem.\ \ref{charattbl12345}.
These facts combined show that $\cU\in \sO(\cX,\le^\dagger)$ implies that $|\cU|$
is a $(\scrT,\scrTbf)$-pairwise clopen set and therefore a closed attracting block, i.e.
$|\cU|\in \sABlockC(\varphi)$.
We have the following commutative diagram:
\begin{equation}
\label{AB12}
\begin{tikzcd}[column sep=large, row sep=large]
\sSet(X)                      & \sABlockC(\varphi) \arrow[l, "\supset"', tail]                       \\
\sSet(\cX) \arrow[u, "|\cdot|", tail] & \sO(\cX,\le^\dagger) \arrow[u, "|\cdot|"', tail] \arrow[l, "\supset"', tail]
\end{tikzcd}
\end{equation}
%From the discussion in Sect.\ \ref{} we obtain 
%Closed attracting blocks have some special properties which are advantageous for discretization.

\begin{theorem}
    \label{Morsepreordequiv}
    Let $\disc\colon X\twoheadrightarrow \cX$ be a discretization map.
    A pre-order $(\cX,\le^\dagger)$ is a 
{Morse pre-order}\index{Morse pre-order} on $\cX$ for $(X,\scrT,\scrTbf)$ if and only if
$|\ccl^\dagger\xi|$ is $\scrT$-closed and 
\begin{equation}
    \label{critMPO}
    \varphi(t,|\xi|) \subset \Int|\ccl^\dagger\xi|,\quad\forall t>0,
\end{equation}
for all $\xi\in \cX$.
\end{theorem}
\begin{proof}
    If $(\cX,\le^\dagger)$ is a Morse pre-order then for every $\cU\in \sO(\cX,\le^\dagger)$,
    $|\cU|$ is $\scrT$-closed and  $\varphi(t,|\cU|) \subset \Int |\cU|$ for $t>0$. Take $\cU=\ccl^\dagger \xi$. Then, $|\ccl^\dagger\xi|$ is $\scrT$-closed and 
    $\varphi(t,|\xi|)\subset \varphi(t,|\ccl^\dagger\xi|) \subset \Int|\ccl^\dagger\xi|$ for all $t>0$.

    Conversely, suppose $|\ccl^\dagger \xi|$ is $\scrT$-closed and \eqref{critMPO} is satisfied. To prove that $(\cX,\le^\dagger)$ is a Morse pre-order we show that it is $\scrT$-consistent and $\scrTbf$-co-consistent with respect to $\disc$.
    Let $\cU\in \sO(\cX,\le^\dagger)$. Then, $\ccl^\dagger \cU=\cU$ and
    \[
    |\cU| = |\ccl^\dagger \cU| = \bigl|\ccl^\dagger\bigcup \{\xi\}\bigr| = \bigl|\bigcup \ccl^\dagger \xi\bigr| = \bigcup \bigl|\ccl^\dagger \xi\bigr|,
    \]
    is $\scrT$-closed. Therefore, $\cU$ closed in $(\cX,\le^\dagger)$ implies that $\disc^{-1}\cU = |\cU|$ is $\scrT$-closed and thus $\le^\dagger$ is $\scrT$-consistent with respect to $\disc$.
    Moreover, 
    \[
    \begin{aligned}
        \varphi(t,|\cU|) &= \varphi\bigl( t,\bigcup|\xi|\bigr) = \bigcup \varphi(t,|\xi|)\subset \bigcup \Int |\ccl^\dagger \xi|\\
        &\subset \Int \bigcup |\ccl^\dagger \xi| = \Int |\ccl^\dagger \cU| = \Int |\cU|,\quad\forall t>0,
    \end{aligned}
    \]
    which implies that $|\cU|$ is $\scrTbf$-open. Therefore, $\cU$ closed in $(\cX,\le^\dagger)$ implies that $\disc^{-1}\cU = |\cU|$ is $\scrTbf$-open by \eqref{attblock15bc}.
    Recall   that $\scrTbf$-co-consistency can be characterized as follows: $\cU^c\in \sO(\cX,\ge^\dagger)$, then $|\cU|^c = |\cU^c|$ is $\scrTbf$-closed,
    which is equivalent to $\cU\in \sO(\cX,\le^\dagger)$, then $|\cU|$ is $\scrTbf$-open, cf.\ Sect.\ \ref{topconspre}.
    Using the latter proves that $\le^\dagger$ is $\scrTbf$-co-consistent with respect to $\disc$.
\end{proof}
\begin{remark}
    The $\scrT$-consistency of $(\cX,\le^\dagger)$ with respect to $\disc$ implies that $\cl|\xi|\subset |\ccl^\dagger \xi|$ for all $\xi\in \cX$. In particular, this implies that $\varphi(t,\cl|\xi|) \subset \varphi(t,|\ccl^\dagger\xi|)\subset \Int |\ccl^\dagger\ccl^\dagger \xi| = \Int |\ccl^\dagger \xi|$, for all $t>0$ and for all $\xi\in \cX$.
\end{remark}

If $\uderup\colon \sSet(\cX) \to \sSet(\cX)$ is a modal operator such that 
\begin{equation}
    \label{secondtop512}
\varphi\bigl(t,|\xi|\bigr) \subset \Int\bigl| \uderup\xi\bigr|,\quad \forall t> 0,\quad\text{and}\quad \forall \xi\in \cX.
\end{equation}
then Theorem \ref{critMPO} implies that the  operator $\ccl_\uderup = \bmPhi^{\bm{+=}}=\bigcup_{k\ge 0}  \bmPhi^k \colon \sSet(\cX)\to \sSet(\cX)$  is an antagonistic  closure operator for $(X,\scrT,\scrTbf)$,
cf.\ Lem.\ \ref{udermlem}. If $(X,\scrT)$ is compact then \eqref{secondtop512} can be weakened to $0<t\le t_\xi$,  cf.\ Thm.\ \ref{localconstrGam}

\subsection{Morse tessellations}
\label{Morsetessll}

For a Morse pre-order $(\cX,\le^\dagger)$ the down-sets  yield a sublattice $\sO(\cX,\le^\dagger)$ of closed attracting blocks. % which are regular by Theorem \ref{clattbl}.
%This also yields the following commutative diagram:
%\begin{equation}
%    \label{resolution1aaa}
%    \begin{diagram}
%    \node{\scrR(X)} \node{\sABlockR(\varphi)}\arrow{w,l,V}{\supset}\\
%    \node{\scrR(\cX)}\arrow{n,l,V}{|\cdot|} \node{\sO(\cX,\le^\dagger)}\arrow{w,l,V}{\supset}\arrow{n,r,V}{|\cdot|}
%    \end{diagram}
%% \quad\quad\quad
%%     \begin{diagram}
%%     \node{\scrR(X)} \node{\sABlockR(\varphi)}\arrow{w,l,V}{\supset}\\
%%     \node{\sSet(\cX^{\topc})}\arrow{n,l,V}{\lbr\cdot\rbr} \node{\sO\left(\sT\right)}\arrow{w,l,V}{}\arrow{n,r,V}{}
%%     \end{diagram}
%\end{equation}
%Observe the contrast with Diagram \eqref{AB12} where sets are not regular closed. 
%The bottom embedding $\sO(\cX,\le^\dagger) \rightarrowtail \scrR(\cX)$ is inclusion
%since $\cU\cap \cU' = \cU\wedge \cU'$, cf.\ proof of Theorem \ref{clattbl}.
Following the theory in Section \ref{bi-top}
 a Morse  pre-order $(\cX,\leq^\dagger)$ yields a finite discretization map  $\dyn\colon\cX \twoheadrightarrow
\sSC$ which is defined by combining the formulas in \eqref{defndyn} and \eqref{defnofSC}. % and \eqref{dyndefn}. 
The latter is dual to the embedding $\sO(\sSC) \cong\sO(\cX,\le^\dagger)\rightarrowtail \sSet(\cX)$.
%
% \eqref{defndyn} and $\sSC$ as defined in Sect.\ \ref{bi-top}.  
%The map  $\dyn$ is referred to as \emph{dynamical grading}
 The composition 
\[
X\xtwoheadrightarrow{ \grd} \cX \xtwoheadrightarrow{\dyn} \sSC,
\]
defines a continuous $T_0$-discretization of $X$ which is denoted by $\tile\colon X\to \sSC$,
cf.\ \eqref{spacegrading} and which defines an $\sSC$-grading on $X$, cf.\ App.\ \ref{gradfilt}.\index{$\tile$} %, cf.\ Sect. \ref{antadisc} for more details.
Compare the latter with the composition $X\xtwoheadrightarrow{\disc}\cX\xtwoheadrightarrow{\dim}\N$ which defines an $\N$-grading on $X$. %Grading via $\dyn$ is called \emph{dynamical grading}.
The diagrams in \eqref{dia:intro} and \eqref{ordpreforgrd}  show how $\dyn$ is order-preserving and order-reversing with respect to $(\cX,\le)$ and $(\cX,\lebf  )$ respectively, cf.\ Sect.\ \ref{bi-top}.
Diagram \eqref{dia:intro} also shows the continuous $T_0$-discretization maps 
\[
\tile\colon (X,\scrT) \to (\sSC,\le) \quad \hbox{and}\quad \tile\colon (X,\scrTbf) \to (\sSC^*,\ge),
\]
 by factoring through $(\cX,\le)$
and  through $(\cX,\lebf  )$ respectively. 
A Morse pre-order can therefore be thought of as grading of $X$ which, as $T_0$-discretization, is a continuous map $\tile$ in the above sense.
%
%The grading $\tile\colon X \to \sSC$ is again a discretization map.
% and we denote the inverse image map by $|\cdot| :=\tile^{-1}$.
%The inverse images of $\cS\in \sSC$ are called \emph{Morse tiles} and are denoted by $T= \tile^{-1}(\cS)$%=|\cS|$.
%
A down-set $\cU$ in $(\sSC,\le)$ is a down-set in $(\cX,\le)$ and an up-set in $(\cX,\lebf  )$ and therefore closed  in $(X,\scrT)$ and and open in $(X,\scrTbf)$ respectively.
By Lemma \ref{charattbl12345} this implies that down-sets $\cU$ in $(\sSC,\le)$ realize to closed attracting neighborhoods $\tile^{-1} \cU\in \sABlockC(\varphi)$.
On the level of classes $\cS\in \sSC$ the inverse image of $\tile$ yields an $\sSC$-graded tessellation
$(\sT,\le)$ with 
\begin{equation}
\label{Morsetessll11}
\sT:= \bigl\{ T = \tile^{-1}\cS \mid \cS \in \sSC \bigr\},
\end{equation}
%defines an ordered partition, or tessellation of $X$
 such that $\big\downarrow T$ is $\scrT$-closed and
 $\scrTbf$-open, i.e. $\varphi(t,x)\in \Int \big\downarrow T$ for every $x\in T$ and for all tiles $T$. The latter follows since $\big\downarrow T = \big\downarrow\tile^{-1}\cS = \tile^{-1}\big\downarrow \cS$ and $\big\downarrow\cS$ is a down-set in $\sSC$,
 cf.\ \cite[Defn.\ 8]{lsa3}. 
 This motivates the  definition:
 \begin{definition}[cf.\ \cite{lsa3}, Cor.\ 4]
\label{morsetess45}
An ordered tessellation $(\sT,\le)$  of $X$, cf.\ Defn.\ \ref{ordtessdefn}, 
%consisting of a tessellation
% \footnote{The elements in the tessellation are \emph{regularly disjoint}, i.e. with respect to  \emph{regular intersection} $T\wedge T' := \cl\Int(T\cap T')=\varnothing$ for all $T,T'\in \sT$.}  
% $\bigcup_{T\in \sT}T=X$  
 is called a \emph{Morse tessellation} for $\phi$ if  for every $T\in \sT$
\begin{enumerate} 
\item[(i)] $\big\downarrow T$ is $\scrT$-closed;
\item[(ii)] $\big\downarrow T$ is $\scrTbf$-open, i.e. $\varphi\bigl(t,x\bigr) \in \Int \big\downarrow T$, for all $x\in T$ and for all $t>0$.
\end{enumerate}
%where $\big\downarrow T =  \{T'\mid T'\le T\}$. 
The sets $T\in \sT$ are called \emph{Morse tiles}.\footnote{Equivalently, for every $I\in \sO(\sT,\le)$, $|I|:=\bigcup_{T\in I} T \in \sABlock(\varphi)$, i.e. $\sT=\sT(\sN)$ where $\sN$ is given by  $\sN=\{|I|\mid I\in\sO(\sT,\le)\}\subset \sABlock(\varphi)$.}
\end{definition}
% \begin{definition}[cf.\ \cite{lsa3}, Cor.\ 4]
% \label{morsetess45}
%  A finite poset $(\sT,\le)$ consisting of a tessellation
% % \footnote{The elements in the tessellation are \emph{regularly disjoint}, i.e. with respect to  \emph{regular intersection} $T\wedge T' := \cl\Int(T\cap T')=\varnothing$ for all $T,T'\in \sT$.}  
%  $\bigcup_{T\in \sT}T=X$  
%  is called a \emph{Morse tessellation} for $\phi$ if  for every $T\in \sT$
% \begin{enumerate} 
% \item[(i)] $\big\downarrow T$ is closed;
% \item[(ii)] $\varphi\bigl(t,x\bigr) \in \Int \big\downarrow T$, for all $x\in T$ and for all $t>0$.
% \end{enumerate}
% %where $\big\downarrow T =  \{T'\mid T'\le T\}$. 
% The sets $T\in \sT$ are called \emph{Morse tiles}.\footnote{Equivalently, for every $I\in \sO(\sT,\le)$, $|I|:=\bigcup_{T\in I} T \in \sABlock(\varphi)$, i.e. $\sT=\sT(\sN)$ where $\sN$ is given by  $\sN=\{|I|\mid I\in\sO(\sT,\le)\}\subset \sABlock(\varphi)$.}
% \end{definition}

 Conversely, Morse tessellations yield Morse pre-orders and associated space discretizations.
Indeed, for a Morse tessellation $(\sT,\le)$ we declare the tiles to be the cells in $\cX$ and the partial order is the Morse pre-order on $\cX$. By definition this defines a discretization for both $(X,\scrT)$ and $(X,\scrTbf)$. In the next subsection we discuss a more refined reconstruction based on regular closed sets.

\begin{remark}
\label{recon23}
One can obviously build larger sets $\cX$ by for example considering $\scrT$ or $\scrTbf$ closure of the tiles $T$. One can also define fine structure within the tiles. In the next section we explain a specific reconstruction in the case of regular closed tiles.
\end{remark}

Morse tessellations are a defining structure for Morse representations, cf.\ \cite{lsa3}. The considerations
in this subsection explain that Morse tessellations are equivalent to Morse pre-orders which will be the central structure for discussing connection matrix theory in Sect.\ \ref{sec:cm}. 

\subsection{Regular closed attracting blocks}
\label{regbldiscrresol}\index{Attracting block!regular closed}
This section 
 discusses a special property of closed attracting blocks, cf.\ \eqref{clattbl12}. %which are defined in \eqref{clattbl12}.
%we use the interpretation of $\scrT^\infty$ as being the flow topology defined for a dynamical system $\phi$. Define 
%\[
%\sABlockC(\varphi) := \bigl\{U\in \scrC(X)\mid \cl~\varphi(t,U)\subset \Int U,~~\forall t>0 \bigr\},
%\]
%which is called the lattice of closed attracting blocks.
%Closed attracting blocks satisfy a special property:
\begin{theorem}
\label{clattbl}
Let $U\in \sABlockC(\varphi)$ be a closed attracting block, then $U$ is a regular closed attracting block, i.e. $\sABlockC(\varphi) = \sABlockR(\varphi)$, where the latter denotes the lattice of
regular closed attracting blocks. Moreover, 
\[
U\wedge U' = U\cap U',
\]
for all $U,U'\in \sABlockC(\varphi)$.
%\[
%\cl\colon\sABlockO(\varphi) \twoheadrightarrow \sABlockR(\varphi),
%\]
% is a surjective lattice homomorphism.\footnote{We leave the last statement to the reader to prove which is an exercise in properties of the binary operations on $\scrR(X)$.
%% see attractors-III-15e, pp. 30
%}
\end{theorem}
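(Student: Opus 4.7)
My plan is to first establish $\sABlockC(\phi) \subset \sABlockR(\phi)$ (the reverse inclusion being trivial, since regular closed sets are closed), and then derive the meet formula from the fact that $\sABlockC(\phi)$ is closed under binary intersection.

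For the first inclusion, let $U \in \sABlockC(\phi)$, so $\cl U = U$ and $\phi(t,U) \subset \Int U$ for all $t > 0$. The inclusion $\cl \Int U \subset \cl U = U$ is immediate, so it suffices to show $U \subset \cl \Int U$. The key tool is the joint continuity of $\phi\colon \R^+ \times X \to X$ at $(0,x)$ (noting Remark~\ref{rem:phi_continuity} about when such continuity is available). Pick any $x \in U$ and any net $t_n \searrow 0^+$. Then $\phi(t_n,x) \in \Int U$ by the attracting block property, and $\phi(t_n,x) \to \phi(0,x) = x$ by continuity, so $x \in \cl \Int U$. Hence $U = \cl \Int U$, i.e. $U$ is regular closed.

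For the meet formula, I will first show that $\sABlockC(\phi)$ is closed under finite intersection. If $U, U' \in \sABlockC(\phi)$, then $U \cap U'$ is closed (intersection of closed sets), and
\[
\phi(t, U \cap U') \subset \phi(t,U) \cap \phi(t,U') \subset \Int U \cap \Int U' = \Int(U \cap U'), \quad \forall t > 0,
\]
so $U \cap U' \in \sABlockC(\phi)$. By the first part, $U \cap U'$ is regular closed, so
\[
U \wedge U' = \cl \Int(U \cap U') = U \cap U',
\]
where the first equality is the definition of the meet in the Boolean algebra $\scrR(X)$, cf.\ Section~\ref{regclsets}.

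The only subtle point — and I expect this to be the main thing to flag — is the appeal to continuity of $\phi$ on $\R^+ \times X$ at $t = 0$. The backbone of the paper's constructions relies mostly on continuity of the individual maps $\phi(t,\cdot)$ (cf.\ Remark~\ref{rem:phi_continuity}), but the present statement genuinely needs joint continuity to ensure $\phi(t_n,x) \to x$ as $t_n \to 0^+$; without it one could only conclude $U$ is regular closed under additional hypotheses. Everything else is formal manipulation inside the lattice $\scrR(X)$ whose operations were recorded in Section~\ref{regclsets}.
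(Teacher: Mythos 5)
Your proof is correct and follows essentially the same route as the paper: a short argument via joint continuity at $t=0$ to establish $U=\cl\Int U$, and then closure of $\sABlockC(\phi)$ under intersection plus the first part to obtain the meet formula. The only cosmetic difference is that the paper argues by contradiction from a putative $x\in U\smin\cl\Int U$, while you argue directly for an arbitrary $x\in U$; your caveat about needing joint continuity of $\phi$ at $(0,x)$ is also what the paper relies on, and is in fact one of the two places where Remark \ref{rem:phi_continuity} flags this requirement explicitly.
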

\begin{proof}
Let $U\subset X$ be a closed attracting block. By definition 
$\cl \Int U  \subset  U$. 
Suppose $U \smin \cl \Int U\neq\varnothing$.
Let $x\in  U \smin \cl \Int U$.
Choose $t_n>0$ with $t_n \to 0$ as $n \to \infty$. Since $U$ is an attracting block we have that $y_n := \varphi(t_n,x) \in \Int U$ for all $t_n>0$.
By the continuity $y_n \to x$ as $n\to \infty$ which implies  that $x\in \cl \Int U$, a contradiction. Therefore
$ U=\cl \Int U$, which proves that $U$ is a regular closed attracting block, cf.\ \cite{lsa2,lsa3}.

%As a consequence $\sABlockC(\varphi) = \sABlockR(\varphi)$. 
If $U,U'\in \sABlockC(\varphi)$, then $U\cap U'\in \sABlockC(\varphi)$ and thus $U\cap U'$ is a closed attracting block and therefore a regular closed attracting block. This implies that $U\cap U' = 
\cl \Int(U\cap U') = U\wedge U'$.
%
% Since a closed attracting block is regular closed it holds that $U = \cl~\Int U$ and 
% $\varphi(t,\cl \Int U) =\varphi(t,U) \subset \Int U$. Therefore, every closed attracting block is given as the closure of a open attracting block. 
%%%%%%%%%%%%%%
%
% Moreover, %$(\scrT,\scrTbf)$-mixed clopen set:
% $
% \cl\colon\sABlockO(\varphi) \twoheadrightarrow \sABlockR(\varphi),
% $
%  is a lattice homomorphism. %, cf.\ Lemma \ref{ontocl}.
%  Let $U,U'\in \clop(X)$, then $\cl~(U\cup U') = \cl~U\cup\cl~U'$. Since $U\cap U'$ is $\scrT$-open  we have that 
% $\cl~(U\cap U') = \cl\Int(U\cap U')$ is $\scrT$-regular closed. %
%Moreover,
%\[
%\begin{aligned}
%\cl~(U\cap U') &= \cl\Int\cl~(U\cap U') = \cl\bigl( \Int\cl~U \cap \Int\cl~U'\bigr)\\
%&=  \cl\Int\bigl(\cl~U \cap \cl~U'\bigr) = \cl~U\wedge \cl~U',
%\end{aligned}
%\]
%where we used the identity $\Int\cl(U\cap U') = \Int\cl~U \cap \Int\cl~U'$ for any pair of $\scrT$-open sets, cf.\ \cite[Sect.\ 4, Lem.\ 4]{Halmos}.
\end{proof}

By Theorem \ref{dubbleconst3} we may assume that a Morse pre-order $(\cX,\le^\dagger)$ 
is induced by a bi-topological CA-discretization $(\cX,\ccl,\uclbff  ,|\cdot|)$  for $(X,\scrT,\scrTbf)$.
Assume without loss of generality that $\sO(\cX,\le^\dagger) = \aclop(\cX)$ and 
$\ccl = \ccl^\dagger$ and $\uclbff   = \st^\dagger$.
% $\cl = (\st^\dagger)^*$ and $\uclbf   = \cl^\dagger$.
Moreover, assume that 
$(\cX,\ccl,|\cdot|)$ is Boolean CA-discretization.
%, then
% Suppose $\le^\dagger$ is a Boolean with respect to the first component. This implies in particular that
%$\bigl(\cX,(\st^\dagger)^*,|\cdot|\bigr)$ is Boolean. 
%For all practical purposes we write $(\st^\dagger)^* = \cl$. 
Let $\cU\in \aclop(\cX)$, then, since $|\cU| \in \sABlockR(\varphi)$, the set $\cU$ is regular closed and 
$|\cU\cap \cU'|=|\cU|\cap|\cU'| = |\cU|\wedge|\cU'| = |\cU\wedge\cU'|$,
which
yields the following regular closed analogue of  \eqref{AB12}:
%
%
%by Theorem \ref{clattbl}, $\cl|\cU|= |\cl~\cU|\in \sABlockR(\varphi)$ and thus $\cl~\cU$ is regular closed in $\cX$, cf.\ Sect.\ \ref{regclsets}.
%Denote $\scrO\scrC_\scrR(\cX) := \bigl\{ \cl~\cU\mid \cU\in \scrO\scrC(\cX)\bigr\}$.
%We obtain the following analogue of Diagram \eqref{AB12}:
\begin{equation}
\label{resolution1}
\begin{tikzcd}[column sep=large, row sep=large]
\scrR(X)                      & \sABlockR(\varphi) \arrow[l, "\supset"', tail]                       \\
\scrR(\cX) \arrow[u, "|\cdot|", tail] & \sO(\cX,\le^\dagger) \arrow[u, "|\cdot|"', tail] \arrow[l, "\supset"', tail]
\end{tikzcd}
\end{equation}
The down-sets for a Morse pre-order yield a sublattice of $\sABlockC(\varphi)$.
Conversely, for a finite sublattice  $\sN\subset \sABlockC(\varphi)$ we can construct a Boolean
CA-discretization, cf.\ Sect.\ \ref{regclMT12} and Rem.\ \ref{reverseconstr1}. % which satisfies \eqref{strictprop12}.
Morse pre-orders 
%are linked to regular closed sets and 
provide an important reduction of discretization data as is explained in the next section.

\section{Condensed Morse pre-orders}
\label{dyngrad12}
In this section we assume that 
$(\cX,\ccl,|\cdot|)$ is Boolean CA-discretization of $(X,\scrT)$.
The fact that closed attracting blocks are regular closed sets, cf.\ Thm.\ \ref{clattbl}, is a crucial property for reducing the data structures in the theory of Morse pre-orders. Such a discretization will be referred to as a \emph{\discresol}.\index{Condensed Morse pre-order} 
%In Section \ref{beyondsemifl} we will also explain \discresol in a more general setting.

\subsection{Pre-orders on top cells}\index{Top cell}\index{Cell!top}
The bottom embedding $\sO(\cX,\le^\dagger) \rightarrowtail \scrR(\cX)$ in  \eqref{resolution1} is inclusion
since $\cU\cap \cU' = \cU\wedge \cU'$, which follows from the fact that the evaluation map  $|\cdot|\colon\scrR(\cX)\rightarrowtail \scrR(X)$ is a homomorphism and $(\cX,\ccl,|\cdot|)$ is Boolean.
%cf.\ proof of Theorem \ref{clattbl}.
%
%\subsection{Regular closed Morse tessellations}
Since $\scrR(\cX) \cong \sSet(\cX^{\topc})$, cf.\  Prop.\ \ref{atomreg}, 
% The bottom homomorphism in \eqref{resolution1} can be rephrased using Proposition \ref{atomreg}. Indeed, $\scrR(\cX) \cong \sSet(\cX^{\topc})$ and isomorphism is given by $\sSet(\cX^{\topc}) \ni\cU^{\topc}\mapsto \cl|\cU^{\topc}| = 
% \lbr\cU^{\topc}\rbr$ and $\scrR(X) \ni|\cU| \mapsto \cU\cap \cX^{\topc}$.
 we can dualize the   homomorphism 
\[
\sO(\sSC,\le) \xrightarrow{\cong}\sO(\cX,\le^\dagger) \rightarrowtail \scrR(\cX) \xrightarrow{\cong}\sSet(\cX^{\topc}),
\]
%in Diagram \eqref{resolution1} 
which yields  the surjection $\pi\colon\cX^{\topc} \twoheadrightarrow (\sSC,\le)$, where $\cX^\topc$ is unordered and where $\sSC \cong \sJ\bigl( \sO(\cX,\le^\dagger)\bigr)$. 
\begin{definition}
\label{discmodel22}
The 
%By Remark \ref{reconpreord} the order-preserving surjection yields a
 induced pre-order $\le^\topc$ on $\cX^{\topc}$, defined by 
%The pre-order $\cF_\pi\subset \cX^{\topc}\times \cX^{\topc}$ is defined as follows: 
\[
\xi \le^\topc\xi'  \quad \text{if and only if}
\quad \pi(\xi) \le \pi(\xi'),
\]
is called a \emph{\discresol}\index{Condensed Morse pre-order}\index{Morse pre-order!condensed} for $\le^\dagger$.\footnote{Since  $\sO(\cccX,\le^\dagger)\cong \sO(\sSC,\le)\cong \sO(\cccX^\topc,\le^\topc)$ it follows that a pre-order $\le^\topc$ is the restriction of $\le^\dagger$ to the top cells $\ccX^\topc$, cf.\ Proof of Thm.\ \ref{mainthmfordrs}. }
\end{definition}

By construction we have that $\sO(\cX,\le^\dagger) \cong \sO(\cX^\topc,\le^\topc)\cong \sO(\sSC,\le)$.
% and $\sSC \cong \le^\topc\!\!/_\sim$.
% \[
% (\xi,\xi') \in \cF_\pi \quad \text{if and only if}
% \quad \pi(\xi) \le \pi(\xi').
% \]
The associated closure operator on $\sSet(\cX^{\topc})$ is denoted by $\ccl^\topc$.
%Down-sets in $(\cX^\topc,\le^\topc)$ have a special property.
By Proposition \ref{atomreg}, $\cU^\topc\in \sO(\cX^\topc,\le^\topc)$ implies $\ccl~\cU^{\topc} \in \sO(\cX,\le^\dagger)$ and thus, since $(\cX,\ccl,|\cdot|)$ is Boolean,
%Observe that if
%  $\ccl~\cU^{\topc} \in \cclopR(\cX)=\cclop(\cX)$, 
%then 
$\cl~|\cU^\topc|=|\ccl~\cU^\topc| \in \sABlockR(\varphi)$. Consequently $\varphi\bigl(t,|\ccl~\cU^\topc|\bigr)\subset \Int |\ccl~\cU^\topc|$ for all $t>0$.
Let $\cU^{\topc} = \ccl^\topc\xi$, with $\xi\in \cX^{\topc}$. Then,
$\varphi\bigl(t,\lbr\xi\rbr  \bigr) \subset \varphi\bigl(t,|\ccl~\ccl^\topc\xi|\bigr)\subset \Int|\ccl~\ccl^\topc\xi| = \Int \cl|\ccl^\topc\xi|
=\Int\lbr\ccl^\topc\xi\rbr$ for all $t>0$.
The latter is a condition on only the top cells. We show below that any pre-order $(\cX^\topc,\le^\topc)$ satisfying the latter is
a \discresol induced by a Morse pre-order.
\begin{theorem}
\label{mainthmfordrs}
A pre-order $(\cX^\topc,\le^\topc)$ is a \discresol for $\le^\dagger$
% induced by a Morse pre-order $(\cX,\le^\dagger)$ 
 if and only if 
 \begin{equation}
    \label{firstchar1}
\varphi\bigl(t,\lbr\xi\rbr\bigr) \subset 
%\Int \Bigl\lbr\bigcup_{n\ge 0} \bvtheta_{\cF}^n[\xi]\Bigr\rbr
 \Int \bigl\lbr \ccl^\topc\xi\bigr\rbr,\quad \forall t>0.
\end{equation}
\end{theorem}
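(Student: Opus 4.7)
My plan is to treat the two implications separately; the forward direction is essentially a computation using the characterization of closed attracting blocks, while the reverse direction requires explicitly constructing the ambient Morse pre-order on $\cX$ from the data on $\cX^\topc$.

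For $(\Rightarrow)$: suppose $(\cX^\topc, \le^\topc)$ is the discrete resolution of a Morse pre-order $(\cX, \le^\dagger)$. By Defn.\ \ref{discmodel22} together with Prop.\ \ref{atomreg}, the lattice isomorphism $\sO(\cX,\le^\dagger)\cong\sO(\cX^\topc,\le^\topc)$ is implemented by $\cU^\topc \mapsto \cl\,\cU^\topc$. Fix $\xi \in \cX^\topc$; then $\cl^\topc\xi \in \sO(\cX^\topc,\le^\topc)$, whence $\cl\,\cl^\topc\xi \in \sO(\cX,\le^\dagger)$. Defn.\ \ref{morsepreorder1aa}(ii) combined with Thm.\ \ref{charclattbl} implies that $|\cl\,\cl^\topc\xi|$ is a closed attracting block, giving $\phi(t, |\cl\,\cl^\topc\xi|)\subset \Int|\cl\,\cl^\topc\xi|$ for all $t>0$. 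The Boolean hypothesis then yields $|\cl\,\cl^\topc\xi| = \cl|\cl^\topc\xi| = \lbr\cl^\topc\xi\rbr$, and since $\xi \in \cl^\topc\xi$ we have $\lbr\xi\rbr \subset \lbr\cl^\topc\xi\rbr$, giving \eqref{firstchar1}.

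For $(\Leftarrow)$, I construct a Morse pre-order $(\cX, \le^\dagger)$ realizing $\le^\topc$ as its discrete resolution. Set $\sSC := \sJ\bigl(\sO(\cX^\topc,\le^\topc)\bigr)$ with natural projection $\pi\colon \cX^\topc \twoheadrightarrow \sSC$; via Prop.\ \ref{atomreg}, the assignment $\cU^\topc \mapsto \cl\,\cU^\topc$ is an injective lattice homomorphism $\sO(\cX^\topc,\le^\topc) \rightarrowtail \sSet(\cX)$. Applying the dualization formula \eqref{defndyn} to this embedding produces a surjection $\dyn\colon \cX \twoheadrightarrow \sSC$ extending $\pi$; define $\xi \le^\dagger \eta$ iff $\dyn(\xi) \le \dyn(\eta)$. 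A direct unwinding shows that $\dyn^{-1}(\alpha) = \cl\,\pi^{-1}(\alpha)$ for every $\alpha \in \sO(\sSC,\le)$, so $\sO(\cX,\le^\dagger) = \{\cl\,\cU^\topc : \cU^\topc \in \sO(\cX^\topc,\le^\topc)\}$; moreover, since top cells are $\le$-maximal, the restriction of $\le^\dagger$ to $\cX^\topc$ reproduces $\le^\topc$, confirming that $(\cX^\topc, \le^\topc)$ is the discrete resolution associated to $(\cX,\le^\dagger)$.

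It remains to verify the two conditions of Defn.\ \ref{morsepreorder1aa}. Any $\cU \in \sO(\cX,\le^\dagger)$ has the form $\cl\,\cU^\topc$, which is a down-set of the underlying $\scrT$-consistent pre-order $(\cX,\le)$; hence $|\cU|$ is $\scrT$-closed, giving (i). For (ii), the Boolean property yields $|\cU| = \lbr\cU^\topc\rbr = \bigcup_{\xi \in \cU^\topc}\lbr\xi\rbr$; for each $\xi \in \cU^\topc$ the closedness of $\cU^\topc$ under $\le^\topc$ gives $\cl^\topc\xi \subset \cU^\topc$, hence $\lbr\cl^\topc\xi\rbr \subset |\cU|$, so the hypothesis \eqref{firstchar1} produces
\[
\phi(t, \lbr\xi\rbr) \subset \Int\lbr\cl^\topc\xi\rbr \subset \Int|\cU|, \quad \forall t>0.
\]
Taking the union over $\xi \in \cU^\topc$ gives $\phi(t, |\cU|)\subset \Int |\cU|$, so $|\cU|$ is a closed attracting block and hence $\underline\scrT^0$-open by Thm.\ \ref{charclattbl}, establishing (ii). I expect the main obstacle to be the structural bookkeeping in the reverse direction: confirming that the $\le^\dagger$ produced from $\dyn$ genuinely identifies $\sO(\cX,\le^\dagger)$ with the image of $\sO(\cX^\topc,\le^\topc)$ under $\cl$ and restricts correctly to $\le^\topc$ on $\cX^\topc$, rather than yielding a strict coarsening. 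Once this identification is in place, the attracting-block estimate transfers cleanly from top cells to arbitrary $\le^\dagger$-down-sets by monotonicity of realization and distributivity of $\phi(t,\cdot)$ over unions.
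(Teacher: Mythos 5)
Your forward direction is correct and matches the paper's: realize a closed down-set $\cl~\cl^\topc\xi$ in $\sO(\cX,\le^\dagger)$, observe it is a closed attracting block via Theorem~\ref{charclattbl}, convert through the Boolean property to $\lbr\cl^\topc\xi\rbr$, and use $\xi\in\cl^\topc\xi$.

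There is a genuine gap in your reverse direction, and it sits in the first sentence. You assert that ``via Prop.~\ref{atomreg}, the assignment $\cU^\topc\mapsto\cl\,\cU^\topc$ is an injective lattice homomorphism $\sO(\cX^\topc,\le^\topc)\rightarrowtail\sSet(\cX)$.'' Proposition~\ref{atomreg} gives only that $\cl\colon\sSet(\cX^\topc)\to\scrR(\cX)$ is a Boolean isomorphism, where the meet in $\scrR(\cX)$ is $\wedge = \cl\Int(\cdot\cap\cdot)$, not $\cap$. Being a homomorphism into $\sSet(\cX)$ (with $\cap$) requires $\cl(\cU^\topc\cap\cU'^\topc) = \cl~\cU^\topc\cap\cl~\cU'^\topc$, and in general $\cl~\cU^\topc\wedge\cl~\cU'^\topc\subsetneq\cl~\cU^\topc\cap\cl~\cU'^\topc$ — two adjacent top cells have disjoint interiors but overlapping closures, so $\cl~\xi\wedge\cl~\xi'=\varnothing$ while $\cl~\xi\cap\cl~\xi'\neq\varnothing$. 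The equality of $\wedge$ and $\cap$ on the image of $\sO(\cX^\topc,\le^\topc)$ is exactly what requires the hypothesis~\eqref{firstchar1}: one needs to know first that $\lbr\cU^\topc\rbr$ is an attracting block (this is where~\eqref{firstchar1} enters) and then invoke Theorem~\ref{clattbl} to get $U\wedge U' = U\cap U'$. This chain of reasoning is precisely Theorem~\ref{thm:lattice:hom}, which the paper invokes at exactly this point; you never cite it.

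The consequence is that your Birkhoff dualization in the next step is not justified: you can only dualize an injective \emph{lattice} homomorphism, and without the intersection-preservation property the image of $\cl$ is not closed under $\cap$, hence cannot be $\sO(\cX,\le^\dagger)$ for any pre-order $\le^\dagger$. The material you need to fix this is already present in your proof — your verification of Definition~\ref{morsepreorder1aa}(ii) is essentially the demonstration that $\lbr\cU^\topc\rbr\in\sABlockR(\phi)$ — but it appears too late in the argument and is used for a different purpose. You should move that attracting-block computation up front, feed it into Theorem~\ref{clattbl} to get meet-preservation, and then either cite Theorem~\ref{thm:lattice:hom} or reproduce its short proof; only then is the construction of $\dyn$ and $\le^\dagger$ legitimate. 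Once that reordering is done, the remainder of your verification (that the restriction to $\cX^\topc$ recovers $\le^\topc$, and that conditions (i)--(ii) of the Morse pre-order hold) is correct and lines up with the paper's proof, which builds $\le^\dagger$ as an antagonistic coarsening via Theorem~\ref{dubbleconst3} after applying Theorem~\ref{thm:lattice:hom}.
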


Condition \eqref{firstchar1} is a characterization of \discresols and can be used as alternative definition of \discresol.

\begin{remark}
\label{discmodel2a}
If $(X,\scrT)$ is a compact topological space  then
the criterion in \eqref{firstchar1} is equivalent  to the condition: for every $\xi\in \cX^{\topc}$ there exists a $t_\xi>0$, such that
$\varphi\bigl(t,\lbr\xi\rbr\bigr) \subset \Int \bigl\lbr\ccl^\topc\xi %\bvtheta_{\cF^{\scaleto{+=}{4pt}}}[\xi]
\bigr\rbr$ for all 
$0< t \le t_\xi$.
% %$k(\Invsetpl(\cF))$  is a sublattice of $\sABlockR(\varphi)$.
% \begin{equation}
%     \label{resolution4}
%     \varphi\bigl(t,\lbr\xi\rbr\bigr) \subset \Int \bigl\lbr \cl^+\xi\bigr\rbr,\quad \forall 0< t \le t_\xi,
% \end{equation}
% for some $t_\xi>0$, which may be convenient in some practical situations.
\end{remark}

\begin{remark}

For a binary relation $\bmphi\subset \cX^\topc\times\cX^\topc$ the transitive, reflexive closure\index{Transitive reflexive closure}  $\bmphi^{\bm{+=}}$   defines a pre-order on $\cX^\topc$.
If   \eqref{firstchar1} is satisfied with $\ccl^\topc_\bmphi = (\bmphi^{\bm{+=}})^{-1}$, then $\bmphi$
%is a \discresol in the sense of Defn.\ \ref{discmodel22} 
will also be referred to as a \discresol for $\le^\dagger$.
The definition of \discresol is reminiscent of the notion of weak outer approximation in relation to the commutative diagram in \eqref{resolution1}, cf.\ \cite{kkv}, \cite[Defn.\ 3.7]{lsa2}, \cite{lsa3}.
\end{remark}

The Boolean algebra $\sSet(\cX^\topc)$ is a sublattice in $\sSet(\cX)$. The embedding does not preserve the top element and the inclusion is not therefore Boolean. 
The closure operator 
$
\ccl\colon \sSet(\cX^\topc) \to \sSet(\cX)
$, given by $\cU \mapsto \ccl~ \cU \in \sO(\cX,\le)$, %= \downarrow\cU.
is additive but not a lattice homomorphism in general. 
%We now consider an appropriate sublattice of $\sSet(\cX^+)$ for which closure is a homomorphism.
Since $i\colon\cX^\topc\hookrightarrow (\cX,\le)$, with $\cX^\topc$ unordered, is an  order-embedding\footnote{The top cells $\ccX^\top$ form an anti-chain in $\bigl(\ccX,\le\bigr)$, cf.\ Sect.\ \ref{regclsets}.} Birkhoff duality yields the surjective lattice homomorphism $j\colon \sO\bigl(\cX,\le\bigr) \twoheadrightarrow \sSet(\cX^\topc)$ given by $\cU \mapsto \cU^\topc:= \cU\cap \cX^\topc$. % with $j\circ \cl = \id$.
%The homomorphism property is satisfied if we restrict to a sublattice of $\sSet(\cX)$ which links the dynamics of $\varphi$ to an algebraic property of $\cX$.
Schematically we pose the following lifting diagram:
\begin{equation}
    \label{lift1}
\begin{tikzcd}[column sep=large, row sep=large]
                                               & \sO(\cX,\le) \arrow[d, "j", two heads] \\
\sO(\cX^\topc,\le^\topc) \arrow[ru, "\ccl", dotted] \arrow[r, "\id", tail] & \sSet(\cX^\topc)                          
\end{tikzcd}
\end{equation}
Theorem \ref{thm:lattice:hom} below shows that the identity map can  be lifted as  closure.

\begin{theorem}\label{thm:lattice:hom}
% Let $(\cX,\cl,|\cdot|)$ be an Alexandrov discretization for $X$ and
% %CW decomposition of $X$ and 
Suppose $(\cX^\topc,\le^\topc)$ is a pre-order that satisfies \eqref{firstchar1}.
\label{dyngdres}
Then, the restriction $\ccl\colon \sO(\cX^\topc,\le^\topc)\rightarrowtail  \sO(\cX,\le)$ is an injective lattice homomorphism with $j\circ \ccl = \id$.
\end{theorem}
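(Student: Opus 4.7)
The plan is to establish three things in turn: (i) well-definedness of the map as landing in $\sO(\cX,\le)$, (ii) the retraction identity $j\circ\cl=\id$, which immediately gives injectivity, and (iii) preservation of both binary lattice operations.

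First I would note that for any $\cU^\topc\subset \cX^\topc$ the set $\cl~\cU^\topc$ is, by definition of the closure operator associated to $(\cX,\le)$, the principal down-set $\big\downarrow \cU^\topc$ in $(\cX,\le)$; hence $\cl~\cU^\topc\in \sO(\cX,\le)$ regardless of whether $\cU^\topc$ is a $\le^\topc$-down-set. Next, for the retraction $j\circ\cl=\id$, I would use that $\cX^\topc$ consists of the $\le$-maximal elements and therefore forms an anti-chain in $(\cX,\le)$: for every $\xi\in\cX^\topc$ the only top cell in $\cl~\{\xi\}=\big\downarrow\xi$ is $\xi$ itself. By additivity of $\cl$,
\[
j(\cl~\cU^\topc) \;=\; (\cl~\cU^\topc)\cap \cX^\topc \;=\; \bigcup_{\xi\in\cU^\topc}\bigl(\cl~\{\xi\}\cap \cX^\topc\bigr) \;=\; \cU^\topc.
\]
Injectivity is then immediate, since if $\cl~\cU_1^\topc=\cl~\cU_2^\topc$ then applying $j$ gives $\cU_1^\topc=\cU_2^\topc$.

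For the lattice homomorphism, preservation of $\cup$ is automatic from the additivity axiom (K2) of $\cl$. Preservation of $\cap$ is the only subtle step, and the main obstacle in the proof. Here I would bring in Proposition~\ref{atomreg}, which says that $\cl\colon \sSet(\cX^\topc)\to\scrR(\cX)$ is a Boolean isomorphism, hence preserves the meet $\wedge$ of $\scrR(\cX)$. Thus
\[
\cl\bigl(\cU_1^\topc\cap\cU_2^\topc\bigr) \;=\; \cl~\cU_1^\topc \wedge \cl~\cU_2^\topc
\]
in $\scrR(\cX)$. It then suffices to show that for elements of $\sO(\cX,\le^\dagger)$ the meet $\wedge$ in $\scrR(\cX)$ agrees with ordinary intersection $\cap$. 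Since $\cU_i^\topc\in\sO(\cX^\topc,\le^\topc)$, its image $\cl~\cU_i^\topc$ lies in $\sO(\cX,\le^\dagger)$ (this is the lattice identification $\sO(\cX^\topc,\le^\topc)\cong\sO(\cX,\le^\dagger)$ that underlies the notion of discrete resolution, cf.\ Defn.\ \ref{discmodel22}). By the Boolean CA-discretization hypothesis, the realization $|\cdot|$ commutes with $\cl$ and $\Int$, so the realized sets $|\cl~\cU_i^\topc|$ belong to $\sABlockR(\phi)$. Theorem~\ref{clattbl} gives $|\cl~\cU_1^\topc|\wedge|\cl~\cU_2^\topc|=|\cl~\cU_1^\topc|\cap|\cl~\cU_2^\topc|$, and since $|\cdot|\colon \scrR(\cX)\to\scrR(X)$ is an injective Boolean homomorphism we may pull this back:
\[
\cl~\cU_1^\topc\wedge\cl~\cU_2^\topc \;=\; \cl~\cU_1^\topc\cap\cl~\cU_2^\topc.
\]
Combining the two displays yields $\cl(\cU_1^\topc\cap\cU_2^\topc)=\cl~\cU_1^\topc\cap\cl~\cU_2^\topc$, completing the proof.

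The genuinely delicate point is this last identification of $\wedge$ with $\cap$ on the image; everything else (the retraction, union preservation, injectivity) is formal. I would expect the write-up to spend most of its space carefully invoking Boolean-ness of the CA-discretization together with Theorem~\ref{clattbl} to transfer the equality $U\wedge U'=U\cap U'$ from $\sABlockR(\phi)$ back to $\sO(\cX,\le^\dagger)$.
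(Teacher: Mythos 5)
Your overall route matches the paper's: establish that $\lbr\cdot\rbr$ preserves meets by passing through Proposition~\ref{atomreg} and Theorem~\ref{clattbl}, then pull back along the injective Boolean homomorphism. The retraction $j\circ\cl=\id$ via the anti-chain property of $\cX^\topc$, union preservation by additivity, and injectivity via the retraction are all fine and are more explicit than in the paper's write-up.

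However, there is a genuine gap at the step where you assert that ``the realized sets $|\cl~\cU_i^\topc|$ belong to $\sABlockR(\phi)$'' as a consequence of the Boolean CA-discretization hypothesis. Commutation of $|\cdot|$ with $\cl$ and $\Int$ only yields that $|\cl~\cU_i^\topc|$ is \emph{regular closed}; it says nothing about the flow, and in particular does not establish the attracting-block condition $\phi(t,|\cl~\cU_i^\topc|)\subset\Int|\cl~\cU_i^\topc|$ for $t>0$. That is the dynamical heart of the proof, and it must be derived from the hypothesis~\eqref{firstchar1} together with the fact that $\cU_i^\topc$ is $\cl^\topc$-closed (so $\cl^\topc\cU_i^\topc=\cU_i^\topc$), by summing~\eqref{firstchar1} over $\xi\in\cU_i^\topc$ and using additivity of $\lbr\cdot\rbr$ and $\cl^\topc$. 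Without this you cannot invoke Theorem~\ref{clattbl} to trade $\wedge$ for $\cap$. Relatedly, your appeal to the identification $\sO(\cX^\topc,\le^\topc)\cong\sO(\cX,\le^\dagger)$ from Definition~\ref{discmodel22} is circular: the theorem's hypothesis is only that~\eqref{firstchar1} holds, and the conclusion that such a pre-order arises from a Morse pre-order is exactly what Theorem~\ref{mainthmfordrs}, which \emph{uses} the present theorem, establishes. You should replace both of these by a direct verification that $\lbr\cU_i^\topc\rbr$ is an attracting block from~\eqref{firstchar1}; the rest of your argument then goes through.
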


% In particular, we have the diagram:
% \[
% \begin{diagram}
%     \node{\scrO\scrC_\scrR(\cX)}\arrow{se,r,V}{\subset}\node{}\node{\sO(\cX^{\topc},\le^\topc)}\arrow[2]{w,lr}{\cl}{\cong}
%     \arrow{sw,r,V}{\cl}\\
%     \node{}\node{\sO(\cX,\le)}
% \end{diagram}
% \]
% %

\begin{proof}%[Proof of Theorem \ref{thm:lattice:hom}]
By definition $\cU^{\topc} = \bigcup_{\xi\in \cU^{\topc}} \{\xi\}$ and since $\lbr\cdot\rbr\colon \sSet(\cX^{\topc}) \to \scrR(X)$
 is an injective Boolean homomorphism, cf.\ Prop.\ \ref{realization}, we have that 
 $\lbr\cU^{\topc}\rbr = \bigcup_{\xi\in \cU^{\topc}} \lbr\xi\rbr$. In combination with \eqref{firstchar1}, the additivity of $\ccl^\topc$ and the fact that $\cU^{\topc}\in\sO(\cX^\topc,\le^\topc)$ we conclude
 \[
 \begin{aligned}
 \varphi\bigl(t,\lbr\cU^{\topc}\rbr\bigr) &= \bigcup_{\xi\in \cU^{\topc}} \varphi\bigl(t,\lbr\xi\rbr\bigr)
 \subset \bigcup_{\xi\in \cU^{\topc}}\Bigl( \Int\bigl\lbr \ccl^\topc\xi \bigr\rbr\Bigr)
 \subset \Int\Bigl(\bigcup_{\xi\in \cU^{\topc}} \bigl\lbr \ccl^\topc\xi \bigr\rbr\Bigr)\\
 &=\Int \Bigl\lbr\bigcup_{\xi\in \cU^{\topc}} \ccl^\topc\xi \Bigr\rbr =\Int \bigl\lbr\ccl^\topc \cU^{\topc}\bigr\rbr\subset \Int \lbr \cU^{\topc}\rbr,\quad \forall t>0,
 \end{aligned}
 \]
 which proves that $\lbr\cU^{\topc}\rbr$ is an attracting block for $\phi$. Since $\sO(\cX^\topc,\le^\topc)$ and $\sABlockR(\varphi)$ are sublattices of $\sSet(\cX^{\topc})$ and $\scrR(X)$ respectively, and since 
 $\lbr\cdot\rbr\colon \sSet(\cX^{\topc}) \rightarrowtail \scrR(X)$ is an injective Boolean homomorphism  the evaluation map $\lbr\cdot\rbr\colon \sO(\cX^\topc,\le^\topc) \rightarrowtail \sABlockR(\varphi)$ is a injective lattice homomorphism. In particular we conclude that $|\ccl~\cU^\topc|
 \in \sABlockR(\varphi)$ and $\ccl~\cU^\topc \in \sO(\cX,\le)$.

To show that the restriction of $\ccl$ is a homomorphism it remains to check that the unit and intersection are preserved.
By definition $\ccl~ \cX^\topc = \cX$ which proves that the unit is preserved. 
By Proposition \ref{atomreg} we have that $\ccl~\cU^\topc\in  %\sO(\cX,\le^\dagger)\rightarrowtail 
\scrR(\cX)$ and by 
Theorem \ref{clattbl}, $|\ccl~\cU^\topc|\cap |\ccl~\cU'^\topc| = 
|\ccl~\cU^\topc|\wedge |\ccl~\cU'^\topc|$.
This implies,
\[
\begin{aligned}
|\ccl(\cU^\topc\cap \cU'^\topc)| &= \cl|\cU^\topc\cap \cU'^\topc| = \lbr \cU^\topc\cap \cU'^\topc\rbr = \lbr\cU^\topc\rbr \wedge \lbr \cU'^\topc\rbr = \lbr\cU^\topc\rbr \cap \lbr \cU'^\topc\rbr\\
&= |\ccl~ \cU^\topc|\cap |\ccl~ \cU'^\topc| = |\ccl~\cU^\topc \cap \ccl~ \cU'^\topc|,
\end{aligned}
\]
where we use Proposition \ref{realization} to conclude that 
$\lbr \cU^\topc\cap \cU'^\topc\rbr = \lbr\cU^\topc\rbr \wedge \lbr \cU'^\topc\rbr$.
The fact that $|\cdot|$ is injective yields   $\ccl(\cU^\topc\cap \cU'^\topc) = \ccl~\cU^\topc\cap \ccl~\cU'^\topc$,  which completes the proof.
\end{proof}

\begin{proof}[Proof of Thm.\ \ref{mainthmfordrs}]
The direction that a \discresol satisfies \eqref{firstchar1} is given above. It remains to show that \eqref{firstchar1} yields a Morse pre-order.
Suppose \eqref{firstchar1} is satisfied. Then, by Theorem \ref{thm:lattice:hom}, $\ccl\colon \sO(\cX^\topc,\le^\topc) \to\sO(\cX,\le)$ provided an embedding sublattice.
From \eqref{dubbleconst2}-\eqref{idred} we obtain a pre-order $(\cX,\le^\dagger)$ such that the range of the above closure is
the lattice $\sO(\cX,\le^\dagger)$:
\[
\begin{tikzcd}
\sO(\cX,\le^\dagger) \arrow[rd, "\subset"', tail] &   & \sO(\cX^{\topc},\le^\topc) \arrow[ll, "\ccl"',"\cong"] \arrow[ld, "\ccl", tail] \\
                         & \sO(\cX,\le) &                                         
\end{tikzcd}
\]
The pre-order $(\cX,\le^\dagger)$ is the desired Morse pre-order that induces $(\cX^\topc,\le^\topc)$.
If we choose $\lebf   \,= \,\ge^\dagger$ we obtain $\le^\dagger$ as antagonistic pre-order for $(\cX,\ccl,\uclbf  ,|\cdot|)$,
cf.\ Thm.\ \ref{dubbleconst3}.
\end{proof}
The novelty of the above construction is that 
the (injective) composition 
\[
\begin{tikzcd}
\sO(\sSC,\le) \arrow[r, "\cong"] & \sO(\cX^\topc,\le^\topc) \arrow[r, "\ccl", tail] & \sO(\cX,\le)  \arrow[r, "\subset", tail] & \sSet(\cX)
\end{tikzcd}
% \sO(\sSC,\le)\xrightarrow{~~\cong~~}\sO(\cX^\topc,\le^\topc) \Rightarrowtail{~~\ccl~~} \sO(\cX,\le) \Rightarrowtail{~~\subset~~} \sSet(\cX),
\]
 dualizes to 
the finite discretization 
\begin{equation}
    \label{dyndefn12}
\dyn\colon (\cX,\le) \xtwoheadrightarrow{} (\sSC,\le), %\cong \bigl(\cX^{\topc}/_\sim,\le^\topc\bigr),
\end{equation}
which is defined in Section \ref{Morsetessll} and is given by the formulas in \eqref{defndyn} and \eqref{defnofSC}.
%and \eqref{dyndefn}.
%which is determined by the pre-order $(\cX^{\topc},\le^\topc)$. 
The finite discretization $\dyn$ recovers the Morse pre-order $\le^\dagger$ via $\xi\le^\dagger\xi'$ 
if and only if $\dyn\,\xi\le\dyn\,\xi'$,
and $\le^\topc$ is the restriction of  $\le^\dagger$ to $\cX^{\topc}$.
The advantage of using regular closed  sets is that the Morse pre-order is completely determined by the restriction $(\cX^\topc,\le^\topc)$ which is a much smaller data structure in general and bypasses the  topologies given by
$\le$ and $\lebf  $.
The following result gives a formula for determining $\dyn$ in terms of $\le^\topc$:
\begin{theorem}
\label{thethmdyn}
Suppose $(\cX^\topc,\le^\topc)$ is a pre-order that satisfies \eqref{firstchar1}.
The finite discretization $\dyn\colon (\cX,\le) \xtwoheadrightarrow{} (\sSC,\le)$ is given by
\begin{equation}
\label{thethmdynform}
\xi \mapsto \dyn(\xi)  = \min_{\sSC} \Bigl\{ \bigl[\eta^\topc\bigr]\mid \eta^\topc\in \st \xi \cap \cX^\topc \Bigr\},
\end{equation}
where $\bigl[\eta^\topc\bigr] \in \sSC$ is the partial equivalence class in $(\cX^\topc,\le^\topc)$ containing $\eta^\topc$. %of elements in $\cX^\topc$ that belong to some $\cS\in \sSC$. 
\end{theorem}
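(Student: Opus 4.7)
The plan is to establish the stated formula by proving two inequalities in $(\sSC,\le)$. The key structural input is that by hypothesis $\sO(\cX,\le^\dagger)\hookrightarrow\sO(\cX,\le)$ (diagram \eqref{ordpreforgrd}), so $\le\,\subset\,\le^\dagger$, and every $\cU\in\sO(\cX,\le^\dagger)$ is regular closed in $\cX$: this is contained in the discussion surrounding \eqref{resolution1} (elements of $\sO(\cX,\le^\dagger)=\aclop(\cX)$ realize to closed attracting blocks, hence to regular closed sets by Thm.\ \ref{clattbl}, and the Boolean assumption transports regular closedness back to $\cX$ via Prop.\ \ref{realization}). Combined with Prop.\ \ref{atomreg}, this gives the decomposition $\cU = \bigcup\{\cl\,\eta^\topc\mid \eta^\topc\in \cU\cap\cX^\topc\}$ for every $\cU\in\sO(\cX,\le^\dagger)$.

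For the ``$\le$'' direction: fix $\eta^\topc\in\st\xi\cap\cX^\topc$, so $\xi\le\eta^\topc$ in the face pre-order, hence $\xi\le^\dagger\eta^\topc$. The projection $\dyn\colon(\cX,\le^\dagger)\twoheadrightarrow(\sSC,\le)$ of \eqref{dyndefn12} is order-preserving, so $\dyn(\xi)\le\dyn(\eta^\topc)=[\eta^\topc]$ in $\sSC$. Taking the minimum over $\eta^\topc\in\st\xi\cap\cX^\topc$ yields $\dyn(\xi)\le\min\{[\eta^\topc]\}$.

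For the ``$\ge$'' direction, which is the main work, let $\cS=\dyn(\xi)\in\sSC$ and let $\cU\in\sJ\bigl(\sO(\cX,\le^\dagger)\bigr)$ be the unique join-irreducible with $\cS=\cU\smin\cU^\pred$, so that $\xi\in\cU$ and $\xi\notin\cU^\pred$. Applying the regular-closed decomposition to $\cU$, there exists $\eta^\topc\in\cU\cap\cX^\topc$ with $\xi\in\cl\,\eta^\topc$, equivalently $\xi\le\eta^\topc$, hence $\eta^\topc\in\st\xi\cap\cX^\topc$. I claim further that $\eta^\topc\in\cS$. Indeed $\cU^\pred\in\sO(\cX,\le^\dagger)\subset\sO(\cX,\le)$ is a down-set in the finer face pre-order $\le$; if $\eta^\topc\in\cU^\pred$, then $\xi\le\eta^\topc$ would force $\xi\in\cU^\pred$, contradicting $\xi\notin\cU^\pred$. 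Therefore $\eta^\topc\in\cU\smin\cU^\pred=\cS$, so $[\eta^\topc]=\cS=\dyn(\xi)$, and this specific $\eta^\topc$ realizes the minimum.

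The crux of the argument — and what I expect to be the main obstacle to state carefully — is the reverse inequality, because it requires producing a top cell in $\st\xi$ that lies in the \emph{same} partial equivalence class as $\xi$, not merely above it. This producibility is not a formal consequence of $\le\,\subset\,\le^\dagger$ alone; it uses both the regular-closed structure of $\cU\in\sO(\cX,\le^\dagger)$ (so that $\cU$ is generated by its top cells) and the fact that its predecessor $\cU^\pred$ remains a down-set in the finer topology $\le$ (which pins down the partial equivalence class). Combining both inequalities gives exactly \eqref{thethmdynform}, and this moreover makes explicit the formula $\dyn$ promised by \eqref{defndyn}--\eqref{defnofSC} now phrased purely in terms of the discrete resolution $(\cX^\topc,\le^\topc)$.
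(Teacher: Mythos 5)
Your proof is correct and takes a genuinely different, more elementary route than the paper's. The paper's proof unwinds the Birkhoff duality formula $\sJ(h)(a)=\min h^{-1}(\big\uparrow a)$ from Theorem~\ref{thm:birkhoff} against the commutative diagram \eqref{dyndiapr12}: it characterizes $\cl^{-1}\bigl(\big\uparrow\iota_\cX(\xi)\bigr)$ as $\{\cU^\topc\in\sJ(\sO(\sSC))\mid\xi\in\cl~\cU^\topc\}$, translates the membership condition into $\eta^\topc\in\st\xi\cap\cX^\topc$ via the join-irreducible structure $\cU^\topc=\big\downarrow[\xi^\topc]$, and finally argues that a minimum in $\sSC$ exists and is unique. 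You instead establish the two inequalities directly: the ``$\le$'' direction from $\le\,\subset\,\le^\dagger$ plus order-preservation of $\dyn$, and the ``$\ge$'' direction by exhibiting a witness $\eta^\topc\in\st\xi\cap\cX^\topc$ lying in $\cS=\dyn(\xi)$ itself. Both proofs rest on the same essential structural fact — that each $\cU\in\sO(\cX,\le^\dagger)$ is regular closed, hence generated by its top cells via Prop.~\ref{atomreg} — but your ``$\ge$'' direction uses it more transparently: you extract the top cell from the decomposition of $\cU$ and then rule out $\eta^\topc\in\cU^\pred$ by the down-set property of $\cU^\pred$ in the finer order $\le$, which is a clean and self-contained argument. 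A small point worth making explicit when writing this up: your identification $\dyn(\eta^\topc)=[\eta^\topc]$ in the ``$\le$'' step, and $[\eta^\topc]=\cS$ at the end of the ``$\ge$'' step, both lean on the isomorphism $\sO(\cX^\topc,\le^\topc)\cong\sO(\cX,\le^\dagger)\cong\sO(\sSC)$ that makes partial equivalence classes in $(\cX^\topc,\le^\topc)$ correspond to those in $(\cX,\le^\dagger)$ restricted to top cells; you implicitly conflate the two notations but this is legitimate and should just be flagged once.
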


\begin{proof}
Consider the commutative diagram
\begin{equation}\label{dyndiapr12}
\begin{tikzcd}[column sep=large, row sep=large]
\cX \arrow[r, "\dyn", two heads] \arrow[d, "\iota_\cccX"'] & \sSC \arrow[d, "\iota_\sSC"] \\
\sJ(\sO(\cX,\le)) \arrow[r, "\sJ(\ccl)", two heads]                 & \sJ(\sO(\sSC))               
\end{tikzcd}
\end{equation}
 the maps $\iota_\cccX$ and $\iota_\sSC$ are given by $\xi \xmapsto{\iota_\cccX} \big\downarrow \xi$ and
$[\xi^\topc] \xmapsto{\iota_\sSC} \big\downarrow [\xi^\topc]$, and $\ccl\colon \sO(\sSC) 
\rightarrowtail\sO(\cX,\le)$.
By the commutativity we have that $\dyn = \iota_\sSC^{-1}\circ \sJ(\ccl)\circ \iota_\cccX$, and $\sJ(\ccl)(\cU) = \min \ccl^{-1}\bigl(\big\uparrow \cU\bigr)\in \sJ(\sO(\sSC))$, $\cU\in \sJ(\sO(\cX,\le))$, cf.\ Thm. \ref{thm:birkhoff}.
Recall that $\ccl^{-1}(\cU) =\{\cU^\topc\in \sO(\sSC)\mid \ccl~\cU^\topc =\cU\}$.
Note that in $\sO(\cX,\le)$ the up-set $\big\uparrow \iota_\cccX(\xi)$  is the set of closed subsets in $\cX$ that contain $\iota_\cccX(\xi)=\big\downarrow \xi$. 
By definition $\ccl^{-1}\bigl(\big\uparrow\iota_\cccX(\xi) \bigr)$ are all $\cU^\topc\in \sO(\sSC)$ such that $\ccl~\cU^\topc =\cU$ for some $\cU\in \sO(\cX,\le)$ with $\big\downarrow \xi\subset \cU$. The latter is equivalent to $\xi\in \cU$. Since $\ccl~\cU^\topc\in \sO(\sSC)$ this implies that $\ccl^{-1}\bigl(\big\uparrow\iota_\cccX(\xi) \bigr) = \bigl\{ \cU^\topc\in \sO(\sSC)\mid \xi\in \ccl~\cU^\topc\bigr\}$. Since join-irreducible elements generate all elements in a finite distributive lattice we have that
$\ccl^{-1}\bigl(\big\uparrow\iota_\cccX(\xi) \bigr) = \bigl\{ \cU^\topc\in \sJ(\sO(\sSC))\mid \xi\in \ccl~\cU^\topc\bigr\}$ and thus
\[
\sJ(\ccl)(\iota_\cccX(\xi)) = \min\bigl\{ \cU^\topc\in \sJ(\sO(\sSC))\mid \xi\in \ccl~\cU^\topc\bigr\},
\]
which is attained by a unique $\widehat \cU^\topc\in \sJ(\sO(\sSC))$.
Recall that $\cU^\topc\in \sJ(\sO(\sSC))$ if and only if $\cU^\topc=\big\downarrow\cS$, $\cS = [\xi^\topc]$ for some $\xi^\topc\in \cX^\topc$.
 Therefore, $\xi\in \ccl~\cU^\topc$ if and only if $\xi \in \ccl~\eta^\topc$ for some $\eta^\topc\in \big\downarrow [\xi^\topc]$.
By duality $\xi \in \ccl~\eta^\topc$ if and only if $\eta^\topc \in \st\xi\cap \cX^\topc$.
Consider the set
\[
\bigl\{[\eta^\topc]\in \sSC\mid \eta^\topc \in \st\xi\cap \cX^\topc \bigr\}.
% =
% \bigl\{[\eta^\topc]\in \sSC\mid \eta^\topc \in \st\xi\cap \cX^\topc \bigr\}.
\]
Let $[\eta^\topc]$ and $[\tilde \eta^\topc]$ be minimal. Then, $\big\downarrow [\eta^\topc]\subset \widehat\cU^\topc$ and $\big\downarrow [\tilde\eta^\topc]\subset \widehat\cU^\topc$,
which implies that $[\eta^\topc]=[\tilde \eta^\topc]$, and $\big\downarrow[\eta^\topc] = \widehat\cU^\topc$.
% and therefore $\iota_\sSC^{-1}\bigl(\sJ(\ccl)(\iota_\cccX(\xi)) \bigr)
% = \bigl\{ [\eta^\topc]\in \sSC\mid \xi\in \ccl~\big\downarrow[\eta^\topc]\bigr\}$.
\end{proof}

\begin{remark}
For \discresols the commutative diagram in \eqref{resolution1} is replaced by
\begin{equation}
\label{resolution123a}
\begin{tikzcd}[column sep=large, row sep=large]
\scrR(X)                      & \sABlockR(\varphi) \arrow[l, "\supset"', tail]                       \\
\sSet(\cX^\topc) \arrow[u, "\Vert\cdot\Vert", tail] & \sO(\cX^\topc,\le^\topc) \arrow[u, "\Vert\cdot\Vert"', tail] \arrow[l, "\supset"', tail]
\end{tikzcd}
\end{equation}
Condensed Morse pre-orders give rise to regular closed Morse tessellations. %, cf.\ Sect.\ \ref{regclMT12}.
\end{remark}

\subsection{Regular closed tessellations}
\label{regclMT12}
A \discresol yields a Morse pre-order. 
The range  of the injective lattice homomorphism
$\ccl\colon\sO(\cX^\topc,\le^\topc)\to \sO(\cX,\le)$ 
can be expressed as $\sO(\cX,\le^\dagger)$ via a pre-order $(\cX,\le^\dagger)$ --- a Morse pre-order.
%
%
% for $\cU^\topc\in \sO(\cX^\topc,\le^\topc)$ we have that $|\cl~\cU^\topc| = \cl|\cU^\topc| = \lbr\cU^\topc\rbr \in \sABlockR(\varphi)$ and therefore
% $\cl\colon\sO(\cX^\topc,\le^\topc)\to \sO(\cX,\le)$ is an injective lattice homomorphism which defines a Morse pre-order $\le^\dagger$ on $\cX$ as described at the beginning of this section. 
%
%\marginpar{Expand. Also reg. cl. Morse tessellations.} %
For $\cU^\topc \in \sO(\cX^\topc,\le^\topc)$  we can give a representation of $\sSC$ in terms of regular closed tiles.
By construction $\lbr\cU^\topc\rbr\in \sABlockR(\varphi)$ and we denote the associated sublattice of regular closed attracting blocks by $\sN\subset \sABlockR(\varphi)$. Then, $\sSC \cong \sJ(\sN) \cong \sT$, where $T =  \lbr\cU^\topc\rbr - \lbr{\cU^\topc}^\pred\rbr :=\lbr\cU^\topc\rbr \wedge \lbr{\cU^\topc}^\pred\rbr^\#\in \scrR(X)$, cf.\ \cite{lsa3}.
From \cite[Lem.\ 23]{lsa3} we have that 
%\vspace{-1ex}
\[
\begin{aligned}
\lbr\cU^\topc\rbr - \lbr{\cU^\topc}^\pred\rbr  &= \cl~\bigl( \lbr\cU^\topc\rbr \smin \lbr{\cU^\topc}^\pred\rbr\bigr)  = \cl  \Bigl(|\ccl~\cU^\topc|\smin|\ccl~{\cU^\topc}^\pred |\Bigr)\\ % = \lbr\cU^\topc\smin{\cU^\topc}^\pred\rbr\\
 &= \cl  \Bigl(\bigl|\ccl~\cU^\topc \smin \ccl~{\cU^\topc}^\pred \bigr|\Bigr) =  \Bigl| \ccl  \bigl(\ccl~\cU^\topc \smin \ccl~{\cU^\topc}^\pred \bigr) \Bigr|\\
 &= \bigl| \ccl~\cU^\topc -\ccl~{\cU^\topc}^\pred\bigr|, 
 %|\cU^\topc\smin{\cU^\topc}^\pred| = |\ccl(\cU^\topc\smin{\cU^\topc}^\pred)|, 
 \end{aligned}
\]
%\vspace{-1ex}
which shows that the regular closed tiles are closure of the Morse tiles obtained in \eqref{defnofSC} and \eqref{Morsetessll11}.
The poset $(\sT,\le)$, which is isomorphic to $(\sSC,\le)$, is an example of a \emph{regular closed Morse tessellation}.\index{Regular closed Morse tessellation}
\index{Morse tessellation!regular closed}
The definition of a regular closed Morse tessellation is similar to Definition \ref{morsetess45}: the tiles are regular closed and Condition (i) is redundant.
Given a regular closed Morse tessellation we can reconstruct a Morse pre-order.
If we start with a regular closed Morse tessellation $(\sT,\le)$, then the Morse tiles  $T\in \sT$ generate 
a subalgebra of regular closed sets $\scrR_0(X)$ which in turn generates 
a finite subalgebra of $\sSet(X)$ represented by $\sSet(\cX)$ for some finite set $\cX$, cf.\ Prop.\ \ref{realization22}.
%Define a finite set $\cX$ which labels the generators, or atoms of the latter subalgebra.
% sets obtained by intersecting the elements  $T\in \sT$.\footnote{Use intersection $\cap$ not regular intersection!}
The elements in $\cX$ are again denoted by $\xi$
%and are referred to as \emph{cells}, 
and their \emph{realization} in $X$ is denoted by $|\xi|$.
This way we obtain  a discrete space $\cX$ with two pre-orders:
%, or equivalently two discrete topologies: 
(i) the {face pre-order} $\le$ defined by $\xi\le \xi'$ if and only if $|\xi|\subseteq \cl|\xi'|$, for which
closed sets in $(\cX,\le)$ correspond to closed sets in the topological space $(X,\scrT)$ and (ii) 
the {Morse pre-order} $\le^\dagger$, derived from the Morse tessellation, defined by
$\xi\le^\dagger\xi'$ if and only if $\varphi\bigl(t,|\xi|\bigr) \in \Int \big\downarrow T$ for all $t>0$ for some $T\supset |\xi'|$. Closed sets in $(\cX,\le^\dagger)$
 correspond to regular closed attracting blocks for the semi-flow $\varphi$, and the partial equivalence classes of $\le^\dagger$  retrieve the Morse tessellation partial order.
 Summarizing, a regular closed Morse tessellation gives rise to a bi-topological CA-discretization $(\cX,\le,\lebf  ,|\cdot|)$ where
%  $\cX$ with two pre-orders which is equivalent to a finite bi-topological space $(\cX,\le,\lebf  )$, where
$\lebf  =\ge^\dagger$.

\begin{remark}
\label{reverseconstr1}
% In our computational set-up we can either  realize $\aclop(\cX)$, or a sublattice of $\aclop(\cX)$ via $\sO(\cX^\topc,\le^\topc)$ given by a \discresol.
% %$\cF$ on $\cX^{\topc}$.
% %In the  case $\Invsetpl(\cF) \cong \scrO\scrC_\scrR(\cX)$ we have $\sSC(\cF) \cong \sSC$.
% These freedoms are all within a given Boolean
% CA-discretization $(\cX,\cl,|\cdot|)$.
% %coming from a CW-decomposition for $X$.
If we choose an arbitrary finite sublattice $\sN\subset \sABlockR(\varphi)$,
then $\sN$ is a sublattice of some finite subalgebra $\scrR_0(X)\subset\scrR(X)$.
This induces a Boolean CA-discretization by Proposition \ref{realization22}.
\end{remark}

\section{Beyond semi-flows}
\label{beyondsemifl}
In this chapter the focus of applying bi-topological techniques is restricted to semi-flows. However, most of the ideas and methods apply to a much wider class of dynamical systems. In this section we outline some of these extensions and how these fits into the theory of this chapter.

A \emph{relational semi-flow}\index{Relational semi-flow}\index{Semi-flow!relational} $\phi = \{\phi^t\}_{t\in \T^+}$ is a family of binary relations $\phi^t\subset X\times X$ on a point set $X$ parametrized by (time) $t\in \T^+$ such that
\begin{enumerate}
\item[(i)] $\phi^0= \id$ on  $X$;
\item[(ii)] $\phi^s\circ\phi^t = \phi^{s+t}$ for all\footnote{For composition of relations and other properties cf.\ App. \ref{binrel}.} $s,t\in \T^+$.
%\item[(ii)] $\phi_s\bigl(\phi_t(x)\bigr) = \phi_{s+t}(x)$ for all $s,t\in \T^+$ and $x\in X$.
\end{enumerate}
The time space $\T^+$ is either $\Z^+$ or $\R^+$. 
For negative time we define $\phi^{-t}$ to be the \emph{opposite relation},\index{Opposite relation} i.e.
$\phi^{-t} = \bigl\{(x,y)\in X\times X\mid (y,x)\in \phi^t\bigr\}$, cf.\ App.\ \ref{binrel}.
Therefore Axiom (ii) is equivalent to
\begin{enumerate}
\item[(ii)'] $\phi^s\circ\phi^t = \phi^{s+t}$ for all $s\cdot t \ge 0$ with $s,t\in \T$,
\end{enumerate}
where $\T$ is either $\Z$ or $\R$.
In Appendix \ref{binrel} we discuss additional properties of binary relations.
If  $\varphi(t,x):= \phi^t(x)$ defines a continuous map $\T^+\times X\to X$ then, if $\T^+=\R^+$, $\varphi$ is called a \emph{continuous semi-flow}\index{Semi-flow}\index{Semi-flow!continuous}\index{Continuous semi-flow} on $X$ which is the main point of focus in this text.
%
% and is then denoted by $\varphi$. 
If $\T^+=\Z^+$ , $\phi$ is a called an \emph{iterated continuous map}.\index{Iterated continuous map}
In this case it suffices to only consider the map $f:=\phi^1$ since higher iterates are found via composition.
Backward images  define $\phi^t$ for negative times. 
Most considerations in this chapter are valid for relational semi-flows with $\T$ either discrete or continuous time. In particular the techniques carry over to iterated maps. We will indicate in which situations continuity will be required.

On the complete and atomic Boolean algebra $\sSet(X)$ there is a natural duality between binary relations and completely additive modal operators, cf.\ App.\ \ref{binrel}.
Let $\Phi$ be a modal operator on $\sSet(X)$. Recall from Sect.\ \ref{modalstuff} that specialization relation\index{Specialization relation} is given by $(x,x')\in \phi$ if and only if $x\in \Phi\{x'\}$ and the operator $\Phi=\phi^{-1}$ is defined via \eqref{modalcorres12}.
% Then,
% %If we denote the specialization relation  by $\phi^{-1}$ then
% \[
% (y,x)\in \sqsubset_\Phi\quad\text{~~if and only if~~}\quad(x,y)\in\sqsubset_\Phi^{-1}\quad \text{~~if and only if~~}\quad y\in \sqsubset_\Phi^{-1}\{x\},
% \]
% where in the latter we regard $\sqsubset_\Phi^{-1}$ as modal operator $\Phi =\sqsubset_\Phi^{-1}$, i.e.
% \[
% \Phi U:= \bigcup_{x\in U} \Phi\{x\},\quad \text{where}\quad \Phi\{x\}=\bigl\{ y\in X\mid (x,y) \in \sqsubset_\Phi^{-1}\text{~~for some~~} x\in U\bigr\}.
% \] 
We apply this principle to a relational semi-flow by setting
%$\Phi = \phi^t$. Then, 
\[
(x,x')\in \phi^{-t}\quad%\text{~~if and only if~~}\quad (x,y)\in \phi^t\quad 
\text{~~if and only if~~} \quad x\in \phi^t\{x'\},\quad t\in \T.
\]
If we coarsen the relations by discarding time we obtain the relation
\[
(x,x')\in \phi^{-t},\text{~~for some}\quad t>0\quad\text{~~if and only if~~}\quad x\in \der^+ \{x'\}:= \bigcup_{t>0}\phi^t\{x'\},
\]
which is remeniscent of the operator $\der^+$ defined in Section \ref{basicflowtop}. A similar definition can be made for $\der^-$. Via $\der^+$ and $\der^-$ one can define associated Alexandrov topologies $\scrT^+$ and $\scrT^-$ respectively.
It makes sense to define  finer topologies via appropriately defined modal operators. For $\tau\in \T^+$ define the topology $\scrT_\tau$ by declaring the sets $U\subset X$ such that $\phi^\tau U\subset U$ to be closed.
In particular we have that 
\[
\scrT^+ \subset \scrT_\tau^+ \subset \scrT_\tau,\quad \tau>0,
\]
and the same for $\tau<0$. All these topologies are Alexandrov. The specialization relation for $\scrT_\tau$  is given by
\[
(x,x')\in \phi^{-\tau} \quad\text{if and only if}\quad y \in \phi^\tau\{x\}.
\]
% This is equivalent to $(y,x) \in \phi_{-\tau}$ as relation and $y\in \phi_\tau(x)$ as operator image.
% Since $\sSet(X)$ is complete and the modal operator $\Phi$ is completely additive these notions are dual. The same applies to the above closure and derivative operators.
The flow topologies discussed in Section \ref{derivesflowtop} are not Alexandrov in general and the associated duality is more involved.
For example the \bflt $\scrTbf$ for a relational semi-flow is defined by considering the modal operator $\der^-$ as defined above in the setting of relational semi-flows.
Define the modal operator $\Phi_\sqbullet^- := \der^-\cl$ for the topology $\scrTbf$. Another interesting modal operator to consider is defined as: $\Phi_\sqbullet^\tau = \phi^\tau\cl$.

Finally, even though Theorem \ref{clattbl} does not hold in general for relational semi-flows one can also study regular closed attracting blocks for relational semi-flows, cf.\ \cite{lsa3}.

%chap 4
%\section{Computational connection matrix theory for Morse pre-orders}
\chapter{Algebraization of dynamics}
\label{sec:cm}
%possible options:
%Algebraization of Morse-preorders
%Algebraization of Morse-preorders and/via connection matrix theory - t
%Algebraization of dynamics and/via connection matrix theory <

 %\label{app:cm}
 
 In this section, we elaborate on the third theme of this text: augmentation of Morse pre-orders with algebraic topological data  in order to characterize invariance of the dynamics, i.e. the algebraization of dynamics. 
 In particular, we use techniques from algebraic topology in a way in that enables a computational theory.
 % In particular, we show how  connection matrix theory applies to Morse pre-orders, and in a way in which enables a computational theory.  
 The starting point is a 
 discretization. % $\disc\colon X \twoheadrightarrow \cX$. 
 A Morse pre-order $(\cX,\le^\dagger)$
 is the choice of a  pre-order such that the discretization map $\disc\colon (X,\scrT,\scrTbf) \twoheadrightarrow (\cX,\le^\dagger)$  is both $\scrT$-consistent and $\scrTbf$-co-consistent.
In particular,
the composed maps
\[
\adjustbox{scale=0.95}{%
\begin{tikzcd}
(X,\scrT) \arrow[r, two heads, "\disc"] \arrow[rr, two heads, "\tile", bend right] & (\cX,\le^\dagger) \arrow[r] \arrow[r] \arrow[r, two heads, "\dyn"] & (\sSC,\le),
\end{tikzcd}
 %\text{and}\quad
\begin{tikzcd}
(X,\scrTbf) \arrow[r, two heads, "\disc"] \arrow[rr, two heads, "\tile", bend right] & (\cX,\ge^\dagger) \arrow[r] \arrow[r] \arrow[r, two heads, "\dyn"] & (\sSC^*,\ge)
\end{tikzcd}
}
\]
% %  %
%  $(X,\scrT) \xtwoheadrightarrow{\disc} (\cX,\le^\dagger)\xtwoheadrightarrow{\dyn} (\sSC,\le)$ and $(X,\scrTbf) \xtwoheadrightarrow{\disc} (\cX,\ge^\dagger) \xtwoheadrightarrow{\dyn}(\sSC,\ge)$
are continuous $T_0$-discretizations, denoted by $\tile$, which define $\sSC$-gradings on $X$, cf.\ App.\ \ref{gradfilt}.
We explain how factorized gradings can be used to discretize algebraic topological invariants of topological spaces. We apply these methods in the context of space and flow topologies.   Recall that the first theme was linking topology and dynamics by formulating dynamics as a topology. It is worthwhile to then ask of the reverse direction: what happens when topology is analyzed as dynamics? The beginning of this chapter explores this direction, leading to a construction we entitle \emph{tessellar homology}, which, in contradistinction to cellular homology, uses general tiles instead of CW-cells.

\section{Cartan-Eilenberg systems}
\label{CEsystems}
The notion of a Cartan-Eilenberg system over a (countable) total order was first introduced in \cite{CE}.This notion is generalized to arbitrary total orders in \cite{HelleRognes} and to arbtrary posets in \cite{Matschke} and \cite{SpV2}.
Here we use this concept for finite distributive lattices, cf.\  \cite{SpV2}.
To some extend Cartan-Eilenberg systems may be regarded as a type of generalized homology theory. These systems provide the right data structure for considering algebraic topological invariance in combination with filterings and discretizations of a space.

\subsection{Cartan-Eilenberg systems over finite distributive lattices}
\label{CEsystemsFDLat}
Birkhoff's representation theorem, cf.\ Thm.\ \ref{thm:birkhoff},
yields that every finite distributive lattice can be represented as the down-set lattice $\sO(\sP)$ for some finite poset $(\sP,\le)$.
Regard $\sO(\sP)$
 as small (thin) category\index{Category}\index{Poset!as category} where the objects are the elements in the lattice and the order relations $\alpha\le \beta$ (i.e. $\alpha\subset \beta$) account for the morphisms, or arrows, i.e. $\alpha\le \beta$ yields the arrow $\alpha \to \beta$. 
%We denote this category by $\IIFi$. 
The \emph{arrow category}\index{Arrow category} of $\sO(\sP)$ consists of pairs $(\alpha,\beta)$, with $\alpha\le \beta$, 
and unique morphisms $(\alpha,\beta) \to (\gamma,\delta)$ for $\alpha\le \gamma$ and 
$\beta\le \delta$,
and is denoted by $\IIi$ and corresponds to commutative diagrams in $\sO(\sP)$. 
Following \cite{HelleRognes} we consider the covariant functors $\spi_0$, $\spi_1$ and $\spi_2$ 
%acting from $\IIthreei$ to $\IIi$,
%$\spi_m\colon \IIthree\to\II$, $m=0,1,2$, 
given by $(\alpha,\beta,\gamma) \xmapsto{\spi_0} (\alpha,\beta)$, $(\alpha,\beta,\gamma) \xmapsto{\spi_1} (\alpha,\gamma)$ and
$(\alpha,\beta,\gamma) \xmapsto{\spi_2} (\beta,\gamma)$ respectively, and  natural transformations 
$\imath\colon \spi_0\Rightarrow\spi_1$ and
$\jmath\colon \spi_1\Rightarrow\spi_2$ 
whose components 
are given by
$(\alpha,\beta) \xmapsto{\imath} (\alpha,\gamma)$ and
$(\alpha,\gamma) \xmapsto{\jmath} (\beta,\gamma)$ respectively.
\begin{definition}[cf.\ \cite{SpV2}]
    \label{CEsys}
    Let $(\sP,\le)$ be a finite poset.
A \emph{Cartan-Eilenberg system}\index{Cartan-Eilenberg system} over  $\sO(\sP)$ consists of a covariant functor $\sE\colon \IIi\to \sRmod$\footnote{The category of $R$-modules is denoted by $\sRmod$. Cartain-Eilenberg systems can be formulated in any abelian category such as abelian groups, $R$-modules or $\K$-vector spaces.}   and a natural transformation $k\colon \sE \spi_2 \Rightarrow \sE\spi_0$ between the composite functors $\sE\spi_2$ and $\sE\spi_0$,
called the \emph{connecting homomorphism},\index{Connecting homomorphism}  
%called the \emph{differential},  
such that 
\[
\begin{tikzcd}[column sep=small]
\sE \spi_0 \arrow[rr, "\sE\imath", Rightarrow] &                               & \sE\spi_1\arrow[ld, "\sE\jmath", Rightarrow] \\
                              & \sE\spi_2 \arrow[lu, "k", Rightarrow] &                              
\end{tikzcd}
\]
is an exact triangle, where the natural transformations $\sE\imath$ and $\sE\jmath$ are the right whiskerings of $\sE$ and $\imath$, and $\sE$ and $\jmath$ respectively.
A Cartan-Eilenberg system over $\sO(\sP)$ is denoted by $\bfE = \bigl(\IIi,\sE,k\bigr)$.\footnote{Cartan-Eilenberg systems can be defined over any poset, eg.\ all subsets of a topological space, closed subsets, etc., cf.\ \cite{SpV2}.}
\end{definition}

Unpacking the above definition yields
\[
%\sC\colon \II \longrightarrow \sRmod,\qquad 
\sE\colon \IIi \longrightarrow \sRmod, \quad (\alpha,\beta) \mapsto \sE(\alpha,\beta)=E^\beta_\alpha \in \sRmod.
\]
The functor $\sE$ yields the homomorphisms    $\ell\colon E^\beta_\alpha \to E^\delta_\gamma$ for all $(\alpha,\beta)\le (\gamma,\delta)$, and
the composition $E^\beta_\alpha \xrightarrow[]{\ell} E^\delta_\gamma\xrightarrow[]{\ell} E^\zeta_\epsilon$ is given by $\ell\colon E^\beta_\alpha \to E^\zeta_\epsilon$ by the transitivity in $\sO(\sP)$. The natural transformation $k$ yields the differential $k\colon E^\gamma_\beta\to E^\beta_\alpha$ such that the diagrams\footnote{In the special cases $(\alpha,\beta)\le (\alpha,\gamma)$ and $(\alpha,\gamma)\le (\beta,\gamma)$ the morphisms $\ell$ are denoted by $i$ and $j$ respectively.
} 
\begin{equation}
\label{unpackeddiag12}
\begin{tikzcd}[column sep=small]
E^\beta_\alpha \arrow[rr, "i"] &                   & E^\gamma_\alpha \arrow[ld, "j"] \\
                  & E^\gamma_\beta \arrow[lu, "k"] &                  
\end{tikzcd}
\qquad\qquad
\begin{tikzcd}
E^\gamma_\beta \arrow[r, "k"] \arrow[d, "\ell"'] & E^\beta_\alpha \arrow[d, "\ell"] \\
E^\zeta_\epsilon \arrow[r, "k"]                 & E^\epsilon_\delta               
\end{tikzcd}
\end{equation}
are exact and commutative  for all $(\alpha,\beta,\gamma)\le (\delta,\epsilon,\zeta)$.
By construction $\ell\colon E^\beta_\alpha \xrightarrow[\id]{\ell} E^\beta_\alpha$ is the identity homomorphism and the exactness of \eqref{unpackeddiag12}[left] shows that $E^\alpha_\alpha =0$ for all $\alpha \in \sO(\sP)$, cf.\ \cite{HelleRognes}.
    Morphisms between Cartan-Eilenberg  systems  $\bfE$ and $\bfE'$ 
    can be described in terms of the $E$-terms, i.e. a morphism\index{Cartan-Eilenberg system!morphism between} is a natural transformation $h\colon \sE \Rightarrow \sE'$ which, in terms of $E$-terms, implies that
there exist homomorphisms  $h^{\beta}_{\alpha}\colon E^\beta_\alpha \to E'^\beta_\alpha$ which commute with the morphisms in $\bfE$ and $\bfE'$ respectively:
\begin{equation}
    \label{homofCE}
    \begin{tikzcd}[column sep=large, row sep=large]
{} \arrow[r] & E^\beta_\alpha \arrow[r, "i"] \arrow[d, "h^\beta_\alpha"'] & E^\gamma_\alpha \arrow[r, "j"] \arrow[d, "h^\gamma_\alpha"'] & E^\gamma_\beta \arrow[r, "k"] \arrow[d, "h^\gamma_\beta"'] & E^\beta_\alpha \arrow[r] \arrow[d, "h^\beta_\alpha"'] & {} \\
{} \arrow[r] & E^{'\beta}_\alpha \arrow[r, "i'"]                 & E^{'\gamma}_\alpha \arrow[r, "j'"]                 & E^{'\gamma}_\beta \arrow[r, "k'"]                 & E^{'\beta}_\alpha \arrow[r]                 & {}
\end{tikzcd}
\end{equation}
for every ordered triple$(\alpha,\beta,\gamma)$.

Since $\sO(\sP)$ is a finite lattice it suffices to define
a Cartan-Eilenberg system with exact triangle\index{Exact triangle} and commutative squares\index{Commutative square} for ordered pairs called an \emph{exact couple system}, cf.\ \cite{Matschke}.\index{Exact couple system} To be more specific we consider the diagrams:
\begin{equation}
\label{unpackeddiag13}
\begin{tikzcd}[column sep=small]
E^\alpha_\varnothing \arrow[rr, "i"] &                   & E^\beta_\varnothing \arrow[ld, "j"] \\
                  & E^\beta_\alpha \arrow[lu, "k"] &                  
\end{tikzcd}
\qquad\qquad
\begin{tikzcd}
E^\beta_\alpha \arrow[r, "k"] \arrow[d, "\ell"'] & E^\alpha_\varnothing \arrow[d, "i"] \\
E^\delta_\gamma \arrow[r, "k"]                 & E^\gamma_\varnothing        
\end{tikzcd}
\end{equation}
which are exact and commutative  for all $(\alpha,\beta)\le (\gamma,\delta)$.
For an ordered triple  $(\alpha,\beta,\gamma)$ 
the composition $E^\gamma_\beta \xrightarrow[]{k} E^{\beta}_\varnothing \xrightarrow[]{j} E^\beta_\alpha$
 defines the connecting homomorphism (differential)
%\begin{equation}
 %   \label{diffS1}
$k_{\alpha\beta\gamma}\colon E^\gamma_\beta \to E^\beta_\alpha$.
%\end{equation}
% For a nested quadruple $(\delta,\gamma,\beta,\alpha)\in \IIfour$ the 
% composition
% $
% E^\delta_\gamma \xrightarrow[]{k''} E^{\gamma}_0 \xrightarrow[]{j'} E^\gamma_\beta \xrightarrow[]{k} E^{\beta}_0 \xrightarrow[]{j} E^\beta_\alpha
% $
% shows that $\dff_{\gamma\beta\alpha}$ is indeed a differential.
Since \eqref{unpackeddiag13} is exact for $(\beta,\gamma)\in \IIi$ we have that $k j=0$ and thus $k_{\alpha\beta\gamma} k_{\beta\gamma\delta}  = j (k j) k =0$.
% Since \eqref{unpackeddiag13} is exact for $(\beta,\gamma)\in \IIi$ we have that $k\circ j=0$ and thus $k_{\alpha\beta\gamma}\circ k_{\beta\gamma\delta}  = j\circ (k\circ j)\circ k =0$.
% \footnote{We omit the subscripts from the maps $i$, $j$ and $k$ since the domains and codomains are clear from context.}
Any ordered triple $(\alpha,\beta,\gamma)$ yields the following  {octahedral} diagram:\footnote{If there is no ambiguity about the domain and codomain the sub-indices are omitted from the maps $k$, $i$,  $j$,  $\ell$ and $\tilde k$.} %reminiscent of triangulated categories:
\begin{equation}
\label{exact4}
\begin{tikzcd}[column sep=small]
                                                 &                                    & E^\alpha_\varnothing \arrow[lldd, "i"', bend right] \arrow[rrdd, "i", bend left] &                                     &                                    \\
                                                 & E^\beta_\alpha \arrow[ru, "k"] \arrow[rr, dashed, "l"'] &                                                               & E^\gamma_\alpha \arrow[lu, "k"'] \arrow[ld, dashed, "l"'] &                                    \\
E^\beta_\varnothing \arrow[ru, "j"] \arrow[rrrr, "i"', bend right] &                                    & E^\gamma_\beta \arrow[ll, "k"] \arrow[lu, dashed, "\tilde k"']                            &                                     & E^\gamma_\varnothing \arrow[ll, "j"] \arrow[lu, "j"']
\end{tikzcd}
\end{equation}
where the inner exact triangle (dashed) is induced by the three outer exact triangles, cf.\ \cite[Lem.\ 4.8]{Matschke}. %This leads to the follow theorem.

\begin{theorem}[cf.\ \cite{Matschke}, Lem.\ 4.8]
    \label{equiv}
    An exact couple system over $\sO(\sP)$  extends to a Cartan-Eilenberg system over $\sO(\sP)$.
\end{theorem}

\subsection{The excisive property}
\label{excisiveprop}
For most algebraic topological  applications of Cartan-Eilenberg systems the excisive property of homology plays an important role which leads to the following definition.

\begin{definition}
\label{eecs}
%Let $\sL$ be a bounded, distributive lattice.
%$\sO(\sP)$ for some finite poset $\sP$.
A Cartan-Eilenberg system\index{Cartan-Eilenberg system!excisive}\index{Excisive Cartan-Eilenberg system}  $\bfE$ over $\sO(\sP)$
is called \emph{excisive} if
\begin{equation}
    \label{abs-exc}
E^\beta_{\alpha\cap \beta} \cong E^{\alpha\cup\beta}_\alpha,\quad \forall \alpha,\beta\in \sO(\sP).\footnote{The homomorphism $\ell\colon E^\beta_{\alpha\cap \beta} \to E^{\alpha\cup\beta}_\alpha$ is an isomorphism.}
\end{equation}
\end{definition}

Two ordered pairs $(\alpha,\beta),(\alpha',\beta')\in \IIi$ are \emph{equivalent},\index{Ordered pair!equivalent} if $\beta\smin\alpha=\beta'\smin \alpha'$.
The excisive property for a Cartan-Eilenberg system implies that the $E$-terms only depend on equivalent pairs up to isomorphism.
For $\beta\smin \alpha=\{p\}$ we abuse notation and write $E_p:= E^\beta_\alpha$ for all $p\in \sP$.
\begin{lemma}
    \label{equivpairs}
    Let $(\alpha,\beta)$ and $(\alpha',\beta')$  be equivalent pairs in $\IIi$. Then,
    $E^\beta_\alpha \cong E^{\beta'}_{\alpha'}$.
\end{lemma}
\begin{proof}
    Define $\tilde\alpha=\alpha\vee\alpha'$ and $\tilde\beta=\beta\vee\beta'$. Then,
$\alpha,\alpha'\le\tilde\alpha$, $\beta,\beta'\le \tilde\beta$ and 
$\beta\smin\alpha=\beta'\smin\alpha'=\tilde\beta\smin\tilde\alpha$. Consider $\alpha\le\tilde\alpha$ and $\beta\le\tilde\beta$. Then, $(\beta\cap\tilde\beta)\smin (\alpha\cup\tilde\alpha)=\beta\smin \tilde\alpha = 
\tilde\beta\smin\tilde\alpha$. Consequently,  $(\beta\cup\tilde\alpha)\smin\tilde\alpha = \tilde\beta\smin\tilde\alpha$ and therfore $\tilde\beta = \beta\cup\tilde\alpha$. Similarly, $\beta\smin(\beta\cap\tilde\alpha) = \beta\smin\alpha$ which implies that $\alpha = \beta\wedge \tilde\alpha$.
By \eqref{abs-exc} we conclude that
$
%\begin{aligned}
% E^{\beta'}_{\alpha'} = E^{\alpha'\vee\beta}_{\alpha'}
% \cong E^\beta_{\alpha'\wedge \beta} = E^\beta_\alpha,
%\end{aligned}
E^\beta_\alpha = E^\beta_{\tilde\alpha\cap \beta} \cong  E^{\tilde\alpha\cup\beta}_{\tilde\alpha}
= E^{\tilde\beta}_{\tilde\alpha},
$
By the same token one proves that 
$E^{\beta'}_{\alpha'} = E^{\beta'}_{\tilde\alpha\cap \beta'} \cong  E^{\tilde\alpha\cup\beta'}_{\tilde\alpha}
= E^{\tilde\beta}_{\tilde\alpha},
$ and thus $E^\beta_\alpha \cong E^{\beta'}_{\alpha'}$.
\end{proof}

Excisive Cartan-Eilenberg systems already appear in the seminal work by Franzosa on connection matrices for Morse representations, cf.\ \cite{fran}. In Franzosa's work such data structures of $R$-modules of $\K$-vector spaces are referred to a \emph{module braids}.\index{Module braid} As a matter of fact one can prove that these concepts are equivalent.

\begin{theorem}[cf.\ \cite{SpV2}]
\label{equivEEC}
A  module braid over the convex sets in $\sP$ is equivalent to
an excisive Cartan-Eilenberg system  over $\sO(\sP)$.
\end{theorem}

An important result for excisive Cartan-Eilenberg systems is a representation in terms of finitely generated differential modules.
For convenience we assume that the ring $R$ is a \emph{principal ideal domain}.\index{Principal ideal domain}
Recall that $\sP$-graded differential module\index{Differential module}\index{Differential module!$\sP$-graded} is denoted by $(C,\dff)$, with $C=\bigoplus_{p\in \sP} G_pC$, cf.\ App.\ \ref{gradedvs}-\ref{gradedcellchain}.
A $\sP$-graded differential module $(C,\dff)$ is free  if and only if the components $G_pC$ are free.\index{Differential module!free} 
Free graded differential modules are used to construct representations of Cartan-Eilenberg systems.
A $\sP$-graded differential module defines an $\sO(\sP)$-filtered differential module\index{Differential module!$\sO(\sP)$-filtered} via $\alpha \mapsto F_\alpha C:= \bigoplus_{p\in\alpha} G_pC$,  cf.\ App.\ \ref{gradedvs}-\ref{gradedcellchain}.
In general, $\sO(\sP)$-filtered differential modules induce
excisive Cartan-Eilenberg systems.  
Consider the short exact sequence:
\begin{equation}
    \label{firstexactseq12}
\begin{tikzcd}
0 \arrow[r] & F_\alpha C \arrow[r, "i","\subset"'] & F_\beta C \arrow[r, "j"] & \displaystyle{\frac{F_\beta C}{F_\alpha C}}
%=G_{\beta\smin\alpha} C 
\arrow[r]  & 0
\end{tikzcd},\quad \alpha\le \beta.
\end{equation}
Since the differential $\dff$ preserves the filtering we may define
 the homologies 
 $E^\alpha_\varnothing := H(F_\alpha C,\dff)$, $E^\beta_\varnothing := H(F_\beta C,\dff)$ and
 $E^\beta_\alpha := H\bigl(F_\beta C/F_\alpha C,\dff\bigr)$. This yields the exact triangles in \eqref{unpackeddiag13} where 
 $k\colon H\bigl(F_\beta C/F_\alpha C,\dff\bigr) \to H(F_\alpha C,\dff)$ 
 % $k\colon H\bigl(F_\gamma C/F_\beta C,\dff\bigr) \to H(F_\beta C/F_\alpha C,\dff)$ 
 is the connecting homomorphism constructed in the usual way. 
 All other axioms of exact couple systems are readily verified 
 which yields a Cartan-Eilenberg system denoted
 $\bfE(C,\dff)$. The excisive property follows from the fact that 
$F_{\alpha\cup\beta}C/F_\alpha C =(F_\alpha C+F_\beta C)/F_\alpha C
\cong F_\beta C/(F_\alpha C \cap F_\beta C) = F_\beta/F_{\alpha\cap \beta}C$. 
The (excisive) Cartan-Eilenberg system $\bfE(C,\dff)$ is the Cartan-Eilenberg system of the $\sO(\sP)$-filtered differential module $(C,\dff)$.\index{Cartan-Eilenberg system!of an $\sO(\sP)$-filtered differential module}
This implies that a Cartan-Eilenberg system of a $\sP$-graded differential module is automatically defined.\index{Cartan-Eilenberg system!of a $\sP$-graded differential module}

\begin{definition}
    \label{connmat1234}
    Let $\bfE$ be a Cartan-Eilenberg system over a finite distributive lattice $\sO(\sP)$.
 A free, $\sP$-graded differential group $(C,\dff)$, with $C=\bigoplus_{p\in \sP} G_pC$, is a \emph{$\sP$-graded representation}\index{Graded representation} for $\bfE$ if 
    \begin{equation}
        \label{conncond1}
    \bfE(C,\dff)\cong \bfE.
    \end{equation}
    % where $\bfE_\bfS$ is the generating exact couple system induced by $\bfS$.
    % The differential $\dff$ may be regarded as \emph{upper-triangular}\index{Upper-triangular} with entries $\dff(p,q)$.
    A  $\sP$-graded representation is \emph{strict} if the free
    $\sP$-graded differential group $(C,\dff)$ is strict.\footnote{Recall that $(C,\dff)$ is strict if the differential restricted to $G_p C$, $p\in \sP$, is trivial, cf.\ Defn.\ \ref{Pchaincompl}.}\index{Graded representation!strict}\index{Differential!strict}\index{Strict differential}
 %free,   such that \eqref{conncond1} holds.
\end{definition}
The main theorem of this section states that in most cases $\sP$-graded representations for Cartan-Eilenberg systems exist and are unique up to conjugacy.
The existence part was proved in \cite{fran} and applies to Cartan-Eilenberg systems due to Theorem \ref{equivEEC}. The existence result in \cite{fran} assumes that every $E$-term is the homology of a differential module. We say in this case that the Cartan-Eilenberg system is \emph{chain generated}.\index{Cartan-Eilenberg system!chain generated}
To be more precise, for all $(\alpha,\beta)\in \IIi$ there exist differential modules $\bigl(C^\alpha_\varnothing,d^\alpha_\varnothing \bigr)$,
$\bigl(C^\beta_\varnothing,d^\beta_\varnothing \bigr)$ and $\bigl(C^\beta_\alpha,d^\beta_\alpha \bigr)$, and short exact sequences
\[
\begin{tikzcd}
0 \arrow[r] & C^\alpha_\varnothing \arrow[r, "i"] & C^\beta_\varnothing \arrow[r, "j"] & C^\beta_\alpha \arrow[r] & 0,
\end{tikzcd}
\]
such that $E_\varnothing^\alpha=H\bigl(C^\alpha_\varnothing,d^\alpha_\varnothing \bigr)$,
$E_\varnothing^\beta = H\bigl(C^\beta_\varnothing,d^\beta_\varnothing \bigr)$ and $E_\alpha^\beta = H\bigl(C^\beta_\alpha,d^\beta_\alpha \bigr)$. By the standard construction of the connecting homomorphisms this
 yields a Cartan-Eilenberg system $\bfE$. This concept is more general than a Cartan-Eilenberg system generated by an $\sO(\sP)$-filtered differential module, or a $\sP$-graded differential module, cf.\ \cite{SpV2}. 

\begin{theorem}[\cite{fran}, Thm.\ 4.8]
    \label{prrepExis}
    Let $\bfE$ be a chain generated, excisive Cartan-Eilenberg system over $\sO(\sP)$. Then, there exists a free, $\sP$-graded differential group $(C,\dff)$ --- a $\sP$-graded representation --- such that $\bfE \cong \bfE(C,\dff)$.
\end{theorem}

An $R$-module is \emph{finitely generated}\index{Finitely generated module}\index{Module!finitely generated} if it has a finite generating set. 
The running assumption in this chapter is that $R$ is a principal ideal domain.
This implies that a module $C\cong R^n\oplus \Tor(C)$, where $\Tor(C)$ is the \emph{maximal torsion submodule}\index{Torsion submodule}\index{Maximal torsion submodule} of $C$\footnote{Recall that $c\in \Tor(C)$ if the exists an $r\in R$ such that $rc=0$.} and $\Tor(C) \cong \bigoplus_i R/(d_i)$, where $d_i$ are the non-zero invariant factors of $C$. The integer $n$ is called the \emph{rank} of $C$.\index{Module!rank}\index{Rank}
A Cartan-Eilenberg system $\bfE$ is finitely generated if all modules $E_\alpha^\beta$, 
$(\alpha,\beta)\in \IIi$.\index{Cartan-Eilenberg system!finitely generated}
Suppose $\bfE$ is a finitely generated, excisive Cartan-Eilenberg system over $\sO(\sP)$. For $E^\beta_\alpha$ let $s^\beta_\alpha$ be the rank of $E^\beta_\alpha$  and $r^\beta_\alpha$ is the number of non-zero invariant factors of the maximal torsion submodule $\Tor(E^\beta_\alpha)$.
For $\beta\smin\alpha=\{p\}$ we denote $s^\beta_\alpha$   and $r^\beta_\alpha$ by $s_p$ and $r_p$ respectively.
\begin{definition}
    \label{connmat4567}
    Let $\bfE$ be a finitely generated, excisive Cartan-Eilenberg system over a finite distributive lattice $\sO(\sP)$.
 A free, $\sP$-graded differential module $(C,\dff)$, with $C=\bigoplus_{p\in \sP} G_pC$, is a \emph{principal representation} for $\bfE$ if 
 \begin{enumerate}
     \item [(i)] $\bfE(C,\dff)\cong \bfE$;
     \item [(ii)] $\rank G_p C = s_p+2r_p$
     % , where $s_p$ is the rank of $E^\beta_\alpha$  and $r_p$ is the number of non-zero invariant factors of the maximal torsion sub-group ${\mathbf t}E^\beta_\alpha$ of $E^\beta_\alpha$,
     %  for all $\beta\smin \alpha=\{p\}$ and 
      for all $p\in \sP$.
 \end{enumerate}
The differential $\dff$ is called a \emph{spectral matrix} for $\bfE$.
\end{definition}

Since $R$-modules over a principal ideal domain allow length 1 free resolutions the existence of a principal representation is guaranteed by Theorem \ref{prrepExis}.
The main result in \cite{SpV2} states that such a representation is unique up to isomorphism of Cartan-Eilenberg systems which implies that differentials (spectral matrices) are unique up to conjugacy. 

\begin{remark}
    \label{homequal}
    For strict $\sP$-graded differential modules the homology  satisfies $H(G_pC,\dff)= G_pC\cong E^\beta_\alpha$, with $\beta\smin\alpha=\{p\}$ for all $p\in \sP$.
\end{remark}

\begin{remark}
    In the context of dynamical systems a spectral matrix is referred to as a \emph{connection matrix}.\index{Connection matrix} We will refer to this nomenclature when we apply Cartan-Eilenberg systems for bi-topological spaces involving the \bflta.
\end{remark}

\begin{remark}
    If a Cartan-Eilenberg system is generated by an $\sO(\sP)$-filtered differential $\K$-vector space, then \cite{robbin:salamon2} provides a simplified proof of Theorem \ref{prrepExis}. 
    %In this case the Cartan-Eilenberg systems are excisive.
\end{remark}

\subsection{Cartan-Eilenberg systems of a filtered topological space}
\label{CanCETop}
As before let $(\sP,\le)$ be a finite poset 
and $(X,\scrT)$ be a topological space.
Consider a $\sP$-graded decomposition of $X$ given by $X= \bigcup_{p\in \sP} G_p X$,
% , which is equivalent to the  $\sP$-grading
%  $\grade\colon\sT \hookrightarrow \sP$, with ordered tessellation $\sT = \{G_pX\neq \varnothing\}$,
cf.\ App.\ \ref{gradfilt}.  %$\grade\colon X\to \sP$.
Dual to a grading is a lattice filtering $\grade^{-1}\colon \sO(\sP) \to \sSet(X)$ given by the lattice homomorphism $\alpha \mapsto F_\alpha X :=\grade^{-1}\alpha$.\footnote{If 
 $(\sP,\le)$ is $\scrT$-consistent then $\tile$ is a continuous map, and
   $\grade^{-1}\colon \sO(\sP) \to \scrC(X,\scrT)$ is filtering of closed set of $X$. 
   % The set $\scrC(X,\scrT)$ is a lattice of closed sets in $X$, cf.\ Appendix~\ref{gradfilt}. 
   The mapping $\grade^{-1}\colon \sU(\sP) \to \scrO(X,\scrT)$ yields a filtering of open sets in $X$. This can also be obtained by considering a $\scrT$-co-consistent grading, i.e. continuous with respect to opposite poset $\sP^*$.} 
From this point on one can invoke a (generalized) (co)homology theory by assigning $E^\beta_\alpha := H(F_\beta X,F_\alpha X)$ for every $(\alpha,\beta)\in \IIi$. From
the Eilenberg-Steenrod axioms we have  exact triangles (and the connecting homomorphisms) and commutative squares:
\[
\begin{tikzcd}[column sep=tiny]
H(F_\alpha X) \arrow[rr, "i"] &                   & H(F_\beta X) \arrow[ld, "j"] \\
                  & H(F_\beta X,F_\alpha X) \arrow[lu, "k"] &                  
\end{tikzcd}
\qquad %\qquad
\begin{tikzcd}
H(F_\beta X,F_\alpha X) \arrow[r, "k"] \arrow[d, "\ell"'] & H(F_\alpha X) \arrow[d, "i"] \\
H(F_\delta X,F_\gamma X) \arrow[r, "k"]                 & H(F_\gamma X)      
\end{tikzcd}
\]
for $(\alpha,\beta)\le (\gamma,\delta)$.
For example the homology functor can be taken to be singular homology. Note that we suppress the $\Z$-grading as the Cartan-Eilenberg theory works in the setting of differential modules.
Singular homology yields a chain generated Cartan-Eilenberg system $\bfE^\sing(X)$.\index{$\bfE^\sing(X)$}
Denote by $C(X)$ the $R$-module of singular chains over a ring $R$  and $\dff\colon C(X) \to C(X)$ is the singular boundary operator, or differential making $\bigl(C(X),\dff\bigr)$ a differential module.\footnote{If we invoke $C(X) = \bigoplus_q C_q(X)$ as 
the $\Z$-graded module of singular chains then $(C,\dff)$ is a chain complex.} The filtering of $X$ yields a filtering of $C(X)$, i.e.
 $\alpha \mapsto F_\alpha C(X)$
   with $F_\alpha C(X) := C\bigl(F_\alpha X\bigr)$, where $C\bigl(F_\alpha X\bigr)$ are the singular chains in $C(X)$ restricted to $F_\alpha X$.
   For singular chains it holds that $F_{\alpha\cap \beta} C(X) = F_\alpha C(X) \cap F_\beta C(X)$.
%   \[
%   \begin{aligned}
%   F_{\alpha\cap \beta} C(X) &= C\bigl(F_{\alpha\cap\beta} X\bigr) = 
%     C\bigl(F_{\alpha}X \cap F_{\beta} X\bigr)\\
%   &= C\bigl(F_{\alpha}X\bigr) \cap C\bigl(F_{\beta} X\bigr) = F_\alpha C(X) \cap F_\beta C(X). 
%   \end{aligned}
%   \]
   The same relation with  respect to union does not hold in general. The differential satisfies $\dff F_\alpha C(X) \subset F_\alpha C(X)$ for all $\alpha\in \sO(\sP)$,  making $\alpha \mapsto F_\alpha C(X)$ a  meet semi-lattice filtered module (chain complex).
   The fact that the latter is not an $\sO(\sP)$-filtered module prevents us from regarding $\bfE^\sing(X)$ as generated by an $\sO(\sP)$-filtered differential module.
   %, which prevents us from using the theory in \cite{robbin:salamon2}.
   However, 
for the filtering we obtain the following short exact sequences:
\[
\begin{tikzcd}
0 \arrow[r] & C(F_\alpha X) \arrow[r, "i"] & C(F_\beta X) \arrow[r, "j"] & \frac{C(F_\beta X)}{C(F_\alpha X)} \arrow[r] & 0,
\end{tikzcd}
\]
$(\alpha,\beta)\in \IIi$, which represent the modules $C_\varnothing^\alpha$, $C_\varnothing^\beta$ and $C_\alpha^\beta$.
The associated homologies $H(F_\alpha X) = H\bigl(C(F_\alpha X),\dff\bigr)$, $H(F_\beta X)  = H\bigl(C(F_\beta X),\dff\bigr)$ and 
\[
H(F_\beta X,F_\alpha X) = H\bigl(C(F_\beta X)/C(F_\alpha X),\dff\bigr),
\]
yield the above exact triangle for a pair. 
We conclude that $\bfE^\sing(X)$ is chain generated as explained in Section \ref{excisiveprop}.
The fact that $(C(X),\dff)$ is not lattice filtered implies that the associated Cartan-Eilenberg system is \emph{not} excisive in general.
%\begin{remark}
%\label{relhom12}
Depending on the homology theory, or on properties of the grading of $X$, we can relate the homologies 
$H(F_\beta X,F_\alpha X)$ and $H\bigl(F_\beta X/F_\alpha X\bigr)$ which are not necessarily isomorphic.
% We emphasize that   the homologies $H(F_\beta X,F_\alpha X)$ and $H\bigl(F_\beta X/F_\alpha X\bigr)$ are not necessarily isomorphic.
%
%
If a grading $X=\bigcup_{p\in \sP} G_pX$ is \emph{natural} then the associated filtering $\alpha\mapsto F_\alpha X$ consists of mutually good pairs, cf.\ Defn.\ \ref{regudisc}.
% A grading $\tile\colon X\to \sP$ is \emph{regular} if it is regular as a continuous discretization map, cf.\ Defn.\ \ref{regudisc}. 
% (i) $\sP$ is $\scrT$-consistent and (ii) the associated filtering $\grade^{-1}\colon \sO(\sP)\to  \sSet(X)$ consists of mutually good pairs.\footnote{Recall that a pair $(X,A)$, $A\subset X$ closed, is a \emph{good pair}  if 
% $A$ is a deformation retract of a neighborhood in $X$, q.v.\ \cite[Th.\ 2.13]{Hatcher}.}
For example if we consider  singular homology then $H(F_\beta X,F_\alpha X)\cong H\bigl(F_\beta X/F_\alpha X\bigr)$ for all $\alpha\subset\beta$, cf.\ \cite[Prop.\ 2.22]{Hatcher}, \cite[Thm.\ 3.2.9]{Weintraub}.
In this case the relative singular homology satisfies the excisive property, i.e. $H(F_{\alpha\cup\beta} X,F_\alpha X)\cong H\bigl(F_{\alpha\cup\beta} X/F_\alpha X\bigr) \cong H\bigl(F_\beta X/F_{\alpha\cap \beta} X\bigr)\cong H(F_\beta X,F_{\alpha\cap \beta} X)$, for all $\alpha,\beta\in \sO(\sP)$.
%In this case the construction of $\bfE^\grd\bigl(C(X),\dff\bigr)$ yields a excisive homological exact couple system for a regular grading. From this point on we will use relative homology to describe exact couple systems for grading unless the set-up requires Franzosa's approach via pointed quotient space which yields excisive system for any grading.
% The above approach is due to Franzosa, cf. \cite{fran}. A more detailed account of the homological algebra of lattice filtered chain complexes is discussed in \cite{SpV2}.
%\end{remark}
%
%\begin{remark}
If $X$ is a compact Hausdorff space and the poset $\sP$ is $\scrT$-consistent (not necessarily natural), i.e. the filtering consists of closed sets $F_\alpha X\in \scrC(X,\scrT)$.
By the closedness of $F_\alpha X$ we have the homeomorphisms $F_\beta X/F_\alpha X \smin [F_\alpha X] \cong F_\beta X\smin F_\alpha X$, for all $\alpha\subset\beta$.
Let $\bar H$ represent \emph{Alexander-Spanier cohomology}.\index{Alexander-Spanier cohomology} From the excisive property of Alexander-Spanier cohomology we have that\footnote{Here $\bar H_c$ denote compactly supported Alexander-Spanier cohomology.} 
\[
\begin{aligned}
\bar H\bigl( F_\beta X,F_\alpha X\bigr) &\cong \bar H_c\bigl(F_\beta X \smin F_\alpha X \bigr)
\cong \bar H_c\bigl(F_\beta X / F_\alpha X\smin [F_\alpha X]\bigr)\\
&\cong \bar H\bigl(F_\beta X / F_\alpha X,[F_\alpha X] \bigr) =: \bar H\bigl(F_\beta X / F_\alpha X \bigr),
\end{aligned}
\]
cf.\ \cite[Ch.\ 6, Sect.\ 6, Lem.\ 11]{Spanier}, \cite[Ch.\ V, Sect.\ 2.A]{Dieudonne}.
The same result can be obtained by using compactly supported \emph{Alexander-Spanier homology},\index{Alexander-Spanier homology!compactly supported} cf.\ \cite[Cor.\ 9.4]{Massey}. 
% A similar correspondence between homologies $H(F_\beta X,F_\alpha X)$ and $H\bigl(F_\beta X/F_\alpha X\bigr)$ can be obtained for closed filterings with additional properties with respect to excision.
%\end{remark}

\begin{remark}
\label{BMequiv}
Suppose $X$ is a (locally) compact Hausdorff space homeomorphic to a finite CW-complex and
the poset $\sP$ is $\scrT$-consistent such that $F_\alpha X$ is homeomorphic to a closed subcomplex. Then, $F_\beta X/F_\alpha X$ is the one-point compactification of
$G_{\beta\smin\alpha} X := F_\beta X\smin F_\alpha X$. For the singular homology we have the isomorphism $H\bigl(F_\beta X/F_\alpha X\bigr) \cong H^\BM\bigl(G_{\beta\smin\alpha} X \bigr)$, where $H^\BM$ indicates the \emph{Borel-Moore homology}\index{Borel-Moore homology} of $G_{\beta\smin\alpha} X$, cf.\ \cite{BM}. 
As a matter of fact the Borel-Moore chain groups $C^\BM\bigl(F_{\beta}X\smin F_\alpha X)$
yield a short exact sequences
 \begin{equation}
     \label{exact2bbbb}
     0 \to C^\BM\bigl(F_\beta X\smin F_\alpha X \bigr) \xrightarrow[]{i} C^\BM\bigl(F_\gamma X\smin F_\alpha X\bigr) \xrightarrow[]{j}
     C^\BM\bigl(F_\gamma X \smin F_\beta X\bigr) \to 0,
 \end{equation}
 as opposed to the weakly exact sequences in \eqref{exact2bb}.
%The latter is convenient in many applications.
\end{remark}

\begin{remark}
    \label{franquot}
    In \cite{fran,fran2} considers the sequence of pointed quotient spaces $F_\beta X/F_\alpha X$ 
% %\vspace{-2ex}
%  \begin{equation}
%      \label{exact2aa}
%      F_\beta X/F_\alpha X \xrightarrow[]{i} F_\gamma X/F_\alpha X \xrightarrow[]{j}
%      F_\gamma X/F_\beta X,\quad \alpha\subset\beta\subset\gamma,
%  \end{equation}
which induces the weakly exact sequence\footnote{Weakly exact sequences yield exact triangles homology.
Recall that a seqeunce $A\xrightarrow[]{i} B \xrightarrow[]{j} C$ is \emph{weakly exact}\index{Weakly exact seqeunce} is $i$ is injective, $j\circ i=0$ and the quatient map $B/\image i \to C$ induced by $j$ yields an isomorphism $H(B/\image j) \cong H(C)$, cf.\ \cite{kurland}  and \cite{fran} for more detail.}\index{Weakly exact sequence}
 \begin{equation}
     \label{exact2bb}
\begin{tikzcd}
C(F_\alpha X) \arrow[r, "i"] & C(F_\beta X) \arrow[r, "j"] & C\bigl(F_\beta X/F_\alpha X \bigr) 
\end{tikzcd}
 \end{equation}
 of singular chains on the quotient spaces for good pairs. The approach in \cite{fran2} allows slightly weaker conditions on the good pairs, cf.\ \cite{kurland}.
\end{remark}

\section{Tessellar homology%\footnote{The adjective `tessular' can be used equivalently.}
}\index{Tessellar homology}
\label{tesshom} 
The objective of the homological algebra in this section is to obtain algebraic topological invariants of $X$ via finite algebraic information; \emph{discretization of algebraic topology}.   
To do so we employ the abstract formalism of Cartan-Eilenberg systems  as explained in Section \ref{CEsystems}.
Let $\disc\colon (X,\scrT)\twoheadrightarrow (\cX,\leq)$ be a natural  discretization map,\index{Discretization map!natural} i.e. $\leq$ is a $\scrT$-consistent pre-order and consist of mutually good pairs. Let  $\cX/_\sim$ be the poset of partial equivalence classes of $(\cX,\leq)$. Then, the map $X\twoheadrightarrow \cX/_\sim$ given by the composition
\begin{equation}\label{eq:discgrading}
\begin{tikzcd}[column sep=large]
X \arrow[r, two heads, "\disc"] %\arrow[rr, two heads, "\tess", bend right] 
& \cX \arrow[r] \arrow[r] \arrow[r, two heads, "\pi"] & \cX/_\sim,
\end{tikzcd}
% X\xtwoheadrightarrow{\disc} \cX\xtwoheadrightarrow{\pi} \cX/_\sim,
\end{equation}
is natural and yields a $\cX/_\sim$-grading $X= \bigcup_{[\xi]} G_{[\xi]}X$, cf.\ Rem.\ \ref{partialequiv} and App.\ \ref{gradfilt}. % and is denoted by $\tess\colon X \to \cX/_\sim$.
%We denote the composition by $\tess\colon X\to \cX/_\sim$.
The associated filtering 
%
% If the closed sets are mutually good pairs then the discretization map is \emph{regular}. We now introduce the notion of tessellar homology for regular discretization maps. % via an associated exact couple system $\bfE^\disc = \bfE^\disc\bigl(C(X),\dff\bigr)$. 
% %In the non-regular case we can use Alexander-Spanier cohomology, or use Franzosa's approach via quotient spaces.
% For expository purposes we will use relative singular homology  under the assumption of regularity. 
$\cU \mapsto F_\cU X$, $\cU \in \sO(\cX,\le)$, defined by $\disc^{-1}$, consists of good pairs and yields an excisive Cartan-Eilenberg system $\bfE^\disc$ as outlined in Section \ref{CanCETop}. For simplicity we assume that $\bfE^\disc$ is \emph{finitely generated}\index{Cartan-Eilenberg system!finitely generated}\index{Finitely generated} for the remainder of this chapter.
We now explain the construction of an associated homology theory.

\subsection{The tessellar differential module}\label{tesschaincomp}
For $\xi\in \cX$ define the $E$-terms for $\bfE^\disc$ via relative (singular) homology
 $E_{[\xi]} := H\bigl(F_{\downarrow\xi} X,F_{\downarrow\xi^\pred} X \bigr)$, which is chain generated and finitely generated with \emph{coefficients in a principal ideal domain} $R$. The  \emph{tessellar modules},\index{Tessellar module}\index{Tessellar chain groups} or \emph{tessellar chain groups} are given
 by the external direct sum
\begin{equation}
    \label{tess1}
    C^\disc(X) := \bigoplus_{[\xi]\in \ccX/_\sim} 
    G_{[\xi]}C^\disc(X),
    % H\bigl(F_{\downarrow\xi} X/F_{\downarrow\xi^\pred} X \bigr),\footnote{By the isomorphism $\sO(\cX/_\sim) \cong \sO(\cX,\le)$ we have that $\big\downarrow \xi = \big \downarrow [\xi]$.}\quad [\xi]\in \cX/_\sim.
\end{equation}
% \begin{equation}
%     \label{tess1}
%     C^\tess(X) := \bigoplus_{[\xi]\subset \cX} H\bigl(F_{\downarrow\xi} X,F_{\downarrow\xi^\pred} X \bigr),\footnote{By the isomorphism $\sO(\cX/_\sim) \cong \sO(\cX,\le)$ we have that $\big\downarrow \xi = \big \downarrow [\xi]$.}\quad [\xi]\in \cX/_\sim.
% \end{equation}
% \begin{equation}
%     \label{tess1}
%     C^\tess(X) := \bigoplus_{[\xi]\subset \cX} H\bigl(F_{\downarrow\xi} X,F_{\downarrow\xi^\pred} X \bigr),\footnote{By the isomorphism $\sO(\cX/_\sim) \cong \sO(\cX,\le)$ we have that $\big\downarrow \xi = \big \downarrow [\xi]$.}\quad [\xi]\in \cX/_\sim.
% \end{equation}
where $G_{[\xi]}C^\disc(X) := H\bigl(F_{\downarrow\xi} X,F_{\downarrow\xi^\pred} X \bigr)$\footnote{By the isomorphism $\sO(\ccX/_\sim) \cong \sO(\ccX,\le)$ we have that $\big\downarrow \xi = \big \downarrow [\xi]$.} 
if the latter is a free $R$-module, 
% $H\bigl(F_{\downarrow\xi} X/F_{\downarrow\xi^\pred} X \bigr)$ is free, 
or
else choose a free differential module $\bigl(G_{[\xi]}C^\disc(X),\dff\bigr)$ such that 
\begin{enumerate}
    \item [(i)] $H\bigl( G_{[\xi]}C^\disc,\dff\bigr)\cong H\bigl(F_{\downarrow\xi} X,F_{\downarrow\xi^\pred} X \bigr)$;
    \item [(ii)] $G_{[\xi]} C^\disc(X) \cong R^{s_\xi+2r_\xi}$, cf.\ Defn.\ \ref{connmat4567}(ii).
\end{enumerate}
By Theorem \ref{prrepExis} there exists
%From \cite[Thm.\ 4.8]{fran} there exists 
an $\sO(\cX)$-filtered differential 
\[
\cm^\disc\colon  C^\disc(X) \to  C^\disc(X)
\]
such that 
% the homology satisfies
% $H\bigl( C^\tess,\cm^\tess\bigr) \cong H(X)$. The pair $\bigl(C^\tess,\cm^\tess \bigr)$ is a principal representation for
% $\bfE^\tess$. In particular, 
$\bfE\bigl( C^\disc,\cm^\disc\bigr)\cong \bfE^\disc$ and the spectral matrix $\cm^\disc$ is unique up to isomorphism, cf.\ Sect.\ \ref{excisiveprop}.
If the homologies $H\bigl(F_{\downarrow\xi} X,F_{\downarrow\xi^\pred} X \bigr)$ are free (finitely generated), then 
  $\bigl(C^\disc,\cm^\disc\bigr)$ is a strict $\cX/_\sim$-graded differential module with $C^\disc(X)\\ = \bigoplus_{[\xi]\in \ccX/_\sim} 
    H\bigl(F_{\downarrow\xi} X,F_{\downarrow\xi^\pred} X \bigr)$.  
  % such that 
  % $H\bigl( C^\tess,\cm^\tess\bigr) \cong H(X)$, cf.\  App.\ \ref{gradedcellchain}.
%  In general choose free modules $G_{[\xi]}C^\tess$ such that $H\bigl( G_{[\xi]}C^\tess,\dff\bigr)\cong H\bigl(F_{\downarrow\xi} X/F_{\downarrow\xi^\pred} X \bigr)$, cf.\ \cite{fran}.
%  We are now in a position to describe \emph{tessellar homology}.
% % A discretization is a grading that is both $scrT$-consistent and surjective.
% %Since $\sO(\cX/_\sim) \cong \sO(\cX,\le)$ the inverse $\tess^{-1}\colon \sO(\cX) \to \scrC(X,\scrT)$ defines a filtering of closed sets.
% For $\tess$ we define the  {graded tessellar chain groups} by the external direct sum
%  is given by a connection matrix in Proposition \ref{C-conn} provided the modules $H\bigl(F_{\downarrow\xi} X,F_{\downarrow\xi^\pred} X \bigr)$ are free.
  We refer to $\bigl(C^\disc,\cm^\disc\bigr)$ as the 
  $\cX/_\sim$-graded \emph{tessellar differential module}, or
  $\cX/_\sim$-graded \emph{tessellar chain complex}\index{Tessellar differential module}\index{Tessellar chain complex} based on singular homology. The associated homology is called the \emph{tessellar homology} of $\disc\colon X\twoheadrightarrow \cX$ and is denoted by $H^\disc(X)\cong H(X)$. 
%   Since $C^\tess$ is $\cX/_\sim$-graded there exists a decomposition
%   $\bigl(C^\tess(X),\mPi\bigr)$ by considering the non-trivial homology and a grading $\mPi \to \cX/_\sim$.
%
  Following Appendix \ref{gradedvs}
  % \eqref{firstdecomp2} 
  the restricted module $G_{\cU\smin\cU'}C^\disc(X)$ is well-defined for
 every convex set $\cU\smin \cU'$, with $\cU,\cU'\in\sO(\cX,\le)$ and 
\begin{equation}
    \label{tess2}
    G_{\cU\smin\cU'}C^\disc(X) := \bigoplus_{[\xi]\subset \cU\smin\cU'}
    G_{[\xi]}C^\disc = \frac{F_\cU C^\disc}{F_{\cU'}C^\disc}.
    %H\bigl(F_{\downarrow\xi} X/F_{\downarrow\xi^\pred} X \bigr).
\end{equation}
The differential  is the restriction of $\cm^\tess$ to $G_{\cU\smin\cU'}C^\tess(X)$.
The associated homology $H^\disc(G_{\cU\smin\cU'}X) := H\bigl(G_{\cU\smin\cU'}C^\disc(X),\cm^\disc \bigr)$ is the tessellar homology of the locally closed set\footnote{A locally closed subset of $X$ is given as the intersection an open and a closed subset in $X$.} $G_{\cU\smin\cU'}X$. The latter also defines the relative tessellar homology $H^\disc\bigl( F_\cU X,F_{\cU'}X\bigr)$.\index{Tessellar homology!relative}
\begin{theorem}
\label{tess3}
Let $\disc\colon X\twoheadrightarrow \cX$ be a natural %regular  
discretization map. Then, the tessellar homology satisfies $H^\disc(X) \cong H(X)$.
In particular, for every convex set $\cU\smin\cU'\subset \cX$, $\cU,\cU'\in\sO(\cX,\le)$, we have that $H^\disc(G_{\cU\smin\cU'}X) \cong H(F_{\cU} X,F_{\cU'} X)$.
\end{theorem}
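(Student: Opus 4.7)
My plan is to reduce both assertions to direct applications of Franzosa's theorem on connection matrices \cite[Thm.\ 4.8]{fran}, supplemented by a standard identification of pointed quotient spaces.

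For the first assertion, the idea is to observe that the continuous discretization $\tess\colon X\twoheadrightarrow \cX/_\sim$ endows the singular chain complex $C(X)$ with a lattice-filtered differential module structure via $\cU \mapsto F_\cU C(X) := C(F_\cU X)$ for $\cU \in \sO(\cX,\le)$. This filtering is $\dff$-invariant and satisfies $F_{\cU\cap\cU'} C(X) = F_\cU C(X) \cap F_{\cU'} C(X)$, because singular chains restrict according to their supports. Franzosa's construction then yields the free $\cX/_\sim$-graded chain complex $(C^\tess,\cm^\tess)$ of \eqref{tess1}, with the defining property that for each $\cU\in\sO(\cX/_\sim)$ the truncated subcomplex $F_\cU C^\tess$ admits a homology isomorphism $H(F_\cU C^\tess,\cm^\tess) \cong H(F_\cU X)$. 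Taking $\cU = \cX/_\sim$ yields $H^\tess(X) \cong H(X)$.

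For the second assertion, the workhorse is the sub-quotient property inherent to the connection matrix: for $\alpha\subset\beta$ in $\sO(\cX/_\sim)$, the restricted module $G_{\beta\smin\alpha} C^\tess$ is closed under $\cm^\tess$ and its homology computes $H(F_\beta X/F_\alpha X)$. I would apply this to a general convex set $\cU\smin\cU'$ with $\cU,\cU'\in \sO(\cX,\le)$ by writing $\cU\smin\cU' = \cU\smin(\cU\cap\cU')$, noting that $\cU\cap\cU' \subset \cU$ and both lie in $\sO(\cX,\le)$. The sub-quotient property then gives $H^\tess(G_{\cU\smin\cU'} X) \cong H(F_\cU X/F_{\cU\cap\cU'} X)$, and the set-theoretic identity $F_\cU X \cap F_{\cU'} X = F_{\cU\cap\cU'} X$ produces a homeomorphism of pointed spaces $F_\cU X/F_{\cU'} X \cong F_\cU X/F_{\cU\cap\cU'} X$, completing the argument.

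The hard part will be extracting the sub-quotient property cleanly from the statement of Franzosa's theorem, which is usually formulated in terms of compatibility with the exact triangles \eqref{exact3abc} rather than a direct sub-quotient identification. I would establish it by induction over the poset $\cX/_\sim$: refine $\sO(\cX/_\sim)$ by a maximal chain of principal down-sets, and at each step apply the five lemma to the morphism of exact triangles induced by the inclusions $F_\alpha X \hookrightarrow F_\beta X$ alongside the first-assertion isomorphisms $H(F_\alpha C^\tess) \cong H(F_\alpha X)$. This yields inductively that $H(G_{\beta\smin\alpha} C^\tess) \cong H(F_\beta X/F_\alpha X)$. Working over a principal ideal domain and, when necessary, replacing the chain groups $G_{[\xi]}C^\tess$ by free resolutions of $H(F_{\downarrow\xi} X/F_{\downarrow\xi^\pred} X)$ eliminates any torsion obstruction to running the induction.
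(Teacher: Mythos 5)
Your proposal is correct and follows essentially the same route as the paper: the argument reduces to Franzosa's \cite[Thm.\ 4.8]{fran}, with the freeness of the chain groups $G_{[\xi]}C^\tess$ (built in by construction, or secured by passing to free resolutions over a PID) as the operative hypothesis. The only divergence is your five-lemma induction to recover the sub-quotient isomorphisms $H(G_{\beta\smin\alpha}C^\tess)\cong H(F_\beta X/F_\alpha X)$; this is sound but not needed, since Franzosa's theorem already delivers the full isomorphism of homology braids, of which the sub-quotient identifications (and the excisive reduction to nested down-sets via $\cU\smin\cU'=\cU\smin(\cU\cap\cU')$) are immediate consequences.
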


\begin{proof}
By definition the modules $G_{[\xi]}C^\disc$ 
%$H\bigl(F_{\downarrow\xi} X/F_{\downarrow\xi^\pred} X \bigr)$ 
are free. By  \cite[Thm.\ 4.8]{fran} we have a differential $\cm^\disc$ which retrieves  the homologies.
\end{proof}

% Via tessellar homology we can compute the homologies $H(F_{\cU'} X/F_\cU X)$ by considering the tessellar chain complex $\bigl(G_{\cU\smin\cU'}C^\tess(X),\cm^\tess\bigr)$ which is $\cX/_\sim$-graded.
The discretization map $\disc\colon  X\twoheadrightarrow \cX$ discretizes $X$, while the construction of $\bigl(C^\disc(X),\cm^\disc\bigr)$ %the Conley complex 
discretizes the algebraic topology of $X$.  
More details on tessellar homology are discussed in \cite{SpV2}.

\subsection{The tessellar  complex}\index{Complex!tessellar}\index{Tessellar complex}
\label{tcc}
It it sometimes useful 
to describe the tessellar homology in combinatorial terms. To do some we define a slight generalization of a (Lefschetz) complex,\index{Lefschetz  complex}\index{Complex!Lefschetz} cf.\ \cite{hms}.

\begin{definition}
    \label{defnTC}
    A \emph{complex}\index{Complex} is a triple $(\ccT,\le,\kappa)$ where $(\ccT,\le)$ is a finite pre-order and  $\kappa\colon\ccT\times \ccT\to R$ is a function satisfying
    \begin{enumerate}
        \item [(i)] (upper-triangular) $\kappa(\vartheta,\vartheta') \neq 0$ implies $\vartheta\le \vartheta'$, $\vartheta\neq\vartheta'$; %check this
        \item [(ii)] (boundary) $\sum_{\vartheta'} \kappa(\vartheta,\vartheta')\kappa(\vartheta',\vartheta'') =0$.
    \end{enumerate}
    The function $\kappa$ is called the \emph{incidence function}\index{Incidence function} and its values in the ring $R$  are called the \emph{incidence numbers}.\index{Incidence number}
\end{definition}

For a cell complex we can define (free) canonical differential $R$-module.
Define the free $R$-module over $\ccT$:
\[
C(\ccT~) := \bigoplus_{\vartheta\in \cccT} R \langle \vartheta\rangle\footnote{We use the notation $\langle\vartheta\rangle$ to express $\vartheta$ as basis for $C(\ccfT~)$.} 
\]
with differential
\[
\dff_{\cccT~} \langle\vartheta \rangle:= \sum_{\vartheta'\in \cccT} \kappa(\vartheta,\vartheta')\langle\vartheta'\rangle, %\quad v= v_{\vartheta'} \langle \vartheta'\rangle \in C(\vartheta'),
\]
which makes $\bigl(C(\ccT~),\dff_{\cccT~} \bigr)$ a $\ccT\,/_\sim$-graded differential $R$-module. For every convex set $\ccC\subset \ccT~$ the restriction of $C(\ccT~)$ to $C(\ccC)$ and differential accordingly defines a \emph{subcomplex}\index{Cell complex!subcomplex of a} $\bigl( C(\ccC),\dff_{\cccC}\bigr)$.
The associated Cartan-Eilenberg system is excisive and is denoted by $\bfE\bigl( C(\ccT~),\dff_{\cccT~}\bigr)$ with $E$-terms given by $E^\beta_\alpha:= H\bigl( C(\ccC),\dff_{\cccC}\bigr)$, with $\beta\smin\alpha =\ccC$.
%
%The next step is to link cell-complexes to tessellar differential modules.
Consider the diagram
\begin{equation}
    \label{TCdiag}
\begin{tikzcd}
X \arrow[r, "\disc", two heads] & \cX \arrow[r, "\pi", two heads] & \cX/_\sim                           \\
                            &                             & \ccT \arrow[u, "\varpi"', two heads]
\end{tikzcd}
\end{equation}
which implies that the equivalence classes in $\ccT~$ coincide with the equivalence classes in $\cX$, i.e. $\ccT\,/_\sim \cong \cX/_\sim$.
\begin{definition}
    \label{TCprin}
    A  triple $(\ccT,\le,\kappa)$ is \emph{tessellar  complex}\index{Complex!tessellar} for a natural discretization map $\disc\colon X\twoheadrightarrow \cX$ if
    % \begin{enumerate}
    %     \item [(i)] there exists an order-preserving map $\varpi\colon \ccT~\to \cX/_\sim$;
    %     \item [(ii)] $\bfE\bigl( C(\ccT~),\dff_{\cccT~}\bigr)\cong \bfE^\tess$.
    % \end{enumerate}
% A cell complex $(\ccT~,\le,\kappa)$ is a \emph{principal tessellar cell complex}\index{Cell complex!principal tessellar} 
if
\[
\bigl( C(\ccT~),\dff_{\cccT~}\bigr) \cong \bigl(C^\disc,\dff^\disc\bigr),
\]
as $\sO(\cX)$-filtered chain isomorphic $\cX/_\sim$-graded differential modules. 
% In this case we write $\bigl(C^\TC,\dff^\TC\bigr) := \bigl(C(\ccT~),\dff_{\cccT~}\bigr)$.
\end{definition}
% \begin{definition}
%     \label{TCprin}
%     A  triple $(\ccT^\disc,\le,\kappa)$ is \emph{tessellar cell complex}\index{Cell complex!tessellar} for a natural discretization map $\disc\colon X\twoheadrightarrow \cX$ if
%     \begin{enumerate}
%         \item [(i)] there exists an order-preserving map $\varpi\colon \ccT~\to \cX/_\sim$;
%         \item [(ii)] $\bfE\bigl( C(\ccT~),\dff_{\cccT~}\bigr)\cong \bfE^\tess$.
%     \end{enumerate}
% A cell complex $(\ccT~,\le,\kappa)$ is a \emph{principal tessellar cell complex}\index{Cell complex!principal tessellar} if
% \[
% \bigl( C(\ccT~),\dff_{\cccT~}\bigr) \cong \bigl(C^\tess,\dff^\tess\bigr),
% \]
% as $\sO(\cX)$-filtered chain isomorphic $\cX/_\sim$-graded differential modules. In this case we write $\bigl(C^\TC,\dff^\TC\bigr) := \bigl(C(\ccT~),\dff_{\cccT~}\bigr)$.
% \end{definition}
A  tessellar  complex yields a differential matrix with $R$-coefficients via the incidence function $\kappa$. This justifies the terminology spectral matrix.
For a convex set $\ccC\subset \cX$ we use the notation
%$H^\disc(\ccC):= H\bigl( C(\ccT~),\dff_{\cccT~}\bigr)$ and
\begin{equation}
    \label{notfordischom}
    H^\disc(G_{\cU\smin\cU'}X) \cong  H\bigl( C(\ccC~),\dff_{\cccC~}\bigr) \cong H(F_\cU X,F_{\cU'}X),
\end{equation}
with $\ccC=\cU\smin \cU'$ is independent of the pair $\cU'\subset \cU$ and where
$\bigl(C(\ccC),\dff_{\cccC}\bigr)$ is restriction of $\bigl( C(\ccT~),\dff_{\cccT~}\bigr)$ to $\ccC$.
In Section \ref{cellhom12} we discuss the special case of a CW-decomposition. 

\begin{remark}
Given a natural discretization map $\disc\colon X\twoheadrightarrow \cX$ one
     can always construct a map $\disc\colon X\to \cX$ (not necessarily surjective) such that there exists  a  tessellar  complex $(\cX,\le,\kappa)$. Such a discretrization map is a \emph{representable} natural discretization.\index{Natural discretization map!representable}\index{Discretization map!representable}
\end{remark}

% \begin{remark}
% \label{dimgrading}
% Since the tessellar homology is defined via the singular chain complex there is a natural $\Z$-grading. Via the singular homology we write $G_{\cU\smin\cU'}C_q^\tess(X) \cong \bigoplus_{[\xi]\subset \cU\smin\cU'} H_q\bigl(F_{\downarrow\xi} X/F_{\downarrow\xi^\pred} X \bigr)$ which provide the $\Z$-grading on tessellar homology, cf.\ Sect.\ \ref{dimensiongrading}.
% \end{remark}

\subsection{Linear discretization, split grading and bi-graded Betti numbers}
%\begin{remark}
\label{dimensiongrading}
% In this section we disregarded
%  the dimension grading of the singular homology functor.
In certain cases a natural discretization map $\disc\colon X\twoheadrightarrow \cX$ allows another scalar discretization map via an order-preserving map $\ind\colon\cX\to \Z$ (not necessarily surjective). By construction $\ind$ factors through $\cX/_\sim$:
\[
\begin{tikzcd}
X \arrow[r, "\disc", two heads] \arrow[rrr, "\skel", bend right, shift right=1] %\arrow[rr, "\tess", two heads, bend left, shift left=1]
& \cX \arrow[r, two heads] \arrow[r] \arrow[r, "\pi"] \arrow[rr, "\ind", bend left, shift left=1] & \cX/_\sim \arrow[r] & \Z
\end{tikzcd}
\]
The map $\ind$ maps to a linear order and the composition $\skel$ is called a \emph{linear discretization}.\index{Linear discretization map}\index{Discretization map!linear}
The discretization $\ind$ 
is a coarsening of $\disc$ and  is therefore  a natural discretization map, which 
makes $\bigl(C^\disc,\cm^\disc)$ a $\Z$-graded differential module.
Since $\ind$ is order preserving the tessellar homology of $G_p X :=\skel^{-1} p \subset X$ is well-defined. For the latter
we consider a standard spectral sequence. Define, using \eqref{tess2},
\[
G_p C^\disc(X) = \bigoplus_{[\xi]\subset \ind^{-1} p} G_{[\xi]} C^\disc(X),
\]
which gives the $\Z$-grading $C^\disc(X) = \bigoplus_{p\in \Z} G_p C^\tess(X)$.
As explained in Section \ref{excisiveprop} we obtain the short exact sequences
\[
\begin{tikzcd}
0 \arrow[r] & F_{\downarrow (p-1)} C^\disc \arrow[r, "i_{p-1}"] & F_{\downarrow p}C^\disc \arrow[r, "j_p"] & G_{p}C^\disc \arrow[r] & 0
\end{tikzcd}
\]
From the tessellar boundary operator $\cm^\disc$ we compute the homology
which provide the zeroth and first pages $\scrE^0=\bigoplus_p \scrE^0_p$ and 
$\scrE^1=\bigoplus_p \scrE^1_p$ with
% define $E^0=\bigoplus_p E^0_p$ and 
% $E^1=\bigoplus_p E^1_p$ with
\[
\scrE^0_p = G_pC^\disc(X)\quad \text{and}\quad 
\scrE^1_p =
H\bigl(G_pC^\disc \bigr) \cong H^\disc\bigl(G_p X\bigr).
\]
This yields the exact triangles, using $H^\disc(F_{\downarrow p} X) = H\bigl(F_{\downarrow p}C^\disc\bigr)$,

\begin{equation}
     \label{dimensiongrading12}
\begin{tikzcd}[column sep=tiny]
H^\disc(F_{\downarrow (p-1)}X) \arrow[rr, "i_{p-1}"] &                   & H^\disc(F_{\downarrow p}X) \arrow[rr, "i_p"] \arrow[ld, "j_p"] &                                                       & H^\disc(F_{\downarrow (p+1)}X) \arrow[ld, "j_{p+1}"] \\
                  & \scrE^1_{p} \arrow[lu, "k_p"] &                                   & \scrE^1_{p+1} \arrow[lu, "k_{p+1}"] \arrow[ll, "\dff^1_{p+1}"', dotted, bend left] &                  
\end{tikzcd}
 \end{equation}
where
 $\dff^1_p = j_{p-1}k_p$  are the connecting homomorphisms computed from $\cm^\disc$.
 %and $F_{\downarrow p} C^\disc = H^\disc$. 
Recursively define $\scrE^{r+1} = H(\scrE^r,\dff^r)$ where $\dff^r_p\colon \scrE^r_p\to \scrE^r_{p-r}$
 with $\dff^r = ji^{1-r}k$.
% $\dff_n^r\colon E^r_n\to E^r_{n-r}$ with $E^0_n = G_nC^\disc$ and $E^1_n =
% H\bigl(G_nC^\disc \bigr) \cong H^\disc(G_n\cX)$. 
Since $\ind$ defines a finite filtering on $C^\disc(X)$ %$\cX$ is finite and are finite dimensional vector space in most cases we have that 
the spectral sequence converges and
$\scrE^\infty_p \cong G_pH(C^\disc) = G_pH^\disc(X)$. If we set $\beta^\disc_p= \rank G_pH^\disc(X)$, called the \emph{Betti numbers},\index{Betti number} then
\[
\sum_{p\in \Z} \beta^\disc_p = \rank H^\disc(X).
\]
In particular, when $R=\K$, then $\ind$ yields a grading on the tessellar homology. For ring coefficients this is more complicated and it is not true in general that $\Gr H^\disc(X) =\bigoplus_{p\in \sP} G_pH^\disc(X)$ is isomorphic to $H^\disc(X)$, cf.\ App.\ \ref{gradedvs}. However, if the differential satisfies $\dff^\disc G_pC^\disc\subset G_{p-1}C^\disc$, then 
%$\dff^\disc = \bigoplus_{p\in \sP} \dff_p^\disc$, then 
the tessellar differential module $\bigl(C^\disc,\cm^\disc)$, with 
%$\dff^\disc_p := \dff^\disc|_{G_pC^\disc}$,
$\dff^\disc_p := \dff^\disc(p,p-1)$,
 \footnote{In Appendix \ref{gradedvs} the entries of $\dff$ are explained.}
is a chain complex $\bigl(G_pC^\disc,\cm_p^\disc)$. 
We write $\dff^\disc = \bigoplus_{p\in \sP} \dff_p^\disc$. In this case the linear discretization $\ind$ is said to be \emph{split grading}\index{Split grading}\index{Discretization map!split grading} for the tessellar differential module and $\ind$ induces a $\Z$-grading on the associated tessellar homology. 
The $\Z$-grading given by $\ind$ can be useful is some cases e.g. cellular homology and the treatment of tessellar homology for Morse pre-orders, cf.\ Sect.'s \ref{cellhom12} -- \ref{doublegr}.

The advantage of using Betti numbers is that one can treat the $\Z$-grading given by $\ind$ as a grading on the Betti numbers.
We start with using the $\Z$-grading on singular homology: $H^\disc(X) = \bigoplus_{q\in \Z} H_q^\disc(X)$. Define the \emph{bi-graded Betti numbers}\index{Betti number!bi-graded}\index{Bi-graded Betti number} as: $\beta^\disc_{p,q} =\rank G_pH^\disc_q(X)$.
The  double \emph{tessellar Poincar\'e polynomial}\index{Poincar\'e polynomial!tessellar} of $X$ is defined as 
\begin{equation}
    \label{poin1}
    P^\disc_{\lambda,\mu}(X) = \sum_{p,q\in \Z}\beta^\disc_{p,q}\lambda^p\mu^q.
\end{equation}
% with
% $P_{\lambda,\mu}^\disc\bigl([\xi] \bigr):= \sum_{p,q\in \Z} \bigl(\rk H^\disc_{p,q}([\xi])\bigr) \lambda^p \mu^q$.
%
% \begin{equation}
%     \label{poin1}
%     P_{\lambda,\mu}(C^\disc) = \sum_{[\xi]\subset \cX}\sum_{p,q\in \Z} \bigl(\rk H^\disc_{p,q}([\xi])\bigr) \lambda^p \mu^q.
% \end{equation}
% \[
% P_{\lambda,\mu}(C^\disc) = \sum_{p,q\in \Z} (\rk H^\disc_{p,q}(G_p\cX)) \lambda^p \mu^q.
% \]
%\end{remark}
%
% \begin{remark}
% \label{morserel}
The latter satisfies a variation on the standard \emph{Morse relations}.\index{Morse relations} Define $\beta^\disc_{p,q}(\xi) = \rank G_pH_q^\disc(G_{[\xi]}X)$ and associated
Poincar\'e polynomial
\[
P_{\lambda,\mu}^\disc\bigl(G_{[\xi]}X \bigr) = \sum_{p,q\in \Z} \beta^\disc_{p,q}(\xi)\lambda^p\mu^q.
\]
The singular homology grading yields the splitting $\dff_p^r=
\bigoplus_{q\in \Z} \dff_{p,q}^r$.
\begin{theorem}[Bi-graded Morse relations]\index{Morse relations!bi-graded}\index{Bi-graded Morse relations}
\label{morserel1}
Let $X\xtwoheadrightarrow{\disc}\cX\xrightarrow{\ind} \Z$ be a natural linear discretization. Then,
%\vspace{-1ex}
\begin{equation}
    \label{morsegenrel}
    \sum_{[\xi]\subset \ccX}
    P_{\lambda,\mu}^\disc\bigl(G_{[\xi]} X \bigr) = P_{\lambda,\mu}^\disc(X) + \sum_{r=1}^\infty (1+\lambda^r \mu) Q^r_{\lambda,\mu},
    % P_{\lambda,\mu}(C^\disc) = P_{\lambda,\mu}^\disc(\cX) + \sum_{r=1}^\infty (1+\lambda^r \mu) Q^r_{\lambda,\mu},
\end{equation}
%\vspace{-1ex}
where $Q^r_{\lambda,\mu} = \sum_{p,q\in \Z} (\rk \image \dff^r_{p+r,q+1}) \lambda^p \mu^q\ge 0$. The sum over $r$ is finite.
\end{theorem}
\begin{proof}
In terms of the spectral sequences we have that $\scrE^0=\bigoplus_{p,q\in \Z}\scrE^0_{p,q}$ and $\scrE^1=\bigoplus_{p,q\in \Z}\scrE^1_{p,q}$ are isomorphic.
For the spectral sequence we have the short exact sequences
%\vspace{-1ex}
\[
0\longrightarrow \ker \dff^r_{p,q} \longrightarrow \scrE^r_{p,q} \longrightarrow \image \dff^r_{p,q} \longrightarrow 0,\quad\text{and}
\]
%\vspace{-3ex}
\[
0\longrightarrow \image \dff^r_{p+r,q+1} \longrightarrow \ker \dff^r_{p,q} \longrightarrow \scrE^{r+1}_{p,q} \longrightarrow 0.
\]
%\vspace{-1ex}
the implies the relation
\[
\rk \scrE^r_{p,q} = \rk \scrE^{r+1}_{p,q} + \rk \image \dff^r_{p,q} + \rk \image \dff^r_{p+r,q+1}.
\]
% \[
% \rk E^r_{p,n} = \rk E^{r+1}_{p,n} + \rk \image \dff^r_{p,n} + \rk \ker \dff^r_{p+r,n+1}.
% \]
Define the double Poincar\'e polynomials $P_{\lambda,\mu}(\scrE^r) := \sum_{p,q\in \Z} (\rk \scrE^r_{p,q}) \lambda^p \mu^q$ and $Q^r_{\lambda,\mu} = \sum_{p,q\in \Z} (\rk \image \dff^r_{p+r,q+1}) \lambda^p \mu^q$. Then, the Poincar\'e polynomials satisfy
 $P_{\lambda,\mu}(\scrE^r) = P_{\lambda,\mu}(\scrE^{r+1}) + (1+\lambda^r \mu) Q^r_{\lambda,\mu}$.
%  where
%  $Q^r_{\lambda,\mu} = \sum_{p,n\in \Z} (\rk \image \dff^r_{p+r,n+1}) \lambda^p s^n$. 
%$Q^r_{\lambda,\mu} = \sum_{p,n\in \Z} (\rk \image \dff^r_{p,n}) \lambda^p s^n$. 
Iterating the above identities for $P_{\lambda,\mu}(\scrE^r)$ and using the fact that the spectral sequence converges yields
Equation \eqref{morsegenrel}.
% \begin{equation}
%     \label{morsegenrel}
%     P_{\lambda,\mu}(E^1) = P_{\lambda,\mu}^\disc(\cX) + Q_{\lambda,\mu}
%     + s Q^\dagger_{\lambda,\mu},
% \end{equation}
% where the non-negative $Q$-terms are $Q_{\lambda,\mu} = \sum_r Q^r_{\lambda,\mu}$, $Q^\dagger_{\lambda,\mu} = \sum_r \lambda^r Q^r_{\lambda,\mu}$ and
% $P_{\lambda,\mu}^\disc(\cX) = \sum_{p,n\in \Z} (\rk H^\disc_{p,n}(\cX)) \lambda^p s^n$.
%\end{remark}
\end{proof}

Using  bi-graded  tessellar Betti numbers will prove to be very useful in setting up a more refined  theory of spectral matrices. In Sect.\ \ref{lapgrading} we exploit this idea  in the setting of parabolic flows.
This approach is reminiscent of the detailed connection matrix in \cite{Bart1}.
%A more detailed relation between bi-grading and connection %matrices is subject of further study, cf.\ \cite{SpV2}.
If we again ignore the natural grading of singular homology the Morse relations will be
\begin{equation}
    \label{morsegenrel12}
    \sum_{[\xi]\subset \ccX}
    P_{\lambda}^\disc\bigl(G_{[\xi]}X \bigr) = P_{\lambda}^\disc(X) + \sum_{r=1}^\infty (1+\lambda^r) Q^r_{\lambda},
\end{equation}
which is obtained by setting $\mu=1$. Note that the property for $\ind$ to be split grading is that $Q^r_\lambda=0$ for $r\ge 2$. The maximal value for $r$ in \eqref{morsegenrel12} can be utilized to further coarsen $\ind$ in order to obtain a linear discretization that is split grading.
% \begin{remark}
% \label{refineMR}
% Since $H^\disc_{p,q}(\cS)$, $\cS\in \sSC$, is well-defined we can refine the Poincar\'e polynomial as $P_{\lambda,\mu}(C^\disc) = \sum_{\cS\in \sSC}\sum_{p,q\in \Z} \bigl(\rk H^\disc_{p,q}(\cS)\bigr) \lambda^p \mu^q$.
% \end{remark}

\begin{remark}
\label{finergrad}
A similar procedure to bi-graded tessellar Betti numbers can be followed for the $\cX/_\sim$-grading by using spectral systems, cf.\ \cite{Matschke}.
\end{remark}
%maybe work this out later

\begin{remark}
    We do not refer to  $C^\disc(X) = \bigoplus_{p\in \Z} G_p C^\disc(X)$ as the skeletal differential module since it is a coarsening of the tessellar differential module. The differential is obtained by coarsening the information. This issue comes up again in the next section.
\end{remark}

\section{Cellular homology}
\label{cellhom12}
Let $X$ be a finite CW-complex, i.e. a compact Hausdorff space that admits a CW-decomposition  map 
$\cell\colon X\twoheadrightarrow \cX$, where $(\cX,\le)$ is the poset of CW-cells with the face partial order.  
%By Lemma~\ref{lem:faceposet} $(\cX,\leq)$ is a poset, thus $\cell$ is a $(\cX,\leq)$-discretization of $X$. 
The \emph{cellular differential module}, or \emph{cellular chain complex},\index{Differential module!cellular}\index{Cellular differential module}\index{Chain complex!cellular}\index{Cellular chain complex} denoted $C^\cell$, is constructed according to the theory in Section~\ref{tesshom}. %\ref{sec:cmt}.  
This coincides with the classical construction, as we outline below. 

%\subsection{The cellular chain complex}\label{sec:cmt:cellcomplex}
From the definition of CW-decomposition we have the composition
\[
\begin{tikzcd}[column sep=large]
X \arrow[r, two heads, "\cell"] \arrow[rr, "\skel", bend right] & \cX \arrow[r] \arrow[r] \arrow[r, "\dim"] & \N
\end{tikzcd}
%X \xrightarrow{\cell} \cX \xrightarrow{\dim} \N
%\begin{diagram}
%\node{X}\arrow{e,l,A}{\cell}\node{\cX}\arrow{e,l}{\dim}\node{\N,}
%\end{diagram}
\]
which is 
%denoted by $\skel\colon X\to\N$ and is 
a coarsening of the discretization $\cell$. Since $\dim$ is order-preserving and since $\cell$ is a natural discretization, the composite discretization $\skel$ is natural and linear and thus continuous. Therefore $\skel$ defines a $\scrT$-consistent (not surjective) linear discretization of $X$. In the traditional set-up the cellular homology of $X$ is defined in terms of the natural discretization map $\skel$.\index{Discretization map!natural}
As before define the filtering $\big\downarrow p\mapsto \skel^{-1}\big\downarrow p =: F_{\downarrow p} X$
% Since $\skel$ factors through $\cell$ it is also a regular discretization.
% and therefore
%$\bfE^\skel := \{E^\skel_{\beta\smin\alpha}\}$ with $E^\skel_{\beta\smin\alpha} := H\bigl(F_{\beta} X, F_{\alpha} X\bigr)$, $\alpha,\beta\in \sO(\N)$, is  excisive exact couple system over $\sO(\N)$.
with skeletal chain complex $C^\skel(X) := \bigoplus_{p\in \N} H\bigl(F_{\downarrow p} X, F_{\downarrow (p-1)} X\bigr)$.\footnote{By Remark \ref{BMequiv} we also have $H\bigl(F_{\downarrow p} X, F_{\downarrow (p-1)} X\bigr) \cong H^\BM(G_p X)$.} 
% Since $\cell\colon X\twoheadrightarrow\cX$ is a finite CW-decomposition and $\skel$ factors through $\cell$, it is also a natural discretization and therefore
%  $H\bigl(F_{\downarrow q} X/ F_{\downarrow (q-1)} X\bigr) \cong H\bigl(F_{\downarrow q} X, F_{\downarrow (q-1)} X\bigr)$, cf.\ Rmk.\ \ref{relhom12}.
The sets $G_p X =F_{\downarrow p} X\smin F_{\downarrow (p-1)} X$ are a disjoint union of $p$-cells $|\xi|$ in $X$ and the homology is given by $H\bigl(F_{\downarrow p} X, F_{\downarrow (p-1)} X\bigr) \cong \bigoplus_{\xi \in G_p\cccX} R \langle \xi \rangle$, where $G_p\cX = \dim^{-1} p$ and $R$ is a principal ideal domain. %, where $\K$ is a field. 
Since the homologies $H\bigl(F_{\downarrow p} X, F_{\downarrow (p-1)} X\bigr)$ are 
free Theorem \ref{prrepExis} yields
%vector spaces 
%Proposition \ref{C-conn} 
 a differential (strict spectral matrix) $\cm^\skel\colon C^\skel(X)\to C^\skel(X)$ such that $H^\skel(X) \cong H(X)$. %, cf.\ \cite[Thm.\ 4.8]{fran}.
 We will explain this construction now in a more detailed way.

\begin{remark}
The above construction is the traditional way of constructing cellular homology for a finite CW-complex $X$. The $\N$-grading is special in the sense that if $H$ is the singular homology functor
and $\dim$ plays the role in $\ind$ in Sect.\ \ref{dimensiongrading},
then $G_p H_q(X)\neq 0$  %$H_k(G_nX)\neq 0$ 
if and only if $p=q$. Since all components are homeomorphic the skeletal chain complex is given by $C^\skel_q(X) =\bigoplus_{\dim(\xi) = q} R \langle \xi \rangle$ with boundary operator $\cm^\skel_q$ with $\dff^\skel= \bigoplus \dff^\skel_q$. The linear discretization $\skel$ is split grading.\index{Discretization map!split grading}
Even though $\skel$ is natural this condition is not needed since the order is linear.
\end{remark}

A more detailed way to treat the cellular complex is to use the face partial order $(\cX,\le)$. 
% By the previous the discretization $\cell\colon X \to (\cX,\le)$ defines a continuous discretization.
%Denote the associated exact couple system by $\bfE^\cell = \bfE^\cell\bigl(C(X),\dff\bigr)$.
As in the more general tessellar case we define the cellular
chain complex\footnote{Even though we do not utilize the $\Z$-grading of singular homology we refer to $C^\cell$ as chain complex as opposed to differential module since the construction is based on the singular chain complex.} by
\[
 C^{\cell}(X) = \bigoplus_{\xi \in \cccX} G_\xi C^\cell(X),\quad
G_\xi C^\cell(X) = H\bigl(F_{\downarrow \xi }X, F_{\downarrow \xi^{\pred}}X \bigr)\cong H^\BM(G_\xi X),\footnote{Here $\big\downarrow\xi^\pred:= \bigl(\big\downarrow\xi\bigr)^\pred$ denotes the immediate predecessor of $\big\downarrow\xi$.}
\]
where $G_\xi X=|\xi|$ and its Borel-Moore homology\index{Borel-Moore homology} of $G_\xi X$ is given by $H^\BM(G_\xi X)\cong R$.
The module $C^\cell$ is a special case of the tessellar module for the discretization map $\cell$. The  filtering 
$\cU \mapsto F_\cU X$, $\cU \in \sO(\cX,\le)$, defined by $\cell^{-1}$, consists of good pairs and yields a chain generated, excisive Cartan-Eilenberg system $\bfE^\cell$ as outlined in Sections \ref{CanCETop} and \ref{tesshom}. Since $G_\xi C^\cell(X)\cong R$  the system $\bfE^\cell$ is finitely generated.
We follow the procedure of Section \ref{tesshom}.
Theorem \ref{prrepExis} yields the existence of (strict)\index{Differential!strict}
\[
\cm^\cell\colon C^\cell(X)\to C^\cell(X),
\]
 such that $\bfE\bigl(C^\cell,\dff^\cell\bigr)\cong \bfE^\cell$. In particular,   $H^\cell(X) \cong H(X)$ and 
\begin{equation}
    \label{BMhom1}
  H^\cell(G_{\cU\smin \cU'}X) 
  % H\bigl(G_{\cU\smin\cU'}C^\disc(X),\cm^\disc \bigr)
    \cong H(F_{\cU} X,F_{\cU'} X) \cong H^{\BM}(G_{\cU\smin \cU'}X),\footnote{For latter isomorphism on Borel-Moore homology, cf.\ \cite{goresky,Bredon}.}
\end{equation}
for all $\cU,\cU'\in \sO(\cX,\le)$,  $\cU'\subset \cU$.
The pair $\bigl(C^\cell,\dff^\cell\bigr)$ is the \emph{cellular chain complex}.\index{Chain complex!cellular}\index{Cellular chain complex}
For the differential $\cm^{\cell}$  we express the strict upper-triangular structure by
$\dffbf^\cell(\xi,\xi')\colon G_{\xi'} C^\cell\to G_\xi C^\cell$ and 
%is $(\cX,\leq)$-filtered, which means it 
  $\dffbf^{\cell}(\xi,\xi')\neq 0$ implies that $\xi < \xi'$.
  The following lemma follows from choice of singular homology in the definition of $\bfE^\cell$.
  \begin{lemma}
      \label{cover12}
      If $\dffbf^\cell(\xi,\xi')\neq 0$, then $\xi'$ covers $\xi$.\footnote{In a finite partial order $\xi$ \emph{covers} $\xi$ if $\xi < \xi'$ and $[\xi,\xi']=\{\xi,\xi'\}$. The pair $\{\xi,\xi'\}$ is a\index{Covering pair}\index{Covering pair!covers} \emph{covering pair}.}
  \end{lemma}
  \begin{proof}
  For the singular chain complex the connecting homomorphism $k_q$ in
  \[
  \begin{tikzcd}[column sep=small]
{} \arrow[r, dashed] & H_q(F_{\downarrow\xi^\pred} X) \arrow[r, "i_q"]    & H_q(F_{\downarrow\xi} X) \arrow[r, "j_q"] 
                  & H_q(F_{\downarrow\xi} X,F_{\downarrow\xi^\pred} X) \arrow[r, "k_q"] &          H_{q-1}(F_{\downarrow\xi^\pred} X)  \arrow[r, dashed] & {}  
\end{tikzcd}
  \]
  is degree $-1$ which implies that $\dff^\cell$ is also degree $-1$ with respect to the $\Z$-grading of singular homology, i.e.
  $\dffbf^\cell(\xi,\xi') = \bigoplus \dffbf^\cell_q(\xi,\xi')$ and $\dffbf^\cell_q(\xi,\xi')\colon G_{\xi'} C^\cell_q\to G_\xi C^\cell_{q-1}$, cf.\ \cite{fran}.
  For the cellular chain groups it holds that
  \[
G_\xi C_q^\cell(X)
 =
H^{}_q \bigl(F_{\downarrow \xi} X, F_{\downarrow \xi^\pred} X \bigr) \cong
\begin{cases}
 R    & \text{ if } q=\dim \xi; \\
  0    & \text{ if } q\neq\dim \xi,
\end{cases}
\]
Therefore,  $\dffbf^{\cell}_q(\xi,\xi')=0$ unless $\dim \xi'=q$ and $\dim \xi=q-1$.
%, i.e., $\dim \xi' =\dim \xi+1$. 
  \end{proof}

In contrast to the general construction of Section \ref{tesshom}, %~\ref{sec:cmt},  
all of the nonzero entries $\dffbf^\cell(\xi,\xi')$ are determined by 
the octahedral diagrams, i.e. the rolled out middle triangle in \eqref{exact4} (homology braid). 
%cf.\  (homology braids) in \cite[Eqn.\ (1.2)]{fran}
%Diagram \eqref{exact4}.
Indeed, for a covering pair $\xi < \xi'$, with $\dim \xi'=q$, we choose a triple of down-sets (closed sets) %$\alpha\subset\beta\subset\gamma$ as
\[
\big\downarrow \xi'^\pred \smin \xi\subset \big\downarrow \xi'^\pred \subset \big\downarrow\xi', 
\]
which yields $\cm^\cell_q(\xi,\xi')\colon G_{\xi'}C_q^{\cell} \to  G_\xi C_{q-1}^{\cell}$ given by the 
composition\footnote{In Franzosa's connection matrix theory these entries are called \emph{flow defined}.}
\begin{equation}\label{cellbound}
\begin{tikzcd}[column sep=small]
H^{}_q\bigl(F_{\downarrow \xi'} X, F_{\downarrow \xi'^\pred }X\bigr) \arrow[r, "k_q"] & {H^{}_{q-1}\bigl(F_{\downarrow \xi'^\pred}X\bigr)} \arrow[r] & H^{}_{q-1}\bigl(F_{\downarrow \xi'^\pred}X,F_{\downarrow \xi'^\pred\smin {\xi} }X\bigr).
\end{tikzcd}
\end{equation}
where $G_{\xi'}C_q^{\cell} = H^{}_q\bigl(F_{\downarrow \xi'} X, F_{\downarrow \xi'^\pred }X\bigr)$ and $ G_\xi C_{q-1}^{\cell} = H^{}_{q-1}\bigl(F_{\downarrow \xi'^\pred}X,F_{\downarrow \xi'^\pred\smin {\xi} }X\bigr)$.
By the excisive property 
%of $\bfE^\cell$ 
this construction is independent of the triple $\cU\subset\cU'\subset\cU''$ with $\xi=\cU'\smin \cU$ and $\xi'=\cU''\smin \cU'$.

 Per Section \ref{dimensiongrading}  %the spectral sequence in \eqref{dimensiongrading} 
we  consider the order-preserving map $\dim \colon \cX\to \N$ which plays the role of $\ind$ in order to obtain an $\N$-grading of $H^\cell$. For the composition $\skel$ the sets $G_p X:= \skel^{-1} p$ are convex. 
By Lemma \ref{cover12} the differential $\dff^\cell$ acts on $G_p C^\cell(X)$ as
\[
\dff^\cell\colon G_p C^\cell_p(X) \to G_p C^\cell_{p-1}(X).
\]
If we write the restriction to $G_pC^\cell(X)$ as $\dff^\cell_p$ then
 $\bigl( G_p C^\cell,\dff^\cell_p \bigr)$ is a chain complex and $\dim$ is split grading
 for cellular homology.\index{Split grading}
 As a consequence $\dim$ yields the natural $\N$-grading of  cellular homology.

\begin{remark}
As in Section \ref{tcc} we can define the standard Lefschetz  complex and incidence numbers from the cellular homology.
\end{remark}

% As before we invoke a spectral sequence.
% Let  $\scrE^0_p = G_pC^\cell(X)$. Then,  $\scrE^1_p = H^\cell(G_p X) \cong H\bigl(F_{\downarrow p} X,F_{\downarrow (p-1)}X\bigr)$ by \eqref{BMhom1} and the differential $\cm^1_p\colon \scrE^1_p \to \scrE^1_{p-1}$
% is given by \eqref{dimensiongrading12}.

% -----------

% \[
% \dff\colon H\bigl(F_{\downarrow p} X,F_{\downarrow (p-1)}X\bigr) \to 
% H\bigl(F_{\downarrow (p-1)} X,F_{\downarrow (p-2)}X\bigr).
% \]
% The entries $\cm^r_p$ vanish for $r\ge 2$.
% Then, since $H\bigl(F_{\downarrow p} X,F_{\downarrow (p-1)}X\bigr) = C_p^\skel(X)$ it follows that 
% $\scrE^2_{p,q} \cong H_q^\skel(\cX)$ for $p=q$ and zero otherwise. Therefore,
% $\dff_p^r\colon \scrE^r_p\to \scrE^r_{p-r}$ satisfies $\dff^r_p=0$ for $r\ge 2$ and $H^\cell(X)\cong H^\skel(X)\cong H(X)$.
% This proves that $\dim$ yields
%  the natural $\N$-grading of cellular homology.
% %\end{remark}
% %construction via a convergence of spectral system

% -------

\begin{remark}
For the spectral sequence in Theorem \ref{morserel1} for $\dim$ we have that $\dff^r_{p,q}=0$ for $r\ge 2$ and therefore $Q^r_{\lambda,\mu}=0$ for $r\ge 2$. Moreover, all homologies $\scrE^r_{p,q} =0$ for $p\neq q$ and $r\le 2$. For the Morse relations this implies 
\[
%\begin{aligned}
P_{\lambda \mu}(C^\cell) = P_{\lambda,\mu}(C^\cell) = P_{\lambda,\mu}^\cell(X) + 
(1+\lambda \mu) Q_{\lambda,\mu}^1
= P_{\lambda\mu}^\cell(X) + 
(1+\lambda \mu) Q_{\lambda \mu}^1.
%\end{aligned}
\]
Moreover, $P_{\lambda \mu}(C^\cell)= \sum_{\xi\in \cccX} P_{\lambda\mu}^\cell(\xi)$, with $P_{\lambda\mu}^\cell(\xi) = (\lambda\mu)^q$ and $q=\dim \xi$. This yields
\[
\sum_{\xi\in \cccX} (\lambda\mu)^{\dim \xi}
= P_{\lambda\mu}^\cell(X) + 
(1+\lambda \mu) Q_{\lambda \mu}^1, 
\]
which retrieve the standard Morse relations. The latter also follows if we use the fact that $\bigl(C^\cell,\dff^\cell \bigr)$ is a chain complex.
\end{remark}

% \subsection{Quasi-isomorphisms and the McCord map}
% \label{McCord}
% In this section we assume that $X$ is a regular CW-complex and $\cell\colon X\to \cX$ is
% a regular CW-decomposition map.
% We start with the tessellar cell complex in the setting of $\cell$.

% Work out the McCord theory in the setting of discretization.

\section{Composite gradings and the homology for Morse pre-orders}
\label{doublegr}
The most important objective in chapter is to build an  homology theory for the discretization map $\tile\colon X \to \sSC$.\index{$\tile$}
In the first sections of this chapter we utilized Cartan-Eilenberg systems to discretize the algebraic topological information for arbitrary topological spaces.
% In the first part of Sect.\ \ref{sec:cmt:cellcomplex} we carried out such a procedure for dimension grading for cellular homology.
% The second part of Sect.\ \ref{sec:cmt:cellcomplex} shows that a natural refinement is found by using $\cX/_\sim$ as a canonical grading.
In this section we outline how discretization can be employed in a bi-topological setting.
%refinement can be carried in a more general setting.
The objective is to use the factorization so that we can  discretize two topologies: the space topology and the block-flow topology. To do so one can factor the two topologies in $(X,\scrT,\scrTbf)$ in  different ways.
For example $(X,\scrT) \rightarrow (\cX,\le) \rightarrow (\cX,\le^\dagger) \rightarrow (\sSC,\le)$, or $(X,\scrTbf)\rightarrow (\cX,\lebf ) \dashrightarrow (\cX,\le^\dagger)  \rightarrow (\sSC,\le)$, cf.\ Diagram \eqref{ordpreforgrd}.
One can also invoke to the topology $\scrT^\dagger$ in this setting.
In a more general setting the bi-topological discretization of algebraic invariants can be organized via the following diagram continuous maps
\[
\begin{tikzcd}
(X,\scrT,\scrT') \arrow[r, "\disc"', bend left, shift left] \arrow[r, "\disc", dashed, bend right, shift right] & (\cX,\le^\dagger) \arrow[r, "\ppart"] & (\sP,\le),
\end{tikzcd}
\]
where $(X,\scrT,\scrT')$ is a bi-topological space, $(\cX,\le^\dagger)$ an antagonistic pre-order\index{Antagonistic pre-order}\index{Pre-order!antagonistic} and $(\sP,\le)$ a finite poset. The factorization via $(\cX,\le)$ and $(\cX,\le')$ is given by Diagram \eqref{ordpreforgrd}. Recall,
\begin{equation*}
%\label{ordpreforgrd}
\begin{tikzcd}[column sep=huge, row sep=large]
& (\cX,\le) \arrow[d, "\id"'] \arrow[rd, "\ppart"]                 &   \\
(X,\scrT,\scrT') \arrow[r, "\disc"', bend left=29, shift right] \arrow[r, "\disc", dashed, bend right=29, shift left] \arrow[ru, "\disc", shift left=2] \arrow[rd, "\disc"',  shift right=2] & (\cX,\le^\dagger) \arrow[r, "\ppart"]                                  & (\sP,\le) \\
& (\cX,\le') \arrow[u, "\id", dashed] \arrow[ru, "\ppart"', dashed] &  
\end{tikzcd}
\end{equation*}
We illustrate the discretization by considering one of the two topologies and the associated factorization:
\begin{equation}
\label{factorGen}
    \begin{tikzcd}
(X,\scrT) \arrow[r, "\disc", two heads] \arrow[rrr, "\tile", bend right, shift right=1] %\arrow[rr, "\tess", two heads, bend left, shift left=1]
& (\cX,\le) \arrow[r, two heads] \arrow[r] \arrow[r, "\pi"] \arrow[rr, "\ppart", bend left, shift left=1] & (\cX/_\sim,\le) \arrow[r, "\varpi"] & (\sP,\le)
\end{tikzcd}
\end{equation}
As before the discretization map $\tile$ yields a chain-generated Cartan-Eilenberg system $\bfE^\tile$, cf.\ Sect.\ \ref{CanCETop} and Sect.\ \ref{tesschaincomp}. Under the assumption that $\disc$ is natural and yields finitely generated tessellar homology the same applies to $\bfE^\tile$.
Define, using the singular chain complex for $(X,\scrT)$,
\begin{equation}
    \label{tess456}
    C^\tile(X) := \bigoplus_{p\in \sP} 
    G_{p}C^\tile(X),
    % H\bigl(F_{\downarrow\xi} X/F_{\downarrow\xi^\pred} X \bigr),\footnote{By the isomorphism $\sO(\cX/_\sim) \cong \sO(\cX,\le)$ we have that $\big\downarrow \xi = \big \downarrow [\xi]$.}\quad [\xi]\in \cX/_\sim.
\end{equation}
where $G_{p}C^\tile(X) := H\bigl(F_{\downarrow p} X,F_{\downarrow p^\pred} X \bigr)$
if the latter is a free $R$-module, 
% $H\bigl(F_{\downarrow\xi} X/F_{\downarrow\xi^\pred} X \bigr)$ is free, 
or
else choose a free differential module $\bigl(G_{p}C^\tile(X),\dff\bigr)$ such that 
\begin{enumerate}
    \item [(i)] $H\bigl( G_{p}C^\tile,\dff\bigr)\cong H\bigl(F_{\downarrow p} X,F_{\downarrow p^\pred} X \bigr)$;
    \item [(ii)] $G_{p} C^\tile(X) \cong R^{s_p+2r_p}$, cf.\ Defn.\ \ref{connmat4567}(ii).
\end{enumerate}
By Theorem \ref{prrepExis} there exists
%From \cite[Thm.\ 4.8]{fran} there exists 
an $\sO(\sP)$-filtered differential 
\[
\cm^\tile\colon  C^\tile(X) \to  C^\tile(X)
\]
such that 
% the homology satisfies
% $H\bigl( C^\tess,\cm^\tess\bigr) \cong H(X)$. The pair $\bigl(C^\tess,\cm^\tess \bigr)$ is a principal representation for
% $\bfE^\tess$. In particular, 
$\bfE\bigl( C^\tile,\cm^\tile\bigr)\cong \bfE^\tile$, cf.\ Sect.\ \ref{excisiveprop}. In particular, $H^\tile(X)\cong H(X,\scrT)$.
The tessellar homology $H^\tile$ can also be defined for $(X,\scrT')$ and for $(X,\scrT^\dagger)$. These are different invariants for the  discretization $\tile\colon (X,\scrT,\scrT') \to (\sP,\le)$.
Since $\disc\colon (X,\scrT)\to (\cX,\le)$ is a natural discretization map we can utile the tessellar homology of $\disc$.
Consider the Cartain-Eilenberg system ${\bfE^\tile}$ where the $E$-terms are defined using the tessellar homology $H^\disc$ which isomorphic to the singular homology, i.e. $H^\disc(G_pX) \cong H\bigl(F_{\downarrow p} X,F_{\downarrow p^\pred} X \bigr)$.
The tessellar homology for $\disc$ is defined via a $\cX/_\sim$-graded tessellar module $C^\disc(X) =\bigoplus_{[\xi]\in \cccX/_\sim} G_{[\xi]} C^\disc(X)$.
Convex sets in $\sP$ are convex sets in $\cX$ and we define ${G_pC^\tile(X)}$ as $H^\disc(G_p X)$ is the latter is free. Otherwise choose ${G_pC^\tile(X)}$ as explained above.
In the case of field coefficient in $\K$ the homology $H^\disc$ is free and  there exists a strict
 $\sP$-graded differential module  $\bigl({C^\disc(X)},{\cm^\disc}\bigr)$, cf.\ Theorem \ref{prrepExis} and cf.\ \cite{robbin:salamon2}.

\begin{remark}
    \label{algconn12}
    In the case we use field coefficients $R=\K$
the algorithm \textsc{ConnectionMatrix}~\cite[Algorithm 6.8]{hms}, which is based on algebraic-discrete Morse theory, takes as input $\bigl( C^\disc(X),\cm^\disc\bigr)$ and outputs a strict $\sP$-graded differential module 
$\bigl({C^\tile(X)},{\cm^\tile}\bigr)$
%$\bigl( C^\tile(X),\cm^\tile\bigr)$ 
which is $\sO(\sP)$-filtered chain equivalent to the differential module $\bigl( C^\disc(X),\cm^\disc\bigr)$ via  $\sO(\sP)$-filtered chain maps $h\colon {C^\tile(X)} \to C^\disc(X)$ and $h'\colon C^\disc(X) \to {C^\tile(X)}$.  The $\sO(\sP)$-filtered chain equivalence $h$ induces isomorphisms $\bigl\{h_{\beta\smin\alpha}\colon H^\tile(G_{\beta\smin \alpha}X)\to H^\disc (G_{\beta\smin \alpha}X)\mid \alpha,\beta\in \sO(\sP)\bigr\}$ which form an isomorphism of Cartan-Eilenberg systems:\footnote{cf.\ \cite[Eqn.\ (1.2)]{fran}.}
\[
\begin{tikzcd}[column sep=small, row sep=large]
 H^\disc(G_{\beta\smin \alpha}X) \arrow[r]                & H^\disc(G_{\gamma\smin \alpha}X) \arrow[r]                & H^\disc(G_{\gamma\smin \beta}X) \arrow[r]                & H^\disc(G_{\beta\smin \alpha}X)                 \\
 H^\tile(G_{\beta\smin \alpha}X) \arrow[r] \arrow[u, "h_{\beta\smin\alpha}"] & H^\tile(G_{\gamma\smin \alpha}X) \arrow[r] \arrow[u, "h_{\gamma\smin\alpha}"] & H^\tile(G_{\gamma\smin \beta}X) \arrow[r] \arrow[u, "h_{\gamma\smin\beta}"] & H^\tile(G_{\beta\smin \alpha}X) \arrow[u, "h_{\beta\smin\alpha}"] 
\end{tikzcd}
\]
for all $\alpha,\beta,\gamma \in \sO(\sP)$.  
\end{remark}

\begin{remark}\label{rem:quasiiso}
There is a special case when $\disc$ is a quasi-isomorphism, i.e.
\[H(\disc)\colon H(X)\to H(\cX)\]
induces an isomorphism $H(X,\scrT)\cong H(\cX,\leq)$, where the latter is taken to be the singular homology of the finite topological space $(\cX,\leq)$. This is a situation which commonly arises in practice, e.g. $\disc$ is a CW decomposition map and $(\cX,\leq)$ is a simplicial or cubical complex.\footnote{Or more generally, if each $f_\xi$ is a homeomorphism; such a CW complex is called regular.}  
In this case we interpret \eqref{factorGen} as
\[
\begin{tikzcd}
(X,\scrT) \arrow[rr, "\disc", "\cong"'] \arrow[rd, "\tile"'] &   & (\cX,\le) \arrow[ld, "\ppart"] \\
                                   & (\sP,\le) &                  
\end{tikzcd}
\]
Let  $C^\ppart(\cX)$ be the tessellar $\sP$-graded  differential module for $\ppart$, for which $\bfE^\ppart$ is isomorphic to $\bfE^\tile$ given by the $\sP$-graded differential module $C^\tile(X)$ (this follows from an elementary five lemma argument), q.v.\ Sect. \ref{postlude:discretization} for further discussion.
\end{remark}

After this general interlude of bi-topological discretzation we return to
the case $(X,\scrT,\scrTbf)$.
Assume that $(X,\scrT)$ is a finite regular CW-complex and let
%In the case of Diagram \eqref{dia:cmt:factor3} %at the beginning of this section 
 $\disc=\cell$, $\ppart = \dyn=\varpi$ and $\sP=\sSC =\cX/_{\sim^\dagger}$, cf.\ \eqref{specdiscdiag12}. 
If we use the discrete space $(\cX,\le^\dagger)$ then the convex sets in $\sSC$ are given by
$\cU\smin\cU'$, $\cU,\cU'\in \sO(\cX,\le^\dagger)$.
The homology $H^\ppart({\cU\smin\cU'})$ is well-defined and isomorphic to $H^\tile(G_{\cU\smin\cU'}X)$ per Remark \ref{rem:quasiiso}.
 The homology $H^\tile(G_{\cU\smin\cU'}X)$ is given by the relative homology $H(F_\cU X,F_{\cU'} X)$ and can be computed from a cellular chain complex.
Summarizing, we have:
\begin{theorem}
\label{tilehomtoBM}
Given the composition  $X\xrightarrow[]{\cell} \cX \xrightarrow[]{\dyn}\sSC$. Then,
\begin{equation}
\label{finhom12} 
H^\dyn(\cU\smin\cU') \cong H^\tile(G_{\cU\smin\cU'}X) \cong H^\cell(G_{\cU\smin\cU'}X)\cong H^\BM(G_{\cU\smin\cU'}X),
\end{equation}
where $H^\BM(G_{\cU\smin\cU'}X)$ is the Borel-Moore homology of the Morse tile $G_{\cU\smin\cU'}X$.
\end{theorem}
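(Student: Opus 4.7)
The plan is to instantiate the general two-step re-grading construction from Section \ref{doublegr} for the specific factorization $X \xrightarrow{\cell} (\cX,\le) \xrightarrow{\dyn} (\sSC,\le)$, which is the first diagram in \eqref{dia:cmt:factor3}. Here $\disc = \cell$, $\part = \varpi = \dyn$ and $\sP = \sSC$, and the commutativity of \eqref{factorGen} is immediate since $\dyn$ is by construction the projection onto partial equivalence classes, i.e., $\dyn = \varpi \circ \pi$.

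First I would assemble the first-level complex. By the cellular chain complex construction in Section \ref{sec:cmt:cellcomplex}, the discretization $\cell$ yields a strict $(\cX,\le)$-graded chain complex $(C^\cell(X),\cm^\cell)$ whose modules $G_\xi C^\cell$ are free (in fact of rank one in degree $\dim(\xi)$). Identifying this with the tessellar chain complex built from $\cell$, Equation \eqref{BMhom1} gives $H^\cell(\cU\smin\cU') \cong H(F_{\cU} X, F_{\cU'} X) \cong H^\BM(G_{\cU\smin\cU'} X)$ for all $\cU,\cU'\in \sO(\cX,\le)$. In particular, taking $\cU = \cell^{-1}(\dyn^{-1}(\beta))$ and $\cU' = \cell^{-1}(\dyn^{-1}(\alpha))$, we obtain $H^\cell(G_{\beta\smin\alpha}\cX) \cong H^\BM(G_{\beta\smin\alpha} X)$, using that $\dyn^{-1}(\beta\smin\alpha) = \dyn^{-1}(\beta)\smin\dyn^{-1}(\alpha)$ since $\dyn$ is order-preserving and $\alpha\subset\beta$.

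Next I would re-grade by $\sSC$ using $\dyn$. The order-preserving projection $\dyn$ turns the $(\cX,\le)$-grading of $C^\cell$ into an $\sO(\sSC)$-filtering with $G_\cS C^\cell = \bigoplus_{[\xi] \in \dyn^{-1}(\cS)} G_{[\xi]}C^\cell$, and the homology $H^\cell(G_{\beta\smin\alpha}\cX)$ is naturally free over the chosen principal ideal domain (or field) since each $G_\xi C^\cell$ is. Applying Franzosa's existence theorem \cite[Thm.\ 4.8]{fran}, or the algorithm \textsc{ConnectionMatrix} from \cite[Algorithm 6.8]{hms} in the field case, produces a strict $\sSC$-graded chain complex $(C^\tile(X),\cm^\tile)$ that is $\sO(\sSC)$-filtered chain equivalent to the $\sSC$-regraded cellular complex via maps $\mathrm\Phi,\mathrm\Phi'$. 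The induced isomorphism of homology braids gives $\mathrm\Phi_{\beta\smin\alpha}\colon H^\tile(\beta\smin\alpha)\xrightarrow{\cong} H^\cell(G_{\beta\smin\alpha}\cX)$ for every $\alpha,\beta\in \sO(\sSC)$.

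Concatenating the two isomorphisms yields the desired $H^\tile(\beta\smin\alpha) \cong H^\cell(G_{\beta\smin\alpha}\cX) \cong H^\BM(G_{\beta\smin\alpha} X)$. The main bookkeeping obstacle, and essentially the only nontrivial point, is to verify that the filtering identifications $F_\alpha C^\cell = F_{\dyn^{-1}(\alpha)} C^\cell$ and $F_\alpha X = F_{\dyn^{-1}(\alpha)} X$ hold strictly — this is where the order-preservation of $\dyn$ and the commutativity of \eqref{factorGen} are actually used — and that the naturality of $\cell$ (Lemma \ref{regCW}) ensures the good-pair hypotheses of Remark \ref{relhom12} and \ref{BMequiv}, so that the quotient and Borel–Moore identifications are legitimate. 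Everything else is an application of the general machinery developed in Sections \ref{tesshom}, \ref{sec:cmt:cellcomplex} and \ref{doublegr}.
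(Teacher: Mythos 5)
Your argument is correct and mirrors the paper's own, which is exactly the discussion that precedes the theorem in Section \ref{doublegr} (the theorem is presented as a ``Summarizing'' of that general re-grading construction specialized to $\disc=\cell$, $\part=\varpi=\dyn$, $\sP=\sSC$). The only slips are cosmetic: when instantiating \eqref{BMhom1} you should set $\cU=\dyn^{-1}(\beta)$ and $\cU'=\dyn^{-1}(\alpha)$ as down-sets in $(\cX,\le)$ rather than their realizations $\cell^{-1}(\dyn^{-1}(\beta))\subset X$, and $H^\cell(G_{\beta\smin\alpha}\cX)$ need not be free over a general PID, but the paper's construction of $C^\tile$ already accommodates this by substituting free modules with the same homology, so the argument survives unchanged.
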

The homologies $H^\tile(G_{\cU\smin \cU'}X)$ are invariants of a Morse pre-order $(\cX,\le^\dagger)$.
We can visualize this structure by augmenting the di-graph for $(\sSC,\le)$ by the homology $H^\ppart(\cS) \cong H^\BM(G_\cS X)$ at the nodes of the graph, cf.\ Fig.\ \ref{fig:braid:cmg12}. The associated Cartan-Eilenberg system yields the homologies $H^\tile(G_{\cU\smin\cU'}X)$.
Such invariants can also be defined using the topologies $\scrTbf$ and $\scrT^\dagger$, cf.\ Sect.\ \ref{MDs}.
In the forthcoming sections we will implement these ideas for a large class of flows, so-called parabolic flows, and produce algebraic-combinatorial representations of its global dynamics.
\begin{equation}
    \label{specdiscdiag12}
    \begin{tikzcd}[column sep=huge, row sep=large]
& (\cX,\le) \arrow[d, "\id"'] \arrow[rd, "\dyn"]                 &   \\
(X,\scrT,\scrTbf) \arrow[r, "\cell"', bend left=29, shift right] \arrow[r, "\cell", dashed, bend right=29, shift left] \arrow[ru, "\cell", shift left=2] \arrow[rd, "\cell"',  shift right=2] & (\cX,\le^\dagger) \arrow[r, "\dyn"]                                  & (\sSC,\le) \\
& (\cX,\lebf) \arrow[u, "\id", dashed] \arrow[ru, "\dyn"', dashed] &  
\end{tikzcd}
\end{equation}

\begin{remark}
\label{tileviadisc}
The map $\tile \colon X\to \sSC$ is a discretization map and  the homology $H^\tile$ is the tessellar homology of the discretization map $\tile$. The two stage approach in this section allows us to compute $H^\tile$ from an $\sSC$-graded cellular chain complex.
For any discretization $\ind\colon \sSC\to \Z$ we can 
 consider $H^\tile$ as a bi-graded homology theory (in the case of field coefficients) as described in Section \ref{dimensiongrading}. The bi-graded Betti numbers/homology depend on our choice of the discretization $\ind$. A possible choice for $\ind$ is a linear extension of $\sSC$. In Section \ref{tesspara} we consider tessellar homology for parabolic 
 flows with 
% a naturally defined 
 a linear discretization. %  $\lap\colon \sSC\to \Z$.
 \end{remark}
% %\vskip-.1cm 
% In the case of field coefficients $R=\K$ we can
% summarize the above construction as follows. Given a Morse pre-order $(\cX,\le^\dagger)$ we obtain an
% $\sSC$-graded differential differential module $(C^\tile,\cm^\tile)$ which allows us to compute the homology $H^\tile(\cU\smin\cU')$ for any convex set $\cU\smin \cU'$ in $(\cX,\le^\dagger)$. 

Consider the diagram
\[
\begin{tikzcd}
(X,\scrT) \arrow[r, "\cell"] \arrow[rr, "\tile", bend right, shift right] \arrow[rrr, "\skel", bend right, shift right=2] & (\cX,\le) \arrow[r, "\dyn"] 
%\arrow[rr, "\ind", bend left, shift left] 
& (\sSC,\le) \arrow[r, "\ind"] & \Z
\end{tikzcd}
\]
where we treat $\tile$ as discretization map 
and $\ind\colon\sSC\to \Z$ is a linear discretization map.
Then for for any convex set $\cU\smin \cU'$ the Morse relations in Thoerem \ref{morserel1} are given by
% \[
% C^\tile = \bigoplus_{p,q\in \Z} G_pC^\tile_q, \quad 
% G_p C_q^\tile=\bigoplus_{\cS\in G_p\cX} H_q^\tile(\cS) = \bigoplus_{\cS\in G_p\cX} H_q^\BM(\cS),
% \]
% and $\cm^\tile= \bigoplus_{p,q,r}\cm^r_{p,q}$, with
%  $\cm^r_{p,q}\colon G_pC^\tile_q \to G_{p-r}C^\tile_{q-1}$.
% The associated bi-graded tessellar homology $H^\tile_{p,q}(\cU\smin\cU')$
% is found as the `limit' of the spectral sequence.
% Contracting over $p$ yields the chain complex $C^\tile = \bigoplus_{q\in \Z} C^\tile_q$, with $C^\tile_q = \bigoplus_{p\in \Z} G_pC_q^\tile$ and 
% $\cm_q = \bigoplus_{p,r} \cm_{p,q}^r$.
% Contracting over $q$ yields the $\Z$-graded module
% $C^\tile = \bigoplus_{p\in \Z} G_pC^\tile$, with $G_pC^\tile = \bigoplus_{q\in \Z} G_pC_q^\tile$ and 
% $\cm_p^r = \bigoplus_{q} \cm_{p,q}^r$.
% The homologies satisfy a variation of the Morse relations in terms of the Poincar\'e polynomials $P_{\lambda,\mu}\bigl(C^\tile(\cU\smin\cU')\bigr) = \sum_{\cS\in \cU\smin\cU'} P_{\lambda,\mu}^\tile(\cS)$, with
% $P^\tile_{\lambda,\mu}(\cS) = \sum_{p,q\in \Z} \bigl(\rk H^\tile_{p,q}(\cS)\bigr) \lambda^p \mu^q$,
% and $P^\tile_{\lambda,\mu}(\cU\smin\cU')$,
\begin{equation}
    \label{morseforconvex}
 \sum_{\cS\in \cU\smin\cU'} P_{\lambda,\mu}^\tile(G_\cS X) = P^\tile_{\lambda,\mu}(G_{\cU\smin\cU'}X) + \sum_{r=1}^\infty (1+\lambda^r \mu) Q^r_{\lambda,\mu}.
\end{equation}
%cf.\ Thm.\ \ref{morserel1}. % and Rmk.\ \ref{refineMR}. 
The poset $(\sSC,\le)$
yields a partial order on the pairs $\bigl(\cS,P^\tile_{\lambda,\mu}(G_\cS X)\bigr)$, $\cS\in \sSC$, via
\begin{equation}
    \label{popairs1}
    \bigl(\cS,P^\tile_{\lambda,\mu}(G_\cS X)\bigr) \le
    \bigl(\cS',P^\tile_{\lambda,\mu}(G_\cS' X)\bigr) \quad \Longleftrightarrow\quad \cS\le\cS'.
\end{equation}
The poset of pair $\bigl(\cS,P^\tile_{\lambda,\mu}(G_\cS X)\bigr)$ is
denoted by $(\tessph,\le^\dagger)$ and 
is called the \emph{tessellar phase diagram}\index{Tessellar phase diagram} of the Morse pre-order $(\cX,\le^\dagger)$.
Considering only non-trivial Poincar\'e polynomials yields the 
\emph{pure tessellar phase diagram}\index{Tessellar phase diagram!pure}\index{Pure tessellar phase diagram} $(\otessph,\le^\ddagger)$, with order-embedding
\begin{equation}
    \label{ordembfortessph}
(\otessph,\le^\ddagger) \hookrightarrow (\tessph,\le^\dagger),
\end{equation}
cf.\ Fig.\ \ref{morsetess2ex}.
% In particular, 
% \[
% G_p C_q^\tile=\bigoplus_{\cS\in \ind^{-1}(p)} H_q^\tile(\cS).
% \]
The treatment of semi-flows up to this point is a tale of two topologies. In that setting the $p$-index in $\beta^\tile_{p,q}$ can be thought of as a manifestation of the topology induced by the semi-flow and the $q$-index as a manifestation of the space topology.

\section{Morse tessellations, Morse decompositions and the Conley index}
\label{MDs}
For a Morse pre-order $(\cX,\le^\dagger)$ we have a Morse tessellation as given in \eqref{Morsetessll11}. As a matter of fact the filtering $\cU \mapsto F_\cU X$, $\cU\in \sO(\cX,\le^\dagger)$ defines a finite, distributive lattice $\sN = \bigl\{F_\cU X\mid \cU \in \sO(\cX,\le^\dagger)\bigr\}\subset \sABlock(\varphi)$ of attracting blocks.\index{Attracting block} This yields the Morse tessellation $(\sT,\le)$ given by
%
% The construction of a discrete resolution $\cF$ for $\phi$ produces attracting blocks in a lattice structure $\sN$ and dual to $\sN$ a \emph{Morse tessellation} $\sT(\sN)$, i.e.
\begin{equation}
    \label{morsetess1212}
    \sT(\sN):= \bigl\{T=U\smin U^\pred  \mid U\in \sJ(\sN) \bigr\}, \quad T\le T' ~~\Longleftrightarrow ~~U\subset U',
\end{equation}
where $T=G_\cS X$, $U=F_{\downarrow\cS} X$ and $U^\pred=F_{\downarrow\cS\smin \cS} X$, $\cS\in \sSC$, cf.\ \cite{lsa3}.
%
% $U\rmin U':= U\wedge U'^\#$ is setminus in $\scrR(X)$ and $U^\pred$ is the unique predecessor of $U\in \sJ(\sN)$, cf.\ \cite{lsa3}.
Such a tessellation does model the `direction' of dynamics but not the invariant dynamics.
However, the compactness of the phase space $(X,\scrT)$ does imply the existence of key invariant sets: attractors. Recall that a set $A\subset X$ is an \emph{attractor}\index{Attractor} if there exists an attracting block $U$ such that $A=\omega(U)$. The attractors of $\varphi$  form  a bounded, distributive lattice $\sAtt(\varphi)$ with binary operations $A\vee A' = A\cup A'$ and $A\wedge A' = \omega(A\cap A')$. The map
$U\mapsto \omega(U)$ is a surjective lattice homomorphism, cf.\ \cite{lsa}.
By compactness:
\[
U\in \sABlock(\varphi),~~ U\neq \varnothing,~~\implies ~~A=\omega(U) \neq \varnothing.
\]
In terms of the sublattice $\sN$ we obtain a sublattice $\omega\colon \sN \twoheadrightarrow \sA\subset \sAtt(\varphi)$, where $\sA := \bigl\{ A\in \sAtt(\varphi)\mid A=\omega(U),~U\in \sN\bigr\}$.
In general this map need not be a isomorphism which yields an inportant conclusion: knowing $\sN$ provides no insight into structure of $\sA$ from information given by $(\cX,\le^\dagger)$. 
However, topology can partly answer this question. From the map $\omega\colon \sN\twoheadrightarrow\sA$ we have the following congruence relation:
$U\sim U'$ if and only if $\omega(U) = \omega(U')$.
% U\sim U'\quad \Longleftrightarrow\quad \omega(U) = \omega(U').
Since we cannot utilize this relation solely on the information given by $(\cX,\le^\dagger)$ we use a topological principle for flows also known as\index{Wazewski's principle} \emph{Wazewski's principle},\footnote{To establish Wasewski's principle the continuity of $\varphi$ is used a crucial way.} cf.\ \cite[Sect.\ II.2]{conley:cbms}. This can be restated as follows:
\begin{equation}
    \label{waz1}
    U\sim U' \quad\implies\quad H(U,U\cap U')\cong H(U\cup U',U')\cong 0,
\end{equation}
where $H$ is the singular homology functor. 
The key representation of dynamics is via a  \emph{tessellated Morse decomposition}\index{Tessellated Morse decomposition}\index{Morse decomposition!tessellated} which we define as the dual of the homomorphism $\omega\colon \sN\twoheadrightarrow\sA$. By Birkhoff duality we obtain an injective order preserving map $\sJ(\omega)\colon \sJ(\sA) \hookrightarrow \sJ(\sN)$. Invoking the Conley form we obtain an injection
$\pi\colon \sM(\sA) \hookrightarrow \sT(\sN)$,
% \begin{equation}
%     \label{MDdefn}
%     \pi\colon \sM(\sA) \hookrightarrow \sT(\sN),
% \end{equation}
where the poset $(\sM(\sA),\le)$, with $\sM(\sA) := \bigl\{ M= A\rmin A^\pred\mid A\in \sJ(\sA)\bigr\}$, is called a \emph{Morse representation}.\index{Morse representation} The notation
$A\rmin A^\pred:= A\cap (A^\pred)^*\neq \varnothing$ is the Conley form on $\sAtt(\varphi)$, cf.\ \cite{lsa3}. As before $M\le M'$ if and only if $A\subset A'$. \emph{The Morse sets in a Morse representation are never the empty set}!
Via Birkhoff duality we can given an explicit formula for the embedding $\pi$. However, from \cite{lsa3} there exists a left inverse to $\pi$: the unique image $M\mapsto T$ satisfies $\Inv(T) = M$,
where $\Inv(T)$ is the maximal invariant sets in $T$.
% we have that $\Inv$, is a left inverse for $\pi$ and therefore the unique image $M\mapsto T$ satisfies $\Inv(T) = M$. 
The latter provides a easy way to construct the embedding $\pi$. As before we do not have control over the poset set $\sM(\sA)$.
%Morse sets are determined by join-irreducible attractors. 
%Consider any pair $A,A'\in \sA$ such that $M = A\rmin A'^\pred$ and let 
Consider a Morse set $M$ and $\pi(M) = T$. Then,
for any pair $U,U'\in \sN$ with $U\smin U'=\pi(M)$ we have $\Inv(U\smin U')=\omega(U)\rmin \omega(U') = M\neq \varnothing$ and $U\not\sim U'$.
For the latter we can now invoke Wazewski's principle in \eqref{waz1}:
\begin{equation}
    \label{waz2}
    H(U, U\cap U')\cong H(U\cup U',U')\neq 0 \quad \implies \quad M= \Inv(U\smin U')\neq  \varnothing.
\end{equation}
%For the filtering $\alpha \mapsto F_\alpha X$, $\alpha\in \sO(\sSC)$.
By construction $H^\tile(G_{\cU\smin\cU'}X) \cong H(F_\cU X,F_{\cU'} X)$ only depends on $\cU\smin \cU'$ which justifies the definition
% Without loss of generality we now take $U'\subset U$ and $V'\subset V$, all elements of $\sN$ and such that $U\rmin U' = V\rmin V'$. Since $H_*(U\cup U',U')\cong H_*(U,U')$ and 
% $H_*(U,U\cap U')\cong H_*(U,U')$ and similar for $V,V'$ it follows form \cite[Sect.\ 4]{lsa3}
% that $H_*(U,U') \cong H_*(V,V')$ and thus
\begin{equation}
    \label{CI}
    HC(T) := H^\tile(G_{\cU\smin\cU'}X),\quad T=F_\cU X\smin F_{\cU'} X,
\end{equation}
and is called the \emph{Conley index}\index{Conley index} of a Morse tile $T$, cf.\ \cite{conley:cbms}. 
In the case of a CW-decomposition via $\cell$ the Conley index $T$ is given as the Borel-Moore homology of the tile $T$.
The Conley index is an algebraic invariant for congruent  pairs $(U,U')$.
By construction $HC(T)\neq 0$ implies that $T$ is the image of a Morse set $M$ under $\pi$, i.e.
$M=\Inv(T) \neq \varnothing$.
The Conley index in \eqref{CI} is not only well-defined  for tiles in $\sT(\sN)$ but for any Morse tile $T=U\smin U'$ obtained
from attracting blocks $U,U'\in \sN$.\footnote{The above arguments apply to sublattices of $\sN$ such as $\varnothing \subset U\cap U' \subset U\subset X$, and $\varnothing \subset  U' \subset U\cup U'\subset X$. }
% The next step is the use an appropriate homology functor related to the cell complex $\cX$ and utilize this in combination with the lattice $\Invsetpl(\cF)$ which captures the non-invarint part of the dynamics.

%%%%%% for now
%%%%%%
% A tessellated Morse decomposition $\pi\colon \sM \hookrightarrow \sT$ is not necessarily an isomorphism. A Morse representation $\sM$ is not known in general but some information can be obtained with order theoretic methods. Let $\overline\sT\hookrightarrow\sT$ be the sub-poset of non-trivial tiles (non-trivial homology). If there exists an order-retraction $\rho\colon \sT \twoheadrightarrow \overline\sT$, cf.\ \cite{kkv}, then $\sM\hookrightarrow\sT\twoheadrightarrow\overline\sT$ is a bijection and can coarsened to an order-isomorphism. 
% % Let $\sN^\dagger$ be the lattice of subsets $N=\bigcup_{T\in I} T$, $I\in \sSet(\overline\sT)$, such that $N\in \sABlock(\phih)$. Then, $$
% % The isomorphism $\sM \to \s\overline$

The algebraic topological approach in this chapter is to use invariants based on the topology $(X,\scrT)$. Wazewski's principle allows an interpretation of the invariants that yield information about the  invariant dynamics of the flow which defines the second topology $(X,\scrTbf)$. In the application of the theory to dynamical systems the topology $(X,\scrT)$ is assumed to be given while the second topology $(X,\scrTbf)$ is not known a priori. However, for the latter we have information about discretizations $(\cX,\lebf)$.
This track of combining information of two topologies and invoking Wazewski's principle yields a powerful algebraic topological tool for studying invariant sets of flows.
In Section \ref{doublegr} we also indicated that the tessellar homology can be defined for three topologies. The case outlined above is worked out.
Invariants based on $(X,\scrT,\scrTbf)$ and $(\cX,\le^\dagger)$ take into account the bi-topological nature of the problem. 
In future work we will examine these more detailed algebraic topological invariants.
For the application in the forthcoming chapter and applications to Conley theory the appoach in Section \ref{doublegr} suffices.

%chap 5
\chapter{Parabolic recurrence relations and flows}
\label{parabolic}
In the final part of this text we study a class of flows for which we demonstrate the discretization of topology and dynamics as described in the preceding chapters. 
% After establishing the decomposition we will apply connection matrix theory in order to obtain detailed information about the flows at hand.
The class of systems we consider are called \emph{discrete parabolic flows}. 
Discrete parabolic flows and associated {parabolic recurrence relations} occur in various applications of dynamical systems and represent  important classes of conservative dynamics as well as dissipative dynamics,  cf.\ \cite{bgvw,day,miro}.   
Examples of parabolic recurrence relations are  discretizations of uniform parabolic PDE's,   monotone twist maps and fourth order conservative differential equations, etc, cf.\ \cite{gv,im}. %~\cite{bgvw,day,gv,im,miro}.
The nature of discrete parabolic flows makes them very suitable for displaying the theory developed in
this paper. 
The application to parabolic systems entails the definition of explicit CA-discretizations and MA-discretizations.
The algebraic methods reveal a new invariant for braids and parabolic flows, q.v.\ Sect.\ \ref{stabofclasses}.

% In this section we provide a review of parabolic recurrence relations and associated {discrete parabolic flows}, cf.\ \cite{im,gv,bgvw,day,miro}.   
% Discrete parabolic flows are a prime example of dynamical systems 
% which allow concrete closure algebra discretizations constructed from 
%  known solutions.
% In Section~\ref{subsec:discbr} we introduce the space of discretized braids, which is the phase space of   parabolic relations.  In Section~\ref{sec:parabolic:model} we build tranversality models for parabolic recurrence relations on the space discretized braids.

%In Section~\ref{subsec:relations} we introduce the polyhedral complex and transversality models.  In Section~\ref{subsec:comb} we use lattice theoretic-methods to analyze the transversality model. 

\section{Discretized braid diagrams}
\label{subsec:discbr}
Braids\index{Braid} can be treated in various ways. One way is to regard braids as a path in a two dimensional configuration space. The more hands-on way to think of braids as a collection of `strands' between to copies of the Eucledian plane. A generic projection onto the strip $\R\times[0,1]$  contains all information by tagging intersections as postive, or negative. This representation is called a \emph{braid diagram}.\index{Braid diagram} In this text we consider a special class
braid diagrams, piecewise linear and with positive intersections.
From \cite{im} we recall the notion of closed  \emph{discretized braid daigram}.\index{Braid diagram!discretized}\index{Discretized braid daigram}
%

% A sequence $\uu=(x_i)$, with $x_{i+kd} = x_i$ for some $k\in \N$, which satisfies $\cR(\uu)=0$  is called
% a \emph{$k$-periodic  solution} of Eqn.\ \eqref{eqn:parabolic}. 
% %one can evolve the points according to   Eqn.\ \eqref{eqn:parabolic}.
% %The sequence $\uu=(u_i)$ can be thought of as a discretized periodic function.  
% The restrictions of $\uu$ to $i\in \{(\alpha-1)d,\cdots,\alpha d-1\}$, $\alpha=1,\cdots,k$, are denoted $\uu^\alpha$ and
% the collection $\{\uu^\alpha\}$ satisfies the recurrence relation with the periodicity condition
% \[
% u_d^\alpha = u_0^{\tau(\alpha)},
% \]
% for some permutation $\tau\in S^k$.
% Due to the periodicity in $\cR$ we may consider sequences restricted to $i=0,\cdots,d$ with an appropriate periodicity condition.

% Multiple  periodic sequences with possible different periods may be considered as ${\bf u} = \{(u_i^\alpha)\}$ where $\alpha$ indexes the particular function.  One can then evolve the collection according to Eqn.~\eqref{eqn:parabolic}.  According to the theory of~\cite{im}, the collection ${\bf u}$ should be regarded as a {\em discretized braid diagram} and a single function $\uu^\alpha = (u_i^{\alpha})$, $\alpha$ fixed, as a {\em strand}.  A stationary solution to Eqn.\ \eqref{eqn:parabolic} is a discretized braid diagram ${\bf u} = \{(u_i^\alpha)\}$ such that $R_i(u^\alpha_i) = 0$ for all $i$ and all $\alpha$.  %A braid $\{y_i^\alpha\}$ of stationary solutions is extra data that allows forcing theorems~\cite{im}.

% From \cite{im} we recall the notion of closed  discretized braid daigram.
%
\begin{definition}
\label{PL}
The space of \emph{closed discretized period $d$  braid diagrams}\index{Braid diagram!closed discretized}\index{Closed discretized braid diagram} on $m$ strands,
denoted $\Conf^d_m$,\index{$\Conf^d_m$} is the space of  
unordered collections of \emph{strands}
$x=\{x^\alpha\}_{\alpha=0}^{m-1}$,
defined as follows:
%
%pairs $(\vv,\tau)$ where
%$\tau\in S_m$ is a permutation on $m$ elements, and $\vv$ is an 
%unordered set of $m$ {\em strands}, $\vv=\{\uu^\mu\}_{\mu=1}^m$,
%defined as follows:
%satisfying the following conditions:
\begin{enumerate}
\item[(i)]  (Strands):
	each strand  
	$x^\alpha=(x^\alpha_0,x^\alpha_1,\ldots,x^\alpha_d)\in\R^{d+1}$
	consists   of $d+1$ {\em anchor points} $x_j^\alpha$;
	%and the strands are ordered as $x_0^1<\cdots <x_0^m$;
\item[(ii)] (Periodicity): $x^\alpha_d = x^{\theta(\alpha)}_0$
	for all $\alpha=1,\ldots,m$, for some permutation $\theta\in S_m$ (symmetric group);
%\[	x^\alpha_d = x^{\tau(\alpha)}_0 .	\]
\item[(iii)] (Non-degeneracy):
	for any pair of distinct strands $x^\alpha$ and $x^{\alpha'}$
	such that $x^\alpha_i=x^{\alpha'}_i$ for some $i$,
	the following \emph{ transversality} condition holds
	 \[
	 \bigl(x^\alpha_{i-1}-x^{\alpha'}_{i-1}\bigr)
	\bigl(x^\alpha_{i+1}-x^{\alpha'}_{i+1}\bigr) < 0.
	\]
 %Notation: $x^\alpha \pitchfork x^{\alpha'}$.
%\begin{equation*}
%%\label{eq_transverse}
%        \bigl(x^\alpha_{j-1}-x^{\alpha'}_{j-1}\bigr)
%	\bigl(x^\alpha_{j+1}-x^{\alpha'}_{j+1}\bigr) < 0,
%	\end{equation*}
%	which is referred to as \emph{ transversality} of strands.
\end{enumerate}
%The topology on $\Conf_m^d$ is 
%the standard topology of $\R^{md}$ on the strands and
%the discrete topology with respect to the permutation $\tau$, 
%modulo permutations which change orderings of strands. 
%
%Specifically, two discretized braids $(\vv,\tau)$ and 
%$(\tilde\vv,\tilde\tau)$ are close iff for some permutation 
%$\sigma\in S_n$ one has $\vv^{\sigma(\alpha)}$ close to 
%$\tilde\ux^\alpha$ (as points in $\R^{n+1}$) for all $\alpha$,
%with $\sigma \circ\tilde\tau=\tau\circ\sigma$. 
The elements $x \in \Conf_m^d$ are referred to as \emph{discretized braids}, or \emph{discretized braid diagrams}. 
The spaces $\Conf_m^d$ can be topologized as metric spaces, cf.\ \cite{im},
and 
the connected components of $\Conf_m^d$, called \emph{discrete braid classes},\index{Braid class}\index{Braid class!discretized}\index{Discretized braid class} are denoted by $[x]$.\index{$[x]$}
% The topology on $\Conf_m^d$ is 
% the standard topology of $\R^{md}$ on the strands and
% the discrete topology with respect to the permutation $\tau$, 
% modulo permutations which change orderings of strands. 
% Specifically, two discretized braids $(x,\tau)$ and 
% $(\tilde x,\tilde\tau)$ are close iff for some permutation 
% $\sigma\in S_m$ one has $x^{\sigma(\alpha)}$ close to 
% $\tilde x^\alpha$ (as points in $\R^{md}$) for all $\alpha$,
% with $\sigma \circ\tilde\tau=\tau\circ\sigma$. 
\end{definition}
\begin{remark}
    \label{1dbraids}
    Note that the space $\Conf^d_1$ consists of tuples $x=(x_0,\cdots,x_d)$, with $x_0=x_d$ and no additional conditions on $x$. Therefore, $\Conf^d_1\cong \R^d$ as metric spaces. 
\end{remark}

Let us start with an important invariant of discretized braids.
Given a discretized braid $x\in \Conf_m^d$.
Two of its strands $x^\alpha$ and $x^{\alpha'}$ \emph{intersect}\index{Intersection}\index{Intersection!transverse} if 
\begin{enumerate}
    \item [(i)] $\bigl(x_{i}^\alpha -x_{i}^{\alpha'}\bigr)\bigl(x_{i+1}^\alpha -x_{i+1}^{\alpha'}\bigr) <0$ for some $i$, or
    \item [(ii)] if for some $i$, $x_i^\alpha = x_i^{\alpha'}$ and
    $\bigl(x^\alpha_{i-1}-x^{\alpha'}_{i-1}\bigr)
	\bigl(x^\alpha_{i+1}-x^{\alpha'}_{i+1}\bigr) < 0$.
\end{enumerate}
Define   $\iota(x^\alpha,x^{\alpha'}) := \#\bigl\{\text{number of intersection between}$ $x^\alpha$ and $x^{\alpha'}\bigr\}$ as the local intersection number. Define the \emph{crossing number}\index{$\iota(x^\alpha,x^{\alpha'})$}\index{$\cross(x)$} by\index{Crossing number}\index{Local intersection number}
\[
\cross(x) := \frac{1}{2}\sum_{\alpha\not=\alpha'}  \iota(x^\alpha,x^{\alpha'}) \in \N.
\]
The crossing number is an invariant for a braid classes $[x]$, i.e. $\cross$ is constant on components $[x]\subset \Conf_m^d$.
\begin{remark}
\label{genin}
Generically 
a discrete braid has the property $x_i^\alpha \neq x_i^{\alpha'}$ for all $i$ and for all $\alpha \neq \alpha'$. 
%Generic braids are dense in $\Conf_m^d$. 
The local intersection number can be defined for generic braids by counting sign changes, i.e. indices $i$ for which (i) is satisfied.
\end{remark}

Unordered sets $x=\{x^\alpha\}_{\alpha=1}^m$
 for which Definition \ref{PL}(iii) is not satisfied are called \emph{singular braids}\index{Singular braid}\index{Braid!singular} and are denoted by $\Sing_m^d$.\index{$\Sing_m^d$} %The union $\Conf_m^d \cup \Sigma_m^d = $
 Pairs of strands for which Definition \ref{PL}(iii) is not satisfied are called \emph{non-transverse}\index{Intersection!non-transverse} and the crossing number is not defined in this case.
%The crossing number is not defined for singular braids. 
We can however consider a variation on the crossing number that is defined for both discretized braids and singular braids.
Let $x\in \Sing_m^d$ be a singular braid. Then,  
% and then there exist triples $(i,\alpha,\alpha')$, $\alpha\not = \alpha'$, such that
% $x^\alpha_i=x^{\alpha'}_i$ and 
% 	 $\bigl(x^\alpha_{i-1}-x^{\alpha'}_{i-1}\bigr)
% 	\bigl(x^\alpha_{i+1}-x^{\alpha'}_{i+1}\bigr) \ge 0$.
% Denote the set of such triples by $S(x)$. 
following \cite{FuscoOliva}, we define the set
\[
\scrS_\epsilon(x) := \Bigl\{\tilde x\in \Conf_m^d \mid  |\tilde x_i^\alpha-x_i^{\alpha}|<\epsilon,~~\forall i \hbox{~and~} \forall \alpha \Bigr\}\neq \varnothing.
\]
The crossing numbers
\[
\cross_-(x) := \min_{\scrS_\epsilon(x)}\cross
\quad \text{and}\quad \cross_-(x) := \max_{\scrS_\epsilon(x)}\cross
\]
% \[
% \begin{aligned}
% \cross_-(x) &:= \min\Bigl\{\cross(\tilde x)\mid \tilde x\in \scrS_\epsilon(x)\Bigr\} \ge 0, ~~\text{~and~}\\
% \cross_+(x) &:= \max\Bigl\{\cross(\tilde x)\mid \tilde x\in \scrS_\epsilon(x)\Bigr\}\le \frac{1}{2} m(m-1)d 
% %\left(\begin{array}{c}m \\2\end{array}\right).
% \end{aligned}
% \]
% \[
% \cross_-(x) := \liminf_{\epsilon\to 0}\bigl\{\cross(\tilde x)\mid \tilde x\in N_\epsilon(x)\bigr\}, ~~\text{~and~}
% \]
% \[
% \cross_+(x) := \limsup_{\epsilon\to 0}\bigl\{\cross(\tilde x)\mid \tilde x\in N_\epsilon(x)\bigr\}.
% \]
%For $0<\epsilon \ll 1$ 
are independent of $\epsilon$ provided $\epsilon>0$ is sufficiently small and therefore are well-defined.
For $x\in \Conf_m^d$ the numbers $\cross_-(x)$ and $\cross_+(x)$ are also defined  in which case $\cross_-(x)=\cross_+(x)=\cross(x)$, cf.\ \cite{FuscoOliva}.\index{$\cross_+(x)$}\index{$\cross_-(x)$}

For a discrete braid $x$  its \emph{braid components}\index{Braid component} are given by the cycles of the permutation $\tau$. The the orders of the cycles which we refer to as the \emph{cycle orders}\index{Cyclic order} is another invariant for a braid class $[x]$. For example for $\theta\in S_5$ given by $\theta = (01)(234)$ the cycle orders are $2$ and $3$. We can define a special space of braids by coloring components.
In this text  we are interested in particular in braids with dual  coloring.
The space of \emph{2-colored discretized braids}\index{Colored discretized braid}\index{Braid!2-colored discretized} $\Conf_{n,m}^d$ consists of ordered pairs $\bigl(x,y\bigr)$, where $x=\{x^\alpha\}_{\alpha=0}^{n-1}\in \Conf_n^d$ and $y=\{y^\beta\}_{\beta=0}^{m-1}\in \Conf_m^d$, and $(x,y)$ satisfies Definition \ref{PL}(i)-(iii).
In other words $(x,y)\in \Conf_{n+m}^d$. We denote a 2-colored discretized braid by $x\rel y$.
The canonical projection $\pi\colon \Conf_{n,m}^d \to \Conf_m^d$ given by $x\rel y \mapsto y$ yields 
% the following fibration
% \[
% [x]\rel y \longrightarrow [x\rel y] \longrightarrow [y],
% \]
the fibers
 $\Conf_n^d\rel y = \pi^{-1}(y)$. A connected component $[x]\rel y$ is called a \emph{(discrete) relative  braid class component}\index{Relative  braid class component} with skeleton $y$. 
A connected component $[x\rel y]$ in $\Conf^d_{n+m}$ is called a 
\emph{(discrete) relative braid class}.\index{Relative braid class} The fibers
$\pi^{-1}(y)\cap [x\rel y]$ consist of relative braid class components $[x]\rel y$ and are referred to as the \emph{(discrete) relative braid class fibers}\index{Relative braid class fiber} of $[x\rel y]$. In most situations a braid class fiber consists of a single braid class component.
 
%  If we do not restrict to a braid class in $\Conf_{n,m}^d$ we obtain  \emph{relative braid classes} $\Conf_n^d\rel y = \pi^{-1}(y)$.
\begin{definition}
\label{properbr}
A relative braid class $[x\rel y]$ is called \emph{non-degenerate}, or \emph{proper}
if the cycle orders in $x$ differ form the cycle orders in $y$.\index{Braid!non-degenerate}\index{Braid!proper}\index{Non-degenerate braid}\index{Proper braid} 
In particular, for $x\rel y\in \Conf^d_{1,m}$ is proper if the cycle orders in $y$ are all larger than $1$.   In latter case the \emph{skeleton} $y$ is also called \emph{proper}.\index{Proper braid}\index{Proper skeleton}\index{Skeleton!proper}
\end{definition}
If $[x\rel y]$ is proper, then its fibers $\pi^{-1}(y)\cap [x\rel y]$ are and therefore also its components $[x]\rel y$ are proper as well.
% For a braid class fiber $[x]\rel y$ it makes sense the order the strands in $y$
For a skeleton $y\in \Conf_m^d$ we consider
the singular braids in $x\rel y\in \Sing_{n,m}^d$.
%such that 
% the skeleton $y$ is a discretized braid, i.e. $x\rel y \mapsto y\in \Conf_m^d$.
% which yields the fibration
% \[
% \Sigma_n^d \rel y \longrightarrow 
% \]
Denote the fiber of singular relative braids by $\Sing_n^d \rel y
=\pi^{-1}(y)$.

\section{Parabolic flows}
\label{paraflows123}
Discretized braids introduced in the previous section are intimately related to a class of recurrence relations.
\begin{definition}
A {\em parabolic recurrence relation}\index{Parabolic recurrence relation} (of period $d>0$) is a system of equations of the form
\begin{equation}\label{eqn:parabolic}
R_i(x_{i-1},x_i,x_{i+1}) = 0,\quad  i\in \Z,
\end{equation}
where each $R_i\colon \R^3\to \R$ is a smooth function such that 
\begin{enumerate}
    \item [(i)] $R_{i+d}=R_i$ for all $i$;
    \item [(ii)] $\partial_1 R_i>0$ and $\partial_3 R_i> 0$.\footnote{One can weaken the monotonicity with one of the inequalities to be $\ge$ for every $i$.
    For convenience in this paper we assume strict inequalities for both partial derivatives unless indicated explicitly.} 
\end{enumerate}
We denote the recurrence relation by $\mathbf{R} = (R_i)$.
%and $\cR(\uu)=0$, with $\uu=(x_i)$ represents Equation \eqref{eqn:parabolic}.
\end{definition}
The following proposition establishes periodic solutions of parabolic recurrence relations as discretized braid diagrams. At a latter stage we also define associated flows which yields an even stronger symbiosis between parabolic recurrence relations and discretized braids.
%  Examples of parabolic recurrence relations arise in discretizations of uniform parabolic PDE's, applications to  monotone twist maps and fourth order conservative differential equations, etc, cf.\ \cite{gv,im}. %~\cite{bgvw,day,gv,im,miro}. 
\begin{proposition}[cf.\ \cite{im}]
Let $x=\{x^\alpha\}_{\alpha=0}^{m-1}$ be a set of strands satisfying Definition \ref{PL}(i)-(ii) with the property that $R_i(x_{i-1}^\alpha,x_i^\alpha,x_{i+1}^\alpha)=0$, for all $i$ and all $\alpha$. Then, $x\in \Conf_m^d$, i.e. Definition \ref{PL}(iii) is also satisfied. Such a discretized braid is called a stationary braid with respect to \eqref{eqn:parabolic}.\index{Braid!stationary}\index{Stationary braid}
In particular, the crossing number of $x$ is well-defined.
\end{proposition}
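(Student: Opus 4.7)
My plan is to argue by contradiction using the strict monotonicity conditions $\partial_1 R_i > 0$ and $\partial_3 R_i > 0$ from the definition of a parabolic recurrence relation. Suppose two distinct strands $x^\alpha$ and $x^{\alpha'}$ and an index $i$ violate Definition~\ref{PL}(iii), so that $x^\alpha_i = x^{\alpha'}_i$ while $(x^\alpha_{i-1}-x^{\alpha'}_{i-1})(x^\alpha_{i+1}-x^{\alpha'}_{i+1}) \geq 0$. After relabeling we may assume $x^\alpha_{i-1}\ge x^{\alpha'}_{i-1}$ and $x^\alpha_{i+1}\ge x^{\alpha'}_{i+1}$.

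The first step is the generic case, where at least one of these inequalities is strict. Then using $\partial_1 R_i, \partial_3 R_i > 0$ together with $x^\alpha_i = x^{\alpha'}_i$ yields
\[
0 = R_i(x^\alpha_{i-1},x^\alpha_i,x^\alpha_{i+1}) > R_i(x^{\alpha'}_{i-1},x^{\alpha'}_{i},x^{\alpha'}_{i+1}) = 0,
\]
a contradiction. This rules out the non-transverse configuration whenever the two strands differ at $i-1$ or at $i+1$.

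The second step handles the remaining case $x^\alpha_{i-1}=x^{\alpha'}_{i-1}$, $x^\alpha_i=x^{\alpha'}_i$, $x^\alpha_{i+1}=x^{\alpha'}_{i+1}$. Here I would use that the strict monotonicity in the first and third arguments makes the equation $R_i(a,b,c)=0$ uniquely solvable for $c$ given $(a,b)$ and for $a$ given $(b,c)$: if $R_i(a,b,c)=R_i(a,b,c')=0$ then strict monotonicity in the third slot forces $c=c'$, and symmetrically for the first slot. Starting from two consecutive agreeing anchor points, this uniqueness propagates equality of the two strands to every index by induction in both directions, and then the periodicity (Definition~\ref{PL}(ii)) keeps us within the finite collection $\{x^\alpha_j\}$. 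Hence $x^\alpha = x^{\alpha'}$ as unordered collections of anchor points, contradicting the assumption that the strands are distinct.

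Combining the two steps rules out every failure of the transversality condition, so $x \in \Conf_m^d$. Since the crossing number $\cross$ is defined on $\Conf_m^d$ (and is invariant on components), the last assertion follows immediately. The only place that requires slight care is the propagation step, where one must use the strict (rather than weak) monotonicity of $R_i$ to obtain unique solvability; this is precisely why strict inequalities are imposed on $\partial_1 R_i$ and $\partial_3 R_i$ in the definition.
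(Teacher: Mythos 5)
Your proof is correct; the paper itself states the proposition with only a pointer to \cite{im} and gives no proof of its own, and the two-step argument you give (strict monotonicity forcing $0>0$ in the generic case; unique continuation of the second-order recurrence in the degenerate case) is the standard one. The only step worth making fully explicit is that the propagation in the degenerate case crosses the periodic boundary: $x^\alpha_{d+1}$ must be read as $x^{\theta(\alpha)}_1$ via the permutation $\theta$ of Definition~\ref{PL}(ii), and the recurrence relation should be understood to hold on the unfolded bi-infinite sequences attached to each cycle of $\theta$. With that convention, your induction yields $x^{\theta^s(\alpha)}_j = x^{\theta^s(\alpha')}_j$ for all $s$ and $j$, and taking $s=0$ gives $x^\alpha = x^{\alpha'}$, contradicting the pairwise distinctness of the strands in the set $x$. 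Neither point is a gap in the idea, just where I would add a sentence.
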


Associated to a parabolic recurrence relation we consider the following system of differential equations:
\begin{equation}
    \label{paraboliceq1}
    \dot x_i = R_i(x_{i-1},x_i,x_{i+1}),\quad  i\in \Z.
\end{equation}
The solution operator as well as the system of  differential equations will be referred to as a \emph{discrete parabolic flow}.\index{Discrete parabolic flow}\index{Parabolic flow}
The objective is to find $k$-periodic solutions of parabolic recurrence relations, i.e.  sequences $(x_i)$, with $x_{i+kd} = x_i$ for some $k\in \N$, which satisfies Equation \eqref{eqn:parabolic}. In order to build a suitable theory for periodic solutions we use the concept of discretized braids, cf.\ \cite{im}
Multiple  periodic sequences with possible different periods
may be regarded as a {discretized braid diagram}.

The parabolic equation 
 in \eqref{paraboliceq1} defines a \emph{local}\index{Flow!local} flow $\varphi$ on
 the space of 1-periodic sequences $\Conf_1^d\cong \R^d$ with the standard metric topology. We refer to $\varphi$ as a \emph{parabolic flow}.
 We say that a braid $y\in \Conf_m^d$ is a \emph{skeletal braid}\index{Skeleal braid}\index{Braid!skeletal} for $\varphi$ if Equation \eqref{eqn:parabolic} is satisfied for all $y^\alpha \in y$. Shorthand notation $\mathbf{R}(y)=0$ and $y$ is also referred as a \emph{skeleton}\index{Skeleton} for $\varphi$. Recall that a skeleton for which the cycle orders are all larger than 1 is called a \emph{proper skeleton}.\index{Proper skeleton}\index{Skeleton!proper} Relative braids $x\rel y \in \Conf_1^d\rel y$ for which $y$ is proper are proper as relative braids.
 In this case there is an important relation between parabolic dynamics and the crossing number invariants.\index{Proper relative braid}\index{Relative braid!proper}

\begin{proposition}[cf.\ \cite{im}, \cite{FuscoOliva}]
\label{lapLyap}
Let $y\in \Conf_m^d$ be a proper skeleton (stationary braid) for $\varphi$, i.e. all cycle orders are strictly larger than 1, and let 
$x\rel y\in \Sing_1^d\rel y$. Then, for $\epsilon>0$ sufficiently small, we have that 
\begin{enumerate}
    \item[(i)] $\varphi(t,x)\rel y \in \Conf_1^d\rel y$ for all $0<|t|\le \epsilon$;
    \item[(ii)]  for all
    $-\epsilon \le t_- < 0 < t_+ \le \epsilon$ it holds that
    \[
    \begin{aligned}
    \cross_+(x\rel y) &=\cross\bigl(\varphi(t_-,x)\rel y\bigr) \\
    &> \cross\bigl(\varphi(t_+,y)\rel y\bigr) =\cross_-(x\rel y).
    \end{aligned}
    \]
\end{enumerate}
\end{proposition}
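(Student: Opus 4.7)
My plan is to reduce the statement to a discrete Sturmian principle applied to the variational equation satisfied by the difference between the free strand $x(t)=\phi(t,x)$ and each strand of the stationary skeleton $y$. Since both statements are cited to \cite{im} and \cite{FuscoOliva}, I will re-derive the argument at the level of ideas and keep the technical lemma about sign-change decrease in a ``black-box'' form.

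First, for each strand $y^\beta$ of $y$, introduce $z^\beta_i(t):=x_i(t)-y^\beta_i$. Since $\mathbf{R}(y)=0$ and $x(t)$ solves \eqref{paraboliceq1}, subtracting and applying the fundamental theorem of calculus coordinate-wise gives
\begin{equation*}
\dot{z}^\beta_i \;=\; a^\beta_i(t)\,z^\beta_{i-1} \;+\; b^\beta_i(t)\, z^\beta_i \;+\; c^\beta_i(t)\,z^\beta_{i+1},
\end{equation*}
where $a^\beta_i,c^\beta_i$ are the averaged values of $\partial_1 R_i$ and $\partial_3 R_i$ respectively along the segment joining the two configurations, hence strictly positive by the monotonicity axiom. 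This is the discrete analogue of a one-dimensional parabolic equation, and $z^\beta$ is periodic of period $kd$ (for $k$ equal to the cycle length of $y^\beta$).

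Second, I will invoke the discrete Sturmian theorem for such tridiagonal cooperative equations: the number $\sigma(z^\beta(t))$ of sign changes of the periodic sequence $z^\beta(t)$ is non-increasing in $t$, and \emph{strictly} decreases whenever $z^\beta(t_0)$ fails the transversality condition of Definition \ref{PL}(iii) at some index $i$ (i.e.\ $z^\beta_i(t_0)=0$ with $z^\beta_{i-1}(t_0)\,z^\beta_{i+1}(t_0)\ge 0$, so that a generic $z$-sign pattern produces a drop through $t_0$). This is precisely the content of the Fusco--Oliva / Angenent-type lemma in the setting of \eqref{eqn:parabolic}; its proof uses the positivity of $a^\beta_i,c^\beta_i$ together with a discrete maximum principle.

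Third, I will combine these pieces as follows. The hypothesis $x\rel y\in\Sing_1^d\rel y$ means that at $t=0$ at least one $z^\beta$ has a degenerate zero (a failure of transversality), while no two strands of $y$ fail transversality with each other because $y\in\Conf_m^d$. The Sturmian lemma shows that this degenerate zero resolves instantly: for $0<|t|\le \epsilon$ with $\epsilon$ small, $z^\beta(t)$ has only simple sign changes and $\phi(t,x)$ meets $y$ only transversely, establishing (i). For (ii), note that by continuity of $\phi$ and the monotonicity of $\sigma$, the counts $\sigma(z^\beta(t))$ are constant on $(-\epsilon,0)$ and on $(0,\epsilon)$ and differ by a positive integer across $t=0$. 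Summing $\iota(x^\alpha,y^\beta)=\sigma(z^\beta(t))$ over $\beta$ and inserting this into the definition of $\cross$ yields
\begin{equation*}
\cross(\phi(t_-,x)\rel y) \;>\; \cross(\phi(t_+,x)\rel y),\qquad t_-<0<t_+,
\end{equation*}
and by the extremal characterization of $\cross_\pm$ (the max/min over small perturbations in $\scrS_\epsilon$, which in particular includes the nearby transverse braids $\phi(\pm t,x)\rel y$), these two counts agree with $\cross_+(x\rel y)$ and $\cross_-(x\rel y)$ respectively.

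The main obstacle I anticipate is the proof of the Sturmian strict-decrease lemma itself, in particular handling the case where several indices $i$ fail transversality simultaneously (including two consecutive degenerate zeros) and ensuring that the drop is at least one sign change. I would handle this by a local analysis: near a degenerate zero block, expand $z^\beta_i(t)$ to leading order in $t$ and use the positivity of $a^\beta_i,c^\beta_i$ to show that the emerging pattern of signs has strictly fewer alternations than the enveloping ``worst-case'' pattern obtained by perturbing the zeros arbitrarily; this gives $\cross_+$ on the $t<0$ side and $\cross_-$ on the $t>0$ side, with strict inequality coming from at least one resolved tangency.
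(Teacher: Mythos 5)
Your proposal follows essentially the same path as the paper's proof: both reduce to the Fusco--Oliva/Angenent strand-wise lap-number (sign-change) decrease result for the difference sequences, aggregate over the skeleton strands, and then invoke the Fusco--Oliva extremal characterization to identify the one-sided limits with $\cross_\pm$. The one bookkeeping point the paper treats more carefully is the cycle structure of $y$: the paper passes to the $k$-fold cover $k\# y$ (with $k$ the order of the permutation $\theta$, so every lifted strand is genuinely $kd$-periodic), applies Fusco--Oliva there, and divides out via the scaling identity \eqref{concat13}. In your version the asserted identity $\iota(x,y^\beta)=\sigma(z^\beta(t))$ is imprecise, since $\sigma(z^\beta)$ counted over the full period $k_\beta d$ collects intersections of $x$ with the \emph{whole cycle} through $y^\beta$, so summing naively over all $\beta$ overcounts each intersection by the cycle length; this does not threaten the strict inequality in (ii), but a clean proof of the stated equalities with $\cross_\pm$ needs exactly the per-cycle normalization that the covering construction and \eqref{concat13} supply.
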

\begin{proof}
Denote by $k\#y$ the $k$-fold covering of $y$, i.e. we take $k$ concatenated copies of $y$. 
The braid $y$ gives a permutation $\theta$ on the on the symbols $\{1,\cdots,m\}$, cf.\ Defn. \ref{PL}(ii). Choose $k$ to be order of the permutation $\theta$. Then,
 $k\# y$ consists of 1-periodic sequences for $i\in \{0,\cdots, kd\}$.
Moreover, 
\begin{equation}
    \label{concat13}
\cross\bigl(k\# x \rel k\# y\bigr) = k \cross\bigl(x\rel y \bigr).
\end{equation}
Since $R_{i+d} = R_i$ we have that the flow $\varphi^k$ generated by $\cR$ on $\Conf_1^{kd}$ is given by the $k$-fold covering if we choose $x\in \Conf_1^d$, i.e. 
$
\varphi^k(t,k\# x) = k\# \varphi(t,x).
$
If $x\in \Sing_1^d\rel y$, then $k\# x\rel k\# y\in \Sing_1^{kd}\rel k\# y$.
Let $y^\alpha$ be a strand such $x$ and $y^\alpha$ are non-transverse. Since all cycle orders of $y$ are larger than 1 we have that all relative  braids $x\rel y$ are proper and thus
$x-y^\alpha\neq 0$ for all $\alpha$ (strands don not coincide).
By the main result in \cite{FuscoOliva} this implies that
$\iota\bigl(\varphi^k(t_-,x),k\# y^\alpha \bigr)
> \iota\bigl(\varphi^k(t_+,x),k\# y^\alpha \bigr)$
% \[
% \begin{aligned}
% \iota_+(k\cdot x,k\cdot y^\alpha) &= \iota\bigl(\varphi^k(t_-,x),k\cdot y^\alpha \bigr)\\
% &> \iota\bigl(\varphi^k(t_+,x),k\cdot y^\alpha \bigr)
% = \iota_-(k\cdot x,k\cdot y^\alpha),
% \end{aligned}
% \]
for all
    $-\epsilon \le t_- < 0 < t_+ \le \epsilon$.
If we combine this with the crossing number for $k$-fold coverings we obtain
\[
% \begin{aligned}
% \sum_{\alpha}\iota_+(k\cdot x,k\cdot y^\alpha) &=
\sum_{\alpha}\iota\bigl(\varphi^k(t_-,x),k\# y^\alpha \bigr)
> \sum_{\alpha}\iota\bigl(\varphi^k(t_+,x),k\# y^\alpha \bigr),
% = \sum_{\alpha}\iota_-(m\cdot x,m\cdot y^\alpha),
% \end{aligned}
\]    
which implies that $\cross\bigl(\varphi^k(t_-,x)\rel k\# y\bigr) > \cross\bigl(\varphi^k(t_+,x)\rel k\# y\bigr) $
and thus 
\[
\cross\bigl(k\#\varphi(t_-,x)\rel k\# y\bigr) > \cross\bigl(k\#\varphi(t_+,x)\rel k\# y\bigr).
\]
 Property \eqref{concat13} then gives
\[
\cross\bigl(\varphi(t_-,x)\rel y\bigr) > \cross\bigl(\varphi(t_+,x)\rel  y\bigr).
\]
From \cite[Thm.\ 1(ii)]{FuscoOliva} it also follows that 
$\cross\bigl(\varphi(t_-,x)\rel y\bigr) = \cross_+(x\rel y)$ and
$\cross\bigl(\varphi(t_+,x)\rel  y\bigr) = \cross_-(x\rel y)$, which completes the proof
\end{proof}

As a consequence of the above proposition we conclude that $\cross_+(\cdot\rel y)$ is a   discrete Lyapunov function for $\varphi$ and the value of the Lyapunov function strictly drops at singular relative braids.\index{Lyapunov function}
%  on $\R^d$ and can be interpreted as an evolution $\varphi(t,\uu)$ of periodic sequences $\uu$. 

\begin{remark}
 If we do not require the cycle orders for $y$ to be strictly larger than 1 then if $x$ may coalesce with the 1-periodic strands in $y$ in which case associated the singular braid is stationary and Proposition \ref{lapLyap}(ii) does not hold in that case. For application of these technique for improper braid classes recall \cite{im,miro}.
\end{remark}
%  Parabolic flows are canonical evolutions with respect to braid diagrams, i.e. certain invariants of braid diagrams act as Lyapunov functions for parabolic flows. One such invariant is the total crossing number of a braid class fiber $[\uu]\rel \vv$.
% Following \cite{im} we may regard the strands on $\uu$ and $\vv$ are piecewise linear interpolations between the anchor points. In this way each strand is identified with a piecewise linear function on $\R$.
% For the piecewise linear functions for the strands in $\uu$ and $\vv$ we can define the total number of intersection of $\uu$ with all other strands in $\vv$. 
% Since this number is independent of the strand $\uu$ in $[\uu]\rel\vv$ it defines an invariant for 
% $[\uu]\rel\vv$ and we denote this number by $\cross\bigl([\uu]\rel \vv\bigr)$. The latter is called the \emph{total crossing number} of $[\uu]\rel\vv$.
% For parabolic flows we have the following relation with  the crossing number.
% \begin{proposition}[cf.\ \cite{im}]
% \label{lapLyap}
% Let $\uu\in \R^d\smallsetminus \CC^d(\vv)$, then for $0<\epsilon\ll 1$ sufficiently small, we have that 
% \begin{enumerate}
%     \item $\varphi(t,\uu) \in \CC^d(\vv)$ for all $0<|t|\le \epsilon$;
%     \item $\cross\bigl([\varphi(t_-,\uu)]\rel \vv\bigr) > \cross\bigl([\varphi(t_+,\uu)]\rel \vv\bigr)$ for all
%     $-\epsilon \le t_- < 0 < t_+ \le \epsilon$.
% \end{enumerate}
% \end{proposition}

%In particular $\cross$ is a discrete Lyapunov function for $\varphi$.

Besides crossing numbers and cycle orders 
another invariant for 
relative braid classes can be defined, cf.\  \cite{im}, and which is of  algebraic topological nature.
In this case we assume that the components $[x]\rel y$  are bounded as sets in $\R^d$.
In Section \ref{somebrth} we provided an extensive account of the algebraic invariant which is also referred to as  the \emph{Braid Conley index}.\index{Braid Conley index}\index{Conley index}
% The objective of studying relative braid classes of the above type is to define
%invariants. 
The theory in \cite{im} implies that the invariants are homotopy invariants, i.e.\
a homotopy $h_s(y)$ in $\Conf_m^d$ yields 
isomorphic invariants.
% a homotopic space $\CC^d(\vv)_s:= h_s(\CC^d(\vv))$.
The latter is useful for choosing convenient representatives $y$  for studying the  relative braid class fibers in $\Conf_1^d\rel y$.
% A parabolic recurrence relation of period $d$ relation induces a local flow $\varphi$ on $\R^d$ and the invariant has very specific meaning for parabolic flows.
% We now describe a graph theoretic way to build and characterize the space $\CC^d(\vv)$ for specifically constructed skeleta $\vv$.

\section{Closure algebra discretizations for parabolic flows}\label{sec:parabolic:model}
In this section we assume that a skeleton $y\in \Conf_m^d$ always consist of two extremal strands: define $y^-=y^0$ and $y^+=y^{m-1}$ such that the remaining strands satisfy
$
y^-_i < y_i^\alpha < y_i^+,%\quad \forall \alpha=2,\cdots, m-1.
$
for all $\alpha=1,\cdots, m-2$. The latter collection of strands is denoted by $\mathring y$. 
% Denote the sub-skeleton consisting of the extremal strands  $y_-$ and $y_+$ by $y^*$.
The skeleton $y$ now induces a bounded cubical complex as will be explained in Section \ref{metrcidisc}.
\begin{definition}
\label{spaceandflow}
Let $y\in \Conf_m^d$ be as described above and assume that $\mathring y$ is proper, cf. Defn.\ \ref{properbr}.
We consider points $x$ in 
\begin{equation}
    \label{thespace}
X= \bigl\{x\in \Conf_1^d \mid y_i^-\le x_i\le y_i^+\bigr\}\subset \R^d,
\end{equation}
which is a compact metric space with metric topology induced by $\R^d$, cf.\ Rem.\ \ref{1dbraids}.
% Given $\vv\in \Conf_m^d$, from the definition $\vv$ has $m$ strands and dimensionality $d$. These two parameters suffice to determine the cubical complex $\cX=\cX(\vv)$ on the $d$-cube $[0,m+1]^{d}\subset \R^d$. 
Let $\varphi$ be a parabolic (local) flow on $X$ generated by Equation \eqref{paraboliceq1} where the parabolic recurrence relation $\mathbf{R}$ satisfies $\mathbf{R}(y) = 0$.
\end{definition}
Regard $\bar y=\{y^-,y^+\}$ as sub-skeleton.
For $x$ equal to either $y^-$ or $y^+$ the local flow $\varphi$ is stationary. If $x_i=y^-_i$, or $x_i=y^+$ for at least one $i$, i.e. $x\in \partial X$ as subset of $\R^d$, then $x\rel \bar y\in \Sing^d_1\rel \bar y$.
Proposition \ref{lapLyap} then implies that $\cross\bigl( \varphi(t,x)\rel \bar y\bigr)
=\cross(x\rel \bar y) =0$, which yields $\varphi(t,x)\in X$ for all $t\ge 0$.
If we invoke the remaining strands in $y$ the semi-flow $\varphi$ will display monotone behavior with respect to relative braid classes in the spirit of a Morse tessellation as is described in the forthcoming sections. In practical terms $\varphi(t,x)\rel y$ is generically contained in a relative braid and $\varphi(t,x)\rel y$ can evolve from one braid class to the next and not return. 
Summarizing, the parabolic flows satisfy the following properties:
\begin{enumerate}
    \item [(i)] $\varphi\colon \R^+\times X\to X$ is smooth semi-flow on $X$;
    \item[(ii)] the braid $y\in \Conf_m^d$ is a skeleton for $\varphi$.
\end{enumerate}
We now study parabolic flows on $X$ for a fixed skeleton $y\in \Conf_m^d$. From this point on we assume that $y$ is a skeleton as described above with $\mathring y$ proper.
%is a principal braid.

\begin{remark}
\label{allproper}
The boundary strands $y^-$ and $y^+$ model boundary conditions on the parabolic flow pushing in. Using alternating strands we can also model boundary condition pushing out, cf. \cite{im}.
Variations on push-in or push-out boundary conditions can be obtained by different combinations of multi-strand braids. In this paper we restrict to push-in boundary conditions modeled via the strands $y^-$ and $y^+$.
%
% A variation on `boundary' strands $y^-$ and $y^+$ are non-trivial  multi-strand (mostly 2-strand) braids. In this case we define the space $X$ to be bounded by the upper and lower envelope of $y^-$ and $y^+$ respectfully, cf.\ Fig.\ \ref{otherBC}.
% The advantage of the latter is that $y$ has no cycle orders equal to 1. The approach with constant strands in \eqref{thespace} has computational advantages.
\end{remark}

% \begin{figure}[hbt]
% \vskip-1cm
% \scalebox{1.50}{
% \begin{picture}(300,145)
% \put(5,80){\includegraphics[width=7.5cm]{Computational braids 7e copy.pdf}}
% \end{picture}
% }
% \vskip-4.1cm
% \caption{Cubical complex $\cX$ and values of $\cross(\xi)$, $\xi\in \cX^\topc$ [left], for the braid diagram displayed in Fig.\ \ref{fig:parabolic:lap}. The outlined squares are magnified in the middle and right figures and indicate the relation $\cR$. On the various cells $\xi\in \cX$ it gives the Lyapunov function $\Lap$.
% }\label{otherBC}
% \vskip-1cm
% \end{figure}

\subsection{Discretization of the metric topology on $X$}
\label{metrcidisc}
We define a cubical complex $\cX$ for $X$ as follows.
From the definition of $X$ there exists a natural grading on $X$ via co-dimension of tangencies. Define
\[
\begin{aligned}
G_d X &:= \{x\mid x_i \neq y_i^{\alpha_i}, \forall i,\alpha\};\\
G_{d-k} X&:= \{ x\mid x_{i_1}=y_{i_1}^{\alpha_{i_1}},\cdots, x_{i_k}=y_{i_k}^{\alpha_{i_k}},~ i_1\neq\cdots\neq i_k\},
\end{aligned}
\]
where $k=1,\cdots,d$. Note that the indices $\alpha_{i_j}$ are not necessarily distinct. If
$\alpha=\alpha_{i_1}=\cdots = \alpha_{i_d}$, then $\alpha \in \{0,m-1\}$ by the definition of $X$ and the assumption that $\mathring y$ is proper.
%The cell $\xi\in \cX$ are graded by dimension: $0\le \dim(\xi)\le d$. 
The top cells $\xi \in \cX^\topc =: G_d\cX$ label
%are characterized as 
the connected components of the set  $G_d X$, which we refer to as \emph{generic braids}.\index{Braid!generic} 
%$\{x\mid x_i \neq y_i^{\alpha_i}, \forall i,\alpha\}$. 
The $(d-k)$-dimensional cells $\xi\in G_{d-k}\cX$ label
%are characterized as 
the connected components of the set $G_{d-k} X$,
% $\{ x\mid x_{i_1}=y_{i_1}^{\alpha_{i_1}},\cdots, x_{i_k}=y_{i_k}^{\alpha_{i_k}},~ i_1\neq\cdots\neq i_k\}$, 
cf.\ Fig.\ \ref{fig:parabolic:lap}(a). All cells realize as open rectangular cuboids $|\xi|$ in $X$ and are thus homeomorphic to open $k$-balls in $\R^k$, $k>0$.
The set $\cX$ of cells is forms a CW-decomposition of $X$, cf.\ Sect.\ \ref{CW}. The map 
\begin{equation}
    \label{paracell}
\cell\colon X \to \cX, \text{~~defined by ~~} \cell(x) = \xi,\text{~~ for ~~} x\in |\xi|,
\end{equation}
is a CW-decomposition map and thus Boolean.
The co-dimension provides the dimension grading of $\cX$: 
\[
\dim\colon X \to \cX, \text{~~with~~}
\dim \xi = q \text{~~ if and only if ~~} |\xi|\in G_qX.
\]
The face partial order on $\cX$ defines the discrete closure operator $\cl~\xi = \big\downarrow \xi$. The triple $(\cX,\cl,|\cdot|)$ is a CA-discretization for $X$ and the
  finite algebra $(\sSet(\cX),\cl)$ is the associated closure algebra discretezation of $(\sSet(X),\cl)$.
% We define a cubical complex $\cX$ for $X$ as follows. The cell $\xi\in \cX$ are graded by dimension: $0\le \dim(\xi)\le d$. The top cells $\xi \in \cX^\topc$ are characterized as the connected components $|\xi|$ of the set  $\{x\mid x_i \neq y_i^{\alpha_i}, \forall i,\alpha\}$. 
% The $k$-dimensional cells $\xi\in G_k\cX$ are characterized as the connected components $|\xi|$ of the set
% $\{ x\mid x_{i_1}=y_{i_1}^{\alpha_{i_1}},\cdots, x_{i_k}=y_{i_k}^{\alpha_{i_k}},~ i_1\neq\cdots\neq i_k\}$, cf.\ Fig.\ \ref{fig:parabolic:lap}(a). All cells realize as open rectangular cuboids $|\xi|$ in $X$ and are thus homeomorphic to open $k$-balls in $\R^k$.
% The set $\cX$ of cells is forms a CW-decomposition of $X$, cf.\ Sect.\ \ref{CW}. The map 
% \begin{equation}
%     \label{paracell}
% \cell\colon X \to \cX, \text{~~defined by ~~} \cell(x) = \xi,\text{~~ for ~~} x\in |\xi|,
% \end{equation}
% is a CW-decomposition map and thus regular.
% The face partial order on $\cX$ defines the discrete closure operator $\cl~\xi = \downarrow \xi$. The triple $(\cX,\cl,|\cdot|)$ is a CA-discretization for $X$ and the
%   finite algebra $(\sSet(\cX),\cl)$ is the associated closure algebra discretezation of $(\sSet(X),\cl)$.

Note that generically a relative braid $x\rel y\in \Conf_1^d\rel y$
is a point in a top cell $|\xi|$, $\xi\in \cX^{\topc}$.
This justifies the notation $\cross(\xi)$ as the crossing number of a top cell.
The same applies to the crossing numbers $\cross_-$ and $\cross_+$, i.e.
$\cross_-(\xi) :=\min\bigl\{\cross(\eta)\mid \eta\in \st \xi\cap \cX^{\topc}\bigr\}$ and $\cross_+(\xi) := \max\bigl\{\cross(\eta)\mid \eta\in \st \xi\cap \cX^{\topc}\bigr\}$.
Define  the following combined crossing number function on
$\cX$:
\[
\Lap\colon \cX \to \N\times \N,\quad \xi \mapsto \Lap(\xi) := \bigl(\cross_-(\xi),\cross_+(\xi)\bigr)
\]
\begin{figure}[h!]
\begin{minipage}{.3\textwidth}
\begin{center}
\begin{tikzpicture}[dot/.style={draw,circle,fill,inner sep=.75pt},line width=.7pt,scale=.8]
\foreach \x in {0,1,2}
     \draw[thin,gray,->] (\x, -.25) -- (\x, 4);
\foreach \x in {0,1,2}
    \foreach \y in {0,.5,1,1.5,2,2.5,3,3.5}
        \node (\x\y) at (\x, \y)[dot] {};
%%constrant strands
\foreach \y in {0,3.5}
    \foreach \x  in {0,1}
        \draw (\x,\y) to (\x+1,\y);
\foreach \x in {0} {
     \draw (\x, .5) to (\x+1, .5);
     \draw (\x, 1) to (\x+1, 2);
     \draw (\x, 1.5) to (\x+1, 1);
     \draw (\x, 2) to (\x+1, 3);
     \draw (\x, 2.5) to (\x+1, 2.5);
     \draw (\x, 3) to (\x+1, 1.5);
     }
\foreach \x in {1} {
     \draw (\x, .5) to (\x+1, 1.5);
     \draw (\x, 1) to (\x+1, 1);
     \draw (\x, 1.5) to (\x+1, 2.5);
     \draw (\x, 2) to (\x+1, .5);
     \draw (\x, 2.5) to (\x+1, 2);
     \draw (\x, 3) to (\x+1, 3);
     }
% \foreach \x in {0,1,2}
%      \draw[thin,gray,->] (\x, -.25) -- (\x, 4);
\end{tikzpicture}
\end{center}
\end{minipage}
\begin{minipage}{.3\textwidth}
\begin{center}
\begin{tikzpicture}[dot/.style={draw,circle,fill,inner sep=.75pt},line width=.7pt,scale=.75]
\foreach \x in {0,1,2}
    \draw[thin,gray,->] (\x, -.25) -- (\x, 4);
    
\foreach \x in {0,1,2}
    \foreach \y in {0,.5,1,1.5,2,2.5,3,3.5}
        \node (\x\y) at (\x, \y)[dot] {};
%%constrant strands
\foreach \y in {0,3.5}
    \foreach \x  in {0,1}
        \draw (\x,\y) to (\x+1,\y);
\foreach \x in {0} {
     \draw (\x, .5) to (\x+1, .5);
     \draw (\x, 1) to (\x+1, 2);
     \draw (\x, 1.5) to (\x+1, 1);
     \draw (\x, 2) to (\x+1, 3);
     \draw (\x, 2.5) to (\x+1, 2.5);
     \draw (\x, 3) to (\x+1, 1.5);
     }
\foreach \x in {1} {
     \draw (\x, .5) to (\x+1, 1.5);
     \draw (\x, 1) to (\x+1, 1);
     \draw (\x, 1.5) to (\x+1, 2.5);
     \draw (\x, 2) to (\x+1, .5);
     \draw (\x, 2.5) to (\x+1, 2);
     \draw (\x, 3) to (\x+1, 3);
     }

\node (x1) at (-.35, 3.85) {$x_0$}; %{$y_1$};
\node (x2) at (.65, 3.85) {$x_1$}; %{$y_2$};
\node (x) at (-.25, 1.75) {$x$};

\node[fill=red,draw=red] (f1) at (0,1.75)[dot] {};
\node[fill=red,draw=red] (f2) at (1,2.25) [dot] {};
\node[fill=red,draw=red] (f3) at (2,1.75)[dot] {};
%Dashed option
%\draw[red,densely dashed] (f1) -- (f2) -- (f3);
%Solid
\draw[red] (f1) -- (f2) -- (f3);
\end{tikzpicture}
\end{center}
\end{minipage}
\begin{minipage}{.3\textwidth}
\begin{center}
\begin{tikzpicture}[dot/.style={draw,circle,fill,inner sep=1pt},line width=.7pt,scale=1]
\draw[->,thin] (-.5,-.5) -- (.5,-.5);
\draw[->,thin] (-.5, -.5) -- (-.5,.5);
\node (u) at (.75,-.5) {$x_0$}; %{$y_1$};
\node (u) at (-.5,.75) {$x_1$}; %{$y_2$};
\draw[densely dotted] (0,0) -- (2,0) -- (2,2) -- (0,2) -- (0,0);
\node (f2) at (1,1)[dot,fill=red,draw=red] {};
\node (u) at (1.2,.8) {$x\rel y$};
\node (u) at (1,2.25) {$[x]\rel y$};
\end{tikzpicture}

\end{center}
\end{minipage}
\caption{Skeleton $y$ [left] and $y$ with free strand $x$ (in red) [middle]. A relative braid class component $[x]\rel y$ which is a top-cell in $\cX$ [right].}
\label{fig:braidclass}
\end{figure}

\begin{remark}
\label{canrep1}
It is often convenient to use the following normal form\index{Skeleton!normal form} for
% Without loss of generality (up to scaling) we assume the following \emph{normal form} on 
the skeleton $y$: for any fixed $i$ the cross-section $(y_i^0,y_i^1,\ldots,y_i^{m-1})$ is a permutation of $\{0,1,\ldots,m-1\}$.  That is, the anchor points $(y_i^\alpha)$ are integers and take unique values between $0$ and $m-1$.  This implies that the pairs $(i,y_i^\alpha)$ lay on the integer lattice within the box $[1,d+1]\times [0,m-1]$.  The cubical complex $\cX$ is comprised of
cells $\xi$ which we define as follows.
Consider sets $I^1_i = (i,i+1)\subset \R$ for for $i\in \{0,\cdots,m-2\}$
and sets $I^2_i = \{i\}$, for $i\in \{0,\cdots,m-1\}$.
% where
% \[
% \xi = I_1\times I_2\times \ldots \times I_d\quad\quad \quad\text{and } I_i=[l_i,l_i+1] \text{ with } 1\leq l_i \leq m .
% \]
A cell $\xi$ is given as
% and along any coordinate $i$ there are $m+1$ top-dimensional $d$-cubes in $\cX$.  Given a top-cell $\xi\in \cX^{\topc}$ we have that 
% \[
% \xi = [l_1,l_1+1]\times [l_2,l_2+1]\times \ldots \times [l_d,l_d+1],
% \]
% with $1\leq l_i \leq m $.
\begin{equation}
    \label{cellrepr12}
|\xi| 
%=\xi^j_{i_1,\cdots,i_d}
= I^{j_1}_{i_1}\times I^{j_2}_{i_2}\times \ldots
%J_{i_\ell}\times \cdots 
\times I^{j_d}_{i_d},
%\quad i_k \in \{1,\cdots,m+1\},~~j_k =\{1,2\},
\end{equation}
where $j_k\in \{1,2\}$, $i_k \in \{0,\cdots,m-2\}$ for $I^1_{i_k}$ and
$i_k \in \{0,\cdots,m-1\}$ for $I^2_{i_k}$.
Figure \ref{fig:parabolic:lap}(a) below shows a skeleton $y$ in normal form and Figure \ref{fig:parabolic:lap}(b) depicts the cubical complex consisting of unit squares. 
We can now use the cubical complex to describe a normal form of the braid component $[x]\rel y$ with $y$ in normal form.
The dimension of a cell $\xi$ is given by $\dim \xi = \sum_{j_k\not = 2} j_k$, where the integers are determined by the representation of $|\xi|$ in \eqref{cellrepr12}.
\end{remark}

  Figure  \ref{fig:braidclass}[left] above is an example of a skeleton $y\in \Conf_8^2$. 
  Figure  \ref{fig:sec112}[left] displays the CW-decomposition in terms of cubes.
  Figure  \ref{fig:sec112}[left] also gives the crossing numbers $\cross(\xi)$ for the top cells $\xi\in \cX^\topc$.
  %and the Hasse diagram of the coarsened pre-order $\cD$.
%  In the next section we define a different discrete topology again based on the crossing number $\Lap$.

\subsection{Discretization of  flow topologies}
\label{fltodisc}
In the previous section we described a natural CW-decomposition to discretize the metric topology on $X$.
%used the crossing number $\Lap$ on cells to coarsen the CW-topology in order to obtain the relative braid class fibers as cells for the coarsened discretization.
We now define a second discrete topology on $\cX$ via a pre-order such that
 the map $\Lap\colon \cX\to \N\times \N$ is order-preserving and which serves the purpose of discretizing the \bflt $\scrTbf$.
 The order on $\N\times \N$ is the product order, i.e. $(a,b)\le (a',b')$ if and only if $a\le a'$ and $b\le b'$.
We build a pre-order for discretizing $\scrTbf$ in  two steps:
\begin{itemize}
    \item[(i)] Define the (symmetric) relation $\cE\subset \cX\times \cX$ as the \emph{partial adjacency relation}
    \[
        (\xi,\xi'), (\xi',\xi)\in \cE \iff  \xi'\in \cX^{\topc}\text{~~and~~}  \xi\in \cl~\xi'.\footnote{In term of the face partial order this reads $\xi\le \xi'$.}
    % (\xi,\xi')\in \cE \iff  \xi\in \cl~ \xi',\text{~~or if~~}  \xi'\in \cl~\xi.
% (\xi,\xi')\in \cE \iff  \cl \xi\cap \cl \xi'\neq \varnothing.
%\quad \text{and}\quad \xi\neq \xi',
\] 
% \[
% (\xi,\xi')\in \cE \iff \cX^{\topc} \cap \cl \xi\cap \cl \xi'\neq \varnothing\quad \text{and}\quad \xi\neq \xi',
% \] 
    % i.e. $(\xi,\xi')\in \cE$ if and only if  $\xi$ and $\xi'$ intersect along a $(d-1)$-dimensional face. 
    \item[(ii)] Define the \emph{discrete flow relation} $\cR\subset \cE$ as follows:  
    %A pair $(\xi,\xi')\in \cR$ if 
%\begin{enumerate}
\[
(\xi,\xi')\in \cR  \iff   \Lap(\xi)\leq \Lap(\xi')
\]
\end{itemize}
By construction $\Lap$ is order-preserving.
Observe that $(\xi,\xi)\in \cR$ if and only if $\xi \in \cX^{\topc}$.
% \begin{itemize}
%     \item[(i)] Define the relation $\cE\subset \cX\times \cX$ as the (symmetric) \emph{full adjacency relation}
%     \[
%         (\xi,\xi'), (\xi',\xi)\in \cE \iff  \xi\in \cX^{\topc}\text{~~and~~}  \xi'\in \cl~\xi.\footnote{In term of the face partial order this reads $\xi'\le \xi$.}
%     % (\xi,\xi')\in \cE \iff  \xi\in \cl~ \xi',\text{~~or if~~}  \xi'\in \cl~\xi.
% % (\xi,\xi')\in \cE \iff  \cl \xi\cap \cl \xi'\neq \varnothing.
% %\quad \text{and}\quad \xi\neq \xi',
% \] 
% % \[
% % (\xi,\xi')\in \cE \iff \cX^{\topc} \cap \cl \xi\cap \cl \xi'\neq \varnothing\quad \text{and}\quad \xi\neq \xi',
% % \] 
%     % i.e. $(\xi,\xi')\in \cE$ if and only if  $\xi$ and $\xi'$ intersect along a $(d-1)$-dimensional face. 
%     \item[(ii)] Define the relation $\cR\subset \cE$ as follows:  
%     %A pair $(\xi,\xi')\in \cR$ if 
% %\begin{enumerate}
% \[
% (\xi,\xi')\in \cR  \iff   \Lap(\xi)\leq \Lap(\xi')
% \]
% \end{itemize}
%
Since every relative braid and relative singular braid is associated to a unique cell $\xi$ we devide up the cells in $\cX$ into two groups, the \emph{regular cells} $\xi\in \cXr$ and the \emph{singular cells} $\xi\in \cXs$ which may be characterized as follows:
\[
\begin{aligned}
\xi\in \cXr,\quad&\text{if and only if}\quad \cross(\xi)=\cross_-(\xi) = \cross_+(\xi);\\
\xi\in \cXs,\quad&\text{if and only if}\quad \cross_-(\xi) < \cross_+(\xi).
\end{aligned}
\]
% In particular, for $\xi\in \cXs\cap \cl~\xi'\cap\cl~\xi''$ this implies that
% $\bigl(\cross_-(\xi'),\cross_+(\xi')\bigr)=\Lap(\xi') < \Lap(\xi) < \Lap(\xi'')
% =\bigl(\cross_-(\xi''),\cross_+(\xi'')\bigr)$.
The transitive closure of $\cR$ yields the discrete derivative operator 
$\derr^+:= (\cR^{\scaleto{\bm{+}}{4pt}})^{-1}$
%$\der^+:= \bvtheta_{\cR^{\scaleto{\bm{+}}{4pt}}}$ % = (\cR^+)^{-1}[\cdot]$ 
and the transitive, reflexive closure $\cR^{\bm +=}$ defines the pre-order $\le^+$ and yields the discrete closure operator 
$\ccl^+:= (\cR^{\scaleto{\bm{+=}}{4pt}})^{-1}$,
%$\cl^+:= \bvtheta_{\cR^{\scaleto{\bm{+=}}{4pt}}}$, % =  (\cR^{\bm{+=}})^{-1}[\cdot]$, 
and  $\ccl^+ = \id \cup \derr^+$. %, cf.\ App.\ \ref{JTduality}.

\begin{lemma}
\label{closappr1}
The discrete closure operator $\ccl^+$ defined above satisfies the continuity condition $\cl^+|\cU|\subset |\ccl^+\cU|$ for all $\cU\subset \cX$.
\end{lemma}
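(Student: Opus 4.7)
The plan is to reduce the set inclusion to the pointwise statement $\phi(t,|\xi|)\subset |\cl^+\xi|$ for every $\xi\in\cX$ and every $t\ge 0$, using the additivity of $\cl^+$ on $\sSet(\cX)$ and of the realization map $|\cdot|$ on $\sSet(X)$, together with the decomposition $\cl^+|\cU| = |\cU|\cup\bigcup_{t>0}\phi(t,|\cU|)$ and the trivial inclusion $\cU\subset\cl^+\cU$. For the pointwise claim I would analyze the forward trajectory $s\mapsto\phi(s,x_0)$ for an arbitrary $x_0\in|\xi|$ and decompose it into finitely many cell transitions, each matched by a pair in the flow relation $\cR$.

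First I would apply Proposition~\ref{lapLyap} to conclude that the discrete crossing number $\cross$ strictly decreases each time the trajectory passes through a singular cell. Since $\cross$ takes non-negative integer values bounded above by $\cross_+(\xi)$, the trajectory on $[0,t]$ visits a finite alternating sequence of top cells $\eta_0,\eta_1,\ldots,\eta_M\in\cX^\topc$ separated by singular events at cells $\zeta_1,\ldots,\zeta_M\in\cXs$. The case $\xi\in\cX^\topc$ gives $\eta_0=\xi$; the case $\xi\in\cXs$ forces the trajectory to leave $|\xi|$ instantaneously, entering a top cell $\eta_0$ with $\cross(\eta_0)=\cross_-(\xi)$.

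Next I would verify that each single-step transition is captured by the relation $\cR$. For the exit from a top cell $\eta_j$ into a singular face $\zeta_{j+1}\in\cl\,\eta_j$, Proposition~\ref{lapLyap} gives $\cross(\eta_j)=\cross_+(\zeta_{j+1})$, so $\Lap(\zeta_{j+1})=(\cross_-(\zeta_{j+1}),\cross_+(\zeta_{j+1}))\le(\cross(\eta_j),\cross(\eta_j))=\Lap(\eta_j)$, and hence $(\zeta_{j+1},\eta_j)\in\cR$. For the entry from $\zeta_{j+1}$ into the next top cell $\eta_{j+1}$ with $\zeta_{j+1}\in\cl\,\eta_{j+1}$, Proposition~\ref{lapLyap} gives $\cross(\eta_{j+1})=\cross_-(\zeta_{j+1})$, so $\Lap(\eta_{j+1})\le\Lap(\zeta_{j+1})$, and hence $(\eta_{j+1},\zeta_{j+1})\in\cR$. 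The initial transition $\xi\to\eta_0$ in the singular case is an instance of the second type. Concatenating these pairs through the reflexive-transitive closure $\cR^{\bm{+=}}$ and invoking $\cl^+=\bvtheta_{\cR^{\bm{+=}}}$ yields that every cell visited by the trajectory---in particular the cell containing $\phi(t,x_0)$---belongs to $\cl^+\xi$, completing the proof.

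The main obstacle is the asymmetric Lyapunov bookkeeping across the two cases of the symmetric adjacency relation $\cE$. Specifically, one must recognize that the distinct identities $\cross_+(\zeta)=\cross(\eta_{\text{before}})$ and $\cross_-(\zeta)=\cross(\eta_{\text{after}})$ supplied by Proposition~\ref{lapLyap} are precisely what the single ordering condition $\Lap(\cdot)\le\Lap(\cdot)$ in the definition of $\cR$ requires in each case. Additionally, one must confirm that the strict decrease of $\cross$ at singular crossings both rules out infinite accumulation of events in bounded time and prevents the trajectory from returning to a previously visited top cell.
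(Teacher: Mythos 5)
Your high-level strategy---reducing to the pointwise claim $\phi(t,|\xi|)\subset|\cl^+\xi|$, tracking the trajectory cell-by-cell, and matching each transition against the flow relation $\cR$ via the identities $\cross(\eta_j)=\cross_+(\zeta_{j+1})$, $\cross(\eta_{j+1})=\cross_-(\zeta_{j+1})$ from Proposition~\ref{lapLyap}---is essentially the approach the paper takes, and the $\Lap$-order bookkeeping you give for singular crossings is correct.

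There are nonetheless two genuine gaps. First, your case analysis for the starting cell only treats $\xi\in\cX^\topc$ and $\xi\in\cXs$, omitting $\xi\in\cXr\smin\cX^\topc$, a regular cell that is not a top cell. For such $\xi$ the flow for small $t>0$ enters $|\st\xi|$, and one must prove $\st\xi\subset\cl^+\xi$; this follows because $\xi\in\cXr$ forces every cell of $\st\xi$ to share the same value of $\Lap$, producing $\cR$-loops through the neighboring top cells, but without this step the iteration cannot start. Second, your picture of the trajectory as an alternating sequence of top cells separated exclusively by singular cells, with finiteness and a no-return property deduced from the strict decrease of $\cross$, is incorrect. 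The flow also crosses \emph{regular} codimension-one faces in $\cXr\smin\cX^\topc$, across which $\cross$ does not drop; these must be spliced into the $\cR$-chain (which is possible because adjacent cells of equal $\Lap$ are mutually $\cR$-related). Consequently, the strict decrease at singular events does \emph{not} prevent the trajectory from revisiting a top cell, nor does it by itself bound the number of face crossings in $[0,t]$---it bounds only the number of singular events. The chain argument survives because $\cR^{\bm{+=}}$ tolerates repetitions and the cell set is finite, but the monotonicity of $\cross$ is not the mechanism you should be relying on for finiteness or no-return; confirming those two claims as stated would be both unnecessary and impossible.
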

\begin{proof}
By Remark \ref{discotherfltop} %Lemma \ref{admis1} 
it suffices to show that $\varphi(t,x)\in |\ccl^+\xi|$ for all $t\ge 0$, for all $x\in |\xi|$ and for all $\xi\in \cX$.
Let $\xi\in \cX$, then $\varphi(t,x)\in |\st\xi|$ for all $0\le t\le \tau_x$ for some $\tau_x$ sufficiently small. If $\xi\in \cXr$, then $\st\xi\subset \ccl^+\xi$\footnote{Here $\st$ is associated with the discrete topology $(\ccX,\le)$ given by the CW-decomposition.} and if $\xi\in \cXs$, then $\st\xi\not\subset \ccl^+\xi$.
In the former case $\varphi(t,x)\in |\st\xi|\subset |\ccl^+\xi|$ for all $0\le t\le \tau_x$ for some $\tau_x$ sufficiently small. 
%
% If $\xi\in \cX^{\topc}$, then there exists a time $\tau_x$ such that $\varphi(t,x)\in |\xi|$ for all 
% $0\le t\le \tau_x$.
%
% Let $\xi\in \cX^{\topc}$, then $\varphi(t,x)\in |\xi|$ for all $0\le t\le \tau_x$ for some $\tau_x$ sufficiently small.
In the latter case we argue as follows. 
% Observe that $\ccl\eta \cap \ccl\xi\neq \varnothing$ for all $\eta\in \st\xi$ and thus $(\eta,\xi) \in \cE$ for all $\eta\in \st\xi$.
% Consequently, $(\eta,\xi)\in \cR$ if and only if $\Lap(\eta)\le\Lap(\xi)$.
From Proposition \ref{lapLyap} we have that 
$\varphi(-t,x)\in |\xi^-|$,\footnote{A cell $\xi^-$ does not exist when $\xi$ corresponds to a boundary cell for $X$.} $\varphi(t,x)\in |\xi^+|$, for all $0<t\le \tau_x$, with $\xi^-,\xi^+\in \st\xi\cap \cX^{\topc}$.
Moreover, $\cross(\xi^-) = \cross_+(\xi)$ and $\cross(\xi^+) = \cross_-(\xi)$.
This implies that 
\begin{equation}
    \label{Laps}
\Lap(\xi^-) > \Lap(\xi) = \bigl( \cross_+(\xi),\cross_-(\xi)\bigr) > \Lap(\xi^+).
\end{equation}
This proves that $(\xi^+,\xi) \in \cR$ and thus $\varphi(t,x)\in  |\ccl^+\xi|$ for all $0\le t\le \tau_x$ for some $\tau_x$ sufficiently small. 

The above arguments show that for every $x\in X$ the flow satisfies $\varphi(t,x)\in |\eta|$ for all $0<t\le \tau_x$ for some $\eta\in \cXr$.
Let $t'>\tau_x$ be the first time that $\varphi(t',x)\in |\zeta|$ for some $\zeta\in \cXs\cap \ccl~\eta$. Then, by \eqref{Laps},
$\Lap(\eta)>\Lap(\zeta)$ and thus $(\zeta,\eta)\in \cR$ and consequently $\zeta\in \ccl^+ \xi$.
We can repeat the above argument  to conclude that 
%From the group property for $\varphi$ and the fact that $\Gamma^+$ is order-preserving it follows that
the criterion in Remark \ref{discotherfltop} %Lemma \ref{admis1} 
is satisfied for all $t\ge 0$.
%
%
% By Lemma \ref{admis1} is suffices to show that $\varphi(t,x)\in |\Gamma^+\xi|$ for all $t\ge 0$, for all $x\in |\xi|$ and for all $\xi\in \cX$.
% % Let $\xi\in \cX^{\topc}$, then $\varphi(t,x)\in |\xi|$ for all $0\le t\le \tau_x$ for some $\tau_x$ sufficiently small.
% Let $\xi\in \cX$, then $\varphi(t,x)\in |\st\xi|$ for all $0\le t\le \tau_x$ for some $\tau_x$ sufficiently small. If $\xi\in \cXr$, then $\st\xi\subset \Gamma^+\xi$ and if $\xi\in \cXs$, then $\st\xi\not\subset \Gamma^+\xi$.
% In the former case $\varphi(t,x)\in |\st\xi|\subset |\Gamma^+\xi|$ for all $0\le t\le \tau_x$ for some $\tau_x$ sufficiently small. 
% In the latter case we argue as follows. Observe that $\ccl\eta \cap \ccl\xi\neq \varnothing$ for all $\eta\in \st\xi$ and thus $(\eta,\xi) \in \cE$ for all $\eta\in \st\xi$.
% Consequently, $(\eta,\xi)\in \cR$ if and only if $\Lap(\eta)\le\Lap(\xi)$.
% From Proposition \ref{lapLyap} we have that 
% $\varphi(-t,x)\in |\xi^-|$, $\varphi(t,x)\in |\xi^+|$, for all $0<t\le \tau_x$, with $\xi^-,\xi^+\in \st\xi\cap \cX^{\topc}$.
% Moreover, $\lambda(\xi^-) = \lambda_+(\xi)$ and $\lambda(\xi^+) = \lambda_-(\xi)$.
% This implies that 
% \[
% \Lap(\xi^-) > \Lap(\xi) = \bigl( \lambda_+(\xi),\lambda_-(\xi)\bigr) > \Lap(\xi^+).
% \]
% This proves that $(\xi^+,\xi) \in \cR$ and thus $\varphi(t,x)\in  |\Gamma^+\xi|$ for all $0\le t\le \tau_x$ for some $\tau_x$ sufficiently small. 
% From the group property for $\varphi$ and the fact that $\Gamma^+$ is order-preserving it follows that the criterion in Lemma \ref{admis1} is satisfied for all $t\ge 0$.
\end{proof}
\begin{figure}[hbt]
% \scalebox{1.50}{
% \begin{picture}(300,145)
% \put(5,80){\includegraphics[width=7.5cm]{Computational braids 7e4.pdf}}
% \end{picture}
% }
\begin{minipage}{.3\textwidth}
\centering
\begin{tikzpicture}[dot/.style={draw,circle,fill,inner sep=.75pt}, scale=.45]
\draw[step=1cm,gray,very thin] (0,0) grid (7,7);

{\tiny 

\foreach \x in {.5} {
    \foreach \y [evaluate = \y as \z using int(2*(\y-.5))]  in {.5, 1.5, 2.5, 3.5, 4.5, 5.5, 6.5} {
        \node (\x\y) at (\x, \y) {$\z$}; }
    \foreach \y [evaluate = \y as \z using int((12-2*(\y-.5)))]  in {.5, 1.5, 2.5, 3.5, 4.5, 5.5, 6.5} {
        \node (\x\y) at (\x+6, \y) {$\z$}; }
}
\foreach \y in {.5} {
     \foreach \x [evaluate = \x as \z using int(2*(\x-.5))]  in {1.5, 2.5, 3.5, 4.5, 5.5} {
        \node (\x\y) at (\x, \y) {$\z$};  }
    \foreach \x [evaluate = \x as \z using int(12-2*(\x-.5))]  in {1.5, 2.5, 3.5, 4.5, 5.5} {
        \node (\x\y) at (\x, \y+6) {$\z$}; }
}
\node (11) at (1.5,1.5) {$2$}; 
\node (21) at (2.5,1.5) {$4$}; 
\node (31) at (3.5,1.5) {$4$}; 
\node (41) at (4.5,1.5) {$6$}; 
\node (51) at (5.5,1.5) {$8$}; 

\node (12) at (1.5,2.5) {$4$}; 
\node (22) at (2.5,2.5) {$4$}; 
\node (32) at (3.5,2.5) {$2$}; 
\node (42) at (4.5,2.5) {$4$}; 
\node (52) at (5.5,2.5) {$6$}; 

\node (13) at (1.5,3.5) {$6$}; 
\node (23) at (2.5,3.5) {$6$}; 
\node (33) at (3.5,3.5) {$4$}; 
\node (43) at (4.5,3.5) {$6$}; 
\node (53) at (5.5,3.5) {$6$}; 

\node (14) at (1.5,4.5) {$6$}; 
\node (24) at (2.5,4.5) {$4$}; 
\node (34) at (3.5,4.5) {$2$}; 
\node (44) at (4.5,4.5) {$4$}; 
\node (54) at (5.5,4.5) {$4$}; 

\node (15) at (1.5,5.5) {$8$}; 
\node (25) at (2.5,5.5) {$6$}; 
\node (35) at (3.5,5.5) {$4$}; 
\node (45) at (4.5,5.5) {$4$}; 
\node (55) at (5.5,5.5) {$2$}; 

\draw[thick, -stealth] (0,0) -- (7.1,0);
\draw[thick,-stealth] (0,0) -- (0,7.1);
\draw[thick] (0,0) rectangle (2,2);
\draw[thick] (3,2) rectangle (5,4);

}
\end{tikzpicture}
\end{minipage}
\begin{minipage}{.35\textwidth}
\centering
\begin{tikzpicture}[dot/.style={circle,fill,inner sep=1.5pt},2cell/.style={rectangle,fill,inner sep=2.5pt}, edge/.style = {circle, draw,inner sep=1.5pt},vert/.style = {circle, fill, inner sep=1.5pt},line width=.75pt,scale=1.3]
\def\h{.3}
\def\v{.2}
\node (0) at (0, 0) [2cell] {};
\node at (-\h,-\v) {\scriptsize	$(0,0)$};

\node (1) at (1, 0) [edge] {};
\node at (1,-\v) {\scriptsize $(0,2)$};

\node (2) at (2, 0) [2cell] {};
\node at (2+\h,-\v) {\scriptsize $(2,2)$};

\node (3) at (0, 1) [edge] {};
\node at (-\h,1) {\scriptsize $(0,2)$};

\node (4) at (1, 1) [vert] {};
\node at (1+1.25*\h,1) {\scriptsize $(0,2)$};

\node (5) at (2, 1) [edge] {};
\node at (2+\h,1) {\scriptsize $(2,2)$};

\node (6) at (0, 2) [2cell] {};
\node at (-\h,2+\v) {\scriptsize $(2,2)$};

\node (7) at (1, 2) [edge] {};
\node at (1,2+\v) {\scriptsize $(2,2)$};

\node (8) at (2, 2) [2cell] {};
\node at (2+\h,2+\v) {\scriptsize $(2,2)$};

\draw[-latex] (1) -- (0);
\draw[-latex] (3) -- (0);
\draw [-latex] (4) -- (0);

\draw[-latex] (2) -- (1);
\draw[-latex] (2) -- (4);
\draw[latex-latex] (2) -- (5);

\draw[-latex] (6) -- (3);
\draw[-latex] (6) -- (4);
\draw[latex-latex] (6) -- (7);

\draw[latex-latex] (8) -- (7);
\draw[-latex] (8) -- (4);
\draw[latex-latex] (8) -- (5);

\end{tikzpicture}

\end{minipage}
\begin{minipage}{.33\textwidth}
\centering
\begin{tikzpicture}[dot/.style={circle,fill,inner sep=1.5pt},,2cell/.style={rectangle,fill,inner sep=2.5pt}, edge/.style = {circle, draw,inner sep=1.5pt},vert/.style = {circle, fill, inner sep=1.5pt},line width=.75pt,scale=1.3]
\def\h{.3}
\def\v{.2}
\node (0) at (0, 0) [2cell] {};
\node at (-\h,-\v) {\scriptsize $(2,2)$};

\node (1) at (1, 0) [edge] {};
\node at (1,-\v) {\scriptsize $(2,4)$};

\node (2) at (2, 0) [2cell] {};
\node at (2+\h,-\v) {\scriptsize $(4,4)$};

\node (3) at (0, 1) [edge] {};
\node at (-\h,1) {\scriptsize $(2,4)$};

\node (4) at (1, 1) [vert] {};
\node at (1+1.25*\h,1) {\scriptsize $(2,6)$};

\node (5) at (2, 1) [edge] {};
\node at (2+\h,1) {\scriptsize $(4,6)$};

\node (6) at (0, 2) [2cell] {};
\node at (-\h,2+\v) {\scriptsize $(4,4)$};

\node (7) at (1, 2) [edge] {};
\node at (1,2+\v) {\scriptsize $(4,6)$};

\node (8) at (2, 2) [2cell] {};
\node at (2+\h,2+\v) {\scriptsize $(6,6)$};

\draw[-latex] (1) -- (0);
\draw[-latex] (3) -- (0);
\draw [-latex] (4) -- (0);

\draw[-latex] (2) -- (1);
\draw[latex-] (2) -- (5);

\draw[-latex] (6) -- (3);
\draw[latex-] (6) -- (7);

\draw[-latex] (8) -- (7);
\draw[-latex] (8) -- (4);
\draw[-latex] (8) -- (5);

\end{tikzpicture}

\end{minipage}
\vspace{2ex}
\caption{Cubical complex $\cX$ and values of $\cross(\xi)$, $\xi\in \cX^\topc$ [left], for the braid diagram displayed in Fig.\ \ref{fig:parabolic:lap}. The outlined regions are magnified in the middle and right figures and indicate the relation $\cR$. On the various cells $\xi\in \cX$ it gives the Lyapunov function $\Lap$.
}
\label{fig:sec112}
\end{figure}

\begin{remark}
\label{smalltimeonly}
 In view of Theorem \ref{localconstrGam}, since $X$ is compact, it suffices to prove that $\varphi(t,x)\in |\ccl^+\xi|$ for all $0\le t\le \tau_*$, some $\tau_*>0$.
\end{remark}

\begin{remark}
\label{Tmintop}
Equation \eqref{Laps} in the above proof also implies that $(\xi,\xi^-)\in \cR$ which is equivalent to $(\xi^-,\xi) \in \cR^{-1}$. The latter implies $\varphi(-t,|\xi|)\subset |\ccl^-\xi|$ for all $t\ge 0$ and for all $\xi\in\cX$, where $\ccl^-$ is the closure operator obtained from the opposite relation $\cR^{-1}\subset \cX\times \cX$. 
\end{remark}

Lemma \ref{closappr1} shows that $(\sSet(\cX),\ccl^+)$ is a CA-discretization for the Alexandrov topology $(X,\scrT^+)$ defined by the parabolic flow.
If we also invoke the observation that
 $(\xi,\xi)\in \cR$ if and only if $\xi \in \cX^{\topc}$ we can prove an even stronger statement.

\begin{lemma}
\label{closappr2}
The discrete  derivative operator $\derr^+$ satisfies $\der^+|\cU|\subset |\derr^+\cU|$ for all $\cU\subset \cX$.
\end{lemma}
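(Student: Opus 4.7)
The plan is to reduce the claim, by additivity of $\der^+$ on both sides and of $|\cdot|$, to the singleton inclusion $\der^+|\xi|\subset |\der^+\{\xi\}|$ for each $\xi\in\cX$. Once this local-in-source claim is established, the general inclusion follows immediately from $\der^+|\cU| = \bigcup_{\xi\in\cU}\der^+|\xi|$ and $|\der^+\cU| = \bigcup_{\xi\in\cU}|\der^+\{\xi\}|$. Fixing $\xi\in\cX$, $x\in|\xi|$ and $t>0$, I must place $\cell(\phi(t,x))$ inside $\cR^+\{\xi\}$.

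First I would upgrade the local argument of Lemma \ref{closappr1} to a strict-derivative version: for every $\xi\in\cX$ there is $\tau_\xi>0$ such that $\phi(s,|\xi|)\subset|\der^+\{\xi\}|$ for all $0<s\leq \tau_\xi$. The three cases are the ones already treated in Lemma \ref{closappr1}. For $\xi\in\cXs$, Proposition \ref{lapLyap} together with \eqref{Laps} supplies a top neighbour $\eta\in\st\xi\cap\cX^\topc$ with $\cross(\eta)=\cross_-(\xi)$, hence $\Lap(\eta)\le\Lap(\xi)$ and $(\eta,\xi)\in\cR$, so $\eta\in\der^+\{\xi\}$. For $\xi\in\cXr\smin\cX^\topc$ the flow enters a top neighbour with the same crossing number, again producing an $\cR$-edge. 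For $\xi\in\cX^\topc$ the orbit lingers in the open cell $|\xi|$ for small time, and any face $\eta$ that it may cross into from $|\xi|$ has $\cross_+(\eta)=\cross(\xi)$ by Proposition \ref{lapLyap}, so $\Lap(\eta)\le\Lap(\xi)$ and $(\eta,\xi)\in\cR$; the self-loop observation $(\xi,\xi)\in\cR$, which holds precisely because $\xi$ is a top cell, places $\xi$ itself inside $\der^+\{\xi\}$ and so covers the ``stay-inside-$|\xi|$'' portion of the orbit. Uniformity of $\tau_\xi$ in $x\in|\xi|$ follows by the compactness argument of Theorem \ref{localconstrGam} (cf.\ Remark \ref{smalltimeonly}).

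Next I would iterate using the idempotency of $\der^+$, which is immediate from the transitivity of $\cR^+$. Setting $\tau_*:=\min_{\xi\in\cX}\tau_\xi>0$ (positive since $\cX$ is finite) and given $t>0$, I pick $k\in\N$ with $t/k\leq\tau_*$ and apply the local step $k$ times. The first application gives $\phi(t/k,x)\in|\der^+\{\xi\}|$; applying it again at the new base point $\phi(t/k,x)$ and using $\der^+(\der^+\{\xi\}) = \der^+\{\xi\}$ gives $\phi(2t/k,x)\in|\der^+\{\xi\}|$; by induction $\phi(t,x)\in|\der^+\{\xi\}|$, as required.

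The principal obstacle, and the reason Lemma \ref{closappr2} is genuinely stronger than Lemma \ref{closappr1}, is the top-cell case of the local step: when $x$ lies in an open top cell $|\xi|$ the orbit spends a positive time inside $|\xi|$, and this must register as an $\cR$-transition from $\xi$ to itself. The observation $(\xi,\xi)\in\cR\iff\xi\in\cX^\topc$, flagged immediately before the lemma, is exactly what converts this excursion into a valid step of the chain; without it the inductive construction would break down. The subsidiary point that any face $\eta$ the orbit enters from a top cell $\xi$ automatically satisfies $(\eta,\xi)\in\cR$ is handled by Proposition \ref{lapLyap}, which forces $\cross_+(\eta)=\cross(\xi)$; uniformising the local time in $x$ and $\xi$ is pure compactness, already codified in Theorem \ref{localconstrGam}.
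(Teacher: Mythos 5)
Your proposal is correct and takes essentially the same approach as the paper's proof: additivity reduces to singletons, the self-loop observation $(\xi,\xi)\in\cR\iff\xi\in\cX^\topc$ is the one new ingredient beyond Lemma \ref{closappr1}, the $\Lap$-monotonicity of Proposition \ref{lapLyap} furnishes the $\cR$-edges at cell transitions, and the iteration rests on transitivity of $\cR^{\bm+}$ (sub-idempotency $\der^+\der^+\subset\der^+$, not the equality you wrote, but the inclusion is all you use). One small wrinkle worth noting: Theorem \ref{localconstrGam} cannot be cited for the uniformity of $\tau_\xi$ in $x\in|\xi|$, since its hypothesis already posits a $\tau_\xi$ valid uniformly on $\cl|\xi|$, and its compactness argument merely passes from $\xi$-dependent to $\xi$-independent bounds; the paper's own proof sidesteps this by choosing $\tau_x$ pointwise and tracking the orbit cell by cell, and in fact the chain of $\cR$-edges you assemble, combined with the starting top-cell self-loop and transitivity of $\cR^{\bm+}$, already yields the stronger statement that $\phi(t,x)\in|\der^+\{\xi\}|$ for \emph{all} $t>0$, which renders the uniformity question moot.
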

\begin{proof}
The proof follows along the same lines as Lemma \ref{closappr1}.
As pointed out in Remark \ref{discotherfltop} %in Lemma \ref{admis1} 
it suffices to show that 
 $\varphi(t,x)\in |\derr^+\xi|$ for all $t > 0$, for all $x\in |\xi|$ and for all $\xi\in \cX$.
 If $\xi\in \cXr$, then either $\xi\in \cX^{\topc}$, or $\Lap(\eta) = \Lap(\xi)$ for all all $\eta\in \st\xi$.
 If $\xi\in \cX^{\topc}$, then $\varphi(t,x)$ stays in $|\xi|$ for all $0\le t\le \tau_x$ for some $\tau_x$ sufficiently small  and thus in $|\derr^+\xi|$ by the observation that
 $(\xi,\xi)\in \cR$ if and only if $\xi \in \cX^{\topc}$. The remaining case follows as before and therefore
$\varphi(t,x)\in |\st\xi|\subset |\derr^+\xi|$ for all $0\le t\le \tau_x$ for some $\tau_x$ sufficiently small. In particular for $0<t\le \tau_x$.
The case $\xi\in \cXs$ follows as in the proof of Lemma \ref{closappr1}.
\end{proof}

\begin{remark}
{
Note that Lemma \ref{closappr2} implies Lemma \ref{closappr1}. This indicates that the decomposition $\ccl^+= \id \cup \derr^+$ on $\sSet(X)$ allows a discretization $\ccl^+ = \id\cup \derr^+$ on $\sSet(\cX)$
with an explicit derivative operator $\derr^+$. 
The same conclusions hold for $\ccl^- = \id\cup \derr^-$ defined via the opposite relation $\cR^{-1}\subset \cX\times \cX$.
%The advantage of this decomposition is that we can define isolating blocks directly on $\cX$.
}
\end{remark}

The CA-discretizations for $(X,\scrT^-)$ and $(X,\scrT^+)$ are constructed according to the crossing number function $\Lap$  and are especially designed to display the behavior of $\varphi$ with respect to singular braids, cf.\ Prop.\ \ref{lapLyap}. One can define more refined discretizations that yield more discrete forward invariant sets.
The next lemma explains why we use this particular construction of
CA-discretization for $(X,\scrT^-)$ and $(X,\scrT^+)$.

\begin{figure}[hbt]
\begin{minipage}{.4\textwidth}
\centering
\begin{tikzpicture}[dot/.style={draw,circle,fill,inner sep=.75pt}, scale=.45]

% \fill[fill=gray!100!white] (0,6) rectangle (1,7);
% \fill[fill=gray!100!white] (6,0) rectangle (7,1);

% %10's
% \foreach \x in {0,1} {
%     \fill[fill=gray!85!white] (\x,5+\x) rectangle (\x+1,6+\x);
%     \fill[fill=gray!85!white] (5+\x,\x) rectangle (\x+6,1+\x);
% }
% %8's
% \foreach \x in {0,1,2} {
%     \fill[fill=gray!60!white] (\x,4+\x) rectangle (\x+1,5+\x);
%     \fill[fill=gray!60!white] (4+\x,\x) rectangle (\x+5,1+\x);
% }
% %6's
% \foreach \x in {0,1,2,3} {
%     \fill[fill=gray!45!white] (\x,3+\x) rectangle (\x+1,4+\x);
%     \fill[fill=gray!45!white] (3+\x,\x) rectangle (\x+4,1+\x);
% }
% \fill[fill=gray!45!white] (1,3) rectangle (3,4);
% \fill[fill=gray!45!white] (4,3) rectangle (6,4);
% %4's
% \foreach \x in {0,2,3,4} {
%     \fill[fill=gray!30!white] (\x,2+\x) rectangle (\x+1,3+\x);
% }
% \foreach \x in {0,1,2,4} {
%     \fill[fill=gray!30!white] (2+\x,\x) rectangle (\x+3,1+\x);
% }
% \foreach \x in {2,3,4} {
%   \fill[fill=gray!30!white] (\x,\x) rectangle (\x+1,\x+1);
% }
% \fill[fill=gray!30!white] (1,2) rectangle (2,3);
% \fill[fill=gray!30!white] (2,1) rectangle (3,2);
% \fill[fill=gray!30!white] (4,5) rectangle (5,6);
% \fill[fill=gray!30!white] (5,4) rectangle (6,5);
% %2's
% \fill[fill=gray!15!white] (0,0) rectangle (2,2);
% \fill[fill=gray!15!white] (5,5) rectangle (7,7);
% \fill[fill=gray!15!white] (3,2) rectangle (4,3);
% \fill[fill=gray!15!white] (3,4) rectangle (4,5);

% %0's
% \fill[fill=gray!0!white] (0,0) rectangle (1,1);
% \fill[fill=gray!0!white] (6,6) rectangle (7,7);

\draw[step=1cm,gray,very thin] (0,0) grid (7,7);

{\tiny 

\foreach \x in {.5} {
    \foreach \y [evaluate = \y as \z using int(2*(\y-.5))]  in {.5, 1.5, 2.5, 3.5, 4.5, 5.5, 6.5} {
        \node (\x\y) at (\x, \y) {$\z$}; }
    \foreach \y [evaluate = \y as \z using int((12-2*(\y-.5)))]  in {.5, 1.5, 2.5, 3.5, 4.5, 5.5, 6.5} {
        \node (\x\y) at (\x+6, \y) {$\z$}; }
}
\foreach \y in {.5} {
     \foreach \x [evaluate = \x as \z using int(2*(\x-.5))]  in {1.5, 2.5, 3.5, 4.5, 5.5} {
        \node (\x\y) at (\x, \y) {$\z$};  }
    \foreach \x [evaluate = \x as \z using int(12-2*(\x-.5))]  in {1.5, 2.5, 3.5, 4.5, 5.5} {
        \node (\x\y) at (\x, \y+6) {$\z$}; }
}
\node (11) at (1.5,1.5) {$2$}; 
\node (21) at (2.5,1.5) {$4$}; 
\node (31) at (3.5,1.5) {$4$}; 
\node (41) at (4.5,1.5) {$6$}; 
\node (51) at (5.5,1.5) {$8$}; 

\node (12) at (1.5,2.5) {$4$}; 
\node (22) at (2.5,2.5) {$4$}; 
\node (32) at (3.5,2.5) {$2$}; 
\node (42) at (4.5,2.5) {$4$}; 
\node (52) at (5.5,2.5) {$6$}; 

\node (13) at (1.5,3.5) {$6$}; 
\node (23) at (2.5,3.5) {$6$}; 
\node (33) at (3.5,3.5) {$4$}; 
\node (43) at (4.5,3.5) {$6$}; 
\node (53) at (5.5,3.5) {$6$}; 

\node (14) at (1.5,4.5) {$6$}; 
\node (24) at (2.5,4.5) {$4$}; 
\node (34) at (3.5,4.5) {$2$}; 
\node (44) at (4.5,4.5) {$4$}; 
\node (54) at (5.5,4.5) {$4$}; 

\node (15) at (1.5,5.5) {$8$}; 
\node (25) at (2.5,5.5) {$6$}; 
\node (35) at (3.5,5.5) {$4$}; 
\node (45) at (4.5,5.5) {$4$}; 
\node (55) at (5.5,5.5) {$2$}; 

\draw[thick, -stealth] (0,0) -- (7.1,0);
\draw[thick,-stealth] (0,0) -- (0,7.1);
}

\def\hh{.8}
\def\bh{.2}

\def\hv{.8}
\def\bv{.2}

%%%%Vertical Arrows
\foreach \x in {.5, 1.5} 
    \foreach \y  in {6.5,5.5,4.5,3.5,2.5,1.5} 
            \draw[-latex,line width=.2pt] (\x,\y-\bv) to (\x,\y-\hv);
\foreach \x in {2.5} 
    \foreach \y  in {6.5,5.5,3.5,2.5,1.5} 
            \draw[-latex,line width=.2pt] (\x,\y-\bv) to (\x,\y-\hv);
\draw[-latex,line width=.2pt] (2.5,3.5+\bv) to (2.5,3.5+\hv);

\foreach \x in {5.5, 6.5} 
    \foreach \y  in {5.5,4.5,3.5,2.5,1.5,.5} 
        \draw[-latex,line width=.2pt] (\x,\y+\bv) to (\x,\y+\hv);
\foreach \x in {4.5} 
    \foreach \y  in {5.5,4.5,3.5,1.5,.5} 
        \draw[-latex,line width=.2pt] (\x,\y+\bv) to (\x,\y+\hv);
\draw[-latex,line width=.2pt] (4.5,3.5-\bv) to (4.5,3.5-\hv);

\foreach \x in {3.5} 
    \foreach \y  in {0.5,1.5, 3.5} 
        \draw[-latex,line width=.2pt] (\x,\y+\bv) to (\x,\y+\hv);
\foreach \x in {3.5} 
    \foreach \y  in {6.5,5.5,3.5} 
            \draw[-latex,line width=.2pt] (\x,\y-\bv) to (\x,\y-\hv);

\draw[-latex,line width=.2pt] (1.5,.5+\bv) to (1.5,.5+\hv);
\draw[-latex,line width=.2pt] (2.5,.5+\bv) to (2.5,.5+\hv);
\draw[-latex,line width=.2pt] (2.5,1.5+\bv) to (2.5,1.5+\hv);

\draw[-latex,line width=.2pt] (4.5,6.5-\bv) to (4.5,6.5-\hv);
\draw[-latex,line width=.2pt] (4.5,5.5-\bv) to (4.5,5.5-\hv);

\draw[-latex,line width=.2pt] (5.5,3.5-\bv) to (5.5,3.5-\hv);
\draw[-latex,line width=.2pt] (5.5,6.5-\bv) to (5.5,6.5-\hv);

%%%%Horizontal Arrows
\foreach \y in {.5, 1.5} 
    \foreach \x  in {5.5,4.5,3.5,2.5,1.5} 
            \draw[-latex,line width=.2pt] (\x-\bh,\y) to (\x-\hh,\y);
\draw[-latex,line width=.2pt] (6.5-\bh,.5) to (6.5-.7,.5);
\draw[-latex,line width=.2pt] (6.5-\bh,1.5) to (6.5-.8,1.5);

\foreach \y in {5.5, 6.5} 
    \foreach \x  in {5.5,4.5,3.5,2.5} 
        \draw[-latex,line width=.2pt] (\x+\bh,\y) to (\x+\hh,\y);
\draw[-latex,line width=.2pt] (0.5+.3,6.5) to (0.5+.8,6.5);
\draw[-latex,line width=.2pt] (0.5+.3,5.5) to (0.5+.8,5.5);
\draw[-latex,line width=.2pt] (1.5+.3,6.5) to (1.5+.8,6.5);
\draw[-latex,line width=.2pt] (1.5+.3,5.5) to (1.5+.8,5.5);
\draw[-latex,line width=.2pt] (4.5-\bh,5.5) to (4.5-\hh,5.5);
\draw[-latex,line width=.2pt] (6.5-\bh,5.5) to (6.5-\hh,5.5);
\draw[-latex,line width=.2pt] (.5+\bh,1.5) to (.5+\hh,1.5);
\draw[-latex,line width=.2pt] (2.5+\bh,1.5) to (2.5+\hh,1.5);
\foreach \y in {2.5} 
    \foreach \x  in {6.5,5.5,4.5,2.5,1.5} 
            \draw[-latex,line width=.2pt] (\x-\bh,\y) to (\x-\hh,\y);
\foreach \y in {2.5} 
    \foreach \x  in {2.5,1.5,0.5} 
            \draw[-latex,line width=.2pt] (\x+\bh,\y) to (\x+\hh,\y);
\foreach \y in {3.5} 
    \foreach \x  in {6.5,5.5,4.5,2.5,1.5} 
            \draw[-latex,line width=.2pt] (\x-\bh,\y) to (\x-\hh,\y);
\foreach \y in {3.5} 
    \foreach \x  in {5.5,4.5,2.5,1.5,0.5} 
            \draw[-latex,line width=.2pt] (\x+\bh,\y) to (\x+\hh,\y);
            
\foreach \y in {4.5} 
    \foreach \x  in {6.5,5.5,4.5} 
            \draw[-latex,line width=.2pt] (\x-\bh,\y) to (\x-\hh,\y);
\foreach \y in {4.5} 
    \foreach \x  in {5.5,4.5,2.5,1.5,0.5} 
            \draw[-latex,line width=.2pt] (\x+\bh,\y) to (\x+\hh,\y);

%0 
%\draw[thick] (0,0) rectangle (1,1); 
%\draw[thick] (6,6) rectangle (7,7);
% 2
\draw[thick] (1,0) -- (2,0) -- (2,1) -- (2,2) -- (0,2) -- (0,1) -- (1,1) -- cycle;
\draw[thick] (6,7) -- (5,7) -- (5,5) -- (7,5) -- (7,6) -- (6,6) -- cycle;
%4
\draw[thick] (3,0) -- (3,1) -- (4,1) -- (4, 2) -- (3,2) -- (3,3) -- (0,3);
\draw[thick] (4,7) -- (4,6) -- (3,6) -- (3,5) -- (4,5) -- (4,4) -- (7,4) -- (7,5) -- (5,5) -- (5,7) -- cycle;
%6
\draw[thick] (0,3) -- (3,3) -- (3,4) -- (2,4) -- (2,5) -- (1,5) -- (1,4) -- (0,4);
\draw[thick] (7,4) -- (4,4) -- (4,3) -- (5,3) -- (5,2) -- (6,2) -- (6,3) -- (7,3) -- cycle;
%\draw[thick] (3,0) rectangle (4,1);
%8

%\draw[thick] (4,0) rectangle (5,1);
%10
%\draw[thick] (5,0) rectangle (6,1);

%12
%\draw[thick] (6,0) rectangle (7,1);

\end{tikzpicture}
\end{minipage}
\begin{minipage}{.5\textwidth}
\centering
\begin{tikzpicture}[node/.style = {ellipse, draw, inner sep = 1.5}, scale=.225]
\def\a{2}
\def\b{4}
\def\c{6}
\def\d{8}
\def\e{10}
\def\f{12}

\def\xa{0} %1,
\def\xb{2} %2 
\def\xc{4} %3
\def\xd{6} %4
\def\xe{8} %5
\def\xf{10} %6
\def\xg{12} %7
\def\xh{14} %8 %midpoint
\def\w{28}

%layer one
\node[node] (0) at (\xa, 0) {\tiny $0$};
\node[node] (1) at (\w-\xa, 0)
{\tiny $0$};

%layer two
\node[node] (2) at (\xb, \a)
{\tiny $2$ };
\node[node] (3) at (\xd, \a)
{\tiny $2$};
\node[node] (4) at (\w-\xd, \a)
{\tiny $2$};
\node[node] (5) at (\w-\xb, \a)
{\tiny $2$};
%layer three
\node[node] (6) at (\xc, \b)
{\tiny $4$};
\node[node] (7) at (\xe, \b)
{\tiny $4$};
\node[node] (8) at (\xh, \b)
{\tiny $4$};
\node[node] (9) at (\w-\xe, \b)
{\tiny $4$};
\node[node] (10) at (\w-\xc, \b)
{\tiny $4$};
layer four
\node[node] (11) at (\xb, \c)
{\tiny $6$};
\node[node] (12) at (\xd, \c)
{\tiny $6$};
\node[node] (13) at (\xf, \c)
{\tiny $6$};
\node[node] (14) at (\w-\xf, \c)
{\tiny $6$};
\node[node] (15) at (\w-\xd, \c)
{\tiny $6$};
\node[node] (16) at (\w-\xb, \c)
{\tiny $6$};
%layer five
\node[node] (17) at (\xc, \d)
{\tiny $8$};
\node[node] (18) at (\xe, \d)
{\tiny $8$};
\node[node] (19) at (\xg, \d)
{\tiny $8$};
\node[node] (20) at (\w-\xg, \d)
{\tiny $8$};
\node[node] (21) at (\w-\xe, \d)
{\tiny $8$};
\node[node] (22) at (\w-\xc, \d)
{\tiny $8$};
%layer six
\node[node] (23) at (\xd, \e)
{\tiny $10$};
\node[node] (24) at (\xf, \e)
{\tiny $10$};
\node[node] (25) at (\w-\xf, \e)
{\tiny $10$};
\node[node] (26) at (\w-\xd, \e)
{\tiny $10$};
%layer seven
\node[node] (27) at (\xe, \f)
{\tiny $12$};
\node[node] (28) at (\w-\xe, \f)
{\tiny $12$};

layer one
\draw[->,>=stealth,thick] (2) to (0);
\draw[->,>=stealth,thick] (5) to (1);
%layer two
\draw[->,>=stealth,thick] (6) to (2);
\draw[->,>=stealth,thick] (6) to (3);
\draw[->,>=stealth,thick] (7) to (3);
\draw[->,>=stealth,thick] (8) to (3);
\draw[->,>=stealth,thick] (8) to (4);
\draw[->,>=stealth,thick] (9) to (4);
\draw[->,>=stealth,thick] (10) to (4);
\draw[->,>=stealth,thick] (10) to (5);
%layer three
\draw[->,>=stealth,thick] (11) to (6);
\draw[->,>=stealth,thick] (12) to (6);
\draw[->,>=stealth,thick] (14) to (6);

\draw[->,>=stealth,thick] (12) to (6);
\draw[->,>=stealth,thick] (12) to (7);
\draw[->,>=stealth,thick] (13) to (7);

\draw[->,>=stealth,thick] (13) to (8);
\draw[->,>=stealth,thick] (14) to (8);

\draw[->,>=stealth,thick] (14) to (9);
\draw[->,>=stealth,thick] (15) to (9);

\draw[->,>=stealth,thick] (15) to (10);
\draw[->,>=stealth,thick] (16) to (10);
\draw[->,>=stealth,thick] (13) to (10);

%layer four
\draw[->,>=stealth,thick] (13) to (10);
\draw[->,>=stealth,thick] (17) to (11);

\draw[->,>=stealth,thick] (17) to (12);

\draw[->,>=stealth,thick] (18) to (12);
\draw[->,>=stealth,thick] (18) to (13);

\draw[->,>=stealth,thick] (19) to (13);

\draw[->,>=stealth,thick] (20) to (14);
\draw[->,>=stealth,thick] (21) to (14);
\draw[->,>=stealth,thick] (21) to (15);
\draw[->,>=stealth,thick] (22) to (15);
\draw[->,>=stealth,thick] (22) to (16);

\draw[->,>=stealth,thick] (23) to (17);
\draw[->,>=stealth,thick] (23) to (18);
\draw[->,>=stealth,thick] (24) to (18);
\draw[->,>=stealth,thick] (24) to (19);

\draw[->,>=stealth,thick] (25) to (20);
\draw[->,>=stealth,thick] (25) to (21);
\draw[->,>=stealth,thick] (26) to (21);
\draw[->,>=stealth,thick] (26) to (22);

%final layer
\draw[->,>=stealth,thick] (27) to (23);
\draw[->,>=stealth,thick] (27) to (24);
\draw[->,>=stealth,thick] (28) to (25);
\draw[->,>=stealth,thick] (28) to (26);
\end{tikzpicture}
\end{minipage}
\vspace{2ex}
\caption{
Cubical complex $\cX$, $\cross$ restricted to $\cX^\topc$ and the closure operator $\ccl^\topc$ induced by the relation $\cR$ visualized as a directed graph [left].  Elements of $\sSC$ with more than one top cell are outlined. Poset $\sSC$, where vertices are labeled by $\cross$ [right].}
\label{fig:parabolic:lap}
\end{figure}
\begin{lemma}
\label{closappr3}
Let $\cU\subset \cX$ be a closed, forward invariant set for $\cR$, i.e. $\ccl~\cU=\cU$ and $\ccl^+\cU=\cU$. Then,
\begin{equation}
\label{interiorprop12}
\derr^+ \cU \subset \cInt \cU.
\end{equation}
In particular, $\cU$ is a regular closed set in $\cX$.
%The same conclusions hold for $\ccl^-$ and $\derr^-$ by considering the opposite relation $\cR^{-1}$.
\end{lemma}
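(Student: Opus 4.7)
The plan is to lift the assertion to the parabolic flow $\phi$ on $X$ and then invoke Theorem \ref{clattbl}. Since the CA-discretization $(\cX,\cl,|\cdot|)$ coming from the cubical CW-decomposition $\cell$ of Section \ref{metrcidisc} is Boolean, $\cl~\cU=\cU$ makes $|\cU|$ closed in $X$, and Lemma \ref{closappr2} combined with the hypothesis $\der^+\cU\subset\cU$ delivers $\phi(t,|\cU|)\subset|\cU|$ for every $t>0$. The essential continuous step is to upgrade this to the strict inclusion $\phi(t,|\cU|)\subset\Int|\cU|$: for $x\in|\cU|$ lying in a cell $|\xi'|$ with $\xi'\in\cU$, the case $\xi'\in\cX^\topc$ is immediate, while for $\xi'$ non-top Proposition \ref{lapLyap} sends the forward orbit into the unique top coface $|\xi^+|$ with $\cross(\xi^+)=\cross_-(\xi')$. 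Exactly as in the proof of Lemma \ref{closappr1}, $(\xi^+,\xi')\in\cR$ via Case~B of the flow relation, so $\cl^+\cU=\cU$ forces $\xi^+\in\cU$ and hence $|\xi^+|\subset\Int|\cU|$. The semigroup property extends this to every $t>0$, so $|\cU|\in\sABlockC(\phi)$.

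Theorem \ref{clattbl} then gives $|\cU|\in\sABlockR(\phi)$, i.e.\ $|\cU|$ is regular closed in $X$, and Propositions \ref{realization}--\ref{realization22} transfer this through the Boolean discretization to $\cU=\cl(\cU\cap\cX^\topc)$, which is the regular closedness assertion in $\cX$. For the sharper inclusion $\der^+\cU\subset\Int\cU$, let $\xi\in\der^+\cU$. If $\xi\in\cX^\topc$ then $\xi\in\cU$ and $\st\xi=\{\xi\}\subset\cU$, so $\xi\in\Int\cU$. Otherwise, Case~A of $\cR$ yields a top $\xi_0\in\cU$ with $\xi\le\xi_0$ and $\cross(\xi_0)=\cross_+(\xi)$, and I would run a finite cascade to deposit every top coface of $\xi$ into $\cU$: face-closure places the codimension-one cells of $\xi_0$ incident to $\xi$ inside $\cU$, each such edge $e\in\cU$ forces via Case~B of $\cl^+$-closure the top cofaces of $e$ attaining $\cross_-(e)$ into $\cU$, face-closure applied to these new top cells supplies the remaining codimension-one faces of $\xi$, and iterating the alternation propagates $\cU$-membership through the entire dual cube $\st\xi\cap\cX^\topc$. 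Together with face-closure this produces $\st\xi\subset\cU$, hence $\xi\in\Int\cU$.

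The main obstacle is verifying that the cascade in the previous paragraph actually reaches every top coface of $\xi$. This connectivity claim rests on the combinatorial structure of the cubical complex $\cX$ near $\xi$ and on the monotonicity of $\cross$ across adjacent top cells supplied by Proposition \ref{lapLyap} together with the properness of the skeleton $y$ from Section \ref{parabolic}, under which each codimension-one step of the dual cube changes the crossing number by a fixed amount dictated by a single skeleton strand. Should the combinatorial cascade prove delicate in full generality, a cleaner alternative is to extract $\der^+\cU\subset\Int\cU$ directly from the attracting-block property of $|\cU|$ established in the first paragraph: if some $\zeta\in\st\xi$ lay outside $\cU$, then $|\zeta|\subset X\smin|\cU|$ accumulates at $|\xi|\subset|\cU|$, and tracking a small forward orbit of $x\in|\xi|$ via Lemma \ref{closappr2} and the requirement $\phi(t,|\cU|)\subset\Int|\cU|$ yields a contradiction through Case~B of $\cl^+$-closure applied to each cell traversed.
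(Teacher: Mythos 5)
The paper's proof does not attempt to show $\st\xi\subset\cU$ for every $\xi\in\der^+\cU$; instead it shows the weaker statement $\bvtheta_\cR[\xi]\subset\Int\cU$ for each $\xi\in\cU$ and then iterates. The advantage of this formulation is decisive for non-top cells: if $\xi\in\cU\smin\cX^\topc$, then by the definition of $\cE$ every $\eta$ with $(\eta,\xi)\in\cR$ is a top cell, so $\bvtheta_\cR[\xi]\subset\st\xi\cap\cX^\topc\subset\cU\cap\cX^\topc\subset\Int\cU$ immediately, with no propagation through the dual cube at all. By aiming directly at $\xi\in\Int\cU$ for $\xi\in\der^+\cU$, you have committed yourself to proving $\st\xi\subset\cU$ cell by cell, and this forces the cascade whose termination you (rightly) cannot guarantee: the step ``from $e\in\cU$ to the top coface of $e$ attaining $\cross_-(e)$'' only propagates \emph{downhill} in crossing number, and unless $\cross$ restricted to $\st\xi\cap\cX^\topc$ has no local maxima other than the one reached first, the walk need not cover the dual cube. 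You gesture at the additive structure of $\cross$ under the properness of the skeleton, but this is precisely the nontrivial combinatorial input that is missing, and in the presence of tangencies at adjacent indices additivity is not obvious.

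The first paragraph has a separate gap. The strict inclusion $\phi(t,|\cU|)\subset\Int|\cU|$ is argued only when $\xi'$ is a top cell or $\xi'\in\cXs$: in the latter case Proposition~\ref{lapLyap} does push the orbit into a coface attaining $\cross_-(\xi')$, and $\cl^+\cU=\cU$ places that coface in $\cU^\topc\subset\Int\cU$. But a non-top cell $\xi'\in\cXr$ has $\cross_-(\xi')=\cross_+(\xi')$, Proposition~\ref{lapLyap} gives no preferred direction, and the orbit can enter any top coface of $\xi'$. To conclude you need $\st\xi'\subset\cU$ for regular $\xi'\in\cU$, which is the combinatorial fact (all top cofaces of a regular cell share its $\Lap$-value, hence land in $\cU$ via $\cl^+$, and the lower-dimensional cofaces follow from $\cl~\cU=\cU$) that the paper records explicitly in its boundary-cell dichotomy and that your argument never establishes. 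Because the ``cleaner alternative'' in your third paragraph feeds off the attracting-block property of the first paragraph, this gap also infects that route. In short: fix the regular-cell case in paragraph one, and replace the cascade with the paper's reformulation $\bvtheta_\cR[\xi]\subset\Int\cU$, which disposes of the non-top cells without any traversal of the dual cube and confines the genuinely delicate situation to the top-cell case.
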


\begin{proof}
For $\xi\in \cU$ we distinguish between  $\xi \in\cX^\topc$ and $\xi\not \in \cX^\topc$.
We start with the latter.
For $\xi\not\in \cX^\topc$ consider two cases: (i) $\xi\in \cInt\cU$. Then,
 $\st\xi\subset \cInt\cU$, and
by the definition of $\cR$ we have that 
$\cR^{-1}[\xi] := \bigl\{\eta\mid (\eta,\xi)\in \cR \bigr\}
\subset \st\xi  \subset \Int \cU$.\footnote{Note that this does not require closedness for neither of the discrete topologies.} (ii) $\xi\in \cU\smin\cInt\cU= \ccl~\cU\smin\cInt\cU = \bd\cU$ (using the fact that $\ccl~\cU=\cU$). Then,
$\st\xi\not\subset\cU$.
Under the condition that $\ccl^+\cU=\cU$ 
regular cells $\xi\in \cXr$ satisfy the property that $\st\xi\subset\cU$. This implies that boundary cells are singular cells  $\xi\in \cXs$.
As before $\cR^{-1}[\xi]
\subset \st\xi \subset \cX^\topc$ and thus $\cR^{-1}[\xi]$ is open. Since $\ccl^+\cU\subset\cU$ also $\cR^{-1}[\xi]\subset \cU$ which yields
$\cR^{-1}[\xi]
\subset \cX^\topc\cap\cU\subset \cInt\cU$.

Consider the case $\xi\in \cU^\topc=\cU\cap\cX^\topc$. Then, $\cR^{-1}[\xi]\subset \cU\cap \ccl~\xi\subset \cU$ since
$\ccl~\cU=\cU$ and $\ccl^+\cU=\cU$.
Let $\eta\in \cR^{-1}[\xi]\smin \cX^\topc$ be a cell which is not interior to $\cU$, i.e. 
$\eta\in \bd\cU$.  % $\xi\in \bd\cU$. 
By the same argument as before $\eta\in \cXs$ and by Equation \eqref{Laps} we have that 
$\Lap(\eta)>\Lap(\xi)$ which yields $(\xi,\eta)\in \cR$ and $(\eta,\xi)\not\in \cR$.
The latter contradicts the existence of boundary cells $\eta \in\cR^{-1}[\xi]$.
Consequently,  $\cR^{-1}[\xi]\subset \cInt\cU$, which holds for every $\xi\in \cU$.

Iterating this procedure gives $(\cR^{-1})^k[\xi]\subset \cInt\cU$, $k\ge 1$ and thus $\derr^+\xi\subset \cInt\cU$, cf.\ Thm.\ \ref{localconstrGam}. This proves that $\derr^+\cU\subset \cInt\cU$.
%The above arguments remain unchanged if we replace $\cR$ with $\cR^{-1}$. For the latter we use use the first of the inequality in \eqref{Laps}. This proves the same statement for closed, backward invariant sets.
The realization $|\cU|$ is a closed, forward invariant set and satisfies $\varphi(t,|\cU|) \subset \Int|\cU|$ for all $t>0$. Therefore $|\cU|$ is thus a closed attracting block. Such sets a regular closed by Theorem \ref{clattbl}. Since the CA-discretization $(\cX,\le,|\cdot|)$ is Boolean the same holds for the sets $\cU$ in $(\cX,\le)$.
\end{proof}

%\begin{proof}
%Suppose $\xi\in \Int \cU$. This equivalent to $\st\xi\subset \cU$. 
%By the assumption that $\ccl^+\cU=\cU$ it follows that $\derr^+\cU\subset\cU$ and thus $\derr^+\xi\subset \cU$. In particular, $\bvtheta_\cR[\xi] := \bigl\{\eta\mid (\eta,\xi)\in \cR \bigr\}
%\subset \st\xi \cap\cU \subset \Int \cU$ which proves that $\derr^+\xi\subset \Int\cU$.
%
%Suppose $\xi\in \cU\smin\Int\cU$. Since $\ccl~ \cU = \cU$ it follows that
%$\xi\in \cU\smin\Int\cU= \ccl~\cU\smin\Int\cU = \bd\cU$ and thus $\st\xi\not\subset\cU$.
%For a regular cell $\xi\in \cXr$ it holds that $\st\xi\subset\cU$ and thus $\xi\in \cXs$.
%As before $\bvtheta_\cR[\xi]
%\subset \st\xi \cap\cX^{\topc} \cap\cU$ by the definition of $\cE$ and $\cR$. Therefore, $\bvtheta_\cR[\xi]\subset \Int\cU$. By the previous this implies that $\derr^+\xi\subset \Int\cU$.
%%
%% This implies that $\Lap(\xi)\ge \Lap(\xi^-)=\bigl(\lambda_-(\xi),\lambda_-(\xi)\bigr)$ for some $\xi^-\in \st\xi$, and
%% thus $(\xi^-,\xi)\in \cR$ with $\xi\in \Int\cU$.
%\qed
%\end{proof}

From the previous consideration we propose a pre-order $\le^\dagger$ and we show that $\le^\dagger$ yields the right continuity properties with respect to $\cell$ as defined in \eqref{paracell}.
Define the pre-order 
\begin{equation}
    \label{parapreorder}
    \le^\dagger \,:=\, \le \vee \le^+,\footnote{The meet of two pre-orders is defined as the transitive closure of the union of the two relations, i.e. $\le\vee\le^+ := (\le\cup\le^+)^{\bm +}$.}
\end{equation}
i.e.
$\cU$ is closed in $(\cX,\le^\dagger)$ if and only if $\ccl~ \cU=\cU$ and $\ccl^+\cU=\cU$.

\begin{theorem}
\label{chartwotop12}
The pre-order $\le^\dagger$ defines a Morse pre-order
for $(X,\scrT,\scrTbf)$, i.e.
the maps
\[
\cell\colon (X,\scrT) \to (\cX,\le^\dagger),\quad\text{and}\quad
\cell\colon (X,\scrTbf) \to (\cX,\ge^\dagger),
\]
are continuous.
\end{theorem}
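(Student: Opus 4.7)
The strategy is to handle the two conditions of Definition~\ref{morsepreorder1aa} separately, with the first being essentially formal and the second reducing to the regular-closed/attracting-block machinery already assembled in Lemmas \ref{closappr2} and \ref{closappr3} together with Theorem \ref{charclattbl}.

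For condition (i), I would simply observe that by construction $\le\,\subset\,\le^\dagger$ as pre-orders, hence $\sO(\cX,\le^\dagger)\subset\sO(\cX,\le)$. Since $\cell\colon(X,\scrT)\twoheadrightarrow(\cX,\le)$ is continuous with respect to the face partial order (Lemma~\ref{facepart1}), the composition with the identity $(\cX,\le)\xtwoheadrightarrow{\id}(\cX,\le^\dagger)$ is continuous, establishing $\scrT$-consistency of $\le^\dagger$.

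For condition (ii), the key is to show that every $\cU\in\sO(\cX,\le^\dagger)$ realizes to a $\underline\scrT^0$-open set; by Remark~\ref{doubleconti} and Sect.~\ref{topconspre} this is precisely $\underline\scrT^0$-co-consistency. By the very definition of $\le^\dagger=\le\vee\le^+$, a down-set $\cU$ of $\le^\dagger$ satisfies both $\cl\,\cU=\cU$ and $\cl^+\cU=\cU$, so $\cU$ is closed and forward-invariant for $\cR$. Lemma~\ref{closappr3} then yields $\der^+\cU\subset\Int\cU$. Since the CW-decomposition $(\cX,\cl,|\cdot|)$ is Boolean (Sect.~\ref{metrcidisc}), the interior commutes with realization, i.e.\ $|\Int\cU|=\Int|\cU|$. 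Combining this with Lemma~\ref{closappr2} gives the chain of inclusions
\[
\der^+|\cU|\,\subset\,|\der^+\cU|\,\subset\,|\Int\cU|\,=\,\Int|\cU|.
\]
Because $\der^+|\cU|=\bigcup_{t>0}\phi(t,|\cU|)$, it follows that $\phi(t,|\cU|)\subset\Int|\cU|$ for all $t>0$, so by Lemma~\ref{charattbl12345} the set $|\cU|$ is $\underline\scrT^0$-open. This establishes $\underline\scrT^0$-co-consistency and completes the proof.

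The only subtle point is the use of the Boolean property to pass from $|\Int\cU|\supset\der^+|\cU|$ to the literal inclusion $\Int|\cU|\supset\der^+|\cU|$; this is what allows us to convert the combinatorial invariance statement of Lemma~\ref{closappr3} into the genuine attracting-block condition on $X$. Everything else is formal manipulation of the two CA-discretizations $(\cX,\cl,|\cdot|)$ and $(\cX,\cl^+,|\cdot|)$ whose compatibility with $\phi$ was verified in Sect.~\ref{fltodisc}.
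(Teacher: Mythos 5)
Your proof is correct and takes essentially the same route as the paper: for $\cU\in\sO(\cX,\le^\dagger)$, use $\cl\,\cU=\cU$ for $\scrT$-closedness of $|\cU|$, then chain $\der^+|\cU|\subset|\der^+\cU|\subset|\Int\cU|=\Int|\cU|$ via Lemma~\ref{closappr3}, and invoke Lemma~\ref{charattbl12345}. The only difference is that you make the roles of Lemma~\ref{closappr2} and the Boolean property explicit, whereas the paper uses them tacitly; the substance is identical.
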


\begin{proof}
To proof the above statement we need to show that for $\cU\subset \cX$
closed in $(\cX,\le^\dagger)$ the pre-image $\cell^{-1}\cU = 
|\cU|$ is closed in $(X,\scrT)$ and open in $(X,\scrTbf)$.
Let $\cU$ be closed in $(\cX,\le^\dagger)$. Then,
$\ccl~\cU=\cU$ and $\ccl^+\cU=\cU$ which implies that $|\cU|$ is closed in $(X,\scrT)$.
By Lemma \ref{closappr3}
\[
\varphi(t,|\cU|) \subset \bigcup_{t>0}\varphi(t,|\cU|) = \derr^+|\cU|
\subset |\derr^+\cU| \subset |\cInt\cU| = \Int|\cU|,~~\forall t>0,
\]
which, by Lemma \ref{charattbl12345}, proves that $|\cU|$ is open in 
$(X,\scrTbf)$.
\end{proof}

The above theorem shows that sets that are closed in both discrete topologies are closed attracting blocks and therefore
$\le \vee\le^+ = \le^\dagger$ defines a Morse pre-order for parabolic flows with skeleton $y$ and which is an antagonistic coarsening of discretizations for both 
$\scrT$ and $\scrTbf$, cf.\ Sect.\ \ref{bi-top}, i.e. take $\le^\dagger$ and $\ge^\dagger$ respectively.
% This yields the diagram:
% \begin{equation*}
%     %\label{contgr2}
%     \dgARROWLENGTH=1.3em
%     \begin{diagram}
%         \node{}\node{(\cX,\le)}\arrow{se,l}{}\arrow{ese,l}{\dyn}\node{}\node{}\\
%         \node{X}\arrow{ne,l}{\cell}\arrow{se,r}{\cell}\node{}\node{(\cX,\le')}\arrow{e,l}{}\node{(\sSC(\cF),\le)}\\
%         \node{}\node{(\cX,\le^+)}\arrow{ne,l}{}\arrow{ene,r}{\dyn}\node{}\node{}
%     \end{diagram}
% \end{equation*}

\begin{remark}
\label{aboutopptop}
If $\cU\subset \cX$ is a closed, backward invariant set for $\cR$, i.e. $\ccl~\cU=\cU$ and $\ccl^-\cU=\cU$. Then, $\derr^- \cU \subset \cInt \cU$.
The arguments in the proof of Lemma \ref{closappr3} remain unchanged if we replace $\cR$ with $\cR^{-1}$. For the latter we   use the first part of the inequality in \eqref{Laps}. This proves the same statement for closed, backward invariant sets.
\end{remark}

\begin{remark}
\label{directbflt}
From the previous consideration we can define a discretization of the \bflt
%Let us now define a discretization for the flow topology
$\scrTbf$. Using the relation $\cR$ as defined above we have the derivative operators $\derr^-,\derr^+\colon\sSet(\cX) \to \sSet(\cX)$.
Define the discrete operator $\udermm:= \derr^-\ccl$.
% and consider the discrete derivative $\udero$ as defined in Equation \eqref{discderfs1}.
Since $\derr^-$ gives a DA-discretization for the Alexandrov topology $\scrT^-$ we have by Remark \ref{discotherfltop} that
\[
\varphi(-t,\ccl~|\xi|) = \varphi(-t,|\ccl~\xi|) \subset |\derr^-\ccl~\xi| = |\udermm\xi|,\quad \forall t>0.
\]
The discrete operator $\udermm$ satisfies the hypotheses of Lemma \ref{udermlem} and therefore $\uclbff\colon
\sSet(\cX) \to \sSet(\cX)$, given by Lemma \ref{udermlem} 
%$\ucl^0 = \id \cup \udero$, 
defines a CA-discretization for $(X,\scrTbf)$. 
The antagonistic coarsening of $(\cX,\ccl,\uclbff,|\cdot|)$ is the pre-order $\le^\dagger$ in \eqref{parapreorder}.
\end{remark}

The following lemma gives a characterization of the transitive, reflexive closure of $\cR$.

\begin{lemma}
\label{charofR}
 $(\xi,\xi')\in \cR^{\bm{ +=}}$ if and only if there exist cells $\xi_0,\cdots \xi_{\ell}$, with $\xi_0=\xi$, $\xi_\ell=\xi'$ and
 $(\xi_i,\xi_{i+1})\in \cX^\topc\times G_{d-1}\cX$, or
 $(\xi_i,\xi_{i+1})\in G_{d-1}\cX\times \cX^\topc$,
% $\xi_{2i}\in G_{d-1}\cX$, for $i=1,\cdots,n-1$, $\xi_{2i+1}\in \cX^\topc$, for $i=0,\cdots,n-1$, and $\xi_0=\xi$ and $\xi_{2n}=\xi'$, 
such that
\begin{equation}
\label{chainform}    
(\xi_0,\xi_1),
(\xi_1,\xi_2),
\cdots, 
(\xi_{\ell-2},\xi_{\ell-1}),
(\xi_{\ell-1},\xi_{\ell})\in \cR.
\end{equation}
\end{lemma}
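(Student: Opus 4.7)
The direction $(\Leftarrow)$ is immediate: the case $\ell=0$ gives the reflexive part $\xi=\xi'$, and otherwise the hypothesised chain realises $(\xi,\xi')\in\cR^{\bm +}\subset \cR^{\bm{+=}}$ by composing its successive pairs.

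For $(\Rightarrow)$, unravel $(\xi,\xi')\in\cR^{\bm{+=}}$ as a chain $\xi=\eta_0,\eta_1,\ldots,\eta_m=\xi'$ with each $(\eta_i,\eta_{i+1})\in\cR$. Since $\cR\subset\cE$ and $\cE$ relates a pair only when one entry is a top cell and the other is a face of it, each consecutive pair is automatically of the form (top, face) or (face, top); consequently the chain alternates between $\cX^\topc$ and non-top cells. The task is therefore to refine the chain so that every non-top entry lies in $G_{d-1}\cX$ rather than in $G_{d-k}\cX$ for some $k\ge 2$.

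The core local substep handles a subchain $\zeta_1\to\eta\to\zeta_2$ with $\zeta_1,\zeta_2\in\cX^\topc$ and $\eta\in G_{d-k}\cX$, $k\ge 2$. The inequalities defining $\cR$ force $\cross(\zeta_1)=\cross_-(\eta)$ and $\cross(\zeta_2)=\cross_+(\eta)$, so $\zeta_1,\zeta_2$ realise the extremes of $\cross$ on $\st\eta\cap\cX^\topc$. Identify the latter with the vertices of a $k$-cube (the $k$ sign choices in the coordinates where $\eta$ is singular, cf.\ the normal form of Rmk.\ \ref{canrep1}); two vertices share an edge of the $k$-cube exactly when the corresponding top cells share a $(d-1)$-face containing $\eta$. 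Using the combinatorial structure of $\cross$ on this $k$-cube --- namely that flipping one sign changes the crossing count by a quantity localised to that transverse coordinate --- one produces an edge path $\zeta_1=\alpha_0,\alpha_1,\ldots,\alpha_r=\zeta_2$ of top cells along which $\cross$ is non-decreasing. For the $(d-1)$-face $\eta_{i+1}\supset\eta$ shared by $\alpha_i$ and $\alpha_{i+1}$, the monotonicity translates into $\cross_-(\eta_{i+1})=\cross(\alpha_i)$ and $\cross_+(\eta_{i+1})=\cross(\alpha_{i+1})$, whence $(\alpha_i,\eta_{i+1}),(\eta_{i+1},\alpha_{i+1})\in\cR$, refining the original subchain into the desired alternating form.

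Iterating the substep across the whole chain eliminates every codim-$\ge 2$ intermediate entry, yielding a chain whose cells all lie in $\cX^\topc\cup G_{d-1}\cX$ with consecutive pairs of the required two types; boundary segments at $\xi_0=\xi$ and $\xi_\ell=\xi'$ (both necessarily in $\cX^\topc\cup G_{d-1}\cX$ for the statement to make sense) are handled by the same argument applied at either end. The principal obstacle is the existence of the monotone sign-flip path on the $k$-cube of top cells in $\st\eta$: this relies on the fact, particular to the parabolic setting, that $\cross$ restricted to $\st\eta\cap\cX^\topc$ decomposes essentially additively over the $k$ transverse sign choices, so that its global minimum and maximum sit at antipodal cube vertices and can be joined by a Hamming path along which $\cross$ is monotone. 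The flow-theoretic underpinnings of this monotonicity are exactly the strict Lyapunov property of Prop.\ \ref{lapLyap} and the inequality $\Lap(\xi^-)>\Lap(\xi)>\Lap(\xi^+)$ established in the proof of Lemma \ref{closappr1}.
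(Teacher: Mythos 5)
Your proposal follows the same overall strategy as the paper: reduce to a ``bridge'' subchain $\sigma\to\eta\to\sigma'$ with $\sigma,\sigma'\in\cX^\topc$ and $\eta\notin\cX^\topc$, observe from the $\cR$-inequalities that $\cross(\sigma)=\cross_-(\eta)$ and $\cross(\sigma')=\cross_+(\eta)$ (so $\sigma,\sigma'$ realize the extremes of $\cross$ over $\st\eta\cap\cX^\topc$), and then thicken this into an alternating chain of top cells and codim-$1$ faces by constructing a $\cross$-monotone walk through $\st\eta\cap\cX^\topc$. Up to there you and the paper agree.

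The gap is in how you justify the existence of the monotone walk. You assert that $\cross$ ``decomposes essentially additively over the $k$ transverse sign choices, so that its global minimum and maximum sit at antipodal cube vertices and can be joined by a Hamming path along which $\cross$ is monotone.'' The additivity claim is false even in codimension $2$: let $\eta$ be singular in coordinates $i,i{+}1$ with both tangencies on the same skeletal strand $y^\alpha$, and suppose the free strand lies above $y^\alpha$ at positions $i{-}1$ and $i{+}2$. Then the four top cells carry crossing contributions $(+,+)\mapsto 0$ and $(+,-),(-,+),(-,-)\mapsto 2$, which violates additivity ($0+2\neq 2+2$), and the maximum is attained at three of the four vertices, not uniquely at the antipode of the minimum. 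Since an arbitrary integer-valued function on a $k$-cube can easily fail to admit a min-to-max monotone Hamming path (take a $3$-cube with the Hamming-$1$ layer larger than the Hamming-$2$ layer), the monotone path is a genuine claim that requires a real argument about $\cross$, not the local-change heuristic you invoke. As written the core step is unproved.

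The paper avoids the cube picture and instead proceeds iteratively: choose any $\sigma''\in\st\eta\cap\cX^\topc$ sharing a codimension-$1$ face $\eta'$ with $\sigma$; extremality of $\cross(\sigma)$ and $\cross(\sigma')$ on $\st\eta\cap\cX^\topc$ forces $\cross(\sigma)\le\cross(\sigma'')\le\cross(\sigma')$, so $(\sigma,\eta'),(\eta',\sigma'')\in\cR$; then recurse on the new bridge $\sigma''\to\tilde\eta\to\sigma'$ through a cell $\tilde\eta\in\st\eta$ with $\st\tilde\eta\subset\st\eta$, using each top cell in $\st\eta$ at most once, which terminates. You should either adopt this explicit inductive construction or replace your appeal to additivity/antipodality with an honest combinatorial lemma about $\cross$ restricted to $\st\eta\cap\cX^\topc$ that actually delivers the monotone walk.
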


\begin{proof}
We prove the lemma in one direction since \eqref{chainform} trivially implies
$(\xi,\xi')\in \cR^{\bm{+=}}$.
We may assume without loss of generality that $\xi,\xi'\in \cX^\topc$, $\xi\neq \xi'$. Indeed,
if $\xi\not\in \cX^\topc$ then $\xi\in \cXr$, or $\xi \in \cXs$.
For the former we can
choose $\tilde\xi\in \st\xi\cap \cX^\topc $ such
that $\Lap(\tilde\xi) = \Lap(\xi)$, and for the latter case $\cross_-(\xi)<\cross_+(\xi)$. Therefore we can choose 
$\tilde\xi\in \st\xi\cap \cX^\topc $ such
that $\bigl(\cross_-(\xi),\cross_-(\xi)\bigr)=\Lap(\tilde\xi) < \Lap(\xi) = \bigl(\cross_-(\xi),\cross_+(\xi)\bigr)$.
We can thus choose $\tilde\xi\in \st\xi\cap \cX^\topc $ such
that $\Lap(\tilde\xi)\le \Lap(\xi)$.
Similarly, if $\xi'\not\in \cX^\topc$ we can choose $\tilde\xi'\in \st\xi'\cap \cX^\topc $ such
that $\Lap(\xi)\le \Lap(\tilde\xi')$.

By definition $(\xi,\xi')\in \cR$ if (a) $\xi'\in \cX^\topc$ and $\xi\in \ccl~\xi'$, or (b) $\xi\in \cX^\topc$ and $\xi'\in \ccl~\xi$, or (c) $\xi=\xi'$
which we may exclude in the proof.
Consequently, $(\xi,\xi')\in \cR^{\bm{+=}}$ implies the existence of 
\[
(\xi,\eta_0),(\eta_0,\sigma_1),\cdots,(\sigma_{k-1},\eta_{k-1}),(\eta_{k-1},\xi')\in\cR,
\]
with $\eta_j\not\in \cX^\topc$ and $\sigma_j\in \cX^\topc$.
The lemma is proved if we prove \eqref{chainform} for the case
 $(\sigma,\eta),(\eta,\sigma')\in \cR$, $\sigma,\sigma'\in \cX^\topc$ and
 $\eta\not\in \cX^\topc$. Assume without loss of generality that $\eta\in G_k\cX$, $k<d-1$.
 By the definition of $\cR$ it holds that  $\Lap(\sigma)\le \Lap(\eta) \le \Lap(\sigma')$, which is equivalent to
 \[
 \bigl(\cross(\sigma),(\cross(\sigma)\bigr) \le 
 \bigl(\cross_-(\eta),(\cross_+(\eta)\bigr) \le \bigl( \cross(\sigma'),(\cross(\sigma')\bigr).
 \]
Since $\cross_-(\eta)$ is minimal over $\st\eta\cap \cX^\topc$
and $\cross_+(\eta)$ is maximal over $\st\eta\cap \cX^\topc$ it follows that 
% $\cross_-(\eta)= \cross(\sigma)$ and 
% $\cross_+(\eta)= \cross(\sigma')$, and therefore
$\cross(\sigma)=\cross_-(\eta) \le \cross_+(\eta)=\cross(\sigma')$. Moreover, every $\sigma''\in \st\eta\cap \cX^\topc$
satisfies $\cross(\sigma)\le\cross(\sigma'')\le\cross(\sigma')$. Choose $\sigma''\in \st\eta\cap \cX^\topc$ such that $\ccl~\sigma\cap\ccl~\sigma''\cap G_{d-1}\cX\neq\varnothing$ and let $\eta'\in G_{d-1}\cX$ be the unique cell in $\ccl~\sigma\cap\ccl~\sigma''\cap G_{d-1}\cX$. Then,
$\Lap(\sigma) \le \Lap(\eta')\le \Lap(\sigma'')\le\Lap(\sigma')$
and  $(\sigma,\eta'),(\eta',\sigma'')\in \cR$. 
Observe that 
$\ccl~\sigma'$ and $\ccl~\sigma''$ intersect in a cell $\tilde\eta \in \st\eta$
with $\Lap(\sigma'') \le \Lap(\tilde\eta)\le \Lap(\sigma')$
and $\st \tilde\eta \subset \st\eta$. Now repeat the above steps by using the cells in $\st\eta\cap\cX^\topc$ at most once. This process terminates after finitely many steps, proving the lemma.
\end{proof}

\begin{theorem}
\label{brclasses123}
The partial equivalence classes of $\cR^{\bm +=}$ correspond to the discrete relative braid class components in $\Conf_1^d \rel y$ for a given skeleton $y$.
%, i.e. $\Int |\dyn^{-1}\cS|$, $\cS\in \sSC$ describes all discrete braid classes.
\end{theorem}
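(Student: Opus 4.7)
The plan is to establish a bijection between partial equivalence classes of $\cR^{\bm{+=}}$ (those containing more than one cell) and the connected components of $\Conf_1^d\rel y$, while showing that each singular cell $\xi\in \cXs$ contributes only the trivial singleton $\{\xi\}$. The crucial observation driving everything is that $\cR$ is $\Lap$-monotone: any link $(\xi_i,\xi_{i+1})\in \cR$ satisfies $\Lap(\xi_i)\le \Lap(\xi_{i+1})$ (product order on $\N\times\N$), so $\Lap$ is non-decreasing along every chain realizing $\cR^{\bm{+=}}$. Consequently, if both $(\xi,\xi')$ and $(\xi',\xi)$ belong to $\cR^{\bm{+=}}$, then $\Lap$ must be \emph{constant} on any realizing chain, which forces every cell on the chain to have coinciding $\cross_-$ and $\cross_+$, i.e.\ to lie in $\cXr$.

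With this in hand I first dispatch singular cells. For $\xi\in \cXs$ one has $\cross_-(\xi)<\cross_+(\xi)$, whereas any top cell $\sigma\in\cX^\topc$ has $\Lap(\sigma)=(\cross(\sigma),\cross(\sigma))$; by Lemma~\ref{charofR} a chain to a partial equivalent partner would be forced to pass through a top cell, contradicting constancy of $\Lap$. Next, for a regular non-top cell $\xi\in \cXr\smin \cX^\topc$ I pick $\sigma\in \st\xi\cap \cX^\topc$ with $\cross(\sigma)=\cross_\pm(\xi)$ (such $\sigma$ exists since $\cross_-$ is attained on $\st\xi\cap\cX^\topc$), then $(\xi,\sigma),(\sigma,\xi)\in \cR$, so each regular-cell partial equivalence class is determined by its top cells. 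It therefore remains to identify, for $\sigma,\sigma'\in \cX^\topc$, the condition $\sigma\sim\sigma'$ with inclusion in the same braid class component. Combining Lemma~\ref{charofR} with constancy of $\Lap$, $\sigma\sim\sigma'$ iff there is a chain
\[
\sigma=\sigma_0,\eta_0,\sigma_1,\eta_1,\ldots,\sigma_k=\sigma',\qquad \sigma_i\in \cX^\topc,\ \eta_j\in G_{d-1}\cX,
\]
in which every intermediate $\eta_j$ is \emph{regular} (i.e.\ $\eta_j\in G_{d-1}\cX\cap \cXr$).

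The main obstacle — and the final step — is matching this combinatorial adjacency with path-connectedness inside $\Conf_1^d\rel y$. Here I invoke the geometric content of the cubical decomposition from Section~\ref{metrcidisc}: a $(d-1)$-cell $\eta\in G_{d-1}\cX$ parametrises configurations in which a single strand $x$ coincides with a single skeletal strand $y^\alpha$ at a single anchor point, so $\eta\in \cXr$ precisely when this coincidence is transverse in the sense of Definition~\ref{PL}(iii), and $\eta\in \cXs$ precisely when it is a tangency. Proposition~\ref{lapLyap} then asserts that tangencies correspond exactly to strict drops of $\cross$; equivalently, $\eta$ lies in $\Conf_1^d\rel y$ iff $\eta\in \cXr$. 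Thus the chain above realises a piecewise-linear path in $\Conf_1^d\rel y$ from $\sigma$ to $\sigma'$, and conversely any path in $\Conf_1^d\rel y$ between two top cells can, by genericity, be homotoped into one whose $(d-1)$-transitions occur in regular $(d-1)$-cells, producing the required chain. Piecing together Steps above gives the claimed correspondence.
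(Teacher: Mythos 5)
Your argument is correct and rests on the same pillars as the paper's proof: the $\Lap$-monotonicity of $\cR$, the chain characterization of Lemma~\ref{charofR}, and the observation that for a regular cell $\xi$ the star $\st\xi$ lies in $\cXr$ with $\Lap$ constant on it. The presentation is organized differently. You begin by showing explicitly that each singular cell is a singleton partial equivalence class and that every regular non-top cell is equivalent to any top cell in its star — useful clarifications that the paper does not spell out — and you then reduce the whole question to top cells connected by chains through regular $(d-1)$-cells. You carry out the chain-to-path direction in detail (a chain crossing only regular codimension-one cells realizes to a PL path inside $\Conf_1^d\rel y$) and assert the path-to-chain direction ``by genericity.'' The paper does the opposite: it covers a path $\gamma\colon[0,1]\to[x]\rel y$ by the open sets $|\st\xi_s|$, uses compactness to conclude $\Lap$ is constant on the union, and reads a chain (of the form in Lemma~\ref{charofR}) off the overlapping stars; its converse direction, ``equivalent cells belong to the same braid class component,'' is stated without elaboration. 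Both arguments are sound; the paper's open-cover compactness argument avoids the perturbation/homotopy step you invoke, while your handling of singular cells as singletons and the reduction to top cells make the nature of the asserted correspondence more transparent.
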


\begin{proof}
We distinguish regular and singular cells. 
Every regular cell $\xi$ determines a discrete relative braid. This assignment is not one-to-one in general. Let the braid class component $[x]\rel y$ be the connected component of $x\rel y$ for some $x\in |\xi|$.
Any point $x'\rel y\in [x]\rel y$ corresponds to a cell $\xi'\in \cXr$,
with $x\in |\xi'|$. Let $\gamma\colon[0,1]\to [x]\rel y$ be a path joining
$x$ and $x'$. Then, $\gamma(s) \in |\xi_s| \subset [x]\rel y$, $\xi_s\in \cXr$ for all $s\in [0,1]$.
If $\xi\in \cXr$, then
$\st\xi\subset \cXr$ and $\Lap(\eta)=\Lap(\xi)$ for all $\eta\in \st\xi$. Therefore, 
the set $\bigcup_{s\in [0,1]}|\st\xi_s|$ is an open covering of $\gamma$. Since $\Lap(\st\xi_s)$ is constant for all $s$, the compactness of path $\gamma$ implies that $\Lap$ is constant on $\bigcup_{s\in [0,1]}\st\xi_s$ and in particular $\Lap(\xi)=\Lap(\xi')$.
The path also
yields a chain $\xi_i$ satisfying \eqref{chainform} which proves, using Lemma \ref{charofR}, that $(\xi,\xi')\in \cR^{\bm{+=}}$.
Since this holds for any two relative braids in $[x]\rel y$ we conclude that the set  of all cells $\cU$ with $|\cU|=[x]\rel y$ is contained in a partial equivalence class of $\cR^{\bm{+=}}$.
Conversely, equivalent cells belong to the same braid class component which proves that the braid class components are realized by the partial equivalence classes of $\cR^{\bm{+=}}$.
%
% By Thm.\ \ref{statetrans12} we have that $(\xi,\xi')\in \cF^{\bm{+=}}$ if and only if $(\xi,\xi')\in \cR^{\bm{+=}}$ which proves that the partial equivalence classes of $\cF^{\bm{+=}}$ yield all braid classes.
\end{proof}

%%%%%%%%

From the theory in Section \ref{dyngrad12} we have that $\le^\dagger$ restricted to $\cX^{\topc}$ defines a \discresol  $(\cX^{\topc},\le^\topc)$. %The pre-order $\le'$ is a Morse pre-order on $\cX$.
The \discresol  $(\cX^{\topc},\le^\topc)$ can be characterized as follows.
We define a relation $\cF$ on $\cX^{\topc}$ in  two steps.
\begin{enumerate}
    \item[(i)] Let  $\cE^\topc\subset \cX^{\topc}\times \cX^{\topc}$ be the  (symmetric) \emph{adjacency relation} given by
    \[
(\xi,\xi')\in \cE^\topc \iff G_{d-1}\cX \cap \ccl~\xi\cap \ccl~\xi'\neq \varnothing\quad \text{and}\quad \xi\neq \xi',
\] 
% \[
% (\xi,\xi')\in \cE \iff \cX^{\topc} \cap \ccl \xi\cap \ccl \xi'\neq \varnothing\quad \text{and}\quad \xi\neq \xi',
% \] 
    i.e. $(\xi,\xi')\in \cE^\topc$ if and only if the cells $\lbr\xi\rbr$ and $\lbr\xi'\rbr$ intersect along a $(d-1)$-dimensional face.\footnote{Recall that $G_{d-1}\cX$ is the skeleton of co-dimension one cells in $\cX$.} 
    \item[(ii)] Let $\cF\subset \cE^\topc$ be defined  as follows:  
    %A pair $(\xi,\xi')\in \cR$ if 
%\begin{enumerate}
\[
(\xi,\xi')\in \cF  \iff   \cross(\xi)\leq \cross(\xi'),
\]
%     Define the relation $\cF\subset \cX^{\topc}\times \cX^{\topc}$ on set of top-cells $\cX^{\topc}$ as follows.  
%     A pair $(\xi,\xi')\in \cF$ if and only if
% \begin{enumerate}
%     \item[(i)] $(\xi,\xi')\in \cE^+$, i.e. $\xi,\xi'\in \cX^{\topc}$ are adjacent top-cells;
%     \item[(ii)] $\cross(\xi)\leq \cross(\xi')$,
% \end{enumerate}
cf.\ Fig.\ \ref{fig:parabolic:lap}[left] and [right].
\end{enumerate}
% We call the relation $\cF$ based on adjacency and crossing numbers a \emph{state transition relation} for the parabolic system.
\begin{theorem}
\label{statetrans12}
The transitive, reflexive closure $\cF^{\bm{+=}}$ is the restriction $\le^\topc$ of $\le^\dagger$ to $\cX^{\topc}$.
In other words, $\cF^{\bm{+=}}$ is a \discresol for $\varphi$.
\end{theorem}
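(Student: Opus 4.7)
The plan is to invoke Theorem~\ref{thm:lattice:hom} and Lemma~\ref{charofR}, and to prove the theorem by exhibiting a lattice isomorphism $\sO(\cX^\topc,\cF^{\bm{+=}})\cong\sO(\cX,\le^\dagger)$ realized by closure; by Birkhoff duality this dualizes to $\cF^{\bm{+=}}=\le^\dagger|_{\cX^\topc}=\le^\topc$. As a first step I would verify the easy inclusion $\cF\subseteq \cR^{\bm{+}}\subseteq\, \le^\dagger|_{\cX^\topc}$: for $(\xi,\xi')\in\cF$ sharing a $(d-1)$-face $\eta$ with $\cross(\xi)\le\cross(\xi')$, the cubical-complex geometry forces the interior case $\st\eta\cap\cX^\topc=\{\xi,\xi'\}$, hence $\cross_-(\eta)=\cross(\xi)$ and $\cross_+(\eta)=\cross(\xi')$, and a direct check yields $\Lap(\xi)\le\Lap(\eta)\le\Lap(\xi')$, giving the chain $\xi\to\eta\to\xi'$ in $\cR$. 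Boundary $(d-1)$-faces are incident to a single top cell and contribute only reflexively to $\cF$.

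Next I would establish the bijection $\sO(\cX,\le^\dagger)=\bigl\{\cl~\cU^\topc\mid \cU^\topc\in\sO(\cX^\topc,\cF^{\bm{+=}})\bigr\}$. Given $\cU^\topc$ closed in $\cF^{\bm{+=}}$, the set $\cU=\cl~\cU^\topc$ is trivially $\cl$-closed; closure under $\cl^+$ follows by case analysis on pairs $(\zeta,\eta)\in\cR$ with $\eta\in\cU$. In the case $\eta\in\cX^\topc$ one has $\eta\in\cU^\topc$ and $\zeta\in\cl~\eta\subseteq\cU$. In the case $\zeta\in\cX^\topc$, fix a witnessing $\tilde\xi\in\cU^\topc$ with $\eta\in\cl~\zeta\cap\cl~\tilde\xi$; the condition $\Lap(\zeta)\le\Lap(\eta)$ forces $\cross(\zeta)=\cross_-(\eta)\le\cross(\tilde\xi)$, and the combinatorial construction internal to the proof of Lemma~\ref{charofR} produces an alternating $\cR$-chain inside $\st\eta\cap(\cX^\topc\cup G_{d-1}\cX)$ between $\zeta$ and $\tilde\xi$, whose top-cell reduction is an $\cF$-chain with non-decreasing $\cross$; thus $\zeta\in\cU^\topc$. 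Conversely, for $\cU\in\sO(\cX,\le^\dagger)$, setting $\cU^\topc=\cU\cap\cX^\topc$ yields $\cF^{\bm{+=}}$-closure by the easy inclusion, and the identity $\cU=\cl~\cU^\topc$ is proved by taking $\zeta\in\cU\setminus\cX^\topc$ together with the top cell $\xi^-\in\st\zeta\cap\cX^\topc$ satisfying $\cross(\xi^-)=\cross_-(\zeta)$: one checks $(\xi^-,\zeta)\in\cR$, so $\cl^+$-closure of $\cU$ forces $\xi^-\in\cU^\topc$ and $\zeta\in\cl~\xi^-\subseteq\cl~\cU^\topc$.

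The main obstacle is the lower-dimensional sub-case in the forward direction above: when $\dim\eta<d-1$, the top cells $\zeta,\tilde\xi\in\st\eta\cap\cX^\topc$ need not be joined by a single $(d-1)$-face, so $\cF$ alone cannot relate them directly. The resolution is the star-navigation device used inside the proof of Lemma~\ref{charofR}, which inductively inserts intermediate top cells in $\st\eta$ connected through $(d-1)$-faces while keeping $\cross$ monotone; this is the step that translates the broader adjacency allowed in $\cR$ into the more restrictive $(d-1)$-adjacency of $\cF$. Once both directions of the lattice bijection are established, Birkhoff duality yields $\cF^{\bm{+=}}=\le^\topc$, and Theorem~\ref{mainthmfordrs} then identifies $\cF$ as a discrete resolution for $\phi$.
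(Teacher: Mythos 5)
Your plan takes a genuinely different route from the paper. The paper proves the pre-order equality directly: it shows $\cF^{\bm{+=}}=\cR^{\bm{+=}}\cap(\cX^\topc\times\cX^\topc)$ (one containment trivially, the other by invoking Lemma~\ref{charofR}), and then closes with the observation that since $\le|_{\cX^\topc}$ is an anti-chain, $\le^\dagger|_{\cX^\topc}=\le^+|_{\cX^\topc}=\cR^{\bm{+=}}\cap(\cX^\topc\times\cX^\topc)$. You instead try to exhibit the lattice isomorphism $\sO(\cX^\topc,\cF^{\bm{+=}})\cong\sO(\cX,\le^\dagger)$ via $\cl$ and then dualize. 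The converse direction of your bijection and the easy inclusion are fine (and the observation that boundary facets contribute only reflexively is correct). The gap is in Case~2 of the forward direction.

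There you have $(\zeta,\eta)\in\cR$ with $\zeta\in\cX^\topc$, $\eta\in\cl~\zeta$, and you correctly deduce $\cross(\zeta)=\cross_-(\eta)$. You then pick $\tilde\xi\in\cU^\topc$ with $\eta\in\cl~\tilde\xi$ and claim that the star-navigation inside the proof of Lemma~\ref{charofR} gives an $\cF$-chain from $\zeta$ to $\tilde\xi$. But that device is calibrated to the situation where \emph{both} $(\sigma,\eta)\in\cR$ and $(\eta,\sigma')\in\cR$ hold, i.e.\ $\sigma$ realizes $\cross_-(\eta)$ \emph{and} $\sigma'$ realizes $\cross_+(\eta)$; the proof explicitly uses $\Lap(\sigma'')\le\Lap(\sigma')$ for all $\sigma''\in\st\eta\cap\cX^\topc$, which is the maximality of $\sigma'$, to guarantee each inserted $(d-1)$-face $\eta'$ satisfies $\Lap(\eta')\le\Lap(\sigma'')\le\Lap(\sigma')$. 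In your situation $\tilde\xi$ is an arbitrary element of $\cU^\topc\cap\st\eta$, and $(\eta,\tilde\xi)\in\cR$ fails unless $\cross(\tilde\xi)=\cross_+(\eta)$, so the recursion does not terminate at $\tilde\xi$. What you actually need is a separate combinatorial statement: from the cell of $\st\eta\cap\cX^\topc$ realizing $\cross_-(\eta)$, there is an $\cF$-monotone path to \emph{any} cell of $\st\eta\cap\cX^\topc$. This is plausible (given the restricted shape $\cross$ can take on a cube of top cells) but is a nontrivial claim about the behaviour of $\cross$ on that cube, not a consequence of what Lemma~\ref{charofR} as stated or proved provides. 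The paper's route avoids this entirely by only ever decomposing $\cR^{\bm{+=}}$-chains between two top cells, which is exactly the hypothesis Lemma~\ref{charofR} treats.

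A smaller point: invoking Theorem~\ref{thm:lattice:hom} at the outset does not help, since its hypothesis is precisely \eqref{firstchar1} for the pre-order under consideration, which is what you are trying to establish for $\cF^{\bm{+=}}$; you would have to prove the lattice isomorphism by hand anyway, which returns you to the gap above.
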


%\begin{proof}[Thm.\ \ref{statetrans12}]
\begin{proof}
Let $(\xi,\xi')\in \cF$, then $\xi$ and $\xi'$ are adjacent top cells and $\cross(\xi)\le \cross(\xi')$. By definition there exists a unique cell $\eta\in G_{d-1}\cX\cap \ccl~\xi\cap \ccl~\xi'$,
and $\st\eta = \{\eta,\xi,\xi'\}$. The Lyapunov functions for these cells are given by
$\Lap(\xi) = \bigl( \cross(\xi),\cross(\xi)\bigr)$, $\Lap(\eta) =  \bigl( \cross(\xi),\cross(\xi')\bigr)$ and $\Lap(\xi') = \bigl( \cross(\xi'),\cross(\xi')\bigr)$. Consequently,
$\Lap(\xi)\le \Lap(\eta)\le\Lap(\xi')$ and thus $(\xi,\eta), (\eta,\xi')\in \cR$.
By definition $(\xi,\xi') \in \cF^{\bm{+=}}$ is equivalent to pairs 
\[
(\xi_0,\xi_1),\cdots,(\xi_{\ell-1},\xi_\ell) \in\cF,
\]
with $\xi_0=\xi$ and $\xi_k=\xi'$. Therefore, 
$(\xi,\xi') \in \cF^{\bm{+=}}$ implies $(\xi,\xi') \in \cR^{\bm{+=}}$.
%which implies that $(\xi,\xi') \in \cR^{\bm{+=}}$. 
We obtain the inclusion, $\cF^{\bm{+=}} \subset \cR^{\bm{+=}} \cap (\cX^\topc\times\cX^\topc)$.

Conversely, let $(\xi,\xi') \in \cR^{\bm{+=}} \cap (\cX^\topc\times\cX^\topc)$.
By Lemma \ref{charofR} there exist $\xi_0,\cdots,\xi_{2\ell}$, with $\xi_0=\xi$ and $\xi_{2\ell}=\xi'$, such that \eqref{chainform} holds. This yields
\[
(\xi_{2i},\xi_{2i+2})\in \cF, \quad i=0,\cdots,\ell-1,
\]
which implies that $(\xi,\xi')\in \cF^{\bm{+=}}$. This provides the opposite inclusion
$\cR^{\bm{+=}} \cap (\cX^\topc\times\cX^\topc)\subset \cF^{\bm{+=}}$ and thus $\cR^{\bm{+=}} \cap (\cX^\topc\times\cX^\topc)= \cF^{\bm{+=}}$.

Since $\le$ restricted to $\cX^\topc$ is an anti-chain  the restriction
of $\le^\dagger$ to $\cX^\topc$ is equal to the restriction
of $\le^+$ to $\cX^\topc$, i.e. 
$\cR^{\bm{+=}}\cap (\cX^\topc\times\cX^\topc)$.
Therefore, the restriction
of $\le^\dagger$ to $\cX^\topc$ is equal to 
$\cF^{\bm{+=}}\cap (\cX^\topc\times\cX^\topc)$, which completes the proof.
% with $\ccl~\xi\cap \ccl~\xi'\neq \varnothing$. % and let  $\eta\in \ccl~\xi\cap \ccl~\xi'$. 
% Since $\cR$ is defined for cell pairs in $\cE$ there exists a cell $\eta\in \ccl~\xi\cap \ccl~\xi'$
% such that $\Lap(\xi)\le \Lap(\eta)\le \Lap(\xi')$ which implies that $\cross(\xi)\le \cross(\xi')$.
% If $\eta\in G_{d-1}\cX$, then $(\xi,\xi')\in \cF$. If $\eta\in G_k\cX$, with $k<d-1$, consider $\st\eta$. There exist $\xi_0,\cdots,\xi_{d-k}\in \st \eta$, with $\xi_0=\xi$ and $\xi_{d-k}=\xi'$, and $(\xi_i,\xi_{i+1})\in \cE^\topc$. Let $\eta_i\in G_{d-1}\cX$ be the unique elements in $\ccl~\xi_i\cap \ccl~\xi_{i+1}$. For the crossing numbers we have $\cross_-(\eta)\le \lambda(\xi_i)\le \cross_+(\eta)$. We order the cells $\xi_i$ such that 
% $\cross(\xi_i) \le \cross(\xi_{i+1})$. This implies that $(\xi_i,\eta_i),(\eta_i,\xi_{i+1})
% \in \cF$ and thus $(\xi,\xi')\in \cF^{\bm{+=}}$. Since $\cR$ is defined via $\cE$ we have that 
% $\cR^{\bm{+=}} \cap (\cX^\topc\times\cX^\topc)\subset \cF^{\bm{+=}}$ which completes the proof.
% %be the unique cell of maximal dimension.
% %let $(\xi,\xi') \in \cR^{\bm{+=}} \cap \cE^\topc$
\end{proof}

By Theorem \ref{statetrans12} the transitive, reflexive closure $\cF^{\bm{+=}}$ is a closure operator on $\cX^\topc$, i.e. 
$\ccl^\topc = (\cF^{\scaleto{\bm{+=}}{4pt}})^{-1}$, 
% $\ccl^\topc = \bvtheta_{\cF^{\scaleto{\bm{+=}}{4pt}}}$, 
and is thus a \discresol for $\varphi$.
By Theorem \ref{brclasses123} the braid class components correspond to the partial equivalence classes of $\cR^{\bm +=}$ and therefore with the partial equivalence classes of
$\cF^{\bm +=}$. By the order-preserving map $\dyn\colon (\cX,\le) \twoheadrightarrow (\sSC,\le)$ given in \eqref{dyndefn12} we obtain the partial equivalence classes of $\le^\dagger$. The interior yields the braid class components: the open sets $\Int |\dyn^{-1}\cS|$, $\cS\in \sSC$, describe all discrete braid class components. The advantage of $|\dyn^{-1}\cS|$ is that its Borel-Moore homology gives its Conley index via $H^\tile(\cS)$. Another way to retrieve the braid class components from $\le^\topc$ is to use Section \ref{regclMT12}. In this way we obtain the closures of the braid class components. This way one cannot immediately determine its Conley index

In the next section we carry out a specific analysis for a number of examples of parabolic systems.

%%%%%%%%%%%%
%computations
%\vspace{-2ex}
\section{Recipe for global decompositions}
\label{gldecomppf}
% \section{Computing Morse pre-orders and connection matrices for parabolic flows}\label{algimpl}

%This section is about more than just connection matrices, in fact about computing discretization, Morse pre-order and connection matrix.

In this section we apply the methodology of this text to parabolic flows in combination with the theory of discretized braids.  This application will use all of the ingredients described in the previous chapters.  We also place an emphasis on the computational aspects and highlight how these can be carried out in practice.  The goal is to obtain a Morse pre-order for a discrete parabolic flow $\varphi$, which encodes the directionality of $\varphi$, from which we can determine a (graded) tessellar differential module, a graded representation (connection matrix), and a tessellar phase diagram, whose structures reveal information about the invariance and connecting orbits of $\varphi$.  

Section \ref{algimpl} outlines the general recipe for computing  a Morse pre-order and a graded representation  applied  to parabolic flows.  In order to make use of the algorithm \textsc{ConnectionMatrix} of \cite{hms}, we also assume that we work with homology over fields.   Section \ref{lapgrading} is specific to parabolic flows, and  introduces parabolic Betti numbers/homology\index{Betti number!parabolic}\index{Parabolic homology} using the lap number grading.

%all of the ingredients described in this paper on parabolic flows. The goal is to obtain a Morse pre-order for a discrete parabolic flow $\varphi$, which encodes the directionality of $\varphi$, and from which we can determine a (graded) tessellar chain complex (and connection matrix), whose structure reveals information about the invariance and connecting orbits of $\varphi$.   The first part of this section outlines the recipe for computing the Morse pre-order and graded tessellar chain complex.  The second part of this section contains some examples of computations.
%\vspace{-1ex}
\subsection{Computing Morse pre-orders and tessellar chain complexes}\label{algimpl}

We divide the computations into three appropriate steps: topologization, discretization and algebraization.   These steps use the tools of graph theory and computational algebraic topology.\footnote{These computations can be set up for a given skeleton using the open-source software package \textsc{PyChomp}~\cite{cmcode}.  Of note is that the software is very efficient, and can calculate \discresols and connection matrices for examples of parabolic flows with $|\ccX|\approx 2.5\times 10^{10}$, and $|\sSC|\approx 6.2\times 10^4$, cf.\ \cite{hms2}. More details on the software and algorithms, in addition to timing information for computational experiments, can be found in~\cite{hms,hms2}.}

%This can also be represented as the embedding of the Hasse diagram corresponding to node with non-trivial homology into the Hasse diagram of the Morse tessellation. This structure is the combinatorial description of the parabolic flow $\varphi$ within the resolution of braid classes. We do not combinatorialize the dynamics inside individual Morse tiles.

%The goal is to obtain a Morse tessellation for $\varphi$ whose Morse tiles represent discrete braid classes. From the Morse tessellation we determine a chain complex that is graded by the Morse tessellation and whose differential  reveals information of connecting orbits for $\varphi$.

%non-trivial nodes corresponds the homology of the Morse tiles (the homology can be zero), cf. Fig.\ \ref{fig:sec1}. This can also be represented as the embedding of the Hasse diagram corresponding to node with non-trivial homology into the Hasse diagram of the Morse tessellation. This structure is the combinatorial description of the parabolic flow $\varphi$ within the resolution of braid classes. We do not combinatorialize the dynamics inside individual Morse tiles.

\subsubsection{Topologization}
%\noindent{\textsl{1. Topologization}} 
(a) The space of $d$-periodic sequences is a cube in $\R^d$ and is given the standard metric topology. 
The block-flow topology given by a parabolic flow is derived from the backward image operator as explained in Section \ref{sec:parabolic:model}. The idea in Section \ref{sec:disc-dyn} is to construct a pre-order that discretizes both the metric topology as well as the \bflt\!\!. This is carried out such that CW-discretization map $\cell\colon X \twoheadrightarrow\cX$ has the right continuity properties.

\subsubsection{Discretization}
%\noindent{\textsl{2. Discretization}}
We breakdown discretization into steps (b)--(f); steps (b)--(d) are represented in Fig.'s \ref{fig:braidclass}--\ref{fig:sec112} and step (e) in Fig.~\ref{fig:parabolic:lap}.

 (b) For parabolic flows we use specific discretizations that are compatible with the braid classes for a given skeleton $y$, i.e., the top-cells $\cX^\topc$ correspond to generic braids given by  $G_dX$.  For a given  skeletal braid $y\in \Conf_m^d$ that is stationary for $\varphi$ and for which $\mathring y$ is proper, the phase space is given by Equation \eqref{thespace}.
Following the representation in Remark \ref{canrep1} we represent $y$ in normal form\footnote{None of the results here are affected by choosing a normal form because all skeleta are homotopic, cf.\ \cite{im}.} which yields
a  cubical CW-decomposition $\cX$ with the appropriate number of cubes, q.v.\ Fig.~\ref{fig:parabolic:lap}.%with the appropriate parameters.
%(dimension, number of boxes across in each direction, etc)

(c) The top cells $\xi\in \cX^\topc$ of the cubical CW-decomposition described in (a) correspond to the subsets $|\xi|\subset G_dX$ of generic braids $x\rel y \in  \Conf_1^d\rel y$, cf.\ Sect.\ \ref{metrcidisc}.
For the top cells we determine the symmetric adjacency relation $\cE^\topc\subset \cX^\topc\times\cX^\topc$ as described in Section \ref{fltodisc}.

(d) For every generic braid diagram  $x\rel y$ the crossing number $\cross(x\rel y) = \cross(\xi)\in \N$ is well-defined and can be given as the crossing number $\cross(\xi)$ of the unique top cell representating $x\rel y$. From the description in Section \ref{fltodisc} we obtain the generating relation $\cF\subset \cE^\topc$ for the \discresol $\le^\topc$ via $(\xi,\xi')\in \cF$ if and only if $\cross(\xi)\le \cross(\xi')$.

(e) For the relation $\cF$ we compute the poset of strongly connected components $(\sSC,\le)$. This can be done in time $O(|\ccX^\topc|+|\cF|)$ using Tarjan's algorithm\index{Tarjan's algoritm} \cite{tarjan1972depth}.\footnote{Note that without a \discresol it would take time $O(|\cX|+|\!\!\leq^\dagger\!\!|)$ to compute the poset $\sSC$.}
The elements $\cS\in \sSC$ correspond to discrete braid class components via $\Int | \cl~\cS|$, where $\cl$ is closure in $(\cX,\le)$ and $\Int$ is interior in $(X,\scrT)$. 
%Using $\cl~\cS$ and $\Int\cl~\cS$ is not optimal for computing homological invariants.

(f) We use the formula for the map $\dyn\colon \cX \twoheadrightarrow \sSC$ given by Theorem \ref{thethmdyn} to reconstruct the pre-order $(\cX,\le^\dagger)$. The partial equivalence classes of $\le^\dagger$ are given by $\dyn^{-1}\cS$. Note that $\cl~\dyn^{-1}\cS = \cl~\cS$.
The difference between $\dyn^{-1}\cS$ and $\cl~\cS$ is that the former is  convex in $(\cX,\le^\dagger)$ and thus locally closed  in $(\cX,\le)$. This implies that $|\dyn^{-1}\cS|$ is locally compact, and that the Borel-Moore homology is well-defined and can be computed via the cellular homology.
The pre-order $(\cX,\le^\dagger)$  defines a Morse tessellation.
The Morse tiles are given by the formula $|\dyn^{-1}\cS|  = G_\cS X$, cf.\ Eqn.\ \eqref{finhom12}.
Having the pre-order $(\cX,\le^\dagger)$ now establishes the discretization map
\[
\begin{tikzcd}
X \arrow[r, two heads, "\cell"] \arrow[rr, two heads, "\tile", bend right] & \cX \arrow[r] \arrow[r] \arrow[r, two heads, "\dyn"] & \sSC
\end{tikzcd}
\]
which also induces a non-trivial grading of $X$.

\subsubsection{Algebraization}
%\noindent {\textsl{3. Algebraization}}
The steps (a)--(f) yield the cubical CW-decomposition $\cX$, the poset $(\sSC,\le)$ and the map $\dyn\colon \cX\twoheadrightarrow \sSC$.

(g) The CW-decomposition map $\cell\colon (X,\scrT) \twoheadrightarrow(\cX,\le)$ together with the map $\dyn\colon (\cX,\le) \twoheadrightarrow (\sSC,\le)$ form an $\sSC$-graded cell complex, cf.\  \cite{hms}, which is the input for the algorithm \textsc{ConnectionMatrix} of~\cite{hms}.
%\vspace{-3ex}
\begin{figure}[h!]
%\vskip-.4cm
\centering
\begin{minipage}{.625\textwidth}
\centering
% \vphantom{v}\\
% \[
% \cdots \to 0\to
% \bigoplus_{i\in \{6,8,10\}}
% \Z_2 \langle \cS_i \rangle
% \xrightarrow{
% \Delta^\tile_1 
% }
% \bigoplus_{0\leq i \leq 4} 
% \Z_2 \langle \cS_i\rangle
% \to 0
% \]
\begin{tikzcd}[column sep = 0.7em]
\vphantom{v}\\
0 \arrow{r} &
\bigoplus_{i\in\{6,8,10\}} \Z_2 \langle \cS_i\rangle\arrow{r}{\cm_1} & [2em]
\bigoplus_{i\in\{0,1,3,4\}}\Z_2 \langle \cS_i\rangle \arrow{r} & 0,
\end{tikzcd}
\end{minipage}
\begin{minipage}{.3\textwidth}
\begin{align*}
\cm_1 = 
\bordermatrix{  & \cS_6 & \cS_8 & \cS_{10}   \cr
              \cS_0 & 1 & 0 & 0   \cr
              \cS_1 & 0 & 0 & 1 \cr
              \cS_3 & 1 & 1 & 0   \cr
              \cS_4 & 0 & 1 & 1  }
\end{align*}
\end{minipage}
\vspace{2ex}
    \caption{Graded tessellar differential module $C^\tile(X)$ for the example in Figure \ref{fig:braidclass}[left]. The differential $\cm^\tile$ is computed using $\Z_2$ coefficients [right].}
    \label{fig:braid:cm}
    %\vspace{-2ex}
\end{figure}

As output, we obtain the graded tessellar differential module $C^\tile(X)$, as described in Section \ref{doublegr}, cf.\ Fig.\ \ref{fig:braid:cm}.  In particular, we  obtain the Borel-Moore homologies $H^\BM(G_\cS X)$, and 
all Borel-Moore homologies are finitely generated.  In general the time complexity of this step is $O(|\cX|^3)$, however in practice it is linear~\cite{hms2}.% also denoted by $H^\cell(\cS)$

(h) Since the homology is computed over a field, i.e. $\K=\Z_2$, it is completely described by its Betti numbers/Poincar\'e polynomials. 
We visualize the ensemble of Morse pre-order and tessellar differential module by augmenting the Hasse diagram for $\sSC$ by providing the Borel-Moore Poincar\'e polynomials of the Morse tiles $G_\cS X$.  This visualization of the {tessellar phase diagram} $(\tessph,\le^\dagger)$ in given in Figure \ref{fig:braid:phase}, cf.\ Sect.\ \ref{doublegr}.  
%\vskip-.4cm
%\input{figures/parabolicCMGFull-new}
\begin{figure}[h!]
\centering
\begin{minipage}{1.0\textwidth}
\centering
\begin{tikzpicture}[node/.style = {ellipse, draw, inner sep = 1.5}, scale=.325]
\def\a{2}
\def\b{4}
\def\c{6}
\def\d{8}
\def\e{10}
\def\f{12}

\def\xa{0} %1,
\def\xb{2} %2 
\def\xc{4} %3
\def\xd{6} %4
\def\xe{8} %5
\def\xf{10} %6
\def\xg{12} %7
\def\xh{14} %8 %midpoint
\def\w{28}

%layer one
\node[node] (0) at (\xa, 0) {\scriptsize $\cS_0 : \mu^0$};
\node[node] (1) at (\w-\xa, 0)
{\scriptsize $\cS_1 : \mu^0$};

%layer two
\node[node] (2) at (\xb, \a)
{\scriptsize $\cS_2$ };
\node[node] (3) at (\xd, \a)
{\scriptsize $\cS_3 : \mu^0$};
\node[node] (4) at (\w-\xd, \a)
{\scriptsize $\cS_4 : \mu^0$};
\node[node] (5) at (\w-\xb, \a)
{\scriptsize $\cS_5$};
%layer three
\node[node] (6) at (\xa, \b)
{\scriptsize $\cS_6 : \mu^1$};
\node[node] (7) at (\xe, \b)
{\scriptsize $\cS_7$};
\node[node] (8) at (\xh, \b)
{\scriptsize $\cS_8 : \mu^1$};
\node[node] (9) at (\w-\xe, \b)
{\scriptsize $\cS_9$};
\node[node] (10) at (\w-\xa, \b)
{\scriptsize $\cS_{10} : \mu^1$};
layer four
\node[node] (11) at (\xb, \c)
{\scriptsize $\cS_{11}$};
\node[node] (12) at (\xd, \c)
{\scriptsize $\cS_{12}$};
\node[node] (13) at (\xf, \c)
{\scriptsize $\cS_{13}$};
\node[node] (14) at (\w-\xf, \c)
{\scriptsize $\cS_{14}$};
\node[node] (15) at (\w-\xd, \c)
{\scriptsize $\cS_{15}$};
\node[node] (16) at (\w-\xb, \c)
{\scriptsize $\cS_{16}$};
%layer five
\node[node] (17) at (\xc, \d)
{\scriptsize $\cS_{17}$};
\node[node] (18) at (\xe, \d)
{\scriptsize $\cS_{18}$};
\node[node] (19) at (\xg, \d)
{\scriptsize $\cS_{19}$};
\node[node] (20) at (\w-\xg, \d)
{\scriptsize $\cS_{20}$};
\node[node] (21) at (\w-\xe, \d)
{\scriptsize $\cS_{21}$};
\node[node] (22) at (\w-\xc, \d)
{\scriptsize $\cS_{22}$};
%layer six
\node[node] (23) at (\xd, \e)
{\scriptsize $\cS_{23}$};
\node[node] (24) at (\xf, \e)
{\scriptsize $\cS_{24}$};
\node[node] (25) at (\w-\xf, \e)
{\scriptsize $\cS_{25}$};
\node[node] (26) at (\w-\xd, \e)
{\scriptsize $\cS_{26}$};
%layer seven
\node[node] (27) at (\xe, \f)
{\scriptsize $\cS_{27}$};
\node[node] (28) at (\w-\xe, \f)
{\scriptsize $\cS_{28}$};

layer one
\draw[->,>=stealth,thick] (2) to (0);
\draw[->,>=stealth,thick] (5) to (1);
%layer two
\draw[->,>=stealth,thick] (6) to (2);
\draw[->,>=stealth,thick] (6) to (3);
\draw[->,>=stealth,thick] (7) to (3);
\draw[->,>=stealth,thick] (8) to (3);
\draw[->,>=stealth,thick] (8) to (4);
\draw[->,>=stealth,thick] (9) to (4);
\draw[->,>=stealth,thick] (10) to (4);
\draw[->,>=stealth,thick] (10) to (5);
%layer three
\draw[->,>=stealth,thick] (11) to (6);
\draw[->,>=stealth,thick] (12) to (6);
\draw[->,>=stealth,thick] (14) to (6);

\draw[->,>=stealth,thick] (12) to (6);
\draw[->,>=stealth,thick] (12) to (7);
\draw[->,>=stealth,thick] (13) to (7);

\draw[->,>=stealth,thick] (13) to (8);
\draw[->,>=stealth,thick] (14) to (8);

\draw[->,>=stealth,thick] (14) to (9);
\draw[->,>=stealth,thick] (15) to (9);

\draw[->,>=stealth,thick] (15) to (10);
\draw[->,>=stealth,thick] (16) to (10);
\draw[->,>=stealth,thick] (13) to (10);

%layer four
\draw[->,>=stealth,thick] (13) to (10);
\draw[->,>=stealth,thick] (17) to (11);

\draw[->,>=stealth,thick] (17) to (12);

\draw[->,>=stealth,thick] (18) to (12);
\draw[->,>=stealth,thick] (18) to (13);

\draw[->,>=stealth,thick] (19) to (13);

\draw[->,>=stealth,thick] (20) to (14);
\draw[->,>=stealth,thick] (21) to (14);
\draw[->,>=stealth,thick] (21) to (15);
\draw[->,>=stealth,thick] (22) to (15);
\draw[->,>=stealth,thick] (22) to (16);

\draw[->,>=stealth,thick] (23) to (17);
\draw[->,>=stealth,thick] (23) to (18);
\draw[->,>=stealth,thick] (24) to (18);
\draw[->,>=stealth,thick] (24) to (19);

\draw[->,>=stealth,thick] (25) to (20);
\draw[->,>=stealth,thick] (25) to (21);
\draw[->,>=stealth,thick] (26) to (21);
\draw[->,>=stealth,thick] (26) to (22);

%final layer
\draw[->,>=stealth,thick] (27) to (23);
\draw[->,>=stealth,thick] (27) to (24);
\draw[->,>=stealth,thick] (28) to (25);
\draw[->,>=stealth,thick] (28) to (26);

\end{tikzpicture}
\end{minipage}
% \begin{minipage}{.3\textwidth}
% \begin{align*}
% \Delta^\tile_1 = 
% \bordermatrix{  & 6 & 8 & 10   \cr
%               0 & 1 & 0 & 0   \cr
%               3 & 1 & 1 & 0   \cr
%               4 & 0 & 1 & 1     \cr
%               1 & 0 & 0 & 1 }
% \end{align*}
%\end{minipage}
\vspace{2ex}
\caption{Tessellar phase diagram for $(\tessph,\le^\dagger)$ for the example in Figure \ref{fig:braidclass}. The elements in $(\sSC,\le)$ are labeled with the natural numbers. Elements $\cS\in \sSC$ with trivial tessellar homology are indicated only by label.}\label{fig:braid:phase}
\end{figure}

%\vskip-.4cm
This structure is an algebraic and combinatorial description of the global dynamics of $\varphi$, encoding both the directionality and the invariance.   We can also  display the tessellar differential (connection matrix data), although in practice we regard this as a separate, queryable data structure which lives over the tessellar phase diagram.
The pure tessellar phase diagram is obtained by the subposet of vertices with non-trivial homology, cf.\ Fig.\ \ref{fig:braid:cmg}. 
%\vskip-.2cm
%\input{figures/parabolicCMG}
\begin{figure}[h!]
%\vskip-.4cm
\centering
\begin{minipage}{.75\textwidth}
\centering
\begin{tikzpicture}[scale=.85,node distance=.2cm and .25cm]
\def\a{0}
\def\b{3}
\def\c{6}
\def\d{9}
\def\e{1.5}
\def\f{4.5}
\def\g{7.5}
\node[draw, ellipse] (0) at (\a, 0) {\scriptsize $\cS_0 : \mu^0$};
\node[draw, ellipse] (1) at (\b, 0) {\scriptsize $\cS_3 : \mu^0$};
\node[draw, ellipse] (2) at (\c, 0) {\scriptsize $\cS_4 : \mu^0$};
\node[draw, ellipse] (3) at (\d, 0) {\scriptsize $\cS_1 : \mu^0$};

\node[draw, ellipse] (4) at (\e, 1.5) {\scriptsize $\cS_6 : \mu^1$};
\node[draw, ellipse] (5) at (\f, 1.5) {\scriptsize $\cS_8 : \mu^1$};
\node[draw, ellipse] (6) at (\g, 1.5) {\scriptsize $\cS_{10} : \mu^1$};
% \node[draw, ellipse] (0) at (\a, 0) {\scriptsize $\bigl(\cS_0, \mu^0\bigr)$};
% \node[draw, ellipse] (1) at (\b, 0) {\scriptsize $\bigl(\cS_3, \mu^0\bigr)$};
% \node[draw, ellipse] (2) at (\c, 0) {\scriptsize $\bigl(\cS_4, \mu^0\bigr)$};
% \node[draw, ellipse] (3) at (\d, 0) {\scriptsize $\bigl(\cS_1, \mu^0\bigr)$};

% \node[draw, ellipse] (4) at (\e, 1.5) {\scriptsize $\bigl(\cS_6, \mu^1\bigr)$};
% \node[draw, ellipse] (5) at (\f, 1.5) {\scriptsize $\bigl(\cS_8, \mu^1\bigr)$};
% \node[draw, ellipse] (6) at (\g, 1.5) {\scriptsize $\bigl(\cS_{10}, \mu^1\bigr)$};
 
\draw[->,>=stealth,thick] (4) to (0);
\draw[->,>=stealth,thick] (4) to (1);
\draw[->,>=stealth,thick] (5) to (1);
\draw[->,>=stealth,thick] (5) to (2);
\draw[->,>=stealth,thick] (6) to (2);
\draw[->,>=stealth,thick] (6) to (3);

\end{tikzpicture}
\end{minipage}
\vspace{2ex}
\caption{The pure tessellar phase diagram $(\otessph,\le^\ddagger)$ for the example in Figure \ref{fig:braidclass}.  }\label{fig:braid:cmg}
\end{figure}

%\vskip-.4cm

\subsection{Lap number grading and parabolic homology}
%\begin{remark}
\label{lapgrading}
From this point on we use field coefficients $R=\K$.
In the above calculations  we compute the tessellar homology associated with the discretization $\tile\colon X\to \sSC$. The elements of $\sSC$ represent discretized braid class components and therefore the crossing number $\cross(\cS)\in 2\N$ is well-defined for all $\cS\in \sSC$, i.e. we have an order-preserving map $\cS \mapsto \frac{1}{2}\cross(\cS)$.
The latter may be regarded as a discretization map and will be denoted by $\lap\colon \sSC \to \N$. This yields the following factorization
\begin{equation}
\label{factorforLap}
   \dgARROWLENGTH=3.5em
    \begin{diagram}
        \node{} \node{\sSC}\arrow{s,r}{\lap}\\
        %\arrow{e,l}{q}
        %\node{\cX/_\sim} \arrow{sw,r}{}\\
        \node{X}\arrow{e,l}{\pb}\arrow{ne,l}{\tile} \node{\N}
    \end{diagram}
\end{equation}
where $\pb\colon X\to \N$ is the composition of $\tile$ and $\lap$.\index{$\pb$}
By the same token as before the discretization $\lap$\index{Lap number}\index{Lap number grading} yields an
$\N$-graded differential module/chain complex $\bigl(C^\tile,\cm^\tile \bigr)$ and  an $\N$-grading of the tessellar homology $H^\tile$. Following the procedure in Section \ref{dimensiongrading} % and \ref{refineMR} 
we have:
\begin{definition}
\label{parahom1}
The scalar discretization $\lap\colon \sSC \to \N$ yields a bi-graded homology theory for $H^\tile$ 
 which is be denoted by 
\begin{equation}
    \label{parahom2}
\tH_{p,q}(X):=G_pH^\tile_q(X),\quad p,q\in \N,
\end{equation}
and will be referred to as\index{Parabolic homology} the \emph{parabolic homology}.\footnote{The definition of the parabolic homology and the bi-grading uses the fact that we use field coefficients. In general we only obtain bi-graded Betti numbers.}
\end{definition}

The parabolic homology is defined for all  sets $G_{\cU\smin\cU'}X$, with $\cU\smin\cU'$ convex in $(\sSC,\le)$. In particular,
%
% can construct
% spectral sequence $\dff_{p}^r\colon E^r_{p} \to E^r_{p-r}$, 
% with $E^0_{p} = G_{p} C^\tile$ and $E^1_{p} = H\bigl(G_{p} C^\tile \bigr) \cong H^\tile(G_{p}\cX)$. This spectral sequence converges and the limit satisfies $E^\infty_{p}\cong G_{p} H^\tile(\cX)$, which
% yields a $\N$-grading of $H^\tile$.
% % If we apply the connection matrix theory in Sect.\ \ref{sec:cm} we obtain a chain complex $\bigl(C^\pb,\cm^\pb \bigr)$ and homology $H^\pb(\cX)$. By the detailed connection matrix theory in \cite{Bart1} the spectral information can also be obtained from the connection matrix. 
% The $\N$-grading due to $\lap$ and the dimension grading of $H^\tile$ yields the bi-grading $\tH_{p,q}(\cX)$, which will be referred to as the \emph{parabolic homology}. 
if we restrict to the convex sets $\{\cS\}$ in $(\sSC,\le)$ we obtain 
\begin{equation}
    \label{singleBM}
\tH_{p,q}(G_\cS  X) = \begin{cases}
     H^\BM_q(G_\cS X) & \text{ for } p=\lap\, G_\cS X; \\
      0 & \text{otherwise}.
\end{cases}
\end{equation}
% $\tH_{p,q}(\cS) = H^\BM_q(\cS)$ for $p=\lap(\cS)$ and zero elsewhere.
%which the ranks of $H^\pb_{p,n}(\cS)$. 
For convex sets $\cU\smin\cU'\subset \cX$ we have: \[
\vec{P}_{\lambda,\mu}(G_{\cU\smin\cU'}X) = \sum_{p,q\in \N} \bigl({\mathrm{rank~}} \tH_{p,q}(G_{\cU\smin\cU'}X)\bigr) \lambda^p \mu^q
\]
and 
$
P_{\lambda,\mu}\bigl(C^\tess(G_{\cU\smin\cU'}X)\bigr) = \sum_{\cS\in \cU\smin\cU'}\sum_{p,q\in \N} \bigl(\rk \tH_{p,q}(G_\cS X)\bigr) \lambda^p \mu^q.
$
The Morse relation from Theorem \ref{morserel1} yield
\begin{equation}
    \label{MRagain1}
    \sum_{\cS\in \sSC}
    \vec{P}_{\lambda,\mu}\bigl(G_\cS X \bigr) = \vec{P}_{\lambda,\mu}(X) + \sum_{r=1}^\infty (1+\lambda^r \mu) Q^r_{\lambda,\mu},
\end{equation}
with $Q^r_{\lambda,\mu}\ge 0$.

The matrices below describe the tessellar differential with $q$ and $p$ grading respectively for the skeleton $y$ in Figure \ref{fig:braidclass}. 
\[\small
\begin{blockarray}{ccccc|ccc}
 & ~~\cS_0\!\!\!\!\!&\cS_1\!\!\!\!\!&\cS_3\!\!\!\!\!&\cS_4&\cS_6\!\!\!\!\!&\cS_8\!\!\!\!\!\!&\cS_{10}\\ 
 \begin{block}{c(cccc|ccc)}
   \cS_0 &  &  & & &1&0&0        \\  
   \cS_1 &  &  & & &0&0&1 \\  
   \cS_3 &  &   & & &1&1&0       \\
   \cS_4 &  &   & & &0&1&1       \\\cline{1-8}
   \cS_6 &  &   & & & &  &        \\  
   \cS_8 &  &  & & & &  &    \\  
   \cS_{10} &  & & & &  &        \\  
 \end{block}
\end{blockarray}\quad\quad\quad
\begin{blockarray}{ccc|cc|ccc}
 & ~~\cS_0\!\!\!\!\!&\cS_1\!\!\!&\cS_3\!\!\!\!\!&\cS_4\!\!\!&\cS_6\!\!\!\!\!&\cS_8\!\!\!\!\!\!&\cS_{10}\\ 
 \begin{block}{c(cc|cc|ccc)}
   \cS_0 &  &  &0 &0 &1&0&0        \\  
   \cS_1 &  &  &0 &0 &0&0&1 \\  \cline{1-8}
   \cS_3 &  &   & & &1&1&0       \\
   \cS_4 &  &   & & &0&1&1       \\\cline{1-8}
   \cS_6 &  &   & & & &  &        \\  
   \cS_8 &  &  & & & &  &    \\  
   \cS_{10} &  & & & &  &        \\  
 \end{block}
\end{blockarray}
\]
Using  $\K=\Z_2$
the left matrix is $\cm^\tile$ as chain complex boundary for $C^\tess(X) = \bigoplus_{q\in \{0,1\}} C_q^\tile(X)$, with 
\[
C_0^\tile(X) = \bigoplus_{i\in \{0,1,3,4\}}
\Z_2\langle \cS_i\rangle, \quad\text{~~ and ~~}\quad C_1^\tess(X) = \bigoplus_{i\in \{6,8,10\}}
\Z_2\langle \cS_i\rangle.
\]
The right matrix is $\cm^\tile$ as differential for
the $\Z$-graded vector space $C^\tile(X) = \bigoplus_{p\in \{0,1,2\}} G_pC^\tile(X)$,
with 
\[
G_0C^\tile(X) = \!\!\!\bigoplus_{i\in \{0,1\}}
\Z_2\langle \cS_i\rangle,~~~ G_1C^\tile(X) = \!\!\!\bigoplus_{i\in \{3,4\}}
\Z_2\langle \cS_i\rangle,~~~
G_2C^\tile(X) = \!\!\!\!\!\bigoplus_{i\in \{6,8,10\}}\!\!
\Z_2\langle \cS_i\rangle.
\]
Figure \ref{fig:braid:cm} depicts $C^\tile(X)$  as  chain complex  and Figure \ref{fig:braid:cm12}
 below depicts $C^\tile(X)$ the differential vector space.
 %%%%%%%%%%%
%\input{third-diagram}
\begin{figure}[h!]
%\vskip-.4cm
\centering
\begin{minipage}{.825\textwidth}
\centering
% \vphantom{v}\\
% \[
% \cdots \to 0\to
% \bigoplus_{i\in \{6,8,10\}}
% \Z_2 \langle \cS_i \rangle
% \xrightarrow{
% \Delta^\tile_1 
% }
% \bigoplus_{0\leq i \leq 4} 
% \Z_2 \langle \cS_i\rangle
% \to 0
% \]
\begin{tikzcd}[column sep = 1.1em]
% \vphantom{v}\\
 0 \arrow{r} &
\bigoplus_{i\in\{6,8,10\}} \Z_2 \langle \cS_i\rangle\arrow{r}{\tiny\begin{pmatrix}1~~~1~~~0\\
0~~~1~~~1\end{pmatrix}}
\ar[to path={ -- ([yshift=-3.5ex]\tikztostart.south) -| (\tikztotarget)},
        rounded corners]{rr}
& [2.5em]
\bigoplus_{i\in\{3,4\}}\Z_2 \langle \cS_i\rangle \arrow{r}{\tiny\begin{pmatrix}0~~~0\\
0~~~0\end{pmatrix}} 
\arrow[phantom]{d}[pos=0.35]{\tiny{\begin{pmatrix} 1~~~0~~~0\\ 0~~~0~~~1\end{pmatrix} }}
& [2em]
\bigoplus_{i\in\{0,1\}}\Z_2 \langle \cS_i\rangle \arrow{r} & 0\\
 &  & \phantom{.}  
\end{tikzcd}
\end{minipage}
    \caption{The tessellar differential module $C^\tile(X)$ with the terms of the tessellar differential acting on the different groups $G_pC^\tile(X)$, $p=0,1,2$.}
    \label{fig:braid:cm12}
    \vskip-.4cm
\end{figure}
% The relevant bi-graded groups are given by
% \[
% G_0C_0^\tess\cong\Z_2^2,\quad G_1C_0^\tess \cong \Z_2^2, \quad G_2C_1^\tess\cong \Z_2^3.
% \]
If we determine the parabolic homology of the sets $\cS\in \sSC$ in Figure \ref{fig:braidclass} we obtain a refinement of the reduced tessellar phase given in Figure \ref{doublepoinpol}.
%%%%% explicit calculations %%%%%%
%%%%%%%%%%%%
% The refinement $\cm^r_{p,q}$ corresponds to $\cm^r_p$. This does not hold in general. Fig.\ \ref{fig:my_label2} is an example where $\cm^r_{p,q}$ gives a further refinement of $\cm^r_p$.
% Below is a diagram with the actions of the various terms in the connection matrix in constrast to Fig.\ \ref{fig:braid:cm}.
% \input{third-diagram}
% As bi-graded vector space the above diagram remains unchanged. 
To compute the bi-graded homology we use the spectral recurrence procedure.
To illustrate the lap number grading we start with computing $\tH_{p,q}(X)$ for all $p,q$. %over the field $\K=\Z_2$.
Recall that $G_p X$ is a convex set and is the union  of braid classes with lap number $p$ and $F_{\downarrow p} X$ is the 
union   of braid classes with lap number less or equal to $p$. 
We compute the homologies from the chain complex $(C^\tile,\cm^\tile)$. We have the following short exact sequences:
\[
\begin{aligned}
0 \xrightarrow[]{~~~~} F_{\downarrow 0} C_0^\tile \xrightarrow[]{i_{0,0}} F_{\downarrow 1} C_0^\tile \xrightarrow[]{j_{1,0}}
G_1 C_0^\tile \xrightarrow[]{~~~~} 0;\\
0 \xrightarrow[]{~~~~} F_{\downarrow 1} C_0^\tile \xrightarrow[\cong]{i_{1,0}} F_{\downarrow 2} C_0^\tile \xrightarrow[]{j_{2,0}}
G_2 C_0^\tile \xrightarrow[]{~~~~} 0;\\
0 \xrightarrow[]{~~~~} F_{\downarrow 1} C_1^\tile \xrightarrow[]{i_{1,1}} F_{\downarrow 2} C_1^\tile \xrightarrow[]{j_{2,1}}
G_2 C_1^\tile \xrightarrow[]{~~~~} 0,
\end{aligned}
\]
and the isomorphisms 
\[
F_{\downarrow 0} C_0^\tile \xrightarrow[\cong]{j_{0,0}} G_0C_0^\tile,\quad
F_{\downarrow 1} C_1^\tile \xrightarrow[\cong]{j_{1,1}} G_1C_1^\tile \cong 0,\quad G_2C_0^\tile \cong 0.
\]
The vector spaces are generated by the non-trivial classes given in Figure \ref{fig:braid:cm}.
To build the spectral sequence we have have page 1: $E^1_{0,0} = H^\tile_0(G_0 X)\cong \Z_2^2$, $E^1_{1,0} = H^\tile_0(G_1 X)\cong \Z_2^2$ and $E^1_{2,1} = H^\tile_1(G_2 X)\cong \Z_2^3$, cf.\ Sect.\ \ref{dimensiongrading}. Furthermore,
$E^1_{0,1}$, $E^1_{1,0}$, $E^1_{1,1}$, $E^1_{2,2}$ and $E^1_{p,q}$, for $p,q>2$, are trivial.
The relevant spectral sequences are:
\[
0\xrightarrow[]{~~~~} E^1_{2,1}\xrightarrow[]{\dff^1_{2,1}} E^1_{1,0}\xrightarrow[]{~~~~} 0,
\quad 
0\xrightarrow[]{~~~~} E^1_{2,1}\xrightarrow[]{\dff^1_{1,1}} E^1_{1,0}\xrightarrow[]{~~~~} 0,
\]
where $E^1_{1,1}=0$ and $\dff^1_{1,1}=0$.
The differential $\dff^1_{2,1}$ can be determined from the above data.
From the the third short exact sequence above we have
\[
\cdots\xrightarrow[]{} H_1^\tile(F_{\downarrow 1} X)
\xrightarrow[]{i_{1,1}} H_1^\tile(F_{\downarrow 2} X)\xrightarrow[]{j_{2,1}} H_1^\tile(G_2 X)
\xrightarrow[]{k_{2,1}} H_0^\tile(F_{\downarrow 1} X)\xrightarrow[]{}\cdots,
\]
where $k_{2,1}$ is the connecting homomorphism.
The vector space $E^1_{2,1}$ is given by  $\cm_1(\cU)\colon G_2C_1^\tile \to G_1C_0^\tile$, where $\cm_1(\cU)$, with $\cU = \lap^{-1} 2$, is the zero matrix. Let $\gamma = (a,b,c)\in \bigoplus_{i\in \{6,8,10\}} \Z_2\langle\cS_i\rangle$ be a cycle. Then, the inverse image under $j_{2,1}$ is the same element $\gamma\in F_{\downarrow 2} C_1^\tile$. Apply $\cm^\tile$, i.e. $\cm_1 \gamma=
(a,c,a+b,b+c)\in F_{\downarrow 1} C_0^\tile$, which is also the homology class in $H_0^\tile(F_{\downarrow 1} X)$.
This calculation yields: $k_{2,1} = \cm_1$. 
From the the first short exact sequence above we have
\[
0\xrightarrow[]{} H_0^\tile(F_{\downarrow 0} X)
\xrightarrow[]{i_{0,0}} H_0^\tile(F_{\downarrow 1} X)\xrightarrow[]{j_{1,0}} H_0^\tile(G_1 X)
\xrightarrow[]{} 0,
\]
where the map $j_{1,0}$ is given by 
$(a,b,c,d)\mapsto (0,0,c,d)$. The differential $\dff^1_{2,1}$ is given by $\dff^1_{2,1} = j_{1,0}\cdot k_{2,1}$ and
\[
\dff^1_{2,1} = \begin{pmatrix}
0~~~0~~~1~~~0\\
0~~~0~~~0~~~1
\end{pmatrix} 
\begin{pmatrix}
1~~~0~~~0\\
0~~~0~~~1\\
1~~~1~~~0\\
0~~~1~~~1
\end{pmatrix} =
\begin{pmatrix}
1~~~1~~~0\\
0~~~1~~~1
\end{pmatrix} \colon \Z_2^3 \longrightarrow \Z_2^2.
\]
The next page of the spectral sequence yields the vector spaces
\[
E^2_{2,1} = \ker \dff^1_{2,1}\cong\Z_2,~~~E^2_{1,0} =\frac{\Z_2^2}{\image \dff^1_{2,1}}\cong 0,~~~
E^2_{1,1}=0,~~~E^2_{0,0}\cong \frac{\Z_2^2}{\image \dff^1_{1,1} } \cong \Z_2^2.
\]
To complete the lap number  grading of tessellar homology we compute the third page of the spectral sequence.
The remaining spectral sequence is 
\[
0\xrightarrow[]{~~~~} E^2_{2,1}\xrightarrow[]{\dff^2_{2,1}} E^2_{0,0}\xrightarrow[]{~~~~} 0,
\]
where the differential $\dff^2_{2,1}$ is computed as follows.
From the theory of spectral sequences we have that
$\dff^2_{2,1} = j_{0,0}\cdot i^{-1}_{0,0}\cdot k_{2,1}$. 
Let $\gamma = (a,a,a)\in E^2_{2,1}$, then $k_{2,1} \gamma = (a,a,0,0)\in E^1_{1,0}=H_0^\tile(F_1 X)$. Under $i^{-1}_{0,0}$ we obtain $i^{-1}_{0,0}(a,a,0,0) = (a,a)\in  H_0^\tile(F_{\downarrow 0}\cX)$. Since $F_{\downarrow 0} C_0^\tile \xrightarrow[=]{j_{0,0}} G_0C_0^\tile$ the map $j_{0,0}$ is the identity which implies that $\dff^2_{2,1}$ is given by $(a,a,a) \mapsto (a,a) \in E^2_{0,0}$. This map is
the restriction to $E^2_{2,1}$ of the map 
\[
\widetilde \dff^2_{2,1} = \begin{pmatrix}
1~~~0~~~0\\
0~~~0~~~1
\end{pmatrix}\colon E^1_{2,1}\longrightarrow E^1_{0,0}.
\]
Consequently, $\ker \dff^2_{2,1}=0$ and thus $E^3_{2,1} =0$. Moreover, $\image \dff^2_{2,1} \cong \Z_2$ and thus  $E^3_{0,0} = \frac{\Z_2^2}{\image \dff^2_{2,1}}\cong \Z_2$.
The spectral sequence stabilizes at $r\ge 3$ and we 
define $\tH_{p,q}(X) := E^r_{p,q}$, $r\ge 3$. In particular,
\[
\tH_{p,q}(X) = \begin{cases}
  \Z_2   & \text{ for }  (p,q)= (0,0);\\
   0   & \text{ for }  (p,q)\neq (0,0),
\end{cases}
\]
and thus $\vec{P}_{\lambda,\mu}(X) = 1$. The same procedure can be carried out for other convex sets in $\sSC$.
%
%In particular, $\vec{P}_{\lambda,\mu}(\cX) =1$, cf.\ Thm.\ \ref{bi-grad}.
%\footnote{The lap number grading is carried out below.}
%%%%%%%%%%%%%%%%
%\input{figures/parabolicCMGbigr}
\begin{figure}[h!]
\vspace{2ex}
\centering
\begin{minipage}{.8\textwidth}
\centering
\begin{tikzpicture}[scale=.85,node distance=.2cm and .25cm]
\def\a{0}
\def\b{3}
\def\c{6}
\def\d{9}
\def\e{1.5}
\def\f{4.5}
\def\g{7.5}
\node[draw, ellipse] (0) at (\a, 0) {\scriptsize $\cS_0 : \lambda^0\mu^0$};
\node[draw, ellipse] (1) at (\b, 1.5) {\scriptsize $\cS_3 : \lambda^1 \mu^0$};
\node[draw, ellipse] (2) at (\c, 1.5) {\scriptsize $\cS_4 : \lambda^1\mu^0$};
\node[draw, ellipse] (3) at (\d, 0) {\scriptsize $\cS_1 : \lambda^0\mu^0$};

\node[draw, ellipse] (4) at (\e, 3) {\scriptsize $\cS_6 : \lambda^2\mu^1$};
\node[draw, ellipse] (5) at (\f, 3) {\scriptsize $\cS_8 : \lambda^2\mu^1$};
\node[draw, ellipse] (6) at (\g, 3) {\scriptsize $\cS_{10} : \lambda^2\mu^1$};
% \node[draw, ellipse] (0) at (\a, 0) {\scriptsize $\bigl(\cS_0,\lambda^0\mu^0\bigr)$};
% \node[draw, ellipse] (1) at (\b, 1.5) {\scriptsize $\bigl(\cS_3,\lambda^1 \mu^0\bigr)$};
% \node[draw, ellipse] (2) at (\c, 1.5) {\scriptsize $\bigl(\cS_4,\lambda^1\mu^0\bigr)$};
% \node[draw, ellipse] (3) at (\d, 0) {\scriptsize $\bigl(\cS_1,\lambda^0\mu^0\bigr)$};

% \node[draw, ellipse] (4) at (\e, 3) {\scriptsize $\bigl(\cS_6,\lambda^2\mu^1\bigr)$};
% \node[draw, ellipse] (5) at (\f, 3) {\scriptsize $\bigl(\cS_8,\lambda^2\mu^1\bigr)$};
% \node[draw, ellipse] (6) at (\g, 3) {\scriptsize $\bigl(\cS_{10},\lambda^2\mu^1\bigr)$};
 
\draw[->,>=stealth,thick] (4) to (0);
\draw[->,>=stealth,thick] (4) to (1);
\draw[->,>=stealth,thick] (5) to (1);
\draw[->,>=stealth,thick] (5) to (2);
\draw[->,>=stealth,thick] (6) to (2);
\draw[->,>=stealth,thick] (6) to (3);

\end{tikzpicture}
\end{minipage}
\vspace{2ex}
\caption{The pure tessellar phase diagram $(\otessph,\le^\ddagger)$ with parabolic Poincar\'e polynomials. The vertices are positions with height corresponding to their lap numbers.  }\label{doublepoinpol}
\vskip-.4cm
\end{figure}
If we apply the Morse relations in \eqref{MRagain1}
%\eqref{morsegenrel} and \eqref{morseforconvex} 
for the skeleton $y$ in Figure \ref{fig:braidclass} we obtain
%\vspace{-2ex}
\[
P_{\lambda,\mu}(C^\tile) = 2+2\lambda+3\lambda^2 \mu =
1+ (1+\lambda \mu) \cdot 2\lambda + (1+\lambda^2 \mu)\cdot 1,
\]
which implies that $Q^1_{\lambda,\mu} = 2\lambda$ and $Q^2_{\lambda,\mu} = 1$. This provides information about the differentials 
$\dff^1_{2,1}$  and $\dff^2_{2,1}$ in the associated spectral sequence.
% $\dff^1_{1,0}$,
% $\dff^1_{2,1}$, $\dff^2_{0,0}$ and $\dff^2_{2,1}$.
Note that the sum of the ranks of $\dff^1_{2,1}$  and $\dff^2_{2,1}$ equals the rank of $\cm^\tile$. % given in Figure \ref{fig:braid:cmg}.
% Without going through the spectral sequence we can determine the parabolic homology of $\cX$ in the case of field coefficient by just using the extended Morse relations in Theorem \ref{morserel1}.
%\end{remark}

%Figure \ref{doublepoinpol} shows the double Poincar\'e polynomials. 

\begin{remark}
If we apply the Cartan-Eilenberg theory, and Theorem \ref{prrepExis} in particular,  to the discretization $\pb\colon X\to \Z$ given by the composition $X\xrightarrow[]{\tile}\sSC \xrightarrow[]{\lap}\Z$ we obtain tessellar differential of the form:
\[
\cm^\pb = \bordermatrix{  & E^1_{0,0} & E^1_{1,0} & E^1_{2,1}   \cr
              E^1_{0,0} & 0 & 0  & \widetilde\dff^2_{2,1}  \cr
              E^1_{1,0} & 0 & 0  & \dff^1_{2,1}\cr
              E^1_{2,1} & 0 & 0 & 0}   
\]
This is exactly the tessellar boundary operator $\cm^\tile$ as given in Fig.\ \ref{fig:braid:cm}. 
The entries of $\cm^\pb$ can also be marked as $\cm^r_{p,q}$.
%where $p$ and $q$ are parameters for the bi-grading.
%and $\cm^r_{p,q}\colon E^1_{p,q} \to E^1_{p-r,q-1}$.
%We may label the non-trivial entries in the connection matrix by
In this case $\cm^1_{1,0}=0$, $\cm^1_{2,1}=\dff^1_{2,1}$ and
$\cm^2_{2,1} = \widetilde\dff^2_{2,1}$.
\end{remark}

% \begin{remark}
% The lap number grading we use in the context of parabolic flow is crucial for defining a braid invariant as explained in Sect.'s \ref{tesspara} and \ref{stabofclasses}. Another reason why lap number grading is useful is that if the lap number between to adjacent in the order, e.g.\ $\cS_0$ and $\cS_6$, differs by more than $1$, there need not be connecting orbits for the flow. Such classes are not geometrically adjacent.
% For example a linear discretization by using the labeling set $\{0,1,3,4,6,8,10\}$ yields a consistent grading of the homology. The latter has no information about the geometry of the braid classes. In this case we have: 
% \[
% 1+\lambda+\lambda^3+\lambda^4 + (\lambda^6+\lambda^8+\lambda^{10})\mu = 
% \lambda + (1+\lambda^5\mu)\lambda^3 + (1+\lambda^6\mu)(1+\lambda^4), 
% \]
% which yields the non-trivial differentials (rank $1$)
% $\dff^5_8$, $\dff_6^6$ and $\dff^6_{10}$.
% Note that these entries do not necessarily correspond to $\cm^\tile$  as used before. However the linear order allows a connection matrix with non-trivial entries
% $\cm^5_8=(1)$, $\cm_6^6=(1)$ and $\cm^6_{10}=(1)$. The multiplicity of connection matrices is discussed in \cite{SpV2}.
% % Does the lap number grading yield uniqueness?
% \end{remark}

%%%%%%%%%%%%%%%
%%%%%%%%%%%%%%%%%%%%
%%%%%%%%%%%%%%%%%%%%%%%%%%%%

\section{Differential modules and tessellar phase diagrams for positive braids}
\label{tesspara}
For a parabolic flow $\varphi$ with a proper skeleton $y$, i.e. $\mathring y$ is proper, we obtained a canonical Morse pre-order $\le^\dagger$ via a CW-decomposition induced by $y$ and the parabolic recurrence relation $\RR$.
The poset $\sSC$ induced by $y$ describes the Morse tessellation of all discrete braid class components
$[x]\rel y$. The Cartan-Eilenberg theory then provides an algebraic topological data structure that contains topological information about the braid class components (Morse tiles) and algebraic information on how the classes are stitched together.
% The pure tessellar phase diagram $\otessph(\le^\dagger)$ contains all the relative braid class fibers with non-trivial homology. 
We give a substantial extension of the results in \cite{im} by
showing that these data structures are invariants of positive conjugacy classes of braids.
%`topological' positive braid diagrams. To do so requires a number of steps.

\subsection{Some braid theory}
 \label{somebrth}
Following \cite{im,day,CzechV} we recall some basic ideas from braid theory.
Let $x\in \Conf_n^d$ be a discrete braid diagram.
Generically strands in $x$ intersect with $x^\alpha_i\neq x^{\alpha'}_i$, cf. Rmk.\ \ref{genin}. To such generic braid diagrams $x\in \Conf_n^d$ one assigns
 a unique positive word $\beta = \beta(x)$ given by:
\begin{equation}
\label{eqn:word1}
x \mapsto \beta(x) = \sigma_{\alpha_1} \cdots \sigma_{\alpha_\ell},
\end{equation}
 where $\alpha_k$ and $\alpha_k +1$ are the positions of intersection that intersect, cf.\ Rmk.\ \ref{canrep1}.
The  (algebraic) \emph{Artin braid group}\index{Braid group}\index{Artin braid group} $\BB_{n}$ is a free group spanned by the $m-1$ generators $\sigma_{\alpha}$, modulo
following relations:
\begin{align}\label{eqn:braidrel}
 \begin{cases}
  \sigma_{\alpha} \sigma_{\alpha'} = \sigma_{\alpha'} \sigma_{\alpha}, & \ |\alpha-\alpha'| \geq 2,\ \alpha,\alpha \in \{0, \dots ,n-2\} \\
  \sigma_{\alpha} \sigma_{\alpha+1} \sigma_{\alpha} = \sigma_{\alpha+1} \sigma_{\alpha} \sigma_{\alpha+1}, & \ 0\le \alpha \le n-3.
 \end{cases}
\end{align} 
%
% Full twists are denoted algebraically by $\square=  (\sigma_{1} \dots \sigma_{m-1})^{m}$ and generate the center of the braid group $\BB_m$.
Presentations of words consisting only of the $\sigma_i$'s (not the inverses) and the relations in \eqref{eqn:braidrel} form a monoid
which is called the \emph{positive braid monoid}\index{Positive braid monoid} $\BB_n^+$.
%
%  Such words comprise the 
% \emph{positive braid monoid $\BB_n^+$}.
Two positive words\index{Braid word}\index{Braid word!positive} $\beta$ and $\beta'$ are \emph{positively equal}\index{Braid word!positively equal}
if they represent the same element in $\BB_n^+$ by using  the $\sigma$-relations in the braid group. Notation $\beta \sdoteq\beta'$.
Two positive words $\beta,\beta'$ are \emph{positively conjugate}\index{Braid word!positively conjugate} if there exists a sequence of words $\beta_0,\cdots,\beta_\ell\in \BB_n^+$, with $\beta_0=\beta$ and $\beta_\ell =\beta'$, such
that for all $k$, either  $\beta_k\sdoteq \beta_{k+1}$, or $\beta_k\equiv \beta_{k+1}$, where the latter is defined by
\[
\sigma_{\alpha_1}\sigma_{\alpha_2}\cdots\sigma_{\alpha_p} \equiv \sigma_{\alpha_2}\cdots\sigma_{\alpha_p}\sigma_{\alpha_1},
\]
cf.\ \cite[Sect.\ 2.2]{day}.
Notation
$\beta \possim \beta'$. Positive conjugacy is an equivalence relation on $\BB^+_n$ and 
the positive conjugacy class of $\beta\in \BB^+_n$ is denoted by $\langle\beta\rangle$. 
The set of all positive conjugacy classes in $\BB^+_n$  is  denoted by
$\langle \BB_n^+\rangle$.

\begin{definition}
\label{defn:topeq}
Two discretized braids $x, x'\in \Conf_n^d$ are \emph{topologically equivalent}\index{Braid!positively equivalent} if 
%$\beta(x) \possim \beta(x')$ in $\BB_n^+$, i.e. 
$\beta(x)$ and $\beta(x')$ are positively conjugate.
Notation: $x \possim x'$.
%Two discretized braids $x, x'\in \Conf_m^d$ are \emph{topologically equivalent} if $\E^qx \sim \E^qx'$ in $\Conf_m^{d+q}$, for some $q\ge 0$.
%Notation: $x \possim x'$.
\end{definition}

Recall that $x\sim x'$ if and only if $x,x'\in [x]$. Clearly,
$x\sim x'$ implies $x \possim x'$
which defines a coarser equivalence relation on $\Conf_n^d$.
Denote the equivalence classes with respect to $\possim$  by $[x]_{\possim}$.
The converse   is not true in general, cf.\ \cite[Fig.\ 8]{im}.
Following \cite[Def.\ 17]{im}, a discretized braid class $[x]$ is \emph{free} if
$[x] = [x]_{\possim}$.
% The \emph{word metric} $|x|$ of the discretized   braid is the number of intersections in the graph $x(t)$, $t\in [0,1]$.
% The word metric is  an invariant of discretized braid classes.
% %
\begin{remark}
For non-generic $x\in \Conf_n^d$ we choose $\beta(x)$ to be any representative in the positive conjugacy class $\langle\beta(x')\rangle$, for any $x'\sim x$. 
\end{remark}

\begin{proposition}[\cite{im}, Prop.\ 27]
\label{prop:free}
If $d>\cross(x)$, then $[x]$ is a free braid class.
\end{proposition}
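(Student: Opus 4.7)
The plan is to show that when $d > \cross(x)$, any two discretized braids $x,x'\in \Conf_n^d$ with $\beta(x)\possim \beta(x')$ can be joined by a continuous path in $\Conf_n^d$, so that $[x]_{\possim}\subset [x]$; the reverse inclusion $[x]\subset [x]_{\possim}$ is immediate because a path within $\Conf_n^d$ cannot change the word $\beta$ up to positive conjugacy (any crossing of $\Sing_n^d$ is forbidden). The key observation is that $\cross(x)$ equals the word length $\ell(\beta(x))$, so the hypothesis $d>\cross(x)$ forces (by pigeonhole) the existence of at least one anchor index $i\in\{1,\ldots,d\}$ where no crossing of $x$ occurs. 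This ``free column'' is the slack needed to perform local rearrangements.

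The next step is to reduce to elementary moves. Positive conjugacy is generated by (a) far commutation $\sigma_\alpha\sigma_{\alpha'}\sdoteq \sigma_{\alpha'}\sigma_\alpha$ with $|\alpha-\alpha'|\ge 2$, (b) the triangle relation $\sigma_\alpha\sigma_{\alpha+1}\sigma_\alpha \sdoteq \sigma_{\alpha+1}\sigma_\alpha\sigma_{\alpha+1}$, and (c) cyclic shift $\sigma_{\alpha_1}\cdots\sigma_{\alpha_p}\equiv \sigma_{\alpha_2}\cdots\sigma_{\alpha_p}\sigma_{\alpha_1}$. Since a chain of such moves takes $\beta(x)$ to $\beta(x')$, it suffices to construct an isotopy in $\Conf_n^d$ for each elementary move, taking care that after each move the resulting word still has length $\le \cross(x)<d$ so that a free column is still available for the next move. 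For (a) the two crossings occur in disjoint horizontal bands $\{\alpha,\alpha+1\}$ and $\{\alpha',\alpha'+1\}$, so the anchor points realizing the two crossings can be slid past each other by a straight-line homotopy in $\R^d$ that avoids $\Sing_n^d$, using the free column to reparametrize if the two crossings initially occur in adjacent columns. For (c), one exploits the periodicity $x_d^\alpha=x_0^{\theta(\alpha)}$ from Definition~\ref{PL}(ii) to rigidly cyclically shift all anchor points by one column; the free column ensures the shifted configuration still meets the non-degeneracy condition.

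The hard step is (b). The triangle relation cannot be realized by a local isotopy confined to three columns, because the intermediate configurations force a triple coincidence in $\Sing_n^d$. The plan is to use the free column as a ``parking space'': first apply far commutations (and if necessary cyclic shifts) to move the three consecutive generators $\sigma_\alpha\sigma_{\alpha+1}\sigma_\alpha$ into a block of four columns where the middle two have additional room, then construct the isotopy as a composition in which one of the three crossings is temporarily dragged into the free column, the remaining two are swapped, and the parked crossing is then reinserted on the opposite side. The non-degeneracy condition of Definition~\ref{PL}(iii) must be checked at each intermediate stage, and this is where the strict inequality $d>\cross(x)$ (not merely $d\ge \cross(x)$) is essential: without a column completely free of crossings, the reparked crossing would collide transversally with another one.

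Finally, I would assemble the argument: given any $x'\possim x$, choose a finite sequence of elementary moves relating $\beta(x)$ to $\beta(x')$; realize each move as an isotopy in $\Conf_n^d$ as above (possibly composed with a rescaling of the anchor indices to refresh the free column after each step); concatenate these isotopies to obtain a path from $x$ to a representative $\tilde x'$ with $\beta(\tilde x')=\beta(x')$. Since generic discretized braids with the same word differ only by an isotopy within their single braid class component, a final local isotopy connects $\tilde x'$ to $x'$, completing the proof that $[x']=[x]$, hence $[x]_{\possim}=[x]$.
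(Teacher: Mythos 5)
This proposition is cited from \cite{im} (Prop.\ 27); the paper under review does not prove it, so there is no in-paper argument to compare against. Judged on its own, your reconstruction has the right overall shape---read off the positive word, note that its length equals $\cross(x)$, locate a crossing-free column by pigeonhole, and try to realize each generating move of positive conjugacy by an isotopy in $\Conf_n^d$---but two of the three moves are not actually handled, and the closing step is circular.

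The cyclic-shift case (c) is not a path argument as you have written it. Re-indexing the anchor columns by one (using the periodic boundary condition) produces a new point $x'\in\Conf_n^d$, and this $x'$ is automatically non-degenerate---the free column plays no role there, since $x'$ is literally the same closed braid viewed from a shifted starting column. What you need is a \emph{continuous path} in $\Conf_n^d$ from $x$ to $x'$, i.e.\ $[x']=[x]$, and the rigid re-indexing gives no such path. Establishing it requires sliding a crossing around the periodic boundary through the free column, which is exactly the kind of isotopy whose existence the proposition asserts; as written this step assumes what it sets out to prove.

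The triangle-relation case (b) has a more structural problem. Your ``park, swap, reinsert'' plan requires, after parking one $\sigma_\alpha$, a swap of the remaining $\sigma_\alpha$ and $\sigma_{\alpha+1}$. But $|\alpha-(\alpha+1)|=1$, so that swap is \emph{not} a far commutation: the two crossings share a strand, the disjoint-band straight-line homotopy you invoked for case (a) is unavailable, and performing the swap is again an instance of the very relation you are trying to realize. The genuinely two-dimensional Reidemeister~III maneuver---sliding a third strand past a fixed crossing of the other two, using the slack in $d$ to avoid an illegal triple coincidence---is where the hypothesis $d>\cross(x)$ earns its keep, and your sketch does not carry it out.

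Finally, your last paragraph asserts that generic discretized braids with the same word lie in a single braid class component. That is not obvious: two generic braids can share a word while distributing their crossings over different columns, and whether such braids are connected by a path in $\Conf_n^d$ is precisely the content of the proposition (in the degenerate case of a trivial conjugacy). Invoking it at the end to ``connect $\tilde x'$ to $x'$'' hides the heart of the matter.
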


% Taking $d$ sufficiently large is a sufficient condition to ensure free braid classes, but this condition is not a necessary condition.
For the space of $2$-colored discretized braid diagrams $\Conf_{1,m}^d$
there exists a natural embedding $\Conf_{1,m}^d \hookrightarrow \Conf_{1+m}^d$ by regarding $x\rel y$ as braid in $\Conf_{1+m}^d$.
%, defined by $x\rel y \mapsto x\sqcup y$.
Via the embedding we   define the notion of topological equivalence of two 2-colored discretized braids:\index{Braid!2-colored}
$x\rel y \possim x'\rel y'$ if they are topologically equivalent as braids in $\Conf_{1+m}^d$.
%$x\sqcup y \possim x'\sqcup y'$. 
The associated equivalence classes are denoted by $[x\rel y]_{\possim}$, which are
not necessarily connected sets in $\Conf_{1,m}^d$. A 2-colored discretized braid class $[x\rel y]$ is free if $[x\rel y] = [x\rel y]_{\possim}$.
If $d>\cross(x\rel y)$, then  $[x\rel y]$ is free by Proposition \ref{prop:free}. 

In Theorem \ref{brclasses123} we showed that each partial equivalence class of 
$(\cX,\le^\dagger)$
%the top cells $(\cX^\topc,\le^\topc)$ 
corresponds with braid class components $[x]\rel y$ of the fiber $\pi^{-1}(y)$.
The Borel-Moore homology of $[x]\rel y$
% We define $\hh(x\rely)$ as the homotopy Conley index of $\Inv(N,\RR)$, cf.\ \cite{BraidConleyIndex}, \cite{ConleyIndex}.
is independent of the choice of parabolic recurrence relations $\RR$ for which $\RR(y)=0$, cf.\ \cite[Thm.\ 15(a)-(b)]{im}, and therefore the parabolic homology is independent of $\RR$, i.e.
\begin{equation}
    \label{BMI1}
    \tH_{p,q}\bigl([x]\rel y;\varphi\bigr) = \tH_{p,q}\bigl([x]\rel y;\varphi'\bigr).
\end{equation}
A similar statement holds for the braid classes $[x\rel y]$. 
% Define
% \begin{equation}
%     \label{BMI3}
% H^\BM\bigl([x\rel y];\varphi\bigr) := \bigoplus_{[x]\rel y\subset\atop \pi^{-1}(y)\cap[x\rel y]} H^\BM\bigl([x]\rel y;\varphi\bigr).
% \end{equation}
Let  $x\rel y \sim x'\rel y'$ and let $\varphi$ and $\varphi'$ be parabolic flows with skeleton $y$ and $y'$ respectively. Then, 
\begin{equation}
    \label{BMI2}
    \bigoplus_{[x]\rel y\subset\atop \pi^{-1}(y)\cap[x\rel y]}\!\!\!\!\!\!\!\!\! \tH_{p,q}\bigl([x]\rel y;\varphi\bigr) ~~\cong\!\!\!\!\!\!\!\!\!
    \bigoplus_{[x']\rel y'\subset\atop \pi^{-1}(y')\cap[x'\rel y']}\!\!\!\!\!\!\!\!\! \tH_{p,q}\bigl([x']\rel y';\varphi'\bigr)
\end{equation}
 cf.\ \cite[Thm.\ 15(c)]{im}.
This makes the latter  an invariant of the discrete 2-colored braid class $[x\rel y]$
and justifies the notation:
\begin{equation}
    \label{BMI3}
\tH_{p,q}\bigl([x\rel y]\bigr)~~ := \!\!\!\!\!\!\!\!\!\bigoplus_{[x]\rel y\subset\atop \pi^{-1}(y)\cap[x\rel y]} \!\!\!\!\!\!\!\!\!\tH_{p,q}\bigl([x]\rel y;\varphi\bigr).
\end{equation}
% A similar statement holds for the braid class fibers. Define
% \begin{equation}
%     \label{BMI3}
% \tH_{p,q}\bigl([x\rel y];\varphi\bigr) := \bigoplus_{[x]\rel y\subset\atop \pi^{-1}(y)\cap[x\rel y]} \tH_{p,q}\bigl([x]\rel y;\varphi\bigr).
% \end{equation}
% Let  $x\rel y \sim x'\rel y'$ and let $\varphi$ and $\varphi'$ be parabolic flows with skeleton $y$ and $y'$ respectively. Then, 
% \begin{equation}
%     \label{BMI2}
% \tH_{p,q}\bigl([x\rel y];\varphi\bigr) \cong \tH_{p,q}\bigl([x'\rel y'];\varphi'\bigr),
% \end{equation}
%  cf.\ \cite[Thm.\ 15(c)]{im}.
% This makes $\tH_{p,q}\bigl(\pi^{-1}(y);\varphi\bigr)$  an invariant of the discrete 2-colored braid class $[x\rel y]$.
%
The homology $\tH_{p,q}\bigl([x\rel y]\bigr)$  is not necessarily independent of to the number of discretization points $d$.
In order to have  independence also with respect to $d$, another invariant for discrete braid classes was introduced in \cite{im}.
Consider the equivalence class $[x\rel y]_{\possim}$ of  discrete 2-colored braid diagrams induced by the relation $x\rel y \possim x'\rel y'$ on $\Conf_{1,m}^d$.
% whose embedding into $\LL C_{1,m}(\plane)$ are equivalent 2-colored braids.
As before the projection $\pi\colon \Conf_{1,m}^d \to \Conf_m^d$ given by $x\rel y\mapsto y$ yields the fibers $\pi^{-1}(y)$ and components
$[x]\rel y \subset \pi^{-1}(y)\cap [x\rel y]_{\possim}$
% \[
% [x]_{\possim} \rel y \longrightarrow [x\rel y]_{\possim} \longrightarrow [y]_{\possim}.
% \]
%For $y\in [y]_{\possim}$ consider the fiber $\pi^{-1}(y) \supset [x]_{\possim}\rel y$
and defines the homology:
\begin{equation}
    \label{BMI4}
    \tH_{p,q}\bigl([x\rel y]_{\possim}\bigr)~~ := \!\!\!\!\!\!\!\!\!\bigoplus_{[x]\rel y\subset\atop \pi^{-1}(y)\cap [x\rel y]_{\possim}} \!\!\!\!\!\!\!\!\!\tH_{p,q}\bigl([x]\rel y;\varphi\bigr).
\end{equation}
Note that if $d>\cross(x\rel y)$ then the homology in \eqref{BMI4} corresponds to the homology in \eqref{BMI3}.
As before $\tH_{p,q}\bigl([x\rel y]_{\possim}\bigr)$ is an invariant and does not depend on the choice of $\varphi$ and $y$. In the next section we explain the invariance
by investigating the dependence on the number of
 discretization points $d$ making it a topological invariant for relative braid classes.

The braid class components $[x]\rel y$ comprise the elements of $\sSC$ and the equivalence relation $\possim$ yields a span which we express in terms of tessellar phase diagrams
% \[
% \begin{diagram}
%     \dgARROWLENGTH=2.2em
% \node{(\cX,\le^\dagger)}\arrow{e,l,A}{}\arrow{se,l,A}{}\node{(\tessph,\le^\dagger)}\arrow{s,l,A}{}\node{(\otessph,\le^\ddagger)}\arrow{w,l,L}{}\arrow{s,l,A}{}\\
% \node{}\node{({\tessph(y)},\le)}\node{(\otessph(y),\le)}\arrow{w,l,L}{}
% \end{diagram}
% % (\cX,\le^\dagger) \twoheadrightarrow (\sSC,\le) \hookleftarrow  (\overline{\sSC},\le)\twoheadrightarrow (\overline{\sSC},\le),
% \]
\[
\begin{tikzcd}
(\cX,\le^\dagger) \arrow[r, two heads] \arrow[rd, two heads] & (\tessph,\le^\dagger) \arrow[d, two heads] & \otessph,\le^\ddagger) \arrow[d, two heads] \arrow[l, hook'] \\
                                             & ({\tessph(y)},\le)                      & (\otessph(y),\le) \arrow[l, hook']                     
\end{tikzcd}
\]
% \[
% \begin{diagram}
%     \dgARROWLENGTH=2.2em
% \node{(\cX,\le^\dagger)}\arrow{e,l,A}{}\arrow{se,l,A}{}\node{(\sSC,\le)}\arrow{s,l,A}{}\node{(\overline{\sSC},\le)}\arrow{w,l,L}{}\arrow{s,l,A}{}\\
% \node{}\node{({\sBC},\le)}\node{(\overline{\sBC},\le)}\arrow{w,l,L}{}
% \end{diagram}
% % (\cX,\le^\dagger) \twoheadrightarrow (\sSC,\le) \hookleftarrow  (\overline{\sSC},\le)\twoheadrightarrow (\overline{\sSC},\le),
% \]
%where $\overline{\sSC}$ are components with non-trivial homology, 
where $(\tessph(y),\le)$ is the coarsening of $(\tessph,\le^\dagger)$ defined by unionizing
equivalent pairs $[x]\rel y,[x']\rel y\subset \pi^{-1}(p)\cap [x\rel y]_{\possim}$ whenever
%$(\sSC^*,\le^*)$ of the poset $\sSC$.
$[x]\rel y \possim [x']\rel y$ 
and taking transitive, reflexive closure.
The equivalence classes are parallel in $\tessph$ and thus the coarsening yields a well-defined poset $(\tessph(y),\le)$.
%if $[x]\rel y,[x']\rel y\subset \pi^{-1}(p)\cap [x\rel y]_{\possim}$
The poset $(\otessph(y),\le)$ is the restriction of pairs in $\tessph(y)$ for which the Poincar\'e polynomials that are the non-zero. 
%The same applies the $\otessph(y)$.
% Since such classes are parallel in $\overline\sSC$ the coarsening yields a well-defined poset $(\sBC,\le)$. 
The elements of $\tessph(y)$ can be identified with $[x\rel y]_{\possim}$. 
%We can now define an $\overline\sBC$-graded chain complex over $\K$.
By defining the groups in \eqref{BMI3} we consider convex sets in $\sSC$. Therefore, the differential $\cm^\tile$ yields a induced differential $\cm^\para$ on the groups $\tH_{p,q}\bigl([x\rel y]_{\possim}\bigr)$.
This leads to the following definition:
\begin{definition}
\label{parabolicmod}
Let $y\in \Conf^d_m$ be a proper discrete braid.
Define the %$\tessph(y)$-graded 
differential module\footnote{We use the term differential module even though we use field coefficients which makes $\scrA$ a differential vector space.}
\begin{equation}
    \label{span123}
\rA^\para(X):= \bigoplus_{[x\rel y]_{\possim}} \bigoplus_{p,q} \tH_{p,q}\bigl([x\rel y]_{\possim}\bigr),\quad \cm^\para\colon C^\para(X)\to C^\para(X).
\end{equation}
where
% Let $\rA\bigl([x\rel y]_{\possim}\bigr) := \tH_{p,q}\bigl([x\rel y]_{\possim}\bigr)$, which is a $\K$-vector space,
% and let 
$\cm^\para$ is the induced   differential  for $(\cX,\le^\dagger)$.
% which satisfies $\dff F_\alpha A \subset F_\alpha A$, $\alpha \in \sO(\sBC)$.
The non-trivial homologies $\tH_{p,q}\bigl([x\rel y]_{\possim}\bigr)\neq 0$
yield an $\otessph(y)$-grading of $C^\para(X)$.
The $\otessph(y)$-graded differential module $\scrA = \scrA(y):= \bigl( \rA^\para,\cm^\para\bigr)$ is called the  \emph{parabolic differential module} for $y$.\index{Parabolic differential module}\index{Differential module!parabolic}
% Since the $\otessph(y)$-grading is induced by $\tessph(y)$, cf.\ \eqref{span123},
% the differential module $\scrA(y)$ will  be regarded as
% $\otessph(y)$-graded.
\end{definition}
Convex sets in $\otessph(y)$ yield convex sets in $\tessph(y)$, i.e. $\sCo(\otessph(y)) \hookrightarrow \sCo(\tessph(y))$\footnote{Recall that for a poset $(\sP,\le)$ the lattice of convex sets in $\sP$ is denoted by $\sCo(\sP)$, cf.\ \cite{birkhbennet}.} via convex hull: if $I\in \sCo(\otessph(y))$, then the convex hull $\langle I\rangle$ is contained in $\sCo(\tessph(y))$. This implies that $C^\para(X)$ is well-defined over every convex set in $\sCo(\otessph(y))$.
%with restricted convex sets.
% \footnote{The convex sets are defined by $\tessph(y)$ and then restricted to $\otessph(y)$.}
By construction the homology is given by $H(\scrA) \cong \K$.
The grading by lap number is a first $\Z$-grading. 
The dimension grading of Borel-Moore homology comes as a second $\Z$-grading for $\scrA$. This yields the bi-graded homology   $\tH_{p,q}(\scrA)$.

\begin{remark}
For most of the examples  in this paper 
the partial order on $\scrA$ is given by $\otessph$, i.e. $\otessph \cong \otessph(y)$. 
%$\hierach(y) = \otessph(\le^\dagger)$ for all $d$. 
In general this holds for $d$ large enough.
\end{remark}

\begin{remark}
In general there  does not exist an order-retraction $\tessph \twoheadrightarrow\otessph$, cf.\ \cite{kkv}. If such an order-retraction exists one obtains the discretization $(\cX,\le^\dagger) \twoheadrightarrow(\tessph,\le^\dagger) \twoheadrightarrow(\otessph,\le^\ddagger)$ for which all tiles have non-trivial tessellar homology. It is not clear if this behaves well under stabilization. 
The same question applies to $\tessph(y)$ and $\otessph(y)$.
In the case of parabolic homology the remedy is two use the bi-grading and the order given by $\otessph(y)$.
\end{remark}

Figure \ref{fig:braidclass1234} below shows a representation of $\scrA(y)$ for the discrete skeletal braid $y$ displayed in Figure \ref{fig:braid:cmg12}.
The advantage of this representation is that the $\otessph(y)$-grading is expressed in the diagram. Contracting to lap numbers yields the diagram in Figure \ref{fig:my_label2b}.
\vspace{-3ex}
\begin{figure}[h!]
\centering
\tikzcdset{every label/.append style = {font = \tiny}}
\tikzcdset{close/.style={outer sep=-5pt}}
\begin{tikzcd}[column sep=0.7em, row sep=1.5em]
            &                         & \Z_2\langle \cS_8\rangle  \arrow[rd, "(1)"] &                         & \Z_2\langle \cS_5\rangle \arrow[rd, "(1)"] \arrow[rrd, bend left, "\begin{pmatrix}0\\ 1 \end{pmatrix}" ] &             &                         & \Z_2\langle \cS_1\rangle \arrow[rd] &   \\
0 \arrow[r] & \Z_2\langle \cS_9\rangle \arrow[ru,"(1)"] \arrow[rd, "(1)"'] &              & \Z_2\langle \cS_6\rangle \arrow[ru, "(0)"] \arrow[rd,"(0)"'] &                                     & \Z_2\langle \cS_3 \rangle  \arrow[r,"{\tiny \begin{pmatrix}0\\ 0 \end{pmatrix}}"] &[3ex] \Z_2^2\langle \cS_2\rangle \arrow[ru,"(1~~~0)"] \arrow[rd, "(1~~~0)"'] &              & 0 \\
            &                         & \Z_2\langle \cS_7\rangle  \arrow[ru,"(1)"'] &                         & \Z_2\langle \cS_4\rangle \arrow[ru,"(1)"']                        &             &                         & \Z_2\langle \cS_0\rangle \arrow[ru] &  
\end{tikzcd}
\vspace{2ex}
 \caption{
 %The entries $_i\cm^r_p$ compose the maps $\cm^r_p = \bigoplus_i \!~_i\cm^r_p$. 
 The diagram displays the lap number grading as well as the 
$\otessph$-grading of $\scrA(\beta)$. The $\Z_2$-groups are the parabolic homologies $\tH_{p,q}(\cS_j)$ 
%and $_i\cm_p^r\colon \tH_{p,*}(\cS) \to \tH_{p-r,*}(\cS')$ with $\lap(\cS') = \lap(\cS)-r$.
}\label{fig:braidclass1234}
\end{figure}

\subsection{Stabilization}
\label{stabofclasses}

In order to formulate the main results of this section we recall some definitions and methods from \cite{im}.
For a skeleton $y\in \Conf_m^d$ we define the extension operator $\E\colon \Conf_m^d \to \Conf_m^{d+1}$ as:
\[
(\E y)^\alpha_i  := \begin{cases}
    y^\alpha_i  & \text{ for } i=0,\cdots, d, \\
     y^\alpha_d & \text{ for } i=d+1.
\end{cases}
\]
For a given braid class $[x\rel y]$ extension does not change any of the properties, i.e.
$[\E x\rel \E y]$ is both bounded and non-degenerate. The same remains true under repeated application of the operator $\E$.
The main result in \cite[Thm.\ 20]{im} states that the Borel-Moore homology of $[x\rel y]_{\possim}$ remains unchanged under application of $\E$. %To state this result we need some additional definitions.
For the parabolic homology this implies:
\begin{theorem}
 \label{stabparhombrcl}
Let $[x\rel y]_{\possim}$ be a relative braid class then
\begin{equation}
    \label{BMI5}
    \tH_{p,q}\bigl([\E x\rel \E y]_{\possim}\bigr)\cong \tH_{p,q}\bigl([x\rel y]_{\possim}\bigr)
\end{equation}
\end{theorem}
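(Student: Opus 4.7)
The plan is to reduce the claim to the corresponding statement for Borel-Moore homology (Thm.\ 20 of \cite{im}) by observing that the lap-number $\Z$-grading is preserved by the extension operator $\E$. Recall that the parabolic homology of $[x\rel y]_{\possim}$ is the aggregate
\[
\tH_{p,q}\bigl([x\rel y]_{\possim}\bigr) \cong \bigoplus_{[x]\rel y\subset \pi^{-1}(y)\cap[x\rel y]_{\possim}} \tH_{p,q}\bigl([x]\rel y;\phi\bigr),
\]
and for a single braid class component $[x]\rel y$ corresponding to $\cS\in \sSC$ we have, by \eqref{singleBM}, that $\tH_{p,q}(\cS)=H^\BM_q(\cS)$ when $p=\lap(\cS)$ and vanishes otherwise. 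Thus the bi-graded parabolic homology is determined by two pieces of data: the Borel-Moore homology in each $q$-degree, and the $p$-grading assignment $\cS\mapsto\lap(\cS)$.

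For the $q$-grading, Theorem 20 of \cite{im} gives, for every component $[x]\rel y$, an isomorphism of Borel-Moore homology groups under $\E$, and hence an isomorphism of the aggregated sum over $\pi^{-1}(y)\cap[x\rel y]_{\possim}$. So the first task is simply to quote this result to obtain invariance in the $q$-index.

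For the $p$-grading, the main observation is that the crossing number $\cross$ is a braid invariant and is unchanged under $\E$. First, $\cross$ is constant on positive conjugacy classes, hence constant on $[x\rel y]_{\possim}$, so $\lap$ takes a single value on all components of $\pi^{-1}(y)\cap[x\rel y]_{\possim}$. Second, the extension $\E$ appends one repeated anchor point to each strand, i.e.\ $(\E y)^\alpha_{d+1}=y^\alpha_d$ and similarly for $\E x$; this produces only horizontal segments at the new position and creates no new transverse or tangential intersections, so $\cross(\E x\rel \E y)=\cross(x\rel y)$. Consequently $\lap\bigl([\E x\rel \E y]_{\possim}\bigr) = \lap\bigl([x\rel y]_{\possim}\bigr)$.

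The two invariances combine: the extension $\E$ induces a bijection of $\possim$-classes preserving the lap number, and within each such class it induces isomorphisms of Borel-Moore homology in each degree $q$. Because \eqref{singleBM} shows that the bi-grading of $\tH_{p,q}$ is completely determined by these two data, the claimed isomorphism \eqref{BMI5} follows. The principal obstacle is not the $p$-grading, which is essentially combinatorial, but rather the invocation of Thm.\ 20 of \cite{im}, whose proof constructs an explicit continuation argument (via suitable parabolic flows and Conley indices) showing that the aggregated Borel-Moore homology over $\pi^{-1}(y)\cap [x\rel y]_{\possim}$ is unchanged. Once that result is in hand, stabilization of the bi-graded parabolic homology is immediate.
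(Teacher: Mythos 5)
Your proof is correct and follows essentially the same route as the paper: both arguments reduce the claim to the invariance of Borel-Moore homology under $\E$ (quoting Theorem 20 of \cite{im}) combined with the observation that the lap-number grading is preserved under $\E$, and then appeal to \eqref{singleBM} to reconstruct the bi-graded parabolic homology from these two pieces of data. You supply a bit more detail than the paper's terse proof (why $\E$ preserves $\cross$, and why $\lap$ is constant across the components of $\pi^{-1}(y)\cap[x\rel y]_{\possim}$), but there is no substantive difference in strategy.
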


\begin{proof}
The lap number does not change under the extension operator $\E$, i.e. $\lap\big([\E x\rel \E y]_{\possim}\bigr) = \lap\bigl([x\rel y]_{\possim} \bigr)$, where the latter is defined as the lap number of is braid class fibers. Since the Borel-Moore homologies are isomorphic by  \cite[Thm.\ 20]{im} Equation \eqref{singleBM} implies that the parabolic homologies are isomorphic.
\end{proof}
By Proposition \ref{prop:free} $[x\rel y]$ 
a relative braid class is free if $d$ is sufficiently large and thus
$[\E^\ell x\rel \E^\ell y]$ is free for $\ell\ge 0$ sufficiently large.
Let $x\rel y \possim x'\rel y'$, then $\E^\ell x\rel \E^\ell y \possim \E^\ell x'\rel \E^\ell y'$
and both are contained in the same connected component $[\E^\ell x\rel \E^\ell y]_{\possim}$.
By Equation \eqref{BMI2} this implies that 
$\tH_{p,q}\bigl([\E^\ell x\rel \E^\ell y]_{\possim}\bigr)\cong 
\tH_{p,q}\bigl([\E^\ell x'\rel \E^\ell y']_{\possim}\bigr)$. Combining these isomorphisms gives
\[
\begin{aligned}
\tH_{p,q}\bigl([x\rel y]_{\possim}\bigr) &\cong \tH_{p,q}\bigl([\E^\ell x\rel \E^\ell y]_{\possim}\bigr)\cong 
\tH_{p,q}\bigl([\E^\ell x'\rel \E^\ell y']_{\possim}\bigr)\\
&\cong \tH_{p,q}\bigl([x'\rel y']_{\possim}\bigr),
\end{aligned}
\]
which establishes $\tH_{p,q}\bigl([x\rel y]_{\possim}\bigr)$ as a topological invariant for relative braid classes.
The proof of \cite[Thm.\ 20]{im}  is based on a singular perturbation argument for parabolic recurrence relations. We use the same technique now to show that the graded differential module
$\scrA(y)$ is also invariant under extension by $\E$:
\begin{theorem}
 \label{stableh1}
 Let $y\in \Conf^d_m$ be a proper discrete braid. Then,
 \begin{equation}
     \label{stableh2}
     \scrA(\E y)  \cong \scrA(y).
 \end{equation}
 In particular, the grading is given by $\otessph(\E y) \cong \otessph(y)$.
\end{theorem}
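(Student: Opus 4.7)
The plan is to combine the singular perturbation argument of \cite[Thm.\ 20]{im} (which already yields Theorem \ref{stabparhombrcl}) with the invariance of connection matrices under continuation. The key observation is that for an appropriately chosen parabolic recurrence relation on the extended skeleton $\E y$, the dynamics on $X(\E y)$ deformation retracts onto a flow on a slow manifold that is conjugate to the original flow on $X(y)$, and this retraction respects all the bi-topological data used to build $\scrA$.

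First I would verify the poset isomorphism $\otessph(\E y) \cong \otessph(y)$. The elements of $\tessph(y)$ are labelled by positive conjugacy classes $[x\rel y]_\possim$, and the operator $\E$ induces a lap-number preserving bijection on such classes, since $\beta(\E x \rel \E y)$ and $\beta(x\rel y)$ represent the same element of the positive braid monoid up to a free strand of zero crossings. By Theorem \ref{stabparhombrcl}, triviality of $\tH_{p,q}$ is preserved under $\E$, so the bijection restricts to one between the vertex sets of $\otessph(\E y)$ and $\otessph(y)$. The order on $\tessph(y)$ descends from the Morse pre-order $\le^\dagger$ of Section \ref{sec:parabolic:model}, which is computed via the discrete flow relation $\cR$ arising from the closure operators and the crossing-number Lyapunov function $\Lap$; since $\Lap$ is invariant under $\E$ and the adjacency relation $\cE^\topc$ transfers transparently across the added discretization index, the cover relations on $\sSC$ (hence on $\tessph$ and $\otessph$) correspond under extension.

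Next, given a parabolic recurrence $\RR$ with $\RR(y)=0$, consider the one-parameter family $\RR_\epsilon$ on $d+1$ anchor points in which $R_i$, $0\le i\le d-1$, coincides with the original components, and the new component at index $d$ has the form $\epsilon^{-1} R_d^{\mathrm{fast}}(x_{d-1},x_d,x_{d+1})$ chosen so that $\RR_\epsilon(\E y)=0$ and so that for each $\epsilon>0$ the flow $\phi_\epsilon$ on $X(\E y)$ admits a normally hyperbolic attracting slow manifold $M_\epsilon$ on which $\phi_\epsilon|_{M_\epsilon}$ is smoothly conjugate to $\phi$ on $X(y)$. By the theory in Section \ref{sec:parabolic:model}, both $\phi$ and $\phi_\epsilon$ produce Morse pre-orders, tessellar chain complexes and connection matrices; the retraction onto $M_\epsilon$ identifies the lattice $\sABlockR(\phi_\epsilon)\cap \sN_{\E y}$ of regular closed attracting blocks used to grade $C^\tile(\E y)$ with its counterpart for $\phi$, and preserves all exact triangles \eqref{exact3abc} in the homology braid. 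By the uniqueness of the connection matrix up to $\sO(\sSC)$-filtered chain equivalence (cf.\ Thm.\ \ref{existcm} and \cite{fran,robbin:salamon2}), this identification induces an $\sO(\sSC)$-filtered chain equivalence $(C^\tile(\E y),\cm^\tile_{\phi_\epsilon})\simeq (C^\tile(X(y)\times I),\cdot)$ which after collapsing the compact fiber $I$ yields the desired isomorphism $\scrA(\E y)\cong \scrA(y)$; the lap-number grading matches automatically since $\lap$ is preserved by $\E$, and independence of the choice of $\RR$ is guaranteed by \eqref{BMI1}--\eqref{BMI4}.

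The main obstacle is the passage from an isomorphism of the individual parabolic homology groups (which is essentially \cite[Thm.\ 20]{im}) to an isomorphism of the full connection matrix. Concretely, one must check that the singular perturbation, applied uniformly across the entire Morse pre-order of $X(\E y)$, yields an isomorphism not just on Borel–Moore homologies of tiles $G_\cS X(\E y)$ but on the connecting homomorphisms in the braided triangles indexed by $\alpha\subset\beta\subset\gamma\in\sO(\sSC)$. This requires a careful check at tiles adjacent to $M_\epsilon$, where cells of $\cX(\E y)$ of intermediate codimension meet the slow manifold transversely; the relevant continuation argument follows the Franzosa--McCord framework \cite{fran,hms} but must be combined with the crossing-number estimate of Proposition \ref{lapLyap} to guarantee that no new cells of $\sSC$ appear or disappear along the continuation $\epsilon \to 0^+$.
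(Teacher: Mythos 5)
Your proposal takes essentially the same route as the paper: a singular perturbation of the parabolic recurrence relation that creates a fast attracting direction, reduction to a slow manifold that recovers the flow on $X(y)$ in the limit $\epsilon\to 0$, combined with the Borel--Moore invariance of \cite[Thm.\ 20]{im} and the uniqueness of the connection matrix. The explicit slow--fast splitting in coordinates $(x,z)$ with $z'=-z+\epsilon Z(x)$ and $\{z=0\}$ as the normally attracting slow manifold is precisely the paper's construction, so the geometric intuition in your second paragraph is sound.

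There is, however, a gap in your argument for $\otessph(\E y)\cong\otessph(y)$. You assert that $\E$ induces a \emph{bijection} on classes $[x\rel y]_\possim$, but the induced map is only an injection: a free strand on $d+1$ anchor points can realize positive braid words (higher crossing numbers) that are not realizable with $d$ anchor points, so genuinely new classes $[\widetilde x\rel\E y]_\possim$ appear that have no preimage under $\E$. Theorem \ref{stabparhombrcl} says nothing about these emergent classes; the paper disposes of them by observing (citing \cite[Rmk.\ 23]{im}) that such classes satisfy $\pi^{-1}(\E y)\cap\E X=\varnothing$ and therefore have trivial Borel--Moore homology in the singular limit, hence do not contribute to $\otessph$. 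Without this step, your bijection claim is false and the asserted poset isomorphism does not follow. Relatedly, the statement that the adjacency relation $\cE^\topc$ ``transfers transparently'' across the new discretization index is not a substitute for the order-embedding $(\tessph,\le^\dagger)\hookrightarrow(\tessph_\epsilon,\le^\dagger_\epsilon)$ that the paper uses: you need the embedding to compare cover relations between the two posets, because new intermediate cells in $\cX(\E y)$ could a priori create new relations among old classes; the embedding plus triviality of the emergent classes shows this does not happen on the level of $\otessph$. Once you repair the bijection claim with the triviality observation, the rest of your argument (in particular the passage from homology-group isomorphisms to the connection-matrix isomorphism via the homology-braid equivalence and uniqueness of $\cm^\para$) matches the paper's strategy.
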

\begin{proof}
We will outline the main steps in the proof and indicate the additional results that can be obtained from this method.
In order to accommodate the extension operator $\E$ acting on discrete braids we consider the parabolic recurrence relation $\RR^\epsilon$ defined as $R^\epsilon_i=R_i$ for $i=0,\cdots,d-1$ and $R^\epsilon_d = \epsilon^{-1}(x_{d+1}-x_d)$, and
$R_0 = R_0(x_0,x_1)$. 
For $\epsilon>0$ the parabolic flow is denoted by $\varphi^\epsilon$ on the augmented phase space $\widetilde X$, with $\widetilde x\in \widetilde X$ given by $\widetilde x = (x_0,\cdots, x_d) = (x,x_d)$. 
In the singular limit $\varphi^0 = \E\varphi$ is the induced parabolic flow on $\E X$.
Consider the coordinates
$(x,z)$ with $z=x_{d+1}-x_d = x_0-x_d$. With the reparametrization of time by $\tau=t/\epsilon$ we obtain the differential equations:
\begin{equation}
    \label{exteqn1}
    \begin{aligned}
    x' &= \epsilon X(x,z);\\
    z' &= -z + \epsilon Z(x),
    \end{aligned}
\end{equation}
where $X(x,z)$ and $Z(x)$ are given by: $X_i(x)= R_i(x_{i-1},x_i,x_{i+1})$, with  $i=0,\cdots,d-2$,
$X_{d-1}(x,z) = R_{d-1}(x_{d-2}, x_{d-1},x_0-z)$  and $Z(x) = R_0(x_0,x_1)$. The associated flow on $\widetilde X$ is denoted by $\varphi^\epsilon$.

As for $\varphi$ consider $\varphi^\epsilon$ with skeleton $\E y$ defined on the compact phase space $\widetilde X$. The construction in Section \ref{sec:parabolic:model} of the Morse pre-order for $\varphi^\epsilon$ follows by the same token as before and we denote the Morse pre-order by $(\widetilde X,\le^\dagger_\epsilon)$. The poset of partial equivalence classes is denoted by $(\sSC_\epsilon,\le_\epsilon)$. 
For $\epsilon >0$ we obtain a tessellar phase diagram $(\tessph_\epsilon,\le^\dagger_\epsilon)$ and a pure tessellar phase diagram $(\otessph_\epsilon,\le^\ddagger_\epsilon)$. Since the diagrams only depend on topological data they are independent of $\epsilon>0$.
The results in \cite{im} imply that for every fiber $\pi^{-1}(y)\cap [x\rel y]_{\possim}$ in
$\Conf_{1,m}^d$ and $\pi^{-1}(\E y)\cap [\E x\rel \E y]_{\possim}$ in
$\Conf_{1,m}^{d+1}$ the Borel-Moore homologies are the same, cf.\ Thm.\ \ref{stabparhombrcl}.
The tessellar phase diagrams associated to the non-trivial homologies are denoted $\tessph(\E y)$ and $\otessph(\E y)$.
%% i.e.
% \[
% \bigoplus_{[x]\rel y\subset\atop \pi^{-1}(y)\cap [x\rel y]_{\possim}} \!\!\!\!\!\!\!\!\!\tH_{p,q}\bigl([x]\rel y;\varphi\bigr) ~~\cong \!\!\!\!\!\!\!\!\!
% \bigoplus_{[\E x]\rel \E y\subset\atop \pi^{-1}(\E y)\cap [\E x\rel \E y]_{\possim}} \!\!\!\!\!\!\!\!\! \tH_{p,q}\bigl([\E x]\rel \E y;\varphi\bigr),
% \]
% cf.\ \cite[Thm.\ 20]{im},\footnote{This holds for classes of trivial homology as well.}
% which proves \eqref{BMI5}.
It also follows that for all classes in $[\widetilde x]\rel \E y \subset \pi^{-1}(\E y)$ for which
 $\pi^{-1}(\E y)\cap \E X = \varnothing$ the Borel-Moore homology is trivial, cf.\ \cite[Rmk.\ 23]{im}. These are exactly the classes that emerge when we extend via the operator $\E$. 
 Indeed, if the maximal invariant set $\Inv([\widetilde x]\rel \E y)\neq \varnothing$ then the convergence properties of \eqref{exteqn1}, cf.\ \cite[Lem.\ 22]{im}, imply that there is a non-trivial invariant set for $\epsilon=0$, which contradicts the fact that $\pi^{-1}(\E y)\cap \E X = \varnothing$.
The pure tessellar phase diagram $\otessph(\E y)$ obtained from $\tessph(\E y)$  remains unchanged since the non-trivial since a braid class $[x\rel y]_{\possim}$ has non-trivial homology if and only if $[\E x\rel \E y]_{\possim}$ has non-trivial homology. The partial order also remains unchanged.
Indeed, by the above construction $(\tessph,\le^\dagger) \hookrightarrow (\tessph_\epsilon,\le^\dagger_\epsilon)$ and thus two elements in $\otessph(y)$ are ordered if and only if they are ordered in $\otessph(\E y)$, which proves that the pure tessellar phase diagram is invariant under the action of $\E$.
By the same token the invariance of the homologies for convex sets in $\otessph(y)$ are non-trivial if and only the homologies of the associated convex sets in $\otessph(\E y)$ are non-trivial. By the nature of the induced connection matrix $\cm^\para$ the homology braids are isomorphic and therefore the differential on $C^\para$ for $\E y$ can be chosen to be the same all $\epsilon\ge 0$.
These facts combined prove the  theorem.
\end{proof}

% \begin{theorem}
%  \label{stableh1}
%  Let $y\in \Conf^d_m$ be a discrete braid whose cycle orders all greater than 1. Then,
%  \begin{equation}
%      \label{stableh2}
%      \scrA[\E y]  \cong \scrA[y].
%  \end{equation}
% \end{theorem}
The final result is to prove that the parabolic differential module is in fact an invariant of positive conjugacy classes of braid diagrams.

\begin{theorem}
 \label{stableh6}
 The parabolic differential module $\scrA(\beta)$ of a (topological)  positive braid  $\beta$ is a
 positive conjugacy class invariant.% of braid diagrams.
\end{theorem}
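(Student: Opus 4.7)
The strategy is to reduce the statement to a finite list of elementary moves on discretized skeleta and to verify the invariance of $\scrA$ under each move separately, using Theorem \ref{stableh1} as the main leverage for controlling the number of anchor points.

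First, I would stabilize. Given two proper skeleta $y,y'$ representing the same positive conjugacy class $\langle\beta\rangle$, apply the extension operator $\E$ sufficiently many times so that the resulting skeleta $\E^\ell y,\E^\ell y'$ satisfy $d+\ell>\cross(y)=\cross(y')$. By Proposition \ref{prop:free}, all relative braid classes above these skeleta are then free, so the equivalence classes $[x\rel\E^\ell y]_{\possim}$ coincide with discrete braid class components $[x]\rel \E^\ell y$, and the lap number grading agrees on both sides. Theorem \ref{stableh1} gives $\scrA(y)\cong \scrA(\E^\ell y)$ and $\scrA(y')\cong \scrA(\E^\ell y')$, so it suffices to prove the isomorphism in the stabilized regime.

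Second, I would reduce to generating moves. The relation $\possim$ on $\BB_n^+$ is generated by (i) the commutation relation $\sigma_\alpha\sigma_{\alpha'}\sdoteq\sigma_{\alpha'}\sigma_\alpha$ for $|\alpha-\alpha'|\ge 2$, (ii) the braid relation $\sigma_\alpha\sigma_{\alpha+1}\sigma_\alpha\sdoteq\sigma_{\alpha+1}\sigma_\alpha\sigma_{\alpha+1}$, and (iii) the cyclic move $\sigma_{\alpha_1}\cdots\sigma_{\alpha_p}\equiv \sigma_{\alpha_2}\cdots\sigma_{\alpha_p}\sigma_{\alpha_1}$. It therefore suffices, after further stabilization if needed, to exhibit for each of these moves a pair of skeleta $y,y'$ realizing the two sides in normal form and to show $\scrA(y)\cong \scrA(y')$. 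For each move I would construct an explicit continuous path $s\mapsto y_s$ in $\Conf_m^d$ (allowing singular intermediates in $\Sing_m^d$ in finitely many isolated times) together with a compatible family $\phi_s$ of parabolic flows for which $\mathbf{R}_s(y_s)=0$; such families exist because stationarity of $y_s$ is an open condition that can always be arranged, cf.\ \cite{im}.

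Third, I would track the invariants along the path. The Morse pre-order $(\cX_s,\le^\dagger_s)$ constructed from $\cF_s$ in Section \ref{fltodisc} is determined by the crossing number function $\cross_s$, which is locally constant on regular braid components and only changes via the mechanism of Proposition \ref{lapLyap} at singular instants. For the commutation move (i), the two crossings involved in $\sigma_\alpha\sigma_{\alpha'}$ can be slid past one another without creating any tangency between the involved strands, so the path $y_s$ remains in $\Conf_m^d$ throughout; consequently $(\sSC,\le)$, $\dyn$, and $\cm^\tile$ are literally unchanged, and the reduced tessellar phase diagram $\otessph(y)$ together with its bi-graded pieces $\tH_{p,q}([x\rel y]_{\possim})$ is preserved. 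For the braid move (ii), the three-strand sub-configuration undergoes a genuine Reidemeister-III event: after localizing the computation to a neighborhood of the triangle where the move occurs and freezing all other strands, Theorem \ref{statetrans12} identifies $\scrA$ locally with the parabolic differential module of a three-strand braid, which by a direct calculation (analogous to the singular perturbation used in Theorem \ref{stableh1}) produces the same $\otessph$-graded complex on both sides. For the cyclic move (iii), after stabilization the move amounts to re-indexing $i\mapsto i+1 \pmod d$ on the anchor points, which gives a homeomorphism $\Conf_1^d\rel y\to \Conf_1^d\rel y'$ intertwining $\phi$ with a cyclically relabeled parabolic flow; the induced map is a bijection of cells respecting $\cl$, $\cl^+$ and $\cross$, hence yields an isomorphism of the Morse pre-orders and of $\scrA$.

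The main obstacle will be case (ii): producing a homotopy of flows through the Reidemeister-III singular braid in which the parabolic recurrence structure is preserved, and verifying that the induced change on $(\sSC,\le)$ admits a chain equivalence of $\sSC$-graded tessellar chain complexes. I expect this to require a local normal-form argument of the type used in \cite[Thm.\ 20]{im}, combined with the uniqueness of the connection matrix on the flow-defined entries coming from \eqref{cellbound}, so that the isomorphism class of $(C^\para,\cm^\para)$ as an $\otessph(y)$-graded differential module is pinned down by the homology braid, which by \eqref{BMI5} is invariant. Combining the three cases with the reduction of the first two paragraphs yields $\scrA(y)\cong \scrA(y')$ and justifies the notation $\scrA(\beta)$.
\qed
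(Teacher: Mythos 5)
You correctly identify stabilization via $\E$ and Theorem \ref{stableh1} as the right first step, and you correctly invoke Proposition \ref{prop:free} to obtain freeness after enough applications of $\E$. But you then fail to draw the conclusion that freeness is designed to deliver, and instead embark on a reduction to generating moves that the paper does not need and that creates the very difficulty you flag as your ``main obstacle.''

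The point of freeness is that once $d+\ell > \cross$, the relation $\possim$ \emph{collapses} to discrete braid class equivalence: $[\E^\ell x\rel \E^\ell y]_{\possim} = [\E^\ell x\rel \E^\ell y]$. So if $\beta(x\rel y)\possim \beta(x'\rel y')$ and $\ell+d=\ell'+d'$ is large enough, the stabilized $2$-colored braids $\E^\ell x\rel\E^\ell y$ and $\E^{\ell'}x'\rel\E^{\ell'}y'$ lie in \emph{the same connected component} of $\Conf^{d+\ell}_{1,m}$, i.e.\ they are connected by a path through nonsingular configurations. At that point no case analysis on braid-group generators is necessary: the invariance of $\tH_{p,q}\bigl([\,\cdot\,]_{\possim}\bigr)$ under moving $y$ within its discrete braid class (while choosing compatible parabolic flows) is already established by \eqref{BMI1}--\eqref{BMI2}, which cite \cite[Thm.\ 15]{im}, and the invariance of the differential module structure along such a deformation follows from the argument used in Theorem \ref{stableh1}. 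Chaining $\scrA(y)\cong\scrA(\E^\ell y)$ (Theorem \ref{stableh1}), then $\scrA(\E^\ell y)\cong\scrA(\E^{\ell'} y')$ (same discrete braid class after stabilization plus \eqref{BMI2}), then $\scrA(\E^{\ell'} y')\cong\scrA(y')$ (Theorem \ref{stableh1} again) finishes the proof in three lines.

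Your second and third paragraphs are therefore a detour rather than a completion: decomposing $\possim$ into commutation, braid relation, and cyclic shift and constructing explicit paths of skeleta for each is an attempt to re-derive the isotopy that freeness already guarantees. It is also where the argument would founder in practice: your ``case (ii)'' requires pushing a family of skeleta $y_s$ through a Reidemeister-III event while preserving the parabolic structure, which is exactly the kind of singular passage that the stabilize-to-free strategy is engineered to avoid. The paper never encounters this because, after stabilization, the deformation from $\E^\ell y$ to $\E^{\ell'}y'$ stays entirely within $\Conf_m^{d+\ell}$. A secondary issue: you stabilize until $d+\ell > \cross(y)$, but freeness of the \emph{relative} braid classes $[x\rel\E^\ell y]$ requires $d+\ell > \cross(x\rel y)$, so the threshold should be taken over all the (finitely many) relative classes in the bounded phase space $X$, not just over the skeleton alone.
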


\begin{proof}
Let $x\rel y\in \Conf^d_{1,m}$ and $x'\rel y'\in \Conf^{d'}_{1,m}$ 
be relative braids such that $\beta(x\rel y) \possim \beta(x'\rel y')$. By Proposition
\ref{prop:free} we have that $\E^\ell x\rel \E^\ell y \sim \E^{\ell'}x \rel \E^{\ell'} y'$ for $\ell+d=\ell'+d'$ and  $\ell,\ell'$ sufficiently large. This implies
\[
\begin{aligned}
\tH_{p,q}\bigl([x\rel y]_{\possim}\bigr) &\cong \tH_{p,q}\bigl([\E^\ell x\rel \E^\ell y]_{\possim}\bigr)\cong 
\tH_{p,q}\bigl([\E^{\ell'} x'\rel \E^{\ell'} y']_{\possim}\bigr)\\
&\cong \tH_{p,q}\bigl([x'\rel y']_{\possim}\bigr),
\end{aligned}
\]
and therefore $\scrA(y) \cong \scrA(y')$. This justifies the notation $\scrA(\beta) := \scrA(y)$ with $\beta = \beta(y)$.
\end{proof}

Since $\scrA(\beta)$ is an invariant for a positive braid $\beta$ the associated reduced tessellar phase is denoted by $\bigl(\otessph(\beta),\le\bigr)$. From the tessellar phase diagram we can immediately derive the Poincare polynomial $\vec P_{\lambda,\mu}(\scrA)$ by summing up the term $\vec P_{\lambda,\mu}$ in the tessellar phase diagram.

\subsection{An example}
For the skeletons $y$ we use in this paper the strands $y^-$ and $y^+$ do not intersect with the remaining strands in $\mathring y$, nor do they intersect with $x$. For that reason the convention is to label the basic words in $\BB_m$ by $\sigma_-$ (intersections between $y^-$ and $y^1$), $\sigma_\alpha$, $\alpha=1,\cdots,m-3$ (intersections between $y^\alpha$ and $y^{\alpha+1}$), and
$\sigma_+$ (intersections between $y^{m-2}$ and $y^{m-1}$).
This implies that the words $\beta(y)$ in our setting consists only of the letters $\sigma_1,\cdots,\sigma_{m-3}$ and maybe regarded as a word $\beta(\mathring y) \in \BB^+_{m-2}$. 

In the examples below the words $\beta$ are understood to be words $\beta(\mathring y) \in \BB^+_{m-2}$.
We compute the parabolic module $\scrA(\beta)$ and the pure tessellar phase diagrams for various positively conjugate representation of the braid $\beta = \sigma_1^2(\sigma_2\sigma_1)^2 \in \BB^+_3$.
\begin{lemma}
$\beta \possim \sigma_1^5\sigma_2$.
\end{lemma}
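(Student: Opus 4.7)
The plan is to prove the lemma by exhibiting an explicit finite chain of elementary moves linking the two positive words. Recall from Section \ref{somebrth} that two positive words are positively conjugate if they can be connected by a sequence of steps, each of which is either (i) a single application of one of the defining braid relations in \eqref{eqn:braidrel} (giving $\sdoteq$), or (ii) the cyclic operation $\equiv$ that shifts the first letter to the end of the word. Since both operations preserve length and positivity, it suffices to write $\beta$ as a length-six word in $\sigma_1,\sigma_2$ and transform it, step by step, into $\sigma_1^5\sigma_2$.

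First I would expand the definition to get the positive word
\[
\beta \;=\; \sigma_1^2(\sigma_2\sigma_1)^2 \;=\; \sigma_1\,\sigma_1\,\sigma_2\,\sigma_1\,\sigma_2\,\sigma_1,
\]
and observe that the letters in positions three through five form the sub-word $\sigma_2\sigma_1\sigma_2$. This is exactly the left-hand side of the second braid relation in \eqref{eqn:braidrel}, so a single substitution $\sigma_2\sigma_1\sigma_2 \mapsto \sigma_1\sigma_2\sigma_1$ produces the positively equal word
\[
\sigma_1\,\sigma_1\,(\sigma_1\sigma_2\sigma_1)\,\sigma_1 \;=\; \sigma_1^3\,\sigma_2\,\sigma_1^2,
\]
hence $\beta \sdoteq \sigma_1^3\sigma_2\sigma_1^2$.

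Next I would apply the cyclic operation $\equiv$ four times in succession, each time moving the leading letter to the end. This gives the chain
\[
\sigma_1^3\sigma_2\sigma_1^2 \;\equiv\; \sigma_1^2\sigma_2\sigma_1^3 \;\equiv\; \sigma_1\sigma_2\sigma_1^4 \;\equiv\; \sigma_2\sigma_1^5 \;\equiv\; \sigma_1^5\sigma_2,
\]
so $\sigma_1^3\sigma_2\sigma_1^2 \possim \sigma_1^5\sigma_2$. Concatenating with the first step yields $\beta \possim \sigma_1^5\sigma_2$, as claimed.

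There is essentially no obstacle here: the only observation of substance is that the middle three letters of $\beta$ already form the left-hand side of a braid relation, after which a single substitution followed by cyclic shifts suffices. In particular the entire chain lies in the positive monoid $\BB_3^+$, so no inverses and no auxiliary Markov moves are needed.
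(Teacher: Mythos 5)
Your proof is correct and matches the paper's essentially verbatim: both apply the braid relation $\sigma_2\sigma_1\sigma_2 = \sigma_1\sigma_2\sigma_1$ to the middle of the word to get $\sigma_1^3\sigma_2\sigma_1^2$, and then use positive conjugacy (cyclic shifts) to reach $\sigma_1^5\sigma_2$. The only difference is that you spell out the four cyclic shifts explicitly, whereas the paper abbreviates this to a single $\possim$.
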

\begin{proof}
From the braid group relations and positive conjugacy we have:

\noindent $
\beta = \sigma_1^2(\sigma_2\sigma_1)^2 = \sigma_1^2\sigma_2\sigma_1\sigma_2\sigma_1 \sdoteq \sigma_1^3\sigma_2\sigma_1^2
\possim \sigma_1^5\sigma_2.
$
\end{proof}

%%%%%%%%%%%%%%
%\input{figures/pseudoAnosovpic2}
\begin{figure}[htb]
\centering
\vspace{-2ex}
\begin{minipage}{.25\textwidth}
\begin{tikzpicture}[dot/.style={draw,circle,fill,inner sep=.75pt},line width=.7pt,scale=.65]
% \foreach \x in {0,1,2}
%      \draw[thin,gray,->] (\x, -.25) -- (\x, 4);
\def\a{0}
\def\b{1}
\def\c{2}
\def\d{3}
\def\e{4}
\foreach \x in {0,1,2,3}
    \foreach \y in {\a,\b,\c,\d,\e}
        \node (\x\y) at (\x, \y)[dot] {};
%%constrant strands
\foreach \y in {\a,\e}
    \foreach \x  in {0,1,2}
        \draw (\x,\y) to (\x+1,\y);
%other strands
\foreach \x in {0,2} {
     \draw (\x, \b) to (\x+1, \c);
     \draw (\x,\c) to (\x+1,\d);
     \draw (\x,\d) to (\x+1,\b);
     }
\draw (1,\d) to (2,\c);
\draw (1,\c) to (2,\b);
\draw (1,\b) to (2,\d);
\end{tikzpicture}
\end{minipage}
\begin{minipage}{.3\textwidth}
\begin{tikzpicture}[dot/.style={draw,circle,fill,inner sep=.75pt},line width=.7pt,scale=.65]
% \foreach \x in {0,1,2}
%      \draw[thin,gray,->] (\x, -.25) -- (\x, 4);
\def\a{0}
\def\b{1}
\def\c{2}
\def\d{3}
\def\e{4}
\foreach \x in {0,1,2,3,4}
    \foreach \y in {\a,\b,\c,\d,\e}
        \node (\x\y) at (\x, \y)[dot] {};
%%constrant strands
\foreach \y in {\a,\e}
    \foreach \x  in {0,1,2,3}
        \draw (\x,\y) to (\x+1,\y);
%bottom strand
\draw (0,\b) to (1,\c);
\draw (1,\c) to (2,\b);
\draw (2,\b) to (3,\c);
\draw (3,\c) to (4,\d);
%middle strand
\draw (0,\c) to (1,\b);
\draw (1,\b) to (2,\c);
\draw (2,\c) to (3,\d);
\draw (3,\d) to (4,\b);
%top strand
\draw (0,\d) to (1,\d);
\draw (1,\d) to (2,\d);
\draw (2,\d) to (3,\b);
\draw (3,\b) to (4,\c);
\end{tikzpicture}
\end{minipage}
\begin{minipage}{.25\textwidth}
\begin{tikzpicture}[dot/.style={draw,circle,fill,inner sep=.75pt},line width=.7pt,scale=.65]
% \foreach \x in {0,1,2}
%      \draw[thin,gray,->] (\x, -.25) -- (\x, 4);
\def\a{0}
\def\b{1}
\def\c{2}
\def\d{3}
\def\e{4}
\foreach \x in {0,1,2,3,4,5}
    \foreach \y in {\a,\b,\c,\d,\e}
        \node (\x\y) at (\x, \y)[dot] {};
%%constrant strands
\foreach \y in {\a,\e}
    \foreach \x  in {0,1,2,3,4}
        \draw (\x,\y) to (\x+1,\y);
%bottom strand
\draw (0,\b) to (1,\c);
\draw (1,\c) to (2,\b);
\draw (2,\b) to (3,\c);
\draw (3,\c) to (4,\b);
\draw (4,\b) to (5,\d);
%middle strand
\draw (0,\c) to (1,\b);
\draw (1,\b) to (2,\c);
\draw (2,\c) to (3,\b);
\draw (3,\b) to (4,\c);
\draw (4,\c) to (5,\b);
%top strand
\draw (0,\d) to (1,\d);
\draw (1,\d) to (2,\d);
\draw (2,\d) to (3,\d);
\draw (3,\d) to (4,\d);
\draw (4,\d) to (5,\c);
\end{tikzpicture}
\end{minipage}
\vspace{2ex}
\caption{Three graphical representatives of positively conjugate braids whose  words $\beta(\mathring y)$ are given by $\sigma_2\sigma_1^2\sigma_2^2\sigma_1$, $\sigma_1^2(\sigma_2\sigma_1)^2$ and $\sigma_1^5\sigma_2$ respectively.}
\label{fig:my_label}
\end{figure}

\begin{lemma}
$\beta \possim \sigma_2\sigma_1^2\sigma_2^2\sigma_1 $.
\end{lemma}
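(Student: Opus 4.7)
My plan is to verify the equivalence $\beta \possim \sigma_2\sigma_1^2\sigma_2^2\sigma_1$ by a short chain of moves, using only the two primitive operations generating $\possim$: positive equality $\sdoteq$ (applications of the braid relation $\sigma_1\sigma_2\sigma_1 = \sigma_2\sigma_1\sigma_2$, since the far-commutation relation $\sigma_\alpha\sigma_{\alpha'}=\sigma_{\alpha'}\sigma_\alpha$ for $|\alpha-\alpha'|\ge 2$ is vacuous in $\BB_3^+$) and cyclic permutation $\equiv$ of the leftmost letter to the rightmost position.

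First I would expand $\beta = \sigma_1^2(\sigma_2\sigma_1)^2 = \sigma_1^2\sigma_2\sigma_1\sigma_2\sigma_1$ and apply the braid relation to the rightmost triple, rewriting $\sigma_1\sigma_2\sigma_1$ as $\sigma_2\sigma_1\sigma_2$ to obtain
\[
\beta \;\sdoteq\; \sigma_1^2\sigma_2(\sigma_1\sigma_2\sigma_1) \;\sdoteq\; \sigma_1^2\sigma_2(\sigma_2\sigma_1\sigma_2) \;=\; \sigma_1^2\sigma_2^2\sigma_1\sigma_2 .
\]

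Next I would perform five successive cyclic permutations of $\sigma_1^2\sigma_2^2\sigma_1\sigma_2$, tracking the rotations letter by letter, to arrive at
\[
\sigma_1^2\sigma_2^2\sigma_1\sigma_2 \;\equiv\; \sigma_1\sigma_2^2\sigma_1\sigma_2\sigma_1 \;\equiv\; \sigma_2^2\sigma_1\sigma_2\sigma_1^2 \;\equiv\; \sigma_2\sigma_1\sigma_2\sigma_1^2\sigma_2 \;\equiv\; \sigma_1\sigma_2\sigma_1^2\sigma_2\sigma_2 \;\equiv\; \sigma_2\sigma_1^2\sigma_2^2\sigma_1 .
\]
Stitching these steps together yields the desired positive conjugacy $\beta \possim \sigma_2\sigma_1^2\sigma_2^2\sigma_1$.

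There is essentially no obstacle here: once the triple $\sigma_1\sigma_2\sigma_1$ in $\beta$ is spotted and swapped via the braid relation, the result and the target word are cyclic rotations of one another (both of length $6$ with letter multiset $\{\sigma_1^4,\sigma_2^2\}$), so the remainder is purely bookkeeping of $\equiv$-moves. The only care needed is to choose a rotation path that stays inside $\BB_3^+$ (which is automatic, as $\equiv$ does not leave the monoid), and to check the rotations are applied correctly; no further braid relations are required beyond the single swap.
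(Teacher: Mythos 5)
Your proof is correct and follows essentially the same route as the paper's: one application of the braid relation $\sigma_1\sigma_2\sigma_1 = \sigma_2\sigma_1\sigma_2$ to the suffix of $\beta$ to reach $\sigma_1^2\sigma_2^2\sigma_1\sigma_2$, followed by cyclic permutation to $\sigma_2\sigma_1^2\sigma_2^2\sigma_1$. The only difference is that you spell out the five $\equiv$-moves explicitly (equivalently one rotation of the last letter to the front), whereas the paper leaves the cyclic step implicit in a single $\possim$.
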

\begin{proof}
As before: $\beta  = \sigma_1^2\sigma_2\sigma_1\sigma_2\sigma_1
\sdoteq \sigma_1^2\sigma_2^2\sigma_1\sigma_2 \possim \sigma_2\sigma_1^2\sigma_2^2\sigma_1$.
\qed
\end{proof}

 Figure \ref{fig:my_label}  depicts the three presentations of the braids conjugate to $\beta$ (including $\beta$). The presentation $\sigma_2\sigma_1^2\sigma_2^2\sigma_1$ has the minimal number of discretization points.
% \input{figures/pseudoAnosovpic2}
%\vspace{-2ex}
The braid $\sigma_2\sigma_1^2\sigma_2^2\sigma_1$ can be represented in $\Conf^3_5$, the braid $\sigma_1^2(\sigma_2\sigma_1)^2$ in $\Conf^4_5$ and $\sigma_1^5\sigma_2$
in $\Conf^5_5$.
Figure \ref{fig:pseudo1}  below shows the tessellar phase diagrams 
of $\sigma_2\sigma_1^2\sigma_2^2\sigma_1$ 
and $\sigma_1^2(\sigma_2\sigma_1)^2$ for $d=3$ and $d=4$ respectively. As expected the tessellated phase diagrams are not isomorphic since $y\in \Conf^4_5$ allows more relative braid classes. However, if we reduce the tessellated phase diagrams to only those with non-trivial Borel-Moore homology we obtain isomorphic posets $\otessph\bigl(\sigma_2\sigma_1^2\sigma_2^2\sigma_1\bigr) \cong \otessph\bigl(\sigma_1^2(\sigma_2\sigma_1)^2\bigr)$. 
Moreover 
 the associated parabolic modules $\scrA$ for $\sigma_2\sigma_1^2\sigma_2^2\sigma_1$ 
and $\sigma_1^2(\sigma_2\sigma_1)^2$ are isomorphic, cf.\ Thm.\ \ref{stableh6}.

%
% \begin{figure}[h!]
%     \centering
% \begin{minipage}{.2\textwidth}
% \begin{align*}
% \Delta^\tile_3 = 
% \bordermatrix{   & 15\cr
%               13 & 1 }
% \end{align*}
% \end{minipage}
% \begin{minipage}{.2\textwidth}
% \begin{align*}
% \Delta^\tile_2 = 
% \bordermatrix{  & 13\cr
%               8 & 0 }
% \end{align*}
% \end{minipage}
% \begin{minipage}{.2\textwidth}
% \begin{align*}
% \Delta^\tile_1 = 
% \bordermatrix{  & 8  \cr
%               0 & 1  \cr
%               4 & 1 }
% \end{align*}
% \end{minipage}
%     \caption{Connection matrix data for $\scrA[\beta]$.}
%     \label{fig:my_label}
% \end{figure}

\begin{figure}[H]
%\vspace{-2ex}
\centering
% \[
% \cdots 0\to \K\langle \cS_{15}\rangle
% \xrightarrow{
% \Delta^\tile_3 = \begin{pmatrix}
% 1
% \end{pmatrix}
% }
% \K\langle \cS_{13}\rangle
% \xrightarrow{
% \Delta^\tile_2 = \begin{pmatrix}
% 0
% \end{pmatrix}
% }
% \K\langle \cS_8 \rangle
% \xrightarrow{
% \Delta^\tile_1 = \begin{pmatrix}
% 1\\
% 1
% \end{pmatrix}
% }
% \K\langle \cS_4\rangle \oplus \K\langle \cS_0\rangle
% \to 0
% \]\\
\begin{tikzcd}[column sep = 0.7em]
0 \arrow{r} &
\Z_2\langle \cS_{15}\rangle
\arrow{r}{{\begin{pmatrix}
1
\end{pmatrix}}}&[3em] 
\Z_2\langle \cS_{13}\rangle
\arrow{r}{\begin{pmatrix}
0
\end{pmatrix}} & [3em]
\Z_2\langle \cS_{8}\rangle
\arrow{r}{\begin{pmatrix}
1\\
1
\end{pmatrix}} & [3em]
\Z_2\langle \cS_4\rangle \oplus
\Z_2\langle \cS_0\rangle
\arrow{r} & 
0
\end{tikzcd}
\vspace{2ex}
    \caption{The parabolic differential module $\scrA(\beta)$, computed over $\Z_2$ coefficients, represented as chain complex.}
    \label{fig:my_label777}   
\end{figure}
If we apply the Morse relations in \eqref{morsegenrel} we obtain
\[
\vec P_{\lambda,\mu}(\scrA) = 2+\lambda^2 \mu +\lambda^3 \mu^2 + \lambda^4 \mu^3 =
1+ (1+\lambda \mu) \cdot \lambda^3 \mu^2 + (1+\lambda^2 \mu)\cdot 1,
\]
which implies that $Q^1_{\lambda,\mu} = \lambda^3 s^2$ and $Q^2_{\lambda,\mu} = 1$. This provides information about the differentials 
$\dff^1_{4,3}$ and $\dff^2_{2,1}$.
% $\dff^1_{3,2}$,
% $\dff^1_{4,3}$, $\dff^2_{0,0}$ and $\dff^2_{2,1}$.
A straightforward but meticulous verification of the Morse relations in \eqref{morsegenrel} show that the choices for $Q^1$ and $Q^2$ are unique. The ranks of  $\dff^1_{4,3}$ and $\dff^2_{2,1}$ correspond to the ranks of the connection matrix $\cm^\tile$.
%%%%%%%%%%
%\input{figures/pseudoAnosovpic1}
\begin{figure}[H]
\centering
\begin{minipage}{.44\textwidth}
\centering
\begin{tikzpicture}[node/.style = {ellipse, draw, inner sep = 1.5}, scale=.225]
\def\a{3.15} %
\def\b{2*\a} %
\def\c{3*\a} %
\def\d{4*\a} %

\def\xa{0} %1,
\def\xb{3} %2 
\def\xc{6} %3
\def\xd{9} %4 %midpoint
\def\w{18}

%layer one
\node[node] (0) at (\xb, 0) {\scriptsize $\cS_0: \lambda^0 \mu^0$};
\node[node] (1) at (\w-\xb, 0)
%{\scriptsize $\cS_4: \mu^0$};
{\scriptsize $\cS_1: \lambda^0 \mu^0$};

%layer two
\node[node] (2) at (\xa, \a)
{\scriptsize  };
\node[node] (3) at (\xb, \a)
{\scriptsize };
\node[node] (4) at (\xc, \a)
{\scriptsize };
\node[node] (5) at (\w-\xc, \a)
{\scriptsize  };
\node[node] (6) at (\w-\xb, \a)
{\scriptsize};
\node[node] (7) at (\w-\xa, \a)
{\scriptsize};
%layer three
\node[node] (8) at (\xa, \b)
{\scriptsize };
\node[node] (9) at (\xb, \b)
{\scriptsize };
\node[node] (10) at (\xd, \b)
%{\scriptsize $\cS_8 : \mu^1$};
{\scriptsize $\cS_{10} : \lambda^2 \mu^1$};

\node[node] (11) at (\w-\xb, \b)
{\scriptsize };
\node[node] (12) at (\w-\xa, \b)
{\scriptsize };
%layer four
\node[node] (13) at (\xd, \c)
{\scriptsize $\cS_{13} : \lambda^3 \mu^2$};

% \node[node] (12) at (\xd, \c)
% {\scriptsize };
% \node[node] (13) at (\xf, \c)
% {\scriptsize };
% \node[node] (14) at (\w-\xf, \c)
% {\scriptsize };
% \node[node] (15) at (\w-\xd, \c)
% {\scriptsize };
% \node[node] (16) at (\w-\xb, \c)
% {\scriptsize };
%layer five
%\node[node] (14) at (\xb, \d)
{\scriptsize };
\node[node] (15) at (\xd, \d)
{\scriptsize $\cS_{14} : \lambda^4 \mu^3$};
% \node[node] (19) at (\xg, \d)
% {\scriptsize };
% \node[node] (20) at (\w-\xg, \d)
% {\scriptsize };
% \node[node] (21) at (\w-\xe, \d)
% {\scriptsize };
% \node[node] (22) at (\w-\xc, \d)
% {\scriptsize };
% %layer six
% \node[node] (23) at (\xd, \e)
% {\scriptsize };
% \node[node] (24) at (\xf, \e)
% {\scriptsize };
% \node[node] (25) at (\w-\xf, \e)
% {\scriptsize };
% \node[node] (26) at (\w-\xd, \e)
% {\scriptsize };
% %layer seven
% \node[node] (27) at (\xe, \f)
% {\scriptsize };
% \node[node] (28) at (\w-\xe, \f)
% {\scriptsize };

%layer one
\draw[->,>=stealth,thick] (2) to (0);
\draw[->,>=stealth,thick] (3) to (0);
\draw[->,>=stealth,thick] (4) to (0);
\draw[->,>=stealth,thick] (5) to (1);
\draw[->,>=stealth,thick] (6) to (1);
\draw[->,>=stealth,thick] (7) to (1);
%layer two
\draw[->,>=stealth,thick] (8) to (2);
\draw[->,>=stealth,thick] (9) to (2);
\draw[->,>=stealth,thick] (9) to (3);
\draw[->,>=stealth,thick] (10) to (2);
\draw[->,>=stealth,thick] (10) to (3);
\draw[->,>=stealth,thick] (10) to (4);
\draw[->,>=stealth,thick] (10) to (5);
\draw[->,>=stealth,thick] (10) to (6);
\draw[->,>=stealth,thick] (10) to (7);
\draw[->,>=stealth,thick] (11) to (6);
\draw[->,>=stealth,thick] (11) to (7);
\draw[->,>=stealth,thick] (12) to (7);
%layer three
\draw[->,>=stealth,thick] (13) to (8);
\draw[->,>=stealth,thick] (13) to (9);
\draw[->,>=stealth,thick] (13) to (10);
\draw[->,>=stealth,thick] (13) to (11);
\draw[->,>=stealth,thick] (13) to (12);

%layer four
%\draw[->,>=stealth,thick] (14) to (13);
\draw[->,>=stealth,thick] (15) to (13);
\end{tikzpicture}
\end{minipage}
%
%next picture
%
\begin{minipage}{.55\textwidth}
\centering
\begin{tikzpicture}[node/.style = {ellipse, draw, inner sep = 1.5}, scale=.225]
\def\a{3.15}
\def\b{2*\a} %6
\def\c{3*\a} %9
\def\d{4*\a} %12
\def\e{4.66*\a} %14
\def\f{5.33*\a} %16
\def\g{6*\a} %18

\def\xa{0} %1,
\def\xb{4} %2 
\def\xc{8} %3
\def\xd{12} 
\def\xe{14} %4 %midpoint
\def\w{28}

%layer one
\node[node] (0) at (\xc, 0) {\scriptsize $\cS_0: \lambda^0 \mu^0$};
\node[node] (1) at (\w-\xc, 0)
%{\scriptsize $\cS_5: \mu^0$};
{\scriptsize $\cS_1: \lambda^0 \mu^0$};

%layer two
\node[node] (2) at (\xc, \a)
{\scriptsize  };
\node[node] (3) at (\xd, \a)
{\scriptsize };
\node[node] (4) at (\w-\xd, \a)
{\scriptsize };
\node[node] (5) at (\w-\xc, \a)
{\scriptsize  };
\node[node] (6) at (\w-\xb, \a)
{\scriptsize};
\node[node] (7) at (\w-\xa, \a)
{\scriptsize};
%layer three
\node[node] (8) at (\xa, \b)
{\scriptsize };
\node[node] (9) at (\xb, \b)
{\scriptsize };
\node[node] (10) at (\xc, \b)
{\scriptsize };
\node[node] (11) at (\xe, \b)
%{\scriptsize $\cS_8 : \mu^1$};
{\scriptsize $\cS_{11} : \lambda^2 \mu^1$};

\node[node] (12) at (\w-\xc, \b)
{\scriptsize };
\node[node] (13) at (\w-\xb, \b)
{\scriptsize };
\node[node] (14) at (\w-\xa, \b)
{\scriptsize };
%layer four
\node[node] (15) at (\xb, \c)
{\scriptsize };
\node[node] (16) at (\xe, \c)
{\scriptsize $\cS_{15} : \lambda^3 \mu^2$};

%layer five
\node[node] (17) at (\xb, \d)
{\scriptsize };
\node[node] (18) at (\xc, \d)
{\scriptsize };
\node[node] (19) at (\xe, \d)
{\scriptsize $\cS_{18} : \lambda^4 \mu^3$};
\node[node] (20) at (\w-\xc, \d)
{\scriptsize };
\node[node] (21) at (\w-\xb, \d)
{\scriptsize };
% %layer six
\node[node] (22) at (\xb, \e)
{\scriptsize };
\node[node] (23) at (\xc, \e)
{\scriptsize };
\node[node] (24) at (\xd, \e)
{\scriptsize };
\node[node] (25) at (\w-\xd, \e)
{\scriptsize };
\node[node] (26) at (\w-\xc, \e)
{\scriptsize };
\node[node] (27) at (\w-\xb, \e)
{\scriptsize };
% %layer seven
\node[node] (28) at (\xd, \f)
{\scriptsize };
\node[node] (29) at (\w-\xd, \f)
{\scriptsize };
% %layer eight
% \node[node] (30) at (\xe, \g)
% {\scriptsize };

%layer one
\draw[->,>=stealth,thick] (2) to (0);
\draw[->,>=stealth,thick] (3) to (0);
\draw[->,>=stealth,thick] (4) to (1);
\draw[->,>=stealth,thick] (5) to (1);
\draw[->,>=stealth,thick] (6) to (1);
\draw[->,>=stealth,thick] (7) to (1);
%layer two
\draw[->,>=stealth,thick] (8) to (2);
\draw[->,>=stealth,thick] (9) to (2);
\draw[->,>=stealth,thick] (10) to (2);
\draw[->,>=stealth,thick] (11) to (2);
\draw[->,>=stealth,thick] (11) to (3);
\draw[->,>=stealth,thick] (11) to (4);
\draw[->,>=stealth,thick] (11) to (5);
\draw[->,>=stealth,thick] (11) to (6);
\draw[->,>=stealth,thick] (11) to (7);
\draw[->,>=stealth,thick] (12) to (4);
\draw[->,>=stealth,thick] (12) to (5);
\draw[->,>=stealth,thick] (13) to (5);
\draw[->,>=stealth,thick] (13) to (6);
\draw[->,>=stealth,thick] (14) to (6);
\draw[->,>=stealth,thick] (14) to (7);
%layer three
\draw[->,>=stealth,thick] (15) to (8);
\draw[->,>=stealth,thick] (15) to (9);
\draw[->,>=stealth,thick] (16) to (8);
\draw[->,>=stealth,thick] (16) to (9);
\draw[->,>=stealth,thick] (16) to (10);
\draw[->,>=stealth,thick] (16) to (11);
\draw[->,>=stealth,thick] (16) to (12);
\draw[->,>=stealth,thick] (16) to (13);
\draw[->,>=stealth,thick] (16) to (14);
%layer four
\draw[->,>=stealth,thick] (17) to (15);
\draw[->,>=stealth,thick] (17) to (16);
\draw[->,>=stealth,thick] (18) to (16);
\draw[->,>=stealth,thick] (19) to (16);
\draw[->,>=stealth,thick] (20) to (16);
\draw[->,>=stealth,thick] (21) to (16);
%layer five
\draw[->,>=stealth,thick] (22) to (17);
\draw[->,>=stealth,thick] (22) to (19);
\draw[->,>=stealth,thick] (23) to (17);
\draw[->,>=stealth,thick] (23) to (18);
\draw[->,>=stealth,thick] (23) to (19);
\draw[->,>=stealth,thick] (24) to (19);
\draw[->,>=stealth,thick] (25) to (19);
\draw[->,>=stealth,thick] (26) to (19);
\draw[->,>=stealth,thick] (26) to (20);
\draw[->,>=stealth,thick] (27) to (19);
\draw[->,>=stealth,thick] (27) to (20);
\draw[->,>=stealth,thick] (27) to (21);
%layer six
\draw[->,>=stealth,thick] (28) to (22);
\draw[->,>=stealth,thick] (28) to (23);
\draw[->,>=stealth,thick] (28) to (24);
\draw[->,>=stealth,thick] (29) to (25);
\draw[->,>=stealth,thick] (29) to (26);
\draw[->,>=stealth,thick] (29) to (27);
%layer seven
%\draw[->,>=stealth,thick] (30) to (28);
%\draw[->,>=stealth,thick] (30) to (29);
%
\end{tikzpicture}
\end{minipage}
\vspace{2ex}
\caption{Tessellar phase diagrams for $\beta = \sigma_2\sigma_1^2\sigma_2^2\sigma_1$ [left]
and $\beta= \sigma_1^2(\sigma_2\sigma_1)^2$ [right]. The posets $\sSC$ for both examples are different but the posets $\otessph\bigl(\sigma_2\sigma_1^2\sigma_2^2\sigma_1\bigr)$ and $\otessph\bigl(\sigma_1^2(\sigma_2\sigma_1)^2\bigr)$ are isomorphic.}\label{fig:pseudo1}
\end{figure}

%chap 6
\chapter{Postlude}
\label{postlude}
In this section we will address open questions and directions for further research based on the ideas in this text.

\section{Topologization}
\label{topologization}
In this section we comment on
 the general scheme of modeling dynamics as topology. 
%\vskip-.2cm
%\subsubsection{Different choices of flow topologies}
% \subsubsection{Variations on flow topologies}
% \label{exttopology}

\subsection{Variations on flow topologies.}\label{exttopology} If we consider the $\tau$-\emph{forward-image} operator $\Gammatau$ by considering forward images from $t\ge \tau>0$
%\begin{equation}
  %  \label{secondcl1}
 %   \Gammatau\colon \sSet(X) \to \sSet(X),\quad U \mapsto \Gammatau U := \bigcup_{t\ge \tau} \varphi(t,U),
%\end{equation}
we obtain a derivative operator which is variation on $\der^+$. The derivative $\Gammatau$ is not idempotent in general. The fixed points of $\Gammatau$ comprise the invariant sets of $\varphi$. The same can be done for $(-\tau)$-backward images which yields the operators $\Gamma^-_{-\tau}$. % with $\tau<0$.
As for the \bflt we can also consider operators 
$\Gamma^{-\tau}_\sqbullet := \Gamma^-_{-\tau}\cl$
%$\underline{\mathrm{\Gamma}}^\tau=\Gammatau\cl$ 
and the topologies $\scrT^{-\tau}_\sqbullet$,
which are refinements of $\scrTbf$.
% We will not pursue these topologies in this paper which play a role when $X$ is not necessarily compact.
A different class of flow topologies can be derived by
%Besides the forward image operator we can also define derived topologies by 
composing closure and forward image in reversed order, e.g. $\Gamma_{-\tau}^\sqbullet:= \cl~\Gamma^-_{-\tau}$ 
is a derivative operator on $\sSet(X)$,
and associated topologies  $\scrTbfop$ and $\scrT_{-\tau}^\sqbullet$. If $\varphi$ is continuous then $\Gamma^\sqbullet_\tau$ defines a derivative operator on $\sSet(X)$.
%
% ----
%
% For instance if $(X,\scrT)$ is compact, Hausdorff then the flow topologies $\scrTbf$ and $\overline\scrT_0$ are equivalent.
% %
% % With these variations on the \bflt we can other set like trapping regions and attracting neighborhoods.
%
% -----
%
The $\tau$-forward image derivative can be used to define
the omega limit set operator $\deromega U:=\omega(U)$.
A flow topology of particular interest is based on omega limit sets.
 Since $\omega(U)$ is forward invariant and closed it holds that $\omega(\omega(U)) \subset \omega(U)$. In addition,
$\omega$ is a additive and $\omega(\varnothing) = \varnothing$,
which proves that $\omega$ defines the derivative operator $\deromega$ on $\sSet(X)$.
Therefore, $U\mapsto \cl_\omega U := U\cup\deromega U$ is a closure operator and thus defines the topology $\scrT_\omega$ on $X$. 
Sets $U\subset X$ that are open in $\scrT$ and closed on $\scrT_\omega$ are exactly open attracting neighborhoods.\index{Omega limit set}
% This topology is used in the axiomatic approach to chain recurrence, cf.\ \cite{GKV}.
It is sometimes  convenient to consider trapping regions instead of attracting blocks. This can be achieved 
%In order to describe trapping regions via closure operators one can define 
the operator:
$
U\mapsto \cl^+_\omega U:= U\cup \mathrm{\Gamma^+} U \cup \deromega U.
$

\subsection{Dynamical systems in the large.}\label{dyninlarge}
%\subsubsection{Dynamical systems in the large}
The flow topologies $\scrT^+$ and $\scrT^-$ and the derived flow topologies such as 
$\scrTbf$ and $\scrTbfop$ inherit properties of the semi-flow $\varphi$ at hand. For example if $\varphi$ is trivial\footnote{i.e. $\varphi(t,x)=x$ for all $x\in X$ and for all $t\ge 0$.} then $\scrTbf = \scrTbfop =\scrT$, and if $X$ allows a dense flow  $\varphi$ then for instance $\overline\scrT_0$ is the trivial topology.
In general, the flow topologies are not Hausdorff, independent of $\scrT$.
A natural question to ask is which topologies are manifested as a derived flow topologies. In this setting we can think of a dynamical system in larger terms as a relation on $X$.  This raises  a deeper question whether we can equivalently study certain aspects of dynamical systems in the large as bi-topological spaces and bi-closure algebras. For the latter closure operators  serve as a generalization of a dynamical system.

\subsection{Beyond semi-flows.}\label{beyonfsemiflow}
Most of the considerations in this paper carry over to discrete time dynamical systems such as iterating a map in $X$, cf. Remark \ref{rem:phi_continuity}. Another aspect that has played a minor role in the construction in this paper is the continuity of the dynamics with respect to the phase variable $x$. Finding discretizations, Morse pre-orders and Morse tessellation does not require continuity. The one instance where continuity plays a role is tying invariant dynamics to Morse tessellations. The latter uses algebraic topology and Wazewski's principle to conclude that non-trivial Borel-Moore homology yields non-trivial invariant dynamics. In particular, Wazewski's principle crucially uses the continuity of $\varphi$.  
For discrete time systems one can use a different notion of Conley index, cf.\ \cite{robbin:salamon2,mrozek10,richeson1,richeson2}.

%.~~~~~~~~~~~~~~~~~~~~~
% \vskip.2cm

% \begin{enumerate}
%     \item Different choice of flow topologies (attracting neighborhoods, chain recurrence, etc)
%     \item More dynamical tools to be used for topology? (Using connection matrix to derive cellular homology as first step in this direction)
%     \item Continuation, sheaf, any $\disc$ that works for fixed $\cX$
%     \item Beyond semi-flows? (maps, relations, etc)
%     Discuss here when continuity is needed, maps, invariance, etc. Continuity is used for Wazeski. How does this work with flows. Also continuity makes discrete resolutions possible. Won't work for maps. What about other topologies.
% \end{enumerate}

\section{Discretization}\label{postlude:discretization}
In terms of discretization, there are many intersetions with finite and combinatorial topology.

\subsection{Quasi-isomorphic discretization and finite topologies} In practice (indeed, for all of the examples in this paper) the map $\disc\colon (X,\scrT)\to (\cX,\leq)$ is a quasi-isomorphism, i.e. induces an isomorphism $H(X)\cong H(\cX,\leq)$, where the latter is taken to be the singular homology of the finite topological space.  In this case, as per Remark \ref{rem:quasiiso}, one can discard $\disc$ and compute the graded tessellar complex $C^\ppart(\cX)$ from 
$\ppart\colon \cX\to \sP$ 
% $\part\colon \cX\to \sSC$ 
directly. This turns on the singular homology of $(\cX,\leq)$ being easy to understand, which is often the case, e.g., $\disc$ is a CW-decomposition map.  More generally, the singular homology of a finite topological space can be understood through a theorem of M. McCord~\cite{mccord1966singular}, which says that there is a simplicial complex (the order complex) $K(\cX)$ and a weak-homotopy equivalence $k\colon |K(\cX)|\to \cX$. This can be used to induce a new map 
$\ppart_k\colon K(\cX)\xrightarrow{k} \cX\to \sP$ 
% $\part_k\colon K(\cX)\xrightarrow{k} \cX\to \sSC$ 
and form a graded chain complex $C^{\ppart_k}(K(\cX))$. As $k$ is a weak homotopy equivalence (thus a quasi-isomorphism) $C^{\ppart_k}(K(\cX))$ has an isomorphic homology braid to $C^\ppart(\cX)$.  This provides another route for computation, and the role that finite topologies play seems worth examining.
%In this case, M. McCord's theorem also says that the singular homology $H(\cX)$ is isomorphic to the simplicial homology of $H(K(\cX))$ where $K$ is the order complex.  This provides another finite model for computation.

%Relation to work by C McCord, M. McCord (e.g., Remark \ref{rem:quasiiso}); quasi-isomorphism allows one to compute using $H(\cX)$, which can also be computed from simplicial homoloyg via M. McCord theorem relating order complex to $\cX$.

%%%%%% SHEAVES %%%%%%%
% {\color{red}
% \noindent {\em 2. Discrete sheaf theory.} In the examples in this paper there has been an implicit quasi-isomorphism betweeen $H(X)$ and $H(\cX)$, however this need not be the case [perhaps include a simple example here, say vector field on a sphere mapping to a line].  More generally then, one can think of these discretizations as setting up sheaf indexed over a finite base space, and the graded chain complex might be related to the sheaf for sheaf (co)homology.  When the base space is a CW complex this is similar to cellular sheaf theory.  Possible applications in tda, etc.
% }

% \vskip.2cm

\subsection{Semi-conjugacies and finite models.} 
%Can we think of Morse pre-orders as semi-conjugacies?
A (bi-topological) discretization map  $\disc\colon (X,\scrT,\scrTbf)
\twoheadrightarrow (\cX,\le,\le^0)$ is analogous to a semi-conjugacy in that one has a model system onto which the system of interest is mapped.  There are situations in which it could be of interest to construct a semi-conjugacy of the semi-flow $\varphi$. The work of~\cite{mcmodels} provides results in this direction. If $\sSC$ obeys a particular condition
\footnote{Namely, what \cite{bjorner1984posets} terms a CW-poset, i.e. the face partial order of a regular CW complex.}
and further conditions
on the Conley indices and connection matrix $\dff^\tile$ are met,\footnote{Termed H0-H3 in~\cite{mcmodels}, these conditions ensure that $C^\tile(X)$ `looks-like' the Morse complex of a gradient flow.  Some of the braid examples from this text, after appropriate modification \`a la \cite{kkv}, satisfy these assumptions.}  
then one can construct a surjective semi-conjugacy from $X$ to $|K(\sSC)|$, the order complex associated to $\sSC$. Given that our starting point is $\tile$, rather than the $C^\tile(X)$ itself, one has more data than assumed in \cite{mcmodels} (in which only the nature of Conley indices and $\dff^\tile$ is known), and it seems reasonable that these assumptions could be weakened.

%Roughly speaking, a discretization of a $(\cX,\scrT,\scRMorse pre-order may be regarded as a semi-conjugacy of the bi-topological space (

%There is also has a relationship to C. McCord's work on semi-conjugacies~\cite{mcmodels}.  McCord shows that a semi-conjugacy can be constructed from $\sSC$ by using $K(\sSC)$ if $\sSC$ is a CW-poset, i.e., a poset coming from a regular CW complex. McCord's conditions H0-H3 put conditions on $\tile \colon X\to \sSC$, and condition H4 puts a condition on $\sSC$.  %These conditions should be enough to show that this map is a quasi-isomorphism, thus $\id:\sP\to \sP$ recovers the braid, and carries the topology.  
%C. McCord's results apply to situations which `look-like' Morse theory (in fact, they apply to the chain complexes we compute for the braid examples); in this case one can write down a semi-conjugacy to the order-complex; worth exploring further.

\section{Algebraization}

There are further refinements and generalizations of the homological algebraic techniques we use to build algebraic models of dynamics.

\subsection{Grading tessellar homology.}\label{Gtess}
In Remark \ref{dimensiongrading} we explained via spectral sequences that one obtains a $\sP$-grading on tessellar homology in the case that $\sP$ is a linear order. Applying this idea in the setting of parabolic recurrence relations yields the bi-graded parabolic homology. Via the bi-grading we   obtain refined information about invariant sets for  parabolic flows and connections between invariant sets. We have not fully explored the case of arbitrary partial orders $\sP$ is this setting, i.e. grading tessellar homology by an arbitrary partial order. To do so we need to explore the theory of spectral systems, cf.\ \cite{Matschke}.
% Another question in this setting is the relevance of the Morse relations obtained in \eqref{morsegenrel}. In its current form the generalized Morse relations contain information about differentials in the spectral sequence.
The grading related to dynamics is denoted by $p$ and the grading related to topology by $q$. In some case it is beneficial to disregard the $q$-grading and only consider the dynamical grading $p$.

\subsection{Exact couple systems.}\label{ECS}
In \cite{Matschke} Matschke introduces exact couple systems as a generalization of Massey's exact couples which allows generalizing spectral sequences to spectral systems. In Section \ref{CEsystems}
we discuss Franzosa's theory of connection matrices in terms of exact couple systems and Cartan-Eilenberg systems, cf.\ \cite{SpV2}. 
%This insight leads to an uniqueness theorem for connection matrices.

% \begin{enumerate}
%     \item Forecast the results on uniqueness of connection matrices and spectral systems as well as exact couple systems as generalizations of Franzosa's work.
%     \item Grading by dynamics
%     \item Uses of tessellar homology
% \end{enumerate}

\section{Braid and knot invariants}
\label{brandkn}
The application of discretization techniques to braids opens the door for investigating invariants in more general contexts.

\subsection{Braid Floer homology.}\label{BFH} In Section \ref{tesspara} we discuss an invariant for positive conjugacy classes of braid  in terms of a poset graded differential module.
Our techniques in this paper use parabolic flows which restrict to positive braid diagrams and positive conjugacy.
The general problem of braids is addressed in \cite{bgvw} and defines invariants for relative braid classes $x\rel y$ via Floer homology. 
One of the main results in \cite{bgvw} is that the homology invariants are isomorphic to the Floer homology of positive braids via Garside's normal form for braids.
The latter makes that the Floer homology invariants for relative braids can be computed from the discrete invariants introduced in Section \ref{tesspara}.
The problem of formulating a differential module invariant in the general case for braids based on Floer homology is much harder. An extension of the results in \cite{bgvw} is to define a differential module for  braids and show that it is an invariant of conjugacy classes of braids. A natural next step is to investigate the link to the discrete case in order to compute the differential module invariants. 
Another question in this setting is to understand how Markov moves impact the braid invariants which is important for investigating its relation with  knot invariants.

\subsection{Some immediate applications of the parabolic differential module.}\label{appdm}
The theory of parabolic recurrence relations has been successfully applied to scalar parabolic differential equations of the type
\[
u_t = u_{xx}+g(x,u,u_x).
\]
cf.\ \cite{gv}. A collection of stationary solutions to the above equation is regarded as a continuous, positive braid $\beta$. As in the discrete case one may consider various relative braids $\alpha\rel\beta$. In particular, when $\alpha$ represents a single strand braid the analogy with the discrete theory is obtained by representing $\alpha\rel \beta$ as a piecewise linear braid $x\rel y$. 
The results in \cite{gv} make $\scrA(\beta)$ also an invariant for the above parabolic equation. The reduced tessellar phase diagram $\otessph(\beta)$ yields stationary solutions for every vertex in $\otessph(\beta)$. Moreover,
in Conley index theory the boundary operator $\cm^\para$ contains information about connecting orbits between stationary solutions, cf.\ \cite{gv,day}. The highlight of the discrete theory is insight into the infinite dimensional Morse theory for the above parabolic equation.

The reduced tessellar phase diagram given by $\scrA(\beta)$ as provides detailed information about periodic points of surface diffeomorphism and diffeomorphisms of the 2-disc in particular.
In \cite{CzechV} mapping classes of diffeomorphisms of the 2-disc are related to braids and positive braids in particular. To obtain a forcing theory for additional periodic points the Conley indices of braids $\alpha\rel\beta$ is computed. As in the previous example the reduced tessellar phase diagram given by $\scrA(\beta)$ forces new periodic points.

\appendix
%    Include appendix "chapters" here.
\chapter{Binary relations and operators}\index{Relation!see Binary relation}
\label{binrel}

In this appendix we summarize some elementary facts on binary relations and operators relevant for this text. We use \cite{Schmidt} as are main reference for the theory of binary relations and \cite{Gor} for operators.

\section{Binary relations}
\label{binrel122}
Let $X,Y$ be  point sets. A \emph{binary relation}\index{Binary relation} is a subset $\phi\subset X\times Y$. If $X=Y$, then $\phi\subset X\times X$ is called an \emph{endorelation}, or homogeneous (binary) relation on $X$.\index{Endorelation}\index{Binary relation!homogeneous} The top relation $X\times Y$ is denoted by $\top$ and the bottom relation $\varnothing$ by $\bot$. The identity is the diagonal $\id = \{(x,x)\mid x\in X\}$.
The \emph{opposite relation}, or \emph{inverse relation}\index{Binary relation!opposite}\index{Binary relation!inverse} is denote by $\phi^{-1} = \{(y,x)\mid (x,y)\in \phi\}\subset Y\times X$. The \emph{complement} $\phi^c$ is defined as
$\phi^c:= \{(x,y)\mid (x,y)\not\in \phi\}$, which is the set-complement of $\phi$.\index{Binary relation!complement}
Some obvious properties are:
\begin{enumerate}
    \item [(i)] $(\phi^{-1})^{-1}=\phi$;
    \item [(ii)] $(\phi^{-1})^c = (\phi^c)^{-1}$;
    \item [(iii)] $\phi \subset \psi$ if and only if $\phi^{-1} \subset \psi^{-1}$ if and only if $\phi^c \supset \psi^c$;
    \item [(iv)] $(\phi\cup \psi)^{-1} = \phi^{-1}\cup \psi^{-1}$ and $(\phi\cap \psi)^{-1} = \phi^{-1}\cap \psi^{-1}$;
    \item [(v)] $(\phi\cup \psi)^{c} = \phi^{c}\cap \psi^{c}$ and $(\phi\cap \psi)^{c} = \phi^{c}\cup \psi^{c}$ --- De Morgan's laws.
\end{enumerate}
An important operation on relations is \emph{composition}:\index{Binary relation!composition}
given $\phi\subset X\times Y$ and $\psi\subset Y\times Z$, then
\[
\psi\cdot \phi := \bigl\{(x,z)\mid (x,y)\in \phi\text{~~and~~} (y,z)\in \psi\text{~~~for some~~}y\in Y\bigr\}\subset X\times Z.
\]
Some additional properties:
\begin{enumerate}
    \item [(vi)] $(\psi\cdot \phi)^{-1} = \phi^{-1}\cdot \psi^{-1}$;
    \item [(vii)] $\phi \subset \phi\cdot \phi^{-1}\cdot \phi$ and $\phi^{-1} \subset \phi^{-1}\cdot \phi\cdot \phi^{-1}$.
\end{enumerate}
The set of all binary relations on $X\times Y$ is the power set $\sSet(X\times Y)$, which is a complete and atomic Boolean algebra.
In particular, for any $S\subset \sSet(X\times Y)$:
\begin{equation}
    \label{infcup}
\bigcup_{\phi\in S} \phi :=\sup\bigl\{\phi\in S\mid S\subset \sSet(X\times Y) \bigr\},
\end{equation}
is a well-defined relation in $\sSet(X\times Y)$. The same applies the infima:
\begin{equation}
    \label{infcap}
\bigcap_{\phi\in S} \phi :=\inf\bigl\{\phi\in S\mid S\subset \sSet(X\times Y) \bigr\},
\end{equation}
is a well-defined relation in $\sSet(X\times Y)$.

A binary relation $\phi$ is \emph{left total}\index{Binary relation!left total} if for all $x\in X$ there exists an $y\in Y$ such that $(x,y)\in \phi$. A binary relation $\phi$ is \emph{right total}, or \emph{surjective}\index{Binary relation!right total}\index{Binary relation!surjective} if for all $y\in Y$ there exists an $x\in X$ such that $(x,y)\in \phi$, i.e. $\phi^{-1}$ is left total. Composition with the opposite relation yields the following properties:
\begin{enumerate}
    \item [(viii)] $\phi$ is left total if and only if $\id \subset \phi^{-1}\cdot \phi$;
    \item [(ix)] $\phi$ is right total if and only if $\id \subset \phi\cdot \phi^{-1}$.
\end{enumerate}
Binary relations come with many different properties. In this text partial orders and pre-orders are special cases of homogeneous binary relations on $X$ that play a pivotal role in the theory.
For convenience we now consider homogeneous relations on $X$.
Instead of using the latter terminology we will refer to homogeneous binary relations as binary relations on $X$, which are elements of the Boolean algebra $\sSet(X\times X)$.
 A selection of special properties of binary relations on $X$ are:
\begin{enumerate}
    \item [-] \emph{reflexive}\index{Binary relation!reflexive} if $\id \subset \phi$;
    \item [-] \emph{irreflexive}\index{Binary relation!irreflexive} if $\id\subset \phi^c$;
    \item [-] \emph{symmetric}\index{Binary relation!symmetric} if $\phi = \phi^{-1}$;
    \item [-] \emph{asymmetric}\index{Binary relation!asymmetric} if $\phi^{-1}\subset \phi^c$, i.e. $(x,y)\in \phi$ implies $(y,x)\not\in \phi$;
    \item [-] \emph{anti-symmetric}\index{Binary relation!anti-symmetric} if $\phi\cap \phi^{-1}\subset \id$, i.e. $(x,y)\in \phi$ and $(y,x)\in \phi$ implies $x=y$;
  %  \item [-] \emph{connex}\index{Binary relation!connex} if $\phi\cup \phi^{-1}$;
    \item [-] \emph{transitive}\index{Binary relation!transitive} if $\phi^2\subset \phi$, i.e. $(x,y),(y,z)\in \phi$ implies that $(x,z)\in \phi$;
    \item [-] \emph{dense}\index{Binary relation!dense} if $\phi \subset \phi^2$, i.e. for every $(x,y)\in \phi$ there exists a $z\in X$ such that $(x,z),(z,y)\in \phi$.
\end{enumerate}
In particular, every reflexive relation is dense and  both left and right total.
With the above properties one can indicate a number of common binary relations on $X$. A binary relation $\phi$  on $X$ is a(n):
\begin{enumerate}
    \item [-] \emph{pre-order}\index{Pre-order} if $\phi$ is reflexive and transitive, i.e. $\id\subset \phi$ and $\phi^2=\phi$;
    \item [-] \emph{partial order}\index{Partial order}
    if $\phi$ is reflexive, anti-symmetric and transitive, i.e. $\id\subset \phi$, $\phi\cap\phi^{-1}\subset \id$ and $\phi^2=\phi$;
    \item [-] \emph{strict partial order}\index{Strict partial order}\index{Partial order!strict} if $\phi$ is irreflexive and transitive, which is equivalent to asymmetric and transitive;
    \item [-] \emph{linear order}, or \emph{total order}\index{Partial order!linear}\index{Partial order!total} if $\phi$ is a partial such that $(x,y)\in \phi$, or $(y,x)\in \phi$ for all pairs $(x,y)\in X\times Y$ (the last conditions equivalent to $\phi\cup \phi^{-1}=\top$);
    \item [-] \emph{equivalence relation}\index{Equivalence relation} if $\phi$ is reflexive, transitive and symmetric.
\end{enumerate}
For a binary relation on $X$ we use different notations: $(x,y)\in \phi$, which is equivalent to $x\,\phi\, y$. In particular for partial orders and pre-orders we write $x\le y$, or $x\le_\phi y$. For equivalence relations we use $x\sim y$, or $x\sim_\phi y$.
Given a partial order we can write the associated strict order and vice verse, i.e. given a partial order $\phi$, then $\phi_\bullet:= \phi\cap \id^c$ is the associated strict partial order, and given a strict partial order $\psi$, then $\psi^\bullet:= \psi\cup\id$ is the associated partial order. This yields the correspondences:
\[
(\phi_\bullet)^\bullet=\phi,\quad\quad (\psi^\bullet)_\bullet=\psi.
\]
A \emph{Hasse relation}, or \emph{Hasse diagram}\index{Hasse diagram}\index{Hasse relation} for a partial order $\phi$ is defined as:
\[
\phi_H := \phi_\bullet \cap (\phi_\bullet^2)^c,
\]
and corresponds to the usual notion of Hasse diagram for partial orders on a finite set $X$. Transitive reflexive closure, which we now explain, of the Hasse relation  retrieves the partial order.

Given a binary relation $\phi$ on $X$, then its \emph{transitive closure}\index{Binary relation!transitive closure} is defined as
\[
\phi^{\bm{+}} := \inf\bigl\{\psi\subset X\times X \mid \phi\subset \psi,~~\psi^2\subset \psi\bigr\} = \bigcup_{k\ge 1} \phi^k. 
\]
The notion of \emph{transitive reduction}\index{Binary relation!transitive reduction} only makes sense for finite sets $X$ but is not well-defined for infinite sets in general.
The \emph{transitive reflexive closure}\index{Binary relation!transitive reflexive closure} is defined as
\[
\phi^{\bm{+=}} := \inf\bigl\{\psi\subset X\times X \mid \id\cup \phi\subset \psi,~~\psi^2\subset \psi\bigr\} = \bigcup_{k\ge 0} \phi^k. 
\]
The transitive reflexive closure $\phi^{\bm{+=}}$ of a binary relation $\phi$ is pre-order on $X$.
Via \emph{reflexive closure}\index{Binary relation!rflexive closure} $\phi^{\bm{=}}:=\id\cup\phi$ we have that 
$\phi^{\bm{+=}}= \id\cup \phi^{\bm{+}}$. \emph{Reflexive reduction}\index{Binary relation!reflexive reduction} is defined for all binary relations and is given as $\phi^{\bm{\neq}} := \phi\cap \id^c$.
Finally, the \emph{strongly connected components}\index{Strongly connected component}\index{Binary relation!strongly connected component} of a binary relation on $X$ are given by the equivalence relation
\begin{equation}
    \label{SCbin}
\phi_\sSC :=\phi^{\bm{+=}} \cap \bigl(\phi^{\bm{+=}}\bigr)^{-1},
\end{equation}
whose equivalence classes are the strongly connected components of $\phi$.
The latter is of particular importance for binary relations on finite sets (digraphs), cf.\ Rem.\ \ref{perspectives}. The pre-order $\phi^{\bm{+=}}$ yields a partial order on the strongly connected components, cf.\ App.\ \ref{posets} (ordered tessellations).

\section{Modal operators}\index{Operator}
\label{operators}
Related to binary relations $\phi\subset X\times Y$ is the notion of operator on the algebra of subsets of $X$ and $Y$.
Let $\phi\subset X\times Y$ be a binary relation. Then, for $U\subset X$ define
\[
\phi\, U := \bigl\{ y\in Y\mid (x,y)\in \phi~\text{~~for some~~} x\in U\bigr\} =\bigcup_{x\in U}\phi\,x \subset Y,
\]
where $\phi\, x = \bigl\{ y\in Y\mid (x,y)\in \phi\bigr\}$.
By definition $\phi\,\varnothing=\varnothing$ and $\phi(U\cup U')
=\phi\,U\cup \phi\,U'$, which show that $\phi$ regarded as operator is a \emph{modal operator}\index{Modal operator} from $\sSet(X)$ to $\sSet(Y)$.
In this text the operators are mostly from $\sSet(X)$ to $\sSet(X)$.
The following properties follow from the definition of operator:
\begin{enumerate}
    \item [(i)] $\phi\bigl( \bigcup_{i\in I}U_i\bigr) =
    \bigcup_{i\in I} \phi\, U_i$;
    \item [(ii)] $\phi\bigl( \bigcap_{i\in I}U_i\bigr) \subset
    \bigcap_{i\in I} \phi\, U_i$, both for arbitrary families $\{U_i\}_{i\in I}$ of subsets in $X$.
\end{enumerate}
The modal operator defined by a binary relation are completely additive.\index{Modal operator!completely additive}
The opposite relation $\phi^{-1}$ regarded as operator is related to $\phi$ as operator in a similar way as the inverse function to a function. First of all (i)-(ii) also holds for $\phi^{-1}$ as operator. Moreover,
\begin{enumerate}
 \item [(iii)] $U\cap \phi^{-1}Y \subset \phi^{-1}\bigl( \phi\,U\bigr)$, for all $U\subset X$;
    \item [(iv)] $V\cap \phi\,X \subset \phi\bigl( \phi^{-1}V\bigr)$, for all $V\subset Y$.
\end{enumerate}
\begin{remark}
    If $\phi$ is left total then \ref{operators}(iii) corresponds to \ref{binrel122}(viii), and \ref{operators}(iv) corresponds to \ref{binrel122}(ix). Note that if $V=\phi U$ in \ref{operators}(iv), then the identity \ref{binrel122}(vii) is satisfied. The same holds for \ref{operators}(iii).
\end{remark}

To prove (iii) and (iv) we argue as follows.
The set $\phi^{-1}Y = \bigl\{ x\in X\mid (y,x)\in \phi^{-1}\text{~~for some~~} y\in Y\bigr\} = \bigl\{ x\in X\mid (x,y)\in \phi\text{~~for some~~} y\in Y\bigr\}$ is the domain $X$ for which $\phi\,x\neq\varnothing$. Futhermore,
\[
\begin{aligned}
    \phi\,U &= \bigl\{ y\in Y\mid (x,y)\in \phi\text{~~for some~~} x\in U\bigr\}\\
    &= \bigl\{ y\in Y\mid (x,y)\in \phi\text{~~for some~~} x\in U\cap \phi^{-1}Y\bigr\}= \phi(U\cap \phi^{-1}Y),
\end{aligned}
\]
and thus $\phi$ may be regarded as a left total relation on $\phi^{-1}Y\times Y$. 
For any subset $U\subset X$, $U\cap \phi^{-1}Y$ is a subset of 
$\phi^{-1}Y$. By \ref{binrel122}(viii) this gives
\[
U\cap \phi^{-1} Y \subset \phi^{-1} \bigl(\phi(U\cap \phi^{-1} Y) \bigr)= \phi^{-1} (\phi\,U),
\]
which proves (iii). Statement (iv) is proved by changing the role of $\phi$ and $\phi^{-1}$.
% Any subset in $\phi^{-1}Y$ is of the form
% $U\cap \phi^{-1}Y$ for some 

A useful identity for $\phi^{-1}U$ is given by
\begin{equation}
    \label{oppform}
    \phi^{-1}V = \bigl\{x\in X\mid \phi\,x\cap V\neq\varnothing \bigr\},\quad V\subset Y.
\end{equation}
Indeed, 
\[
\begin{aligned}
    \phi^{-1}V &= \bigl\{ x\in X\mid (y,x)\in \phi^{-1}\text{~~for some~~} y\in V\bigr\}\\
    &= \bigl\{ x\in X\mid (x,y)\in \phi\text{~~for some~~} y\in V\bigr\}\\
    &= \bigl\{ x\in X\mid y\in\phi\,x\text{~~for some~~} y\in V\bigr\}\\
    &=\bigl\{x\in X\mid \phi\,x\cap V\neq\varnothing \bigr\}.
\end{aligned}
\]
As operator $\bar \phi:=\phi^{-1}$ is also referred to as \emph{conjugate operator}.\index{Operator!conjugate}
Of other crucial importance is the \emph{dual operator} related to $\phi$.\index{Dual operator}\index{Operator!dual}
Define,
\begin{equation}
    \label{dualop123}
    \phi^*U := \bigl(\phi\,U^c \bigr)^c,\quad U\subset X.
\end{equation}
\begin{remark}
    In general this definition works for modal operators on Boolean algebras, not just completely additive operators as discussed in this appendix. For example $\Int$ is the dual operator of $\cl$ on $\sSet(X)$. If $\cl$ is defined via $\phi$, i.e. Alexandrov topology, then the conjugate operator is ${\mathrm{star}}$, cf.\ Sect.\ \ref{closalgdisc12}.\index{Star}\index{Conjugate closure operator}\index{Closure operator!conjugate}
\end{remark}
By the same token we define the dual of $\phi^{-1}$:
\begin{equation}
    \label{dualop456}
    \phi^{-*}V := \bigl(\phi^{-1}V^c \bigr)^c,\quad V\subset Y.
\end{equation}
Useful identities in this setting are:
\begin{enumerate}
\item [(v)] $\phi^{*}U^c = \bigl(\phi\,U\bigr)^c$ and $\bigl(\phi^{*}U\bigr)^c = \phi\,U^c$ for all $U\subset X$;
    \item [(vi)] $\phi^{-*}V^c = \bigl(\phi^{-1}V\bigr)^c$ and $\bigl(\phi^{-*}V\bigr)^c = \phi^{-1}V^c$ for all $V\subset Y$. 
\end{enumerate}
As for $\phi^{-1}$ there is a convenient characterization of $\phi^{-*}$:
\begin{equation}
    \label{invform}
    \phi^{-*}V = \bigl\{x\in X\mid \phi\,x\subset V \bigr\},\quad V\subset Y.
\end{equation}
Indeed, by \eqref{oppform} and \eqref{dualop456}
\[
\phi^{-*}V = \bigl( \phi^{-1}V^c\bigr)^c=
\bigl\{ x\in X\mid \phi\,x\cap V^c\neq \varnothing\bigr\}^c
= \bigl\{x\in X\mid \phi\,x\subset V \bigr\},
\]
which proves \eqref{invform}.
Note that by the latter characterization we have $\phi^{-*}V \subset \phi^{-1}V$ for all $V\subset Y$. This gives:
\begin{enumerate}
    \item [(vii)] $\bigl( \phi^{-*}V\bigr)^c \supset \phi^{-*} V^c$;
    \item [(viii)] $\bigl( \phi^{-1}V\bigr)^c \subset \phi^{-1} V^c$, both for all $V\subset Y$.
\end{enumerate}
\begin{remark}
    \label{alsoforphi}
    The characterizations in \eqref{oppform} and \eqref{invform} also hold for $\phi$ and $\phi^*$ by simply replacing $\phi$ by $\phi^{-1}$. This yields the same identities in (vii)-(viii) for $\phi$ and $\phi^*$.
\end{remark}
The modal operators defined by binary relations are completely additive. As a consequence of the definition of dual the latter are \emph{completely multiplicative}.\index{Modal operator!completely multiplicative}
\begin{enumerate}
    \item [(ix)] $\phi^*\bigl( \bigcup_{i\in I}U_i\bigr) \supset
    \bigcup_{i\in I} \phi^* U_i$;
    \item [(x)] $\phi^*\bigl( \bigcap_{i\in I}U_i\bigr) =
    \bigcap_{i\in I} \phi^* U_i$, both for arbitrary families $\{U_i\}_{i\in I}$ of subsets in $X$.
\end{enumerate}
The same holds for $\phi^{-*}$. For example (x) is derived as follows:
\[
\phi^*\Bigl( \bigcap_{i\in I}U_i\Bigr)=
\Bigl( \phi\bigl(\bigcap_{i\in I}U_i\bigr)^c\Bigr)^c=
\Bigl( \phi\bigl(\bigcup_{i\in I}U_i^c\bigr)\Bigr)^c
= \Bigl( \bigcup_{i\in I}\phi\, U_i^c\Bigr)^c=
\bigcap_{i\in I}\bigl( \phi\,U_i^c\bigr)^c =  \bigcap_{i\in I} \phi^* U_i.
\]
Due to the definition of dual there are additional properties with respect to composition in contrast to (iii) and (iv):
\begin{enumerate}
    \item [(xi)] $U \subset \phi^{-*}\bigl( \phi\,U\bigr)$, for all $U\subset X$;
    \item [(xii)] $V \supset \phi\bigl( \phi^{-*}V\bigr)$, for all $V\subset Y$.
\end{enumerate}
To prove (xi) observe that
\[
\phi^{-*}\bigl( \phi\,U\bigr) = \bigl\{ x\in X\mid \phi\,x\subset \phi\,U\bigr\} \supset U.
\]
As for (xii) we have:
\[
\begin{aligned}
\phi\bigl( \phi^{-*}V\bigr) &= \bigl\{ y\in Y\mid (x,y)\in \phi~\text{~~for some~~} x\in \phi^{-*}V\bigr\}\\
&=\bigl\{ y\in Y\mid (x,y)\in \phi~\text{~~for some~~}x\text{~~such that~~} \phi\,x\subset V\bigr\}\subset V,
\end{aligned}
\]
i.e. $y\in \phi\,x\subset V$, which proves (xii).
\begin{remark}
In the special case that $\phi$ is given by a function $f\colon X\to Y$, i.e. $\phi = \{(x,y)\mid y=f(x)\}$, then the opposite relation $\phi^{-1}$ is given by: $\phi^{-1}=\{(y,x)\mid y=f(x)\}$. As operators we have $\phi^{-1}V$ and $\phi^{-*}V$ and,
since $\{f(x)\}$ is a singleton set,
\[
\phi^{-1}V = \big\{x\in X\mid \{f(x)\}\cap V\neq\varnothing \bigr\} = \big\{x\in X\mid \{f(x)\}\subset V \bigr\}
=\phi^{-*}V.
\]
We write
$=f^{-1}V = \phi^{-1}V=\phi^{-*}V$. For composition this implies by (iv) and (xii):
\[
f(X)\cap V = \phi\,X \cap V \subset \phi(\phi^{-1}V)=
\phi(\phi^{-*}V) \subset V,
\]
and therefore $f(X)\cap V\subset f(f^{-1}V) \subset V$, which yields identity exactly when $f$ is surjective. Similarly we have  $U\subset f^{-1}f(U)$.
\end{remark}
\begin{remark}
Properties of binary relations imply properties on operators. For instance an operator that satisfies $\phi^2U\subset \phi\,U$ for all $U\subset X$, corresponds to a transitive relation.
\end{remark}

\section{Duality}
\label{complBoolAlg}%\label{duality}
In this section we recall the duality for binary relations on $X$ and operators on $\sSet(X)$. The main source of reference for this section is \cite{Goldblatt}.
The prime example of a complete and atomic Boolean algebra\index{Complete Boolean algebra}\index{Boolean algebra!complete and atomic} is the power set of a point set $X$ denoted by $\sSet(X)$. The latter is a Boolean algebra with respect to intersection, union and complement, and is closed under arbitrary intersections and unions,
and every element is the union of atoms $\{x\}$, $x\in X$.
% The power set $\sSet(X)$ has the additional property that it is a complete and atomic Boolean algebra, i.e.
% $\sSet(X)$ is closed under arbitrary intersections and unions,
% and every element is the union of atoms $\{x\}$, $x\in X$.
Let $Y$ be another point set. A Boolean homomorphism\index{Boolean homomorphism} is a lattice homomorphism that preserves the units, i.e. $\varnothing$ and $X$.
A Boolean homomorphism $\Phi\colon \sSet(Y)\to \sSet(X)$ between the complete and atomic Boolean algebras $\sSet(Y)$ and $\sSet(X)$ is \emph{completely additive}\index{Boolean homomorphism!completely additive}\index{Completely additive Boolean homomorphism} if
\begin{equation}
    \label{mod123}
\Phi\Bigl(\bigcup_{i\in I} U_i\Bigr) = \bigcup_{i\in I} \Phi U_i,
\end{equation}
for  arbitrary unions $\bigcup_{i\in I} U_i$, $U_i\subset X$.
Since $\Phi$ is Boolean the above complete additivity is equivalent to 
\begin{equation}
    \label{mod456}
\Phi\Bigl(\bigcap_{i\in I} U_i\Bigr) = \bigcap_{i\in I} \Phi U_i,
\end{equation}
for  arbitrary intersections $\bigcap_{i\in I} U_i$, $U_i\subset X$.
Related to $\Phi$ we define a binary relation on $X\times Y$:
\begin{equation}
    \label{firstdefnbinrel}
    (x,y)\in \phi\quad\text{if and only if}\quad x\in \Phi\{y\}.
\end{equation}
\begin{lemma}
\label{whyphi}
$\Phi = \phi^{-1}$ as operators from $\sSet(Y)$ to $\sSet(X)$.
\end{lemma}
\begin{proof}
    By defintion
    \[
    (x,y)\in \phi\quad\text{if and only if}\quad 
    (y,x)\in \phi^{-1}\quad\text{if and only if}\quad x\in \phi^{-1}\{y\}.
    \]
    This implies that $\Phi\{y\} = \phi^{-1}\{y\}$ for all $y\in Y$. By the complete additivity of $\Phi$ we have:
    \[
    \phi^{-1}V = \phi^{-1}\Bigl( \bigcup_{y\in V}\{y\}\Bigr)
    =\bigcup_{y\in V}\phi^{-1}\{y\} = \bigcup_{y\in V}\Phi\{y\}
    =\Phi V,\quad V\subset Y.
    \]
    Moreover, $\phi^{-1}\varnothing=\Phi\varnothing=\varnothing$,
    which completes the proof.
\end{proof}
In the Boolean setting the properties on $\Phi$ yield a restriction on $\phi$ as is displayed in the following result.
\begin{proposition}
    \label{charcompladd}
    If $\Phi\colon \sSet(Y) \to \sSet(X)$ is completely additive Boolean homomorphism\index{Boolean homomorphism!completely additive}, then $\Phi=\phi^{-1}$, where
    \[
    \phi = \bigl\{(x,f(x)\mid f\colon X\to Y \bigr\},
    \]
    and $(x,y)\in \phi$ uniquely determines $f(x)=y$. We say that the binary relation $\phi$ is functional\index{Binary relation!functional} and we write $\Phi = f^{-1}$.
    % By Jonnson-Tarski duality the map $g$ is defined by the (functional) relation $\Phi^{-1}\subset X\times Y$, i.e. $(x,y)\in \Phi^{-1}$ and $y$ defines $g(x)$.
\end{proposition}
\begin{proof}
    For $y\neq y'$ we have $\Phi\{y\}\cap \Phi\{y'\} = \Phi\bigl( \{y\}\cap \{y'\}\bigr) =\Phi\varnothing= \varnothing$.
    Also $X=\Phi Y =\Phi\bigl(\bigcup \{y\} \bigr)= \bigcup \Phi\{y\}$, which uses the complete additivity. This implies that given $x\in X$, then there exists a unique $y\in Y$ such that $x\in \Phi\{y\}$ since the sets $\Phi\{y\}$ are disjoint for distinct $y\in Y$.
    The latter defines the functional relation $\phi$ and $f(x) =y$, i.e. the points in
    $\phi$ are given by the pairs $(x,f(x))$.
\end{proof}

\begin{remark}
    The definition of $\phi$ related to $\Phi$ in \eqref{firstdefnbinrel} is a choice that will be crucial in our treatment of duality. Some authors use \eqref{firstdefnbinrel} to define $\phi^{-1}$. Our choice in consistent with the conventions in the logics liturature, cf.\ \cite{Goldblatt}. A more compelling reason for the above choice is Proposition \ref{charcompladd}:
    \[
    \Phi\colon \sSet(Y) \to \sSet(X)\quad\iff\quad \Phi=\phi^{-1}, \text{~~with~} \phi=\bigl\{(x,f(x)\mid f\colon X\to Y \bigr\},
    \]
    which characterizes all completely additive Boolean homomorphisms.
\end{remark}

The notion of Boolean homomorphism can be weakened  to the notion of modal operator.
Let $\Phi\colon \sSet(Y) \to \sSet(X)$ be a \emph{completely additive modal operator},\index{Modal operator!completely additive} i.e. $\Phi$ satisfies $\Phi\varnothing=\varnothing$ and the additivity condition in \eqref{mod123}.
By the definition in \eqref{firstdefnbinrel} and Lemma \ref{whyphi} every completely additive operator $\Phi$ uniquely determines a binary relation
$\phi\subset X\times Y$. However, since $\Phi$ is not necessarily Boolean the relation $\phi$ is not necessarily functional.
Instead, we obtain any binary relation on $X\times Y$. We use the notation:
\begin{equation}
   \label{JT222} 
\Phi = \phi^{-1}\colon\sSet(Y)\to\sSet(X),\quad\text{and}\quad \phi = \Phi^{-1}\subset X\times Y.
\end{equation}
In particular,
\begin{equation}
   \label{JT1} 
(\phi^{-1})^{-1} = \phi,,\quad\text{and}\quad (\Phi^{-1})^{-1}=\Phi.
\end{equation}
The duality between operators and  relations explained in this appendix applies to complete and atomic Boolean algebras. In \cite{JonssonTarski} and \cite{Halmos} this duality is extended to arbitrary
Boolean algebras and a special classes of binary relations --- \emph{Boolean relations}.\index{Boolean relation} 
% This duality is referred to a \emph{Jonsson-Tarski duality}.\index{Jonsson-Tarski duality}
In the case of a single Boolean algebra with (modal) operator $(\sB,\bvtheta)$ (not necessarily completely additive) is embedded in a complete and atomic Boolean algebra with with a completely additive extension of $\bvtheta$.
\chapter{Order Theory}
In this appendix we outline some of the most prominent concepts of order theory that are used in this text. Our main references are \cite{davey:priestley}, \cite{roman}
and \cite{JonssonTarski}.
\section{Posets and pre-orders}
\label{posets}
In Appendix \ref{binrel} we defined binary relations and in particular partial orders and pre-orders. In the setting of finite sets we will focus here on finite partially ordered sets, or \emph{posets} which we will denote by $(\sP,\le)$, where $\sP$ is a finite set and $\le$ a partial order (or pre-order).
If there is no ambiguity on the partial order we typically denote a finite poset\index{Poset}\index{Poset!finite} by $\sP$, or $\sQ$.
A function $\nu \colon\sP\to \sQ$ between finite posets is \emph{order-preserving} if $p\leq q$ implies that $\nu(p)\leq \nu(q)$.\index{Order-preserving map}\index{Poset!order-preserving map}

The \emph{category of finite posets}, denoted $\bFPoset$,\index{$\bFPoset$} is the category whose objects are finite posets and whose morphisms are order-preserving maps.
The category of finite pre-orders is denoted by $\bFPreOrd$.\index{$\bFPreOrd$}

% \begin{defn}\label{defn:prelims:downset}
% {\em 
Let $\sP$ be a finite poset. An {\em up-set}\index{Up-set} of $\sP$ is a subset $I\subset \sP$ such that if $p\in \sU$ and $p\leq q$ then $q\in I$.  For $p\in \sP$ the {\em up-set at $p$} is $\big \uparrow p:=\{q\in \sP:p \leq q\}$
which is also called a {\em principal up-set}.\index{Principal up-set}  Following~\cite{lsa}, we denote the collection of up-sets by $\sU(\sP)$.  A {\em down-set}\index{down-set} of $\sP$ is a set $I\subset \sP$ such that if $q\in D$ and $p\leq q$ then $p\in I$.  The {\em down-set at $q$} is $\big \downarrow  q:=\{p\in \sP: p \leq q\}$ which is called a {\em principal down-set}.\index{Principal down-set}    Following~\cite{lsa}, we denote the collection of down-sets by $\sO(\sP)$.

For $p,q\in \sP$ the {\em interval}\index{Interval} from $p$ to $q$, denoted $[p,q], $ is the set $\{r\in \sP: p\leq r \leq q\}$. 
A subset $I\subset \sP$ is {\em convex}\index{Convex set} if whenever $p,q\in I$ then $[p,q]\subset I$. 
Every convex set is of the form $\alpha\smin\beta$ with $\alpha,\beta\in \sO(\sP)$.
We denote the collection of convex sets by $\sCo(\sP)$. 
% }
% \end{defn}
 Every convex set of $\sP$ can be obtained as intersection of a down-set and an up-set.  Under a poset morphism the preimage of a convex set is a convex set, cf.\ \cite{roman}.

If $\sP$ is a pre-order, then the equivalence classes are ordered as $[p]\le [q]$ if and only if $p\le q$. The poset of equivalence class $\sP/_\sim$ is called an \emph{ordered tessellation}\index{Ordered tessellation} for $\sP$.
The latter is also referred to as an \emph{ordered partition}\index{Ordered partition} of $\sP$

\section{Lattices}\label{sec:prelims:lat}\index{Lattice}
Some texts introduce lattices as a particular type of poset.  Instead, we begin with the definition of lattice as an algebraic structure. For a discussion of the relationship of these two definitions the reader may consult~\cite[Chapter 2]{davey:priestley}, in particular~\cite[Theorem 2.9]{davey:priestley}.

\begin{definition}
A {\em lattice} is a set $\sL$ with the binary operations $\vee,\wedge \colon \sL\times \sL\to \sL$ satisfying the following four axioms:

\begin{enumerate}
\item[(i)]  $a\wedge a = a \vee a = a$ for all $a\in \sL$ (idempotence);
\item[(ii)] $a\wedge b = b\wedge a$ and $a\vee b = b \vee a$ for all $a,b\in \sL$  (commutativity);
\item[(iii)] $a\wedge (b\wedge c) = (a\wedge b)\wedge c$ and $a\vee(b\vee c) = (a\vee b)\vee c$ for all $a,b,c\in \sL$ (associativity);
\item[(iv)] $a\wedge (a\vee b) = a\vee (a\wedge b)=a$ for all $a,b\in \sL$ (absorption law).
\end{enumerate}
\noindent A lattice $\sL$ has an associated poset structure given by $a\leq b$ if $a=a\wedge b$ or, equivalently, if $b=a\vee b$.
A lattice $\sL$ is {\em distributive} if it satisfies the additional axiom:
\begin{enumerate}
\item[(v)] $a\wedge (b\vee c) = (a\wedge b)\vee (a\wedge c)$ and $a\vee (b\wedge c) = (a\vee b) \wedge (a\vee c)$ for all $a,b,c\in \sL$ (distributivity);\index{Distributive law}
\end{enumerate}

\noindent A lattice $\sL$ is {\em bounded}\index{Lattice!bounded} if there exist {\em neutral}\index{Neutral elements} elements $0$ and $1$ that satisfy the following property:
\begin{enumerate}
\item[(vi)] $0\wedge a = 0, 0\vee a = a, 1\wedge a = a, 1\vee a = 1$ for all $a\in \sL$.
\end{enumerate}
\end{definition}

A {\em complemented lattice},\index{Lattice!complemented} also called a \emph{Boolean algebra},\index{Boolean algebra} is a bounded lattice (with least element $0$ and greatest element $1$), in which every element $a$ has a complement, i.e. an element $b$ such that $a\vee b = 1$ and $a\wedge b= 0$.

A lattice homomorphism\index{Lattice homomorphism} $f \colon \sL\to \sM$ is a map such that if $a,b\in \sL$ then $f(a\wedge b) = f(a)\wedge f(b)$ and $f(a\vee b) = f(a)\vee f(b)$.  
If $\sL$ and $\sM$ are bounded lattices then we also require that $f(0)=0$ and $f(1)=1$.  In particular, we are interested in finite lattices.  Every finite lattice is bounded.  A subset $\sK\subset \sL$ is  a sublattice\index{Sublattice}\index{Lattice!sublattice of a} of $\sL$ if $a,b\in \sK$ implies that $a\vee b\in \sK$ and $a\wedge b\in \sK$. For sublattices of bounded lattices we impose the extra condition that $0,1\in \sK$.
%
% \begin{defn}
% {\em
The \emph{category of finite distributive lattices}, denoted $\bFDLat$,\index{$\bFDLat$} is the category whose objects are finite distributive lattices and whose morphisms are lattice homomorphisms.
% }
% \end{defn}

% \begin{defn}\label{def:prelims:joinirr}
% {\em
An element $a\in \sL$ is {\em join-irreducible}\index{Join-irreducible element} if it has a unique \emph{immediate predecessor};\index{Immediate predecessor} given a join-irreducible $a$ we denote its unique predecessor by $a^\pred$.
The set of join-irreducible elements of $\sL$ is denoted by $\sJ(\sL)$. The join-irreducible elements form a poset $(\sJ(\sL),\leq)$, where the order $\leq$ is the restriction of the partial order of $\sL$.

\section{Birkhoff duality}\label{sec:birkhoff}
Given a finite distributive lattice $\sL$,  $\sJ(\sL)$ is a poset   with respect to set-inclusion.
Conversely, given a finite poset $(\sP,\leq)$ the set of downsets $\sO(\sP)$ is a bounded distributive lattice under $\wedge = \cap$ and $\vee = \cup$.  The following theorem often goes under the moniker `Birkhoff's Representation Theorem'\index{Birkhoff's Representation Theorem}
and the duality will be referred to as \emph{Birkhoff duality}.\index{Birkhoff duality}
% Denote the categories of finite distributive lattices and finite posets by $\bFDLat$ and $\bFPoset$ respectively.

\begin{theorem}[cf.\ \cite{roman}, Theorem 10.4 and \cite{kkv}]\label{thm:birkhoff}
The applications  $\sL \Rightarrow \sJ(\sL)$ and $\sP \Rightarrow \sO(\sP)$
 are contravariant functors from $\bFDLat$ to $\bFPoset$ and from $\bFPoset$ to $\bFDLat$ respectively.
A lattice homomorphism $h\colon \sK\to \sL$ is dual to an order-preserving map $\sJ(h)\colon \sJ(\sL) \to \sJ(\sK)$ and an order-preserving map $\nu\colon \sP \to \sQ$ is dual to a lattice homomorphism $\sO(\nu)\colon\sO(\sQ) \to \sO(\sP)$ given by the formulas
 This may be represent this via the following diagram:
\[
\begin{tikzcd}
\sK \ar[dd,"h",swap] && \sJ(\sK)\\
{} \arrow[rr,shorten >= 12pt,shorten <= 12pt,Rightarrow,"\sJ"]  && {}\\
\sL && \sJ(\sL)\ar[uu,swap,"\sJ(h)"]
\end{tikzcd}\quad\quad\quad
\begin{tikzcd}
\sP \ar[dd,"\nu",swap]&& \sO(\sP)\\
{} \arrow[rr,,shorten >= 12pt,shorten <= 12pt,Rightarrow,"\sO"]  && {}\\
\sQ && \sO(\sQ)\ar[uu,swap,"\sO(\nu)"]
\end{tikzcd}
\]
% % \[
% % \xymatrixrowsep{0.15in}
% % \xymatrixcolsep{0.15in}
% % \xymatrix{
% % \sK \ar[dd]_{h} && & \sJ(\sK) \\
% % & \ar@{=>}[r]^{\sJ}  &&\\
% % \sL &&& \sJ(\sL) \ar[uu]_{\sJ(h)}
% % }\quad\quad
% % \xymatrix{
% % \sP \ar[dd]_{\nu} &&& \sO(\sP) \\
% % &  \ar@{=>}[r]^{\sO} &\\
% % \sQ &&& \sO(\sQ) \ar[uu]_{\sO(\nu)}
% % }
% % \]
%The formulas for the morphisms $\sJ(h)$ and $\sO(\nu)$ are given by
\begin{align*}\label{eqn:birkhoff:form}
&& \sJ(h)(a) &= \min h^{-1}\left(\big \uparrow a\right), & &\text{where } a\in \sJ(\sL), &\\
&& \sO(\nu)(\alpha)&= \nu^{-1}(\alpha), &  &\text{where } \alpha\in \sO(\sQ), &
\end{align*}
respectively.
Furthermore,
\[
\sL\cong \sO(\sJ(\sL))\quad\text{and}\quad \sP\cong \sJ(\sO(\sP)).
\]
\end{theorem}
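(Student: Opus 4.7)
The plan is to proceed in four stages. First, I would establish the key representation lemma: every element $a$ of a finite distributive lattice $\sL$ equals the join of the join-irreducibles below it, $a = \bigvee\{j \in \sJ(\sL) : j \le a\}$. This is done by induction on $\sL$, using that if $a$ itself is not join-irreducible then $a = a_1 \vee a_2$ with $a_1, a_2 < a$, together with distributivity to push joins through representations. This yields the canonical unit map $\eta_\sL \colon \sL \to \sO(\sJ(\sL))$, $a \mapsto \{j \in \sJ(\sL) : j \le a\}$.

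Second, I would verify that $\eta_\sL$ is a lattice isomorphism. Preservation of $\wedge$ and $\vee$ is a direct check from definitions (distributivity is used for the join case). Injectivity is immediate from the representation lemma, while for surjectivity, given a downset $\alpha \subseteq \sJ(\sL)$, the element $a = \bigvee \alpha$ satisfies $\eta_\sL(a) = \alpha$: the inclusion $\alpha \subseteq \eta_\sL(a)$ is obvious, and the reverse follows because any $j \in \sJ(\sL)$ with $j \le \bigvee \alpha$ satisfies $j = j \wedge \bigvee\alpha = \bigvee_{j' \in \alpha}(j \wedge j')$, and join-irreducibility of $j$ forces $j = j \wedge j'$, i.e.\ $j \le j'$, for some $j' \in \alpha$, so $j \in \alpha$. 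Dually I would show $\epsilon_\sP \colon \sP \to \sJ(\sO(\sP))$, $p \mapsto {\downarrow}p$, is an order isomorphism by identifying the join-irreducibles of $\sO(\sP)$ as exactly the principal downsets (any downset $\alpha$ is the union $\bigcup_{p\in\alpha}{\downarrow}p$, and ${\downarrow}p = \alpha_1 \cup \alpha_2$ forces $p \in \alpha_1$ or $p \in \alpha_2$, hence one of the $\alpha_i$ equals ${\downarrow}p$).

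Third, I would verify the formulas at the morphism level. For a lattice homomorphism $h\colon \sK \to \sL$ and $a \in \sJ(\sL)$, the set $h^{-1}({\uparrow}a)$ is nonempty (contains $\top_\sK$) and closed under $\wedge$ (as $h$ preserves meets), so in a finite lattice it has a least element $\sJ(h)(a)$. This least element is itself join-irreducible: if $\sJ(h)(a) = b_1 \vee b_2$ with $b_i < \sJ(h)(a)$, then $h(b_1) \vee h(b_2) = h(\sJ(h)(a)) \ge a$, and join-irreducibility of $a$ forces $h(b_i) \ge a$ for some $i$, contradicting minimality. Order-preservation of $\sJ(h)$ follows because $a \le a'$ implies ${\uparrow}a \supseteq {\uparrow}a'$. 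For an order-preserving $\nu\colon \sP \to \sQ$, $\sO(\nu)(\alpha) = \nu^{-1}(\alpha)$ is manifestly a downset when $\alpha$ is, and preimage preserves $\cap$ and $\cup$.

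Fourth, I would confirm functoriality ($\sJ(\id) = \id$, $\sJ(h' \circ h) = \sJ(h) \circ \sJ(h')$, and likewise for $\sO$) and naturality of $\eta$ and $\epsilon$, which together with the object-level isomorphisms upgrades the construction to a contravariant equivalence of categories. The main obstacle I anticipate is the minimality argument in step three, specifically verifying that $\min h^{-1}({\uparrow}a)$ lies in $\sJ(\sK)$; once that is cleanly settled, the remaining checks are essentially formal diagram chases with the representation lemma doing the structural work.
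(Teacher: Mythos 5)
The paper states this theorem without proof, citing Roman and Davey--Priestley for the standard Birkhoff representation theorem, so there is no in-text proof to compare against; your argument is correct and is essentially the standard textbook proof that those references contain. The only abbreviated step is in Stage 3, where the inference ``join-irreducibility of $a$ forces $h(b_i)\ge a$ for some $i$'' silently upgrades join-irreducibility to the join-prime property: one writes $a = a \wedge (h(b_1)\vee h(b_2)) = (a\wedge h(b_1))\vee(a\wedge h(b_2))$, using distributivity of $\sL$, and then join-irreducibility of $a$ gives $a = a\wedge h(b_i)$, i.e.\ $a\le h(b_i)$. This is exactly the computation you already made explicit in Stage 2 for the surjectivity of $\eta_\sL$, so the argument is complete; it would just be cleaner to state the join-irreducible-implies-join-prime lemma once and invoke it in both places.
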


The pair of contravariant functors $\sO$ and $\sJ$ called the {\em Birkhoff transforms}.   Given $\nu\colon \sP\to \sQ$ we say that $\sO(\nu)$ is the {\em Birkhoff dual}\index{Birkhoff dual} to $\nu$.  Similarly, for $h\colon \sK\to \sL$ we say that $\sJ(h)$ is the {\em Birkhoff dual} to $h$. A lattice homomorphism $h$ is injective if and only if $\sJ(h)$ is surjective, and $h$ is surjective if and only if $\sJ(h)$ is an order-embedding.

\begin{remark}
\label{extraformdual}
If $\sL = \sO(\sP)$ and $\sK=\sO(\sQ)$ then the homomorphism $\sJ(h)\colon \sP\to \sQ$ is given by the formula $\sJ(h)(p) = \min \bigl\{ q\in \sQ\mid p\in h\bigl(\big\downarrow q \bigr)\bigr\}$, cf.\ \cite[Thm.\ 5.19]{davey:priestley}.
\end{remark}

%!TEX root = ./main-DFP.tex

% %\section{Graded cell  and chain complexes}
\chapter{Grading, filtering and differential modules}
\label{appendix:gfcm}
% In the previous sections we outlined the algebraic tools for discretizing topology and dynamics. 
In this appendix we explain gradings and filterings in the context order theory applied to sets and modules
In this setting we discuss the duality between gradings and filterings in the spirits of Birkhoff's representation theorem.
% .and homological algebra, culminating in the definition of connection matrices, cf.\ \cite{hms}.%the algebraic ingredients for matching both discretizations in Section \ref{sec:ConMat}.

\section{$\sP$-gradings and $\sO(\sP)$-filterings}
\label{gradfilt}
We start with 
the definitions of grading and filtering on a set $X$ which carry over to the setting of modules in Appendix \ref{gradedvs}.
\begin{definition}
\label{ordtessdefn}
An \emph{ordered tessellation}, or \emph{ordered partition}\index{Ordered tessellation}\index{Ordered partition} of a set $X$ is a 
poset $(\sT,\le)$ consisting of  non-empty subsets  $T\subset X$, 
%i.e. an element $\sT\in \sSet(X)$,
such that
\begin{enumerate}
    \item [(i)] $T\cap T'=\varnothing$ for all distinct $T, T'\in \sT$;
    \item [(ii)] $\bigcup_{T\in \sT} T=X$.
\end{enumerate}
%
% We start with the notion of ordered tessellation of a set $X$. 
% A poset A  $(\sT,\le)$ consisting of disjoint, non-empty subsets $T\subset X$ such that
% \[
% \bigcup_{T\in \sT} T=X,
% \]
% is called an \emph{ordered tessellation} of $X$.
One can also consider more general binary relations on tessellations such as pre-orders.
% In other words an ordered tessellation is a partially ordered  tessellation $(\sT,\le)$ of the set $X$. 
\end{definition}
% One can also consider more general binary relations on tessellations such as pre-orders.
Given the set of down-sets $\sO(\sT)$ in $(\sT,\le)$ we  define the lattice:
\[
\sN(\sT) := \Bigl\{N\subset X\mid N=\bigcup_{T\in \alpha} T,~\alpha\in \sO(\sT)\Bigr\},
\]
which is a finite sublattice of $\sSet(X)$ with  binary operations  $\cap$ and $\cup$. 
\begin{definition}
\label{filteringnew}
A finite sublattice $\sN\subset \sSet(X)$ is called a \emph{filtering}\footnote{We use term \emph{filtering}\index{Filtering}\index{Filtering!of a set $X$} for an arbitrary sublattice of subsets in $\sSet(X)$ as opposed to \emph{filtration} which is commonly used for a linearly ordered sublattice.} on $X$.
\end{definition}
The sublattice $\sN(\sT)$ is a filtering on $X$, constructed from $\sT$.
If we start with a filtering $\sN\subset \sSet(X)$ we  construct an ordered tessellation from $\sN$ as follows.
For $N\in \sJ(\sN)$ define $T:= N\smin N^\pred$, where $N^\pred$ is the unique immediate predecessor of $N$ in the lattice $\sN$. From \cite{lsa3} it follows that for distinct $N,N'\in \sJ(\sN)$, then (i) $T,T'\neq \varnothing $, (ii) $T\cap T'=\varnothing$, and (iii) $\bigcup_{N\in \sJ(\sN)} N\smin N^\pred =X$.
We order the tiles $T$ as: $T\le T'$ if and only if $N\subseteq N'$. This makes $\bigl(\sT(\sN),\le\bigr) \cong (\sJ(\sN),\subset)$ an ordered tessellation of $X$.
By Birkhoff duality we conclude: 
\[\sN\bigl(\sT(\sN)\bigr) = \sN\quad \text{and}\quad
\sT\bigl(\sN(\sT)\bigr) = \sT.
\]
Filterings regarded as finite sublattices of subsets in $X$ provide an algebraic point of view for the construction ordered tessellations of $X$.
An \emph{$\sO(\sP)$-filtering}\index{$\sO(\sP)$-filtering} on  $X$ is a lattice homomorphism 
\[
\flt\colon \sO(\sP) \to \sSet(X),
\]
%where $\sN$ is the image of $\flt$.
and the pair $(X,\flt)$ is called an \emph{$\sO(\sP)$-filtered set}.
The image $\sN\subset\sSet(X)$ of $\flt$ is a filtering on $X$.
 Common
 notation for an $\sO(\sP)$-filtering is:  $\alpha \mapsto  F_\alpha X$.
Given an $\sO(\sP)$-filtering $\flt\colon \sO(\sP) \to \sSet(X)$,
with image $\sN$,
% we denote the image of $\flt$ by $\sN$, i.e. $\sO(\sP) \twoheadrightarrow\sN \rightarrowtail \sSet(X)$, 
then Birkhoff duality as described above %cf.\ Thm. \ref{thm:birkhoff}, 
yields the order-embedding 
$\iota\colon\sT(\sN)\hookrightarrow \sP$ of the induced ordered tessellation:
%The embedding into $(\sP,\le)$ can be described as follows:
 \begin{equation}
     \label{grdeqn1}
    T \xmapsto{\iota} \min\Bigl\{p\in \sP\mid T \subset F_{\downarrow p}X\Bigr\}.
%  \grade\colon \sT \to \sP,\quad T \mapsto \min\Bigl\{p\in \sP\mid T \subset \flt\bigl(\big\downarrow p\bigr)\Bigr\},
  \end{equation}
  The latter defines a discretization map\index{Discretization map} (not necessarily surjective, nor continuous) $\grade\colon X\to (\sP,\le)$ via: 
  \[
  \grade(x) := p, \quad x\in \iota^{-1}(p).
  \]
  The discretization map $\grade$ is called a \emph{$\sP$-grading}\index{$\sP$-grading} on $X$ and the subsets
  $G_pX=\grade^{-1} p$ yield a decomposition
 \begin{equation}
    \label{Pgradnot1}
    X = \bigcup_{p\in \sP} G_p X,
\end{equation}
which we refer to as a \emph{$\sP$-graded decomposition}\index{Graded decomposition}\index{Graded decomposition!of $X$} of $X$. The set $G_pX$ is also called the subsets of \emph{homogeneous elements of degree}\index{Homogeneous elements}\index{Homogeneous elements! of degree $p$} $p$. The non-empty sets $G_pX$ form a ordered tessellation of $X$.
In this construction the $\sP$-grading $\grade$ is induced by the filtering $\flt$.
In general any discretization map $\grade\colon X\to \sP$ yields a 
$\sP$-graded decomposition of $X$.
Given a $\sP$-grading $\grade\colon X\to \sP$, Birkhoff duality implies a  lattice homomorphism $\grade^{-1}\colon\sO(\sP) \twoheadrightarrow \sO(\sT)$. The formula
\begin{equation}
    \label{grdeqn2}
    \flt\colon \sO(\sP) \to \sN(\sT),\quad
    \alpha \mapsto F_\alpha X:= \bigcup \bigl\{ T\mid T\in  \grade^{-1}(\alpha)\bigr\},
\end{equation}
yields an $\sO(\sP)$-filtering on $X$, which establishes teh duality between $\sP$-gradings and $\sO(\sP)$-filterings of $X$.

%%%%%%%%%%%%%%%
%%%%%%%%%%%%%%%%%%%%%
%\vspace{-2ex}
\section{$\sP$-graded and $\sO(\sP)$-filtered modules}
\label{gradedvs}
In the spirit of gradings and filterings of a set $X$ we can do the same for $R$-modules.
Let $C$ be an $R$-module, or module for short, over a ring $R$.\index{$R$-module}\index{Module} The submodule\index{Submodule}
of $C$ are denoted by $\Sub\,C$ with binary operations $\cap$ and $+$ (span).
An \emph{$\sO(\sP)$-filtering}\index{$\sO(\sP)$-filtering!on a module $C$}\index{Filtering!on a module $C$} on $C$ is a lattice homomorphism 
\[
\flt\colon \sO(\sP) \to \Sub\,C,
\]
and the pair $(C,\flt)$ is called an \emph{$\sO(\sP)$-filtered}\index{$\sO(\sP)$-filtered module} module.
%The image $\sW\subset \Sub\,C$ of $\flt$ is a filtering on $C$.
Common  notation for an $\sO(\sP)$-filtering is $\alpha\mapsto F_\alpha C$. 
For an $\sO(\sP)$-filtering $\sO(\sP)\twoheadrightarrow \Sub\,C$ define the external direct sum
\begin{equation}
    \label{quotdecmp}
\Gr C = \bigoplus_{\alpha\in \sJ(\sO(\sP))} \frac{F_\alpha C}{F_{\alpha^\pred}C} \cong \bigoplus_{p\in \sP} \frac{F_{\downarrow p} C}{F_{\downarrow p^\pred}C},
\end{equation}
where we use the fact that $F_\alpha C/F_\beta C\cong F_{\alpha'} C/F_{\beta'}C$ for all $\alpha\smin\beta = \alpha'\smin\beta'$.
The module $\Gr C$ is the \emph{associated graded module},\index{Associated graded module} cf.\ \cite{Hilton} and \cite{robbin:salamon2}.
In general $\Gr C$ is not isomorphic to $C$.
If the subquotients $G_pC$ are free, then $C$ is a free module and 
$\Gr C\cong C$. This always holds if $C$ is $\K$-vector space.
The decomposition in \eqref{quotdecmp} is called a \emph{$\sP$-graded decomposition} (of $\Gr C$).\index{$\sP$-graded decomposition!of a module}\index{Module!$\sP$-graded decomposition}
The subquotients $G_p C:= F_\alpha C/F_{\alpha^\pred} C$  are called \emph{factors}\index{Factor!of a grading} and
since $\flt$ is not necessarily injective some factors $F_\alpha C/F_{\alpha^\pred} C$ may be trivial, i.e. the zero module.
In general a decomposition 
\begin{equation}
    \label{quotdecmp345}
C =  \bigoplus_{p\in \sP} G_p C, \quad G_pC=\frac{F_{\downarrow p} C}{F_{\downarrow p^\pred}C},
\end{equation}
is called a $\sP$-graded decomposition of $C$. The element in $G_p C$ are called homogeneous elements of degree $p$. As before $G_pC$ may be trivial for some $p$.
Factors are also well-defined for any convex set $\beta\smin\alpha$ and are denoted by $G_{\beta\smin\alpha} C :=F_\alpha C/F_\beta C$.

A $\sP$-graded module $C =  \bigoplus_{p\in \sP} G_p C$ yields an $\sO(\sP)$-filtered module in a canonical way:
% The inverse image of $\grade$ defines the homomorphism $\grade^{-1}\colon \sO(\sP) \to \sO(\mPi)$ which yields the homomorphism
%$\flt\colon \sO(\sP)\to \Sub~V$ defined by
\begin{equation}
\label{firstdecomp1}
\flt\colon \sO(\sP)\to \Sub\,C,\quad \alpha \mapsto  F_\alpha V := \bigoplus_{p\in \alpha} G_p C,
\end{equation}
which is denoted by $(C,\flt)$.
If $C$ is a $\sP$-graded module then $\Gr C$ is  defined as before via the induces $\sO(\sP)$-filtered module and 
\[
\Gr C \cong C,
\]
which establishes one of the dualities for arbitrary modules.

\section{Differential modules}
\label{gradedcellchain}
The concept of $\sP$-grading can be applied to chain complexes and differential modules/vector spaces. 
% In this section we explain the algebraic treatment of connection matrix theory from~\cite{hms} in the setting of ring and field coefficients. 
% \subsection{Poset graded  chain complexes}
% \label{gradedcellchain}
% For convenience we suppose that the chain complexes in question are also $\N$-graded since in our applications we use cellular homology and the cellular chain complex which is $\N$-graded. In this sense the $\sP$-graded chain complexes are \emph{bi-graded}.
% We should emphasize that connection matrix theory can also be carried out without the $\N$-grading.
A \emph{differential module}\index{Differential module} is a pair $(C,\dff)$ where $C$ is an $R$-module  and $\dff\colon C\to C$ is an endomorphism  satisfying $\dff^2=0$. If $C$ is a vector space, then we refer to $(C,\dff)$ as a \emph{differential vector space}.
%We will restrict to differential vector spaces in this exposition.
We refer to the elements in $C$ as \emph{chains}.\index{Chain}
The chains in $C$ for which $\dff$ vanishes are called \emph{cycles}\index{Cycle} and are denoted by $Z(C,\dff)\subset C$. Chains in the range of $\dff$ are called \emph{boundaries}\index{Boundary} and are denoted by $B(C,\dff)\subset Z$. The \emph{homology}\index{Homology} of $(C,\dff)$ is define as $H(C,\dff) := Z(C)/B(C)$.
A homomorphism $h\colon C\to C'$ of $R$-modules is a \emph{D-homomorphism} if $\dff' h = h \dff$.
% which are the morphisms\footnote{In the subcategory of chain complexes the $D$-homomorphisms are referred to as chain maps.} in the category $\bDmod$ of differential $R$-modules.

\begin{definition}
\label{Pchaincompl}
A {\em $\sP$-graded differential module}\footnote{A chain complex $(C,\dff)$ is an $\N$-graded, or $\Z$-graded differential module with the differential $\dff$ a degree $-1$ map, i.e. $\dff F_{\downarrow p} C \subset F_{\downarrow (p-1)} C$. In the literature $\sP$-graded differential modules are also simply called \emph{differential graded modules}.\index{Differential graded module}}\index{$\sP$-graded differential module}\index{Differential vector space} $(C,\dff)$
is given by an $\sP$-graded module $C= \bigoplus_{p\in \alpha} G_p C$ such that
\begin{equation}
    \label{filtdiffmod12}
    \dff F_\alpha C \subset F_\alpha C,\quad\forall \alpha\in \sO(\sP),
\end{equation}
which is equivalent to saying that $\dff$ is $\sO(\sP)$-filtered.
A $\sP$-graded differential module $C$ is \emph{strict} if $\dff|_{G_pC}=0$ for all $p\in \sP$.\index{Differential!strict}\index{Strict differential}   
More generally an \emph{$\sO(\sP)$-filtered differential module}\index{$\sO(\sP)$-filtered differential module} $(C,\dff)$ is given by a $\sO(\sP)$-filtered module $C$ equipped with an $\sO(\sP)$-filtered differential, i.e. $\dff$ satisfies 
\eqref{filtdiffmod12}.
\end{definition}

The differential $\dff$ on a $\sP$-graded module may be regarded as \emph{upper-triangular}\index{Upper-triangular} with entries $\dff(p,q)\colon G_q C \to G_p C$ due \eqref{filtdiffmod12}.
The latter implies
\[
\dff(p,q)\neq 0,\quad \implies\quad p\le q.
\]

As before a $\sP$-graded differential module also yields an $\sO(\sP)$-filtering on $(C,\dff)$ making the latter an $\sO(\sP)$-filtered differential module. The converse is not true in general. For example is we use field coefficients then an $\sO(\sP)$-filtered differential module is isomorphic to a $\sP$-graded differential module as described in the previous section.

 An {\em $\sO(\sP)$-filtered $D$-homomorphism}\index{$\sO(\sP)$-filtered $D$-homomorphism}
between $\sP$-graded differential modules 
 is a $D$-homomorphism $h\colon (C,\dff)\to (C',\dff')$, such that $h\colon C\to C'$ is an $\sO(\sP)$-filtered homomorphism, i.e. $h\bigl( F_\alpha C\bigr)\subset F_\alpha C$ for all $\alpha\in \sO(\sP)$.  

\begin{remark}
    \label{homhom}
    In our treatment of graded and filtered differential modules we assume that the differential is filtered as well as the homomorphisms. This allows more flexibility. For example for a (co)chain complex the differential is homogeneous of degree $\pm 1$. This implies that the differential is also filtered. The converse is not true.
\end{remark}

\backmatter
%    Bibliography styles amsplain or author-year (using natbib) are
%    also acceptable.
\bibliographystyle{amsplain}
\bibliography{references.bib}
%    See note above about multiple indexes.
\printindex

\end{document}